\titleformat{\section}{\Large\bfseries}{\thesection.}{4pt}{}
\titleformat{\subsection}{\large\bfseries}{\thesection.\arabic{subsection}.}{4pt}{}
\titleformat{\subsubsection}{\bfseries}{\thesection.\arabic{subsection}.\arabic{subsubsection}.}{4pt}{}
\titleformat*{\paragraph}{\bfseries}
\titleformat*{\subparagraph}{\bfseries}
\newtheorem{theorem}{Theorem}[section]
\newtheorem{lemma}[theorem]{Lemma}
\newtheorem{proposition}[theorem]{Proposition}
\theoremstyle{definition}
\newtheorem{definition}[theorem]{Definition}
\newtheorem{remark}[theorem]{Remark}
\newcommand{\vep}{\varepsilon}
\newcommand{\Mod}{\textup{Mod}}
\newcommand{\Rb}{\mathbb{R}}
\newcommand{\Cc}{\mathcal{C}}
\newcommand{\Ec}{\mathcal{E}}
\newcommand{\Fc}{\mathcal{F}}
\newcommand{\Oc}{\mathcal{O}}
\newcommand{\Sc}{\mathcal{S}}
\newcommand{\pa}{\partial}
\newcommand{\pr}{\partial_r}
\newcommand{\inn}{\textup{in}}
\newcommand{\out}{\textup{ex}}
\newcommand{\bou}{\textup{bo}}
\newcommand{\Ls}{\mathscr{L}}
\newcommand{\Ms}{\mathscr{M}}
\newcommand{\Hs}{\mathscr{H}}
\newcommand{\As}{\mathscr{A}}
\newcommand{\Lst}{\tilde{\mathscr{L}}}
\newcommand{\Uk}{\mathfrak{U}}
\numberwithin{equation}{section}
 \title[Collision of collapsing solitons]{SINGULARITY FORMED BY THE COLLISION OF TWO COLLAPSING SOLITONS in interaction FOR THE 2D KELLER-SEGEL SYSTEM}
 \keywords{Keller-Segel system, blowup, multiple collapse, singularity formation}
\author[C. Collot]{Charles Collot}
\address{Charles Collot, Laboratoire AGM, CY Cergy Paris Universit\'e, 2 avenue Adolphe Chauvin, 95300 Pontoise, France}
\email{ccollot@cyu.fr}
\author[T.-E. Ghoul]{Tej-Eddine Ghoul}
\address{Tej-Eddine Ghoul, NYUAD Research Institute, New York University Abu Dhabi, Abu Dhabi, UAE}
\email{teg6@nyu.edu}
\author[N. Masmoudi]{Nader Masmoudi}
\address{Nader Masmoudi, NYUAD Research Institute, New York University Abu Dhabi, Abu Dhabi, UAE, and Courant Institute of Mathematical Sciences, New York University, 251 Mercer St, New York, NY 10012, USA}
\email{nm30@nyu.edu }
\author[V. T. Nguyen]{Van Tien Nguyen}
\address{Van Tien Nguyen, Department of Mathematics, Institute of Applied Mathematical Sciences, National Taiwan University, Taiwan}
\email{vtnguyen@ntu.edu.tw}
\thanks{\today}
\begin{document}
\maketitle


\begin{abstract} It is well-known that the two-dimensional parabolic-elliptic Keller-Segel system admits finite-time blowup solutions, which is the case if the initial density has total mass greater than $8\pi$. Several constructive examples of such solutions have been given, where for all of them a perturbed stationary state undergoes scale instability and collapses at a point, resulting in an $8\pi$-mass concentration. It was conjectured that singular solutions concentrating more than one soliton simultaneously could exist. We construct rigorously such a new blowup mechanism, where two stationary states are simultaneously collapsing and colliding, resulting in a $16\pi$-mass concentration at a single blowup point, and with a new blowup rate which corresponds to the formal prediction by Seki, Sugiyama and Vel\'azquez. We develop, for the first time, a robust framework to rigorously construct blowup solutions that simultaneously involve the non-radial collision and concentration of several solitons, which we expect to have applications to other evolution problems.
\end{abstract}

\section{Introduction}

\subsection{The two dimensional Keller-Segel system and the main result}

We consider the parabolic-elliptic Keller-Segel system introduced by Nanjundiah in \cite{Njtb73},
\begin{equation}\label{eq:KS2d}
\left\{ \begin{array}{r l} \pa_tu &= \Delta u - \nabla \cdot\big(u \nabla \Phi_u\big) ,\\
0 &= \Delta \Phi_u + u,\\
u(x,0)&=u_0(x)\geq 0,
\end{array}\right. \quad (x,t) \in \Rb^2 \times (0,T).
\end{equation}
Above,  the solution to the Poisson equation is chosen as
\begin{equation}\label{def:Phi_u}
\Phi_u=-\frac{1}{2\pi} \log |x|*u.
\end{equation}
The model \eqref{eq:KS2d} was first derived by Keller-Segel in \cite{KSjtb70} to describe \textit{chemotaxis} in biology (see \cite{PATbmb53} for an earlier derivation and \cite{KSjtb71a}, \cite{KSjtb71b}, \cite{KELbook80}  for various assessments). We refer to \cite{wolansky1992steady,Chavanis2008} (see also \cite{SCPRSN02, CSPRE11}) for related models that appear in physical and biological contexts. Connections with other evolution equations will be discussed in the next Subsection \ref{subsec:comment-novelties}.

The problem \eqref{eq:KS2d} enjoys a scaling invariance in that for all $\lambda>0$ and $a\in \mathbb R^2$, the function
\begin{equation}\label{def:scalingIntro}
u_{\lambda, a}(x,t) = \frac{1}{\lambda^2}u\big(\frac{x - a}{\lambda}, \frac{t}{\lambda^2}\big), 
\end{equation}
also solves \eqref{eq:KS2d}. In addition, the mass of the solution is conserved as
$$
\int_{\mathbb R^2} u(x,t)dx=\int_{\mathbb R^2} u_0(x)dx=:M(u).
$$
The problem \eqref{eq:KS2d} is termed as mass critical in two dimensions since
$$\| u_{\lambda,a}(0)\|_{L^1(\Rb^2)} = \| u(0)\|_{L^1(\Rb^2)}. $$
It is well known that the Cauchy problem for \eqref{eq:KS2d} is well-posed in $L^1$ \cite{wei2018global} and in rougher scale invariant spaces under suitable smallness assumptions \cite{BMARMA14,lemarie2013small}. Such solutions belong to $L^\infty$ instantaneously after the initial time by parabolic regularization, and, denoting by $T(u_0)$ their maximal time of existence, there holds the blow-up criterion
$$
T<\infty \quad \Leftrightarrow \quad \lim_{t\to T} \| u(t)\|_{L^\infty(\mathbb R^2)}=\infty .
$$
The blow-up set $\mathcal S\subset \mathbb R^2$ is then defined as the set of points $x$ for which there exists $t_n\uparrow T$ and $x_n\rightarrow x$ such that $u(x_n,t_n)\to \infty$. Positive solutions with mass below the $8\pi$ threshold $M(u)<8\pi$ produce global solutions which for large times converge in $L^1$ towards the unique forward self-similar solution with the same mass $t^{-1}\Psi_{M(u)}(x/\sqrt{t})$, see \cite{BDPjde06,nagai2011convergence,HYnon24}. If the mass is above $M(u)>8\pi$, then blowup in finite time occurs, see \cite{JLtams92,CPZmjm04,DNRjde98,BDPjde06,biler20068pi,BCCjfa12,wei2018global}. The $8\pi$ threshold corresponds to the mass of the stationary state of the equation
$$
U(x) = \frac{8}{(1 + |x|^2)^2}, \qquad \int_{\Rb^2} U(x) dx = 8\pi.
$$
It was known that solutions at the mass threshold $M(u)=8\pi$ with finite second momentum concentrate in infinite time \cite{BCMcpam08}, with precise examples with a logarithmic spatial scale given in \cite{GMcpam18}, \cite{DPDMWarma24}. Recently it was proved that it is the unique universal dynamics \cite{buseghin2026determination,buseghin2026classification}. Without finite second momentum, under finite relative entropy solutions converge towards a fixed stationary state \cite{blanchet2012functional,carlen2013stability,carlen2025stability}. Infinite oscillations between scales are also possible \cite{lopez2014non}.

Several concrete examples of finite time blowup solutions were constructed in \cite{HVma96,RSma14,CGMNcpam21,CGMNapde22}. In these works, the solution blows up with $8\pi$ mass concentration and admits the asymptotic dynamics
\begin{equation}\label{def:singleblowup}
u(x,t) = \frac{1}{\lambda^2(t)}U\Big( \frac{x}{\lambda(t)}\Big)+\tilde u(x,t),
\end{equation}
where, for $C_0=2e^{-\frac{2+\gamma}{2}}$ for $\gamma $ the Euler constant,
\begin{equation}\label{def:lambda1bubble}
\lambda(t) = C_0 \sqrt{T-t} e^{-\sqrt{ |\log(T-t)|/2}}(1+o_{t\uparrow T}(1)) \quad \mbox{and} \quad \lambda^2(t)\| \tilde u(\cdot,t)\|_{L^\infty(\mathbb R^2)}\underset{t\to T}{\longrightarrow} 0.
\end{equation}
The stability of this solution was studied in \cite{Vsiam02} at a formal level, then showed in \cite{RSma14} for the radial case, and in \cite{CGMNcpam21,CGMNapde22} for the full nonradial case with the sharp blowup rate. Other unstable blowup rates for sign-changing solutions were also derived in \cite{CGMNcpam21}. Later, the authors of \cite{BDPMaPDE26} showed the existence of singular solutions blowing up simultaneously at several points where near each one it blows up according to the same dynamics \eqref{def:singleblowup}-\eqref{def:lambda1bubble}, carrying a mass of $8\pi$.
 
Recently, in \cite{Sbook24} (Theorem 3.3), Suzuki showed that if the solution $u$ to \eqref{eq:KS2d} blows up in finite time then the blow-up set is finite and there holds mass quantization,
\begin{equation}\label{def:massclassification}
u(x,T) \rightharpoonup \sum_{x_0 \in \mathcal{S}} m(x_0)\delta_{x_0}dx + f(x) dx, \quad m(x_0) \in 8\pi \mathbb{N},
\end{equation}
where $f \in L^1(\Rb^2)  \cap \mathcal{C}(\Rb^2 \setminus \mathcal{S})$. The asymptotic dynamics \eqref{def:singleblowup}-\eqref{def:lambda1bubble} provides an example of the $8\pi$ mass concentration at a single blowup point: $\mathcal S=\{x_0\}$ and $m(x_0)=8\pi$. Solutions concentrating $8\pi$ mass simultaneously at several distinct points, i.e. $\mathcal S=\{x_0,...,x_k\}$ for some $k\geq 1$ with $m(x_i)=8\pi$ for $0\leq i \leq k$, were constructed in \cite{BDPMaPDE26}. 

 Existence of blowup solutions with a multiple of $8\pi$ mass concentration at a single blowup point remained an open question. A formal approximate solution was given by Seki, Sugiyama and Vel\'azquez in \cite{SSVnon13}, by means of formal matched asymptotic expansions. In this work, we provide the first rigorous example of blowup solutions to \eqref{eq:KS2d} with the $16\pi$ mass concentration at a single point ($\mathcal S=\{x_0\}$ and $m(x_0)=16\pi$), formed by the nonradial collision of two collapsing solitons in interaction. Introducing the function space
\begin{equation}\label{def:Espace}
\Ec = \Big \{u: \Rb^2 \to \Rb, \quad \| u\|_\Ec^2 = \sum_{k=0}^2 \int_{\Rb^2} \langle x \rangle^{\frac{3}{2} + 2k} |\nabla^k u|^2 dx < \infty \Big\},
\end{equation}
the following theorem is the main result of this present paper.

\begin{theorem}[Single point multiple-collapse blowup solutions to \eqref{eq:KS2d}] \label{thm:main} There exists a smooth initial data $u_0 \in L^1\cap \Ec$ with $u_0\geq 0$ such that the corresponding solution $u$ to \eqref{eq:KS2d} blows up in finite time $T = T(u_0) > 0$ and admits the decomposition 
\begin{equation}\label{def:massclassification}
u(x,t) = \frac{1}{\lambda^2(t)} U\Big( \frac{x - x_0(t)}{\lambda(t)}\Big) + \frac{1}{\lambda^2(t)}  U\Big( \frac{x + x_0(t)}{\lambda(t)}\Big) + \tilde u(x,t) , 
\end{equation}
where as $t \to T$, 
\begin{equation}\label{def:lambda2bubbles}
\lambda(t) = C_1(t) \sqrt{T-t} e^{-\sqrt{\gamma_1 |\ln (T-t)|} }, \quad \frac{x_0(t)}{\sqrt{T-t}} \to \big(2, 0 \big), \quad \lambda^2(t) \| \tilde{u}(t)\|_{L^\infty} \to 0,
\end{equation}
for $\gamma_1 > 0$ defined in \eqref{eigenfunction:id:tildelambda1} and $C_1(t)$ that satisfies $c_*\leq C_1(t)\leq c^*$ for all $t\in [0,T)$ for some constants $c^*>c_*>0$. Moreover, we have
$$
\lim_{\delta \downarrow 0}\lim_{t\uparrow T} \int_{|x|<\delta} u(t,x)dx=16\pi.
$$

\end{theorem}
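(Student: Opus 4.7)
The plan is to carry out a modulated ansatz construction, combined with a finite-dimensional topological shooting. First, I would pass to parabolic self-similar variables $y = x/\sqrt{T-t}$ and $s = -\ln(T-t)$. Since $x_0(t)/\sqrt{T-t}\to (2,0)$, in these variables the two bubbles sit at approximately fixed positions $\pm b(s)$ with $b(s)\to(2,0)$; writing $\mu(s) := \lambda(t)/\sqrt{T-t}$, the conjectured rate reads $\mu(s) \sim e^{-\sqrt{\gamma_1 s}}$. The skeleton two-bubble ansatz is
\begin{equation*}
Q_{\mu, b}(y) = \frac{1}{\mu^2}\, U\Big(\frac{y - b}{\mu}\Big) + \frac{1}{\mu^2}\, U\Big(\frac{y + b}{\mu}\Big),
\end{equation*}
with $(\mu(s), b(s))$ treated as modulation parameters to be determined dynamically.

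Next I would build a refined approximation by matched asymptotic expansions in the spirit of \cite{SSVnon13}. Each bubble sees the far-field tail generated by the other through the logarithmic Poisson potential, producing inhomogeneous source terms that must be inverted against the single-bubble linearised operator $\Ls_U$. The solvability conditions attached to the kernel directions of $\Ls_U$, namely the scaling generator $\Lambda U$ and the two translations $\partial_{y_1}U,\partial_{y_2}U$, determine the leading modulation ODEs for $\mu$ and $b$. Matching the inner expansion near each bubble with the outer self-similar expansion in $y$ extracts the precise scalar feeding the dissipative law for $\mu$, recovering the logarithmic correction $\mu_s/\mu \sim -\sqrt{\gamma_1/(2s)}$, and hence the rate \eqref{def:lambda2bubbles}.

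Then I would decompose $u = Q_{\mu(t), b(t)} + \Psi + \varepsilon$, with $\Psi$ collecting the interaction corrections and $\varepsilon$ a remainder constrained by six orthogonality conditions (one scaling and two translation orthogonalities per bubble), which fix $\mu$ and $b$. The equation for $\varepsilon$ is governed by a linearised operator that is, at leading order, the direct sum of two single-bubble operators in the inner regions, plus a parabolic self-similar drift in the outer region. Monotonicity and energy estimates in the weighted space $\Ec$, leveraging the known coercivity of the single-bubble operator modulo its kernel together with quantitative control of the bubble-bubble coupling, should close bootstrap bounds on $\|\varepsilon\|_\Ec$. Finitely many unstable modes survive, corresponding to the two scaling instabilities of the two bubbles (and possibly to the relative distance $|b|$); a Brouwer-type topological argument on the associated finite-dimensional initial data produces a trajectory along which every unstable projection vanishes for $t \in [0,T)$, concluding the construction.

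The main obstacle is the sharp profile construction: in contrast to the radial single-bubble case of \cite{CGMNcpam21,CGMNapde22}, the nonradial two-centre interaction generates anisotropic long-range corrections that must be resolved through the matched expansion to high enough order to identify the correct scalar feeding the modulation law for $\mu$. A closely related difficulty is the coercivity analysis of the linearised operator in $\Ec$: one must combine two copies of the single-bubble spectral theory in the inner regions with parabolic dissipation in the outer region, while controlling uniformly in $s$ the nonlocal interaction terms as the two bubbles approach their common limiting positions.
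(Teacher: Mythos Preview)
Your broad architecture matches the paper's: self-similar variables, a two-bubble ansatz at positions $\pm a$ with $a\to(2,0)$, interaction corrections via matched asymptotics, modulation of scale and position, energy estimates, and a topological shooting. However, two points you describe as routine are precisely where the paper's main work lies, and your description of them would not close.

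First, the coercivity step. You propose to obtain energy control in the weighted space $\Ec$ by combining ``two copies of the single-bubble spectral theory in the inner regions with parabolic dissipation in the outer region''. The paper explicitly argues that no standard scalar product makes the linearised operator $\Ls^z$ around the two-bubble configuration self-adjoint, because of the nonlocal Poisson field coupling the two bubbles and the radiation. The paper's solution is a new \emph{matched scalar product} $\langle\cdot,\cdot\rangle_*$ (equation \eqref{def:scalar12_intro}) that glues the free-energy Hessian product near each bubble to a weighted $L^2$ product in the parabolic zone, with a carefully tuned constant $c_*$ so that the two match at leading order. Coercivity of $\Ls^z$ is then established only with respect to this product and under local orthogonality conditions including the \emph{mass} direction $\mathbbm{1}$ (not only $\Lambda U$ and $\nabla U$), giving eight local conditions rather than six. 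Your plan to work directly in $\Ec$ would not yield the needed dissipation inequality.

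Second, the origin of the modulation law and the constant $\gamma_1$. You attribute the law for $\mu$ to solvability conditions of the single-bubble operator $\Ls_U$, but in the paper the rate is governed by two \emph{approximate eigenfunctions} $\phi_0,\phi_1$ of the full two-bubble operator $\Ls^z$, with approximate eigenvalues $\lambda_i = 2\beta(1-i+\gamma_i/\ln\nu)+O(|\ln\nu|^{-2})$. The constant $\gamma_1=3/16-16A$ depends on a scalar $A<0$ that arises from the \emph{outer} eigenvalue problem (Proposition \ref{pr:phi1out}), not from a purely inner solvability condition. The approximate blowup profile then includes an extra modulation parameter $\alpha$ multiplying $\phi_1-\phi_0$, and the coupled system for $(\nu,a,\alpha)$, not just $(\mu,b)$, produces $\nu_\tau/\nu\sim \gamma_1/(2\ln\nu)$. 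A further subtlety you do not anticipate is that the orthogonality conditions used for coercivity are incompatible with the modulation decomposition, forcing a secondary projection $\vep=\hat\vep+a_0\phi_0$ with $\langle\hat\vep,\phi_0\rangle_*=0$ to avoid a loophole in the $L^2_{\omega_\nu}$ energy estimate (Proposition \ref{prop:decomp_vephat}).
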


\subsection{Comments and novelties} \label{subsec:comment-novelties}

\paragraph{A new blowup mechanism.} The mechanism of the singularity formation in Theorem \ref{thm:main} is the \emph{nonradial collision at a single point of two collapsing solitons in strong interaction}. This mechanism is brand new for the Keller-Segel system. In particular, the blowup rate in $L^\infty$ is $\| u(t)\|_{L^\infty}\sim (T-t)^{-1}e^{2\sqrt{\gamma_1 |\ln (T-t)|} }C_1^{-2}(t)$. It is of type II because $\lim_{t\uparrow T}(T-t)\| u(t)\|_{L^\infty}= \infty$, and it differs from the previously known solution \eqref{def:singleblowup}-\eqref{def:lambda1bubble}, because the constant $\gamma_1$ is no longer equal to $1/2$. This new blow-up rate confirms that which was formally predicted in \cite{SSVnon13}. The change of blowup rate reflects a strong interaction between the two rescaled stationary states that attract each other in the parabolic neighboorhood from the singularity $\{ |x|\lesssim \sqrt{T-t}\}$, as well as a strong interaction with the remainder $\tilde u$ (that we shall call the radiation term). Our proof links this latter interaction to the change of the blowup law, via a computation of a degenerate eigenvalue of the associated linearized operator around the multisoliton (see \eqref{eq:eigenfunctions_phi10}). A related mechanism was, to our knowledge, only previously addressed in two breakthrough works:
\begin{itemize}
    \item First, C\^ote-Zaag \cite{CZcpam13} (see also \cite{MZcpam18}) constructed a singular solution of the one-dimensional semilinear wave equation, in which several backward self-similar solutions interact in the backward light cone from the singularity. The key difference with respect to the present work is that a backward self-similar solution alone is an exact solution which concentrates to smaller scales, while a stationary state alone remains steady. As a result, the singularity in \cite{CZcpam13} is driven by the sole interaction of the backward self-similar solutions, and the radiation term does not play a role. In our main result however, \emph{the singularity is due both to the interactions between the stationary states and to their interactions with the radiation term.} 
    \item Second, Martel-Rapha\"el \cite{MRens18} constructed a singular solution of the mass critical Schr\"odinger equation formed by the collision of two collapsing traveling waves. The singularity formation mechanism is similar to the one here, but the techniques of \cite{MRens18} rely heavily on specific features of the mass critical Schr\"odinger equation, and could not address the present equation. Indeed, they show first the existence of a global in time solution formed by the sum of traveling waves concentrating at distinct points, by a backward in time construction relying on time reversal symmetry. Then they apply the pseudo-conformal symmetry to generate the desired finite time blow-up solution. These two symmetrie are absent in the parabolic problem \eqref{eq:KS2d}; we develop here a new strategy for a direct construction and stability of finite-time blowup formed by the collision of collapsing solitons.
    \end{itemize}

\paragraph{Interaction of solitons.} The concept of solitons is one of the most important developments in the nonlinear world, and the word "soliton" has been now widely used to designate nonlinear coherent structures of partial differential equations \cite{tao2009solitons,soffer2006soliton,DPbook06}. In all the references we shall mention, these solitons are either stationary states, backward self-similar solutions, or traveling waves, all of which are solutions that are left invariant by the action of a one-dimensional subgroup of the group of symmetries of the equation, and are thus "self-similar" solutions in a broad sense. Fundamental properties of a single soliton including its stability, its interaction with radiation has been intensively studied in the past.

All previously known examples of singularity formation for the Keller-Segel system \eqref{eq:KS2d} involved \emph{the collapse of a single soliton}, see \cite{HVma96,HMVjcam98,BCKSVnon99,mizoguchi2007type,RSma14,CGMNcpam21,CGMNjfa23,glogic2024stable,NNZarXiv23,buseghin2023existence,collot2024stability}. The vast majority of the known singularity formation mechanisms for other equations are also the ones that involve the collapse of a single soliton. This is the case for example for the collapse of a bubble for critical equations \cite{MMjams02,MRam05,KSTim08,MRRim13,RRihes12,RSam10,DPWim19}, related to the aforementioned works \cite{HVma96,RSma14,CGMNcpam21}. Singularities formed by interacting solitons remain however poorly understood.

When it comes to the soliton-soliton interaction (or a wider class of interactions between solutions with self-similarity), the linear superposition does not hold and nonlinear interaction takes place. The study of interaction between solitons is a fascinating topic nowadays, \cite{Schapterbook09}. So far, except in some integrable PDEs where explicit formulas for multiple-soliton solutions are known (see for example \cite{Lcpam68}, \cite{Msiam76}), the question of the collision of solitons is much less understood. Most of the studies have been done for global-in-time solutions. We have some mathematical results for the case where the collision has a negligible effect (perturbative regime), see for example \cite{HSzamp98,Msiam03,Pcpde04,RSSrmp03}. There are examples of global solutions formed by multiple solitary waves with strongly affected traveling speeds studied by \cite{KRMcpam09,Ngcrm19}, and other important works on the study of the collision of solitons in the non-integrable case include \cite{MMam11,MMim18,GLPRapde18}. We refer to \cite{martel2018interaction} for a review of global-in-time multisoliton dynamics. In the radial case, there are examples of superposition of solitons with strongly affected scales in the works by \cite{PMWapde21,kim2024classification,JLim18}. The only works involving the nonradial collision of solitons in strong interactions were \cite{CZcpam13,MRens18} that we mentioned in the previous paragraph. Our present work aims at providing a robust framework to study the interaction of solitons in singularity formation for other equations, via the spectral analysis of the linearized operator around a multisoliton, and a novel concept of matched scalar product to derive Lyapunov functionals to control the flow. \\

\paragraph{Codimension-one stability under symmetry:} The solution we construct satisfies the symmetry
\begin{equation} \label{id:symmetry-even}
u(x,y)=u(-x,y)=u(x,-y)=u(-x,-y),
\end{equation}
which is preserved by the flow of \eqref{eq:KS2d}. We associate to the initial datum of the solution of Theorem \ref{thm:main} the initial values $\lambda_0=\lambda(0)$ and $x_0=x_0(0)$ of the parameters.  Modulating the position $x_0$ of the two stationary states produces at the linear level a variation along the direction
$$
\partial_{|x_0|}\left(\frac{1}{\lambda^2_0} U\Big( \frac{x - x_0}{\lambda_0}\Big) +\frac{1}{\lambda^2_0}  U\Big( \frac{x + x_0}{\lambda_0}\Big)\right)=-\frac{1}{\lambda^3_0}\partial_{x_1} U\Big( \frac{x - x_0}{\lambda_0}\Big) +\frac{1}{\lambda^3_0} \partial_{x_1} U\Big( \frac{x + x_0}{\lambda_0}\Big).
$$
We expect the constructed solution described in Theorem \ref{thm:main} enjoys a codimension-one stability under symmetry in the following sense: Let $u_0$ denote the initial datum of the solution $u$ of Theorem \ref{thm:main}. There exists a function $\tilde v_0\mapsto c(\tilde v_0)$ defined for all $\tilde v_0\in \mathcal E$ small enough satisfying the symmetry \eqref{id:symmetry-even} and the orthogonality
$$
\int_{\mathbb R^2} \tilde v_0 \partial_{x_1} U\Big( \frac{x - x_0}{\lambda_0}\Big) dx=\int_{\mathbb R^2} \tilde v_0 \partial_{x_1} U\Big( \frac{x + x_0}{\lambda_0}\Big) dx=0
$$
(note that the first two quantities are equal by the symmetry \eqref{id:symmetry-even}) such that the initial datum
$$
u_0'(x)= u_0(x)+\tilde v_0(x)+c(\tilde v_0)\left(-\frac{1}{\lambda^3_0}\partial_{x_1} U\Big( \frac{x - x_0}{\lambda_0}\Big) +\frac{1}{\lambda^3_0} \partial_{x_1} U\Big( \frac{x + x_0}{\lambda_0}\Big)\right)
$$
produces a solution $u'$ of \eqref{eq:KS2d} with $u'(0,x)=u'_0(x)$ that blows up at some time $T'>0$ with the same behaviour as $u$. Namely, it satisfies all the conclusions of Theorem \ref{thm:main} for some time-dependent parameters $\lambda'$, $x'_0$ and function $\tilde u'$. Moreover, the function $c$ is Lipschitz in the sense that $|c(\tilde v_0) - c(\tilde w_0)|\leq K \| \tilde v_0-\tilde w_0\|_{\mathcal E}$ for some $K>0$. We refer to \cite{CollotZhang2024} for an example proof of the stable manifold of initial data leading to a chosen self-similar blow-up profile in higher-dimensional cases with a finite Lipschitz codimension.

We believe that the set of initial data is a threshold between two blowup behaviors. Namely, keeping the same hypotheses and notations, for $c''$ close to $c(\tilde v_0)$ an initial datum of the form 
$$
u_0''(x)= u_0(x)+\tilde v_0(x)+c''\left(-\frac{1}{\lambda^3_0}\partial_{x_1} U\Big( \frac{x - x_0}{\lambda_0}\Big) +\frac{1}{\lambda^3_0} \partial_{x_1} U\Big( \frac{x + x_0}{\lambda_0}\Big)\right)
$$
will produce a solution $u''$ of \eqref{eq:KS2d} with $u''(0,x)=u''_0(x)$ which will blow-up in finite time in the following way. If $c''>c(\tilde v_0)$ the two stationary states collapse before they collide, corresponding to two single $8\pi$ blow-ups located away from the origin. If $c''<c(\tilde v_0)$ the two stationary states collide before they collapse, which could result in a merging into a single $8\pi$ blowup at the origin. This unveils an interesting new threshold behavior to study with respect to previous ones separating finite time blow-up and global existence, see \cite{MMam11, CMRcmp17} for example.

The codimension stability is also expected without the symmetry assumption \eqref{id:symmetry-even}. One would have to record separately the positions and scales of the two stationary states. In fact, non-symmetrical positions would not produce a new instability thanks to the invariance of \eqref{eq:KS2d} by spatial isometries. However, non-symmetrical scales would enable a new instability: one of the two stationary states could collapse before the other. We thus expect codimension-two stability to hold for general perturbations, which is worth being addressed in a separate work.\\

\paragraph{Classification of singularities.} In \cite{Vicm06}, Vel\'azquez formulated an open question considered as an ambitious problem concerning a complete classification of all possible blowup dynamics for \eqref{eq:KS2d}. He predicted initially that all the solutions that blow up in finite time behave near the singularity as indicated in \eqref{def:singleblowup} with the blowup rate \eqref{def:lambda1bubble}, but later conjectured other blow-up mechanisms to exist \cite{SSVnon13}. Mizoguchi \cite{Mcpam21} proved that \eqref{def:singleblowup}-\eqref{def:lambda1bubble} is the only blow-up mechanism for the class of radial positive solutions. The proof of \cite{Mcpam21} however relies strongly on the parabolic comparison principle, and on the fact that the number of intersections (with respect to the radial variable) between two
solutions are monotone decreasing in time. These techniques, originating in \cite{MMjfa11}, \cite{Mma07} can only be applied to radial and parabolic equations. In the non-radial case, the best available information is the result of \cite{Sbook24} stated in \eqref{def:massclassification} asserting that a multiple Dirac mass of $8\pi$ can form. New non-radial blowup mechanisms are however expected to happen, and this is the case of blowup solutions described in Theorem \ref{thm:main} with a $16 \pi$ mass concentration at a single point with a different blowup rate \eqref{def:lambda2bubbles}. Even in the radial case but for sign-changing solutions, the present authors constructed in \cite{CGMNcpam21,CGMNapde22} countably many other blow-up rates
\begin{equation}\label{eq:solutiondecomposition}
u(x,t) = \frac{1}{\lambda(t)^2}U\Big(\frac{x}{\lambda(t)}\Big) + \tilde u(x,t),
\end{equation}
where
\begin{equation}
\lambda(t)\sim (T-t)^\frac{\ell}{2} |\log(T-t)|^{-\frac{\ell + 1}{2(\ell - 1)}}, \;\; \ell \in \mathbb{N},\; \ell \geq 2,
\end{equation}
and $\lambda(t)^2\| \tilde u(t)\|_{L^\infty}\to 0 $. Thus, the blow-up mechanisms in the general sign-changing and non-radial case seem to be quite wild and new ones could yet be found.\\

\paragraph{Soliton resolution.} We believe that for the Keller-Segel system \eqref{eq:KS2d}, the soliton resolution conjecture would hold true. In the case of finite time singularities, we expect the solution to decompose as the sum of modulated solitons plus a lower order radiation term. In the non-radial case, such results have been obtained along a sequence of times for the energy critical semilinear wave equation \cite{duyckaerts2017soliton}, and for the energy critical harmonic heat flow \cite{struwe1985evolution,qing1997bubbling,topping2004winding} (and see references therein). While these results state that general solutions decompose into solitons in interaction, it is important to supplement them with the construction of examples to show which interaction configurations are actually possible. This is the general motivation behind the present work.\\

\paragraph{Aspects of the new strategy:} Several methods for blowup constructions have been developed in the last three decades, among which we have modulation technique robustly applied to various problem settings, see for example  \cite{MRam05}, \cite{RRihes12}, \cite{HRapde12}, \cite{Sjfa12}, \cite{RSma14}, \cite{RScpam13, RSapde2014}, \cite{MRRcjm15}, \cite{Capde17, Cmams18}, \cite{GINapde18} and references therein; the matching asymptotic expansions in the works of \cite{HVma96}, \cite{Vsiam02}, \cite{FHVslps00}; the inner-outer gluing method  used in \cite{PMWjfa21}, \cite{PMWams19}, \cite{PMWZdcds20}, \cite{PMWats19}, \cite{DPWim19}, an iterative scheme \cite{KSTam09} \cite{KSTduke09}, \cite{KSTim08}, spectral analysis of multiple-scale problems developed in \cite{HRjems19}, \cite{CMRjams19} and  \cite{CGMNcpam21}. These mentioned methods consist of two common parts: 1) construction of an approximate blowup solution and 2) analysis of the system formed by the finitely many modulation parameters coupled with the infinite dimensional remainder. We push forward the strategy developed in our previous works \cite{CGMNcpam21,CGMNapde22} based on a spectral approach of the underlying multiple-scale problem, with the following novelties in this present work: 
\begin{itemize}
\item[-] (i) \emph{Spectral analysis around two solitons in the backward parabola from the singularity.} We relate the scaling law \eqref{def:lambda2bubbles} to a low eigenvalue of the linearized operator $\Ls^z$ around two concentrated stationary sates in self-similar variables (see \eqref{def:LszNot} for the definition). Our approximate blow-up solution is built using a corresponding approximate solution of the eigenproblem. This point of view provides a natural parametrisation of the approximate solution, and is different from that of the formal construction in \cite{SSVnon13}.

For the previously known single soliton blowup solution \eqref{def:singleblowup}-\eqref{def:lambda1bubble} the dynamics is governed by the radial component to leading order. In the radial sector thanks to the so-called partial mass transformation, we could reduce in \cite{CGMNcpam21} the spectral study of the underlying linearized nonlocal operator to a local Schr\"odinger type operator. This latter operator (a singularly perturbed operator involving 2 scales) can be analyzed in details through a rigorous matched asymptotic expansion procedure and the classical Sturm-Liouville theory. This resulted in the exact spectral properties showed in \cite{CGMNapde22}, which were formally obtained in \cite{DphysD12,DELSaa13}.

The present work initiates the study of the eigenproblem around two solitons, which is much harder since genuinely non-local and non-radial. We only solve this eigenproblem approximately, since anyway it will be used to construct an approximate solution. The main outcome is Proposition \ref{prop:eigen} where we use a matched asymptotic expansion to compute the two approximate eigenfunctions and eigenvalues responsible for the blowup law \eqref{def:lambda2bubbles}. Solving exactly the eigenproblem goes beyond the scope of this article; it is an interesting open problem. Other eigenvalues might allow to construct new blowup rates for sign-changing solutions.\\

\item[-] (ii) \emph{Matched scalar product and coercivity.} For the construction of the single soliton blowup solution \eqref{def:singleblowup}-\eqref{def:lambda1bubble} in \cite{CGMNcpam21}, in the radial sector and in the partial mass framework the linearized operator is a self-adjoint Schr\"odinger-type operator. Thus the knowledge of exact eigenfunctions implies a spectral gap estimate in a weighted $L^2$ space via Sturm-Liouville theory. This naturally defines a Lyapunov functional to control the subleading part of the remainder $\tilde u$. This is not the case for the multiple-collapse blowup treated in this work due to the non-radial, non-local structure of the linearized operator $\Ls^z$. There is, to our knowledge, no Hilbert space for which it is self-adjoint.

To overcome this issue, we develop a new concept of matched scalar product to produce a globally defined Lyapunov function. It is the Hilbert space counterpart to matched asymptotic expansions. Namely, the linearized operator $\Ls^z$ involves a singular limit as the scale of the stationary states collapse. Two limiting operators appear at the scale of the stationary states $\Ls_0$, and at the parabolic self-similar scale $\mathcal H_\beta$. $\Ls_0$ is self-adjoint for a nonlocal scalar product derived from the Hessian of the free entropy functional at the ground state minimizer $U$, and $\mathcal H_\beta$ is self-adjoint in a suitably weighted $L^2$ space as it is a Focker-Planck type operator. We are able to match these two scalar products into a single one \eqref{def:scalar12_intro} for which $\Ls^z$ is to leading order symmetric, and satisfies a dissipative coercivity estimate provided certain orthogonality conditions are satisfied, see Proposition \ref{Prop:coercive-Lz-global}. \\

\item[-] (iii) \emph{Refinement of the solution for nonlinear and modulational analysis}. In order to construct a solution of the full nonlinear problem, The two new ideas for the linearized analysis (i) and (ii) above need to be supplemented by nonlinear corrections to the approximate solution, the determination of  suitable nonlinear evolution equations for its modulation parameters, along with a suitable decomposition and design of norms to show decay estimates for the subleading part of the remainder $\tilde u$. There are two key technical novelties in the present context.

\emph{Correctors for the multisoliton}. First, we need to add suitable corrector terms to the two stationary states. This idea was already used for global-in-time multi-solitons and finite-time blow-up with a single soliton, so the novelty is to implement it here for a non-radial multi-solitons blowup. The purpose is twofold. First, there is a strong long-range interaction between the two via the Poisson field they generate $\nabla \Phi_U(x)=- 4x (1+|x|)^{-2}$. The error generated by their nonlinear interaction is then reduced by introducing suitable correctors. Second, we need to be able to relate the parameter derivatives (modulation directions) of the corrected multisoliton, to the eigenfunctions found in solving the aforementioned spectral problem. This will enable us to compute how the radiation remainder will affect the scale and positions of the stationary states. The main outcome is Proposition \ref{prop:E0} where we compute a corrected multisoliton solution to \eqref{id:stationary-equation-parabolic-variables-normalized} and extract the leading order term of the generated error term to the first eigenfunction of $\Ls^z$.

\emph{Different decompositions for modulation and decay estimates}. The two approximate eigenfunctions and eigenvalues permit to construct the approximate blowup solution (part 1 in the general strategy) and to derive the blowup law through a simple orthogonal projection onto the eigenmodes (see Lemma \ref{lemm:mod}), but the coercivity estimate for $\Ls^z$ is obtained with a different orthogonality condition to the kernel of the linearized operator around a single soliton, see \eqref{eq:orthog}. We thus start by decomposing our solution requiring the former orthogonality conditions for the remainder; this cancels its linear contribution to the modulation dynamics, but when performing an energy estimate based on the matched scalar product to bound the remainder this creates uncontrollable crossed terms (see Proposition \ref{prop:E} and the decomposition \eqref{def:EztauMod}). We then consider a further decomposition \eqref{id:coercivity:decomposition-phi0} and prove that the new component in this decomposition enjoys the same coercivity estimate under the matched scalar product \eqref{def:scalar12_intro}, see Proposition \ref{prop:decomp_vephat}. We emphasize once again on the importance of the adapted scalar product \eqref{def:scalar12_intro} in both the coercivity estimate for $\Ls^z$ and the norm-equivalence  \eqref{bd:coercivity-control-adapted-norm-hatu-2} that are crucial in the linear analysis. 
\end{itemize}

\noindent \paragraph{Extension to other problems:}  We expect the framework developed in this work can be carried out to other different settings such as semilinear parabolic or dispersive equations. In particular, the four novelties described above would have natural adaptations, providing a general framework to guess how the interactions between solitons and their surrounding radiation remainder would generate a precise blow-up dynamics, to construct an approximate solution, and to construct a full solution by the study of its nonlinear stability.

\subsection{Strategy of the proof}
The construction of the single point multiple-collapse blowup solution of Theorem \ref{thm:main} proceeds in several steps in the spirit of the constructive framework developed by the authors in \cite{CGMNcpam21,CGMNapde22}, supplemented by novelties described above in Subsection \ref{subsec:comment-novelties}.\\

\noindent - \underline{Step 1} (Reformulation of the problem in the self-similar variables): We introduce the self-similar renormalized variables associated to the scaling symmetry \eqref{def:scalingIntro},
\begin{equation}
u(x,t) = \frac{1}{T-t}w(z, \tau),  \quad   z = \frac{x}{\sqrt{T-t}}, \quad \tau = -\ln(T-t).
\end{equation}
The new unknown $w$ solves
\begin{equation}\label{eq:wztauIntro}
\pa_\tau w = \nabla \cdot (\nabla w - w \nabla \Phi_w) - \frac 12 \Lambda w.
\end{equation}
We aim at constructing for \eqref{eq:wztauIntro} a global in-time solution that blows up in infinite of the form 
\begin{equation}
w(z, \tau) = \frac{1}{\nu^2}U(\frac{z - a}{\nu}) + \frac{1}{\nu^2}U(\frac{z+a}{\nu}) + \tilde{w}(z,\tau),
\end{equation}
where $\nu \in \Cc^1(\Rb_+, \Rb_+)$ and $a \in \Cc^1(\Rb_+, \Rb^2)$ are the main unknown parameters to be determined.  \\

\noindent -  \underline{Step 2} (Approximate eigenfunctions): The linearized equation for $\tilde w$ is of the form 
\begin{equation} \label{eq:wtilde}
\pa_\tau \tilde w = \Ls^z \tilde{w} + NL(\tilde w) + G,
\end{equation} 
where $\Ls^z$ is the linearized operator around the sum of the two rescaled single solitons defined in \eqref{def:LszNot}, $NL$ is the quadratic nonlinear term expected to be small and $G$ is the error generated by the two stationary states. Extracting leading order dynamics of $\tilde w$ is essential in determining the blowup law; its main part should naturally be located on eigenmodes corresponding to lowest eigenvalues, leading to the study of the associated eigenproblem. Unlike the single blowup in \cite{CGMNcpam21,CGMNapde22} where this eigenproblem can be recast as a singularly perturbed radial elliptic equation, here it is a singularly perturbed non-radial system (see \eqref{eigen:id:Ri}, \eqref{eigenfunctions-interior:id:eigenfunction-system} and \eqref{eigenfunctions-exterior:id:eigenfunction-system}). We construct using matched asymptotics two approximate eigenfunctions $\phi_0$ and $\phi_1$ that solve
\begin{equation}
\Ls^z \phi_i = 2\beta\Big(1 - i + \frac{\gamma_i}{\ln \nu} \Big)\phi_i + R_i,
\end{equation}
where $R_i$ is a lower order term and $\gamma_i \in \Rb$ with $\gamma_1 > 0$, see Proposition \ref{prop:eigen}. \\

\noindent -  \underline{Step 3} (Coercivity of the linearized operator for a matched scalar product): Since performing a rigorous spectral analysis for $\Ls^z$ is too involved (the operator being nonlocal, nonradial, non self-adjoint and depending on a scale parameter in a singular limit $\nu \to 0$), we do not use an approach by spectral decomposition as in \cite{CGMNcpam21}. Instead, we rely on an energy estimate method where deriving a coercive dissipation identity for $\Ls^z$ is essential. We introduce a \emph{matched scalar product}
\begin{equation}\label{def:scalar12_intro}
\langle u,v\rangle_* = \int u v \omega_\nu dz - c_*\nu^2 \int \chi^* u \, \Phi_{\chi^*v} dz, \qquad c_*=2 |a|^4 e^{-|a|^2/4},
\end{equation}
where $\omega_\nu$ is defined in \eqref{def:omega12} and the truncation of the Poison field is to eliminate its effect from the far away zone. The bilinear form \eqref{def:scalar12_intro} matches two bilinear forms: 1) in the outer zone $\cap_{\pm}\{|z\pm a| \gtrsim 1\}$,  the nonlocal term $\nabla \Phi_\vep$ (see \eqref{def:LszNot}) is seen as a small perturbation and $\Ls^z$ without this nonlocal term is self-adjoint in $L^2_{\omega_\nu}$; 2) in the inner zones $\cup_{\pm}\{|z \pm a| \lesssim \nu\}$ the drift term $z\cdot \nabla \vep$ and the term from the far away bubble $\nabla \cdot (\varepsilon \nabla \Phi_{U_\nu (z\pm a)}+U_{\nu}(z\pm a)\nabla \Phi_{\vep})$ are seen as small perturbations, and $\Ls^z$ without these two terms is self-adjoint under the scalar product (see Proposition \ref{pr:scalar-product-interior1})
\begin{equation*}
\langle f, g \rangle_\flat = \int_{\Rb^2} f \Ms^z g dz, \quad \Ms^z \vep =  \vep  \gamma_\nu - \nu^2\Phi_\vep,
\end{equation*}
where $\gamma_\nu$ is defined in \eqref{def:gamma12}. Matching these two bilinear forms yields to \eqref{def:scalar12_intro}. It is indeed a matched scalar product on a set with finite co-dimension defined by suitable local orthogonality conditions related to the sharp logarithmic Hardy-Littlewood-Sobolev inequality, Propositions \ref{pr:scalar-product-interior},
\begin{equation}\label{est:coerIntro2}
\langle  \vep, \vep\rangle_* \sim \|\vep\|^2_{L^2_{\omega_\nu}}.
\end{equation}
We show the coercivity of $\Ls^z$ up to local orthogonality conditions in Proposition \ref{Prop:coercive-Lz-global},
\begin{equation}\label{est:coerIntro}
\langle \tilde \Ls^z \vep, \vep\rangle_* \leq -\delta \Big(\| \nabla \vep\|^2_{L^2_{\omega_\nu}} +\sum_\pm \big\|  \frac{\vep}{(\nu + |z \pm a|)} \big\|_{L^2_{\omega_\nu}}^2 + \| \langle z \rangle \vep\|^2_{L^2_{\omega_\nu}} \Big).
\end{equation}

\noindent - \underline{Step 4} (Improved stationary solution): The sum of the two stationary states is no longer an exact stationary solution and generates nonlinear interaction terms, and we need to relate its parameter derivatives to the approximate eigenfunctions of Step 2. We improve it in the self-similar variables setting by adding a small corrector function $\Xi$, leading to the corrected multi-soliton
$$\mathfrak{U} =  \frac{1}{\nu^2}U(\frac{z - a}{\nu}) + \frac{1}{\nu^2}U(\frac{z+a}{\nu})+ \Xi,$$
as a better approximate solution to the stationary problem of \eqref{eq:wztauIntro} (i.e., $\pa_\tau w = 0$). The crucial task in this step is to extract the leading order term of the generated error and to link it to the first eigenfunction $\phi_0$, from which the remaining error term is admissibly small (see Proposition \ref{prop:E0}). \\

\noindent - \underline{Step 5} (Approximate blowup solution and the law of blowup): The full approximate blowup solution to \eqref{eq:wztauIntro} consists of the improved stationary state and the approximate eigenfunctions, 
$$w = W + \vep, \quad W = \mathfrak{U} + \alpha(\phi_1 - \phi_0).$$
The error generated by $W$ is decomposed into the approximate eigenmodes (see \eqref{def:EztauMod}) and a small remainder term (see Proposition \ref{prop:E}). Using the local orthogonality condition for the coercivity of $\Ls^z \vep$ and projections onto these directions yields the dynamical systems for the law of $\nu, a$ and $\alpha$ (see Lemma \ref{lemm:mod}): 
\begin{align*}
&\Big|-16 \nu^2\Big( \frac{\nu_\tau}{\nu} - \frac{\lambda_0}{2} \Big) + \alpha_\tau -  \alpha \lambda_0 \Big| \lesssim \|q\|_{H^1_\inn} + \|\vep\|_{L^2_{\omega_\nu}} + \frac{\nu^2}{|\ln \nu|}, \\
&\Big|-\alpha_\tau + \alpha \lambda_1 \Big| \lesssim \frac{1}{|\ln \nu|}\Big(\|q\|_{H^1_\inn} + \|\vep\|_{L^2_{\omega_\nu}} + \frac{\nu^2}{|\ln \nu|}\Big), \\
& \Big|a_\tau - \frac{a}{2} + \frac{2 a}{|a|^2} \Big| \lesssim \frac{1}{\nu}\Big(\|q\|_{H^1_\inn} + \|\vep\|_{L^2_{\omega_\nu}} + \frac{\nu^2}{|\ln \nu|}\Big). 
\end{align*}
where $\|q\|_{H^1_\inn}$ and $\|\vep\|_{L^2_{\omega_\nu}}$ measure the smallness of the remainder in some weighted $H^1$ and $L^2$ spaces. They are of the typical size $\nu^2|\ln \nu|^{-1}$, for which we can solve this system asymptotically to obtain the law
$$\alpha \sim \nu^2, \quad a \sim a_\infty, \quad \frac{\nu_\tau}{\nu} \sim \frac{1}{2} \frac{\gamma_1}{\ln \nu}, \quad \nu(\tau) \sim e^{-\sqrt{\gamma_1 \tau}} \quad \textup{as} \;\; \tau \to \infty. $$

\noindent - \underline{Step 6} (Energy estimates): Because of the coercivity \eqref{est:coerIntro}, the main quantity to control the subleading part of the radiation $\vep$ is the matched scalar product \eqref{def:scalar12_intro}, which corresponds to a decay estimate in $L^2_{\omega_\nu}$ thanks to the equivalence \eqref{est:coerIntro2}. Lemma \ref{lemm:L2energy} gives a detailed derivation of the monotonicity of $\langle \vep,\vep \rangle_* $. We perform this energy estimate after a second decomposition,
$$\vep  = \hat\vep + a_0 \phi_0, \quad \langle \hat \vep, \phi_0 \rangle_\ast = 0. $$
We prove in Proposition \ref{prop:decomp_vephat} that there is a norm-equivalence under the adapted scalar product \eqref{def:scalar12_intro}, 
$$\|\vep\|^2_{L^2_{\omega_\nu}} \sim \langle \vep, \vep\rangle_\ast \sim \langle \hat \vep, \hat \vep\rangle_\ast. $$
This second decomposition is introduced so as to produce a cancellation with uncontrollable crossed terms between $\vep$ and the approximate solution (more precisely, a cancellation with the $\textup{Mod}_0$ term in the decomposition of the generated error \eqref{def:EztauMod}).

The sole control of the solution at the $L^2_{\omega_\nu}$ level is not enough to estimate nonlinear terms, so we control $\vep$ in additional higher regularity norms. We perform a $H^1$-energy estimate in the soliton scale $\cup_{\pm}\{|z\pm a| \lesssim \nu\}$, a $H^2$-energy estimate in the parabolic scale $\cap_{\pm}\{|z \pm a| \gtrsim 1\}$ and a weighted $L^\infty$ estimate in the outer zone $\{|z| \gg 1\}$ which allows to close the nonlinear estimates. The $H^1$ energy estimate (see \eqref{def:H1innnorm} for a definition of the norm) is delicate and requires a careful treatment of the scaling term thanks to the localization effect near the soliton-scale (by considering $|z \pm a| \ll 1$) and the dissipative structure of the problem, see Lemma \ref{lemm:H1energy}. The $H^2$ estimate in $\cap_{\pm}\{|z \pm a|\gtrsim 1\}$ is a standard application of parabolic regularization (see Lemma \ref{lemm:H2energy}) and the weighted $L^\infty$ estimate in the outer zone is a standard application of parabolic comparison principle. 

\subsection{Organization of the article}

The notation is introduced in Section \ref{sec:notation}. We start by constructing approximate eigenfunctions in Section \ref{sec:eigenproblem}. The eigenproblem is solved separately in the inner zones in Subsection \ref{sec:inner-eigenfunction} and in the outer zone in Subsection \ref{sec:matched-asymptotic}; the matching then follows in Subsection \ref{sec:matched-asymptotic}. Then, in section \ref{sec:coercivity}, the matched scalar product is studied in Subsection \ref{subsec:matched-scalar-product} and the coercivity of the linearized operator is shown in Subsection \ref{subsec:coercivity} and extended to a second decomposition in Subsection \ref{subsec:coercivity-second-decomposition}. The corrected multisoliton is constructed in Section \ref{sec:corrected-multisoliton}, the approximate nonlinear system it solves is introduced in Subsection \ref{subsec:corrected-multisoliton} and correctors are devised successively in Subsections \ref{subsec:first-ansatz-multisoliton}, \ref{subsec:second-corrections-multisolitons} and \ref{subsec:final-corrections-multisolitons}. Section \ref{sec:bootstrap} describes the bootstrap regime for the full solution; the approximate solution is given in Subsection \ref{subsec:full-approx-sol}, the set of a priori estimates in Subsection \ref{subsec:bootstrap-description} and the modulation equations are derived in Subsection \ref{subsec:modulation-equations}. The analysis in the bootstrap regime is done in Section \ref{sec:energy-estimates} where several energy estimates are performed (the energy estimate based on the matched scalar in Subsection \ref{subsec:energy-matched-scalar}, the inner $H^1$ energy estimate in Subsection \ref{subsec:H1-energy-estimate} and the outer $H^2$ estimate in Subsection \ref{subsec:H2-energy-estimate}) and pointwise bounds are obtained in the outer zone in Subsection \ref{subsec:control-mid-outer}. The main result is then proved as a consequence of a global solution in the bootstrap regime in Subsection \ref{subsec:proof-main-theorem}. The appendix describes the resolvent of the linearization around the stationary state in Section \ref{sec:linear-system}, establishes some bounds for the Poisson field in Section \ref{ap:3} and some estimates in weighted $L^2$ spaces in Section \ref{sec:functional-analysis}.

\subsection{Acknowledgments}

Funded by the European Union. Views and opinions expressed are however those of the author(s) only and do not necessarily reflect those of the European Union or the European Research Council Executive Agency. Neither the European Union nor the granting authority can be held responsible for them.

This work is supported by ERC grant (FloWAS, No. 101117820, DOI 10.3030/101117820). C. Collot is supported by the CY Initiative of Excellence Grant "Investissements d'Avenir" ANR-16-IDEX-0008 via Labex MME-DII,and  the ANR grant "Chaire Professeur Junior" ANR-22-CPJ2-0018-01. T. Ghoul and N. Masmoudi are supported by Tamkeen under the NYU Abu Dhabi Research Institute grant CG002.  V.T. Nguyen is supported by the National Science and Technology Council of Taiwan.

\section{Notation} \label{sec:notation}

\noindent \textbf{Analysis on $\mathbb R^2$}. We use the bracket notation
$$
\langle x \rangle = \sqrt{1 + |x|^2}.
$$
We introduce a smooth cut-off function $\chi$, satisfying $\chi(x) = 1$ if $|x| \leq 1$ and $\chi(x) = 0$ if $|x| \geq 2$. For any $K > 0$ and $a \in \Rb^2$, we denote
\begin{equation}\label{def:chiK}
\chi_{_{a, K}}(x) = \chi\Big( \frac{x - a}{K} \Big), \quad \chi_{_K}(x) = \chi\Big( \frac{x}{K} \Big).
\end{equation}
The reader should not be confused with the notation $f_{\nu, a}$ below, as we always use $\chi$ for the cut-off function throughout this paper. We let $\zeta^*=10$, introduce a fixed constant $\zeta_*\ll 1$ and let
\begin{equation}\label{def:chi-star}
\quad  \chi^*(x) =\chi_{\zeta^*}(x), \quad \chi_{_*}(x) = \chi_{\frac{\zeta_*}{\nu}}(x).
\end{equation}

\noindent \textbf{Symmetries and stationary states}. For $\nu > 0$ and $a \in \Rb^2$ and for any function $f$, we write
$$
f_{\nu,a}(x) = \frac{1}{\nu^2} f \Big( \frac{x - a}{\nu}\Big), \quad f_\nu(x) =  \frac{1}{\nu^2} f \Big( \frac{x}{\nu}\Big).
$$
The associated scaling generator is
\begin{equation}\label{def:LambdaIntro}
\Lambda w (x) = \nabla \cdot(x w) = 2w(x) + x \cdot \nabla w(x).
\end{equation}
We introduce the following notation for the two stationary states,
\begin{equation*}
U_\nu(z) = \frac{8 \nu^2}{(\nu^2 + |z|^2)^2}, \quad U_{1, \nu} = U_\nu(z - a), \quad U_{2, \nu}(z) = U_\nu(z + a).
\end{equation*}
The Poison field they generate is
\begin{equation*}
\nabla \Phi_{U_\nu}(z) = -\frac{4 z}{\nu^2  + |z|^2}, \quad \nabla \Phi_{U_{1, \nu}}(z) = \nabla \Phi_{U_\nu}(z - a), \quad \nabla \Phi_{U_{2, \nu}}(z) = \nabla \Phi_{U_\nu}(z + a).
\end{equation*}
To shorten the writing, we introduce
\begin{equation} \label{def:fnuU12}
U_{12, \nu}(z) = U_{1, \nu}(z) U_{2, \nu}(z), \quad U_{1+2, \nu}(z) = U_{1, \nu}(z) + U_{2, \nu}(z).
\end{equation}
The asymptotic position of the stationary states is
$$
a_\infty=(2,0).
$$

\noindent \textbf{Self-similar variables}. Given $T>0$ the parabolic self-similar variables are
\begin{equation}\label{def:wztau_vars-1}
z = \frac{x}{\sqrt{T-t}} \qquad \mbox{and} \qquad \tau = -\log(T-t).
\end{equation}
The corresponding renormalization of the unknown is
\begin{equation}\label{def:wztau_vars-2}
u(x,t) = \frac{1}{T-t}w(z, \tau) \qquad \mbox{and} \qquad \nabla_x \Phi_u(x,t) = \frac{1}{\sqrt{T-t}}\nabla_z \Phi_w(z, \tau).
\end{equation}
Then $u$ solves the Keller-Segel system \eqref{eq:KS2d} if and only if $w$ solves the self-similar equation
\begin{equation}\label{eq:wztau}
w_\tau = \nabla \cdot (\nabla w - w \nabla \Phi_w) - \beta \Lambda w, \qquad  \beta = \frac{1}{2},
\end{equation}

\noindent \textbf{Linearized operators.} Linearizing $w$ around $U_{1,\nu} + U_{2,\nu}$ leads to the linearized operator in parabolic variables
\begin{equation}\label{def:LszNot}
\Ls^z \vep = \Delta \vep - \nabla \vep \cdot \nabla \Phi_{U_{1+2, \nu}} - \nabla U_{1+2, \nu} \cdot \nabla \Phi_{\vep } + 2 U_{1+2, \nu} \vep - \beta \Lambda \vep, \quad \beta = \frac 12,
\end{equation}
The linearized operator around one bubble $U(y)$ is denoted by $\Ls_0$, 
\begin{equation}\label{def:Ls0Ms0}
\Ls_0 u = \nabla \cdot\big( \nabla u - u\nabla \Phi_U - U\nabla \Phi_u\big) = \nabla \cdot \big( U \nabla \Ms_0 u\big),
\end{equation}
where  
\begin{equation}\label{def:Ms0}
\Ms_0 u = \frac{u}{U} - \Phi_u. 
\end{equation}
The linearized operator with truncated Poisson field is
\begin{equation}\label{def:LszNottilde}
\tilde \Ls^z \vep = \Delta \vep - \nabla \vep \cdot \nabla \Phi_{U_{1+2, \nu}} - \nabla U_{1+2, \nu} \cdot \nabla \Phi_{\chi^*\vep } + 2 U_{1+2, \nu} \vep - \beta \Lambda \vep.
\end{equation}

\noindent \textbf{Inequalities}. We write $a\lesssim b$ if there exists $C>0$ such that $a\leq Cb$ uniformly in the parameters' range at stake, and $a\sim b$ if simultaneously $a\lesssim b$ and $b\lesssim a$.\\

\noindent \textbf{Weights}. We denote by $L^2_g(\Rb^2)$ the weighted $L^2$ space with general weight function $g$, equipped with the scalar product 
$$\langle u, v \rangle_g = \int_{\Rb^2} u v \;g dz, \quad \|u\|_g = \sqrt{\langle u, u \rangle_g}.$$ 
When $g \equiv 1$, we simply write 
$$
 \langle u, v \rangle \equiv \langle u, v \rangle_1 \quad \quad   \|u\|\equiv \|u\|_1  .$$
We use the weight functions 
\begin{align} \label{def:gamma12}
& \gamma_\nu(z) = \frac{\nu^2}{U_{1+2, \nu}(z)},\\
 \label{def:omega12}
& \omega_{\nu}(z) = \frac{\nu^4}{U_{12,\nu}(z)}e^{-\beta z^2/2}, \qquad \nabla \ln \omega_{\nu} = -\nabla \Phi_{U_{1+2, \nu}} - \beta z,\\
& \label{def:rho} \rho_{\nu}(z) = \nu^2 \frac{U_{1+2, \nu}(z)}{U_{12,\nu}(z)} e^{-\frac{\beta z^2}{2}} = \nu^{-2} U_{1+2, \nu} \omega_{\nu}(z), \quad \rho(z) = e^{-\frac{\beta |z|^2}{2}}.
\end{align}
The function $\gamma_\nu$ is the weight function for the inner zone (near the soliton scale) $|z \pm a| \ll 1$ where the linear part $\Lambda$ is considered as a small perturbation, the function $\omega_\nu$ is the exact weight function for the outer zone $|z \pm a| \gtrsim 1$ where the Poisson field $\nabla\Phi_{\vep}$ is negligible, and $\rho_\nu$ is a global weight function.

\section{The two approximate eigenfunctions} \label{sec:eigenproblem}

In this section we find suitable approximate eigenfunctions of the linearized operator in parabolic self-similar variables around two concentrated stationary states $\tilde \Ls^z$ defined by \eqref{def:LszNottilde}. The reason why we consider $\tilde \Ls^z$ and not the full operator $\Ls^z$ is because the eigenfunctions will grow at spatial infinity what will prevent their Poisson field to be well-defined, which is why we localize the source term in the Poisson field in $\tilde \Ls^z$. The error terms created by this localization will be estimated later on. We will find two couples $(\lambda_0,\phi_0)$ and $(\lambda_1,\phi_1)$ such that for $i=0,1$, the function $R_i$ defined as the error in the eigenfunction equation
\begin{equation} \label{eigen:id:Ri}
R_i=\tilde \Ls^z \phi_i -\lambda_i \phi_i 
\end{equation}
is small. For $\beta>0$, the equilibrium position for the stationary states is denoted by $a_\infty=\left(\sqrt{\frac 2\beta},0\right)$. We introduce the parameter $\alpha=\frac{2}{|a|^2}$, so that $|\alpha-\beta|\lesssim |a-a_\infty|$ for $a$ close to $a_{\infty}$.

\begin{proposition}[Two approximate eigenfunctions] \label{prop:eigen} There exists $\tilde \epsilon>0$  and $\zeta_*>0$ small enough such that the following holds true for all $1/4\leq \beta \leq 1$, $0 < \nu \ll 1$ small enough and $a=(|a|,0)\in \mathbb R^2$ with $|a-a_\infty|\lesssim \nu$. There exist two smooth functions $\phi_0[\nu,a]$ and $\phi_1[\nu,a]$ and two approximate eigenvalues
\begin{align}
\label{eigenfunction:id:tildelambda0}&\lambda_0=2\beta \Big(1 +\tilde \lambda_0 \Big), \qquad \tilde \lambda_0(\nu)=\frac{\gamma_0}{\ln \nu}+\Oc(|\ln \nu|^{-2}), \quad \gamma_0\in \mathbb R, \\
\label{eigenfunction:id:tildelambda1}& \lambda_1= 2\beta \tilde \lambda_1, \qquad \tilde \lambda_1(\nu)=\frac{\gamma_1}{\ln \nu}+\Oc(|\ln \nu|^{-2}) , \quad  \gamma_1 = \frac{3}{16} - 16A,
\end{align}
where $A < 0$ is given in Proposition \ref{pr:phi1out}, and where the exact expression of $\tilde \lambda_i$ is given by \eqref{id:value-tilde-lambdai-tech}, that satisfy the following.

\noindent (i) \textup{(Approximate eigenfunctions)} We have the decomposition
\begin{align}
\label{def:phii-2} \phi_{i}(z) &= -\frac{1}{16\nu^{4}}\sum_\pm \phi_{i,\pm}^\inn\Big(\frac{z\mp a}{\nu}\Big)  \chi_{\pm a,\nu^{\tilde \epsilon}}(z) + \phi_{i}^\out(z) \big(1 - \sum_\pm \chi_{\pm a, \nu^{\tilde \epsilon}}\big),  \\
& = \sum_\pm \left(-\frac{1}{16 \nu^4} \Lambda U \Big( \frac{z \mp a}{\nu} \Big)\pm \frac{L_i}{\nu^3}\partial_{x_1}U\Big( \frac{z \mp a}{\nu} \Big)\right)  \chi_{\pm a, \zeta_*}(z)  + \tilde \phi_i(z),\label{def:phii}
\end{align} 
where $\phi_{i,\pm}^\inn$, $\phi^\out_{i}$ are given explicitly in Proposition \ref{prop:phi1inn} and Proposition \ref{pr:phi1out} with the constants being given by \eqref{id:value-tilde-lambdai-tech}, \eqref{id:value-parameters-tech} and \eqref{eigenfunctions:id:estimatetildeLtildeM} and where we introduced $L_i=-\frac{\alpha^{1/2}L_i^\inn}{16}$ that satisfies $|L_0|\lesssim 1$ and $|L_1|\lesssim |\ln \nu|$. In particular, we have for $k \in \mathbb{N}$,
\begin{align}\label{est:pointwise_phii}
& |\nabla^k \phi_i(z)|  + |\nabla^k \nu \partial_\nu\phi_i(z)|  \lesssim  \sum \frac{\langle z \rangle^{C_k}}{(\nu + |z \pm a|)^{4+k}}, \qquad \mbox{for }k\in \mathbb N,\\
& \label{est:pointwise_Phi_phii}
|\nabla^k \Phi_{\chi^* \phi_i}(z)| \lesssim  \sum \frac{1+i|\ln \nu|}{ (\nu + |z \pm a|)^{2+k} }\langle z \rangle^2 , \qquad  \mbox{for }k\geq 1.
\end{align}
\begin{align}\label{est:pointwise_phiitil}
 |\nabla^k \tilde \phi_i(z)|   \lesssim  \sum \frac{\langle z \rangle^{C_k}}{(\nu + |z \pm a|)^{2+k}}, \quad  |\nabla^k \nu \partial_\nu\tilde \phi_i(z)|\lesssim  \sum \frac{\langle z \rangle^{C_k}}{(\nu + |z \pm a|)^{2+k}}\left(\frac{\nu^{\delta_0}}{(\nu+|z\mp a|)^{\delta_0}}+\frac{1}{|\ln \nu|^2}\right)
\end{align}
\begin{align}\label{est:pointwise_Phiphiitil}
|\nabla^k \Phi_{\chi^* \tilde \phi_i}(z)| \lesssim  \sum \frac{1+i|\ln \nu|}{ (\nu + |z \pm a|)^{k} } , \qquad  \mbox{for }k\geq 1.
\end{align}
\begin{align}\label{est:pointwise_phiitil_da}
|\nabla^k \partial_a \phi_i| \lesssim \sum \frac{\langle z \rangle^{C_k}}{(\nu + |z \pm a|)^{5+k}}, \quad |\nabla^k \partial_a \tilde \phi_i| \lesssim \sum \frac{\langle z \rangle^{C_k}}{(\nu + |z \pm a|)^{3+k}}.
\end{align}

\noindent (iii) \textup{(Improved estimates near $\pm a$)} For $k\in \mathbb{N}$, we have 
\begin{align}\label{est:pointwise_phi1m0}
|\nabla ^k (\phi_1 - \phi_0)| \lesssim \sum_{\pm} \frac{|\ln \nu|\langle z \rangle^{C_k}}{(\nu + |z \pm a|)^{3 + k}}, \quad |\nabla \Phi_{\phi_1 - \phi_0}(z)| \lesssim  \sum_{\pm} \frac{|\ln \nu | \langle z \rangle^C}{(\nu + |z \pm a|)}. 
\end{align}
and
\begin{align}\label{est:pointwise_phi1m0_dnu}
|\nabla^k \nu \partial_\nu (\phi_1 - \phi_0)| \lesssim \sum \frac{|\ln \nu|}{ (\nu + |z \pm a|)^{3+ k}} \langle z \rangle^{C_k},
\end{align}


\noindent (iv) \textup{(Pointwise estimates of $R_i$)}. The errors in the approximate eigenfunction equation
\begin{equation} \label{eq:eigenfunctions_phi10}
R_0=\tilde \Ls^z \phi_0 -\lambda_0 \phi_0 \qquad \mbox{and} \qquad R_1= \tilde \Ls^z \phi_1 -\lambda_1 \phi_1
\end{equation}
satisfy
\begin{equation}\label{est:pointwise_Ri}
\sum_{\pm} | \langle R_{i} , \mathbbm{1}_{|z \pm a| < 1} \rangle | \lesssim \frac{1}{|\ln \nu|} , \quad |\nabla^k R_i (z)| \lesssim \frac{1}{|\ln \nu|^2} \sum_{\pm} \frac{1  }{(\nu + |z \pm a|)^{2+k}} \langle z \rangle^{C_k}.
\end{equation}
\end{proposition}

\begin{remark}

Our approximate eigenvalues are precise up to the order $|\ln \nu|^{-1}$. The exact identity \eqref{id:value-tilde-lambdai-tech} we take for $\tilde \lambda_i$ does not give the correct next order $|\ln \nu|^{-2}$ term. This is because this identity is useful in our construction of the approximate eigenfunctions, but that this approximate eigenfunctions are only correct up to an order $|\ln \nu|^{-2}$.

Note that in the approximate eigenfunction equation \eqref{eq:eigenfunctions_phi10} the estimate \eqref{est:pointwise_Ri} quantify how much $R_i$ is negligible compared with $\lambda_i\phi_i$. We have roughly $|\phi_i|\approx (\nu+|z-a|)^{-4} (\nu+|z+a|)^{-4}\langle z \rangle^C $ by Proposition \ref{prop:phi1inn} and Proposition \ref{pr:phi1out}, so that \eqref{est:pointwise_Ri} shows that $R_i$ gains a $|\log \nu|^{i-2}$ factor, as well as a $(\nu+|z\mp a|)^2$ factor near $\pm a$.

\end{remark}


\medskip

\noindent The proof of Proposition \ref{prop:eigen} consists of several parts represented in different subsections: 
\begin{itemize}
    \item In subsection \ref{sec:inner-eigenfunction},  we construct an inner approximate eigenfunction located on $|z\pm a| \ll 1$, where the linearized operator behaves like the one generated by linearization around one bubble. The Poison field generated by other bubble can be approximated by a truncated Taylor expansion, from which we can compute its effect up to any accuracy order. 
\item In subsection \ref{sec:outer-eigenfunction}, we construct an outer approximate eigenfunction located on $|z \pm a| \sim 1$, where we eliminate the effect of the nonlocal term, for which the linearized operator behaves like the one of Fokker-Planck type operator with a singular potential.
\item In subsection \ref{sec:matched-asymptotic}, we perform a matched asymptotic expansion on the common region that gives approximate eigenvalues. We also derive various estimate concerning the approximate eigenfunctions and the generated error. 
\end{itemize}

\subsection{Inner expansion of the eigenfunctions} \label{sec:inner-eigenfunction}


In this subsection we obtain approximate eigenfunctions $\phi^{\inn,z}_i$ near $a$ and $-a$ for $i=0,1$. We replace in \eqref{eigen:id:Ri} $\Phi_{\chi^* \phi^{\inn,z}_i}$ by a solution $V^{\inn,z}_i$ of $-\Delta V^{\inn,z}_i=\phi_i^{\inn,z}$. This yields the following inner approximate eigenfunction system
\begin{equation} \label{eigenfunctions-interior:id:eigenfunction-system}
\left\{\begin{array}{l l l}
& R_i^{\inn,z} = \Ls^z(\phi_i^{\inn,z},V_i^{\inn,z} )-\lambda_i \phi_i^{\inn,z}, \\
&-\Delta V_i^{\inn,z} = \phi_i^{\inn,z},
\end{array}
\right.
\end{equation}
that will be studied in this subsection, where
$$
\Ls^z(\phi,V)=\Delta \phi - \nabla \phi \cdot \nabla \Phi_{U_{1+2, \nu}} - \nabla U_{1+2, \nu} \cdot \nabla V + 2 U_{1+2, \nu} \phi - \beta \Lambda \phi.
$$

\subsubsection{Renormalised error in the eigenfunction equation}

Consider $i=0$ or $i=1$ and define the new variables $z_{\pm}$ centred at $\pm a$ and its rescaled version
$$
 z_{\pm}=z \mp a, \qquad y_{\pm}=\frac{z_{\pm}}{\nu},
$$
where $\nu>0$. In the polar coordinates, we write 
$$
\theta_\pm=\frac{z_\pm}{|z_{\pm}|}=\frac{y_\pm}{|y_\pm|}, \qquad r_\pm=|y_\pm|, \qquad \zeta_\pm =|z_\pm|
$$
so that
$$
z_\pm =\zeta_\pm (\cos \theta_\pm , \sin \theta_\pm), \qquad y_\pm = r_\pm (\cos \theta_\pm , \sin \theta_\pm).
$$
We define the renormalized inner eigenfunction $\phi^\inn_{i, \pm}$, Poisson field $V^\inn_{i,\pm}$ and error $R^\inn_{i, \pm}$ as
$$
\phi^\inn_{i, \pm}(y_\pm)= \nu^4 \phi_i^{\inn,z} (z), \qquad V^\inn_{i, \pm}(y_\pm)= \nu^2 V_i^{\inn,z} (z), \qquad R^\inn_{i, \pm}(y_\pm)= \nu^6 R_i^{\inn,z} (z).
$$
By writing
\begin{align*}
U_{1+2,\nu}(z)=\frac{1}{\nu^2}\Big(U(y\pm)+U\big(y_\pm \pm \frac{2a}{\nu}\big)\Big), \qquad \Phi_{U_{1+2,\nu}}(z)=\Phi_U (y_\pm)+\Phi_U\big(y_\pm\pm\frac{2a}{\nu}\big).
\end{align*}
this transforms \eqref{eigenfunctions-interior:id:eigenfunction-system} into the renormalized inner approximate eigenfunction system
\begin{equation}  \label{eigen:id:Riinn1}
\left\{\begin{array}{l l l}
& R_{i,\pm}^{\inn} = \Ls_0( \phi^\inn_{i, \pm},V_{i,\pm}^{\inn}) - \nu^2 \big(\beta \Lambda   + \lambda_i\big) \phi_{i, \pm}^\inn  - \nabla . \Big( \phi_{i, \pm}^\inn \big[ \nabla \Phi_U(y_\pm  \pm \frac{2a}{\nu}) \pm \nu \beta a   \big] \Big) \\
 & \qquad \qquad  - \nabla. \Big( U(y_\pm \pm 2a/\nu) \nabla V^\inn_{i,\pm}\Big)\\
&-\Delta V_{i,\pm}^{\inn} = \phi_{i,\pm}^{\inn}
\end{array}
\right.
\end{equation}
where
$$
 \Ls_0 (\phi^\inn_{i, \pm},V^\inn_{i,\pm})  =\nabla.\Big(\nabla  \phi^\inn_{i, \pm} - \phi^\inn_{i, \pm} \nabla \Phi_U-U\nabla V^\inn_{i,\pm}\Big).
$$

\subsubsection{Leading order system and decomposition of the error $R_{i, \pm}^\inn$}

To extract the leading order terms in the right-hand side of \eqref{eigen:id:Riinn1} we perform the Taylor expansion of $ \nabla \Phi_U(y_\pm  \pm \frac{2a}{\nu}) $.  For sake of simplicity, we write 
$$y = y_\pm, \quad r = r_\pm, \quad \theta=\theta_\pm,$$
wherever there is no need to specify the sign. 
\begin{lemma} \label{lem:taylorpoissonfield}

For $a=(|a|,0)$ with $|a|\approx 1$, there holds
\begin{equation} \label{id:expansion:PhiU-other-bubble}
\nabla \Phi_{U}(y \pm 2a/\nu) =\mp \alpha a \nu +\sum_{i = 1}^3 \alpha^{\frac{i+1}{2}} \nu^{i+1} G_{i,\pm} (y) +  \tilde G_\pm(y),
\end{equation}
where $|\nabla^k \tilde G_\pm(y)| = \Oc(\nu^5\langle y\rangle^{4-k})$ for $|y|\ll \nu^{-1}$, and $G_{i,\pm}(y)$ are homogeneous in $y$ of degree $i$. In particular, we have 
\begin{align} \label{eigenfunctions:id:G1}
G_{1,\pm}(y)  &= G_1(y) =  \frac{1}{2}(y_1, - y_2) = \frac{1}{4}\nabla (y_1^2 - y_2^2) = \frac{1}{4}\nabla \big(r^2 \cos 2\theta \big),
\end{align}
\begin{align}
\label{eigenfunctions:id:G2} G_{2, \pm}(y) = \pm G_{2}(y), \quad  G_2(y)  & = -\frac{1}{4\sqrt{2}} \Big(y_1^2 - y_2^2, - 2 y_1 y_2 \Big)  + \frac{1}{4\sqrt{2}}(1,0)\\
 \nonumber &=  -\frac{1}{12\sqrt{2}} \nabla (y_1^3 - 3y_1 y_2^2) + \frac{1}{4\sqrt{2}}\nabla y_1 \\
 \nonumber & = -\frac{1}{12\sqrt{2}}\nabla \big(r^3 \cos 3\theta\big) + \frac{1}{4\sqrt{2}}\nabla(r\cos \theta),
\end{align}	
and 
\begin{align}
\label{eigenfunctions:id:G3}G_{3, \pm}(y)  = G_3(y)&= -\frac{1}{16}\Big( 3 y_1 y_2^2 - y_1^3, 3 y_1^2 y_2 - y_2^3 \Big) +\frac{1}{16}(-3y_1,y_2) \\
\nonumber & = -\frac{1}{64} \nabla \big( 6 y_1^2 y_2^2 - y_1^4 - y_2^4 \big) + \frac{1}{32} \nabla (-3 y_1^2 +y_2^2) \\
\nonumber &= \frac{1}{64} \nabla \big(r^4\cos 4\theta \big)-\frac{1}{32} \nabla \big(r^2 (1 +2 \cos 2\theta)\big).
\end{align}
\end{lemma}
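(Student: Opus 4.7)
The starting point is that $\nabla \Phi_U$ is fully explicit: since $-\Delta \Phi_U = U = 8/(1+|x|^2)^2$, direct verification gives $\Phi_U(x) = -2\log(1+|x|^2)$ up to an additive constant, so $\nabla \Phi_U(x) = -4x/(1+|x|^2)$. The plan is to Taylor expand this vector field in $\nu$ about $x = \pm 2a/\nu$ up to order $\nu^5$, and identify the four resulting homogeneous pieces with $\mp \alpha \nu a$ and $\alpha^{(i+1)/2}\nu^{i+1}G_{i,\pm}$ for $i = 1, 2, 3$.

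Concretely, I would rewrite
\begin{equation*}
1 + |y \pm 2a/\nu|^2 = \frac{4|a|^2}{\nu^2}(1 + \epsilon_\pm), \qquad \epsilon_\pm = p + q,
\end{equation*}
with $p = \pm \alpha \nu (a\cdot y)/2$ and $q = \alpha \nu^2(1+|y|^2)/8$, and note that $|\epsilon_\pm| \ll 1$ throughout $|y| \ll \nu^{-1}$. Using $\nabla \Phi_U(y \pm 2a/\nu) = -2\nabla_y \log(1+\epsilon_\pm)$ together with $-2\log(1+\epsilon_\pm) = -2\epsilon_\pm + \epsilon_\pm^2 - \tfrac{2}{3}\epsilon_\pm^3 + \tfrac{1}{2}\epsilon_\pm^4 + \Oc(\epsilon_\pm^5)$, I would expand the binomials $(p+q)^k$ and regroup all monomials $p^a q^b$ by their total $\nu$-weight $a+2b$. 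Taking $\nabla_y$ and substituting $a = (|a|, 0)$, $a \cdot y = |a| y_1$, $|a|^2 = 2/\alpha$, the contributions with $a + 2b = 1, 2, 3, 4$ produce respectively $\mp \alpha \nu a$, $\alpha\nu^2 G_1$, $\alpha^{3/2}\nu^3 G_{2,\pm}$, and $\alpha^2\nu^4 G_{3,\pm}$. The factor $\alpha^{(i+1)/2}$ appears because each $p$ factor carries an $a \cdot y$ and hence a $|a| = \sqrt{2/\alpha}$, so odd powers of $p$ produce half-integer powers of $\alpha$. The alternative gradient and polar expressions for $G_i$ then follow by direct verification, using the identifications $y_1^2 - y_2^2 = r^2\cos 2\theta$, $y_1^3 - 3y_1y_2^2 = r^3\cos 3\theta$, and $y_1^4 - 6y_1^2y_2^2 + y_2^4 = r^4\cos 4\theta$.

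For the remainder $\tilde G_\pm$, I would apply Taylor's theorem directly to the smooth vector field $\nabla \Phi_U$ at the point $\pm 2a/\nu$. In the region $|y| \lesssim \nu^{-1}$ one has $|\pm 2a/\nu + sy| \sim \nu^{-1}$ uniformly for $s \in [0,1]$, and therefore $|\nabla^{m+1}\Phi_U(\pm 2a/\nu + sy)| \lesssim \nu^{m+1}$ for all $m$. The integral form of the fourth-order Taylor remainder then yields $|\tilde G_\pm(y)| \lesssim \nu^5 |y|^4$, and differentiating this remainder formula similarly gives $|\nabla^k \tilde G_\pm(y)| \lesssim \nu^5\langle y\rangle^{4-k}$ in the same region. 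The main obstacle is purely computational bookkeeping: correctly matching the constants and the angular structure of $G_2$ and $G_3$ requires careful tracking of the signs contributed by $p$, the alternating signs of the $\log$ series, and the combinatorics of the $(p+q)^k$ binomials up to total weight $a + 2b \leq 4$, together with the simplifications afforded by $a$ lying on the first coordinate axis.
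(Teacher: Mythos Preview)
Your proposal is correct and follows essentially the same approach as the paper: both expand the explicit formula $\nabla\Phi_U(x)=-4x/(1+|x|^2)$ about $x=\pm 2a/\nu$ in powers of $\nu$ and collect terms. Your use of the logarithmic representation $\nabla\Phi_U(y\pm 2a/\nu)=-2\nabla_y\log(1+\epsilon_\pm)$ is a mild organizational variant of the paper's geometric-series expansion of $1/(|b|^2+A)$ in the appendix, and your integral-remainder argument for $\tilde G_\pm$ is a clean way to obtain the stated bounds.
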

\begin{proof} Let $F(b) = \nabla \Phi_U(b y \pm 2a/\nu)$ and use the Taylor expansion for the function $F(b)$ at $b = 0$, then, let $b = 1$ to get the desired result.  See Appendix \ref{ap:3} for a computation which can be simply verified by Matlab symbolic. 
\end{proof}
Using Lemma \ref{lem:taylorpoissonfield}, and anticipating that $|a-a_\infty|\lesssim \nu$ so that $|\alpha-\beta|\lesssim \nu$, we decompose the error $R^\inn_\pm$ as
 \begin{align*}
R^\inn_{i, \pm} &=\tilde R^\inn_{i, \pm} +\bar R^\inn_{i, \pm} ,
\end{align*}
where $\tilde R^\inn_{i, \pm} $ contains the leading order terms defined by the following renormalized inner approximate eigenfunction system
 \begin{equation} \label{eigenfunction:id:tildeRinn1}
\left\{ \begin{array}{r l} \tilde R^\inn_{i, \pm}  =&  \Ls_0  (\phi^\inn_{i, \pm},V^\inn_{i,\pm}) - \nu^2 \alpha \left( \big( \Lambda   + \frac{\lambda_i}{\beta}\big) \phi_{i, \pm}^\inn +  \nabla.\Big( \phi_{i, \pm}^\inn G_{1,\pm}\Big) \right)\\
&-\alpha^{\frac 32}\nu^3 \nabla.\Big( \phi_{i, \pm}^\inn G_{2,\pm}\Big)-\alpha^2\nu^4 \nabla.\Big( \phi_{i, \pm}^\inn G_{3,\pm}\Big),\\
-\Delta V^\inn_{i,\pm}=& \phi^\inn_{i,\pm}
\end{array} \right.
\end{equation}
and $\bar R^\inn_{i, \pm}$ contains the subleading remainding terms defined by
\begin{align} 
\bar R^\inn_{i, \pm} &= \pm \nu (\alpha-\beta)a.\nabla \phi_{i, \pm}^\inn +\nu^2(\alpha-\beta) \left( \Lambda   + \frac{\lambda_i}{\beta}\right) \ \phi_{i,\pm}^\inn \nonumber\\
& \qquad  - \nabla. \Big( U(y_\pm \pm 2a/\nu) \nabla V^\inn_{i,\pm}\Big)- \nabla . \Big( \phi_{i, \pm}^\inn \tilde G_{\pm} \Big). \label{eigenfunction:id:barRinn1}
\end{align}

\subsubsection{First truncated expansion for the leading system}

Our aim is to construct a first version of the inner eigenfunction and Poisson field $(\phi^\inn_{i,\pm},V^\inn_{i,\pm})$ so that the error $\tilde R^\inn_{i, \pm}$ in the leading system \eqref{eigenfunction:id:tildeRinn1} is small in some suitable function space. There holds $\lambda_i= 2\beta(1 - i) +   2\beta \tilde \lambda_i$. Since we expect $|\tilde \lambda_i|\ll 1$ and $|a-a_\infty|\lesssim \nu$ so that $|\alpha-\beta|\lesssim \nu$, we write \eqref{eigenfunction:id:tildeRinn} as 
 \begin{equation} \label{eigenfunction:id:tildeRinn}
\left\{ \begin{array}{r l} \tilde R^\inn_{i, \pm}(\phi^\inn_{i, \pm},V^\inn_{i,\pm}) =&\Ls_0 (\phi^\inn_{i, \pm},V^\inn_{i,\pm}) - \alpha  \nu^2\left( \Lambda_i  \phi_{i, \pm}^\inn + \nabla.\Big( \phi_{i, \pm}^\inn G_{1,\pm}\Big) \right)\\
&-\alpha^{\frac 32}\nu^3 \nabla.\Big( \phi_{i, \pm}^\inn G_{2,\pm}\Big)-\alpha^2 \nu^4 \nabla.\Big( \phi_{i, \pm}^\inn G_{3,\pm}\Big) - 2 \alpha \nu^2 \tilde \lambda_i \phi_{i, \pm}^\inn,\\
-\Delta V^\inn_{i,\pm}=& \phi^\inn_{i,\pm},
\end{array} \right.
\end{equation}
where $\Lambda_i$ is defined by 
\begin{equation*}
\Lambda_i = \Lambda + 2(1 - i) = z.\nabla+  2(2-i). 
\end{equation*}
We look as a first attempt for an approximate inner eigenfunction of the form of a truncated power series in the small parameters $\nu$ and $\tilde \lambda_i$,
\begin{align}
\label{eigenfunctions:id:tildephiinn}\tilde \phi^\inn_{i,\pm} &= \Lambda U + \alpha \nu^2 T_{2}^{(i)} \pm \alpha^{\frac 32}\nu^3 T_{3} +\alpha^2\nu^4  T_{4}^{(i)}  + \alpha \tilde \lambda_i \nu^2 \hat T_2  +\tilde \lambda_i \alpha^2 \nu^4 \hat T_{4}^{(i)},\\
\label{eigenfunctions:id:tildeViinn} \tilde V^\inn_{i,\pm}&= \Phi_{\Lambda U} + \alpha \nu^2 V_2^{(i)} \pm \alpha^{\frac 32}\nu^3 V_{3} +\alpha^2 \nu^4  V_4^{(i)}  + \alpha \tilde \lambda_i \nu^2 \hat V_2 + \tilde \lambda_i\alpha^2 \nu^4 \hat V_4^{(i)},
\end{align}
where we require
\begin{align} \label{eigenfunction:id:T2} 
&\Ls_0( T_{2}^{(i)},V_2^{(i)}) =  \Lambda_i \Lambda U+\nabla. (\Lambda U G_1),\\
\label{eigenfunction:id:hatT2}
& \Ls_0( \hat T_{2} ,\hat V_2)=2\Lambda U,\\
 \label{eigenfunction:id:T3} 
& \Ls_0  (T_3,V_3) = \nabla. \big(G_{2} \Lambda U\big), \\
\label{eigenfunction:id:T4} 
& \Ls_0 (T_{4}^{(i)},V_4^{(i)}) = \Lambda_i T_2^{(i)} + \nabla. (G_3 \Lambda U) + \nabla. \big(T_2^{(i)} G_1\big),\\
 \label{eigenfunction:id:hatT4} 
& \Ls_0 (\hat T_4^{(i)},\hat V_4^{(i)}) = 2T_2^{(i)}  +  \nabla.(G_1 \hat T_{2}) + \Lambda_i \hat T_{2}, \\
 \label{eigenfunction:id:Poisson-fields}
&-\Delta V_2^{(i)}=T_2^{(i)}, \;\;-\Delta V_4^{(i)}=T_4^{(i)}, \;\; -\Delta V_3=T_3,\;\; -\Delta \hat V_2=\hat T_2 ,\quad -\Delta \hat V_4^{(i)}=\hat T_4^{(i)}.
\end{align}

Plugging \eqref{eigenfunction:id:T2}-\eqref{eigenfunction:id:hatT4} in \eqref{eigenfunction:id:tildeRinn} and recalling $\Ls_0 (\Lambda U, \Phi_{\Lambda U} )=0$, we obtain that $\tilde R^\inn_{i,\pm}(\tilde \phi^\inn_{i, \pm},\tilde V^\inn_{i,\pm})$ consists of the terms of  magnitude order $\Oc(\tilde \lambda_i^2 \nu^4 + \nu^5)$ or higher,
 \begin{align}
\nonumber \tilde R^\inn_{i,\pm}(\tilde \phi^\inn_{i, \pm},\tilde V^\inn_{i,\pm}) =&- 2\alpha^2 \tilde \lambda_i^2 \nu^4 \hat T_2 \\
\nonumber &+\alpha^{\frac 52}\nu^5\left(\mp  \Lambda_i T_3 \mp \nabla.(T_3G_1)\mp  \nabla.(T_2^{(i)}G_2) \right) +\alpha^{\frac 52}\tilde \lambda_i \nu^5\left( \mp \nabla.(\hat T_2 G_2)\mp 2T_3 \right) \\
\nonumber &+\alpha^3\nu^6\left(-  \Lambda_i T_4^{(i)} - \nabla.(T_4^{(i)}G_1)- \nabla.(T_3G_2)- \nabla.(T_2^{(i)}G_3) \right) \\
\nonumber & +\alpha^3\tilde \lambda_i \nu^6\left(-  \Lambda_i \hat T_4^{(i)} - \nabla.(\hat T_4^{(i)}G_1)- \nabla.(\hat T_2G_3) -2T_4^{(i)}\right) - 2\alpha^3\tilde \lambda_i^2 \nu^6 \hat T_4^{(i)} \\
\nonumber &+\alpha^{\frac 72}\nu^7 \left( \mp \nabla.(T_4^{(i)} G_{2})\mp \nabla.(T_3 G_{3}) \right) \mp \alpha^{\frac 72}\tilde \lambda_i \nu^7 \nabla . (\hat T_4^{(i)} G_2)\\
\label{eigenfunctions:id:tildeRinn-decomposition} &-\alpha^4 \nu^8\nabla.(T_4^{(i)} G_{3}) -\alpha^4 \tilde \lambda_i \nu^8\nabla.(\hat T_4^{(i)} G_{3}).
\end{align}
We then compute the remainder associated to the subleading terms by plugging \eqref{eigenfunction:id:T2}-\eqref{eigenfunction:id:hatT4} in \eqref{eigenfunction:id:barRinn1} and using $U\Big(\frac{2a}{\nu}\Big)=\frac{\nu^4}{2|a|^4}+O(\nu^6)$:
\begin{align*}
\bar R^\inn_{i,\pm}(\tilde \phi^\inn_{i, \pm},\tilde V^\inn_{i,\pm}) & =\pm \nu (\alpha-\beta)a.\nabla \Lambda U+\nu^2 (\alpha-\beta)\left(\Lambda+\frac{\lambda_i}{\beta}\right) \Lambda U\pm \nu^3 (\alpha-\beta)\alpha a.\nabla ( T_2^{(i)}+\tilde \lambda_i\hat T_2) \\
& \quad +\frac{\nu^4}{2|a|^4} \Lambda U  \pm \nu (\alpha-\beta)a.\nabla (\tilde \phi_{i,\pm}^\inn-\Lambda U-\alpha \nu^2 T^{(i)}_2-\alpha \nu^2 \tilde \lambda_i \hat T_2)\\
& \quad +\nu^2 (\alpha-\beta)\left(\Lambda+\frac{\lambda_i}{\beta}\right)(\tilde \phi_{i,\pm}^\inn-\Lambda U) +\left(U\Big(y\pm \frac{2a}{\nu}\Big)-\frac{\nu^4}{2|a|^4}\right)\Lambda U\\
& \quad +U\Big(y\pm \frac{2a}{\nu}\Big)(\tilde \phi_{i,\pm}^\inn-\Lambda U) -\nabla U\Big(y\pm \frac{2a}{\nu}\Big).\nabla \tilde V^\inn_{i,\pm}-\nabla .(\tilde \phi^\inn_{i,\pm}\tilde G_\pm).
\end{align*}
Above, the first line gathers terms of order $\nu |a-a_\infty|$, $\nu^2 |a-a_\infty|$, $\nu^4$ and $\nu^3 |a-a_\infty|$ which are not small enough for $|y|\lesssim 1$, but which are subleading for $|y|\gg 1$ in comparison with $\tilde R^\inn_{i,\pm}$ due to a stronger decay. The other lines gather terms of order $\nu^5$ or $\nu^4 |a-a_\infty|$ and higher which are small enough. In order to avoid the terms on the first line, we will add corrections in a second truncated expansion.

\subsubsection{Second expansion with corrections for subleading remainder and Poisson field}

We now look for an expansion that manages simultaneously the main order error terms in $\tilde R^\inn_{i,\pm}$ and $\bar R^\inn_{i,\pm}$. We also want to include corrections that allow the Poisson field $V^\inn_{i,\pm}$ to behave like $y_1$ or $(y_1^2-y_2^2)$ as $|y|\to \infty$. This is because the Poisson field for the full solution will actually depend from the outer part of the solution, and that the main part of this nonlocal effect in the Poisson equation will be located on these first two harmonic polynomials. We look for an expansion of the form
\begin{align}
\label{eigenfunctions:id:decomposition-phiinn} \phi^\inn_{i,\pm} &=\tilde \phi^\inn_{i,\pm}+\bar \phi^\inn_{i,\pm} \\
\nonumber \bar \phi^\inn_{i,\pm}  &=\pm \alpha^{\frac 12} \nu L_i^\inn \partial_{x_1}U\pm  \nu (\alpha-\beta)\mathcal S_{1,1}+ \nu^2 (\alpha-\beta)\mathcal S_{2,1}+\nu^2(\alpha-\beta)^2\mathcal S_{2,2} \pm \alpha^{\frac 32}\nu^3L_i^\inn \mathcal S_{3,0}\\
\label{eigenfunctions:id:barphiinn} & \quad  \pm \nu^3 (\alpha-\beta)\mathcal S_{3,1} +\nu^4\mathcal S_{4,0} +\alpha^2 \nu^4M_i^\inn \mathcal S_{4,0}',\\
\label{eigenfunctions:id:decomposition-Vinn} V^\inn_{i,\pm}&= \tilde V^\inn_{i,\pm}+\bar V_{i,\pm}\\
\nonumber \bar V_{i,\pm} & =\pm \alpha^{\frac 12} \nu L_i^\inn \partial_{x_1}\Phi_{U} \pm  \nu (\alpha-\beta)\mathcal W_{1,1}+ \nu^2 (\alpha-\beta)\mathcal W_{2,1}+\nu^2(\alpha-\beta)^2\mathcal W_{2,2}  \\
\label{eigenfunctions:id:barVinn} & \quad \pm \alpha^{\frac 32} \nu^3 L_i^\inn \mathcal W_{3,0} \pm \nu^3 (\alpha-\beta)\mathcal W_{3,1} +\nu^4\mathcal W_{4,0} +\alpha^2 \nu^4M_i^\inn \mathcal W_{4,0}',
\end{align}
The real constants $L_i^\inn,M_i^\inn$ are for the moment free parameters. They will be adjusted later on to match with the outer Poisson field. All profiles will actually depend in an affine way on $L_i^\inn$, but we chose to hide it in the notation except fo $\partial_{x_1}U$ and $\mathcal S_{3,0}$ which will give the key contribution involving $L_i^\inn$. Also, the profiles in $\bar \phi^\inn_{i,\pm}$ and $\bar V_{i,\pm}^\inn$ above depend on $i$ but we do not include it in the notation for sake of clarity. We require
\begin{align} 
\label{eigenfunction:id:S11}
& \Ls_0(  S_{1,1},\mathcal W_{1,1})=-a.\nabla \Lambda U,\\
\label{eigenfunction:id:S21}
& \Ls_0(  S_{2,1},\mathcal W_{2,1})=-(\Lambda_i+2\tilde \lambda_i)\Lambda U-\alpha^{\frac 12}L_{i}^\inn a.\nabla \partial_{x_1}U,\\
 \label{eigenfunction:id:S2} 
& \Ls_0  (\mathcal S_{2,2},\mathcal W_{2,2})= -a.\nabla \mathcal S_{1,1},\\
\label{eigenfunction:id:S30}
& \Ls_0(  S_{3,0},\mathcal W_{3,0})= (\Lambda_i +2\tilde \lambda_i)\partial_{x_1}U+\nabla . (\partial_{x_1}UG_{1}),\\
 \label{eigenfunction:id:S31} 
& \Ls_0  (\mathcal S_{3,1},\mathcal W_{3,1}) =  - \alpha a .\nabla ( T_2^{(i)} + \tilde \lambda_i  \hat T_2)+ \alpha ((\Lambda_i+2\tilde \lambda_i) \mathcal S_{1,1}+\nabla . (\mathcal S_{1,1}G_{1})) \nonumber \\
& \qquad \qquad \qquad -\alpha^{\frac 32}L_i^\inn (\Lambda_i+2\tilde \lambda_i)\partial_{x_1}U, \\
 \label{eigenfunction:id:S40} 
& \Ls_0  (\mathcal S_{4,0},\mathcal W_{4,0})= -\frac{1}{2|a|^4}\Lambda U+\alpha^2L_i^\inn \nabla.(\partial_{x_1}U G_2),\\
 \label{eigenfunction:id:S40'} 
& \Ls_0  (\mathcal S_{4,0}',\mathcal W_{4,0}')=0,\qquad  -\Delta \mathcal W_{4,0}'=\mathcal S_{4,0}',\\
 \label{eigenfunction:id:Poisson-fields-SW}
&-\Delta \mathcal W_{m,n}=\mathcal S_{m,n} \quad \mbox{for }(m,n)\in \{(1,1),(2,1),(2,2),(3,0),(3,1),(4,0)\}.
\end{align}
We recall that $\Ls_0(\partial_{x_1}U,\Phi_{\partial_{x_1}}U)=0$. Plugging \eqref{eigenfunction:id:T2}-\eqref{eigenfunction:id:hatT4} and \eqref{eigenfunction:id:S11}-\eqref{eigenfunction:id:Poisson-fields-SW} in \eqref{eigenfunction:id:barRinn1} and \eqref{eigenfunction:id:tildeRinn} we obtain:
\begin{align}
\nonumber R^\inn_{i, \pm} & =  \tilde R^\inn_{i,\pm}(\tilde \phi^\inn_{i, \pm},\tilde V^\inn_{i,\pm})   \\
\nonumber  & \qquad  - \alpha  \nu^2\Bigg( (\Lambda_i+2\tilde \lambda_i)\left( \bar  \phi_{i, \pm}^\inn \mp \nu (\alpha-\beta)\mathcal S_{1,1}\mp \alpha^{\frac 12}\nu L_i^\inn \partial_{x_1}U\right)\\
\nonumber & \qquad \qquad \qquad +\nabla .\left(\left(\bar  \phi_{i, \pm}^\inn   \mp \nu (\alpha-\beta)\mathcal S_{1,1}\mp \alpha^{\frac 12}\nu L_i^\inn \partial_{x_1}U\right) G_{1,\pm}  \right)\Bigg)\\
\nonumber  & \qquad - \nabla.\left( \Big(\bar  \phi_{i, \pm}^\inn\mp \alpha^{\frac 12}\nu L_i^\inn \partial_{x_1}U\Big) \alpha^{\frac 32}\nu^3 G_{2,\pm}\right) - \nabla.\left(\bar  \phi_{i, \pm}^\inn  \alpha^2 \nu^4G_{3,\pm} \right)\\
\nonumber  &\qquad \pm \nu (\alpha-\beta)a.\nabla ( \phi_{i,\pm}^\inn-\Lambda U-\alpha \nu^2 T^{(i)}_2-\alpha \nu^2 \tilde \lambda_i \hat T_2\mp \nu (\alpha-\beta)\mathcal S_{1,1}\mp \alpha^{\frac 12}\nu L_i^\inn \partial_{x_1}U)\\
\nonumber & \qquad+\nu^2 (\alpha-\beta)\left(\Lambda+\frac{\lambda_i}{\beta}\right)(\phi_{i,\pm}^\inn-\Lambda U\mp \alpha^{\frac 12}\nu L_i^\inn \partial_{x_1}U)\\
\nonumber  &\qquad +\left(U\Big(y\pm \frac{2a}{\nu}\Big)-\frac{\nu^4}{2|a|^4}\right)\Lambda U+U\Big(y\pm \frac{2a}{\nu}\Big)( \phi_{i,\pm}^\inn-\Lambda U)\\
\label{eigenfunctions:id:Rinn-decomposition}&\qquad -\nabla U\Big(y\pm \frac{2a}{\nu}\Big).\nabla V^\inn_{i,\pm}-\nabla .( \phi^\inn_{i,\pm}\tilde G_\pm).
\end{align}

\subsubsection{Elliptic equations in spherical harmonics} To ease the notations, we write
$$r = r_\pm, \quad \theta = \theta_\pm,$$
wherever there is no need to precise the sign. Using the change of coordinates $\pa_{y_1} = \cos \theta \pa_r - r^{-1} \sin \theta \pa_\theta$ and $\pa_{y_2} = \sin \theta \pa_r + r^{-1} \cos \theta \pa_\theta$ yields the elementary identity
\begin{align}
\nabla. \Big[ f(r) \cos (m \theta) \nabla g(r) \cos (k \theta) \Big] & = \Big[\frac{1}{r} \partial_r (r f \partial_r g)  - \frac{k^2 f g}{r^2} \Big] \cos (m\theta) \cos(k \theta) \nonumber\\
 & \quad + (mk) \frac{fg}{r^2} \sin(m\theta) \sin(k \theta). \label{id:coskm}
\end{align}

\paragraph{Elliptic equations for the leading terms.}

Since the source terms in \eqref{eigenfunction:id:T2} - \eqref{eigenfunction:id:hatT4} depends on spherical harmonics, we decompose the profiles $T_2^{(i)}, T_3, T_4^{(i)}, \hat T_2,$ and $\hat T_4^{(i)}$ accordingly,
\begin{align}
 \label{eigenfunction:id:decomposition-T2i} & T_2^{(i)} = T_{2,0}^{(i)}(r) + \mathsf T_{2,2}(r)\cos (2\theta), \\
 \label{eigenfunction:id:decomposition-V2i} & V_2^{(i)} = V_{2,0}^{(i)}(r) + \mathsf V_{2,2}(r)\cos (2\theta), \\
 \label{eigenfunction:id:decomposition-T3}&T_3=  \mathsf T_{3,1}(r) \cos (\theta) + \mathsf T_{3,3}(r) \cos (3\theta),\\ 
 \label{eigenfunction:id:decomposition-V3}& V_3 =  \mathsf V_{3,1}(r)\cos (\theta) +\mathsf V_{3,3}(r)\cos (3\theta),\\
 \label{eigenfunction:id:decomposition-T4i}&T_4^{(i)} = T_{4,0} ^{(i)}(r) + \mathsf T_{4,2}^{(i)}(r)\cos (2\theta) + \mathsf T_{4,4}(r)\cos (4\theta)\\ 
 \label{eigenfunction:id:decomposition-V4i}&V_4^{(i)} = V_{4,0}^{(i)}(r) + \mathsf V_{4,2}^{(i)}(r)\cos (2\theta) + \mathsf V_{4,4}(r)\cos (4\theta),\\
 \label{eigenfunction:id:decomposition-T2}& \hat T_2(y) = \hat T_{2}(r) , \qquad \quad \hat T_4^{(i)} =\hat T_{4,0}^{(i)}(r)  + \hat{\mathsf T}_{4,2}(r)\cos (2\theta),\\ 
 \label{eigenfunction:id:decomposition-V2}& \hat V_2(y) = \hat V_{2}(r), \qquad \quad  \hat V_4^{(i)} = \hat V_{4,0}^{(i)}(r)+ \hat{\mathsf V}_{4,2}(r) \cos (2\theta),
\end{align}
Note that we slightly abuse notations for radial functions by identifying $f(y) \equiv f(r)$. The notation $f_{m,k}(r)$ means that the term of magnitude order $\nu^m$ and angular dependence $\cos(k \theta)$. We also note in the above decomposition of the profiles that the pairs $(\mathsf T_{k,k}, \mathsf V_{k,k})_{k = 2,3,4}$ and $(\hat{\mathsf T}_{4,2}, \hat{\mathsf V}_{4,2})$ are independent of $i$. 

For radial modes $T(y)=T(r)$ with suitable decay at infinity, $\Phi_T$ is well defined. In particular, we have the explicit formula
\begin{equation} \label{eigenfunction:id:extensionnablaPhiradial}
\pa_r \Phi_T=-r^{-1}\int_0^r T(\tilde r)\tilde r d\tilde r.
\end{equation}
We extend $T\mapsto \nabla \Phi_T$ by the above formula to all radial $T\in L^1_{loc}(\mathbb R^2)$, noting that it is well defined (and abusing notations since $\Phi_T=-\frac{1}{2\pi}\ln r*T$ might not be well defined). We extend accordingly
$$
\Ls_0 (T)=\Delta T-\nabla .(T\nabla \Phi_U)-\nabla .(U\nabla \Phi_T),
$$
in the distributional sense to all radial functions $T\in L^1_{loc}(\mathbb R^2)$ by defining $\nabla \Phi_T$ via \eqref{eigenfunction:id:extensionnablaPhiradial}. Whatever the choice of smooth radial functions $T(y)=T(r)$ and $V(y)=V(r)$ with $-\Delta V=T$ we have then the identity:
\begin{align*}
\Ls_0 (T,V)= \Ls_0 (T).
\end{align*}
For higher order spherical harmonics, if $-\Delta (\omega \cos (k\theta))= \psi \cos(k \theta)$, we have
$$
\Ls_0 (\psi \cos(k \theta), \omega \cos (k\theta))= \cos(k\theta) \Ls_{0,k} (\psi , \omega ),
$$
and
$$
\Delta (\omega \cos(k\theta))=  \cos(k\theta) \Delta_{k} \omega,
$$
where
\begin{equation}\label{def:Ls0k}
\Ls_{0,k}(\psi , \omega )= \Delta_k \psi + \dfrac{4r }{1 + r^2}\pr \psi + \dfrac{32r }{(1+r^2)^3}\pr \omega + \dfrac{16\psi}{(1+r^2)^2},
\end{equation}
and 
$$
-\Delta_k \omega = \psi \quad \textup{with}\quad  \Delta_k= \pr^2+ \dfrac{\pr}{r} - \dfrac{k^2}{r^2},
$$
We use here the convention $\Ls_{0,0} \equiv \Ls_0$ and $\Delta_0 \equiv \Delta$. 
From Lemma \ref{lem:taylorpoissonfield} and \eqref{id:coskm}, we decompose the source terms in \eqref{eigenfunction:id:T2} - \eqref{eigenfunction:id:hatT4} into the spherical harmonics,
\begin{equation}\label{def:Sigma22}
\nabla. (\Lambda U G_1)= \Sigma_{2,2}(r) \cos(2\theta), \qquad \Sigma_{2,2}=\frac{1}{2}r\pa_r \Lambda U,
\end{equation}
and
$$
\nabla. (\Lambda U G_2)= \Sigma_{3,1}(r) \cos \theta+ \Sigma_{3,3}(r) \cos(3\theta ) 
$$
where
\begin{equation}\label{def:Sigma3k}
\Sigma_{3,1}(r) = \frac{1}{4\sqrt{2}} \pa_r \Lambda U, \quad \Sigma_{3,3}(r) = -\frac{1}{4\sqrt{2}} r^2 \pa_r \Lambda U.
\end{equation}
We have
\begin{align*}
 \Lambda_i T_2^{(i)} + \nabla.(G_1 T_2^{(i)}) + \nabla.(G_3 \Lambda U)  = \Sigma_{4,0}^{(i)}(r)+ \Sigma_{4,2}^{(i)}(r)\cos(2\theta) + \Sigma_{4,4}(r)\cos(4 \theta),
\end{align*}
where  
\begin{align}
\label{def:Sigma40}\Sigma_{4,0}^{(i)} &=\Lambda_i T_{2,0} ^{(i)} + \frac{1}{4 r}\partial_r (r^2\mathsf T_{2,2}) - \frac{1}{16 r}  \partial_r(r^2 \Lambda U),\\
\label{def:Sigma42}\Sigma_{4,2}^{(i)} &=  \Lambda_i \mathsf T_{2,2} + \frac{1}{2} r\pa_r T_{2,0}^{(i)} - \frac{r}{16}\pa_r \Lambda U  ,\\
\label{def:Sigma44}\Sigma_{4,4} &= \frac{1}{4} r \pa_r \mathsf T_{2,2} - \frac{1}{2}\mathsf T_{2,2}+\frac{1}{16}r^3 \pa_r \Lambda U,
\end{align} 
and
\begin{align*}
2 T_2^{(i)}  +  \nabla.(G_1 \hat T_{2}) +  \Lambda_i \hat T_{2}=\hat \Sigma_{4,0}^{(i)}(r)+\hat \Sigma_{4,2}(r)\cos (2\theta),
\end{align*}
where
\begin{equation}\label{def:Sigma4khat}
 \hat \Sigma_{4,0}^{(i)}=2 T_{2,0}^{(i)}+ \Lambda_i \hat T_{2}, \qquad \hat \Sigma_{4,2}= 2 \mathsf T_{2,2}+\frac{1}{2}r\pa_r \hat T_2.
\end{equation}
Now, the equations \eqref{eigenfunction:id:T2} -\eqref{eigenfunction:id:Poisson-fields}  are transformed into the following systems of radial functions:
\begin{equation}\label{sys:T20T20hat}
\left\{\begin{array}{l l} & \Ls_0(T_{2,0}^{(i)}) = \Lambda_i \Lambda U, \\
& -\Delta V_{2,0}^{(i)}=T_{2,0}^{(i)},
\end{array} \right. \qquad \mbox{and}\qquad
\left\{\begin{array}{l l} & \Ls_0(\hat T_{2}) = 2 \Lambda U, \\
& -\Delta \hat V_{2}^{(i)}=\hat T_{2,0}^{(i)}.
\end{array}\right.
\end{equation}

\begin{equation}\label{sys:T40T40hat}
\Ls_0(T_{4,0}^{(i)}) = \Sigma_{4,0}^{(i)}, \quad \Ls_0(\hat T_{4,0}^{(i)}) = \hat \Sigma_{4,0}^{(i)},
\end{equation}
\begin{align}
\label{sys:Tkk31} 
k = 2,3,4,\quad &\left\{ \begin{array}{l l} \Ls_{0,k} (\mathsf T_{k,k},\mathsf V_{k,k})=\Sigma_{k,k}, \\ -\Delta_k \mathsf V_{k,k}=\mathsf T_{k,k},  \end{array} \right. \qquad \left\{ \begin{array}{l l} 
 \Ls_{0,1} (\mathsf T_{3,1},\mathsf V_{3,1}) =\Sigma_{3,1},\\
 -\Delta_1 \mathsf V_{3,1}=\mathsf T_{3,1},
  \end{array} \right.
  \end{align}
  where $(\Sigma_{k,k})_{k = 2,3,4}$ and $\Sigma_{3,1}$ are defined in \eqref{def:Sigma22}, \eqref{def:Sigma3k}, \eqref{def:Sigma44}, and 
  \begin{align}
 \label{sys:T42T42hat}
&\left\{ \begin{array}{l l} 
 \Ls_{0,2} (\mathsf T_{4,2}^{(i)},\mathsf V_{4,2}^{(i)}) =\Sigma_{4,2}^{(i)},\\
 -\Delta_2 \mathsf V_{4,2}^{(i)}=\mathsf T_{4,2}^{(i)},
  \end{array} \right. \qquad \left\{ \begin{array}{l l} 
 \Ls_{0,2} (\hat{\mathsf T}_{4,2},\hat{\mathsf V}_{4,2}) =\hat \Sigma_{4,2},\\
 -\Delta_2 \hat{\mathsf V}_{4,2}=\hat{\mathsf T}_{4,2},
  \end{array} \right. 
\end{align}
where $\Sigma_{4,2}^{(i)}$ and $\hat \Sigma_{4,2}$ are defined in \eqref{def:Sigma42} and \eqref{def:Sigma4khat}.\\

\paragraph{Elliptic equations for the subleading terms.} We next decompose the profiles $\mathcal S_{1,1}$, $\mathcal S_{2,1}$, $\mathcal S_{2,2}$, $\mathcal S_{3,0}$, $\mathcal S_{3,1}$, $\mathcal S_{4,0}$ and $\mathcal S_{4,0}'$ and their Poisson fields in spherical harmonics
\begin{align}
 \label{eigenfunction:id:decomposition-mS11} & \mathcal S_{1,1} =  \mathsf S_{1,1,1}(r)\cos (\theta), \qquad \mathcal W_{1,1} =  \mathsf W_{1,1,1}(r)\cos (\theta)\\
 \label{eigenfunction:id:decomposition-mS21}  & \mathcal S_{2,1} =  \mathsf S_{2,1,0}(r)+  \mathsf S_{2,1,2}(r)\cos (2\theta), \qquad \mathcal W_{2,1} =  \mathsf W_{2,1,0}(r)+\mathsf W_{2,1,2}(r)\cos(2\theta)\\
     \label{eigenfunction:id:decomposition-mS22}  & \mathcal S_{2,2} =  \mathsf S_{2,2,0}(r)+ \mathsf S_{2,2,2}(r)\cos (2 \theta), \qquad \mathcal W_{2,2} =  \mathsf W_{2,2,0}(r)+ \mathsf W_{2,2,2}(r)\cos (2\theta)\\
   \label{eigenfunction:id:decomposition-mS30}  & \mathcal S_{3,0} =  \mathsf S_{3,0,1}(r)\cos \theta+ \mathsf S_{3,0,3}(r)\cos (3 \theta), \qquad \mathcal W_{3,1} =  \mathsf W_{3,0,1}(r)\cos \theta+ \mathsf W_{3,0,3}(r)\cos (3\theta)\\
  \label{eigenfunction:id:decomposition-mS31}  & \mathcal S_{3,1} =  \mathsf S_{3,1,1}(r)\cos \theta+ \mathsf S_{3,1,3}(r)\cos (3 \theta), \qquad \mathcal W_{3,1} =  \mathsf W_{3,1,1}(r)\cos \theta+ \mathsf W_{3,1,3}(r)\cos (3\theta)\\
  \label{eigenfunction:id:decomposition-mS40}  & \mathcal S_{4,0} = \sum_{j=0,2,4} \mathsf S_{4,0,j}(r)\cos(j\theta), \qquad \mathcal W_{4,0} = \sum_{j=0,2,4} \mathsf W_{4,0,j}(r)\cos(j\theta),\\
    \label{eigenfunction:id:decomposition-mS40'}  & \mathcal S_{4,0}' =  \mathsf S_{4,0,2}'(r)\cos(2\theta), \qquad \mathcal W_{4,0,2} = \mathsf W_{4,0,2}(r)\cos(2\theta).
\end{align}
We decompose the source terms in \eqref{eigenfunction:id:S11}-\eqref{eigenfunction:id:Poisson-fields-SW} as
\begin{align*}
&  -a.\nabla \Lambda U =-|a|\partial_r \Lambda U \cos(\theta), \\
&  -(\Lambda_i+2\tilde \lambda_i)\Lambda U-\alpha^{\frac 12}L_i^\inn a.\nabla \partial_{x_1}U =-(\Lambda_i+2\tilde \lambda_i)\Lambda U(r)-\frac{\alpha^{\frac 12}}{2}L_i^\inn |a| \left(\partial_r^2 U+\frac{\partial_r U}{r}\right) , \\
& \qquad \qquad \qquad \qquad \qquad \qquad \qquad \qquad -\frac{\alpha^{\frac 12}}{2}L_i^\inn |a| \left(\partial_r^2 U-\frac{\partial_r U}{r}\right)\cos(2\theta)\\
& -a.\nabla \mathcal S_{1,1}  =-\frac{|a|}{2}\left(\partial_r \mathsf S_{1,1,1}+\frac{\mathsf S_{1,1,1}}{r}\right)+\frac{|a|}{2}\left(\frac{\mathsf S_{1,1,1}}{r}-\partial_r \mathsf S_{1,1,1}\right)\cos(2\theta).\\
& (\Lambda_i+2\tilde \lambda_i)\partial_{x_1}U+\nabla.(\partial_{x_1}UG_1) = \left(\frac 54 r \partial_r^2 U+\frac{17-8i-8\tilde \lambda_i}{4}\partial_r U\right)\cos \theta  +\left(\frac r4 \partial_r^2 U-\frac 14 \partial_r U\right) \cos(3\theta),
\end{align*}

\begin{align*}
& - \alpha a .\nabla (T_2^{(i)}+\tilde \lambda_i \hat T_2)+ \alpha ((\Lambda_i+\tilde \lambda_i) \mathcal S_{1,1}+\nabla . (\mathcal S_{1,1}G_{1}))-\alpha^{\frac 32}L_i^\inn (\Lambda_i+2\tilde \lambda_i)\partial_{x_1}U  \\
& \qquad =  \alpha\Big(  2r\partial_r \mathsf S_{1,1,1}+(2(2-i)+2\tilde \lambda_i)\mathsf S_{1,1,1}- \partial_r \mathsf T_{2,0}^{(i)}-\tilde \lambda_i \partial_r \hat T_{2} \\
& \qquad \qquad \qquad \qquad -\frac{\partial_r \mathsf T_{2,2}}{2}-\frac{\mathsf T_{2,2}}{r}-\alpha^{\frac 12}L_i^\inn (\Lambda_i+2\tilde \lambda_i)\partial_r U \Big)\cos \theta\\
&\qquad \qquad \qquad \qquad +\alpha (  r\partial_r \mathsf S_{1,1,1}-\mathsf{S}_{1,1,1}+|a|(\frac{\mathsf T_{2,2}}{r})-\frac{\partial_r \mathsf T_{2,2}}{2})\cos(3\theta),
\end{align*}
and 
\begin{align*}
&-\frac{1}{2|a|^4}\Lambda U+\alpha^2 L_i^\inn \nabla .(\partial_{x_1}UG_2)  =-\frac{1}{2|a|^4}\Lambda U+\frac{\alpha^2 L_i^\inn}{8\sqrt{2}}\left(\partial_r^2 U+\frac{1}{r}\partial_r U\right)\\
& \qquad +\left(-\frac{3}{24\sqrt{2}}(r^2\partial_r^2 U+r\partial_r U)+\frac 12 \partial_r^2 U-\frac{1}{2r}\partial_r U\right)\cos (2\theta) -\frac{3}{24\sqrt{2}}(r^2\partial_r^2 U-r\partial_r U)\cos(4\theta)
\end{align*}
The equations \eqref{eigenfunction:id:Poisson-fields}-\eqref{eigenfunction:id:S11} are transformed into
\begin{align}\label{sys:S111}
& \left\{\begin{array}{l l} & \Ls_{0,1}(\mathsf S_{1,1,1},\mathsf W_{1,1,1}) =-|a|\partial_r \Lambda U, \\
& -\Delta_1 \mathsf W_{1,1,1} =\mathsf S_{1,1,1},
\end{array} \right. \\
\label{sys:S210}
& \left\{\begin{array}{l l} & \Ls_{0}(\mathsf S_{2,1,0}) = \Sigma_{2,1,0}, \\
& -\Delta \mathsf W_{2,1,0} =\mathsf S_{2,1,0},
\end{array} \right. \quad \Sigma_{2,1,0}=-(\Lambda_i+2\tilde \lambda_i)\Lambda U(r)-\frac{|a|}{2}\alpha^{\frac 12} L_i^\inn (\partial_r^2 U+\frac{\partial_r U}{r})\\
\label{sys:S212}
& \left\{\begin{array}{l l} & \Ls_{0}(\mathsf S_{2,1,2}) = \Sigma_{2,1,2}, \\
& -\Delta_2 \mathsf W_{2,1,2} =\mathsf S_{2,1,2},
\end{array} \right. \quad \Sigma_{2,1,0}=-\frac{\alpha^{\frac 12}}{2}L_i^\inn |a| (\partial_r^2 U-\frac{\partial_r U}{r})\\
\label{sys:S220}
& \left\{\begin{array}{l l} & \Ls_{0}(\mathsf S_{2,2,0}) = \Sigma_{2,2,0}, \\
& -\Delta \mathsf W_{2,2,0} =\mathsf S_{2,2,0},
\end{array} \right. \qquad \Sigma_{2,2,0}=-\frac{|a|}{2}(\partial_r \mathsf S_{1,1,1}+\frac{\mathsf S_{1,1,1}}{r})\\
\label{sys:S222}
& \left\{\begin{array}{l l} & \Ls_{0,2}(\mathsf S_{2,2,2}) = \Sigma_{2,2,2}, \\
& -\Delta_2 \mathsf W_{2,2,2} =\mathsf S_{2,2,2},
\end{array} \right. \qquad \Sigma_{2,2,2}=\frac{|a|}{2}(\frac{\mathsf S_{1,1,1}}{r}-\partial_r \mathsf S_{1,1,1})\\
\label{sys:S301}
& \left\{\begin{array}{l l} & \Ls_{0,1}(\mathsf S_{3,0,1},\mathsf W_{3,0,1}) = \Sigma_{3,0,1}, \\
& -\Delta_1 \mathsf W_{3,0,1} =\mathsf S_{3,0,1},
\end{array} \right. \qquad \Sigma_{3,0,1}=\frac 54 r \partial_r^2 U+\frac{17-8i-8\tilde \lambda_i}{4}\partial_r U, \\
\label{sys:S303}
& \left\{\begin{array}{l l} & \Ls_{0,1}(\mathsf S_{3,0,3},\mathsf W_{3,0,3}) = \Sigma_{3,0,3}, \\
& -\Delta_3 \mathsf W_{3,0,3} =\mathsf S_{3,0,3},
\end{array} \right. \qquad \Sigma_{3,0,3}=\frac r4 \partial_r^2 U-\frac 14 \partial_r U, \\
\label{sys:S311}
& \left\{\begin{array}{l l} & \Ls_{0,1}(\mathsf S_{3,1,1},\mathsf W_{3,1,1}) = \Sigma_{3,1,1}, \\
& -\Delta_1 \mathsf W_{3,1,1} =\mathsf S_{3,1,1},
\end{array} \right.  \\
\label{sys:S313}
& \left\{\begin{array}{l l} & \Ls_{0,3}(\mathsf S_{3,1,3},\mathsf W_{3,1,3}) = \Sigma_{3,1,3}, \\
& -\Delta_3 \mathsf W_{3,1,3} =\mathsf S_{3,1,3},
\end{array} \right. \quad \Sigma_{3,1,3}= \alpha (r\partial_r \mathsf S_{1,1,1}-\mathsf{S}_{1,1,1}+|a|(\frac{\mathsf T_{2,2}}{r}-\frac{\partial_r \mathsf T_{2,2}}{2})) \\
\label{sys:S400}
& \left\{\begin{array}{l l} & \Ls_{0}(\mathsf S_{4,0,0}) =\Sigma_{4,0,0}, \\
& -\Delta \mathsf W_{4,0,0} =\mathsf S_{4,0,0},
\end{array} \right. \qquad \Sigma_{4,0,0}=-\frac{1}{2|a|^4}\Lambda U+\frac{\alpha^2 L_i^\inn}{8\sqrt{2}}\left(\partial_r^2 U+\frac{1}{r}\partial_r U\right)\\
\label{sys:S402}
& \left\{\begin{array}{l l} & \Ls_{0}(\mathsf S_{4,0,2}) =\Sigma_{4,0,2}, \\
& -\Delta_2 \mathsf W_{4,0,2} =\mathsf S_{4,0,2},
\end{array} \right. \qquad \Sigma_{4,0,2}=-\frac{3}{24\sqrt{2}}(r^2\partial_r^2 U+r\partial_r U)+\frac 12 \partial_r^2 U-\frac{1}{2r}\partial_r U\\
\label{sys:S404}
& \left\{\begin{array}{l l} & \Ls_{0}(\mathsf S_{4,0,4}) =\Sigma_{4,0,4}, \\
& -\Delta_4 \mathsf W_{4,0,4} =\mathsf S_{4,0,4},
\end{array} \right. \qquad \Sigma_{4,0,4}=-\frac{3}{24\sqrt{2}}(r^2\partial_r^2 U-r\partial_r U)\\
\label{sys:S402'}
& \left\{\begin{array}{l l} & \Ls_{0}(\mathsf S_{4,0,2}') =0, \\
& -\Delta_2 \mathsf W_{4,0,2}' =\mathsf S_{4,0,2}',
\end{array} \right.
\end{align}
where
$$
\Sigma_{3,1,1}= \alpha\left(  2r\partial_r \mathsf S_{1,1,1}+2(2-i)\mathsf S_{1,1,1}- \partial_r \mathsf T_{2,0}^{(i)}-\tilde \lambda_i \partial_r \hat T_{2}-\frac{\partial_r \mathsf T_{2,2}}{2}-\frac{\mathsf T_{2,2}}{r}-\alpha^{\frac 12}L_i^\inn (\Lambda_i+2\tilde \lambda_i)\partial_r U \right).
$$
 
\subsubsection{The inner approximate eigenfunction}

\noindent The following proposition provides an approximate expansion of the eigenfunction $\phi_i$ in the inner zone $|y_\pm| \ll \nu^{-1}$. 

\begin{proposition}[Inner expansion of $\phi_i$]\label{prop:phi1inn} There exists $\epsilon>0$ such that the following holds true. Let $i = 0,1$ and pick any constants $\tilde \lambda_i ,L_i^\inn,M_i^\inn\in \mathbb R$ with $|\tilde \lambda_i|\lesssim |\ln \nu|^{-1}$ and $|\tilde L_i^\inn|+|\tilde M_i^\inn|\lesssim |\ln \nu|$. There exists a smooth solution $(\phi^\inn_{i,\pm},V^\inn_{i,\pm},R^\inn_{i,\pm})$ of \eqref{eigen:id:Riinn1} with the following properties.

\smallskip

\noindent (i) \emph{Decomposition}. There holds
\begin{align} \label{def:phii_inn}
\phi^\inn_{i,\pm} &= \Lambda U + \alpha \nu^2 T_{2}^{(i)} \pm \alpha^{\frac 32}\nu^3 T_{3} +\alpha^2\nu^4  T_{4}^{(i)}  + \alpha \tilde \lambda_i \nu^2 \hat T_2  +\tilde \lambda_i \alpha^2 \nu^4 \hat T_{4}^{(i)}\\
\nonumber &\qquad \pm \alpha^{\frac 12} \nu L_i^\inn \partial_{x_1}U\pm  \nu (\alpha-\beta)\mathcal S_{1,1}+ \nu^2 (\alpha-\beta)\mathcal S_{2,1}+\nu^2(\alpha-\beta)^2\mathcal S_{2,2} \pm \alpha^{\frac 32}\nu^3L_i^\inn \mathcal S_{3,0}\\
\nonumber & \qquad  \pm \nu^3 (\alpha-\beta)\mathcal S_{3,1} +\nu^4\mathcal S_{4,0} +\alpha^2 \nu^4M_i^\inn \mathcal S_{4,0}',\\
\label{def:Vi_inn} V^\inn_{i,\pm}&= \Phi_{\Lambda U} + \alpha \nu^2 V_2^{(i)} \pm \alpha^{\frac 32}\nu^3 V_{3} +\alpha^2 \nu^4  V_4^{(i)}  + \alpha \tilde \lambda_i \nu^2 \hat V_2 + \tilde \lambda_i\alpha^2 \nu^4 \hat V_4^{(i)}\\
\nonumber &\qquad \pm \alpha^{\frac 12} \nu L_i^\inn \partial_{x_1}\Phi_{U}\pm  \nu (\alpha-\beta)\mathcal W_{1,1}+ \nu^2 (\alpha-\beta)\mathcal W_{2,1}+\nu^2(\alpha-\beta)^2\mathcal W_{2,2} \pm \alpha^{\frac 32}\nu^3L_i^\inn \mathcal W_{3,0}\\
\nonumber & \qquad  \pm \nu^3 (\alpha-\beta)\mathcal W_{3,1} +\nu^4\mathcal W_{4,0} +\alpha^2 \nu^4M_i^\inn \mathcal W_{4,0}',
\end{align}
where the functions above are solutions of \eqref{eigenfunction:id:T2}-\eqref{eigenfunction:id:Poisson-fields} and \eqref{eigenfunction:id:S21}-\eqref{eigenfunction:id:Poisson-fields-SW} given by the formulas \eqref{eigenfunction:id:decomposition-T2i}-\eqref{eigenfunction:id:decomposition-T2} and \eqref{eigenfunction:id:decomposition-mS11}-\eqref{eigenfunction:id:decomposition-mS40'} and where the functions in these formulas are given by Lemmas \ref{lem:T20hatT2}, \ref{lem:T22}, \ref{lem:T3k},  \ref{lem:T3k},  \ref{lem:T40hatT40}, \ref{lem:T42hatT42} and \ref{lem:T44} and Proposition \ref{pr:solution-system-SW}.

\smallskip

\noindent (ii) \emph{Expansion}. We have,
\begin{align} \label{expansion:phii_inn}
- \frac{1}{16\nu^4} \phi^\inn_{i,\pm}(y_\pm) & = \frac{1}{\zeta_\pm^4} +\frac{ \alpha}{\zeta_\pm^2} \left(\frac{i }{2}+\frac{1}{4}\cos 2\theta_\pm \right) \mp \alpha^{\frac 32} \frac{1}{12\sqrt{2}\zeta_\pm}\cos(3\theta_\pm)\\
\nonumber &  +\alpha^2 \left(\frac{i}{4}-\frac{7}{64}+\frac{2i-1}{8}\cos(2\theta_\pm)+\frac{1}{32}\cos(4\theta_\pm)\right) \\
\nonumber & +  \tilde \lambda_i  \left(- \frac{\alpha}{2\zeta_\pm^2} +\alpha^2 \left( (\frac i2-\frac 14)( \ln \zeta_\pm-\ln \nu )+\frac{1-10i}{16}-\frac 14\cos(2\theta_\pm)\right)\right)+\phi_{i,\pm,\sharp}^{\inn ,z }(z_\pm) ,
\end{align} 
where for $\nu\lesssim |z_{\pm}|=\zeta_\pm\lesssim 1$:
\begin{equation}  \label{expansion:phii_inn-estimate}
|\nabla^k \phi_{i,\pm,\sharp}^{\inn ,z }(z_\pm) |\lesssim \nu^{\epsilon} (\nu+\zeta_\pm)^{-4-k-\epsilon}
\end{equation}
and
\begin{align}
& \nonumber \frac{-1}{16\nu^2} V^\inn_{i,\pm}(y_\pm)\\
\nonumber &=  \frac{-1}{4\zeta^2_\pm}\pm \frac{\alpha^{\frac 12}L_i^\inn}{4}\frac{1}{\zeta_\pm}\cos(\theta)\\
\nonumber &  +\alpha \Bigg( \frac{\tilde \lambda_i-i}{4} \ln^2 \zeta_\pm +\left(\frac{i-\tilde \lambda_i}{2}\ln \nu+\frac{2i+1-2\tilde \lambda_i)}{4}\right)\ln \zeta_\pm  +\frac{\tilde \lambda_i-i}{4}\ln^2\nu+\frac{2\tilde \lambda_i-1-2i}{4}\ln \nu+\frac{1}{16}\cos (2\theta)\Bigg)\\
\nonumber &\mp  \alpha^{\frac 32}\zeta_\pm \left( L_i^\inn \frac{1-4i+4\tilde \lambda_i}{32}\cos \theta+ \frac{1}{96\sqrt{2}}\cos(3\theta_\pm) \right)\\
\nonumber &+\alpha^2\zeta^2_\pm \Bigg( \frac{1-2i}{16}\tilde \lambda_i \ln \zeta_\pm +\frac{2i-1}{16} \tilde \lambda_i\ln \nu+\frac{2\tilde \lambda_i+1-2i}{32} \ln \zeta_\pm \cos(2\theta_\pm)+\frac{2i-1- \tilde \lambda_i}{32} \ln \nu \cos(2\theta) \\
\nonumber & \qquad \qquad  +\frac{1}{16}\left(\frac{7}{16}-i+\tilde \lambda_i(\frac92 i-\frac54)\right)+\frac{5-10i+10\tilde \lambda_i-8M_i^\inn}{128}\cos(2\theta_\pm)+\frac{1}{384}\cos(4\theta_\pm) \Bigg)\\
\label{expansion:Vi_inn} & \qquad +V^{\inn,z}_{i,\pm,\sharp}(z_\pm)
\end{align}
where for $\nu\lesssim |z_{\pm}|=\zeta_\pm\lesssim 1$ for $i=0,1$:
\begin{equation}  \label{expansion:Vi_inn-estimate}
|\nabla^k V^{\inn,z}_{1,\pm,\sharp}(z_\pm) |\lesssim \nu^{\epsilon} (\nu+\zeta_\pm)^{-2-k-\epsilon}
\end{equation}
Moreover, assuming $|\nu \partial_\nu \tilde \lambda_i|+|\nu \partial_\nu L_i^\inn|+|\nu \partial_\nu M_i^\inn|\lesssim 1$, we have
\begin{align} \label{expansion:phii_inn-estimate-partialnu-partiala-1}
&|\nabla^k \nu \partial_\nu \phi^\inn_{i,\pm}|\lesssim \langle y \rangle^{-4-k}, \qquad |\nabla^k \partial_a \phi^\inn_{i,\pm}|\lesssim \nu \langle y \rangle^{-2-k},\\
\label{expansion:phii_inn-estimate-partialnu-partiala-2}&|\nabla^k \nu \partial_\nu (\phi^\inn_{i,\pm}-\Lambda U\mp \alpha^{\frac 12} \nu L_i^\inn \partial_{x_1}U)|\lesssim \nu \langle y \rangle^{-2-k}, \quad |\nabla^k \partial_a (\phi^\inn_{i,\pm}-\Lambda U\mp \alpha^{\frac 12} \nu L_i^\inn \partial_{x_1}U)|\lesssim \langle y \rangle^{-2-k}.
\end{align}
\smallskip

\noindent (iii) \emph{Estimate for the error}. We have for $ r_\pm  \ll \nu^{-1}$,
\begin{align} \label{estimate:Ri_inn-pointwise}
|\nabla^k R^\inn_{i,\pm}(y)|\lesssim \frac{\nu^4}{|\ln \nu|^2}\langle y \rangle^{-2}+\nu^5\langle y \rangle^{-1-k}
\end{align}

\end{proposition}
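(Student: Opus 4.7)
The plan has three steps. First, I would define $\phi^\inn_{i,\pm}$ and $V^\inn_{i,\pm}$ directly by the formulas \eqref{def:phii_inn}--\eqref{def:Vi_inn}, where each profile $T^{(i)}_\cdot, \hat T_\cdot, \mathcal S_\cdot$ and its Poisson field is taken from the explicit constructions given in Lemmas \ref{lem:T20hatT2}, \ref{lem:T22}, \ref{lem:T3k}, \ref{lem:T40hatT40}, \ref{lem:T42hatT42}, \ref{lem:T44} and Proposition \ref{pr:solution-system-SW}. By construction each profile solves the radial or mode-$k$ component of one of the elliptic systems \eqref{eigenfunction:id:T2}--\eqref{eigenfunction:id:Poisson-fields} or \eqref{eigenfunction:id:S11}--\eqref{eigenfunction:id:Poisson-fields-SW}. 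Summing back the spherical-harmonic decompositions \eqref{eigenfunction:id:decomposition-T2i}--\eqref{eigenfunction:id:decomposition-V2} and \eqref{eigenfunction:id:decomposition-mS11}--\eqref{eigenfunction:id:decomposition-mS40'} yields solutions of \eqref{eigenfunction:id:T2}--\eqref{eigenfunction:id:S40'}. Inserting the ansatz into the renormalized system \eqref{eigen:id:Riinn1} and using these identities, together with $\Ls_0(\Lambda U,\Phi_{\Lambda U})=0$ and $\Ls_0(\partial_{x_1}U,\Phi_{\partial_{x_1}U})=0$, produces by direct algebraic cancellation the explicit remainder \eqref{eigenfunctions:id:Rinn-decomposition}. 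This first part is purely a verification.

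Second, I would extract the expansions \eqref{expansion:phii_inn}--\eqref{expansion:Vi_inn} from the large-$r$ asymptotics of each building block. For the leading profiles we use the classical identities $\Lambda U=-16/r^4+O(r^{-6})$ and $\partial_{x_1}U=-16\cos\theta/r^3+O(r^{-5})$, while the companion lemmas supply the asymptotics of the higher profiles, which contain the characteristic $\log r$ factors created by the Fredholm resonance of each source with the relevant kernel element of $\Ls_{0,k}$. Substituting $r=\zeta/\nu$ and splitting $\log r=\log\zeta-\log \nu$ produces both the $\log \zeta_\pm$ and the $\log\nu$ contributions displayed in \eqref{expansion:phii_inn}--\eqref{expansion:Vi_inn}, with all coefficients fixed once the free parameters $\tilde\lambda_i$, $L_i^\inn$, $M_i^\inn$ satisfy the hypotheses $|\tilde\lambda_i|\lesssim|\ln\nu|^{-1}$, $|L_i^\inn|+|M_i^\inn|\lesssim|\ln\nu|$. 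The remainders $\phi^{\inn,z}_{i,\pm,\sharp}$ and $V^{\inn,z}_{i,\pm,\sharp}$ are defined as the difference, and satisfy \eqref{expansion:phii_inn-estimate}--\eqref{expansion:Vi_inn-estimate} since subleading terms in the asymptotics of each profile decay faster by at least one power, and all contributions from $\nu^k\mathcal S$ or $\nu^k(\alpha-\beta)\mathcal S$ with $k\geq 3$ fall under the $\nu^\epsilon(\nu+\zeta_\pm)^{-4-k-\epsilon}$ budget for a small $\epsilon>0$. The $\nu\partial_\nu$ and $\partial_a$ estimates \eqref{expansion:phii_inn-estimate-partialnu-partiala-1}--\eqref{expansion:phii_inn-estimate-partialnu-partiala-2} follow by differentiating the same explicit formulas, noting that $\nu\partial_\nu$ acting on the prefactors $\nu^k$ only produces a finite numerical constant, and that $\partial_a$ acts only on the subleading profiles proportional to $\alpha-\beta$ or to $G_{i,\pm}$.

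Third, for the error estimate \eqref{estimate:Ri_inn-pointwise} I would bound each of the explicit terms in \eqref{eigenfunctions:id:Rinn-decomposition}. The $\tilde \lambda_i^2\nu^4\hat T_2$ and $\tilde\lambda_i^2\nu^6\hat T_4^{(i)}$ terms from \eqref{eigenfunctions:id:tildeRinn-decomposition} give the dominant $|\ln\nu|^{-2}\nu^4\langle y\rangle^{-2}$ contribution using $|\tilde\lambda_i|\lesssim|\ln\nu|^{-1}$ and the $\langle y\rangle^{-2}$ decay of $\hat T_2$; all other listed terms in \eqref{eigenfunctions:id:tildeRinn-decomposition} carry at least a $\nu^5$ prefactor and combine with the polynomial bounds on the profiles to yield $\nu^5\langle y\rangle^{-1-k}$. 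The terms on the last four lines of \eqref{eigenfunctions:id:Rinn-decomposition} coming from $U(y\pm 2a/\nu)$, $\nabla U(y\pm 2a/\nu)$ and $\tilde G_\pm$ are estimated using Lemma \ref{lem:taylorpoissonfield}, which gives $|\nabla^k\tilde G_\pm|\lesssim\nu^5\langle y\rangle^{4-k}$ for $|y|\ll\nu^{-1}$ and $|U(y\pm 2a/\nu)-\nu^4/(2|a|^4)|\lesssim\nu^6\langle y\rangle^2$; pairing with the profile bounds from step two absorbs all resulting terms into the same $\nu^5\langle y\rangle^{-1-k}$ budget.

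The main obstacle is the bookkeeping in step two: collecting the contributions at each order $\nu^m$ and each harmonic $\cos(k\theta_\pm)$ and verifying that the explicit coefficients produced by the asymptotics of the profiles reassemble exactly into the coefficients displayed in \eqref{expansion:phii_inn}--\eqref{expansion:Vi_inn}. In particular, the $\ln\nu$ coefficients at order $\alpha^2\nu^4$ and $\tilde\lambda_i\alpha^2\nu^4$ are the critical ones, as they will govern the compatibility condition in the subsequent matched-asymptotic step \ref{sec:matched-asymptotic} that fixes $\tilde\lambda_i$, and any error of order $|\ln\nu|^{-1}$ would destroy the sharpness of \eqref{eigenfunction:id:tildelambda0}--\eqref{eigenfunction:id:tildelambda1}. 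This is also the reason the error \eqref{estimate:Ri_inn-pointwise} saves only a $|\ln\nu|^{-2}$ factor rather than a power of $\nu$.
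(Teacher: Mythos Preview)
Your proposal is correct and follows essentially the same three-step route as the paper: define $(\phi^\inn_{i,\pm},V^\inn_{i,\pm})$ by the explicit formulas and read off $R^\inn_{i,\pm}$ from \eqref{eigenfunctions:id:Rinn-decomposition}; substitute the large-$r$ asymptotics of each profile (with $r=\zeta_\pm/\nu$ and $\ln r=\ln\zeta_\pm-\ln\nu$) to obtain the displayed expansions; then bound each term of \eqref{eigenfunctions:id:tildeRinn-decomposition} and \eqref{eigenfunctions:id:Rinn-decomposition} individually, with the $\tilde\lambda_i^2\nu^4\hat T_2$ contribution furnishing the dominant $|\ln\nu|^{-2}\nu^4\langle y\rangle^{-2}$. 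One small correction: the Taylor remainder for $U(y\pm 2a/\nu)-\nu^4/(2|a|^4)$ is $O(\nu^5\langle y\rangle)$ rather than $O(\nu^6\langle y\rangle^2)$, since the first-order term $\nabla U(2a/\nu)\cdot y\sim\nu^5|y|$ survives; this still feeds into the $\nu^5\langle y\rangle^{-1-k}$ budget after multiplication by $|\Lambda U|\lesssim\langle y\rangle^{-4}$.
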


\begin{proof}

We take $ \phi^\inn_{i,\pm}$ and $V^\inn_{i,\pm}$ as given by \eqref{def:phii_inn} and \eqref{def:Vi_inn} with the functions involved in the formulas given by Lemmas \ref{lem:T20hatT2}, \ref{lem:T22}, \ref{lem:T3k},  \ref{lem:T3k},  \ref{lem:T40hatT40}, \ref{lem:T42hatT42} and \ref{lem:T44} and Proposition \ref{pr:solution-system-SW}. We then let $R_{i,\pm}^\inn$ be given by the first equation in \eqref{eigen:id:Riinn1}. Then $(\phi^\inn_{i,\pm},V^\inn_{i,\pm},R^\inn_{i,\pm})$ is indeed a smooth solution of \eqref{eigen:id:Riinn1}.

\medskip

\begin{align} 
\label{eigenfunction:id:asymptotichatV40infty}
 \hat V_{4,0}^{(i)} &= (2i-1) r^2\ln r + \Big(\frac 54-\frac{9}{2}i\Big)r^2+ \Oc_{r\to \infty}(\ln^3 r),
\end{align}

\noindent \textbf{Step 1}. \emph{Proof of the expansion \eqref{expansion:phii_inn} for $\phi_{i,\pm}^\inn$.} Let $1<r= |y| \lesssim \nu^{-1}$. We define 
$$
\epsilon = \min\{1,\sqrt{8}-2,\sqrt{13}-3,\sqrt{5}-2,\sqrt{20}-4\}>0.
$$
We estimate all terms in \eqref{def:phii_inn}. For the first one,
\begin{equation} \label{eigenfunctions:id:expansion-LambdaU}
\Lambda U(y)=-\frac{16}{r^4}+\Oc(r^{-4-\epsilon}).
\end{equation}
For the second, by \eqref{eigenfunction:id:decomposition-T2i} and then \eqref{eigenfunction:id:asymptoticT20infty0}, \eqref{eigenfunction:id:asymptoticT20infty} and \eqref{eigenfunction:id:asymptoticT22infty},
\begin{equation} \label{eigenfunctions:id:expansion-T2i}
 T_2^{(i)}(y) = T_{2,0}^{(i)}(r) + \mathsf T_{2,2}(r)\cos (2\theta)=-\frac{8i }{r^2}-\frac{4}{r^2}\cos 2\theta +\Oc(r^{-2-\epsilon}).
\end{equation}
 For the third, by \eqref{eigenfunction:id:decomposition-T3} and then \eqref{eigenfunction:id:asymptoticT310}  and \eqref{eigenfunction:id:asymptoticT33infty},
\begin{equation} \label{eigenfunctions:id:expansion-T3}
T_3(y)=  \mathsf T_{3,1}(r) \cos (\theta) + \mathsf T_{3,3}(r) \cos (3\theta)=\frac{4}{3\sqrt{2}r}\cos(3\theta)+\Oc(r^{-1-\epsilon}).
\end{equation}
For the fourth, by \eqref{eigenfunction:id:decomposition-T4i} and then and \eqref{eigenfunction:id:asymptoticT40infty}, \eqref{eigenfunction:id:asymptoticT42iinfty} and \eqref{eigenfunction:id:asymptoticT44infty},
\begin{equation} \label{eigenfunctions:id:expansion-T4i}
T_4^{(i)}(y) = T_{4,0} ^{(i)}(r) + \mathsf T_{4,2}^{(i)}(r)\cos (2\theta) + \mathsf T_{4,4}(r)\cos (4\theta) =\frac74-4i+(2-4i)\cos(2\theta) -\frac 12\cos(4\theta)+\Oc(r^{-\epsilon})
\end{equation}
For the fifth, by \eqref{eigenfunction:id:asymptotichatT2infty}:
\begin{equation} \label{eigenfunctions:id:expansion-hatT2}
\hat T_{2}(y)=\frac{8}{r^2}+\Oc(r^{-2-\epsilon})
\end{equation}
For the sixth, by \eqref{eigenfunction:id:decomposition-T2} and then \eqref{eigenfunction:id:asymptotichatT40infty} and \eqref{eigenfunction:id:asymptotichatT42iinfty}:
\begin{equation} \label{eigenfunctions:id:expansion-hatT4}
\hat T_4^{(i)} =\hat T_{4,0}^{(i)}(r)  + \hat{\mathsf T}_{4,2}(r)\cos (2\theta)= (4-8i) \ln r +10i-1+4\cos(2\theta)+\Oc(r^{-\epsilon})
\end{equation}
Injecting \eqref{eigenfunctions:id:expansion-LambdaU}-\eqref{eigenfunctions:id:expansion-hatT4} in \eqref{eigenfunctions:id:tildephiinn} we obtain
\begin{align} \label{eigenfunctions:id:expansion-tildephiiinn}
\tilde \phi^\inn_{i,\pm}(y) & =-\frac{16}{r^4} + \alpha \nu^2\left(-\frac{8i }{r^2}-\frac{4}{r^2}\cos 2\theta \right) \pm \alpha^{\frac 32}\nu^3 \frac{4}{3\sqrt{2}r}\cos(3\theta)\\
\nonumber & \qquad  +\alpha^2\nu^4  \left(\frac74-4i+(2-4i)\cos(2\theta) -\frac 12 \cos(4\theta)\right) + \alpha \tilde \lambda_i \nu^2\frac{8}{r^2}\\
\nonumber &\qquad +\tilde \lambda_i \alpha^2 \nu^4 \left( (4-8i)\ln r +10i-1+4\cos(2\theta)\right)+\Oc(r^{-4-\epsilon})
\end{align}
where we used that for $1<r\lesssim \nu^{-1}$ we have $\nu^k r^{k-4-\epsilon}\lesssim r^{-4-\epsilon}$ for $k=0,1,2,3,4$.

Next, injecting \eqref{eigenfunction:id:decomposition-mS11}-\eqref{eigenfunction:id:decomposition-mS40'}, and \eqref{bd:estimate-S301-W301}-\eqref{bd:estimate-S404-W404} in \eqref{eigenfunctions:id:barphiinn} and using $|\tilde L_i^\inn|,|\tilde M_i^\inn|\lesssim |\ln \nu|$ we get
\begin{align} 
\nonumber \bar \phi^\inn_{i,\pm} (y) &=\pm  \nu \Oc(|\ln \nu|r^{-5})\pm  \nu (\alpha-\beta)\Oc(r^{-3})+ \nu^2 (\alpha-\beta)\Oc(|\ln \nu|r^{-2})+ \nu^2 (\alpha-\beta)^2\Oc(r^{-2})\\
&\qquad \pm \nu^3 \Oc(|\ln \nu|r^{-3})\pm \nu^3 (\alpha-\beta)\Oc(|\ln \nu|r^{-1})+\nu^4\Oc(|\ln \nu|r^{-2})+\nu^4\Oc(|\ln \nu|r^{-2})\\
\label{eigenfunctions:id:expansion-barphiinn} &=\Oc(\nu |\ln \nu|r^{-4})
\end{align}
for $1<r\lesssim \nu^{-1}$, where we used the assumption $|a-a_\infty|\lesssim \nu$ so that $|\alpha-\beta|\lesssim \nu$. Injecting \eqref{eigenfunctions:id:expansion-tildephiiinn} and \eqref{eigenfunctions:id:expansion-barphiinn} in \eqref{eigenfunctions:id:decomposition-phiinn}, and using $r_\pm=|y_\pm|=\nu^{-1}\zeta_\pm$ we get:
\begin{align*}
\nu^{-4} \phi^\inn_{i,\pm}(y_\pm) & =-\frac{16}{\zeta_\pm^4} + \alpha \left(-\frac{8i }{\zeta_\pm^2}-\frac{4}{\zeta_\pm^2}\cos 2\theta_\pm \right) \pm \alpha^{\frac 32} \frac{4}{3\sqrt{2}\zeta_\pm}\cos(3\theta_\pm)\\
\nonumber & \qquad  +\alpha^2 \left(\frac74-4i+(2-4i)\cos(2\theta_\pm)-\frac 12\cos(4\theta_\pm)\right) + \alpha \tilde \lambda_i \frac{8}{\zeta_\pm^2}\\
\nonumber &\qquad +\tilde \lambda_i \alpha^2  \left((4-8i) \ln \zeta_\pm-\ln \nu +10i-1+4\cos(2\theta_\pm)\right)+\Oc(\nu^\epsilon \zeta_\pm^{-4-\epsilon}).
\end{align*}
The above identity implies the desired identity \eqref{expansion:phii_inn} with the estimate \eqref{expansion:phii_inn-estimate} for $k=0$. This estimate propagates for higher order derivatives since it does for all the $\Oc()$'s in the computations above. The estimates \eqref{expansion:phii_inn-estimate-partialnu-partiala-1} and \eqref{expansion:phii_inn-estimate-partialnu-partiala-1} are proved similarly, suffice it to use the explicit polynomial dependance in $\alpha$ and $\nu$ in the decomposition \eqref{def:phii_inn}.

\medskip

\noindent \textbf{Step 2}. \emph{Proof of the expansion \eqref{expansion:Vi_inn} for $V_{i,\pm}^\inn$.} We estimate similarly to Step 1 all terms in \eqref{def:Vi_inn} for $1<|y|=r\lesssim \nu^{-1}$. For the first one,
\begin{equation} \label{eigenfunctions:id:expansion-PhiLambdaU}
\Phi_{\Lambda U}(y)=\frac{4}{r^2}+\Oc(r^{-2-\epsilon}).
\end{equation}
For the second, by \eqref{eigenfunction:id:decomposition-V2i} and then \eqref{eigenfunction:id:asymptoticT22infty}, \eqref{eigenfunction:id:asymptoticT20infty} and \eqref{eigenfunction:id:asymptotichatT2infty}:
\begin{equation} \label{eigenfunctions:id:expansion-V2i}
 V_2^{(i)} (y) =V_{2,0}^{(i)}(r) + \mathsf V_{2,2}(r)\cos (2\theta)= 4i \ln^2 r -(4+8i) \ln r-\cos 2\theta +  \Oc_{r \to \infty}.
\end{equation}
For the third, by \eqref{eigenfunction:id:decomposition-V3} and then \eqref{eigenfunction:id:asymptoticT31infty} and \eqref{eigenfunction:id:asymptoticT33infty}:
\begin{align} 
V_3(y) =  \mathsf V_{3,1}(r) \cos (\theta) + \mathsf V_{3,3}(r) \cos (3\theta)& = \Oc(\frac{\ln r}{r})+\frac{1}{6\sqrt{2}}r\cos(3\theta)+\Oc(r^{4-\sqrt{13}})\nonumber \\
& =\frac{1}{6\sqrt{2}}r\cos(3\theta)+\Oc(r^{1-\epsilon}).\label{eigenfunctions:id:expansion-V3}
\end{align}
For the fourth, by \eqref{eigenfunction:id:decomposition-V4i} and then \eqref{eigenfunction:id:asymptoticT40infty}, \eqref{eigenfunction:id:asymptoticT42iinfty} and \eqref{eigenfunction:id:asymptoticV44infty}
\begin{align} \label{eigenfunctions:id:expansion-V4i}
V_4^{(i)}(y) & = V_{4,0}^{(i)}(r) + \mathsf V_{4,2}^{(i)}(r)\cos (2\theta) + \mathsf V_{4,4}(r)\cos (4\theta) \\
\nonumber &=  \left(i-\frac{7}{16} \right)r^2+\left( (i-\frac 12)r^2\ln r +(\frac{5i}{4}-\frac 58)r^2\right)\cos (2\theta)-\frac{r^2}{24}\cos(4\theta)+\Oc_{r\to \infty}(r^{2-\epsilon})
\end{align}
For the fifth, by \eqref{eigenfunction:id:asymptotichatT2infty}
\begin{equation} \label{eigenfunctions:id:expansion-hatV2}
\hat V_{2}(y)= -4 \ln^2 r+ 8 \ln r+ \Oc_{r \to \infty}\Big( \frac{\ln^2 r }{r^2} \Big)
\end{equation}
For the sixth, by \eqref{eigenfunction:id:decomposition-V2} and then \eqref{eigenfunction:id:asymptotichatV40infty} and \eqref{eigenfunction:id:asymptotichatT42iinfty}
\begin{align} \label{eigenfunctions:id:expansion-hatV4}
 \hat V_4^{(i)} & = \hat V_{4,0}^{(i)}(r)+ \hat{\mathsf V}_{4,2}(r) \cos (2\theta) \\
\nonumber & =  (2i-1)r^2\ln r+\left(\frac{5}{4}-\frac{9}{2}i\right)r^2-(r^2\ln r+\frac 54 r^2)\cos(2\theta)+\Oc_{r\to \infty}(r^{2-\epsilon})
\end{align}
Injecting \eqref{eigenfunctions:id:expansion-PhiLambdaU}-\eqref{eigenfunctions:id:expansion-hatV4} in \eqref{eigenfunctions:id:tildeViinn} we obtain for $1<|y|=r\lesssim \nu^{-1}$:
\begin{align} \nonumber
\nonumber \tilde V^\inn_{i,\pm}(y)&=\frac{4}{r^2} +\alpha\nu^2\left(4i \ln^2 r -(4+8i) \ln r-\cos 2\theta\right) \pm \frac{\alpha^{\frac 32}\nu^3}{6\sqrt{2}} r\cos(3\theta)\\
\nonumber &+\alpha^2 \nu^4 \left(   \left(i-\frac{7}{16} \right)r^2+\left( (i-\frac 12)r^2\ln r +(\frac{5i}{4}-\frac 58)r^2\right)\cos (2\theta)-\frac{r^2}{24}\cos(4\theta)\right)\\
\nonumber &+\alpha\nu^2 \tilde \lambda_i \left( -4 \ln^2 r+ 8 \ln r\right)+\alpha^2\nu^4 \tilde \lambda_i\left( (2i-1)r^2\ln r+\left(\frac{5}{4}-\frac{9}{2}i\right)r^2-(r^2\ln r+\frac 54 r^2)\cos(2\theta)\right)\\
\nonumber  & +\Oc_{r\to \infty}(r^{-2-\epsilon})\\
\nonumber &= \frac{4}{r^2}+\alpha \nu^2\left( 4(i-\tilde \lambda_i) \ln^2 r-4(2i+1-2\tilde \lambda_i)\ln r -\cos(2\theta) \right) \pm \frac{\alpha^{\frac 32}\nu^3}{6\sqrt{2}}r\cos(3\theta)\\
\nonumber  &\qquad+\alpha^2\nu^4r^2 \Bigg( (2i-1)\tilde \lambda_i \ln r +(1-\frac 12-\tilde \lambda_i) \ln r \cos(2\theta)+i-\frac{7}{16}+\tilde \lambda_i(\frac 54-\frac 92 i)\\
 \label{eigenfunctions:id:expansion-tildeViinn-tech}  &\qquad \qquad \qquad +(\frac{5i}{4}-\frac 58-\frac{5}{4}\tilde \lambda_i)\cos(2\theta)-\frac{1}{24}\cos(4\theta) \Bigg)+\Oc_{r\to \infty}(r^{-2-\epsilon})
\end{align}
Next, using $\partial_{x_1}\Phi_U(y)=-\frac{4}{r}\cos \theta$, injecting \eqref{eigenfunction:id:decomposition-mS11}-\eqref{eigenfunction:id:decomposition-mS40'}, and \eqref{bd:estimate-S301-W301}-\eqref{bd:estimate-S402'-W402'} in \eqref{eigenfunctions:id:barVinn} we obtain for $1<r\lesssim \nu^{-1}$,
\begin{align}\nonumber 
\bar V_{i,\pm}&=\mp 4 \alpha^{\frac 12}\nu L_i^\inn (r^{-1}\cos \theta+\Oc(r^{-3})) \pm  \nu (\alpha-\beta)\Oc(r^{-1}\ln r)+ \nu^2 (\alpha-\beta)\Oc((|\ln \nu| + |\alpha - \beta|)|\ln r|^2)\\
\nonumber &\qquad  \pm \alpha^{\frac 32}\nu^3 L_i^\inn (\frac 12 -2i+2\tilde \lambda_i)(r\cos \theta+\Oc(r^{-1}\ln r))\pm \nu^3 (\alpha-\beta)\Oc(|\ln \nu|r \ln r)\\
\nonumber &\qquad+\nu^4 \Oc(|\ln \nu|\ln^2 r)+\alpha^2\nu^4M_i^\inn ( r^2 \cos(2\theta)+\Oc(1))\\
\label{eigenfunctions:id:expansion-barVinn} &=\mp 4 \alpha^{\frac 12}\nu L_i^\inn r^{-1}\cos \theta   \pm \alpha^{\frac 32}\nu^3 L_i^\inn (\frac 12 -2i+2\tilde \lambda_i)r\cos \theta +\alpha^2\nu^4M_i^\inn  r^2 \cos(2\theta) +\Oc(\nu|\ln \nu| r^{-2}\ln^2 r)
\end{align}
where we used that $|\alpha-\beta|\lesssim \nu$. Injecting \eqref{eigenfunctions:id:expansion-tildeViinn-tech} and \eqref{eigenfunctions:id:expansion-barVinn} in \eqref{eigenfunctions:id:decomposition-Vinn}, and using $r=|y|=\nu^{-1}\zeta$ (dropping the $\pm$ subscript for simplicity) we get:
\begin{align*}
& \nu^{-2} V^\inn_{i,\pm}(y) \\
&=\frac{4}{\zeta^2}\mp\frac{4\alpha^{\frac 12}L_i^\inn}{\zeta}\cos(\theta)\\
& \qquad +\alpha  \Bigg( 4(i-\tilde \lambda_i) \ln^2 \zeta- \left(8(i-\tilde \lambda_i)\ln \nu+4(2i+1-2\tilde \lambda_i)\right)\ln \zeta \\
&\qquad  +4(i-\tilde \lambda_i)\ln^2\nu+4(2i+1-2\tilde \lambda_i)\ln \nu-\cos (2\theta)\Bigg)\pm \alpha^{\frac 32}\zeta \left(\frac{1}{6\sqrt{2}}\cos(3\theta)+L_i^\inn (\frac 12 -2i+2\tilde \lambda_i)\cos \theta \right)\\
 &\quad+\alpha^2 \zeta^2 \Bigg( (2i-1)\tilde \lambda_i \ln \zeta -(2i-1)\tilde \lambda_i \ln \nu+(i-\frac 12 -\tilde \lambda_i) \ln \zeta \cos(2\theta)-(i-\frac 12 -\tilde \lambda_i)  \ln \nu \cos(2\theta)  \\
 &\qquad \qquad +i-\frac{7}{16}+\tilde \lambda_i(\frac 54-\frac 92 i) +(M_i^\inn+\frac{5i}{4}-\frac 58-\frac{5}{4}\tilde \lambda_i)\cos(2\theta)-\frac{1}{24}\cos(4\theta) \Bigg) +\Oc(\nu^\epsilon \zeta_\pm^{-2-\epsilon})
\end{align*}
The above identity implies the desired identity \eqref{expansion:Vi_inn} with the estimate \eqref{expansion:Vi_inn-estimate}. These estimates propagate for higher order derivatives since it does for all the $\Oc()$'s in the computations above.

\medskip

\noindent \textbf{Step 3}. \emph{Proof of the estimate \eqref{estimate:Ri_inn-pointwise} for $R_{i,\pm}^\inn$}. We estimate all terms in \eqref{eigenfunctions:id:Rinn-decomposition} for $|y|\ll \nu^{-1}$.

\smallskip

\noindent \underline{Estimate for $\tilde R^\inn_{i,\pm}(\tilde \phi^\inn_{i, \pm},\tilde V^\inn_{i,\pm})$}. We estimate all terms in \eqref{eigenfunctions:id:tildeRinn-decomposition}. For the first one, using \eqref{eigenfunctions:id:expansion-hatT2} and $|\tilde \lambda_i|\lesssim |\ln \nu|^{-1}$,
\begin{align} \label{eigenfunctions:bd:estimatetildeRiin-1}
 \tilde \lambda_i^2 \nu^4 \hat T_2  =\Oc \left(\frac{\nu^4}{|\log \nu|^2}\langle y \rangle^{-2}\right)
\end{align}

For the second, by  \eqref{eigenfunctions:id:expansion-hatT2}, \eqref{eigenfunctions:id:expansion-T3}, \eqref{eigenfunctions:id:expansion-T2i}, \eqref{eigenfunctions:id:G1} and \eqref{eigenfunctions:id:G2}, using $|\tilde \lambda_i|\lesssim |\ln \nu|^{-1}$,
\begin{align} \label{eigenfunctions:bd:estimatetildeRiin-2}
&\nu^5\left(\mp  \Lambda_i T_3 \mp \nabla.(T_3G_1)\mp  \nabla.(T_2^{(i)}G_2) \right) +\tilde \lambda_i \nu^5\left( \mp \nabla.(\hat T_2 G_2)\mp 2T_3 \right)\\
\nonumber &=  \nu^5 (\Oc (\langle y \rangle^{-1})+\Oc (\langle y \rangle^{-1})+\Oc (\langle y \rangle^{-1}))+\nu^5\Oc(|\ln \nu|^{-1}) (\Oc (\langle y \rangle^{-1})+\Oc (\langle y \rangle^{-1}))  = \Oc(\nu^5 \langle y \rangle^{-1})
\end{align}

For the third, using \eqref{eigenfunctions:id:expansion-T2i}, \eqref{eigenfunctions:id:expansion-T3}, \eqref{eigenfunctions:id:expansion-T4i}, \eqref{eigenfunctions:id:G1}, \eqref{eigenfunctions:id:G2} and \eqref{eigenfunctions:id:G3}:
\begin{align}
\nonumber \nu^6\left( \Lambda_i T_4^{(i)} +\nabla.(T_4^{(i)}G_1)+ \nabla.(T_3G_2)+ \nabla.(T_2^{(i)}G_3) \right)& =  \nu^6 (\Oc (1)+\Oc (1)+\Oc (1)+\Oc (1))\\
\label{eigenfunctions:bd:estimatetildeRiin-3}& =\Oc(\nu^6)
\end{align}

For the fourth, by \eqref{eigenfunctions:id:expansion-hatT2}, \eqref{eigenfunctions:id:expansion-T4i}, \eqref{eigenfunctions:id:expansion-hatT4}, \eqref{eigenfunctions:id:G1}, \eqref{eigenfunctions:id:G3}, and using $|\tilde \lambda_i|\lesssim |\ln \nu|^{-1}$,
\begin{align} \label{eigenfunctions:bd:estimatetildeRiin-4}
& \tilde \lambda_i \nu^6\left(  \Lambda_i \hat T_4^{(i)}+ \nabla.(\hat T_4^{(i)}G_1)+ \nabla.(\hat T_2G_3) +2T_4^{(i)}\right) +2 \tilde \lambda_i^2 \nu^6 \hat T_4^{(i)}\\
\nonumber &=\nu^6 \Oc(|\ln \nu|^{-1})(\Oc(\langle \ln \langle y \rangle\rangle)+\Oc(\langle \ln \langle y \rangle\rangle)+\Oc(1)+\Oc(1))+\nu^6 \Oc(|\ln \nu|^{-2})\Oc(\langle \ln \langle y \rangle\rangle) =\Oc (\nu^6)
\end{align}
where we used $\ln \langle y \rangle \lesssim |\ln \nu|$ for $|y|\lesssim \nu^{-1}$. We use \eqref{eigenfunctions:id:expansion-T3}, \eqref{eigenfunctions:id:expansion-T4i}, \eqref{eigenfunctions:id:expansion-hatT4}, \eqref{eigenfunctions:id:G2}, \eqref{eigenfunctions:id:G3} and $|\tilde \lambda_i|\lesssim |\ln \nu|^{-1}$ to estimate the fifth term,
\begin{align} \label{eigenfunctions:bd:estimatetildeRiin-5}
& \nu^7 \left( \nabla.(T_4^{(i)} G_{2})+ \nabla.(T_3 G_{3}) \right) +\tilde \lambda_i \nu^7 \nabla . (\hat T_4^{(i)} G_2)\\
\nonumber &= \nu^7 (\Oc(\langle y \rangle)+\Oc(\langle y \rangle))+\nu^7 \Oc(|\ln \nu|^{-1})\Oc(\langle y \rangle \langle \ln \langle y \rangle \rangle)=\Oc(\nu^7 \langle y \rangle)
\end{align}
where we used $\ln \langle y \rangle \lesssim |\ln \nu|$. Finally, for the sixth one, by \eqref{eigenfunctions:id:expansion-T4i}, \eqref{eigenfunctions:id:expansion-hatT4}, \eqref{eigenfunctions:id:G3} and $|\tilde \lambda_i|\lesssim |\ln \nu|^{-1}$:
\begin{align} \label{eigenfunctions:bd:estimatetildeRiin-6}
& -\nu^8\nabla.(T_4^{(i)} G_{3}) -\alpha^4 \tilde \lambda_i \nu^8\nabla.(\hat T_4^{(i)} G_{3})\\
\nonumber &= \nu^8\Oc(\langle y \rangle^2)+\nu^8 \Oc(|\ln \nu|^{-1})\Oc(\langle y \rangle^2 \langle \ln \langle y\rangle \rangle) = \Oc(\nu^8\langle y \rangle^{-2}).
\end{align}
Injecting \eqref{eigenfunctions:bd:estimatetildeRiin-1}-\eqref{eigenfunctions:bd:estimatetildeRiin-6} in \eqref{eigenfunctions:id:Rinn-decomposition}, and using that $\nu^{5+k}\langle y \rangle^{k-1}\lesssim \nu^5 \langle y \rangle^{-1}$ for $|y|\lesssim \nu^{-1}$ we obtain
\begin{align}
\nonumber \tilde R^\inn_{i,\pm}(\tilde \phi^\inn_{i, \pm},\tilde V^\inn_{i,\pm}) & = \Oc\left(\frac{\nu^4}{|\log \nu|^2}\langle y \rangle^{-2}\right)+\Oc(\nu^5 \langle y \rangle^{-1})++\Oc(\nu^6)++\Oc(\nu^7 \langle y \rangle)++\Oc(\nu^8 \langle y \rangle^{2})\\
\label{eigenfunctions:bd:estimatetildeRiin}&= \Oc\left(\frac{\nu^4}{|\log \nu|^2}\langle y \rangle^{-2}\right)+\Oc(\nu^5 \langle y \rangle^{-1}).
\end{align}

\noindent \underline{Estimate for the subleading terms}. We estimate all remaining terms in \eqref{eigenfunctions:id:Rinn-decomposition}. We recall that $|\alpha-\beta|\lesssim \nu$. For the second term, we inject  \eqref{bd:estimate-S301-W301}-\eqref{bd:estimate-S404-W404} in \eqref{eigenfunctions:id:barphiinn} and get
\begin{align}
\nonumber &  \bar  \phi_{i, \pm}^\inn \mp \nu (\alpha-\beta)\mathcal S_{1,1}\mp \alpha^{\frac12}\nu L_i^\inn \partial_{x_1}U \\
\nonumber & =  \nu^2 (\alpha-\beta)\mathcal S_{2,1}+\nu^2(\alpha-\beta)^2\mathcal S_{2,2}\pm\alpha^{\frac 32}\nu^3 L_i^\inn \mathcal S_{3,0}\pm \nu^3 (\alpha-\beta)\mathcal S_{3,1}+\nu^4\mathcal S_{4,0} +\alpha^2\nu^4 M_i^\inn \mathcal S_{4,0}'\\
\nonumber &= \Oc(\nu^3)\Oc(\langle y \rangle^{-2})+\Oc(\nu^4)\Oc(\langle y \rangle^{-2})+\nu^3\Oc(\langle y \rangle^{-3})+\Oc(\nu^4)\Oc(\langle y \rangle^{-1})+\nu^4\Oc( \langle y \rangle^{-2})+\nu^4\Oc( \langle y \rangle^{-2})\\
\label{eigenfunctions:bd:estimate-barphiinn-tech1} &= \Oc(\nu^3 \langle y \rangle^{-2}).
 \end{align}
 By \eqref{eigenfunctions:bd:estimate-barphiinn-tech1} and \eqref{eigenfunctions:id:G1} we have
\begin{align} \nonumber
&  - \alpha  \nu^2\left( \Lambda_i\left( \bar  \phi_{i, \pm}^\inn \mp \nu (\alpha-\beta)\mathcal S_{1,1}\mp \alpha^{\frac 12}\nu L^\inn_i\partial_{x_1}U\right)+\nabla .\left(\left(\bar  \phi_{i, \pm}^\inn   \mp \nu (\alpha-\beta)\mathcal S_{1,1}\mp \alpha^{\frac 12}\nu L^\inn_i\partial_{x_1}U\right) G_{1,\pm}  \right)\right)\\
\label{eigenfunctions:bd:estimatebarRiin-1} &\qquad =\Oc(\nu^2) \Oc(\nu^3 \langle y \rangle^{-2})+\Oc(\nu^2) \Oc(\nu^3 \langle y \rangle^{-2})= \Oc(\nu^5 \langle y \rangle^{-2})
 \end{align}
For the third term, by \eqref{eigenfunctions:bd:estimate-barphiinn-tech1}, \eqref{bd:estimate-S111-W111}, \eqref{eigenfunctions:id:expansion-barphiinn}, \eqref{eigenfunctions:id:G2} and \eqref{eigenfunctions:id:G3},
 \begin{align}\nonumber
& - \alpha^{\frac 32}\nu^3 \nabla.\left( (\bar  \phi_{i, \pm}^\inn\mp \alpha^{\frac 12}\nu L_i^\inn \partial_{x_1}U)  G_{2,\pm}\right)- \alpha^2 \nu^4 \nabla.\left(\bar  \phi_{i, \pm}^\inn G_{3,\pm}\right)\\
 \label{eigenfunctions:bd:estimatebarRiin-2} &\qquad =\nu^3  \nabla. \left(\Oc(\nu^2\langle y \rangle^{-3}) \Oc( \langle y \rangle^{2})\right)+\nu^4 \nabla. \left(\Oc(\nu \langle y \rangle^{-4})\Oc(\langle y \rangle^{3})\right) \ =\Oc(\nu^5\langle y \rangle^{-2})
 \end{align}
For the fourth term, we decompose by \eqref{eigenfunctions:id:barphiinn} and \eqref{eigenfunctions:id:decomposition-phiinn} and then use \eqref{eigenfunctions:id:expansion-T3}, \eqref{eigenfunctions:id:expansion-T4i}, \eqref{eigenfunctions:id:expansion-hatT4} and \eqref{eigenfunctions:bd:estimate-barphiinn-tech1}, and \eqref{eigenfunctions:id:expansion-T2i}, \eqref{eigenfunctions:id:expansion-hatT2} and \eqref{bd:estimate-S111-W111} to bound
\begin{align}
\nonumber& \phi_{i,\pm}^\inn-\Lambda U-\alpha \nu^2 T^{(i)}_2-\alpha \nu^2 \tilde \lambda_i \hat T_2\mp \nu (\alpha-\beta)\mathcal S_{1,1}\mp \alpha^{\frac 12}\nu L_i^\inn \partial_{x_1}U \\
\nonumber&\qquad =   \pm \alpha^{\frac 32}\nu^3 T_{3} +\alpha^2\nu^4  T_{4}^{(i)}   +\tilde \lambda_i \alpha^2 \nu^4 \hat T_{4}^{(i)}  + \bar  \phi_{i, \pm}^\inn \mp \nu (\alpha-\beta)\mathcal S_{1,1} \mp \alpha^{\frac 12}\nu L_i^\inn \partial_{x_1}U \\
\label{eigenfunctions:bd:estimate-barphiinn-tech2} &\qquad =\Oc(\nu^3\langle y \rangle^{-1})+\Oc(\nu^4)+\Oc(\nu^4)+\Oc(\nu^3\langle y \rangle^{-2})= \Oc(\nu^3 \langle y \rangle^{-1}),\\
\label{eigenfunctions:bd:estimate-barphiinn-tech3}& \phi_{i,\pm}^\inn-\Lambda U\mp \alpha^{\frac 12}\nu L_i^\inn \partial_{x_1}U = \Oc(\frac{\nu^2}{|\ln \nu|^2}\langle y \rangle^{-2})+\Oc(\nu^3 \langle y \rangle^{-1})+\Oc(\nu^2 \langle y \rangle^{-2}) = \Oc(\nu^2 \langle y \rangle^{-2}),
\end{align}
so that
  \begin{align}
\nonumber & \pm \nu (\alpha-\beta)a.\nabla ( \phi_{i,\pm}^\inn-\Lambda U-\alpha \nu^2 T^{(i)}_2-\alpha \nu^2 \tilde \lambda_i \hat T_2\mp \nu (\alpha-\beta)\mathcal S_{1,1}) \\
& \qquad +\nu^2 (\alpha-\beta)\left(\Lambda+\frac{\lambda_i}{\beta}\right)(\phi_{i,\pm}^\inn-\Lambda U)= \Oc(\nu^5 \langle y \rangle^{-2}). \label{eigenfunctions:bd:estimatebarRiin-3}
 \end{align}
For the fifth, we have that $U\Big(y\pm \frac{2a}{\nu}\Big)=\Oc(\nu^4)$ and $U\Big(y\pm \frac{2a}{\nu}\Big)-\frac{\nu^4}{2|a|^2}=\Oc(\nu^5\langle y \rangle)$ for $|y|\ll \nu^{-1}$. Hence by \eqref{eigenfunctions:bd:estimate-barphiinn-tech3}:
\begin{align} \label{eigenfunctions:bd:estimatebarRiin-4}
& \left(U\Big(y\pm \frac{2a}{\nu}\Big)-\frac{\nu^4}{2|a|^4}\right)\Lambda U+U\Big(y\pm \frac{2a}{\nu}\Big)( \phi_{i,\pm}^\inn-\Lambda U)\\
&\qquad = \Oc(\nu^5\langle y \rangle)\Oc(\langle y \rangle^{-4})+\Oc(\nu^4)\Oc(\nu^2 \langle y \rangle^{-2}) =\Oc(\nu^5 \langle y \rangle^{-3}).
\end{align}
For the sixth, by \eqref{expansion:phii_inn} and \eqref{expansion:phii_inn-estimate}, and \eqref{expansion:Vi_inn} and \eqref{expansion:Vi_inn-estimate}, and \eqref{lem:taylorpoissonfield} and $\nabla U(y\pm \frac{2a}{\nu})=\Oc(\nu^5)$ for $|y|\ll \nu^{-1}$:
  \begin{align} \label{eigenfunctions:bd:estimatebarRiin-5}
& -\nabla U\Big(y\pm \frac{2a}{\nu}\Big).\nabla V^\inn_{i,\pm}-\nabla .( \phi^\inn_{i,\pm}\tilde G_\pm) \\
&\qquad =\Oc(\nu^5)(\Oc(\langle y\rangle^{-3})+\Oc(\nu^2\langle y \rangle^{-1} \langle \ln \langle y \rangle \rangle)+\nabla . (\Oc(\langle y \rangle^{-4})\Oc(\nu^5 \langle y \rangle^{4})) =\Oc(\nu^5 \langle y \rangle^{-1})
 \end{align}
 Injecting \eqref{eigenfunctions:bd:estimatetildeRiin}, \eqref{eigenfunctions:bd:estimatebarRiin-1}, \eqref{eigenfunctions:bd:estimatebarRiin-2}, \eqref{eigenfunctions:bd:estimatebarRiin-3}, \eqref{eigenfunctions:bd:estimatebarRiin-4} and \eqref{eigenfunctions:bd:estimatebarRiin-5} in \eqref{eigenfunctions:id:Rinn-decomposition} shows the desired estimate \eqref{estimate:Ri_inn-pointwise}, as all estimates in the $\Oc()$'s above propagate to higher order derivatives.

\end{proof}

\subsubsection{Computation of the leading profiles}

In what follows, to lighten notation, the notation $\Oc_{r\to \infty}()$ will implicitly include estimates for derivatives. Namely, for $f\in C^\infty((0,\infty),\mathbb R)$ and $g:(0,\infty)\rightarrow (0,\infty)$ we will write
$$
f(r)=\Oc_{r\to \infty}(g(r))
$$
if for all $k\in \mathbb N$, we have for all $r\geq 1$:
$$
|\partial_r^k f(r)|\lesssim r^{-k}g(r).
$$
Throughout the analysis, we will treat with special care two natural obstacles for solving the systems \eqref{sys:T20T20hat}, \eqref{sys:T40T40hat}, \eqref{sys:Tkk31} and \eqref{sys:T42T42hat}. Indeed, the formal adjoint of $\Ls_0$ in $L^2$ is $\Ls_0^* \varepsilon =\Ms_0 \nabla . (U\nabla \varepsilon)$, and $\Ls_0^*(1)=\Ls_0^*(y_1)=0$ (from \cite{RSma14}, Lemma 2.4), which comes from the conservation of mass and momentum of the Keller-Segel system \eqref{eq:KS2d}. Therefore, if the source terms in these elliptic systems have zero mass and momentum, we will find decaying solutions, while if not we will find solutions which grow as $|y|\to \infty$.

\smallskip

\paragraph{Computing $(T_{2,0}^{(i)}, V_{2,0}^{(i)})$ and $(\hat T_{2}, \hat V_{2})$:} We claim the following.

\begin{lemma} \label{lem:T20hatT2} Let $i = 0, 1$ and for any constants $c_{2,0}^{(0)}, c_{2,0}^{(1)}, \hat c_{2}\in \mathbb R$, the equations \eqref{sys:T20T20hat}  admit smooth solutions $(T_{2,0}^{(i)},c_{2,0}^{(i)}+V_{2,0}^{(i)})$ and $(\hat T_2,\hat c_{2}+\hat V_2)$ such that
\begin{equation}
\label{eigenfunction:id:asymptoticT20infty0}
T_{2,0}^{(0)} = \Oc_{r \to \infty}\Big( \frac{\ln r }{r^4} \Big), \quad V_{2,0}^{(0)}= -4\ln r +  \Oc_{r \to \infty}\Big( \frac{\ln r }{r^2} \Big),
\end{equation}
\begin{equation} \label{eigenfunction:id:asymptoticT20infty}
T_{2,0}^{(1)} = -\frac{8}{r^2} + \Oc_{r \to \infty}\Big( \frac{\ln^2 r }{r^4} \Big), \quad V_{2,0}^{(1)}= 4 \ln^2 r - 12 \ln r+ \Oc_{r \to \infty}\Big( \frac{\ln^2 r }{r^2} \Big),
\end{equation}
\begin{equation}  \label{eigenfunction:id:asymptotichatT2infty}
\hat T_{2} =\frac{8}{r^2}+ \Oc_{r \to \infty}(r^{-4}|\ln r|^2), \quad \hat V_{2,0}=-4 \ln^2 r+ 8 \ln r+ \Oc_{r \to \infty}\Big( \frac{\ln^2 r }{r^2} \Big), 
\end{equation}
and
\begin{equation*} \label{eigenfunction:id:asymptoticT20hatT20}
|r^2 T_{2,0}^{(i)}| + |r \partial_r V_{2,0}^{(i)}| =  \Oc_{r \to 0}(r^2), \qquad |r^2 \hat T_{2,0}| + |r \partial_r \hat V_{2,0}| =  \Oc_{r \to 0}(r^2).
\end{equation*}
Moreover, these estimates propagate for higher order derivatives.

\end{lemma}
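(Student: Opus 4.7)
The plan is to reduce the problem to a scalar second-order ODE via the partial mass transformation, and then invert by variation of parameters. For a radial function $T$, introduce its partial mass $\psi_T(r):=\int_0^r T(\tilde r)\tilde r\,d\tilde r$. Integrating the equation $\Ls_0 T=S$ against $\tilde r\,d\tilde r$ from $0$ to $r$, using $T=\psi_T'/r$, $\partial_r \Phi_T=-\psi_T/r$ together with the explicit $\partial_r \Phi_U=-\tfrac{4r}{1+r^2}$, yields the equivalent scalar ODE
\begin{equation*}
\Lc \psi_T \;:=\; \psi_T'' + \frac{3r^2-1}{r(1+r^2)}\psi_T' + \frac{8}{(1+r^2)^2}\psi_T \;=\; \psi_S(r),
\end{equation*}
together with the recovery formulas $T=\psi_T'/r$ and $\partial_r V_T=-\psi_T/r$.

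Since $\Ls_0(\Lambda U)=0$, one homogeneous solution of $\Lc$ is explicit: $\psi^{(1)}(r)=\frac{r^2}{(1+r^2)^2}$, proportional to the partial mass of $\Lambda U$. Abel's formula gives the Wronskian $W \propto \frac{r}{(1+r^2)^2}$, and reduction of order produces a second independent homogeneous solution $\psi^{(2)}$; the choice that makes it bounded at $r=0$ tends to a nonzero constant as $r\to\infty$ rather than decaying. I then apply variation of parameters to write
\begin{equation*}
\psi_T(r) = -\psi^{(1)}(r)\int_0^r \frac{\psi^{(2)}\psi_S}{W}d\tilde r + \psi^{(2)}(r)\int_0^r \frac{\psi^{(1)}\psi_S}{W}d\tilde r + A\,\psi^{(1)}(r),
\end{equation*}
with the coefficient of $\psi^{(2)}$ forced to vanish so that $\psi_T(0)=0$, which is necessary for $T=\psi_T'/r$ to be bounded at the origin, and with $A$ a free constant corresponding to the freedom of adding a multiple of $\Lambda U$. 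For the specific sources at hand, the partial mass $\psi_S$ is rational and explicit: $\psi_{\Lambda U}(r)=\frac{8r^2}{(1+r^2)^2}$, while $\psi_{\Lambda_i\Lambda U}$ follows by a direct integration. From these I recover $T=\psi_T'/r$ and then $V_T$ by integrating $\partial_r V_T=-\psi_T/r$, the free additive constants being precisely $c_{2,0}^{(i)}$ and $\hat c_2$.

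The final step is to extract the claimed asymptotics \eqref{eigenfunction:id:asymptoticT20infty0}--\eqref{eigenfunction:id:asymptotichatT2infty} from these integral representations by plugging in the large-$r$ expansions $\psi^{(1)}\sim r^{-2}$, $\psi^{(2)}\to\textrm{const}$, $W\sim r^{-3}$, and $\psi_S\sim C_S r^{-2}$. The contribution $\psi^{(2)}\int_0^r \psi^{(1)}\psi_S/W$ is the resonant one: it produces a logarithmic growth of $\psi_T$, hence the $\mp 8/r^2$ leading behavior of $T^{(1)}_{2,0}$ and $\hat T_2$ and, upon one further integration in $\partial_r V_T=-\psi_T/r$, the $\log^2 r$ and $\log r$ terms in $V^{(1)}_{2,0}$ and $\hat V_2$. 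For $S=\Lambda_0\Lambda U=(\Lambda+2)\Lambda U$ a cancellation in the resonant pairing makes this leading coefficient vanish, which explains the faster decay $O(\log r/r^4)$ of $T^{(0)}_{2,0}$ and the absence of a $\log^2 r$ term in $V^{(0)}_{2,0}$. Regularity at $r=0$ and higher derivative estimates follow from smoothness of the data and differentiation of the explicit integral formulas. The main technical hurdle is the careful bookkeeping of these resonance integrals---the pairing of the source $S$ against the generalized-kernel direction $\psi^{(2)}$ of $\Lc$---to pin down the exact leading constants appearing in the asymptotic expansions.
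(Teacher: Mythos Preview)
Your proposal is correct and follows essentially the same approach as the paper: the partial mass reduction to the scalar operator $\Lc$ (which is the paper's $\As_0$), the explicit homogeneous solution $\psi^{(1)}=\psi_0=\tfrac{r^2}{(1+r^2)^2}$ coming from $\Lambda U$, the second solution obtained by reduction of order (the paper's $\tilde\psi_0=\tfrac{r^4+4r^2\ln r-1}{(1+r^2)^2}$), and the variation of parameters inversion are all exactly what the paper does via its Lemma~\ref{lemm:invA0}. Your identification of the resonant integral as the source of the logarithmic terms, and of the extra cancellation in the $i=0$ case (the paper phrases it as the faster decay $g_{2,0}^{(0)}=\Oc(r^{-4})$ of the partial mass of the source), matches the paper's explicit computations.
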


\begin{proof} Equation \eqref{sys:T20T20hat} can be solved by introducing the partial mass
$$
 m_{2,0}^{(i)} (r) = \int_0^r T_{2,0}^{(i)}(s) s ds, \quad T_{2,0}^{(i)}(r) = r^{-1} \partial_{r}  m_{2,0}^{(i)}, \quad \partial_rV_{2,0}^{(i)}(r) = - r^{-1}  m_{2,0}^{(i)}.
$$
Here, $  m_{2,0}^{(i)}$ solves the equation
$$
\As_0  m_{2,0}^{(i)} = g^{(i)}_{2,0}\quad \textup{with}\;\; g^{(i)}_{2,0}= \int_0^r \Lambda_i \Lambda U(\zeta) \zeta d\zeta = r^{2 i - 1} \pa_r \big( r^{4 - 2i} U \big). 
$$
where $\As_0$ is the second order operator
\begin{equation} \label{def:As0}
\As_0  = \partial_r^2 - \frac{\partial_r}{r} + \frac{Q}{r} \partial_r + \frac{\partial_r Q}{r} \quad \textup{with} \quad Q = m_U = \frac{4r^2}{1 + r^2}. 
\end{equation}
We compute by using Lemma \ref{lemm:invA0} with $g = g^{(1)}_{2,0}$ to get
\begin{align}
 m_{2,0}^{(1)}(r) &=  8 (r^{-2}+O(r^{-4}))\int_1^r (\zeta+O(\zeta^{-1}\ln \zeta))d\zeta \nonumber \\
 & \qquad \quad + \frac{1}{2} (1+O(r^{-2}\ln r))\int_0^r \zeta^2 \pa_\zeta (\zeta^2 U) d\zeta, \nonumber \\
&= 4 + \Oc\Big( \frac{\ln^2 r}{r^2} \Big) + \frac{1}{2}r^4 U(r) - 8 \ln r + 4  + \Oc\Big( \frac{\ln^2 r}{r^2} \Big) \nonumber \\
\label{eigenfunction:id:m20}
&=  -8 \ln r + 12 + \Oc\Big( \frac{\ln^2 r }{r^2} \Big) \quad \textup{as}\;\; r \to \infty,
\end{align}
where we used $\int_0^r \zeta^3 U d\zeta = 4 \big( \log(1 + r^2) + (1 + r^2)^{-1} - 1 \big)$. 
By noting that $g_{2,0}^{(0)}$ has a better decay than $g_{2,0}^{(1)}$ at infinity, which is $g_{2,0}^{(0)} = \Oc_{r \to \infty}(r^{-4})$, we then compute from Lemma \ref{lemm:invA0}, 
\begin{align}
 m_{2,0}^{(0)}(r) &=  8 (r^{-2}+O(r^{-4}))\int_1^r O(\zeta^{-1})d\zeta + \frac{1}{2} (1+O(r^{-2}\ln r))\int_0^r \pa_\zeta (\zeta^4 U) d\zeta, \nonumber \\
&= \Oc\Big( \frac{\ln r}{r^2} \Big) + \frac{1}{2}r^4 U(r) =  4 + \Oc\Big( \frac{\ln r }{r^2} \Big) \quad \textup{as}\;\; r \to \infty. \label{est:m20_0}
\end{align}
\noindent Similarly, we solve \eqref{sys:T20T20hat}  by applying Lemma \ref{lemm:invA0} with $g =  2 m_{\Lambda U} = 2 r^2 U$ to obtain
\begin{align}
 \hat m_{2} & = - 8 (r^{-2}+O(r^{-4}))\int_1^r (\zeta+O(\zeta^{-1}\ln \zeta))d\zeta+ (1+O(r^{-2}\ln r))\int_0^r \zeta^3 U d\zeta, \nonumber\\
 &=  8\ln r - 8 +O(r^{-2}|\ln r|^2),\quad \mbox{as } r\to \infty. \label{est:m2hat}
\end{align}
The asymptotic behavior of $m_{2,0}^{(0)}$, $m_{2,0}^{(1)}$ and $\hat m_{2}$ near zero is a direct consequence of Lemma \ref{lemm:invA0} and we omit the details. All estimates propagate for higher order derivative from a direct check at the identities \eqref{eigenfunction:id:m20} and \eqref{est:m20_0} using the formulas \eqref{systeme-odes:fundamental-solution-radial} and \eqref{systeme-odes:formula-radial}. This concludes the proof of Lemma \ref{lem:T20hatT2}.
\end{proof}

\paragraph{Computing $\mathsf T_{2,2}$ and $\mathsf V_{2,2}$:} We are going to apply Lemma \ref{lemm:SolHom} and \ref{lemm:sol_inhol} to solve the system \eqref{sys:Tkk31}. 

\begin{lemma} \label{lem:T22}
There exists a smooth solution $(\mathsf T_{2,2},\mathsf V_{2,2})$ of \eqref{sys:Tkk31} for $k = 2$ such that 
\begin{align}  \label{eigenfunction:id:asymptoticT22infty}
& \mathsf T_{2,2}(r)=-4 r^{-2} + \Oc_{r \to \infty}(r^{-\sqrt{8}}), \qquad \mathsf V_{2,2}(r)= -1+\Oc_{r\to \infty}(r^{2-\sqrt{8}}),\\
 \label{eigenfunction:id:asymptoticT220}
&\mathsf T_{2,2}(r) = \Oc_{r \to 0}(r^2), \qquad \qquad \qquad \qquad \quad \mathsf V_{2,2}(r) = \Oc_{r \to 0}(r^2).
\end{align}
\end{lemma}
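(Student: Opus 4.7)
\medskip

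\noindent\emph{Proof proposal.} The plan is to solve the coupled ODE system \eqref{sys:Tkk31} for $k=2$ by variation of parameters, invoking the general framework of Lemmas~\ref{lemm:SolHom} and \ref{lemm:sol_inhol} that the authors have set up for the sectors of spherical harmonics of the operator $\Ls_0$. Since these lemmas address precisely this type of coupled elliptic system built from the operator $\Ls_{0,k}$ in \eqref{def:Ls0k} together with the Poisson relation $-\Delta_k \mathsf V = \mathsf T$, Lemma~\ref{lem:T22} is expected to reduce to applying them with the specific source $\Sigma_{2,2}=\tfrac{1}{2}r\pr \Lambda U$.

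First I would identify a basis $\{\varphi_0,\varphi_\infty\}$ of homogeneous solutions of $\Ls_{0,2}(\varphi,\psi)=0,\ -\Delta_2\psi=\varphi$, where $\varphi_0$ is regular at the origin (behaving like $r^2$ near $r=0$) and $\varphi_\infty$ decays at infinity. The indicial analysis at $r\to\infty$ for the leading order operator $\pr^2 + 5 r^{-1}\pr - 4 r^{-2}$ gives the roots $\alpha^2+4\alpha-4=0$, i.e.\ $\alpha=-2\pm\sqrt{8}$; so $\varphi_\infty \sim r^{-2-\sqrt{8}}$ while the growing mode is $\sim r^{-2+\sqrt{8}}$. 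These fundamental solutions are produced by Lemma~\ref{lemm:SolHom}.

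Second I would build the particular solution via Lemma~\ref{lemm:sol_inhol}, writing schematically
\begin{equation*}
\mathsf T_{2,2}(r) = \varphi_\infty(r)\int_0^r \frac{\varphi_0\,\Sigma_{2,2}}{W}\,ds + \varphi_0(r)\int_r^\infty \frac{\varphi_\infty\,\Sigma_{2,2}}{W}\,ds,
\end{equation*}
with Wronskian $W$ associated to the basis. Near $r=0$ the source $\Sigma_{2,2}=\tfrac12 r\pr\Lambda U = \Oc(r)$ is smooth and vanishing, and combined with the regular behaviour of $\varphi_0$, one reads off $\mathsf T_{2,2}=\Oc(r^2)$ at the origin; the asymptotic $\mathsf V_{2,2}=\Oc(r^2)$ then follows from $-\Delta_2 \mathsf V_{2,2}=\mathsf T_{2,2}$. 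For the behaviour at infinity, the key computation is that $\Sigma_{2,2}=\tfrac12 r \pr\Lambda U = 32/r^4+\Oc(r^{-6})$, and that the dominant part of $\Ls_{0,2}$ applied to $c/r^2$ produces $-8c/r^4$ (since $\Delta_2(r^{-2})=0$ while $\tfrac{4r}{1+r^2}\pr(c/r^2)=-8c/r^4+\Oc(r^{-6})$); matching yields the leading term $\mathsf T_{2,2}\sim -4/r^2$, and the Poisson relation gives $\mathsf V_{2,2}\to -1$ at infinity.

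The main obstacle, and the step requiring the most care, will be the precise identification of the correction rate $\Oc(r^{-\sqrt{8}})$. One writes $\mathsf T_{2,2}=-4/r^2 + \tilde{\mathsf T}$ and $\mathsf V_{2,2}=-1+\tilde{\mathsf V}$; the remainder $(\tilde{\mathsf T},\tilde{\mathsf V})$ solves an inhomogeneous system of the same type, with a new source of size $\Oc(r^{-4})$ arising both from the subleading $\Oc(r^{-6})$ part of $\Sigma_{2,2}$ and from the lower-order corrections of $\Ls_{0,2}$ (the terms $\Oc(r^{-3})\pr$, $\Oc(r^{-5})\pr$, $\Oc(r^{-4})$ beyond the leading $\pr^2+5r^{-1}\pr-4r^{-2}$) evaluated on $-4/r^2$ and on $-1$. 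Re-applying variation of parameters to this residual system against the asymptotic mode $\varphi_\infty\sim r^{-2-\sqrt{8}}$ and carrying out the integral against a source of order $r^{-4}$, the decay rate $\Oc(r^{-\sqrt{8}})$ on $\tilde{\mathsf T}$ and $\Oc(r^{2-\sqrt{8}})$ on $\tilde{\mathsf V}$ emerges precisely as the balance between the decay of the source and the integration weight produced by $\varphi_\infty$, $\varphi_0$. All estimates automatically propagate to higher order derivatives in the sense of $\Oc_{r\to\infty}(\cdot)$ because they do so for each Wronskian integral in the variation-of-parameters formula.
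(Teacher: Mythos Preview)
Your overall plan to invoke Lemmas~\ref{lemm:SolHom} and~\ref{lemm:sol_inhol} is exactly what the paper does, and your matching argument for the leading term $(\mathsf T_{2,2},\mathsf V_{2,2})\sim(-4r^{-2},-1)$ is correct. The gap is in the variation-of-parameters step: the coupled system $\Ls_{0,2}(\psi,\omega)=f$, $-\Delta_2\omega=\psi$ is \emph{fourth} order (two second-order ODEs in two unknowns), so its homogeneous solution space is four-dimensional, not two. Lemma~\ref{lemm:SolHom} supplies four linearly independent pairs $(h_{2,k},g_{2,k})_{1\le k\le 4}$, whose $h$-components at infinity behave like $r^{-2}$, $r^{-6}$, $r^{\sqrt 8-2}$, $r^{-\sqrt 8-2}$, and the formula in Lemma~\ref{lemm:sol_inhol} accordingly has four integral terms. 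Your indicial analysis of the decoupled leading operator $\pr^2+5r^{-1}\pr-4r^{-2}$ only catches the two roots $-2\pm\sqrt 8$; the modes $(h_{2,1},g_{2,1})$ and $(h_{2,2},g_{2,2})$, which arise through the coupling term $\tfrac{32r}{(1+r^2)^3}\pr\omega$ and the kernel of $\Delta_2$, are missed. All four contribute nontrivially: for instance, the limit $\mathsf V_{2,2}\to -1$ comes out in the paper as $-1-1+\tfrac12+\tfrac12$ from the four products $\gamma_{2,k}g_{2,k}$, and the sharp error rate $\Oc(r^{-\sqrt 8})$ in $\mathsf T_{2,2}$ is dictated by the subleading factors $1+\Oc(r^{2-\sqrt 8})$ built into $(h_{2,3},g_{2,3})$ and $(h_{2,4},g_{2,4})$ in Lemma~\ref{lemm:SolHom}, not by an iteration on the source.

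Two smaller points. First, after peeling off $(-4/r^2,-1)$ the residual source is $\Oc(r^{-6})$, not $\Oc(r^{-4})$: each of the lower-order corrections you list, applied to $(-4/r^2,-1)$, produces $\Oc(r^{-6})$. Second, the ansatz $-4/r^2$ is singular at the origin, so a global peel-and-iterate cannot directly yield smoothness and the $\Oc_{r\to 0}(r^2)$ behaviour. The paper avoids both issues by using the four-term variation-of-parameters formula globally, choosing the two integration constants for the modes singular at $r=0$ to vanish there, and the two for the modes growing at infinity to vanish as $r\to\infty$; the asymptotics then follow from explicit computation of the $\gamma_{2,k}$ against the asymptotics of $W_{2,k}$.
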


\begin{proof} We apply Lemma \ref{lemm:sol_inhol} with $\ell = 2$ and get that the solution is of the form
\begin{equation}  \label{eigenfunction:id:asymptoticT220-tech}
\mathsf T_{2,2}(r) = \sum_{k = 1}^4 h_{2,k}(r) \gamma_{2,k}(r), \quad \mathsf V_{2,2}(r)=\sum_{k = 1}^4 g_{2,k}(r) \gamma_{2,k}(r), 
\end{equation}
where $(h_{2,k}, g_{2,k})_{1\leq k \leq 4}$ are given by Lemma \ref{lemm:SolHom} and 
\begin{align*}
 \gamma_{2,k}(r) &= \bar \gamma_{2,k} +(-1)^{k+1}  \int_0^r \frac{\Sigma_{2,2}(s) W_{2, k}(s)}{W_2} ds \\
 & = \bar \gamma_{2,k} +\frac{(-1)^{k} }{3\cdot 2^{12}}\int_0^r \Sigma_{2,2}(s) W_{2,k}(s) s^2 (1 + s^2)^2 ds,
 \end{align*}
for arbitrary constants $(\bar \gamma_{2,k})_{1\leq k\leq 4} $. We recall that from \eqref{eq:asyDlkr0} and \eqref{eq:asyDlkrinf}, $(W_{2,k})_{1\leq k \leq 4}$ admit the following asymptotics, for non-zero constants $(c_{2,k,0})_{1\leq k \leq 4}$:
\begin{align}
\label{eigenfunctions:id:expansion-W21-infty} W_{2,1} &= \left\{ \begin{array}{l l} c_{2,1,0} r^{-3}(1+o_{r\to 0}(1))  ,\\  -3\cdot 2^8 r^{-5}(1+\Oc_{r\to \infty}(r^{2-\sqrt{8}}))  , \end{array} \right.   \quad W_{2,3} = \left\{ \begin{array}{l l} c_{2,3,0} r^{-3} (1+o_{r\to 0}(1))   ,\\ -3\cdot 2^6 C_2 r^{-2\sqrt 2 - 3}(1+\Oc_{r\to \infty}(r^{2-\sqrt{8}}))  , \end{array} \right.    \\
\label{eigenfunctions:id:expansion-W22-infty}W_{2,2} &= \left\{ \begin{array}{l l} c_{2,2,0}  r (1+o_{r\to 0}(1))   ,\\   2^8 r^{-1} (1+\Oc_{r \to \infty}(r^{2-\sqrt{8}}))  ,\end{array} \right.   \quad W_{2,4} = \left\{ \begin{array}{l l} c_{2,4,0}  r(1+o_{r\to 0}(1))   ,\\ -3\cdot 2^6 K_2 r^{2\sqrt 2-3}(1+\Oc_{r\to \infty}(r^{2-\sqrt{8}}))   .\end{array} \right.  
\end{align}

Since $h_{2,2}$ and $h_{2,4}$ are singular at the origin, we choose $\bar \gamma_{2,2} = \bar \gamma_{2,4} = 0$. We further choose for $k=1,3$,
$$
\bar \gamma_{2,k} = \frac{1}{3\cdot 2^{12}}\int_0^\infty \Sigma_{2,2}(s) W_{2,k}(s) s^2 (1 + s^2)^2 ds < +\infty,
$$
to avoid growth at infinity. We have by \eqref{def:Sigma22},
$$
\Sigma_{2,2}(r)  = 32 r^{-4} +\Oc_{r \to \infty}(r^{-6}).
$$
We then obtain
\begin{align*}
\gamma_{2,1}(r) , \gamma_{2,3}(r) &= \Oc_{r \to 0}(1), \quad \gamma_{2,2}(r) , \gamma_{2,4}(r) = \Oc_{r \to 0}(r^6),
\end{align*}
and as $r \to \infty$, 
\begin{align*}
\gamma_{2,1}(r) &= -\frac{1}{3\cdot 2^{12}}\int_r^\infty 2^{13}\cdot 3 s^{-3}(1+\Oc_{s\to \infty}(s^{2-\sqrt{8}}))ds=-r^{-2}(1+\Oc_{r\to \infty}(r^{2-\sqrt{8}})),\\
\gamma_{2,2}(r) &= -\frac{1}{3\cdot 2^{12}}\int_0^r 2^{13}s(1+\Oc_{s\to \infty}(s^{2-\sqrt{8}}))ds=-\frac 13 r^{2}(1+\Oc_{r\to \infty}(r^{2-\sqrt{8}})),\\
\gamma_{2,3}(r) &= -\frac{1}{3\cdot 2^{12}}\int_r^\infty 2^{11}\cdot 3C_2 s^{-\sqrt{8}-1}(1+\Oc_{s\to \infty}(s^{2-\sqrt{8}}))ds=-\frac{C_2}{4\sqrt{2}}r^{-\sqrt{8}}(1+\Oc_{r\to \infty}(r^{2-\sqrt{8}})),\\
\gamma_{2,4}(r) &= -\frac{1}{3\cdot 2^{12}}\int_r^\infty 2^{11}\cdot 3K_2 s^{\sqrt{8}-1}(1+\Oc_{s\to \infty}(s^{2-\sqrt{8}}))ds=-\frac{K_2}{4\sqrt{2}}r^{\sqrt{8}}(1+\Oc_{r\to \infty}(r^{2-\sqrt{8}})).
\end{align*}
Using the asymptotic behaviors of $(h_{2,k}, g_{2,k})$ given in Lemma \ref{lemm:SolHom}, we directly get \eqref{eigenfunction:id:asymptoticT220} and obtain, as $r \to \infty$,
\begin{align*}
& \mathsf T_{2,2}(r)  = \sum_{k=1}^4\gamma_{2,k} h_{2,k}\\
&= -r^{-2}(1+O(r^{2-\sqrt{8}}))8r^{-2}(1+\Oc(r^{-2})) -\frac 13 r^{2}(1+O(r^{2-\sqrt{8}}))24 r^{-6}(1+\Oc(r^{-2}))\\
&-\frac{C_2}{4\sqrt{2}}r^{-\sqrt{8}}(1+O(r^{2-\sqrt{8}}))16 K_2 r^{\sqrt{8}-2}(1+\Oc(r^{-2})) -\frac{K_2}{4\sqrt{2}}r^{\sqrt{8}}(1+O(r^{2-\sqrt{8}}))16 C_2 r^{-\sqrt{8}-2}(1+\Oc(r^{\sqrt{8}-4})) \\
&= -4r^{-2}+\Oc(r^{-\sqrt{8}})
\end{align*}
where we used in the last line $C_2K_2 = \frac{1}{\sqrt{2}}$,  and 
\begin{align*}
& \mathsf V_{2,2}(r)  = \sum_{k=1}^4\gamma_{2,k} g_{2,k}\\
&= -r^{-2}(1+O(r^{2-\sqrt{8}})) r^{2}(1+\Oc(r^{-2})) -\frac 13 r^{2}(1+O(r^{2-\sqrt{8}}))3r^{-2}(1+\Oc(r^{-2}))\\
&-\frac{C_2}{4\sqrt{2}}r^{-\sqrt{8}}(1+O(r^{2-\sqrt{8}}))(-4 K_2 r^{\sqrt{8}}(1+\Oc(r^{-2}))) -\frac{K_2}{4\sqrt{2}}r^{\sqrt{8}}(1+O(r^{2-\sqrt{8}}))(-4 C_2 r^{-\sqrt{8}})(1+\Oc(r^{\sqrt{8}-4})) \\
&= -1+\Oc(r^{2-\sqrt{8}}).
\end{align*}
We notice that the estimates indeed propagate for higher order derivatives by a direct check at the formulas \eqref{eigenfunction:id:asymptoticT220-tech} because of the asymptotic behaviour of the functions in Lemmas \ref{lemm:sol_inhol} and \ref{lemm:SolHom}. This concludes the proof of Lemma \ref{lem:T22}. 
\end{proof}

\paragraph{Computing $T_{3}$ and $V_3$:} We use Lemmas \ref{lemm:SolHom} and \ref{lemm:sol_inhol} to solve \eqref{sys:Tkk31} for $k=3$. 

\begin{lemma} \label{lem:T3k}
There exist smooth solutions $(\mathsf T_{3,k},V_{3,k})$ to \eqref{sys:Tkk31}  for $k = 1,3$ such that 
\begin{align}   \label{eigenfunction:id:asymptoticT310} 
&\mathsf T_{3,1}(r) = \Oc_{r \to 0}(r), \quad \mathsf V_{3,1}(r) = \Oc_{r \to 0}(r), \\
\label{eigenfunction:id:asymptoticT31infty} 
& \mathsf T_{3,1}(r)  =  \Oc_{r \to \infty}\Big( r^{-3}\Big), \quad \mathsf V_{3,1}(r) =  \Oc_{r \to \infty}(\frac{\ln r}{r}).
\end{align}
and
\begin{equation} \label{eigenfunction:id:asymptoticT330}
\mathsf T_{3,3}(r) = \Oc_{r \to 0}(r^3), \quad \mathsf V_{3,3}(r) = \Oc_{r \to 0}(r^3), 
\end{equation}
\begin{equation}  \label{eigenfunction:id:asymptoticT33infty}
 \mathsf T_{3,3}(r)  = \frac{4}{3\sqrt{2}}  r^{-1}+ \Oc_{r \to \infty}\Big( r^{2-\sqrt{13} }\Big), \quad \mathsf V_{3,3}(r) = \frac{1}{6\sqrt{2}}r+\Oc_{r\to \infty}(r^{4-\sqrt{13}}).
\end{equation}
The estimates propagate for higher order derivatives.
\end{lemma}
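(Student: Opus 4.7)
The strategy is to follow exactly the template used in the proof of Lemma \ref{lem:T22}, adapted to the harmonic indices $k=1$ and $k=3$, treating the two inhomogeneous ODE systems \eqref{sys:Tkk31} via the variation of parameters formula provided by Lemma \ref{lemm:sol_inhol}. The plan is to write, for each $k \in \{1,3\}$,
\[
\mathsf T_{3,k}(r) = \sum_{j=1}^4 h_{k,j}(r)\gamma_{k,j}(r), \qquad \mathsf V_{3,k}(r)=\sum_{j=1}^4 g_{k,j}(r) \gamma_{k,j}(r),
\]
where $(h_{k,j},g_{k,j})_{1\le j\le 4}$ is the basis of homogeneous solutions from Lemma \ref{lemm:SolHom} with the asymptotics at $0$ and $\infty$ analogous to \eqref{eigenfunctions:id:expansion-W21-infty}--\eqref{eigenfunctions:id:expansion-W22-infty}, and the scalar coefficients take the form $\gamma_{k,j}(r) = \bar\gamma_{k,j} + (-1)^{j+1}\int_0^r \Sigma_{3,k}(s) W_{k,j}(s)/W_k(s)\,ds$ for free constants $\bar\gamma_{k,j}\in \mathbb R$.

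First I would pin down the integration constants. For the two basis functions singular at the origin I set the corresponding $\bar\gamma_{k,j} = 0$, so that the full solutions be smooth at $r=0$, yielding the vanishing $\Oc_{r\to 0}(r)$ and $\Oc_{r\to 0}(r^3)$ estimates in \eqref{eigenfunction:id:asymptoticT310} and \eqref{eigenfunction:id:asymptoticT330}. For the two basis functions growing at infinity, I set $\bar\gamma_{k,j}$ to the full integral from $0$ to $\infty$, so that the corresponding $\gamma_{k,j}$ becomes $-\int_r^\infty(\cdots)ds$. The convergence of these two improper integrals at infinity is guaranteed by the decay of the source terms: from \eqref{def:Sigma3k} and the pointwise bound $\partial_r\Lambda U = \Oc_{r\to\infty}(r^{-5})$, we have $\Sigma_{3,1}(r) = \Oc(r^{-5})$ and $\Sigma_{3,3}(r) = \Oc(r^{-3})$ at infinity, and $\Sigma_{3,1}(r) = \Oc(r)$, $\Sigma_{3,3}(r) = \Oc(r^3)$ at the origin, which combined with the growth rates of $W_{k,j}$ provided by \eqref{eigenfunctions:id:expansion-W21-infty}--\eqref{eigenfunctions:id:expansion-W22-infty} (up-to-index-shift analogues for $k=1,3$) give absolute integrability.

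Next I would extract the leading asymptotics at infinity by plugging the leading behaviors of $\Sigma_{3,k}$, $W_{k,j}$, $h_{k,j}$ and $g_{k,j}$ into the sum, exactly as was done in the last computation of the proof of Lemma \ref{lem:T22}. For $k=3$ the indicial exponents at infinity are $\pm\sqrt{13}-3$ and $\pm 1$ (one power lower than for $k=2$ because of the $-k^2/r^2$ term in $\Delta_k$), and the leading contribution comes from the algebraic decaying mode paired against the leading $r^{-3}$ behaviour of $\Sigma_{3,3}$. A direct computation mimicking \eqref{eigenfunction:id:asymptoticT220-tech} produces the explicit coefficients $\frac{4}{3\sqrt{2}}$ and $\frac{1}{6\sqrt{2}}$ claimed in \eqref{eigenfunction:id:asymptoticT33infty}, with the subleading error of order $r^{2-\sqrt{13}}$ coming from the non-resonant homogeneous mode.

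The $k=1$ case is where I expect the only genuine subtlety, and is the main obstacle. Indeed, the operator $\Ls_{0,1}$ possesses a zero mode induced by translation invariance, namely $\partial_{x_1}U$ which decays at infinity like $r^{-3}$; correspondingly one of the $h_{1,j}$ decays like $r^{-3}$ and the associated $W_{1,j}$ will have an anomalous slow growth. This makes the $r^{-3}$ decay of $\mathsf T_{3,1}$ in \eqref{eigenfunction:id:asymptoticT31infty} sharp (no faster decay is available), and moreover produces the logarithmic factor in $\mathsf V_{3,1} = \Oc(\ln r/r)$: inverting $-\Delta_1$ on a source decaying like $r^{-3}$ creates a $\ln r$ through the explicit formula $\mathsf V_{3,1} = \frac{1}{2}\int r^{-1}\int \mathsf T_{3,1} s^2\, ds\, dr$ (the $k=1$ analogue of \eqref{eigenfunction:id:extensionnablaPhiradial}). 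A careful check of cancellations between the two decaying $\gamma_{1,j}$ contributions at infinity is required to ensure no $r^{-1}$ or $r^{-2}$ polynomial growth survives; this amounts to verifying that the leading $r^{-5}$ coefficient of $\Sigma_{3,1}$ produces zero projection onto the $\partial_{x_1}U$ kernel direction, which I expect to follow automatically since $\Sigma_{3,1}$ is, up to normalization, the radial derivative of an already regular profile. Once these cancellations are checked, the decay estimates \eqref{eigenfunction:id:asymptoticT31infty} and \eqref{eigenfunction:id:asymptoticT33infty} follow, and the propagation to higher order derivatives is automatic from the explicit variation of parameters formula and the smoothness of $h_{k,j}$, $g_{k,j}$, as noted at the end of the proof of Lemma \ref{lem:T22}.
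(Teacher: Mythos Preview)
Your overall approach via the variation of parameters formula of Lemma \ref{lemm:sol_inhol}, with the integration constants fixed as you describe, matches the paper's proof exactly for $k=3$, and the explicit computation of the coefficients $\tfrac{4}{3\sqrt 2}$ and $\tfrac{1}{6\sqrt 2}$ proceeds just as in Lemma \ref{lem:T22}.

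For $k=1$ you have correctly located the one subtle point, but your description of the mechanism is imprecise and would not close as written. The obstruction is not a cancellation ``between the two decaying $\gamma_{1,j}$ contributions'' and is not about $r^{-1}$ or $r^{-2}$ terms in $\mathsf T_{3,1}$; rather, the single coefficient $\gamma_{1,2}(r)=\int_0^r \Sigma_{3,1}W_{1,2}/W_1\,ds$ would tend to a nonzero constant as $r\to\infty$, and since $g_{1,2}\sim r$ this would force $\mathsf V_{3,1}$ to \emph{grow} like $r$, destroying \eqref{eigenfunction:id:asymptoticT31infty}. The paper resolves this via two concrete facts. First, the algebraic identity \eqref{id:W12-W1}, namely $W_{1,2}/W_1=\tfrac18 r^2$ (a consequence of $\Ls_0^*(y_1)=0$), reduces the question to whether $\int_0^\infty r^2\,\Sigma_{3,1}(r)\,dr=0$. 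Second, this momentum integral does vanish: since $\nabla\cdot(\Lambda U\,G_2)$ is the full source, one computes $\int_{\mathbb R^2} y_1\,\nabla\cdot(\Lambda U\,G_2)\,dy=-\int \Lambda U\,(e_1\!\cdot G_2)\,dy=\tfrac{1}{4\sqrt 2}\int \Lambda U\,(y_1^2-y_2^2-1)\,dy=0$, using that $\Lambda U$ is radial with $\int\Lambda U=0$. With this in hand $\gamma_{1,2}=-\int_r^\infty(\cdots)=\Oc(r^{-2})$ and the bounds follow. Your heuristic that this should hold ``automatically since $\Sigma_{3,1}$ is the radial derivative of an already regular profile'' is not a proof; also, the formula you wrote for inverting $-\Delta_1$ is the radial ($\ell=0$) formula and does not apply here.
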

\begin{proof} We recall from \eqref{sys:Tkk31},
\begin{align} \label{sys:T3kproof} 
k =1,3, \quad \left\{ \begin{array}{l l} 
 \Ls_{0,k} (\mathsf T_{3,k},\mathsf V_{3,k}) =\Sigma_{3,k},\\
 -\Delta_k \mathsf V_{3,k}=\mathsf T_{3,k}.
  \end{array} \right. 
\end{align}
Since solving for $\big(\mathsf T_{3,k}, \mathsf V_{3,k}\big)$ for $k = 1,3$ is similar as solving for $\big(\mathsf T_{2,2}, \mathsf V_{2,2}\big)$ that we have detailed in the proof of Lemma \ref{lem:T22}, we will only detail the main new points.

\medskip

\noindent \textbf{Step 1}. \emph{Computing $\mathsf{T}_{3,1}$}. There is a surprising cancellation for the source term, as it has no horizontal momentum:
$$
\int_{\mathbb R^2} y_1 \nabla . (\Lambda U G_2)dy=-\int_{\mathbb R^2}\Lambda U (1,0).G_2 dy=\frac{1}{4\sqrt{2}}\int_{\mathbb R^2}\Lambda U (y_1^2-y_2^2-1)dy=0
$$
where we used that $\Lambda U$ is spherically symmetric with $\int \Lambda U=0$, which implies in polar coordinates
\begin{equation} \label{id:sigma31-no-momentum}
\int_0^\infty \Sigma_{3,1}(r)r^2dr=0.
\end{equation}
This cancellation permits to find a smooth and decaying solution $(\mathsf T_{3,1}\mathsf V_{3,1})$ of \eqref{sys:Tkk31}. Namely, by applying Lemma \ref{lemm:sol_inhol} with $k=1$ we have a solution of the form
\begin{align} \label{sys:T31-formula-tech}
\mathsf T_{3,1} = \sum_{k = 1}^4\gamma_{1,k} h_{1,k}, \quad \mathsf V_{3,1} = \sum_{k = 1}^4\gamma_{1,k} g_{1,k},
\end{align}
where, using $\Sigma_{3,1}=O_{r\to 0}(r)$ and $\Sigma_{3,1}=O_{r\to \infty}(r^{-5})$,
\begin{align*}
\gamma_{1,1} &= \int_{0}^r \frac{\Sigma_{3,1} W_{1,1}}{W_1} ds = \left\{ \begin{array}{l l}  \Oc_{r\to 0}(\int_0^\infty s ds)=  \Oc_{r\to 0}(r^2) , \\
\Oc_{r\to \infty} (\int_r^\infty s\langle s \rangle^{-2}ds)  =   \Oc_{r \to \infty}(\ln r), \end{array} \right.\\
\gamma_{1,2} &= \int_{0}^r \frac{\Sigma_{3,1} W_{1,2}}{W_1} ds =   \left\{ \begin{array}{l l}  \Oc_{r\to 0}( \int_0^r s^3 ds) = \Oc_{r\to 0}(r^4) ,\\
 \Oc_{r\to \infty}( \int_r^\infty s^{-3} ds) = \Oc_{r \to \infty}(r^{-2}),
 \end{array} \right. \\
\gamma_{1,3} &= -\int_{r}^\infty \frac{\Sigma_{3,1} W_{1,3}}{W_1} ds =   \left\{ \begin{array}{l l}  \Oc_{r\to 0}(  \int_0^\infty s\langle s \rangle^{-\sqrt{5}-3} ds) = \Oc_{r\to 0}(1) ,\\
 \Oc_{r\to \infty}( \int_r^\infty s^{-\sqrt{5}-2} ds) = \Oc_{r \to \infty}(r^{-\sqrt{5}-1}),
 \end{array} \right. \\
\gamma_{1,4} &= \int_0^r \frac{\Sigma_{3,1} W_{1,4}}{W_1} ds =    \left\{ \begin{array}{l l}  \Oc_{r\to 0}(  \int_0^r s^3 ds ) = \Oc_{r\to 0}(r^4), \\
 \Oc_{r\to \infty} ( \int_0^r r^3\langle r\rangle^{\sqrt{5}-5})=  \Oc_{r \to \infty}(r^{\sqrt{5}-1}).
 \end{array} \right. 
\end{align*}
In the second identity, we used that $\int_0^r \frac{\Sigma_{3,1}W_{1,2}}{W_1}ds=-\int_r^\infty \frac{\Sigma_{3,1}W_{1,2}}{W_1}ds$ by \eqref{id:sigma31-no-momentum} and the fact that $\frac{W_{1,2}}{W_1}=\frac{2\sqrt{5}+1}{24}r^2$ from \eqref{id:W12-W1}. Using the asymptotic behaviors of $(h_{1,k}, g_{1,k})$ given in Lemma \ref{lemm:SolHom} and the above asymptotic estimates yield the desired estimates  \eqref{eigenfunction:id:asymptoticT310} and \eqref{eigenfunction:id:asymptoticT31infty}. 

\medskip

\noindent \textbf{Step 2}. \emph{Computing $\mathsf{T}_{3,3}$}. We apply Lemma \ref{lemm:sol_inhol} with $\ell = 3$ and $f = \Sigma_{3,3}$, where we compute from \eqref{def:Sigma3k},
$$\Sigma_{3,3} =O_{r\to 0}(r^3), \qquad \Sigma_{3,3}(r)= -\frac{16}{\sqrt 2} r^{-3} + \Oc_{r \to \infty}(r^{-5}).$$
We then obtain a solution of the form
\begin{align} \label{sys:T33-formula-tech}
\mathsf T_{3,3} = \sum_{k = 1}^4\gamma_{3,k} h_{3,k}, \quad \mathsf V_{3,3} = \sum_{k = 1}^4\gamma_{3,k} g_{3,k}.
\end{align}
We choose the integration constants so that
\begin{align*}
\gamma_{3,1} &=   -\int_{r}^\infty \frac{\Sigma_{3,3} W_{3,1}}{W_3} ds, \qquad  \gamma_{3,2} = - \int_0^r \frac{\Sigma_{3,3} W_{3,2}}{W_3} ds\\
\gamma_{3,3} &= -\int_{r}^\infty \frac{\Sigma_{3,3} W_{3,3}}{W_3} ds , \qquad  \gamma_{3,4} = - \int_{0}^r \frac{\Sigma_{3,3} W_{3,4}}{W_3} ds .
\end{align*}
The regularity of $(\mathsf T_{3,3},\mathsf V_{3,3})$ as $r \to 0$ can be checked the same way we did for $(\mathsf T_{3,1},\mathsf V_{3,1})$. To obtain the expansion as $r\to \infty$ we compute from \eqref{eq:asyDlkrinf}:
\begin{align*}
& W_{3,1}(r)=-3\cdot 2^9 r^{-6}(1+\Oc_{r\to \infty}(r^{3-\sqrt{13}})), \qquad W_{3,2}(r)=-3\cdot 2^8(1+\Oc_{r\to \infty}(r^{3-\sqrt{13}})),\\
& W_{3,3}(r)=-3\cdot 2^8C_\ell r^{-\sqrt{13}-3}(1+\Oc_{r\to \infty}(r^{3-\sqrt{13}})), \qquad W_{3,4}(r)=-3\cdot 2^8K_\ell r^{\sqrt{13}-3}(1+\Oc_{r\to \infty}(r^{3-\sqrt{13}}).
\end{align*}
Recalling that $W_3(r)=-9\cdot2^{13}r^{-6}+\Oc_{r\to \infty}(r^{-8})$ we obtain
\begin{align*}
& \gamma_{3,1} = \int_r^\infty \frac{1}{3\sqrt{2}}s^{-3}(1+\Oc(s^{3-\sqrt{13}}))ds=\frac{1}{6\sqrt{2}}r^{-2}(1+\Oc(r^{3-\sqrt{13}})),\\
& \gamma_{3,2}  = \int_0^r \frac{1}{6\sqrt{2}}s^{3}(1+\Oc(s^{3-\sqrt{13}}))ds=\frac{1}{24\sqrt{2}}r^{4}(1+\Oc(r^{3-\sqrt{13}})),\\
& \gamma_{3,3} = \int_r^\infty \frac{C_\ell}{6\sqrt{2}}s^{-\sqrt{13}}(1+\Oc(s^{3-\sqrt{13}}))ds=\frac{C_\ell}{6\sqrt{2}(\sqrt{13}-1)}r^{1-\sqrt{13}}(1+\Oc(r^{3-\sqrt{13}})),\\
& \gamma_{3,4} = \int_0^r \frac{K_\ell}{6\sqrt{2}}s^{\sqrt{13}}(1+\Oc(s^{3-\sqrt{13}}))ds=\frac{K_\ell}{6\sqrt{2}(\sqrt{13}+1)}r^{1+\sqrt{13}}(1+\Oc(r^{3-\sqrt{13}})),
\end{align*}
from what we deduce, using Lemma \ref{lemm:SolHom},
\begin{align*}
\mathsf T_{3,3} & =\sum_{k=1}^4 \gamma_{3,k}h_{3,k}\\
&= \Oc(r^{-2})\Oc(r^{-1})+\Oc(r^4)\Oc(r^{-7})+\frac{C_\ell r^{1-\sqrt{13}}}{6\sqrt{2}(\sqrt{13}-1)}(1+\Oc(r^{3-\sqrt{13}})) 16 K_\ell r^{\sqrt{13}-2}(1+\Oc(r^{-2}))\\
&+\frac{K_\ell r^{1+\sqrt{13}}}{6\sqrt{2}(\sqrt{13}+1)}(1+\Oc(r^{3-\sqrt{13}})) 16 C_\ell r^{-\sqrt{13}-2}(1+\Oc(r^{\sqrt{13}-5}))\\
&=\frac{4}{3\sqrt{2}}r^{-1}(1+\Oc_{r\to \infty}(r^{3-\sqrt{13}}))
\end{align*}
where we used that $K_3C_3 = \frac{3}{\sqrt{13}}$, and
\begin{align*}
\mathsf V_{3,3} & =\sum_{k=1}^4 \gamma_{3,k}g_{3,k}\\
&= \frac{1}{6\sqrt{2}}r^{-2}(1+\Oc(r^{3-\sqrt{13}}))2r^3(1+\Oc(r^{-2}))+\frac{1}{24\sqrt{2}}r^{4}(1+\Oc(r^{3-\sqrt{13}}))4r^{-3}(1+\Oc(r^{-2}))\\
& +\frac{C_\ell r^{1-\sqrt{13}}}{6\sqrt{2}(\sqrt{13}-1)}(1+\Oc(r^{3-\sqrt{13}}))(-4 K_\ell r^{\sqrt{13}-2})(1+\Oc(r^{-2}))\\
&+\frac{K_\ell r^{1+\sqrt{13}}}{6\sqrt{2}(\sqrt{13}+1)}(1+\Oc(r^{3-\sqrt{13}}))(-4 C_\ell r^{-\sqrt{13}-2})(1+\Oc(r^{\sqrt{13}-5}))\\
&=\frac{r}{6\sqrt{2}}(1+\Oc_{r\to \infty}(r^{3-\sqrt{13}}))
\end{align*}
This concludes the proof of \eqref{eigenfunction:id:asymptoticT330} and \eqref{eigenfunction:id:asymptoticT33infty}. The estimates \eqref{eigenfunction:id:asymptoticT310}, \eqref{eigenfunction:id:asymptoticT31infty}, \eqref{eigenfunction:id:asymptoticT330} and \eqref{eigenfunction:id:asymptoticT33infty} propagate for higher order derivative from a direct check at the identities \eqref{sys:T31-formula-tech} and \eqref{sys:T33-formula-tech} using the asymptotic behaviours of the functions in Lemmas \ref{lemm:sol_inhol} and \ref{lemm:SolHom}.
\end{proof}

\paragraph{Computing $T_{4}$ and $\hat T_{4}$:} We now compute the profiles $T_4$ and $\hat T_4$. 

\begin{lemma} \label{lem:T40hatT40}
Let $i = 0,1$ and $T_{2,0}^{(0)}$, $T_{2,0}^{(1)}$, $\hat T_{2}$ and $\mathsf T_{2,2}$ be described as in Lemmas \ref{lem:T20hatT2} and \ref{lem:T22}. For any constants $c_{4,0}^{(0)}, c_{4,0}^{(1)}$, $\hat c_{4,0}^{(0)}, \hat c_{4,0}^{(1)}$, the systems \eqref{sys:T40T40hat} admit smooth solutions $(T_{4,0}^{(i)},c_{4,0}^{(i)}+V_{4,0}^{(i)})$ and $(\hat T_{4,0}^{(i)}, \hat c_{4,0}^{(i)}+\hat V_{4,0}^{(i)})$ such that for $r \to \infty$,
\begin{equation} \label{eigenfunction:id:asymptoticT40infty}
T_{4,0}^{(i)} = \frac74-4i + \Oc(r^{2-\sqrt{8}}), \quad V_{4,0}^{(i)} = \Big( i-\frac{7}{16} \Big)  r^2 + \Oc(r^{4-\sqrt{8}}),
\end{equation}
\begin{align}  \label{eigenfunction:id:asymptotichatT40infty}
\hat T_{4,0}^{(i)} &= (4-8i) \ln r +10i-1+ \Oc_{r \to \infty}(r^{-2} \ln^2 r), \\
\label{eigenfunction:id:asymptotichatV40infty}
 \hat V_{4,0}^{(i)} &= (2i-1) r^2\ln r + \Big(\frac 54-\frac{9}{2}i\Big)r^2+ \Oc_{r\to \infty}(\ln^3 r),
\end{align}
where $B_2$ is the free parameter introduced in Lemma \ref{lem:T22}. The estimates propagate to higher order derivatives.
\end{lemma}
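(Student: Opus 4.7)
The strategy is the same as for Lemma \ref{lem:T20hatT2}: reduce the radial systems \eqref{sys:T40T40hat} to second order ODEs via the partial mass transformation, and then apply the general formula from Lemma \ref{lemm:invA0}. Define
$$
m_{4,0}^{(i)}(r)=\int_0^r T_{4,0}^{(i)}(s)\,s\,ds, \qquad \hat m_{4,0}^{(i)}(r)=\int_0^r \hat T_{4,0}^{(i)}(s)\,s\,ds,
$$
so that $T_{4,0}^{(i)}=r^{-1}\partial_r m_{4,0}^{(i)}$, $\partial_r V_{4,0}^{(i)}=-r^{-1}m_{4,0}^{(i)}$, and analogously for the hat version. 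Integrating the identity $\Ls_0(T)=r^{-1}\partial_r(\As_0 m)$ in the radial sector yields
$$
\As_0 m_{4,0}^{(i)}=g_{4,0}^{(i)}, \qquad \As_0 \hat m_{4,0}^{(i)}=\hat g_{4,0}^{(i)},
$$
where $g_{4,0}^{(i)}(r)=\int_0^r \Sigma_{4,0}^{(i)}(s)\,s\,ds$ and $\hat g_{4,0}^{(i)}(r)=\int_0^r \hat \Sigma_{4,0}^{(i)}(s)\,s\,ds$, with $\As_0$ being the operator from \eqref{def:As0}.

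The heart of the proof is computing the asymptotic behaviour of the two source primitives $g_{4,0}^{(i)}$ and $\hat g_{4,0}^{(i)}$ as $r\to\infty$ by plugging in the expansions of $T_{2,0}^{(i)}$, $\hat T_2$ and $\mathsf T_{2,2}$ from Lemmas \ref{lem:T20hatT2} and \ref{lem:T22}, and then noting the key cancellations in
$$
\Sigma_{4,0}^{(i)}=\Lambda_i T_{2,0}^{(i)}+\frac{1}{4r}\partial_r(r^2 \mathsf T_{2,2})-\frac{1}{16r}\partial_r(r^2\Lambda U),\qquad \hat\Sigma_{4,0}^{(i)}=2T_{2,0}^{(i)}+\Lambda_i \hat T_2.
$$
Using $\Lambda U\sim -16/r^4$, $T_{2,0}^{(0)}=\Oc(\ln r/r^4)$, $T_{2,0}^{(1)}=-8/r^{2}+\Oc(\ln^2 r/r^4)$, $\mathsf T_{2,2}=-4/r^{2}+\Oc(r^{-\sqrt 8})$ and $\hat T_2=8/r^{2}+\Oc(\ln^2 r/r^4)$, a direct computation gives $\Sigma_{4,0}^{(i)}=\Oc(r^{-4})$ (with the $r^{-2}$ contributions cancelling identically for $i=1$ thanks to $\Lambda_1 T_{2,0}^{(1)}=\Oc(\ln^2 r/r^4)$), while $\hat\Sigma_{4,0}^{(i)}=(2-4i)r^{-2}\bigl(\text{up to subleading}\bigr)$, which carries a genuine $r^{-2}$ source. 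One then has $g_{4,0}^{(i)}(r)\to \ell_i$ for an explicit constant $\ell_i\in\mathbb R$ as $r\to\infty$, whereas $\hat g_{4,0}^{(i)}(r)=(2-4i)\ln r+\hat \ell_i+\Oc(r^{-2}\ln^2 r)$.

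Next I feed these into the explicit inversion formula of Lemma \ref{lemm:invA0}, exactly as in \eqref{eigenfunction:id:m20}--\eqref{est:m2hat}. The fundamental solutions of $\As_0$ behave like $r^{-2}$ and $1$ at infinity (up to lower order corrections in $\ln r/r^2$). Integrating against a convergent source produces for $m_{4,0}^{(i)}$ the leading behaviour $m_{4,0}^{(i)}(r)=\tfrac12\ell_i+\Oc(r^{2-\sqrt 8})$, giving $T_{4,0}^{(i)}=r^{-1}\partial_r m_{4,0}^{(i)}=\Oc(r^{1-\sqrt 8})$ — but this is not what we want; the stated asymptotics $T_{4,0}^{(i)}\to 7/4-4i$ and $V_{4,0}^{(i)}\sim(i-7/16)r^2$ correspond to the inhomogeneous part added using the growing fundamental solution (giving $m_{4,0}^{(i)}\sim (7/8-2i)r^2$). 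The explicit constants $7/4-4i$ and $i-7/16$ are identified by evaluating the integrals of $\int s\Sigma_{4,0}^{(i)}$ against the polynomial kernels, using the primitives $\int_0^\infty s^3 U\,ds=4$ and the asymptotic expansions of $T_{2,0}^{(i)},\mathsf T_{2,2}$. The $\hat{}$ case is analogous, except that the logarithmic growth of $\hat g_{4,0}^{(i)}$ propagates through Lemma \ref{lemm:invA0} and produces precisely $\hat m_{4,0}^{(i)}\sim (2-4i)r^2\ln r +\hat cr^2$, yielding $\hat T_{4,0}^{(i)}=(4-8i)\ln r+10i-1+\Oc(r^{-2}\ln^2 r)$ and $\hat V_{4,0}^{(i)}=(2i-1)r^2\ln r+(5/4-9i/2)r^2+\Oc(\ln^3 r)$, with the specific constants $10i-1$ and $5/4-9i/2$ again coming from the explicit integrals and the freedom in choosing $\hat c_{4,0}^{(i)}$.

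The main obstacle is bookkeeping: extracting the \emph{precise} $i$-dependent constants $7/4-4i$, $10i-1$ and $5/4-9i/2$ requires not just leading-order asymptotics of the source terms but a careful accounting of all contributions at the critical order $r^{-4}$ (respectively $r^{-2}$) and matching them through the explicit integrals against the fundamental solutions. Regularity near $r=0$ follows from the corresponding smooth behaviour in Lemma \ref{lemm:invA0}, and the propagation of the estimates to derivatives is immediate by differentiating the explicit representation formulas.
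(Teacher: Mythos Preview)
Your overall strategy — reduce to the partial mass ODE and invoke Lemma \ref{lemm:invA0} — is exactly what the paper does. But the execution has errors that prevent the argument from going through as written.

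The main gap is in how you apply Lemma \ref{lemm:invA0} to a source $g$ that tends to a nonzero constant. You write that a convergent source yields $m_{4,0}^{(i)}(r)=\tfrac12\ell_i+\Oc(r^{2-\sqrt 8})$ and then try to patch the missing $r^2$ growth by ``adding the growing fundamental solution.'' This is backwards: the inversion formula in Lemma \ref{lemm:invA0} integrates $g$ against kernels behaving like $\zeta$ and $\zeta^3$ at infinity, so a constant source $g\equiv\ell$ produces $m\sim\tfrac{\ell}{8}r^2$ automatically. There is nothing to add; the $r^2$ growth is the natural output of the formula.

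There are also arithmetic slips. First, $\Sigma_{4,0}^{(i)}$ is $\Oc(r^{-\sqrt 8})$, not $\Oc(r^{-4})$, because the subleading correction of $\mathsf T_{2,2}$ dominates. Second, your leading coefficient for $\hat\Sigma_{4,0}^{(i)}$ is $(2-4i)r^{-2}$ when it should be $(16-32i)r^{-2}$ (check $\Lambda_0\hat T_2=(r\partial_r+4)(8r^{-2})=16r^{-2}$, and for $i=1$ the $2T_{2,0}^{(1)}$ term contributes $-16r^{-2}$). This makes your claimed $\hat m_{4,0}^{(i)}\sim(2-4i)r^2\ln r$ internally inconsistent with your own source: the factor $\tfrac18$ picked up in the inversion would give $(\tfrac14-\tfrac{i}{2})r^2\ln r$ instead.

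The paper sidesteps most of this bookkeeping by integrating exactly rather than asymptotically. Using $\Lambda_i f=r^{2i-3}\partial_r(r^{4-2i}f)$ one obtains the closed-form primitive
\[
g_{4,0}^{(i)}(r)=r^2T_{2,0}^{(i)}+2(1-i)m_{2,0}^{(i)}+\tfrac14 r^2\mathsf T_{2,2}-\tfrac{1}{16}r^2\Lambda U,
\]
and similarly $\hat g_{4,0}^{(i)}=2m_{2,0}^{(i)}+r\partial_r\hat m_2+2(1-i)\hat m_2$. Plugging in the known expansions of $m_{2,0}^{(i)}$ and $\hat m_2$ from Lemma \ref{lem:T20hatT2} then reads off the precise constants $7-16i$ and $(16-32i)\ln r+32i$ directly, without the delicate cancellation tracking you allude to as the ``main obstacle.''
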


\begin{proof}
Note that there is a tail cancellation in $\Lambda T_2$ (since $\Lambda (r^{-2}) = 0$), from which we introduce two different profiles $T_4$ and $\hat T_4$.  Roughly speaking, $T_4$ and $\hat T_4$ gather all terms of the order $\Oc(\nu^4)$ and $\Oc(\tilde{\lambda}_1 \nu^4)$ respectively, so that the generated error term will be of the order $\Oc(\tilde{\lambda}_i^2 \nu^4) \sim \Oc( \nu^4 |\ln \nu|^{-2})$ in the blowup variables,  which is the typical size to successfully control the remainder. 

Since $\Sigma_{4,0}^{(i)}$ is radial, we use the partial mass to solve for $T_{4,0}^{(i)}$. We recall from \eqref{def:Sigma40},
$$
 \Sigma_{4,0}^{(i)}=\Lambda_i T_{2,0}^{(i)} + \frac{1}{4}\Lambda \mathsf T_{2,2} - \frac{1}{16 } \Lambda^2 U, \quad \Lambda_i = \Lambda + 2(1 - i).
$$
From \eqref{eigenfunction:id:asymptoticT20infty}, \eqref{eigenfunction:id:asymptoticT20infty0}, \eqref{eigenfunction:id:asymptoticT22infty},  \eqref{est:m20_0} and the identities $\Lambda_i f(r) = r^{2i - 3}\pa_r(r^{4 - 2i} f)$, we get
\begin{align*}
g_{4,0}^{(i)}(r) = \int_0^r \Sigma_{4,0}^{(i)}(\zeta) \zeta d\zeta & = r^2T_{2,0}^{(i)} + 2(1-i)m_{2,0}^{(i)} + \frac{1}{4}r^2 \mathsf T_{2,2} - \frac{1}{16}  r^2 \Lambda U\\
& = 7-16i+ \Oc_{r \to \infty}\Big( r^{2-\sqrt{8}}\Big),
\end{align*}
Applying Lemma \ref{lemm:invA0} to solve $\As_0 m^{(i)}_{4,0}  = g_{4,0}^{(i)}$ yields
$$m_{4,0}^{(i)} = \Big(\frac{7}{8}-2i\Big)  r^2 + \Oc_{r \to \infty}(r^{4-\sqrt{8}}),$$ 
hence,
\begin{align} \label{sys:id:formulaT40i}
T_{4,0}^{(i)}&= \frac{\pa_r m_{4,0}^{(i)}}{r} = \frac 74-4i+ \Oc_{r\to \infty}(r^{2-\sqrt{8}}), \\
 \label{sys:id:formulaV40i} \pa_r V_{4,0}^{(i)} &= -\frac{m_{4,0}^{(i)}}{r} =  \Big(2i- \frac{7}{8}\Big)  r + \Oc_{r \to \infty}(r^{3-\sqrt{8}}).
\end{align}
Similarly, solving $\hat T_{4,0}^{(i)}$ is reduced to solving $\As_0 \hat m_{4,0}^{(i)} = \hat g_{4,0}^{(i)}$, where we compute asymptotically from \eqref{def:Sigma4khat} and Lemma \ref{lem:T20hatT2},
\begin{align*}
\hat g_{4,0}^{(i)} &= m_{2 T_{2,0}^{(i)}+\Lambda_i \hat T_2} = 
 2 m_{2,0}^{(i)}+m_{\Lambda_i \hat T_{2}} \\
 & = 2m_{2,0}^{(i)}+ r \pa_r \hat m_2 + 2(1 - i) \hat m_2= (16-32i)\ln r+32 i + \Oc_{r \to \infty}(r^{-2}\ln^2 r),
\end{align*}
where we used \eqref{eigenfunction:id:m20} \eqref{est:m20_0} and \eqref{est:m2hat}. Applying Lemma \ref{lemm:invA0}  with $g = g_{4,0}^{(i)}$ yields
\begin{align*}
\hat m_{4,0}^{(i)} &= \frac{1}{2 r^2} \int_r^1 \zeta^3 \big[(16-32i)\ln \zeta+32 i \big] d\zeta +\frac{1}{2}\int_0^r \zeta \big[(16-32i)\ln \zeta+32 i \big] d\zeta  + \Oc (\ln^2r)\\
&= (2-4i) r^2 \ln r + \big(7i-\frac 32 \big) r^2 +  \Oc_{r \to \infty} (\ln^2r). 
\end{align*}
hence, 
\begin{align} \label{sys:id:formulahatT40}
\hat T_{4,0} &=  \frac{\pa_r \hat m_{4,0}^{(i)}}{r} =  (4-8i) \ln r +10i-1+ \Oc_{r \to \infty}(r^{-2} \ln^2 r), \\
 \label{sys:id:formulahatV40} \pa_r \hat V_{4,0} &= - \frac{\hat m_{4,0}^{(i)}}{r} = (4i-2) r\ln r +  \big(\frac 32-7i \big) r+ \Oc_{r\to \infty}(r^{-1} \ln^2 r).
\end{align}
This ends the proof of Lemma \ref{lem:T40hatT40}. The estimates \eqref{eigenfunction:id:asymptotichatV40infty}, \eqref{eigenfunction:id:asymptotichatT40infty} and \eqref{eigenfunction:id:asymptoticT40infty} propagate to higher order derivatives by a direct check at the identities \eqref{sys:id:formulaT40i}, \eqref{sys:id:formulaV40i}, \eqref{sys:id:formulahatT40} and \eqref{sys:id:formulahatV40} because of the asymptotic behaviour of the functions in Lemmas \ref{lemm:sol_inhol} and \ref{lemm:SolHom}.
\end{proof}

\begin{lemma} \label{lem:T42hatT42}
Let  $i = 0, 1$ and $T_{2,0}^{(i)}$, $\mathsf T_{2,2}$ and $\hat T_{2}$ be given by Lemmas \ref{lem:T20hatT2} and \ref{lem:T22}. The systems \eqref{sys:T42T42hat} admit smooth solutions $(\mathsf T_{4,2}^{(i)}, \mathsf V_{4,2}^{(i)})$ and $(\hat {\mathsf T}_{4,2},\hat {\mathsf V}_{4,2})$ depending  such that for $r \to \infty$:
\begin{align}
\label{eigenfunction:id:asymptoticT42iinfty}
 {\mathsf T}_{4,2}^{(i)} &= 2- 4 i + \Oc(r^{2-\sqrt{8}}), \quad {\mathsf V}_{4,2}^{(i)} = (i-\frac 12)r^2\ln r+(\frac 54 i-\frac 58) r^2 + \Oc_{r\to \infty}( r^{4-2\sqrt{2}})),\\
\label{eigenfunction:id:asymptotichatT42iinfty} \hat {\mathsf T}_{4,2} &= 4+ \Oc(r^{2-\sqrt{8}}), \quad  \hat {\mathsf V}_{4,2} = - r^2\ln r-\frac{5}{4} r^2 + \Oc_{r\to \infty}( r^{4-2\sqrt{2}})).
\end{align}
The estimates propagate to higher order derivatives.
\end{lemma}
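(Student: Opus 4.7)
The plan is to follow the same variation-of-parameters strategy that was used for $(\mathsf T_{2,2},\mathsf V_{2,2})$ in Lemma \ref{lem:T22}, applying Lemma \ref{lemm:sol_inhol} with angular index $\ell=2$ to each of the two systems in \eqref{sys:T42T42hat}. This produces solutions of the form
\begin{align*}
\mathsf T_{4,2}^{(i)} &= \sum_{k=1}^4 \gamma_{2,k}^{(i)}(r)\, h_{2,k}(r), & \mathsf V_{4,2}^{(i)} &= \sum_{k=1}^4 \gamma_{2,k}^{(i)}(r)\, g_{2,k}(r), \\
\hat{\mathsf T}_{4,2} &= \sum_{k=1}^4 \hat\gamma_{2,k}(r)\, h_{2,k}(r), & \hat{\mathsf V}_{4,2} &= \sum_{k=1}^4 \hat\gamma_{2,k}(r)\, g_{2,k}(r),
\end{align*}
with the $(h_{2,k},g_{2,k})$'s and the Wronskian $W_2$ from Lemma \ref{lemm:SolHom}, and where the $\gamma_{2,k}^{(i)}$, $\hat \gamma_{2,k}$ are primitives of $\Sigma_{4,2}^{(i)} W_{2,k}/W_2$ and $\hat\Sigma_{4,2}W_{2,k}/W_2$ respectively, with integration constants still to be chosen.

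First I would compute the asymptotic behaviour of the source terms using Lemmas \ref{lem:T20hatT2} and \ref{lem:T22} together with $\Lambda_i f = r\partial_r f + 2(2-i) f$. Plugging the expansions $\mathsf T_{2,2}= -4r^{-2}+\Oc(r^{-\sqrt 8})$, $T_{2,0}^{(1)}=-8r^{-2}+\Oc(r^{-4}\ln^2 r)$, $T_{2,0}^{(0)}=\Oc(r^{-4}\ln r)$, $\hat T_2=8r^{-2}+\Oc(r^{-4}\ln^2 r)$, and $\tfrac{r}{16}\partial_r \Lambda U = \Oc(r^{-4})$ into \eqref{def:Sigma42} and \eqref{def:Sigma4khat} gives
$$
\Sigma_{4,2}^{(i)}(r) = c^{(i)} r^{-2}+\Oc(r^{-\sqrt 8}),\qquad \hat \Sigma_{4,2}(r) = -16\, r^{-2}+\Oc(r^{-\sqrt 8}),
$$
for some explicit $c^{(i)}\in \mathbb R$ (and which may involve additional $\ln r$ factors for the subleading terms in the $i=1$ case). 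These source terms are smooth and vanish like $r^2$ as $r\to 0$.

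Next I would fix the integration constants. Since $h_{2,2}$ and $h_{2,4}$ are singular at $r=0$, I set
$$
\gamma_{2,2}^{(i)}(r)=-\int_0^r \frac{\Sigma_{4,2}^{(i)} W_{2,2}}{W_2}\,ds,\qquad \gamma_{2,4}^{(i)}(r)=\int_0^r \frac{\Sigma_{4,2}^{(i)} W_{2,4}}{W_2}\,ds,
$$
and similarly for $\hat\gamma_{2,2}$ and $\hat\gamma_{2,4}$, which guarantees smoothness at the origin by the behaviour of the source terms as $r\to 0$. For $k=1,3$ the corresponding homogeneous modes $h_{2,k}$ grow at infinity (like $r^0$ and $r^{\sqrt 8-2}$), so I set
$$
\gamma_{2,1}^{(i)}(r)=-\int_r^\infty \frac{\Sigma_{4,2}^{(i)} W_{2,1}}{W_2}\,ds,\qquad \gamma_{2,3}^{(i)}(r)=-\int_r^\infty \frac{\Sigma_{4,2}^{(i)} W_{2,3}}{W_2}\,ds,
$$
and analogously for $\hat\gamma_{2,k}$. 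Using the explicit asymptotics of $(W_{2,k}/W_2)$ recalled in the proof of Lemma \ref{lem:T22} and the $\Oc(r^{-2})$ decay of the source terms, a direct computation yields
$$
\gamma_{2,1}^{(i)}=\Oc(r^{0}),\ \gamma_{2,2}^{(i)}=\Oc(r^{4}\ln r),\ \gamma_{2,3}^{(i)}=\Oc(r^{-\sqrt 8}),\ \gamma_{2,4}^{(i)}=\Oc(r^{\sqrt 8}\ln r),
$$
with analogous estimates for the $\hat\gamma$'s, and with leading coefficients that can be computed explicitly from $c^{(i)}$, $-16$.

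Finally, I combine these with the asymptotics of the $(h_{2,k},g_{2,k})$ from Lemma \ref{lemm:SolHom} exactly as in the concluding step of the proof of Lemma \ref{lem:T22}; the cancellation $C_2K_2=1/\sqrt 2$ from that proof will again be used to collapse the contributions from $\gamma_{2,3}h_{2,3}$ and $\gamma_{2,4}h_{2,4}$ into a constant leading term in $\mathsf T_{4,2}^{(i)}$ and into the leading $r^2\ln r$ and $r^2$ terms in $\mathsf V_{4,2}^{(i)}$. Matching coefficients produces the explicit constants $2-4i$ for ${\mathsf T}_{4,2}^{(i)}$, $(i-\tfrac 12)$ and $(\tfrac{5i}{4}-\tfrac 58)$ for ${\mathsf V}_{4,2}^{(i)}$, and $4$, $-1$, $-\tfrac 54$ for the hat profiles claimed in \eqref{eigenfunction:id:asymptoticT42iinfty}-\eqref{eigenfunction:id:asymptotichatT42iinfty}. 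Propagation of the estimates to higher derivatives follows from differentiating the explicit integral formulas, since the asymptotic bounds on $(h_{2,k},g_{2,k})$ and $(W_{2,k}/W_2)$ from Lemmas \ref{lemm:SolHom} and \ref{lemm:sol_inhol} are differentiable in $r$.

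I expect the main obstacle to be the bookkeeping of the precise leading coefficients, in particular tracking the logarithmic correction $r^2\ln r$ in $\mathsf V_{4,2}^{(i)}$ and $\hat{\mathsf V}_{4,2}$ which arises from the resonant source term $r^{-2}$ against the growing homogeneous mode; this requires expanding $\gamma_{2,3}^{(i)}$ and $\gamma_{2,4}^{(i)}$ one order beyond their leading asymptotic and using the cancellation $C_2K_2=1/\sqrt 2$ carefully, exactly as in Lemma \ref{lem:T22}, but now keeping the subleading $\ln r$ term that was absent there because of the stronger $r^{-4}$ decay of $\Sigma_{2,2}$.
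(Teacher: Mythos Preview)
Your overall strategy matches the paper's: apply Lemma \ref{lemm:sol_inhol} with $\ell=2$, compute the leading asymptotics of the source $\Sigma_{4,2}^{(i)}$, and read off the answer from the four $\gamma_{2,k}(h_{2,k},g_{2,k})$ contributions. However, there is a genuine gap in your choice of integration limits for $k=1$, and it is tied to a misidentification of where the $r^2\ln r$ term comes from.

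The integral $\int_r^\infty \Sigma_{4,2}^{(i)} W_{2,1}/W_2\,ds$ that you propose for $\gamma_{2,1}^{(i)}$ \emph{diverges}: since $\Sigma_{4,2}^{(i)}(s)\sim (16i-8)s^{-2}$ and $W_{2,1}/W_2\sim s/16$ at infinity (from \eqref{eigenfunctions:id:expansion-W21-infty} and \eqref{eq:detA}), the integrand is $\sim (i-\tfrac12)s^{-1}$, which is not integrable at $+\infty$. This is the key difference from Lemma \ref{lem:T22}, where $\Sigma_{2,2}\sim r^{-4}$ and the corresponding integral converged. The paper instead takes $\gamma_{4,1}(r)=\int_0^r(\cdots)\,ds$, which yields $\gamma_{4,1}(r)=\tfrac{2i-1}{2}\ln r+\Oc(r^{2-\sqrt 8})$. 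Since $h_{2,1}\sim 8r^{-2}$ (it \emph{decays}, contrary to what you wrote) this contributes only $\Oc(r^{-2}\ln r)$ to $\mathsf T_{4,2}^{(i)}$, but since $g_{2,1}\sim r^2$ it produces precisely the $(i-\tfrac12)r^2\ln r$ term in $\mathsf V_{4,2}^{(i)}$. The logarithm therefore does \emph{not} arise from subleading expansions of $\gamma_{2,3},\gamma_{2,4}$ as you anticipated; it comes directly from the logarithmic growth of $\gamma_{4,1}$ against $g_{2,1}$. Your claimed orders $\gamma_{2,2}^{(i)}=\Oc(r^4\ln r)$ and $\gamma_{2,4}^{(i)}=\Oc(r^{\sqrt 8}\ln r)$ are also off: the integrands there are pure powers $s^3$ and $s^{2\sqrt 2+1}$, so no log appears, and these two pieces together with $k=3$ (using $C_2K_2=1/\sqrt 2$) supply only the $r^2$-coefficient $\tfrac{5i}{4}-\tfrac58$.
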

\begin{proof} From Lemma \ref{lemm:sol_inhol} with $\ell = 2$, we choose the solution of the form
$$ \mathsf T_{4,2}^{(i)}(r) = \sum_{k = 1}^4 h_{2,k}(r) \gamma_{4,k}(r), \quad \mathsf V_{4,2}^{(i)}(r)=\sum_{k = 1}^4 g_{2,k}(r) \gamma_{4,k}(r), $$ 
where $(h_{2,k}, g_{2,k})_{1\leq k \leq 4}$ are given in Lemma \ref{lemm:SolHom}, $W_2=- 3. 2^{12}r^{-2}(1+r^2)^{-2}$ and with the following choice of integration constants
\begin{align*}
 \gamma_{4,1}(r) &= \frac{-1}{3\cdot 2^{12}}\int_0^r \Sigma_{4,2}^{(i)}(s) W_{2,1}(s) s^2 (1 + s^2)^2 ds, \quad  \gamma_{4,1}(r) &= \frac{1}{3\cdot 2^{12}}\int_0^r \Sigma_{4,2}^{(i)}(s) W_{2,2}(s) s^2 (1 + s^2)^2 ds,\\
  \gamma_{4,3}(r) &= \frac{1}{3\cdot 2^{12}}\int_r^\infty \Sigma_{4,2}^{(i)}(s) W_{2,1}(s) s^2 (1 + s^2)^2 ds, \quad  \gamma_{4,1}(r) &= \frac{1}{3\cdot 2^{12}}\int_0^r \Sigma_{4,2}^{(i)}(s) W_{2,2}(s) s^2 (1 + s^2)^2 ds.
 \end{align*}
Injecting \eqref{eigenfunction:id:asymptoticT20infty0}, \eqref{eigenfunction:id:asymptoticT20infty} and \eqref{eigenfunction:id:asymptoticT22infty} in \eqref{def:Sigma42}, we have 
\begin{equation} \label{eigenfunctions:id:Sigma42-expansion}
\Sigma_{4,2}^{(i)}(r) = \Lambda_i \mathsf T_{2,2}+\frac 12 r\partial_r \mathsf T_{2,0}^{(i)}-\frac{r}{16} \partial_r \Lambda U = \frac{(-8+16i) }{r^2} + \Oc_{r \to \infty}(r^{-\sqrt{8}}).
\end{equation}
The smoothness of $(\mathsf T_{4,2}^{(i)},\mathsf V_{4,2}^{(i)})$ as $r\to 0$ can be checked as for that of  $(\mathsf T_{2,2},\mathsf V_{2,2})$ in the proof of Lemma \ref{lem:T22}. To obtain the expansion as $r\to \infty$ we use \eqref{eigenfunctions:id:expansion-W21-infty} and \eqref{eigenfunctions:id:expansion-W22-infty} and compute
\begin{align*}
& \gamma_{4,1} =-\frac{1}{3\cdot 2^{12}}\int_0^r -3 \cdot 2^{11} (-1+2i) \langle s\rangle^{-1}(1+\Oc_{s\to \infty}(s^{2-\sqrt{8}}))ds=\frac{-1+2i}{2}\ln r +\Oc_{r\to \infty}(r^{2-\sqrt{8}}),\\
& \gamma_{4,2} = \frac{1}{3\cdot 2^{12}}\int_0^r   2^{11} (-1+2i) s^3 (1+\Oc_{s\to \infty}(s^{2-\sqrt{8}}))ds=\frac{-1+2i}{24}r^4(1+\Oc(r^{2-\sqrt{8}}))\\
& \gamma_{4,3} = \frac{1}{3\cdot 2^{12}}\int_r^\infty - 3\cdot 2^{9} C_2 (-1+2i)s^{1-2\sqrt{2}} (1+\Oc_{s\to \infty}(s^{2-\sqrt{8}}))ds=-\frac{C_2 (-1+2i)}{16(\sqrt{2}-1)}r^{2-2\sqrt{2}} (1+\Oc_{r\to \infty}(r^{2-\sqrt{8}}))\\
& \gamma_{4,4} = \frac{1}{3\cdot 2^{12}}\int_0^r - 3 \cdot 2^{9} K_2 (-1+2i) s^{1+2\sqrt{2}} (1+\Oc_{s\to \infty}(s^{2-\sqrt{8}}))ds=-\frac{K_2 (-1+2i)}{16(\sqrt{2}+1)}r^{2-2\sqrt{2}} (1+\Oc_{r\to \infty}(r^{2-\sqrt{8}})).
\end{align*}
From the asymptotic behaviors of $(h_{2,k}, g_{2,k})_{1 \leq k \leq 4}$ given in Lemma \ref{lemm:SolHom} and the relation $C_2K_2 = \frac{1}{\sqrt 2}$, we end up with 
\begin{align*}
& \mathsf T_{4,2}^{(i)}(r)  = \sum_{k=1}^4\gamma_{4,k} h_{2,k}\\
&= \Oc(\ln r)\Oc(r^{-2})+\Oc(r^4)\Oc(r^{-6})-\frac{C_2 (-1+2i)}{16(\sqrt{2}-1)}r^{2-2\sqrt{2}} (1+\Oc_{r\to \infty}(r^{2-\sqrt{8}})) 16 K_2 r^{\sqrt{8}-2}(1+\Oc(r^{-2}))\\
& -\frac{K_2 (-1+2i)}{16(\sqrt{2}+1)}r^{2-2\sqrt{2}} (1+\Oc_{r\to \infty}(r^{2-\sqrt{8}}))16 C_2 r^{-\sqrt{8}-2}(1+\Oc(r^{\sqrt{8}-4})) \\
&= 2-4i+\Oc(r^{2-\sqrt{8}})
\end{align*}
and 
\begin{align*}
& \mathsf V_{4,2}^{(i)}(r)  = \sum_{k=1}^4\gamma_{2,k} g_{2,k}\\
&= \frac{2i-1}{2}\ln r +\Oc_{r\to \infty}(r^{2-\sqrt{8}}) r^{2}(1+\Oc(r^{-2})) +\frac{2i-1}{24}r^4(1+\Oc(r^{2-\sqrt{8}})) 3r^{-2}(1+\Oc(r^{-2}))\\
&-\frac{C_2 (2i-1)}{16(\sqrt{2}-1)}r^{2-2\sqrt{2}} (1+\Oc_{r\to \infty}(r^{2-\sqrt{8}}))(-4 K_2 r^{\sqrt{8}}(1+\Oc(r^{-2})))\\
& -\frac{K_2 (2i-1)}{16(\sqrt{2}+1)}r^{2-2\sqrt{2}} (1+\Oc_{r\to \infty}(r^{2-\sqrt{8}}))(-4 C_2 r^{-\sqrt{8}})(1+\Oc(r^{\sqrt{8}-4})) \\
&= (i-\frac 12)r^2\ln r+ (\frac 54i-\frac 58)r^2+\Oc(r^{4-\sqrt{8}}).
\end{align*}
This shows the desired identities \eqref{eigenfunction:id:asymptoticT42iinfty}.

The same computation also works for $(\hat {\mathsf T}_{4,2},\hat {\mathsf V}_{4,2})$. Indeed, by \eqref{def:Sigma4khat}, \eqref{eigenfunction:id:asymptotichatT2infty} and \eqref{eigenfunction:id:asymptoticT22infty} we have
$$\hat \Sigma_{4,2} = \frac{-16}{r^2} + \Oc_{r \to \infty}\Big(r^{-2\sqrt{2}}\Big).$$
Notice that this is exactly as $ \Sigma_{4,2}^{(i)}$ given by \eqref{eigenfunctions:id:Sigma42-expansion}, up to a constant factor. This concludes the proof of \ref{lem:T42hatT42}.
\end{proof}

\begin{lemma} \label{lem:T44}
Let  $\hat T_{2}$ and $\mathsf T_{2,2}$ be described in Lemmas \ref{lem:T20hatT2} and \ref{lem:T22}. The system \eqref{sys:Tkk31} with $k= 4$ admits a smooth solution $(\mathsf T_{4,4}, \mathsf V_{4,4})$ such that 
\begin{align}
\label{eigenfunction:id:asymptoticT44infty}
\mathsf T_{4,4}(r) &=   -\frac12  + \Oc_{r \to \infty}(r^{4-\sqrt{20}}), \\
\label{eigenfunction:id:asymptoticV44infty} \mathsf V_{4,4}(r)&= -\frac{1}{24}r^2+ \Oc_{r \to \infty}(r^{6-\sqrt{20}}).
\end{align}
The estimates propagate to higher order derivatives.
\end{lemma}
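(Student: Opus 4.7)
The plan is to mirror the variation-of-parameters arguments used for Lemmas~\ref{lem:T22} and~\ref{lem:T3k}, now applied to the angular mode $k=4$. First I would invoke Lemma~\ref{lemm:sol_inhol} with $\ell=4$ to write the general solution to \eqref{sys:Tkk31} with $k=4$ in the form
\begin{equation*}
\mathsf T_{4,4}(r) = \sum_{k=1}^{4} h_{4,k}(r)\, \gamma_{4,k}(r), \qquad \mathsf V_{4,4}(r) = \sum_{k=1}^{4} g_{4,k}(r)\, \gamma_{4,k}(r),
\end{equation*}
where $(h_{4,k},g_{4,k})_{1\le k\le 4}$ are the four basis solutions of the homogeneous equation supplied by Lemma~\ref{lemm:SolHom}, and
\begin{equation*}
\gamma_{4,k}(r) = \bar{\gamma}_{4,k} + (-1)^{k+1} \int_0^r \frac{\Sigma_{4,4}(s)\, W_{4,k}(s)}{W_4(s)} \, ds,
\end{equation*}
for integration constants $\bar{\gamma}_{4,k}$ to be chosen. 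Two of these, $\bar{\gamma}_{4,2}$ and $\bar{\gamma}_{4,4}$, must vanish so that the contributions of the modes $h_{4,2}$ and $h_{4,4}$ that are singular at the origin are cancelled, exactly as in the proof of Lemma~\ref{lem:T22}.

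Second, I would compute the leading asymptotics of $\Sigma_{4,4}$ from its definition \eqref{def:Sigma44}. Combining $\mathsf T_{2,2}(r)= -4r^{-2} + \Oc(r^{-\sqrt{8}})$ from Lemma~\ref{lem:T22} with $\Lambda U(r) = -16 r^{-4} + \Oc(r^{-4-\epsilon})$, a direct computation gives
\begin{equation*}
\Sigma_{4,4}(r) = \tfrac{1}{4}\, r\partial_r \mathsf T_{2,2} - \tfrac{1}{2} \mathsf T_{2,2} + \tfrac{1}{16}\, r^3 \partial_r \Lambda U = \frac{8}{r^2} + \Oc_{r\to\infty}(r^{-\sqrt{8}}),
\end{equation*}
which is the source feeding the leading behaviour claimed in \eqref{eigenfunction:id:asymptoticT44infty}. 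A quick consistency check: the operator $\Ls_{0,4}$ reduces to $\Delta_4 = \partial_r^2 + r^{-1}\partial_r - 16 r^{-2}$ at infinity since $U$ decays, and a constant $c$ solves $\Delta_4 c = -16 c/r^2$. Matching with $\Sigma_{4,4}\sim 8/r^2$ forces $c=-\tfrac12$; similarly, substituting $\mathsf V_{4,4}\sim A r^2$ into $-\Delta_4 \mathsf V_{4,4}=\mathsf T_{4,4}\to -\tfrac12$ and using $\Delta_4(A r^2)=-12 A$ gives $A=-\tfrac{1}{24}$, confirming the target constants.

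Third, I would use the asymptotic formulas for $W_{4,k}$ coming from \eqref{eq:asyDlkr0} and \eqref{eq:asyDlkrinf}, together with $W_4$, to extract the growth/decay of each $\gamma_{4,k}$. Choosing $\bar{\gamma}_{4,1}$ and $\bar{\gamma}_{4,3}$ so as to rewrite the integrals of the contributions of the decaying modes $W_{4,1}$ and $W_{4,3}$ over $[r,\infty)$, and computing the resulting four integrals explicitly to leading order, yields estimates of the form
\begin{equation*}
\gamma_{4,1}(r) \sim c_1 \ln r, \quad \gamma_{4,2}(r) \sim c_2 r^{4}, \quad \gamma_{4,3}(r) \sim c_3 r^{2-\sqrt{20}}, \quad \gamma_{4,4}(r) \sim c_4 r^{2+\sqrt{20}},
\end{equation*}
up to corrections of order $r^{4-\sqrt{20}}$ relative to each main term, where only the last two will matter for the leading $r\to\infty$ behaviour after multiplication by $(h_{4,k},g_{4,k})$. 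Finally, assembling the pieces using Lemma~\ref{lemm:SolHom} and the identity $C_4 K_4 = \tfrac{3}{\sqrt{20}}$ (analogous to the $C_\ell K_\ell$ identities exploited in Lemmas~\ref{lem:T22} and~\ref{lem:T3k}) collapses the leading terms onto the constants $-\tfrac12$ and $-\tfrac{1}{24}$, and the remainders combine into $\Oc(r^{4-\sqrt{20}})$ and $\Oc(r^{6-\sqrt{20}})$. Regularity at the origin is automatic from the vanishing of $\bar{\gamma}_{4,2},\bar{\gamma}_{4,4}$ and the form of the $h_{4,k}$. Propagation of the estimates to higher derivatives follows, as in the previous lemmas, by differentiating the explicit formulas above and using that the $\Oc(\cdot)$'s for $h_{4,k}$, $g_{4,k}$, $W_{4,k}$ and $\Sigma_{4,4}$ all propagate.

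The main obstacle, as in Lemma~\ref{lem:T3k}, is the bookkeeping: correctly identifying which cross terms between $\gamma_{4,3}h_{4,3}$ and $\gamma_{4,4}h_{4,4}$ (and the analogue for $g$) survive to give the $\sqrt{20}$-independent leading constants, rather than producing powers of $r^{\pm\sqrt{20}}$. Unlike in Lemma~\ref{lem:T3k} there is no hidden moment cancellation to worry about here, because the source's leading $8/r^2$ tail is already handled by the constant-mode resonance of $\Delta_4$, so the argument should go through without additional ingredients.
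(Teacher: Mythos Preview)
Your approach is the same as the paper's: apply Lemma~\ref{lemm:sol_inhol} with $\ell=4$, compute $\Sigma_{4,4}=8r^{-2}+\Oc(r^{-\sqrt 8})$, fix the integration constants for regularity at the origin and decay from the $k=1,3$ modes, and assemble using Lemma~\ref{lemm:SolHom}. The heuristic check by plugging constant and $Ar^2$ into $\Delta_4$ is a nice sanity test the paper does not make explicit.

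However, three of your stated numerics are wrong and would derail the computation. First, $W_{4,1}/W_4\sim s^{-1}$ at infinity, so with $\Sigma_{4,4}\sim 8s^{-2}$ the integrand behaves like $s^{-3}$ and $\gamma_{4,1}(r)=\Oc(r^{-2})$, not $c_1\ln r$; similarly $W_{4,2}/W_4\sim s^{5}$ gives $\gamma_{4,2}(r)\sim r^{6}$, not $r^4$. The first slip is not harmless: since $h_{4,1}(r)\to 24$ as $r\to\infty$, a $\ln r$ growth in $\gamma_{4,1}$ would produce a $\gamma_{4,1}h_{4,1}\sim 24c_1\ln r$ contribution that is \emph{not} dominated by the last two terms, contradicting your own claim that ``only the last two will matter''. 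Second, \eqref{eq:ClKl} gives $C_4K_4=4/\sqrt{20}=2/\sqrt 5$, not $3/\sqrt{20}$; with the wrong value the assembled constant comes out $-3/8$ instead of $-1/2$. Once you correct these three items the argument goes through exactly as you outline (and as the paper does), with the $k=3,4$ cross terms combining via $C_4K_4=2/\sqrt 5$ to give $-\tfrac12$ and $-\tfrac{1}{24}r^2$, and the $k=1,2$ contributions landing safely in the $\Oc(r^{-2})\subset\Oc(r^{4-\sqrt{20}})$ remainder.
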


\begin{proof} The proof is the same as for Lemma \ref{lem:T42hatT42} by applying Lemma \ref{lemm:sol_inhol} with $\ell = 4$ and $f = \Sigma_{4,4}$, where we compute from \eqref{def:Sigma44} and Lemma \ref{lem:T22},
\begin{equation} \label{bd:Sigma44-T44}
\Sigma_{4,4}(r) = \frac 14 r\partial_r \mathsf T_{2,2}-\frac 12 \mathsf T_{2,2} +\frac{1}{16} r^3 \pa_r \Lambda U = \frac{8}{r^2} + \Oc_{r \to \infty}(r^{-\sqrt{8}}). 
\end{equation}
From Lemma \ref{lemm:sol_inhol}, we choose the following solution to \eqref{sys:Tkk31},
\begin{align*}
(\mathsf T_{4,4}, \mathsf V_{4,4})=\sum_{k=1}^4 \gamma_{4,k}(h_{4,k},g_{4,k}),
\end{align*}
where we choose the following integration constants
\begin{align*}
\gamma_{4,1} =\frac{1}{15\cdot 2^{14}}\int_r^\infty \Sigma_{4,4}(s) W_{4,1}(s) s^2(1 + s^2)^2 ds, \quad \gamma_{4,2} =\frac{1}{15\cdot 2^{14}}\int_0^r \Sigma_{4,4}(s) W_{4,2}(s) s^2(1 + s^2)^2 ds,\\
\gamma_{4,3} =\frac{1}{15\cdot 2^{14}}\int_r^\infty \Sigma_{4,4}(s) W_{4,3}(s) s^2(1 + s^2)^2 ds, \quad \gamma_{4,4} =\frac{1}{15\cdot 2^{14}}\int_0^r \Sigma_{4,4}(s) W_{4,4}(s) s^2(1 + s^2)^2 ds.
\end{align*}
To obtain the expansion as $r\to \infty$, we recall that from \eqref{eq:asyDlkrinf},
\begin{align}
\label{bd:W41-T44}W_{4,1} &= -2^9 5 r^{-7} \big(1 + \Oc(r^{4-\sqrt{20}}\big), \quad W_{4,2} = -3\cdot 2^9  r \Big( 1  +\Oc(r^{4-\sqrt{20}}) \Big),\\
\label{bd:W43-T44} W_{4,3}&= - 15\cdot 2^7 C_4 r^{-2\sqrt{5} - 3}\Big(1+ \Oc(r^{-4-\sqrt{20}})\big), \quad W_{4,4} = -15\cdot 2^7 K_4r^{2\sqrt{5} - 3} \Big(  1 + \Oc(r^{4-\sqrt{20}})\big).
\end{align}
We compute thanks to \eqref{bd:Sigma44-T44}, \eqref{bd:W41-T44} and \eqref{bd:W43-T44}:
\begin{align*}
&\gamma_{4,1} =\frac{1}{15\cdot 2^{14}}\int_r^\infty -5\cdot 2^{12} s^{-3}(1+\Oc(s^{4-\sqrt{20}}))ds=-\frac{1}{24r^2}(1+\Oc(r^{4-\sqrt{20}})),\\
& \gamma_{4,2} =\frac{1}{15\cdot 2^{14}}\int_0^r -3\cdot 2^{12} s^{5}(1+\Oc(s^{4-\sqrt{20}}))ds=-\frac{r^6}{120}(1+\Oc(r^{4-\sqrt{20}})),\\
& \gamma_{4,3} =\frac{1}{15\cdot 2^{14}}\int_r^\infty -15\cdot 2^{10}C_4 s^{-\sqrt{20}+1}(1+\Oc(s^{4-\sqrt{20}}))ds=-\frac{C_4}{32(\sqrt{5}-1)}r^{2-\sqrt{20}}(1+\Oc(r^{4-\sqrt{20}})),\\
& \gamma_{4,4} =\frac{1}{15\cdot 2^{14}}\int_0^r -15\cdot 2^{10}K_4 s^{\sqrt{20}+1}(1+\Oc(s^{4-\sqrt{20}}))ds=-\frac{K_4}{32(\sqrt{5}+1)}r^{2+\sqrt{20}}(1+\Oc(r^{4-\sqrt{20}})),\\
\end{align*}

From the definition of $(h_{4,k}, g_{4,k})$ given in Lemma \ref{lemm:SolHom}, we obtain 
\begin{align*}
\mathsf T_{4,4}  & = \sum_{k = 1}^4 \gamma_{4,k} h_{4,k}  \\
& = \Oc(r^{-2})\Oc(1)+\Oc(r^6)\Oc(r^{-8})-\frac{C_4}{32(\sqrt{5}-1)}r^{2-\sqrt{20}}(1+\Oc(r^{4-\sqrt{20}})) 16K_4 r^{\sqrt{20}-2}(1+\Oc(r^{-2})),\\
& -\frac{K_4}{32(\sqrt{5}+1)}r^{2+\sqrt{20}}(1+\Oc(r^{4-\sqrt{20}})) 16C_4 r^{-\sqrt{20}-2}(1+\Oc(r^{4-\sqrt{20}})),\\
& =-\frac 12
\end{align*}
where we used $C_4 K_4 = \frac{2}{\sqrt{}5}$. We get similarly that
\begin{align*}
\mathsf V_{4,4} & = \sum_{k = 1}^4 \gamma_{4,k} g_{4,k} \\
& = -\frac{1}{24r^2}(1+\Oc(r^{4-\sqrt{20}})) (3r^4+\Oc(r^{2}))-\frac{r^6}{120}(1+\Oc(r^{4-\sqrt{20}}))(\frac{5}{r^4}+\Oc(r^{-6}))-\frac{C_4}{32(\sqrt{5}-1)}r^{2-\sqrt{20}}(1+\Oc(r^{4-\sqrt{20}}))(- 4K_4 r^{\sqrt{20}})(1+\Oc(r^{4-2\sqrt{20}})),\\
& -\frac{K_4}{32(\sqrt{5}+1)}r^{2+\sqrt{20}}(1+\Oc(r^{4-\sqrt{20}})) (-4 C_4 r^{-\sqrt{20}})(1+\Oc(r^{\sqrt{20}-6})),\\
& =-\frac{1}{24}r^2+\Oc(r^{6-\sqrt{20}}).
\end{align*}
This concludes the proof of Lemma \ref{lem:T44}. 
\end{proof}

\subsubsection{Computation of the subleading profiles}

\begin{proposition} \label{pr:solution-system-SW}

There exist smooth solutions to \eqref{sys:S111}-\eqref{sys:S402'} such that as $r\to \infty$:
\begin{align}
\label{bd:estimate-S301-W301}& |\partial_r^k \mathsf S_{3,0,1}(r)|\lesssim r^{-3}, \qquad  \mathsf W_{3,0,1}(r)=(\frac 12 -2i+2\tilde \lambda_i)r+\Oc_{r\to \infty}(r^{-1}\ln r)\\
\label{bd:estimate-S402'-W402'}& |\partial_r^k \mathsf S_{4,0,2}'(r)|\lesssim r^{-2-k}, \qquad  \mathsf W_{4,0,2}(r)=r^2+\Oc_{r\to \infty}(1)
\end{align}
and
\begin{align}
\label{bd:estimate-S111-W111}& |\partial_r^k \mathsf S_{1,1,1}(r)|\lesssim r^{-3-k}, \qquad \qquad \qquad |\partial_r^k \mathsf W_{1,1,1}(r)|\lesssim r^{-1-k}\ln r \\
\label{bd:estimate-S210-W210}& |\partial_r^k \mathsf S_{2,1,0}(r)|\lesssim (1+|L_i|)r^{-2-k}, \qquad |\mathsf W_{2,1,0}(r)|\lesssim (1+|L_i|)|\ln r|^{2-\delta_{k\geq 1}}r^{-k},\\
\label{bd:estimate-S212-W212}& |\partial_r^k \mathsf S_{2,1,2}(r)|\lesssim (1+|L_i|) r^{-2-k}, \qquad  |\partial_r^k \mathsf W_{2,1,2}(r)|\lesssim  (1+|L_i|)r^{-k},\\
\label{bd:estimate-S220-W220}& |\partial_r^k \mathsf S_{2,2,0}(r)|\lesssim r^{-2-k}, \qquad \qquad\qquad | \mathsf W_{2,2,0}(r)|\lesssim |\ln r|^{2-\delta_{k\geq 1}}r^{-k}, \\
\label{bd:estimate-S222-W222}& |\partial_r^k \mathsf S_{2,2,2}(r)|\lesssim r^{-2-k}, \qquad \qquad \qquad|\partial_r^k \mathsf W_{2,2,2}(r)|\lesssim r^{-k} \\
\label{bd:estimate-S303-W303}& |\partial_r^k \mathsf S_{3,0,3}(r)|\lesssim r^{-3-k}, \qquad \qquad \qquad |\partial_r^k \mathsf W_{3,0,3}(r)|\lesssim r^{-1-k}, \\
\label{bd:estimate-S311-W311}& |\partial_r^k \mathsf S_{3,1,1}(r)|\lesssim (1+|L_i|) r^{-1-k} , \qquad |\partial_r^k \mathsf W_{3,1,1}(r)|\lesssim  (1+|L_i|)r^{1-k}\ln r, \\
\label{bd:estimate-S313-W313}& |\partial_r^k \mathsf S_{3,1,3}(r)|\lesssim (1+|L_i|) r^{-1-k}, \qquad |\partial_r^k \mathsf W_{3,1,3}(r)|\lesssim (1+|L_i|) r^{1-k}, \\
\label{bd:estimate-S400-W400}& |\partial_r^k \mathsf S_{4,0,0}(r)|\lesssim (1+|L_i|) r^{-2-k}, \qquad | \mathsf W_{4,0,0}(r)|\lesssim (1+|L_i|) r^{-k}\ln^{2-\delta_{k\geq 1}} r,\\
\label{bd:estimate-S402-W402}& |\partial_r^k \mathsf S_{4,0,2}(r)|\lesssim(1+|L_i|)  r^{-2-k}, \qquad  |\partial_r^k \mathsf W_{4,0,2}(r)|\lesssim (1+|L_i|) r^{-k},\\
\label{bd:estimate-S404-W404}& |\partial_r^k \mathsf S_{4,0,4}(r)|\lesssim r^{-2-k}, \qquad \qquad\qquad  |\partial_r^k \mathsf W_{4,0,4}(r)|\lesssim r^{-k}.
\end{align}

\end{proposition}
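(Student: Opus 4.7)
The plan is to solve each elliptic system in \eqref{sys:S111}--\eqref{sys:S402'} by the same variation-of-parameters and partial-mass techniques already employed in Lemmas \ref{lem:T20hatT2}--\ref{lem:T44}, treating them in two groups according to their angular harmonic. For the radial ($k=0$) modes (systems \eqref{sys:S210}, \eqref{sys:S220}, \eqref{sys:S400}) I would use the partial mass reduction $m(r)=\int_0^r \mathsf S(s)s\,ds$ to rewrite $\Ls_0(\mathsf S)=\Sigma$ as $\As_0 m = m_\Sigma$ with $m_\Sigma(r) := \int_0^r \Sigma(s)s\,ds$, and invert via Lemma \ref{lemm:invA0}; for the $k\geq 1$ modes I would express the solution as $\mathsf S=\sum_{j=1}^4 \gamma_j h_{k,j}$, $\mathsf W=\sum_{j=1}^4 \gamma_j g_{k,j}$ using the Green's representation of Lemma \ref{lemm:sol_inhol} in the basis of the four homogeneous solutions from Lemma \ref{lemm:SolHom}. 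In both cases the integration constants are pinned down by requiring smoothness at the origin (which excludes the singular fundamental solutions) and by minimizing the growth at infinity (which fixes whether to integrate from $0$ or from $\infty$ on each component).

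\paragraph{Estimating source terms and propagating through the integrals.} Before integrating, I would collect the pointwise asymptotic behaviour of each $\Sigma$ in \eqref{sys:S111}--\eqref{sys:S402'} from the already established expansions \eqref{eigenfunction:id:asymptoticT20infty}, \eqref{eigenfunction:id:asymptotichatT2infty}, \eqref{eigenfunction:id:asymptoticT22infty}, \eqref{eigenfunction:id:asymptoticT31infty}, \eqref{eigenfunction:id:asymptoticT33infty}, \eqref{eigenfunction:id:asymptoticT40infty}, \eqref{eigenfunction:id:asymptoticT42iinfty} together with the explicit decay of $U,\partial_r U, \Lambda U$. This yields bounds of the form $|\partial_r^k \Sigma_{m,n,\ell}(r)|\lesssim (1+|L_i^\inn|)r^{-s_{m,n,\ell}-k}$ with an exponent $s_{m,n,\ell}\geq 2$ that is read off directly, together with regularity at $r=0$. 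Feeding these bounds into the Green's formulas of Lemmas \ref{lemm:invA0} and \ref{lemm:sol_inhol}, and using the $r\to 0$ and $r\to\infty$ asymptotics of $h_{k,j},g_{k,j},W_{k,j}$ from Lemma \ref{lemm:SolHom}, one obtains the bounds \eqref{bd:estimate-S111-W111}--\eqref{bd:estimate-S404-W404}. The differentiated bounds follow by applying $\partial_r$ to the integral representations, since each derivative falls on either one of the $h_{k,j}/g_{k,j}$ (which gains a power of $r^{-1}$ in its asymptotic behaviour) or on the upper limit of integration (which gains $\Sigma(r)\cdot W_{k,j}(r)/W_k$ evaluated at $r$), exactly as was done in the proofs of Lemmas \ref{lem:T22}--\ref{lem:T44}.

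\paragraph{Extraction of the growing Poisson fields.} The specifically claimed leading behaviours \eqref{bd:estimate-S301-W301} and \eqref{bd:estimate-S402'-W402'} require more than just an order-of-magnitude bound. For \eqref{sys:S301} I would compute $\lim_{r\to\infty} r^{-1}\mathsf W_{3,0,1}(r)$ by isolating the coefficient of the linearly growing homogeneous mode $g_{1,2}\sim r$; using the explicit prefactors in $\Sigma_{3,0,1}=\frac54 r\partial_r^2 U+\frac{17-8i-8\tilde\lambda_i}{4}\partial_r U$ and the fact that $\int_0^\infty r\partial_r^2 U \, r\,dr$ and $\int_0^\infty \partial_r U \, r\,dr$ are explicit multiples of $-1$, one extracts the coefficient $(\tfrac12-2i+2\tilde\lambda_i)$. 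For \eqref{sys:S402'} the source vanishes, so $\mathsf W_{4,0,2}'$ is a pure homogeneous solution of $-\Delta_2$; the normalisation $r^2+\Oc(1)$ is enforced by choosing the linear combination of $g_{2,2}$ and $g_{2,4}$ that leaves only the harmonic polynomial $r^2$ at leading order, exploiting the explicit asymptotics of these functions.

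\paragraph{Main obstacle.} The most delicate system is \eqref{sys:S311}, whose source $\Sigma_{3,1,1}$ combines five previously computed profiles, each carrying its own logarithmic structure, and whose solution must grow only like $r\ln r$. This would be borderline if $\Sigma_{3,1,1}$ happened to decay only like $r^{-1}$, so the key technical step is to verify that in fact $|\partial_r^k \Sigma_{3,1,1}|\lesssim (1+|L_i^\inn|) r^{-3-k}$ by checking exact cancellations between the $r^{-1}$ tails of $\alpha \cdot 2r\partial_r \mathsf S_{1,1,1}+2(2-i)\mathsf S_{1,1,1}$, $-\partial_r \mathsf T_{2,0}^{(i)}-\tfrac12\partial_r\mathsf T_{2,2}-\mathsf T_{2,2}/r$, and $-\alpha^{1/2}L_i^\inn(\Lambda_i+2\tilde\lambda_i)\partial_r U$ using the asymptotics from Lemmas \ref{lem:T20hatT2}, \ref{lem:T22} and the estimate \eqref{bd:estimate-S111-W111} for $\mathsf S_{1,1,1}$. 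Once this cancellation is verified, applying Lemma \ref{lemm:sol_inhol} with integration constants $\gamma_{1,1},\gamma_{1,3}$ taken along $[r,\infty)$ and $\gamma_{1,2},\gamma_{1,4}$ along $[0,r]$ yields the bound \eqref{bd:estimate-S311-W311} with a single logarithmic factor coming from $\int_r^\infty s\cdot s^{-3}\,ds$ in the $\gamma_{1,1}$ integral.
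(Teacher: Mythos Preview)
Your overall strategy---partial mass reduction via Lemma~\ref{lemm:invA0} for the radial modes, and variation of parameters via Lemmas~\ref{lemm:SolHom} and~\ref{lemm:sol_inhol} for the higher harmonics---is exactly what the paper does. However, there are two concrete issues.

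\textbf{The real obstacle is in the radial modes, not in $\Sigma_{3,1,1}$.} Your ``main obstacle'' paragraph hunts for cancellations among $r^{-1}$ tails in $\Sigma_{3,1,1}$, but in fact every term in that source already decays like $r^{-3}$: by \eqref{bd:estimate-S111-W111} one has $\mathsf S_{1,1,1}=O(r^{-3})$; by \eqref{eigenfunction:id:asymptoticT20infty0}--\eqref{eigenfunction:id:asymptotichatT2infty} the profiles $\partial_r T_{2,0}^{(i)}$, $\partial_r\hat T_2$, $\partial_r\mathsf T_{2,2}$, $\mathsf T_{2,2}/r$ are all $O(r^{-3})$; and $\partial_r U=O(r^{-5})$. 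No cancellation is needed, and the paper simply records $\Sigma_{3,1,1}=O(r^{-3})$ and integrates. The genuinely delicate point you omit is the \emph{mass cancellation} for the radial sources: the paper checks that
\[
\int_0^\infty r\,\Sigma_{2,1,0}\,dr=\int_0^\infty r\,\Sigma_{2,2,0}\,dr=\int_0^\infty r\,\Sigma_{4,0,0}\,dr=0,
\]
which follows from $\int\Lambda U=0$, $\int a\cdot\nabla\partial_{x_1}U=0$, $\int a\cdot\nabla\mathcal S_{1,1}=0$, and so on. Without this, the partial mass $m_\Sigma(r)=\int_0^r s\,\Sigma(s)\,ds$ tends to a nonzero constant, and Lemma~\ref{lemm:invA0} then gives $m(r)\sim cr^2$, hence $\mathsf S\sim c$ rather than the required $\mathsf S=O(r^{-2})$ in \eqref{bd:estimate-S210-W210}, \eqref{bd:estimate-S220-W220}, \eqref{bd:estimate-S400-W400}. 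This cancellation is the structural ingredient that must be verified.

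\textbf{Minor correction for $(\mathsf S_{4,0,2}',\mathsf W_{4,0,2}')$.} The system \eqref{sys:S402'} has zero source in \emph{both} equations, so $(\mathsf S_{4,0,2}',\mathsf W_{4,0,2}')$ must be a homogeneous solution of the full coupled system $\{\Ls_{0,2}=0,\ -\Delta_2=\cdot\}$, not merely of $-\Delta_2$. The paper takes $(\mathsf S_{4,0,2}',\mathsf W_{4,0,2}')=(h_{2,1},g_{2,1})$ directly from Lemma~\ref{lemm:SolHom}, which satisfies $g_{2,1}(r)=r^2+O(1)$; your proposed combination of $g_{2,2}$ and $g_{2,4}$ both decay at infinity and cannot produce the $r^2$ growth. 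For \eqref{bd:estimate-S301-W301}, the paper obtains the coefficient $\frac12-2i+2\tilde\lambda_i$ not by computing the raw integrals you mention but via the clean identity $W_{1,2}=\frac18 W_1 r^2$ from \eqref{id:W12-W1}, which converts the momentum $\int_0^\infty\Sigma_{3,0,1}r^2\,dr=4(4i-1-4\tilde\lambda_i)$ directly into the $\gamma_{3,0,1,2}$ coefficient.
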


\begin{proof}

To ease the presentation, we prove all bounds below under the assumption that $|L_i^\inn|\lesssim 1$. The dependance on $L_i^\inn$ in the final identities \eqref{bd:estimate-S111-W111}-\eqref{bd:estimate-S404-W404} then follows directly from the affine dependance on $L_i^\inn$ in the equations \eqref{sys:S111}-\eqref{sys:S402'}.

To lighten notation, the notation $\Oc_{r\to \infty}()$ will again implicitly include estimates for derivatives. Namely, for $f\in C^\infty((0,\infty),\mathbb R)$ and $g:(0,\infty)\rightarrow (0,\infty)$ we will write
$$
f(r)=\Oc_{r\to \infty}(g(r))
$$
if for all $k\in \mathbb N$, we have for all $r\geq 1$:
$$
|\partial_r^k f(r)|\lesssim r^{-k}g(r).
$$

\medskip

\noindent \textbf{Step 1}. \emph{Computation of the first spherical harmonics profiles}.

\smallskip

\noindent \underline{Determination of $(\mathsf S_{1,1,1}, \mathsf W_{1,1,1})$}. We notice that by \eqref{def:Sigma3k} we have $-|a|\partial_r \Lambda U=-4\sqrt{2}|a|\Sigma_{3,1}$. By \eqref{sys:S111}, \eqref{sys:Tkk31} and Lemma \ref{lem:T3k} we thus choose $(\mathsf S_{1,1,1}, \mathsf S_{1,1,1})=(\mathsf T_{3,1},\mathsf W_{3,1})$. The desired estimates \eqref{bd:estimate-S111-W111} then follow from \eqref{eigenfunction:id:asymptoticT31infty}.

\smallskip 

\noindent \underline{Determination of $(\mathsf S_{3,0,1}, \mathsf W_{3,0,1})$}. We first notice that the horizontal momentum of the source term is non-zero by \eqref{eigenfunctions:id:G1} and by integrating by parts:
$$
\int_{\mathbb R^2} y_1 \left((\Lambda_i+2\tilde \lambda_i)\partial_{x_1}U+\nabla . (\partial_{x_1}UG_1) \right)dy=4(4i-1-4\tilde \lambda_i)\pi.
$$
In polar coordinates, this corresponds to
\begin{equation} \label{eigenfunctions:id:Sigma301-momentum}
\int_0^\infty \Sigma_{3,0,1}(r)r^2 dr=4(4i-1-4\tilde \lambda_i).
\end{equation}
We next estimate by \eqref{eigenfunctions:id:G1} 
\begin{equation} \label{eigenfunctions:bd:Sigma301}
\Sigma_{3,0,1}(r)=\Oc_{r\to 0}(r), \qquad \Sigma_{3,0,1}(r)=\Oc_{r\to \infty}(r^{-6}).
\end{equation}
We deduce by \eqref{eigenfunctions:id:Sigma301-momentum} and \eqref{id:W12-W1} that
\begin{equation} \label{eigenfunctions:id:Sigma301-momentum-2}
\int_0^\infty \frac{\Sigma_{3,0,1}W_{1,2}}{W_1} dr= 2i-\frac 12-2\tilde \lambda_i.
\end{equation}
We apply Lemma \ref{lemm:sol_inhol} with $\ell = 1$ and $f = \Sigma_{3,1,1}$ to solve \eqref{sys:S311} and choose the following solution
\begin{equation} \label{eigenfunctions:id:S301-expression}
(\mathsf S_{3,1,1}, \mathsf W_{3,1,1})=\sum_{k=1}^4 \gamma_{3,1,1,k}(h_{1,k},g_{1,k})
\end{equation}
where, using \eqref{eigenfunctions:bd:Sigma301} and \eqref{eigenfunctions:id:Sigma301-momentum-2}:
\begin{align*}
& \gamma_{3,0,1,1}= - \int_r^\infty  \frac{\Sigma_{3,0,1}W_{1,1}}{W_1}ds=\left\{
\begin{array}{l l}
= \Oc_{r\to 0}( \int_0^\infty \langle s\rangle^{-2} ds) = \Oc_{r\to 0}(r^2),\\
= \Oc_{r\to \infty}( \int_0^r s^{-2} ds) = \Oc_{r\to \infty}(r^{-1}),
\end{array} \right.\\
&  \gamma_{3,0,1,2}=-\int_0^r \frac{\Sigma_{3,0,1}W_{1,2}}{W_1}ds =\left\{
\begin{array}{l l}
= \Oc_{r\to 0}( \int_0^r s^3 ds) = \Oc_{r\to 0}(r^4),\\
= \frac 12-2i+2\tilde \lambda_i+ \Oc_{r\to \infty}( \int_r^\infty s^{-4} ds) = \frac 12-2i+2\tilde \lambda_i+\Oc_{r\to \infty}(r^{-3}),
\end{array} \right.\\
& \gamma_{3,0,1,3}=-\int_r^\infty \frac{\Sigma_{3,0,1}W_{1,3}}{W_1}ds=\left\{
\begin{array}{l l}
= \Oc_{r\to 0}( \int_0^\infty s\langle s \rangle^{-1-\sqrt{5}-3} ds) = \Oc_{r\to 0}(1),\\
= \Oc_{r\to \infty}( \int_r^\infty s^{-\sqrt{5}-3} ds) = \Oc_{r\to \infty}(r^{-2-\sqrt{5}}),
\end{array} \right.\\
& \gamma_{3,0,1,4}=-\int_0^r \frac{\Sigma_{3,1,1}W_{1,4}}{W_1}ds=\left\{
\begin{array}{l l}
= \Oc_{r\to 0}( \int_0^r s^3 ds) = \Oc_{r\to 0}(r^4),\\
= \Oc_{r\to \infty}( \int_0^r s^{\sqrt{5}-3} ds) = \Oc_{r\to \infty}(r^{\sqrt{5}-2}),
\end{array} \right.
\end{align*}
Injecting the asymptotic expansions of Lemma \ref{lemm:sol_inhol} in \eqref{eigenfunctions:id:S301-expression} we obtain as $r\to \infty$
\begin{align*}
&\mathsf S_{3,0,1}=  \Oc(r^{-1}) \Oc(r^{-5})+ \Oc(1) \Oc(r^{-3})+ \Oc(r^{-2-\sqrt{5}}) \Oc(r^{\sqrt{5}-2})+ \Oc(r^{\sqrt{5}-2}) \Oc(r^{-\sqrt{5}-2})=\Oc_{r\to \infty}(r^{-3})\\
&\mathsf W_{3,0,1}  =  \Oc(r^{-1}) \Oc(r^{-1})+( \frac 12-2i+2\tilde \lambda_i)(r+ \Oc(r^{-1}\ln r))+ \Oc(r^{-2-\sqrt{5}}) \Oc(r^{\sqrt{5}})+ \Oc(r^{\sqrt{5}-2}) \Oc(r^{-\sqrt{5}})\\
&= ( \frac 12-2i+2\tilde \lambda_i)r+\Oc(r^{-1}\ln r).
\end{align*}
This is \eqref{bd:estimate-S301-W301}.

\smallskip

\noindent \underline{Estimate for $(\mathsf S_{3,1,1}, \mathsf W_{3,1,1})$}. We first compute by \eqref{sys:S311} and  \eqref{bd:estimate-S111-W111}, \eqref{eigenfunction:id:asymptoticT20infty0}, \eqref{eigenfunction:id:asymptoticT20infty}, \eqref{eigenfunction:id:asymptotichatT2infty}, \eqref{eigenfunction:id:asymptoticT220}, \eqref{eigenfunction:id:asymptoticT22infty} and $|\tilde \lambda_i|\lesssim |\ln \nu|^{-1}$ that
\begin{equation} \label{bd:Sigma311-asymptotics}
|\Sigma_{3,1,1}(r)|=\Oc_{r\to 0}(r), \qquad  |\Sigma_{3,1,1}(r)|=\Oc_{r\to \infty}( r^{-3})
\end{equation}
We apply Lemma \ref{lemm:sol_inhol} with $\ell = 1$ and $f = \Sigma_{3,1,1}$ to solve \eqref{sys:S311} and choose the following solution
$$
(\mathsf S_{3,1,1}, \mathsf W_{3,1,1})=\sum_{k=1}^4 \gamma_{3,1,1,k}(h_{1,k},g_{1,k})
$$
where
\begin{align*}
& \gamma_{3,1,1,1}=\int_0^r \frac{\Sigma_{3,1,1}W_{1,1}}{W_1}ds=\left\{
\begin{array}{l l}
= \Oc_{r\to 0}( \int_0^r s ds) = \Oc_{r\to 0}(r^2),\\
= \Oc_{r\to \infty}( \int_0^r s ds) = \Oc_{r\to \infty}(r^2),
\end{array} \right.\\
&  \gamma_{3,1,1,2}=-\int_0^r \frac{\Sigma_{3,1,1}W_{1,2}}{W_1}ds =\left\{
\begin{array}{l l}
= \Oc_{r\to 0}( \int_0^r s^3 ds) = \Oc_{r\to 0}(r^4),\\
= \Oc_{r\to \infty}( \int_0^r s^3 \langle s \rangle^{-4} ds) = \Oc_{r\to \infty}(\ln r),
\end{array} \right.\\
& \gamma_{3,1,1,3}=-\int_r^\infty \frac{\Sigma_{3,1,1}W_{1,3}}{W_1}ds=\left\{
\begin{array}{l l}
= \Oc_{r\to 0}( \int_0^\infty s\langle s \rangle^{-1-\sqrt{5}} ds) = \Oc_{r\to 0}(1),\\
= \Oc_{r\to \infty}( \int_r^\infty s^{-\sqrt{5}} ds) = \Oc_{r\to \infty}(r^{1-\sqrt{5}}),
\end{array} \right.\\
& \gamma_{3,1,1,4}=-\int_0^r \frac{\Sigma_{3,1,1}W_{1,4}}{W_1}ds=\left\{
\begin{array}{l l}
= \Oc_{r\to 0}( \int_0^r s^3 ds) = \Oc_{r\to 0}(r^4),\\
= \Oc_{r\to \infty}( \int_0^r s^{\sqrt{5}} ds) = \Oc_{r\to \infty}(r^{1+\sqrt{5}}),
\end{array} \right.
\end{align*}
where we used \eqref{bd:Sigma311-asymptotics} and the bounds of Lemma \ref{lemm:sol_inhol} to obtain the above estimates. The desired estimate \eqref{bd:estimate-S311-W311} for $k=0$ then follows from the above estimates and that of Lemma \ref{lemm:SolHom}. They are propagated to higher order derivatives $k\geq 1$ by the same method.

\medskip

\noindent \textbf{Step 2}. \emph{Computation of the radial profiles}. We first notice the mass cancellation for all source terms with radial components
\begin{align*}
& 0=\int_{\mathbb R^2} -(\Lambda_i+2\tilde \lambda_i)\Lambda U  -\alpha^{\frac 12}L_i^\inn a.\nabla \partial_{x_1}U\\
&\qquad =\int_{\mathbb R^2} -a.\nabla \mathcal S_{1,1}=\int_{\mathbb R^2} -\frac{1}{2|a|^4}\Lambda U+\alpha^2L_i^\inn \nabla .(\partial_{x_1}UG_2)
\end{align*}
which in polar coordinates corresponds to
\begin{equation} \label{id-mass-cancellation-Sigma220}
0=\int_0^\infty r \Sigma_{2,1,0}(r)dr =\int_0^\infty r \Sigma_{2,2,0}(r)dr= \int_0^\infty r \Sigma_{4,0,0}(r)dr .
\end{equation}
By \eqref{sys:S210}, \eqref{sys:S220} and \eqref{sys:S400}, and \eqref{bd:estimate-S111-W111} we have for $k\in \mathbb N$ and $r\geq 1$,
\begin{equation} \label{bd-Sigma220}
|\partial_r^k \Sigma_{2,1,0}(r)|+|\partial_r^k \Sigma_{2,2,0}(r)|+|\partial_r^k \Sigma_{4,0,0}(r)|\lesssim r^{-4-k}.
\end{equation}
In view of \eqref{id-mass-cancellation-Sigma220} and \eqref{bd-Sigma220}, we will only compute $(\mathsf S_{2,2,0},\mathsf W_{2,2,0})$, and the proof will apply simultaneously to $(\mathsf S_{2,1,0},\mathsf W_{2,1,0})$ and $(\mathsf S_{4,0,0},\mathsf W_{4,0,0})$. Since the system \eqref{sys:S220} corresponds to the radial case, in can be solved by appealing to Lemma \ref{lemm:invA0}. Introducing $m_{2,2,0}(r)=\int_0^r \mathcal S_{2,2,0}(r')r'dr'$ and $\sigma_{2,2,0}(r)=\int_0^r \Sigma_{2,2,0}(r')r'dr'$ we choose
\begin{equation} \label{id-m220}
m_{2,2,0}=\frac 12 \psi_0 \int_r^1 \frac{\zeta^4+4\zeta^2\ln \zeta-1}{\zeta}\sigma_{2,2,0}(\zeta)d\zeta+\frac 12 \tilde \psi_0 \int_0^r \zeta \sigma_{2,2,0}(\zeta)d\zeta.
\end{equation}
By \eqref{systeme-odes:fundamental-solution-radial}, the function $\mathsf S_{2,2,0}(r)=r^{-1}\partial_r m_{2,2,0}(r)$ is then smooth at the origin. We have by \eqref{id-mass-cancellation-Sigma220} and \eqref{bd-Sigma220} that for $k\in \mathbb N$, for any $r>1$,
\begin{equation} \label{bd-sigma220}
|\partial_r^k \sigma_{2,2,0}(r)|=|\partial_r^k \int_r^\infty \Sigma_{2,2,0}(r')dr' |\lesssim r^{-2-k}.
\end{equation}
We have by \eqref{id-m220}
$$
|\mathsf S_{2,2,0}(r)|=\left|\frac{1}{2r}\partial_r \psi_0 \int_r^1 \frac{\zeta^4+4\zeta^2\ln \zeta-1}{\zeta}\sigma_{2,2,0}(\zeta)d\zeta+\frac{1}{2r}\partial_r \tilde \psi_0 \int_0^r \zeta \sigma_{2,2,0}(\zeta)d\zeta \right|\lesssim r^{-2}.
$$
This extends by the same computation to the case $k\geq 1$, which shows the first inequality in \eqref{bd:estimate-S220-W220}. Using $\mathsf W_{2,2,0}(r)=C-\int_0^r \frac{m_{2,2,0}(r')}{r'}dr'$ for some $C\in \mathbb R$ and $|\partial_r^k m_{2,2,0}(r)|\lesssim r^{-k}\ln r$ from \eqref{id-m220} and \eqref{bd-sigma220}, we obtain the second inequality in \eqref{bd:estimate-S220-W220}. As we mentioned above, this proof shows simultaneously \eqref{bd:estimate-S210-W210} and \eqref{bd:estimate-S400-W400}.

\medskip

\noindent \textbf{Step 3}. \emph{Computation of second spherical harmonics profiles}. First, from Lemma \ref{lemm:SolHom} we simply define $(\mathcal S_{4,0,2}',\mathcal W_{4,0,2}')=(h_{2,1},g_{2,1})$ which immediately implies the desired behaviour \eqref{bd:estimate-S402'-W402'}.

We now compute $(\mathcal S_{2,1,2},\mathcal W_{2,1,2})$, $(\mathcal S_{2,2,2},\mathcal W_{2,2,2})$ and $(\mathcal S_{4,0,2},\mathcal W_{4,0,2})$. We have by \eqref{sys:S212}, \eqref{sys:S222}, \eqref{sys:S402} and \eqref{bd:estimate-S111-W111} 
\begin{equation} \label{bd:Sigma222-asymptotics}
\Sigma_{2,1,2}(r),\Sigma_{2,2,2}(r),\Sigma_{4,0,2}(r)=\Oc_{r\to 0}(r^2),
\end{equation}
where we used for $\Sigma_{2,2,2}$ that near the origin $\mathsf S_{1,1}(r)=cr+O(r^3)$ for some constants $c\in \mathbb R$ as $\mathcal S_{1,1}$ is smooth near the origin. We also have
\begin{equation} \label{bd:Sigma222-asymptotics2}
\Sigma_{2,1,2}(r)=\Oc_{r\to \infty}(r^{-6})=\Oc_{r\to \infty}(r^{-4}), \qquad \Sigma_{2,2,2}(r)=\Oc_{r\to \infty}(r^{-4}), \qquad \Sigma_{4,0,2}(r)=\Oc_{r\to \infty}(r^{-4}).
\end{equation}
Because of \eqref{bd:Sigma222-asymptotics} and \eqref{bd:Sigma222-asymptotics2} we will only compute $(\mathsf S_{2,2,2},\mathsf W_{2,2,2})$, and the proof will apply simultaneously to $(\mathsf S_{2,1,2},\mathsf W_{2,1,0})$ and $(\mathsf S_{4,0,2},\mathsf W_{4,0,2})$. We could show more decay as $r\to \infty$ for $(\mathsf S_{2,1,2},\mathsf W_{2,1,0})$ thanks to the extra $r^{-2}$ decay in $\Sigma_{2,1,2}$ but the decay we prove is sufficient.

We solve \eqref{sys:S222} thanks to Lemma \ref{lemm:sol_inhol} with $\ell =2$ and $f = \Sigma_{2,2,2}$ and get a solution of the form
$$
(\mathsf S_{2,2,2}, \mathsf W_{2,2,2})=\sum_{k=1}^4 \gamma_{2,2,2,k}(h_{2,k},g_{2,k})
$$
where we choose the integration constants so that
\begin{align*}
& \gamma_{2,2,2,1}=- \int_r^\infty \frac{\Sigma_{2,2,2}W_{2,1}}{W_2}ds=\left\{
\begin{array}{l l}
= \Oc_{r\to 0}( \int_0^\infty \langle s\rangle^{-3} ds) = \Oc_{r\to 0}(1),\\
= \Oc_{r\to \infty}( \int_r^\infty s^{-3} ds) = \Oc_{r\to \infty}(r^{-2}),
\end{array} \right.\\
&  \gamma_{2,2,2,2}=-\int_0^r \frac{\Sigma_{2,2,2}W_{2,2}}{W_2}ds =\left\{
\begin{array}{l l}
= \Oc_{r\to 0}( \int_0^r s^5 ds) = \Oc_{r\to 0}(r^6),\\
= \Oc_{r\to \infty}( \int_0^r s \langle s \rangle^{-4} ds) = \Oc_{r\to \infty}(r^2),
\end{array} \right.\\
& \gamma_{2,2,2,3}=-\int_r^\infty \frac{\Sigma_{2,2,2}W_{2,3}}{W_2}ds=\left\{
\begin{array}{l l}
= \Oc_{r\to 0}( \int_0^\infty \langle s \rangle^{-1-\sqrt{8}} ds) = \Oc_{r\to 0}(1),\\
= \Oc_{r\to \infty}( \int_r^\infty s^{-1-\sqrt{8}} ds) = \Oc_{r\to \infty}(r^{-\sqrt{8}}),
\end{array} \right.\\
& \gamma_{2,2,2,4}=-\int_0^r \frac{\Sigma_{2,2,2}W_{2,4}}{W_2}ds=\left\{
\begin{array}{l l}
= \Oc_{r\to 0}( \int_0^r s^5 ds) = \Oc_{r\to 0}(r^6),\\
= \Oc_{r\to \infty}( \int_0^r s^{\sqrt{8}-1} ds) = \Oc_{r\to \infty}(r^{\sqrt{8}}),
\end{array} \right.
\end{align*}
and where the estimates follow from \eqref{bd:Sigma313-asymptotics} and the bounds of Lemma \ref{lemm:sol_inhol}. The estimates \eqref{bd:estimate-S313-W313} for $k=0$ then follow from the above estimates and that of Lemma \ref{lemm:SolHom}, and are propagated for $k\geq 1$ by the same computations. As we explained, this proves also simultaneously \eqref{bd:estimate-S212-W212} and \eqref{bd:estimate-S402-W402}.

\medskip

\noindent \textbf{Step 4}. \emph{Computation of third spherical harmonics profiles}. We estimate by \eqref{sys:S303}, \eqref{sys:S313}, \eqref{eigenfunction:id:asymptoticT22infty} and \eqref{bd:estimate-S111-W111}
\begin{equation} \label{bd:Sigma313-asymptotics}
\Sigma_{3,0,3}(r),\Sigma_{3,1,3}(r)=\Oc_{r\to 0}(r^3), \qquad  \Sigma_{3,0,3}(r)=\Oc_{r\to \infty}(r^{-5}), \qquad  \Sigma_{3,1,3}(r)=\Oc_{r\to \infty}(r^{-3})
\end{equation}
where we used that since $\mathcal S_{1,1}$ and $T_{2,2}$ are smooth near the origin then in polar coordinates $\mathsf S_{1,1}(r)=cr+O_{r\to 0}(r^3)$ and $\mathsf S_{2,2}(r)=c'r^2+O_{r\to 0}(r^4)$ for some constants $c,c'\in \mathbb R$ and that these estimates propagate for higher order derivative. We now solve \eqref{sys:S303} and \eqref{sys:S313} simultaneously. Let $j\in \{0,1\}$. Then by \eqref{bd:Sigma313-asymptotics} we have $\Sigma_{3,j,3}=\Oc_{r\to \infty}(r^{2j-5})$. Applying Lemma \ref{lemm:sol_inhol} with $\ell =3$ and $f_j = \Sigma_{3,j,3}$ to solve \eqref{sys:S303} and \eqref{sys:S313} we get the following solutions
$$
(\mathsf S_{3,j,3}, \mathsf W_{3,j,3})=\sum_{k=1}^4 \gamma_{3,j,3,k} (h_{3,k},g_{3,k})
$$
where we choose
\begin{align*}
& \gamma_{3,j,3,1}=- \int_r^\infty \frac{\Sigma_{3,j,1}W_{3,1}}{W_3}ds=\left\{
\begin{array}{l l}
= \Oc_{r\to 0}( \int_0^\infty \langle s\rangle^{2j-5} ds) = \Oc_{r\to 0}(1),\\
= \Oc_{r\to \infty}( \int_r^\infty s^{2j-5} ds) = \Oc_{r\to \infty}(r^{2j-4}),
\end{array} \right.\\
&  \gamma_{3,j,3,2}=-\int_0^r \frac{\Sigma_{3,j,1}W_{3,2}}{W_3}ds =\left\{
\begin{array}{l l}
= \Oc_{r\to 0}( \int_0^r s^7 ds) = \Oc_{r\to 0}(r^8),\\
= \Oc_{r\to \infty}( \int_0^r s^{2j+1}  ds) = \Oc_{r\to \infty}(r^{2j+2}),
\end{array} \right.\\
& \gamma_{3,1,3,3}=-\int_r^\infty \frac{\Sigma_{3,1,1}W_{3,3}}{W_3}ds=\left\{
\begin{array}{l l}
= \Oc_{r\to 0}( \int_0^\infty \langle s \rangle^{-\sqrt{13}+2j-2} ds) = \Oc_{r\to 0}(1),\\
= \Oc_{r\to \infty}( \int_r^\infty s^{-\sqrt{13}+2j-2} ds) = \Oc_{r\to \infty}(r^{2j-1-\sqrt{13}}),
\end{array} \right.\\
& \gamma_{3,1,3,4}=-\int_0^r \frac{\Sigma_{3,1,1}W_{3,4}}{W_3}ds=\left\{
\begin{array}{l l}
= \Oc_{r\to 0}( \int_0^r s^7 ds) = \Oc_{r\to 0}(r^8),\\
= \Oc_{r\to \infty}( \int_0^r s^{\sqrt{13}+2j-2} ds) = \Oc_{r\to \infty}(r^{2j-1+\sqrt{13}}),
\end{array} \right.
\end{align*}
and where the estimates follow from \eqref{bd:Sigma313-asymptotics} and the bounds of Lemma \ref{lemm:sol_inhol}. The estimates \eqref{bd:estimate-S303-W303} and \eqref{bd:estimate-S313-W313} for $k=0$ then follow from the above estimates and that of Lemma \ref{lemm:SolHom}, and are propagated for $k\geq 1$ by the same computations.

\medskip

\noindent \textbf{Step 5}. \emph{Computation of the fourth spherical harmonics profile}. We estimate by \eqref{sys:S404}:
\begin{equation} \label{bd:Sigma404-asymptotics}
\Sigma_{4,0,4}(r)=\Oc_{r\to 0}(r^4), \qquad  \Sigma_{4,0,4}(r)=\Oc_{r\to \infty}(r^{-4}).
\end{equation}
Applying Lemma \ref{lemm:sol_inhol} with $\ell =4$ and $f= \Sigma_{4,0,4}$ to solve \eqref{sys:S404} we get the following solution
$$
(\mathsf S_{4,0,4}, \mathsf W_{4,0,4})=\sum_{k=1}^4 \gamma_{4,0,4,k} (h_{4,k},g_{4,k})
$$
where we choose
\begin{align*}
& \gamma_{4,0,4,1}=- \int_r^\infty \frac{\Sigma_{4,0,4}W_{4,1}}{W_4}ds=\left\{
\begin{array}{l l}
= \Oc_{r\to 0}( \int_0^\infty  \langle s\rangle^{-5} ds) = \Oc_{r\to 0}(1),\\
= \Oc_{r\to \infty}( \int_r^\infty s^{-5} ds) = \Oc_{r\to \infty}(r^{-4}),
\end{array} \right.\\
&  \gamma_{4,0,4,2}=-\int_0^r \frac{\Sigma_{3,0,4}W_{4,2}}{W_4}ds =\left\{
\begin{array}{l l}
= \Oc_{r\to 0}( \int_0^r s^9 ds) = \Oc_{r\to 0}(r^{10}),\\
= \Oc_{r\to \infty}( \int_0^r s^{3}  ds) = \Oc_{r\to \infty}(r^{4}),
\end{array} \right.\\
& \gamma_{4,0,4,3}=-\int_r^\infty \frac{\Sigma_{4,0,4}W_{4,3}}{W_4}ds=\left\{
\begin{array}{l l}
= \Oc_{r\to 0}( \int_0^\infty \langle s \rangle^{-\sqrt{20}-1} ds) = \Oc_{r\to 0}(1),\\
= \Oc_{r\to \infty}( \int_r^\infty s^{-\sqrt{20}-1} ds) = \Oc_{r\to \infty}(r^{\sqrt{20}}),
\end{array} \right.\\
& \gamma_{4,0,4,4}=-\int_0^r \frac{\Sigma_{4,0,4}W_{4,4}}{W_4}ds=\left\{
\begin{array}{l l}
= \Oc_{r\to 0}( \int_0^r s^9 ds) = \Oc_{r\to 0}(r^{10}),\\
= \Oc_{r\to \infty}( \int_0^r s^{\sqrt{20}-1} ds) = \Oc_{r\to \infty}(r^{\sqrt{20}}),
\end{array} \right.
\end{align*}
and where the estimates follow from \eqref{bd:Sigma404-asymptotics} and the bounds of Lemma \ref{lemm:sol_inhol}. The estimates \eqref{bd:estimate-S404-W404} and \eqref{bd:estimate-S404-W404} for $k=0$ then follow from the above estimates and that of Lemma \ref{lemm:SolHom}, and are propagated for $k\geq 1$ by the same computations.

\end{proof}

\subsection{Outer expansion of the eigenfunctions} \label{sec:outer-eigenfunction}

In this subsection, we compute an approximate eigenfunction $\phi_i^\out$ in the exterior zone $\cap_{\pm}|z\pm a|\gg \nu$. To do so, we replace in \eqref{eigen:id:Ri} $\Phi_{\chi^* \phi_i^\out}$ by a more general solution $V_i^\out$ of the Poisson equation $-\Delta V_i^\out=\chi^* \phi_i^\out$, so that the approximate exterior eigenfunction system is
\begin{equation} \label{eigenfunctions-exterior:id:eigenfunction-system}
\left\{\begin{array}{l l l}
& R_i^\out = \tilde{\Ls}^z(\phi_i^\out,V_i^\out )-\lambda_i \phi_i^\out, \\
&-\Delta V_i^\out =\chi^* \phi_i^\out.
\end{array}
\right.
\end{equation}
In order to find a solution of \eqref{eigenfunctions-exterior:id:eigenfunction-system} such that $R_i^\out$ is suitably small, we identify the leading order part. We approximate the derivative of the Poisson field generated by $U_{1+2,\nu}$ by $-\sum_\pm 4(z\pm a_\infty)/|z\pm a_\infty|^2$ so that the linearized operator can be decomposed as
\begin{equation} \label{eigenfunctions-exterior:id:decomposition-Lsz}
\tilde{\Ls}^z(\phi_i^\out,V_i^\out )=\Hs_\beta \phi_i^\out+\nabla \phi_i^\out .\left(-\nabla \Phi_{U_{1+2,\nu}}-4\sum_\pm \frac{z\pm a_\infty}{|z\pm a_\infty|^2}\right)-\nabla U_{1+2,\nu}.\nabla V_i^\out+2U_{1+2,\nu}\phi_i^\out 
\end{equation}
where the leading order part is
\begin{equation} \label{def:Hs0}
\Hs_\beta := \Delta  + \left(\frac{4(z - a_\infty)}{|z-a_\infty|^2} + \frac{4(z + a_\infty)}{|z+a_\infty|^2} \right).\nabla - \beta \Lambda.
\end{equation}
Using \eqref{eigenfunctions-exterior:id:decomposition-Lsz} we can rewrite \eqref{eigenfunctions-exterior:id:eigenfunction-system} with the following decomposition
\begin{equation} \label{eigenfunctions-exterior:id:eigenfunction-system-2}
\left\{\begin{array}{l l l l}
R_i^\out &=  \Hs_\beta \phi_i^\out-\lambda_i \phi_i^\out & \}:=\tilde R_i^\out , \\
&\quad +\nabla \phi_i^\out .(-\nabla \Phi_{U_{1+2,\nu}}-4\sum_\pm \frac{z \pm a_\infty}{|z\pm a_\infty|^2})-\nabla U_{1+2,\nu}.\nabla V_i^\out+2U_{1+2,\nu}\phi_i^\out & \}:=\bar R_i^\out \\
-\Delta V_i^\out & =\chi^* \phi_i^\out.
\end{array}
\right.
\end{equation}
where $\tilde R_i^\out$ denotes the remainder in the leading part of the system and $\bar R_i^\out$ gathers negligible terms. We will first solve the leading system obtained by discarding $\bar R_i^\out$ in \eqref{eigenfunctions-exterior:id:eigenfunction-system-2}, which will provide the outer solution $\phi_i^\out$. Then we will estimate the negligible terms in $\bar R_i^\out$. This will produce a solution with the following properties.

\begin{proposition}[Outer expansion of $\phi_i$] \label{pr:phi1out} Let $ i =0,1$ and $\lambda_i=2\beta (i-1)+2\beta \tilde \lambda_i$, and pick any constants $A^\out_i,B^\out_i,C^{\out}_i\in \mathbb R$. There exists a smooth solution $(\phi_i^\out,V_i^\out,\tilde R_i^\out)$ of the exterior eigenfunction system \eqref{eigenfunctions-exterior:id:eigenfunction-system} on $\mathbb R^2\backslash\{-a_\infty,a_\infty\}$ that has the following properties.

\smallskip

\noindent \emph{(i) Asymptotics as $\zeta_\pm=|z_\pm| = |z \mp a_\infty| \to 0$}. Let $\theta_\pm=\frac{z_\pm}{|z_\pm|}$. We have
\begin{align} \label{eigenfunctions-exterior:id:expansion-phiiout-1}
& \phi_i^\out(z) =\frac{1}{\zeta_\pm^4}+\frac{\beta}{\zeta_\pm^2}\left(\frac{1}{2}i+\frac 1 4 \cos(2\theta_\pm)-\frac 1 2 \tilde \lambda_i\right)\mp \frac{\beta^{\frac 32}}{12\sqrt{2}\zeta_\pm} \cos(3\theta_\pm)+\beta^2\tilde \lambda_i \left(\frac i2 -\frac 14\right) \log \zeta_\pm \\
& \nonumber \qquad +\frac{\beta^2}{32} \left(3i+(8i-4)\cos(2\theta_\pm)+\cos (4\theta_\pm)\right) +4\beta^2 A_i+2\beta^2 \tilde \lambda_i \left(-\frac 1 8 \cos(2\theta_\pm)+B_i\right)+\phi_{i,high}^\out(z),\\
 \label{eigenfunctions-exterior:id:expansion-phiiout-1-bd} &|\nabla^k \phi_{i,high}^\out(z)|+|\nabla^k \nu \partial_\nu \phi_{i,high}^\out(z)|\lesssim \zeta^{\sqrt{5}-2-k}_\pm \qquad \mbox{for }k\in \mathbb N,
\end{align}
where $A_i,B_i\in \mathbb R$ with in particular
\begin{equation} \label{eigenfunctions-exterior:id:def-A}
-1\leq A_1\leq-0.9,
\end{equation}
and
\begin{align} 
\nonumber &V_i^\out(z) = -\frac{1}{4\zeta_\pm^2}-\left(\frac{\beta}{4}i-\frac{\beta}{4}\tilde \lambda_i\right)(\log \zeta_\pm)^2+\frac{\beta}{16}\cos(2\theta_\pm)\mp \frac{\beta^{3/2}}{96\sqrt{2}}\zeta_\pm \cos(3\theta_\pm)+\beta^2 \tilde \lambda_i \frac{1-2i}{16}\zeta_\pm^2\log \zeta_\pm \\
\nonumber &\qquad  \qquad +\beta^2 \zeta_\pm^2 \left(-\frac{3i}{128}-A_i+\left(\frac{2i-1}{16}-\frac{B_i}{2}\right)\tilde \lambda_i+\frac{1-2i+2\tilde \lambda_i}{32}\log \zeta_\pm \cos(2\theta_\pm)+\frac{1}{384}\cos(4\theta_\pm) \right)\\
\label{eigenfunctions-exterior:id:expansion-Viiout-1} &\qquad \qquad \pm A_i^\out \frac{1}{\zeta_\pm}\cos(\theta_\pm) +B_i^\out \log \zeta_\pm +C_i^\out \pm D_i^\out \zeta_\pm \cos(\theta_\pm)+E_i^\out \zeta_\pm^2\cos(2\theta_\pm)+V_{i,high}^\out (z) \\
\label{eigenfunctions-exterior:id:expansion-Viiout-2}  &|\nabla^k V_{i,high}^\out (z) |\lesssim \zeta_\pm^{\sqrt{5}-k} \qquad \mbox{for }k\in \mathbb N,
\end{align}
where $D_i^\out =D_{i,0}^\out +D_{i,1}^\out A_i^\out +D_{i,2}^\out B_i^\out $ and $E_i^\out =E_{i,0}^\out+E_{i,1}^\out A_i^\out+E_{i,2}^\out B_i^\out$ for some constants $D_{i,1}^\out=-\frac{\beta}{8}$ and $D_{i,0}^\out ,D_{i,1}^\out ,D_{i,2}^\out ,E_{i,0}^\out,E_{i,1}^\out,E_{i,2}^\out\in \mathbb R  $.

\smallskip

\noindent \emph{(ii) Asymptotics as $|z|\to \infty$}. Assuming $|\partial_\nu \tilde \lambda_i| \lesssim 1$, we have
\begin{align} \label{eigenfunctions-exterior:bd:expansion-phiiout-infty}
|\nabla^k \phi_i^\out(z)|+|\nabla^k\nu\partial_\nu \phi_i^\out(z)|\lesssim |z|^{C_k},  \qquad \partial_a \phi_i^\out=0,
\end{align}
and
\begin{align} \label{eigenfunctions-exterior:bd:expansion-phiiout-Viout-infty}
 |V_i^\out(z)|\lesssim (1+|A_i^\out|+|B_i^\out|+|C_i^\out|) \langle \log \langle z\rangle \rangle^{\delta_{k=0}}\langle z\rangle^{-k} .
\end{align}

\smallskip

\noindent \emph{(iii) Estimate for the remainder}. There exists $\tilde \epsilon_0>0$ such that the error $R_i^\out$ satisfies if $|\tilde \lambda_i|\lesssim |\ln \nu|^{-1}$ and $|a-a_\infty|\lesssim \nu$ and $|A_i^\out|+|B_i^\out|+|C_i^\out|\lesssim |\ln \nu|$:
\begin{equation} \label{eigenfunctions-exterior:bd:Ri}
|\nabla^k R_i^\out(z)|\lesssim \frac{1}{|\log \nu|^2}\sum_\pm \frac{\langle z\rangle^{C_k}}{|z\pm a|^{2+k}} \qquad \mbox{for }z\in \cap_\pm \{|z\pm a|\gtrsim \nu^{\tilde \epsilon_0}\}.
\end{equation}

\end{proposition}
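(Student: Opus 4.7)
The plan is to construct $\phi_i^\out$ by prescribing its singular asymptotic expansion at each equilibrium point $\pm a_\infty$ so as to match the inner expansion \eqref{expansion:phii_inn}, and then to extend these local expansions to a global smooth solution of \eqref{eigenfunctions-exterior:id:eigenfunction-system}. Working in polar coordinates $(\zeta_\pm,\theta_\pm)$ centered at $\pm a_\infty$, the leading operator $\Hs_\beta$ defined by \eqref{def:Hs0} behaves near $\pm a_\infty$ like $\Delta + \frac{4 z_\pm}{\zeta_\pm^2}\cdot\nabla$ plus smooth drifts (the other singular term $\frac{4z_\mp}{|z_\mp|^2}$ and the confining term $-\beta\Lambda$ are real-analytic near $\pm a_\infty$ and Taylor-expandable). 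Decomposing in the Fourier basis $\{\cos(k\theta_\pm)\}$, the equation $(\Hs_\beta-\lambda_i)\phi_i^\out=0$ reduces at leading order in $\zeta_\pm$ to the radial ODE $\Delta_k f+\frac{4}{\zeta_\pm}\partial_{\zeta_\pm}f=0$ with indicial exponents $-2\pm\sqrt{4+k^2}$, mirroring the structure of $\Ls_{0,k}$ handled in Section~2.1.

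\textbf{Construction of the local expansion term by term.} Starting from the prescribed leading singularity $\zeta_\pm^{-4}$ in the zero-th harmonic (matching $\nu^{-4}\Lambda U$ of the inner solution via \eqref{eigenfunctions:id:expansion-LambdaU}), I would construct the expansion iteratively in powers of $\zeta_\pm$ and in harmonics $\cos(k\theta_\pm)$. At each order, the source is generated by the Taylor expansion of the smooth drifts, the $\lambda_i \phi_i^\out$ correction, and the previously determined lower-order terms. The ODE in $\zeta_\pm$ has a two-dimensional solution space: one branch is fixed by regularity/decay at the opposite singular point (yielding the concrete values of the coefficients in \eqref{eigenfunctions-exterior:id:expansion-phiiout-1}, in particular the constants multiplying the scale-fixing $\cos(2\theta_\pm)$, $\cos(3\theta_\pm)$, $\cos(4\theta_\pm)$ contributions), the other branch gives a free integration constant absorbed into $A_i,B_i$ or into $A_i^\out,B_i^\out,C_i^\out$ for the Poisson field. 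Each time the forcing at integer order resonates with the indicial exponents $-2\pm\sqrt{4+k^2}$ restricted to integer values (here $k=0$ and $k=2$), a logarithmic correction $\log\zeta_\pm$ appears; these are precisely the $\log \zeta_\pm$ and $\zeta_\pm^2 \log \zeta_\pm\cos(2\theta_\pm)$ terms displayed in \eqref{eigenfunctions-exterior:id:expansion-phiiout-1} and \eqref{eigenfunctions-exterior:id:expansion-Viiout-1}. The Poisson field $V_i^\out$ is then constructed by solving $-\Delta V_i^\out=\chi^*\phi_i^\out$ via Newtonian convolution (well-defined thanks to the $\chi^*$ truncation), and its local expansion near $\pm a_\infty$ follows by computing the contribution of each piece of $\phi_i^\out$ through the logarithmic kernel.

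\textbf{Global assembly and determination of $A_i$.} To upgrade these local expansions into a genuine global solution, I would split $\phi_i^\out=\phi_{i,\mathrm{loc}}^\out+\phi_{i,\mathrm{high}}^\out$, where $\phi_{i,\mathrm{loc}}^\out$ is the explicit singular expansion (truncated at the next non-integer exponent $\zeta_\pm^{\sqrt{5}-2}$ originating from $k=1$, hence the $\sqrt{5}$ threshold appearing in \eqref{eigenfunctions-exterior:id:expansion-phiiout-1-bd}) smoothly extended to $\mathbb R^2$. The remainder $\phi_{i,\mathrm{high}}^\out$ then satisfies a linear elliptic problem $(\Hs_\beta-\lambda_i)\phi_{i,\mathrm{high}}^\out=F_i$ with smooth, decaying source and mild singularities near $\pm a_\infty$; existence in a suitable weighted space follows by Fredholm theory, using that $\Hs_\beta$ is symmetric in $L^2(\rho)$ with $\nabla\ln\rho=4\sum_\pm (z\pm a_\infty)/|z\pm a_\infty|^2-\beta z$ and has discrete spectrum on such weighted spaces. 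The specific constants $A_i,B_i$ in \eqref{eigenfunctions-exterior:id:expansion-phiiout-1} arise as the expansion coefficients of the global smooth profile; the sign and numerical bound \eqref{eigenfunctions-exterior:id:def-A} for $A_1$ are obtained by computing explicitly the projection of the source at the relevant order onto the associated element of the cokernel, which reduces to a concrete one-dimensional integral involving $U$ and can be estimated numerically.

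\textbf{Error estimate and main obstacle.} For the error \eqref{eigenfunctions-exterior:bd:Ri}, I would bound each term of $\bar R_i^\out$ in \eqref{eigenfunctions-exterior:id:eigenfunction-system-2} using the smallness $\bigl|-\nabla\Phi_{U_{1+2,\nu}}-4\sum_\pm (z\pm a_\infty)/|z\pm a_\infty|^2\bigr|\lesssim \nu^2\langle z\rangle/(\nu+|z\pm a|)^3$ and $U_{1+2,\nu}(z)\lesssim \nu^4/(\nu+|z\pm a|)^4$ valid in the region $|z\pm a|\gtrsim\nu^{\tilde\epsilon_0}$, combined with the pointwise bounds on $\phi_i^\out$ and $V_i^\out$ from the expansions \eqref{eigenfunctions-exterior:id:expansion-phiiout-1}-\eqref{eigenfunctions-exterior:id:expansion-Viiout-2}; this gives gains of order $\nu^2$ or $|\log\nu|^{-2}$ in each term, yielding \eqref{eigenfunctions-exterior:bd:Ri}. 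The main obstacle I anticipate is the simultaneous handling of the two singular drift points $\pm a_\infty$ at the stage of global existence: the Fokker-Planck operator $\Hs_\beta$ has a rich kernel near the approximate eigenvalues $\lambda_i=2\beta(1-i)$, and both the cokernel orthogonality condition (which pins down the value of $\tilde\lambda_i$ compatible with the matching in Section~2.3) and the precise identification of $A_1$ in \eqref{eigenfunctions-exterior:id:def-A} require careful bookkeeping of resonant terms and explicit computation of projections onto $\Lambda U$-type kernel modes at each singular point.
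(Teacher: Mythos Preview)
Your overall strategy—local Frobenius-type expansions plus a global Fredholm argument—is reasonable in spirit, but there are two genuine gaps that the paper handles differently and that would block your proof as written.

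\textbf{First, the $|\log\nu|^{-2}$ structure of the error.} You propose to bound the error termwise, but do not isolate the mechanism producing the crucial $|\log\nu|^{-2}$ factor in \eqref{eigenfunctions-exterior:bd:Ri}. The paper obtains it through a specific perturbative decomposition: writing $\phi_i^\out=\Omega_i+2\beta\tilde\lambda_i Z_i$ where $\Omega_i$ is an \emph{exact} solution of $(\Hs_\beta-2\beta(1-i))\Omega_i=0$ and $Z_i$ solves the resonant equation $(\Hs_\beta-2\beta(1-i))Z_i=\Omega_i$. Then the leading remainder is exactly $\tilde R_i^\out=-4\beta^2\tilde\lambda_i^2 Z_i$, and the two powers of $\tilde\lambda_i\sim|\log\nu|^{-1}$ give the gain. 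Your iterative local construction does not make this exact cancellation explicit, and without it the error would only be $O(|\log\nu|^{-1})$, which is insufficient.

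\textbf{Second, the constant $A_1$ and global existence.} You assert that the numerical bound \eqref{eigenfunctions-exterior:id:def-A} ``reduces to a concrete one-dimensional integral involving $U$''. This is incorrect: the constant $A_1$ encodes the interaction between the two singular points $\pm a_\infty$ and is determined by a genuinely two-dimensional elliptic problem with no radial symmetry. The paper does not compute it analytically; it cites the rigorous construction and the numerical evaluation of $A$ from Seki--Sugiyama--Vel\'azquez \cite{SSVnon13}, Lemmas~4.1 and~4.6, where maximum-principle arguments (not Fredholm theory) establish existence of $\tilde\Omega_1,\tilde Z_1$ with the stated asymptotics and polynomial growth at infinity. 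Your Fredholm approach in $L^2(\rho)$ would also face the obstacle that the prescribed $\zeta_\pm^{-4}$ singularities are not square-integrable against $\rho$, so the remainder equation is posed on a space where the spectral theory is delicate; the paper sidesteps this by reducing to the already-established results of \cite{SSVnon13} for $i=1$ and adapting their comparison-principle construction for $i=0$.
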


\begin{proof}

\noindent \textbf{Step 1}. \emph{The ansatz for the leading system}. We first solve the leading part of the system  \eqref{eigenfunctions-exterior:id:eigenfunction-system-2}:
\begin{equation} \label{eigenfunctions-exterior:id:eigenfunction-system-leading}
\left\{\begin{array}{l l l l}
\tilde R_i^\out &=  \Hs_\beta \phi_i^\out-\lambda_i \phi_i^\out , \\
-\Delta V_i^\out & =\chi^* \phi_i^\out.
\end{array}
\right.
\end{equation}
As $\lambda_i=2\beta (1-i)+2\beta \tilde \lambda_i$ with $|\tilde \lambda_i|\ll 1$ we look for a solution with the following decomposition
\begin{align} \label{eigenfunctions-exterior:id:decomposition-phiiout}
& \phi_i^\out(z) = \Omega_i(z) + 2\beta \tilde \lambda_i Z_i(z),  \\
& \label{eigenfunctions-exterior:id:decomposition-Viout} V_i^\out (z)= V_{i,\Omega}^\out(z) + 2\beta \tilde \lambda_i V_{i,Z}^\out (z), 
\end{align}
where $\Omega_i$, $Z_i$, $V_{i,\Omega}^\out$ and $V_{i,Z}^\out$ solve
\begin{align} \label{eigenfunctions-exterior:id:equations-Omega-Z}
& \Big(\Hs_\beta - 2\beta(1-i) \Big) \Omega_i = 0,\qquad \Big(\Hs_\beta - 2\beta(1-i) \Big)  Z_i = \Omega_i ,\\
& \label{eigenfunctions-exterior:id:equations-VOmega-VZ} -\Delta  V_{i,\Omega}^\out= \chi^* \Omega_i, \qquad \qquad  \qquad -\Delta  V_{i,Z}^\out= \chi^* Z_i.
\end{align}
Injecting \eqref{eigenfunctions-exterior:id:decomposition-phiiout} and \eqref{eigenfunctions-exterior:id:equations-Omega-Z} in \eqref{eigenfunctions-exterior:id:eigenfunction-system-leading} we find
\begin{equation} \label{eigenfunctions-exterior:id:decomposition-tildeRi}
\tilde R_i^\out= -4\beta^2\tilde \lambda_i^2 Z_i.
\end{equation}

\noindent \textbf{Step 2}. \emph{Solving \eqref{eigenfunctions-exterior:id:equations-Omega-Z} and proof of \eqref{eigenfunctions-exterior:id:expansion-phiiout-1} and \eqref{eigenfunctions-exterior:bd:expansion-phiiout-infty} for $i=1$}. We make the change of variables for $i=0,1$,
\begin{equation} \label{eigenfunctions-exterior:id:change-variables-Omega-Z}
\Omega_i(z) = 4\beta^2 \tilde \Omega_i(\tilde z), \qquad Z_i(z)=\beta \tilde Z_i(\tilde z), \qquad \tilde z = z \sqrt{2 \beta},
\end{equation}
that transforms $\Hs_\beta = 2\beta \Hs_{\frac 12}$. Hence, \eqref{eigenfunctions-exterior:id:equations-Omega-Z} becomes
\begin{equation} \label{eigenfunctions-exterior:id:system-tildeOmega-tildeZ}
 \Big(\Hs_{\frac 12} - (1-i) \Big) \tilde \Omega_i = 0, \quad \Big(\Hs_{\frac 12} - (1-i) \Big)  \tilde Z_i = 2\tilde \Omega_i .
\end{equation}
The existence and asymptotic behaviors of $\tilde{\Omega}_1$ and $\tilde{Z}_1$ have been rigorously proved in \cite{SSVnon13} (see Lemmas 4.1 and 4.6). There it is showed that there exists a solution with as $\tilde z_\pm=\tilde z\mp (2,0)\to 0$,
\begin{equation} \label{eigenfunctions-exterior:id:decomposition-tildeOmega1}
\tilde \Omega_1(\tilde z_\pm)=\frac{1}{\zeta_\pm^4}+\tilde \Omega_{1,1}(\tilde z_\pm)\pm \tilde \Omega_{1,2}(\tilde z_\pm)+\tilde \Omega_{1,3}(\tilde z_\pm)+A+\tilde \Omega_{1,high}(\tilde z_\pm)
\end{equation}
with $A<0$ computed numerically and
\begin{align}
\label{eigenfunctions-exterior:id:decomposition-tildeOmega11} & \tilde \Omega_{1,1}(\tilde z_{\pm})=\frac{1}{8\tilde \zeta_\pm}[1+2\cos^2\tilde \theta_\pm]=\frac{1}{8\tilde \zeta_\pm^2}[2+\cos(2\tilde \theta_\pm)] ,\\
\label{eigenfunctions-exterior:id:decomposition-tildeOmega12}& \tilde \Omega_{1,2}(\tilde z_{\pm})=\frac{1}{48}\frac{\cos(\tilde \theta_\pm)}{\tilde \zeta_\pm}[3-4\cos^2 \tilde{\theta_\pm}]=-\frac{1}{48\tilde \zeta_\pm}\cos (3\tilde \theta_\pm),\\
\label{eigenfunctions-exterior:id:decomposition-tildeOmega13}&\tilde \Omega_{1,3}(\tilde z_{\pm})=\frac{1}{16}\cos^4 \tilde \theta_\pm=\frac{1}{128}(3+4\cos(2\tilde \theta_\pm)+\cos(4\tilde \theta_\pm)),\\
\label{eigenfunctions-exterior:id:decomposition-tildeOmega1high}&|\nabla^k \tilde \Omega_{1,high}(\tilde z_\pm)|\lesssim\tilde \zeta_\pm^{\sqrt{5}-2-k}, \qquad \mbox{for }k\in \mathbb N.
\end{align}
Actually, \eqref{eigenfunctions-exterior:id:decomposition-tildeOmega1high} is only proved for $k=0$ in \cite{SSVnon13}, but the estimate for $k\geq 1$ follows directly from the Fourier series representation formula in polar coordinates obtained on page 339 in \cite{SSVnon13}. It is also showed that
\begin{equation} \label{eigenfunctions-exterior:id:decomposition-tildeZ1}
\tilde Z_1(\tilde z_\pm)=-\frac{1}{2\tilde \zeta^2_\pm}+\frac 18 \log \tilde \zeta_\pm -\frac 18 \cos (2\tilde \theta_\pm)+Cte+\tilde Z_{1,high}(\tilde z_\pm).
\end{equation}
where
\begin{equation} \label{eigenfunctions-exterior:id:decomposition-tildeZ1high}
|\nabla^k \tilde Z_{1,high}(\tilde z_\pm)|\lesssim \tilde \zeta_\pm^{\sqrt{5}-2-k}, \qquad \mbox{for }k\in \mathbb N.
\end{equation}
Again, the proof of \eqref{eigenfunctions-exterior:id:decomposition-tildeZ1high} follows from a similar Fourier series expansion as the aforementioned one.

The desired expansion \eqref{eigenfunctions-exterior:id:expansion-phiiout-1} and bound \eqref{eigenfunctions-exterior:id:expansion-phiiout-1-bd} then follow from injecting \eqref{eigenfunctions-exterior:id:decomposition-tildeOmega1}-\eqref{eigenfunctions-exterior:id:decomposition-tildeZ1high} in \eqref{eigenfunctions-exterior:id:change-variables-Omega-Z} and \eqref{eigenfunctions-exterior:id:decomposition-phiiout}. The bound $|\nabla^k \tilde \Omega_1|+|\nabla^k \tilde Z_1|\lesssim |\tilde z|^{C_k}$ is proved in \cite{SSVnon13} for $k=0$ using a comparison principle argument, and the same argument can be used to obtain this bound for $k\geq 1$. The bound \eqref{eigenfunctions-exterior:bd:expansion-phiiout-infty} follows from standard elliptic regularity estimates.

\medskip

\noindent \textbf{Step 3}. \emph{Solving \eqref{eigenfunctions-exterior:id:equations-Omega-Z} and proof of \eqref{eigenfunctions-exterior:id:expansion-phiiout-1} and \eqref{eigenfunctions-exterior:bd:expansion-phiiout-infty} for $i=0$}. The construction of $\tilde \Omega_0$ that solves the first equation in \eqref{eigenfunctions-exterior:id:system-tildeOmega-tildeZ} is exactly as in the proof of Lemma 4.1 in \cite{SSVnon13} for $\tilde \Omega_1$. Hence we only show the key estimate for the approximate solution, as the construction of the full solution follows from the exact same arguments. We look for a solution $\tilde \Omega_0$ of the first equation in \eqref{eigenfunctions-exterior:id:equations-Omega-Z} of the form
\begin{equation} \label{eigenfunctions-exterior:id:decomposition-tilde-Omega0}
\tilde \Omega_0(\tilde z)=\sum_\pm (\frac{1}{\tilde \zeta_\pm^4}+\tilde \Omega_{0,1}(\tilde z_\pm)\pm \tilde \Omega_{0,2}(\tilde z_\pm)+\tilde \Omega_{0,3}(\tilde z_\pm))\chi(\tilde z\mp a)+\tilde \omega_{0,high}(\tilde z).
\end{equation}
In what follows, many calculations are simply verified by Matlab symbolic. We begin with a Taylor expansion using $\tilde{a}_\infty = (2,0)$, 
\begin{equation}  \label{eigenfunctions-exterior:id:tilde-Omega-0-tech5}
F(\tilde z_\pm) = \frac{(\tilde z_\pm \pm 2\tilde{a}_\infty)}{|\tilde z_\pm \pm 2\tilde{a}_\infty|^2} \mp \frac{\tilde{a}_\infty}{2} = \sum_{k = 1}^3 (\pm 1)^{k+1}F_k(\tilde z_\pm) + \Oc_{|\tilde z_\pm| \to 0}(|\tilde z_\pm|^4),
\end{equation}
where $F_k(\tilde z)$ is homogeneous of order $k$ with
\begin{align*}
F_1(\tilde z) &= \frac{1}{4}(-\tilde z_{1}, \tilde z_{2}), \quad F_2(\tilde z) = \frac{1}{16}\big(\tilde z_{1}^2 - \tilde z_{2}^2, -2 \tilde z_{1} \tilde z_{2} \big),\\
\quad F_3(\tilde z) & =  \frac{1}{64}\big(-\tilde z_1 (\tilde z_1^2 - 3\tilde z_2), \tilde z_2 (3\tilde z_1^2 - \tilde{z}_2^2) \big).
\end{align*}
We then rewrite 
\begin{equation*}
\Hs_\frac 12 - 1 = \Big(\Delta + \frac{4\tilde z_\pm}{|\tilde z_\pm|^2}.\nabla\Big)  - \Big(\frac{1}{2}\tilde z_\pm.\nabla + 2\Big) + \sum_{k = 1}^3 (\pm 1)^{k+1} F_k(\tilde z_\pm). \nabla + \tilde{F}(\tilde z_\pm).\nabla,
\end{equation*}
where $\tilde{F}(\tilde z_\pm) = \Oc(|\tilde z_\pm|^4)$ as $|\tilde z_\pm| \to 0$. We note the cancellation 
\begin{equation} \label{eigenfunctions-exterior:id:tilde-Omega-0-tech1}
\Big(\Delta + \frac{4\tilde z_\pm}{|\tilde z_\pm|^2}.\nabla\Big) |\tilde z_\pm|^{-4}  - \Big(\frac{1}{2}\tilde z_\pm.\nabla + 2\Big) |\tilde z_\pm|^{-4} = 0,
\end{equation}
and then search for $(\tilde \Omega_{0,k})_{k = 1,2,3}$ satisfying 
\begin{align}
\label{eigenfunctions-exterior:id:tilde-Omega-0-tech2}& \Big(\Delta + \frac{4 \tilde z_\pm}{|\tilde z_\pm|^2}.\nabla\Big) \tilde \Omega_{0,1}(\tilde z_\pm) = -F_1.\nabla (|\tilde z_\pm|^{-4}) := -|\tilde z_\pm|^{-4}\cos(2\tilde \theta_\pm),\\
\label{eigenfunctions-exterior:id:tilde-Omega-0-tech3}& \Big(\Delta + \frac{4\tilde z_\pm}{|\tilde z_\pm|^2}.\nabla\Big) \tilde \Omega_{0,2}(\tilde z_\pm) = \mp F_2.\nabla (|\tilde z_\pm|^{-4}):= c_3 |\tilde z_\pm|^{-3}\cos(3\tilde \theta_\pm),\\
\label{eigenfunctions-exterior:id:tilde-Omega-0-tech4}& \Big(\Delta + \frac{4\tilde z_\pm}{|\tilde z_\pm|^2}.\nabla\Big) \tilde \Omega_{0,3}(\tilde z_\pm) = -F_3.\nabla (|\tilde z_\pm|^{-4}) + \Big(\frac{1}{2}\tilde z_\pm.\nabla + 2\Big)\Omega_{0,1} - F_1 .\nabla \Omega_{0,1}.
\end{align}
From the identity $\big(\Delta + \frac{4 \tilde z}{|\tilde z|^2}. \nabla \big)(f(\tilde \zeta) \cos (k\tilde \theta)) = \big(\pa_{\tilde \zeta}^2f(\tilde \zeta) + \frac{5}{\tilde \zeta} \pa_{\tilde \zeta }f(\tilde \zeta) - \frac{k^2}{\tilde \zeta^2} f(\tilde \zeta) \big) \cos(k\tilde \theta)$, we look for $\tilde \Omega_{0,1}(\tilde z_\pm) = \frac{c_{0,1}}{ |\tilde z_\pm|^{2}} \cos(2\theta_\pm)$ and $\tilde \Omega_{0,2}(\tilde z_\pm) = \frac{c_{0,2}}{ |\tilde z_\pm|} \cos(3\theta_\pm)$. A direct check yields 
\begin{equation}
\label{eigenfunctions-exterior:id:decomposition-tildeOmega012}
\tilde \Omega_{0,1}(\tilde z_\pm) = \frac{\cos(2\tilde \theta_\pm)}{8 |\tilde z_\pm|^2}, \quad \tilde \Omega_{0,2}(\tilde z_\pm) = \frac{1}{48} \frac{\cos(3\tilde \theta_\pm)}{|\tilde z_\pm|}.
\end{equation}
As for $\tilde \Omega_{0,3}$, we expand the source term in the polar coordinate to get after a simplification
\begin{align*}
-F_3(\tilde z_\pm).\nabla (|\tilde z_\pm|^{-4}) + \Big(\frac{1}{2}\tilde z_\pm.\nabla + 2\Big)\Omega_{0,1} - F_1(\tilde z_\pm ) .\nabla \Omega_{0,1} = \frac{\cos(2 \tilde \theta_\pm) - \cos (4\tilde \theta_\pm)}{8|\tilde z_\pm|^2}. 
\end{align*}
A direct check yields 
\begin{equation}\label{eigenfunctions-exterior:id:decomposition-tildeOmega03}
\tilde \Omega_{0,3}(\tilde z_\pm) = -\frac{1}{32} \cos(2\tilde \theta_\pm) + \frac{1}{128} \cos(4\tilde \theta_\pm),
\end{equation}
that satisfies the equation $(\Delta + \frac{4 \tilde{z}_\pm}{|\tilde z_\pm|^2}) \tilde \Omega_{0,3}(\tilde z_\pm) = \frac{\cos(2\tilde \theta_\pm) - \cos (4\tilde \theta_\pm)}{8|\tilde z_\pm|^2}$. 

Using first \eqref{eigenfunctions-exterior:id:tilde-Omega-0-tech5}-\eqref{eigenfunctions-exterior:id:tilde-Omega-0-tech4}, and then \eqref{eigenfunctions-exterior:id:decomposition-tildeOmega012}-\eqref{eigenfunctions-exterior:id:decomposition-tildeOmega03} we get for the error generated by the approximate solution
\begin{align}
\nonumber &(\mathcal H_{\frac 12 }-1)\left(\frac{1}{\tilde \zeta_\pm^4}+\tilde \Omega_{0,1}(\tilde z_\pm)\pm \tilde \Omega_{0,2}(\tilde z_\pm)+\tilde \Omega_{0,3}(\tilde z_\pm) \right)\\
\nonumber & \qquad \qquad = F_1(\tilde z_\pm).\nabla (\tilde \Omega_{0,2}+\tilde \Omega_{0,3})\pm F_2(\tilde z_\pm).\nabla \sum_{j=1}^3 \tilde \Omega_{0,j}+F_3(\tilde z_\pm) .\nabla \sum_{j=1}^3 \tilde \Omega_{0,j}\\
\label{eigenfunctions-exterior:id:tilde-Omega-0-tech100}&\qquad \qquad \qquad +F(\tilde z_\pm).\nabla \left(\frac{1}{\zeta_\pm^4}+\tilde \Omega_{0,1}(z_\pm)\pm \tilde \Omega_{0,2}(\tilde z_\pm)+\tilde \Omega_{0,3}(\tilde z_\pm)  \right) \ = O(|\tilde z_\pm|^{-1}).
\end{align}
Introducing $Q_{0,high}=-(\mathcal H_{\frac 12 }-1)\sum_\pm (\frac{1}{\tilde \zeta_\pm^4}+\tilde \Omega_{0,1}(\tilde z_\pm)\pm \tilde \Omega_{0,2}(\tilde z_\pm)+\tilde \Omega_{0,3}(\tilde z_\pm))\chi(\tilde z\mp a)$ and using \eqref{eigenfunctions-exterior:id:tilde-Omega-0-tech100}, we find that the remainder in \eqref{eigenfunctions-exterior:id:decomposition-tilde-Omega0} solves
\begin{equation} \label{eigenfunctions-exterior:id:tilde-Omega-0-tech10}
(\mathcal H_{\frac 12}-1)\tilde \omega_{0,high}=Q_{0,high}, \qquad |\nabla^k Q_{0,high}|\lesssim \sum_\pm \frac{1}{|\tilde z_\pm|}.
\end{equation}
With the key estimate $|\nabla^k Q_{0,high}|\lesssim \sum_\pm |\tilde z_\pm|^{-1}$ at hand, the solution of \eqref{eigenfunctions-exterior:id:tilde-Omega-0-tech10} can now be found exactly as in the proof of Lemma 4.1 in \cite{SSVnon13}, with for some constant $A_0\in \mathbb R$,
\begin{equation} \label{eigenfunctions-exterior:id:tilde-Omega-0-tech11}
 \tilde \omega_{0,high}(\tilde z)=A_0+\tilde \Omega_{0,high}(\tilde z), \qquad |\nabla^k \tilde \Omega_{0,high}(\tilde z)|\lesssim |\tilde z_\pm|^{\sqrt{5}-2-k} \quad \mbox{for }k\in \mathbb N,
\end{equation}
as $z\to \pm a$. The computation of $\tilde Z_0$ that solves the second equation in \eqref{eigenfunctions-exterior:id:system-tildeOmega-tildeZ} can be done similarly, yielding for some constant $\tilde B_0\in \mathbb R$,
\begin{align} \label{eigenfunctions-exterior:id:decomposition-tildeZ0}
& \tilde Z_0(\tilde z_\pm)=-\frac{1}{2\tilde \zeta^2_\pm}-\frac 18 \log \tilde \zeta_\pm -\frac 18 \cos (2\tilde \theta_\pm)+\tilde B_0+\tilde Z_{0,high}(\tilde z_\pm),\\
& |\nabla^k \tilde Z_{0,high}(\tilde z_\pm)|\lesssim \tilde \zeta_\pm^{\sqrt{5}-2-k}, \qquad \mbox{for }k\in \mathbb N.
\end{align}
The proof is exactly as that of Lemma 4.6 in \cite{SSVnon13}. Injecting \eqref{eigenfunctions-exterior:id:decomposition-tilde-Omega0}, \eqref{eigenfunctions-exterior:id:decomposition-tildeOmega012}, \eqref{eigenfunctions-exterior:id:decomposition-tildeOmega03}, \eqref{eigenfunctions-exterior:id:tilde-Omega-0-tech11} and \eqref{eigenfunctions-exterior:id:decomposition-tildeZ0} in \eqref{eigenfunctions-exterior:id:change-variables-Omega-Z} shows the first desired expansion \eqref{eigenfunctions-exterior:id:expansion-phiiout-1} and estimate \eqref{eigenfunctions-exterior:id:expansion-phiiout-1-bd}. The second desired estimate \eqref{eigenfunctions-exterior:bd:expansion-phiiout-infty} for $i=0$ follows from the same arguments as in Step 2 for $i=1$.

\medskip

\noindent \textbf{Step 4}. \emph{Computation of the Poisson field and proof of \eqref{eigenfunctions-exterior:id:expansion-Viiout-1} and \eqref{eigenfunctions-exterior:bd:expansion-phiiout-Viout-infty}.} We recall the identities
\begin{align}
\label{eigenfunctions-exterior:id:Poisson-tech-1} & - \Delta (-\frac 1{4r^2})=\frac{1}{r^4},\qquad  -\Delta ( -\frac 1{2}(\log r)^2)=\frac{1}{r^2}, \qquad - \Delta ( \frac 1{4}\cos (2\theta))=\frac{\cos(2\theta)}{r^2}\\
\label{eigenfunctions-exterior:id:Poisson-tech-2} & - \Delta ( \frac 1{8}r\cos (3\theta))=\frac{\cos(3\theta)}{r}, \qquad - \Delta(- \frac 14 r^2\log r+\frac 14 r^2)=\log r, \qquad -\Delta (-\frac{r^2}{4})=1,\\
\label{eigenfunctions-exterior:id:Poisson-tech-3} & - \Delta (-\frac{r^2\log r}{4}\cos (2\theta))=\cos (2\theta) \qquad - \Delta( \frac{1}{12}r^2\cos (4\theta))=\cos 4\theta.
\end{align}
In view of \eqref{eigenfunctions-exterior:id:expansion-phiiout-1}, we look for a solution of the form
\begin{align} \label{eigenfunctions-exterior:id:Poisson-tech-3}
&V_{i}^\out(z)=\sum_\pm \bar V_{i,\pm}^\out(z_\pm) \chi(z\mp a)+V_{i,high}^\out(z),\\
\nonumber & \bar V_{i,\pm}^\out(z_\pm)=-\frac{1}{4\zeta_\pm^2}-\left(\frac{\beta}{4}i-\frac{\beta}{4}\tilde \lambda_i\right)(\log r)^2+\frac{\beta}{16}\cos (2\theta) \mp \frac{\beta^{3/2}}{96\sqrt{2}}\zeta_\pm \cos(3\theta_\pm)+(-1)^i\frac{\beta^2}{16}(\zeta_\pm^2\log \zeta_\pm-\zeta_\pm^2),\\
&\qquad \qquad +\beta^2 \zeta_\pm^2 \left(-\frac{3i}{128}-A_i+\left(\frac{2i-1}{16}-\frac{B_i}{2}\right)\tilde \lambda_i+\frac{1-2i+2\tilde \lambda_i}{32}\log \zeta_\pm \cos(2\theta_\pm)+\frac{1}{384}\cos(4\theta_\pm) \right)
\end{align}
Injecting the decomposition \eqref{eigenfunctions-exterior:id:Poisson-tech-3} in the second equation in \eqref{eigenfunctions-exterior:id:eigenfunction-system}, using \eqref{eigenfunctions-exterior:id:Poisson-tech-1} and \eqref{eigenfunctions-exterior:id:Poisson-tech-3} we find that $V_{i,high}^\out$ solves
\begin{align} \label{eigenfunctions-exterior:id:Poisson-tech-4}
&-\Delta V_{i,high}^\out= f_{i,high} \\
&f_{i,high}=\sum_\pm \phi_{i,high}^\out \chi(z\mp a) \\
\nonumber &\qquad \qquad +(\chi^*-\sum_\pm \chi(z\mp a))\phi_i^\out+2\sum_\pm \nabla \bar V_{i,\pm}^\out(z_\pm) .\nabla \chi(z\mp a)+\sum_\pm  \bar V_{i,\pm}^\out(z_\pm) \Delta \chi(z\mp a)
\end{align}
where $f_{i,high}$ is compactly supported on $\mathbb R^2$ and satisfies using \eqref{eigenfunctions-exterior:id:expansion-phiiout-1},
\begin{equation}\label{eigenfunctions-exterior:id:Poisson-tech-10}
|\nabla^k f_{i,high}(z)|\lesssim \sum_\pm |z\pm a|^{\sqrt{5}-2-k}.
\end{equation}
We then choose as a solution to \eqref{eigenfunctions-exterior:id:Poisson-tech-4} $V_{i,high}^\out=\Phi_{ f_{i,high}}$ and claim that as $z_\pm \to 0$,
\begin{align}  
&\nonumber V_{i,high}^\out=C_i\pm (D_i,0).(z\mp a_\infty))+E_i(z_{\pm,1}^2-z_{\pm,2}^2)+\tilde V_{i,high}^\out \\
& \qquad \qquad =C_i\pm D_i\cos(\theta_\pm)+E_i\cos(2\theta_\pm)+\tilde V_{i,high}^\out,\nonumber\\
& \label{eigenfunctions-exterior:id:Poisson-tech-5} |\nabla^k \tilde V_{i,high}^\out(z)|\lesssim \zeta_\pm^{\sqrt{5}-k}\mbox{ for }k\in \mathbb N,
\end{align}
for some constants $C_i,D_i,E_i$ that corresponds to the Taylor expansion of $V_{i,high}$ at $a_{\infty}$. Note that the specific form of this Taylor expansion comes from the fact that $V_{i,high}$ is invariant by the symmetry $z_2\mapsto -z_2$ and satisfies $\Delta V_{i,high}(a_{\infty})=f_{i,high}(a_\infty)=0$. We are going to prove the estimate in \eqref{eigenfunctions-exterior:id:Poisson-tech-5} first for $k=2$. We have for $j=1,2$,
\begin{align*}
 \nabla \partial_{x_j} \tilde V_{i,high}^\out(a_\infty+z_+) & =\nabla \partial_{x_j}  \tilde V_{i,high}^\out(a_\infty+z_+)-\nabla \partial_{x_j}  \tilde V_{i,high}^\out(a_\infty) \\
& = c \int_{\mathbb R^2}\left(\frac{z-z'}{|z-z'|^2}+ \frac{z'}{|z'|^2}\right) \partial_{x_j}  f_{i,high}(a_\infty+z')dz' \\
&= c \int_{2|z|<|z'|} ... dz'+ c \int_{2|z|>|z'|}... dz'  = I+II
\end{align*}
Using \eqref{eigenfunctions-exterior:id:Poisson-tech-10} and $|\frac{z-z'}{|z-z'|^2}+ \frac{z'}{|z'|^2}|\lesssim |z||z'|^{-2}$ if $2|z|<|z'|$ we bound $|I|\lesssim \int_{|z|<2|z'|\lesssim 1}\frac{|z|}{|z'|^2}|z'|^{\sqrt{5}-3}dz'\lesssim |z|^{\sqrt{5}-2}$. We bound $|II|\lesssim \int_{|z'|<2|z|} (|z-z'|^{-1}+|z'|^{-1})|z'|^{\sqrt{5}-3}dz'\lesssim |z|^{\sqrt{5}-2}$. This shows the estimate in \eqref{eigenfunctions-exterior:id:Poisson-tech-5} for $k=2$. Next, since $\partial^{\alpha}\tilde V_{i,high}^\out(a_\infty)=0$ for all $\alpha\in \mathbb N^2$ with $\alpha_1+\alpha_2\leq 2$, and $|\nabla^2 \tilde V_{i,high}^\out(z)|\lesssim |z\pm a_\infty|^{\sqrt{5}-2}$, the estimate in \eqref{eigenfunctions-exterior:id:Poisson-tech-5} for $k=0$ and $k=1$ is a direct consequence of the fundamental Theorem of Calculus. We then show the estimate in \eqref{eigenfunctions-exterior:id:Poisson-tech-5} for $k\geq 3$ by induction. Assume it is true for some $k\geq 2$. Pick then $\alpha \in \mathbb N^2$ with $|\alpha|=k$ and, for $0<R\leq 1$, let $\tilde V_{i,R}^\alpha (y)=\partial^\alpha \tilde V_{i,high}^\out (a_\infty+Ry)$. Then it solves on $\{1/4<|y|<1\}$
$$
\tilde V_{i,R}^\alpha (y)=R^2 \partial^\alpha f_{i,high}(a_\infty+Ry)=O(R^{\sqrt{5}-k})
$$
using \eqref{eigenfunctions-exterior:id:Poisson-tech-10}, where it also satisfies $|\tilde V_{i,R}^\alpha(y)|\lesssim R^{\sqrt{5}-k-\epsilon}$. Hence by elliptic regularity we have $\nabla \tilde V_{i,R}^\alpha=O(R^{\sqrt{5}-k})$ for $\{1/3<|y|<1/2\}$. This means $|\nabla \partial^\alpha \tilde V_{i,high}^\out (a_\infty+z_+)|\lesssim R^{\sqrt{5}-1-k}$ for $R/3<|z_+|<R/2$. Hence \eqref{eigenfunctions-exterior:id:Poisson-tech-5} for $k+1$ as $R$ is arbitrary.

The first desired expansion \eqref{eigenfunctions-exterior:id:expansion-Viiout-1} then follows from the decomposition \eqref{eigenfunctions-exterior:id:Poisson-tech-3}, the estimate \eqref{eigenfunctions-exterior:id:Poisson-tech-5}, and adding any linear combination of the functions $\sum_\pm \frac{z_1\mp a_{\infty,1}}{|z\mp a_\infty|^2}$ and $1$ and $\sum_\pm \log |z\pm a_\infty|$ which solve the Laplace equation. The second desired estimate \eqref{eigenfunctions-exterior:bd:expansion-phiiout-Viout-infty} follows from the fact that $V_{i,high}^\out=\Phi_{ f_{i,high}}$ with $f_{i,high}$ being compactly supported. The value $D_{i,2}^\out$ comes from the Taylor expansion of $z\mapsto \frac{z_1+a_{\infty,1}}{|z+a_{\infty}|^2}$ at $a_\infty$.

\medskip

\noindent \textbf{Step 5}. \emph{Proof of \eqref{eigenfunctions-exterior:bd:Ri}}. We recall that $|a-a_\infty|\lesssim \nu$ and $|\tilde \lambda_i|\lesssim |\ln \nu|^{-1}$. In what follows we take $z\in \cap_\pm \{|z\pm a|\gg \nu\}$, and hence $|z\pm a|\approx |z\pm a_\infty|$. Injecting \eqref{eigenfunctions-exterior:id:decomposition-tildeZ1} and \eqref{eigenfunctions-exterior:id:decomposition-tildeZ0} in \eqref{eigenfunctions-exterior:id:decomposition-tildeRi} shows
\begin{equation} \label{eigenfunctions-exterior:bd:tilde-Ri}
|\tilde R_i^\out(z)|\lesssim \frac{1}{|\ln \nu|^2}|Z_i(z)|\lesssim \frac{1}{|\log \nu|^2}\sum_\pm \frac{\langle z\rangle^C}{|z\pm a_\infty|^2} \lesssim \frac{1}{|\log \nu|^2}\sum_\pm \frac{\langle z\rangle^C}{|z\pm a|^2}  .
\end{equation}
We next estimate $\bar R_i^\out$ in \eqref{eigenfunctions-exterior:id:eigenfunction-system-2}. For the first term, we have since $|a-a_\infty|\lesssim \nu$ that
\begin{align*}
\left|\nabla \Phi_{U_{1+2,\nu}}+4\sum_\pm \frac{z\pm a_\infty}{|z\pm a_\infty|^2} \right|&=4\left|\sum_\pm \frac{|z\pm a_\infty|^2(z\pm a)-(\nu^2+|z\pm a|^2)(z\pm a_\infty)}{|z\pm a_\infty|^2(\nu^2+|z\pm a|^2)} \right|\\
& \qquad \lesssim \sum_\pm \frac{\nu^2+|a-a_\infty|}{|z\pm a|^2}
\end{align*}
so that since $|\nabla \phi_i^\out|\lesssim \sum_\pm |z\pm a_\infty|^{-5}\langle z \rangle^C$ from \eqref{eigenfunctions-exterior:id:expansion-phiiout-1} we find
\begin{equation} \label{eigenfunctions-exterior:bd:bar-Ri1}
\left|\nabla \phi_i^\out .\left(-\nabla \Phi_{U_{1+2,\nu}}-4\sum_\pm \frac{z\pm a_\infty}{|z\pm a_\infty|^2}\right) \right|\lesssim \sum_\pm \frac{\nu^2+|a-a_\infty|}{|z\pm a|^7}\langle z \rangle^C.
\end{equation}
Next, using $|\nabla^k U_{1+2,\nu}|\lesssim \nu^2 \sum_\pm |z\pm a|^{-5}$ for $k=0,1$ and $|\nabla V_i^\out|\lesssim |\ln \nu|\sum_\pm |z\pm a_\infty|^{-3}\langle z \rangle^2$ from \eqref{eigenfunctions-exterior:id:expansion-Viiout-1} and the assumption $|A_i^\out|,|B_i^\out|\lesssim |\ln \nu|$ and $|\phi_i^\out|\lesssim \sum_\pm |z\pm a_\infty|^{-4}\langle z \rangle^C$ from \eqref{eigenfunctions-exterior:id:expansion-phiiout-1} we find for the second and third terms
\begin{equation} \label{eigenfunctions-exterior:bd:bar-Ri2-3}
\left|-\nabla U_{1+2,\nu}.\nabla V_i^\out+2U_{1+2,\nu}\phi_i^\out \right|\lesssim |\ln \nu| \sum_\pm \frac{\nu^2}{|z\pm a|^8}\langle z \rangle^C.
\end{equation}
Injecting \eqref{eigenfunctions-exterior:bd:bar-Ri1} and \eqref{eigenfunctions-exterior:bd:bar-Ri2-3} in the definition of $\bar R_i^\out$ in \eqref{eigenfunctions-exterior:id:eigenfunction-system-2} we find
\begin{equation} \label{eigenfunctions-exterior:bd:bar-Ri}
|\bar R_i|\lesssim  \sum_\pm \frac{\nu^2}{|z\pm a|^8}\langle z \rangle^C+\sum_\pm \frac{|a-a_\infty|}{|z\pm a|^7}\langle z \rangle^C.
\end{equation}
As $R_i^\out=\tilde R_i^\out+\bar R_i^\out$, combining \eqref{eigenfunctions-exterior:bd:tilde-Ri} and \eqref{eigenfunctions-exterior:bd:bar-Ri} using $|a-a_\infty|\lesssim \nu$ shows the desired estimate \eqref{eigenfunctions-exterior:bd:Ri}.

\end{proof}

\subsection{Matching and computation of the eigenvalues}\label{sec:matched-asymptotic}

\subsubsection{Matched asymptotic expansions}

\begin{lemma} \label{eigenfunctions-matching}
Let $i=0,1$ and $\epsilon>0$ be given by Proposition \ref{prop:phi1inn}. Pick
\begin{equation} \label{id:value-tilde-lambdai-tech}
\tilde \lambda_i =\frac{7-10i+256 A_i}{16(2i-1)}\frac{1}{|\ln \nu|} \ \frac{1}{1+\frac{1-10 i-32B_i}{(8i-4)|\ln \nu|}}
\end{equation}
and choose the following values of the constants
\begin{equation} \label{id:value-parameters-tech}
(A_i^{\out},B_i^{\out},C_i^{\out},L_i^{\inn},M_i^{\inn})=\left(A_i^{\out*}+\frac{\alpha^{1/2}}{4}\tilde L_i^\inn,B_i^{\out*},C_i^{\out*},L_i^{\inn*}+\tilde L_i^\inn,M_i^{\inn*}+\tilde M_i^\inn\right)
\end{equation}
where $\tilde L_i^\inn,\tilde M_i^\inn \in \mathbb R$ are free parameters and
\begin{align} \label{id:value-tilde-Aiout-tech}
& A_i^{\out*}=\alpha^{\frac 12}\frac{L_i^{\inn*}}{4},\\
\label{id:value-tilde-Biout-tech}
& B_i^{\out*}=\alpha\left(\frac{i-\tilde \lambda_i}{2}\ln \nu+\frac{2i+1-2\tilde \lambda_i }{4}\right),\\
\label{id:value-tilde-Ciout-tech}& C_i^{\out*} =\alpha\left(\frac{\tilde \lambda_i-i}{4}\ln^2 \nu+\frac{\tilde \lambda_i-1-2i}{4}\ln \nu\right),\\
\label{id:value-tilde-Liout-tech} & L_i^{\inn*} =\frac{32\alpha^{-\frac 32}D_i^\out(A_i^{\out*},B_i^{\out*})}{1-4i+4\tilde \lambda_i},\\
\label{id:value-tilde-Miout-tech} & M_i^{\inn*} = (2i-1- \tilde \lambda_i ) \ln \nu  +\frac{5-10i+10\tilde \lambda_i}{4}-\frac{E_i^\out (A_i^{\out*},B_i^{\out*})}{32\alpha^2},
\end{align}
Then we have
\begin{align} \label{eigenfunctions:id:def-phi-bo}
- \frac{1}{16\nu^4} \phi^\inn_{i,\pm}\left(\frac{z\mp a}{\nu}\right)- \phi_i^\out(z) = \phi_{i,\pm}^\bou (z),\\
\label{eigenfunctions:id:def-V-bo}- \frac{1}{16\nu^2} V^\inn_{i,\pm}\left(\frac{z\mp a}{\nu}\right)- V_i^\out(z) = V_{i,\pm}^\bou (z),
\end{align}
where for $\nu$ small enough, for $|a-a_{\infty}|\lesssim \nu$ and $|\tilde L_i^\inn|,|\tilde M_i^\inn|\lesssim 1$, the boundary terms satisfy for all $k\in \mathbb N$,
\begin{align} \label{eigenfunctions:bd:phi-bo}
&|\nabla^k \phi_{i,\pm}^\bou (z)|\lesssim (\nu^{\epsilon}  |z\mp a|^{-4-\epsilon} +|z\mp a|^{\sqrt{5}-2})|z\mp a|^{-k},\\
& \label{eigenfunctions:bd:V-bo} |\nabla^k V_{i,\pm}^\bou (z)|\lesssim  (\nu^{\epsilon}  |z-a|^{-2-\epsilon} +|z\mp a|^{\sqrt{5}})|z\mp a|^{-k} +|\tilde L_i^\inn||z\mp a|^{1-k}+|\tilde M_i^\inn||z\mp a|^{2-k} ,
\end{align}
for all $\nu\ll |z-a|\ll 1$.

\end{lemma}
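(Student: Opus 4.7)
The proof will proceed by comparing, term by term in the overlap region $\nu^{\tilde \epsilon} \lesssim |z\mp a| \ll 1$, the expansions \eqref{expansion:phii_inn}--\eqref{expansion:Vi_inn} rewritten in the $\zeta_\pm = |z\mp a|$ variable with the expansions \eqref{eigenfunctions-exterior:id:expansion-phiiout-1}--\eqref{eigenfunctions-exterior:id:expansion-Viiout-1}. Both are finite sums of monomials in $\zeta_\pm$, $\log \zeta_\pm$ and $\log \nu$, weighted by Fourier modes $\cos(k\theta_\pm)$, plus a small remainder. The plan is to separate the contributions by the pair (spherical harmonic, power of $\zeta_\pm$ and $\log\zeta_\pm$). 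Each such contribution gives a matching equation. By construction of the inner and outer ansatz, most equations are automatically satisfied when $a=a_\infty$ (so that $\alpha = \beta$); the remaining ones form a small and lower-triangular linear system for the parameters $(\tilde\lambda_i, A_i^\out, B_i^\out, C_i^\out, L_i^\inn, M_i^\inn)$, which is then inverted to yield the values \eqref{id:value-tilde-lambdai-tech}--\eqref{id:value-tilde-Miout-tech}.

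For the Poisson field, the matching is carried out mode by mode in $\cos(k\theta_\pm)$. The coefficient of $\zeta_\pm^{-1}\cos\theta_\pm$ forces $A_i^{\out*}=\alpha^{1/2}L_i^{\inn*}/64$, which is \eqref{id:value-tilde-Aiout-tech}; the coefficient of $\log \zeta_\pm$ (mode zero) forces \eqref{id:value-tilde-Biout-tech}; the constants match provided $C_i^{\out*}$ absorbs the $\ln^2\nu$ and $\ln\nu$ contributions from the inner side, giving \eqref{id:value-tilde-Ciout-tech}. The linear-in-$\zeta_\pm$ term $\pm D_i^\out\zeta_\pm\cos\theta_\pm$ from the outer side must cancel the inner $\mp\alpha^{3/2}L_i^\inn(1-4i+4\tilde\lambda_i)/64$ contribution; since $D_i^\out$ is affine in $(A_i^\out,B_i^\out)$, after substituting the already known values this becomes a scalar equation for $L_i^{\inn*}$, solved as \eqref{id:value-tilde-Liout-tech}. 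Finally, the coefficient of $\zeta_\pm^2\cos(2\theta_\pm)$ with no $\log$ fixes $M_i^{\inn*}$ via \eqref{id:value-tilde-Miout-tech}. The additive free constants $\tilde L_i^\inn,\tilde M_i^\inn$ appear because $\partial_{x_1}U$ and $\mathcal S_{4,0}'$ lie in the kernel of $\Ls_0$, so shifts in $L_i^\inn,M_i^\inn$ can be absorbed into corresponding shifts of the $A_i^\out$ in the outer problem; this gives the parametric freedom \eqref{id:value-parameters-tech}.

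Matching for $\phi_i$ itself is now automatic for all modes $\cos(k\theta_\pm)$ with $k\geq 1$ and for the $\log\zeta_\pm$ mode, thanks to the consistency between the inner and outer ansatz. The one remaining equation is the balance of the purely constant terms, where the inner side contributes $-\alpha^2\tilde\lambda_i(i/2-1/4)\ln\nu + \alpha^2[(i/4-7/64)+\tilde\lambda_i(1-10i)/16]$, the outer side contributes $3\beta^2 i/32 + 4\beta^2 A_i + 2\beta^2\tilde\lambda_i B_i$, and the constants $A_i, B_i$ from \eqref{eigenfunctions-exterior:id:expansion-phiiout-1} are intrinsic to the solution built in Proposition~\ref{pr:phi1out}. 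Equating these (with $\alpha=\beta$ to leading order) gives a linear equation $\tilde\lambda_i\bigl[(1-10i)/16-2B_i-(2i-1)\ln\nu/4\bigr] = (256A_i - 10i + 7)/64$, whose explicit inversion yields the formula \eqref{id:value-tilde-lambdai-tech}. This is where the crucial negativity of $A$ in \eqref{eigenfunctions-exterior:id:def-A} enters through $\gamma_i$ to give the sign of the blowup rate.

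Once the parameters are fixed in this way, the bounds \eqref{eigenfunctions:bd:phi-bo}--\eqref{eigenfunctions:bd:V-bo} are obtained by adding the three sources of error. First, the cancellation of the explicitly written monomials in the two expansions leaves the inner remainder $\phi_{i,\pm,\sharp}^{\inn,z}$, $V_{i,\pm,\sharp}^{\inn,z}$, which by \eqref{expansion:phii_inn-estimate} and \eqref{expansion:Vi_inn-estimate} contribute the $\nu^\epsilon |z\mp a|^{-4-\epsilon-k}$ and $\nu^\epsilon |z\mp a|^{-2-\epsilon-k}$ terms. Second, the outer remainders $\phi_{i,high}^\out$, $V_{i,high}^\out$ contribute the $|z\mp a|^{\sqrt 5 -2-k}$ and $|z\mp a|^{\sqrt 5 - k}$ terms by \eqref{eigenfunctions-exterior:id:expansion-phiiout-1-bd} and \eqref{eigenfunctions-exterior:id:expansion-Viiout-2}. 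Third, the contribution of the free parameters $\tilde L_i^\inn,\tilde M_i^\inn$ (absent in $\phi_{i,\pm}^\bou$ since $\partial_{x_1}U$ matches exactly with $A_i^\out\zeta_\pm^{-1}\cos\theta_\pm$) produces in $V_{i,\pm}^\bou$ the linear and quadratic remainders $|\tilde L_i^\inn|\,|z\mp a|^{1-k}+|\tilde M_i^\inn|\,|z\mp a|^{2-k}$. The main technical obstacle is not conceptual but organizational: the careful bookkeeping of all monomial contributions across both expansions and the verification that the resulting matching system is indeed uniquely solvable with the asserted formulas.
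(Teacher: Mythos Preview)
Your proposal is correct and follows essentially the same approach as the paper: term-by-term comparison of the inner expansion \eqref{expansion:phii_inn}--\eqref{expansion:Vi_inn} with the outer expansion \eqref{eigenfunctions-exterior:id:expansion-phiiout-1}--\eqref{eigenfunctions-exterior:id:expansion-Viiout-1}, with each Fourier mode and each power of $\zeta_\pm$ and $\log\zeta_\pm$ producing one matching equation, and the remainder bounds assembled from the inner tail, the outer tail, and the free-parameter residues. One point you pass over quickly that the paper handles explicitly: the outer expansion is centered at $a_\infty$, not at $a$, so an extra shift term $\phi_i^\out(z)-\phi_i^\out(z\mp(a-a_\infty))=\Oc(\nu\zeta_\pm^{-5})$ appears, and all the $\alpha$--$\beta$ mismatches (e.g.\ the $\zeta_\pm^{-2}$ coefficient in $\phi^\bou$ is $(\alpha-\beta)(\tfrac i2+\tfrac14\cos 2\theta_\pm-\tfrac12\tilde\lambda_i)$, not zero) must be absorbed via $|\alpha-\beta|\lesssim\nu$ into the $\nu^\epsilon\zeta_\pm^{-4-\epsilon}$ bound; this is routine but should be tracked.
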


\begin{remark}

Note that by Proposition \ref{pr:phi1out}, the equations \eqref{id:value-tilde-Aiout-tech}-\eqref{id:value-tilde-Miout-tech} corresponds to the following linear system for $A_i^{\out*},B_i^{\out*},C_i^{\out*},L_i^{\inn*},M_i^{\inn*}$:
$$
\mathfrak M(A_i^{\out*},B_i^{\out*},C_i^{\out*},L_i^{\inn*},M_i^{\inn*})^\top=(x_1,x_2,x_3,x_4,x_5)^\top
$$
where
$$
\mathfrak M= \begin{pmatrix}
1 & 0 & 0 & -\frac{\alpha^{\frac 12}}{4} & 0 \\
0 & 1 & 0 & 0 & 0\\
0 & 0 & 1 & 0 & 0\\
-\frac{32\alpha^{-\frac 32}D_{i,1}^\out}{1-4i+4\tilde \lambda_i} & -\frac{32\alpha^{-\frac 32}D_{i,2}^\out}{1-4i+4\tilde \lambda_i} & 0 & 1 & 0\\
\frac{\alpha^{-2}E_{i,1}^\out}{32} & \frac{\alpha^{-2}E_{i,2}^\out}{32} & 0 & 0 & 1
\end{pmatrix}
$$
and
\begin{align*}
& x_1 =0,\\
& x_2= \alpha\left(\frac{i-\tilde \lambda_i}{2}\ln \nu+\frac{2i+1-2\tilde \lambda_i)}{4}\right)\\
& x_3= \alpha\left(\frac{\tilde \lambda_i-i}{4}\ln^2 \nu+\frac{\tilde \lambda_i-1-2i}{4}\ln \nu\right),\\
&x_4=\frac{32\alpha^{-\frac 32}D_{i,0}^\out}{1-4i+4\tilde \lambda_i},\\
&x_5=(2i-1- \tilde \lambda_i ) \ln \nu  +\frac{5-10i+10\tilde \lambda_i}{4}-\frac{E_{i,0}^\out }{32\alpha^2}.
\end{align*}
We compute using $\alpha^{-1}\beta=1+O(\nu)$ that $\textup{Det} ( \mathfrak M)=1-\frac{8\alpha^{-1}D_{i,1}^\out}{1-4i+4\tilde \lambda_i}=1+\frac{1}{1-4i+4\tilde \lambda_i}+O(\nu)\neq 0$ so that the system has a unique solution.

Moreover, we have from \eqref{id:value-tilde-Aiout-tech}-\eqref{id:value-tilde-Miout-tech} that
\begin{align*}
&|A_0^{\out *}|+|B_0^{\out *}|+|L_0^{\inn *}|\lesssim 1 \quad \mbox{and} \quad |A_1^{\out *}|+|B_1^{\out *}|+|L_1^{\inn *}|\lesssim |\ln \nu|,\\
&|C_i^\out|+|M_i^\out|\lesssim |\ln \nu| \qquad \mbox{for }i=0,1.
\end{align*}

\end{remark}

\begin{proof}

\noindent \underline{Proof of \eqref{eigenfunctions:bd:phi-bo}}. Recall that $z_\pm=z\mp a$ in the statement of Proposition \ref{prop:phi1inn} and $z_\pm=z\mp a_\infty$ in the statement of Proposition \ref{pr:phi1out}. In the following proof, we take $z_\pm = z\mp a$, introduce $\bar z=a-a_\infty$ and decompose $\phi_{i}^\out(z)=\phi_{i}^\out(z\mp \bar z)+\phi_{i}^\out(z)-\phi_{i}^\out(z\mp \bar z)$. We have by these propositions
\begin{align}
\nonumber &  \phi_{i,\pm}^\bou (z) \\
\nonumber  &=  \frac{1}{\zeta_\pm^4} +\frac{ \alpha}{\zeta_\pm^2} \left(\frac{i }{2}+\frac{1}{4}\cos 2\theta_\pm \right) \mp \alpha^{\frac 32} \frac{1}{12\sqrt{2}\zeta_\pm}\cos(3\theta_\pm)\\
\nonumber & \qquad  +\alpha^2 \left(\frac{i}{4}-\frac{7}{64}+\frac{2i-1}{8}\cos(2\theta_\pm)+\frac{1}{32}\cos(4\theta_\pm)\right) \\
\nonumber &\qquad +  \tilde \lambda_i  \left(- \frac{\alpha}{2\zeta_\pm^2} +\alpha^2 \left( (\frac i2-\frac 14) (\ln \zeta_\pm-\ln \nu )+\frac{1-10i}{16}-\frac 14\cos(2\theta_\pm)\right)\right)+\phi_{i,\pm,\sharp}^{\inn ,z }(z_\pm) \\
\nonumber  &\qquad -\frac{1}{\zeta_\pm^4}-\frac{\beta}{\zeta_\pm^2}\left(\frac{1}{2}i+\frac 1 4 \cos(2\theta_\pm)-\frac 1 2 \tilde \lambda_i\right)\pm \frac{\beta^{\frac 32}}{12\sqrt{2}\zeta_\pm} \cos(3\theta_\pm)-\beta^2\tilde \lambda_i \left(\frac i2 -\frac 14\right) \log \zeta_\pm \\
& \nonumber \qquad -\frac{\beta^2}{32} \left(3i+(8i-4)\cos(2\theta_\pm)+\cos (4\theta_\pm)\right) -4\beta^2 A_i-2\beta^2 \tilde \lambda_i \left(-\frac 1 8 \cos(2\theta_\pm)+B_i\right)\\
\nonumber & \qquad -\phi_{i,high}^\out(z\mp \bar z)+\phi_{i}^\out(z\mp \bar z)-\phi_{i}^\out(z)\\
\label{id:phi-bo-tech} &=\frac{\alpha-\beta}{\zeta_\pm^2}\left(\frac{1}{2}i+\frac 1 4 \cos(2\theta_\pm)-\frac 1 2 \tilde \lambda_i\right)\mp  \frac{\alpha^{\frac 32}-\beta^{\frac 32}}{12\sqrt{2}\zeta_\pm}\cos(3\theta_\pm) \\
\nonumber & \quad +(\alpha^2-\beta^2) \Bigg(\frac{2i-1}{8}\cos(2\theta_\pm)+\frac{1}{32}\cos(4\theta_\pm)+\tilde \lambda_i (\frac i2-\frac 14) \ln \zeta_\pm-\frac{\tilde \lambda_i}4\cos(2\theta_\pm)+\frac{i}{4}-\frac{7}{64}\\
\nonumber &\qquad \qquad \qquad \qquad +\tilde \lambda_i\left((\frac 14 -\frac i2)\ln \nu +\frac{1-10i}{16}\right) \Bigg) \\
\nonumber  &\quad +\beta^2\underbrace{\left(\frac{10i-7}{64}-4A_i +\tilde \lambda_i\left( \frac{2i-1}{4} |\ln \nu|+\frac{1-10i}{16}-2B_i\right)\right)}_{=0} +\phi_{i,\pm,\sharp}^{\inn ,z }(z_\pm)-\phi_{i,high}^\out(z).
\end{align}
In order to match the constant terms above, we picked 
$$\tilde \lambda_i  = -\frac{\frac{10i-7}{64}-4A_i }{ \frac{2i-1}{4} |\ln \nu|+\frac{1-10i}{16}-2B_i} $$
which corresponds to \eqref{id:value-tilde-lambdai-tech}. We have $|a-a_\infty|\lesssim \nu$ so that $|\alpha-\beta|\lesssim \nu$ and $\phi_{i}^\out(z\mp \bar z)-\phi_{i}^\out(z)=\Oc(\nu \zeta_\pm^{-3})=\Oc(\nu^\epsilon \zeta_\pm^{-2-\epsilon}) $ for $\nu\ll \zeta_\pm \ll 1$. Using this and $|\tilde \lambda_i|\lesssim |\ln \nu|^{-1}$ we deduce from \eqref{id:phi-bo-tech} using \eqref{expansion:phii_inn-estimate} and \eqref{eigenfunctions-exterior:id:expansion-phiiout-1-bd} that for $\nu \ll |z-a|\ll 1$,
$$
 \phi_{i,\pm}^\bou (z) =\Oc(\frac{\nu}{|z-a|^2})+\Oc(\frac{\nu}{|z-a|})+\Oc(\nu)+\Oc(\nu^{\epsilon}  |z-a|^{-4-\epsilon} )+\Oc(|z-a|^{\sqrt{5}-2}),
$$
which shows \eqref{eigenfunctions:bd:phi-bo} for $k=1$. The estimate propagates for higher order derivative by \eqref{expansion:phii_inn-estimate} and \eqref{eigenfunctions-exterior:id:expansion-phiiout-1-bd}.

\smallskip

\noindent \underline{Proof of \eqref{eigenfunctions:bd:V-bo}}. By \eqref{expansion:Vi_inn}  and \eqref{eigenfunctions-exterior:id:expansion-Viiout-1} we compute

\begin{align}
\nonumber & V_{i,\pm}^\bou (z)\\
\nonumber  &= \frac{-1}{4\zeta^2_\pm}\pm \frac{\alpha^{\frac 12}L_i^\inn}{4} \frac{1}{\zeta_\pm}\cos(\theta_\pm)+\alpha \Bigg( \frac{\tilde \lambda_i-i}{4} \ln^2 \zeta_\pm+\left(\frac{i-\tilde \lambda_i}{2}\ln \nu+\frac{2i+1-2\tilde \lambda_i)}{4}\right)\ln \zeta_\pm \\
\nonumber  &\qquad \qquad \qquad \qquad \qquad \qquad \quad +\frac{\tilde \lambda_i-i}{4}\ln^2\nu+\frac{2\tilde \lambda_i-1-2i}{4}\ln \nu+\frac{1}{16}\cos (2\theta_\pm)\Bigg)\\
\nonumber  & \quad \mp  \alpha^{\frac 32}\zeta_\pm \left( L_i^\inn \frac{1-4i+4\tilde \lambda_i}{32}\cos \theta+ \frac{1}{96\sqrt{2}}\cos(3\theta_\pm) \right)\\
\nonumber  &  \quad+\alpha^2\zeta^2_\pm \Bigg( \frac{1-2i}{16}\tilde \lambda_i \ln \zeta_\pm +\frac{2i-1}{16} \tilde \lambda_i\ln \nu+\frac{2\tilde \lambda_i+1-2i}{32} \ln \zeta_\pm \cos(2\theta_\pm) +\frac{2i-1- \tilde \lambda_i}{32} \ln \nu \cos(2\theta_\pm)\\
\nonumber  & \qquad \qquad \qquad +\frac{1}{16}\left(\frac{7}{16}-i+\tilde \lambda_i(\frac92 i-\frac54)\right)+\frac{5-10i+10\tilde \lambda_i-8M_i^\inn}{128}\cos(2\theta_\pm)+\frac{1}{384}\cos(4\theta_\pm) \Bigg)\\
\nonumber  & \qquad +V^{\inn,z}_{i,\pm,\sharp}(z_\pm) \\
\nonumber  & +\frac{1}{4\zeta_\pm^2}+\left(\frac{\beta}{4}i-\frac{\beta}{4}\tilde \lambda_i\right)(\log \zeta_\pm)^2-\frac{\beta}{16}\cos(2\theta_\pm)\pm \frac{\beta^{3/2}}{96\sqrt{2}}\zeta_\pm \cos(3\theta_\pm)-\beta^2 \tilde \lambda_i \frac{1-2i}{16}\zeta_\pm^2\log \zeta_\pm \\
\nonumber  &\qquad  \qquad -\beta^2 \zeta_\pm^2 \left(-\frac{3i}{128}-A_i+\left(\frac{2i-1}{16}-\frac{B_i}{2}\right)\tilde \lambda_i+\frac{1-2i+2\tilde \lambda_i}{32}\log \zeta_\pm \cos(2\theta_\pm)+\frac{1}{384}\cos(4\theta_\pm) \right)\\
\nonumber  &\qquad \qquad \mp \frac{A_i^\out}{\zeta_\pm}\cos(\theta_\pm) -B_i^\out \log \zeta_\pm -C_i^\out \mp D_i^\out \zeta_\pm \cos(\theta_\pm)-E_i^\out \zeta_\pm^2\cos(2\theta_\pm)\\
\nonumber  & \qquad -V_{i,high}^\out (z-\bar z)+V_{i}^\out(z\mp \bar z)-V_{i}^\out(z) \\
\nonumber  &=(\alpha-\beta)\left(\frac{\tilde \lambda_i-i}{4} \ln^2 \zeta_\pm +\frac{1}{16}\cos(2\theta_\pm) \right) \mp  \frac{\alpha^{\frac 32}-\beta^{\frac 32}}{96\sqrt{2}} \zeta_{\pm} \cos(3\theta_{\pm})  \\
\nonumber  &\quad+(\alpha^2-\beta^2)\zeta^2_\pm \Bigg( \frac{1-2i}{16}\tilde \lambda_i \ln \zeta_\pm + \frac{2i-1}{16} \tilde \lambda_i\ln \nu+\frac{1}{16}\left(\frac{7}{16}-i+\tilde \lambda_i(\frac92 i-\frac54)\right) \\
\nonumber  & \qquad \qquad \qquad \qquad\qquad +\frac{2\tilde \lambda_i+1-2i}{32} \ln \zeta_\pm \cos(2\theta_\pm) +\frac{1}{384}\cos(4\theta_\pm) \Bigg)\\
\nonumber  & \qquad \pm \underbrace{\left( \frac{\alpha^{\frac 12}L_i^\inn}{4}-A_i^\out\right)}_{=0} \frac{1}{\zeta_\pm}\cos(\theta_\pm)\mp\underbrace{\left(\alpha^{\frac 32}L_i^\inn\frac{1-4i+4\tilde \lambda_i}{32} -D_i^\out\right)}_{=2^{-5}[\alpha^{3/2}(1-4i+4\tilde \lambda_i)-8\alpha^{1/2}D_{i,1}^\out]\tilde L_i^\inn} \zeta_\pm \cos(\theta_\pm) \\
\nonumber  & \quad +\underbrace{\left(\alpha\left(\frac{i-\tilde \lambda_i}{2}\ln \nu+\frac{2i+1-2\tilde \lambda_i)}{4}\right)-B_i^\out \right)}_{=0}\ln \zeta_\pm+\underbrace{\alpha\left(\frac{\tilde \lambda_i-i}{4}\ln^2 \nu+\frac{\tilde \lambda_i-1-2i}{4}\ln \nu\right)-C_i^\out }_{=0}\\
\nonumber  & \quad +\beta^2\zeta_\pm^2 \underbrace{\left( \frac{2i-1}{16} \tilde \lambda_i\ln \nu+\frac{1}{16}\left(\frac{7}{16}-i+\tilde \lambda_i(\frac92 i-\frac54)\right) +\frac{3i}{128}+A_i+\left(\frac{1-2i}{16}+\frac{B_i}{2}\right)\tilde \lambda_i\right)}_{=I=0}\\
\nonumber  & + \zeta^2_\pm \underbrace{\left(\alpha^2\left(\frac{2i-1- \tilde \lambda_i}{32} \ln \nu  +\frac{5-10i+10\tilde \lambda_i-8M_i^\inn}{128}\right)-E_i^\out \right)}_{=-\frac{\alpha^2}{16}\tilde M_i^\inn}\cos(2\theta_\pm)\\
\label{longest-equation-that-I-have-ever-written}&+\Oc(\nu^{\epsilon} \zeta_\pm^{-2-k-\epsilon}) +\Oc(\zeta_\pm^{\sqrt{5}})
\end{align}
where we used \eqref{expansion:Vi_inn-estimate} and \eqref{eigenfunctions-exterior:id:expansion-Viiout-2} to estimate $V^{\inn,z}_{1,\pm,\sharp}$ and $V_{i,high}^\out (z)$, as well as $V_{i}^\out(z\mp \bar z)-V_{i}^\out(z) =\Oc(\nu \zeta_\pm^{-3})=\Oc(\nu^\epsilon \zeta_\pm^{-2-\epsilon})$ since $|\bar z|\lesssim \nu$.

In order to match the $\ln \zeta_\pm$ term above, we picked $B_i^\out=\alpha\left(\frac{i-\tilde \lambda_i}{2}\ln \nu+\frac{2i+1-2\tilde \lambda_i)}{4}\right)$ which corresponds to \eqref{id:value-tilde-Biout-tech}. In order to match the constant term, we picked $C_i^\out =\alpha\left(\frac{\tilde \lambda_i-i}{4}\ln^2 \nu+\frac{\tilde \lambda_i-1-2i}{4}\ln \nu\right)$ which corresponds to \eqref{id:value-tilde-Ciout-tech}. We compute that
\begin{align*}
I& =\frac{1}{4}\left( \frac{2i-1}{4} \tilde \lambda_i\ln \nu+\frac{1}{4}\left(\frac{7}{16}-i+\tilde \lambda_i(\frac92 i-\frac54)\right) +\frac{3i}{32}+4A_i+\left(\frac{1-2i}{4}+2B_i \right)\tilde \lambda_i\right)\\
& =\frac{1}{4}\left( \tilde \lambda_i\left(\frac{2i-1}{4} \ln \nu +\frac{10i-1}{16}+2B_i\right) +\frac{7-10i}{64}+4A_i\right) \ =0 
\end{align*}
by the choice \eqref{id:value-tilde-lambdai-tech} of $\tilde \lambda_i$. In order to match the $\zeta_\pm^2 \cos(2\theta_\pm)$ term, we picked $M_i^\inn= (2i-1- \tilde \lambda_i ) \ln \nu  +\frac{5-10i+10\tilde \lambda_i}{4}-\frac{E_i^\out }{32\alpha^2}$, which corresponds to \eqref{id:value-tilde-Miout-tech}.

\end{proof}

\subsubsection{The full approximate eigenfunctions and proof of Proposition \ref{prop:eigen}}

We eventually define $\lambda_i=2\beta (1-i+\tilde \lambda_i)$ where $\tilde \lambda_i$ is given by \eqref{eigenfunctions-matching}, and
\begin{equation} \label{eigenfunctions:id:decomposition-phii-final}
\phi_i(z) = -\frac{1}{16\nu^{4}}\sum_\pm \phi_{i,\pm}^\inn \Big(\frac{z\mp a}{\nu}\Big)  \chi(\frac{z\mp a}{\eta}) + \phi_{i}^\out(z) \big(1 - \sum_\pm \chi(\frac{z\mp a}{\eta})\big).
\end{equation}
The parameters $A_i^\out,B_i^\out,C_i^\out,M_i^\inn,L_i^\inn$ are given by \eqref{id:value-parameters-tech}, and the correction parameters $\tilde L_i^\inn$ and $\tilde M_i^\inn$ are not fixed yet but will be fixed in the first Step of the proof of Proposition \ref{prop:eigen} below in \eqref{eigenfunctions:id:estimatetildeLtildeM}. The function $\chi$ is a smooth radial cut-off function with $\chi(x)=1$ for $|x|<1$ and $\chi(x)=0$ for $|x|>2$. We choose $\eta=\nu^{\tilde \epsilon}$ with $\tilde \epsilon$ small enough so that we can simplify $\Oc(\nu^\epsilon |\zeta_\pm|^{-4-\epsilon})=\Oc(|\zeta_\pm|^{\sqrt{5}-2})$ and $\Oc(\nu^\epsilon |\zeta_\pm|^{-4-\epsilon})=\Oc(|\zeta_\pm|^{\sqrt{5}-2})$ for $|\zeta_\pm|\approx \eta$ in \eqref{eigenfunctions:bd:phi-bo} and \eqref{eigenfunctions:bd:V-bo}. We can now end the proof of Proposition \ref{prop:eigen}.

\begin{proof}[Proof of Proposition \ref{prop:eigen}]

\noindent \textbf{Step 1}. \emph{Matching the parameters to cancel the Poisson field}. Let $V_i (z)=\Phi_{\chi^*\phi_i}(z) $. We decompose
\begin{equation} \label{eigenfunctions:id:decomposition-Vi-tildeVi}
V_i (z) = \sum_\pm -\frac{1}{16\nu^{2}}V_{i,\pm}^\inn \Big(\frac{z\mp a}{\nu}\Big)  \chi(\frac{z\mp a}{\eta}) + V_{i}^\out(z) \big(1 - \sum_\pm \chi(\frac{z\mp a}{\eta})\big)+\tilde V_i.
\end{equation}
We recall that $-\Delta V_{i,\pm}^\inn= \phi_{i,\pm}^\inn$ and $-\Delta V_{i,\pm}^\out=\chi^* \phi_{i,\pm}^\out$, so that the equation $-\Delta V_i=\chi^* \phi_i$ gives
\begin{equation} \label{eigenfunctions:id:decomposition-Vi-tildeVi-elliptic}
-\Delta \tilde V_i= F_{i,+}^\bou+F_{i,-}^\bou, \qquad F_{i,\pm}^\bou = \frac{2}{\eta}\nabla \chi(\frac{z\mp a}{\eta}).\nabla V_{i,\pm}^\bou+ \frac{1}{\eta2}\Delta  \chi(\frac{z\mp a}{\eta}) V_{i,\pm}^\bou .
\end{equation}
As $V^\out$ grows sublinearly as $|z|\to \infty$ from \eqref{eigenfunctions-exterior:bd:expansion-phiiout-Viout-infty}, by a uniqueness argument for the Poisson equation we have 
\begin{equation} \label{eigenfunctions:id:expression-tilde-Vi}
\tilde V_i=\tilde V_{i,+}+\tilde V_{i,-} \qquad \mbox{where}\qquad \tilde V_{i,\pm}= \Phi_{F_{i,\pm}^\bou}.
\end{equation}
We want to find values for $\tilde L_i^\inn$, $\tilde M_i^\inn$ such that
\begin{align}
\label{eigenfunctions:id:vanishing-remainding-Poisson}
\partial_{x_1}\tilde V_i(a)=\partial_{x_1}^2\tilde V_i(a)=0.
\end{align}
In order to achieve \eqref{eigenfunctions:id:vanishing-remainding-Poisson}, notice that by linearity,
\begin{align} \label{eigenfunctions:id:systeme-tildeLi-tildeMi}
\begin{pmatrix} \partial_{x_1}\tilde V_i(a) \\ \partial_{x_1}^2\tilde V_i(a) \end{pmatrix} = \mathfrak M \begin{pmatrix} \tilde L_i^\inn \\  \tilde M_i^\inn \end{pmatrix} +\begin{pmatrix} \partial_{x_1}\tilde V_i[A_i^{\out*},B_i^{\out*},C_i^\out,L_i^{\out*},M_i^{\inn*}](a) \\ \partial_{x_1}^2\tilde V_i[A_i^{\out*},B_i^{\out*},C_i^{\out*},L_i^{\out*}M_i^{\inn*}](a) \end{pmatrix} 
\end{align}
where
$$
 \mathfrak M= \begin{pmatrix} \partial_{\tilde L_i^\inn}\partial_{x_1}\tilde V_i (a) &  \partial_{\tilde M_i^\inn}\partial_{x_1}\tilde V_i (a)\\
 \partial_{\tilde L_i^\inn}\partial_{x_1}^2\tilde V_i (a) &  \partial_{\tilde M_i^\inn}\partial_{x_1}^2\tilde V_i (a) \end{pmatrix}.
$$
We compute using the representation formula $\Phi_f =-(2\pi)^{-1}\log |\cdot|*f$, and then integrating by parts and using polar coordinates,
\begin{align}
\nonumber \partial_{x_1}\tilde V_{i,+}(a) & =\frac{1}{2\pi} \int_{\mathbb R^2}\frac{z_{+,1}}{|z_+|^2}F_{i,+}^\bou(a+z_+)dz_+ \\
\label{eigenfunctions-id-partial-tildex1-tildeVi+}& =\frac{1}{2\pi \eta^2} \int_{0}^\infty \int_0^{2\pi}\left(-\partial_{rr}\chi(\frac{\zeta_+}{\eta})+\frac{\eta}{\zeta_+}\partial_r \chi(\frac{\zeta_+}{\eta})\right) \cos(\theta_+)V_{i,+}^\bou(a+z_+)d\zeta_+d\theta_+ .
\end{align}
A direct estimate of the above integral with \eqref{eigenfunctions:bd:V-bo} yields
\begin{equation} \label{eigenfunctions:id:partialx1tildeVi-1}
 \partial_{x_1}\tilde V_{i,+}[A_i^{\out*},...,M_i^{\inn*}](a) =\Oc(\eta^{\sqrt{5}-1}).
\end{equation}
Next, by \eqref{longest-equation-that-I-have-ever-written} and \eqref{id:value-parameters-tech} we have
\begin{align} \label{eigenfunctions-id-partial-tildeLi-Vibou}
&\partial_{\tilde L_{i}^\inn}\tilde V_{i,+}^\bou=2^{-5}[\alpha^{3/2}(1-4i+4\tilde \lambda_i)-8\alpha^{1/2}D_{i,1}^\out] \zeta_+\cos(\theta_+)+O(\zeta_+^{\sqrt{5}}),\\
& \label{eigenfunctions-id-partial-tildeMi-Vibou} \partial_{\tilde M_{i}^\inn}\tilde V_{i,+}^\bou =-\frac{\alpha^2}{16}\zeta_+^2 \cos(2\theta_+)+O(\zeta_+^{\sqrt{5}}).
\end{align}
Injecting \eqref{eigenfunctions-id-partial-tildeLi-Vibou} and \eqref{eigenfunctions-id-partial-tildeMi-Vibou} in \eqref{eigenfunctions-id-partial-tildex1-tildeVi+}, using $\int_0^\infty (-\partial_{rr}\chi(\frac{\zeta_+}{\eta})+\frac{\eta}{\zeta_+}\partial_r \chi(\frac{\zeta_+}{\eta}))\zeta_+d\zeta_+=-2\eta^2$ and that $\int_0^{2\pi}\cos(\theta_+)\cos(k\theta_+)d\theta_+=\pi \delta_{k=1}$ we find
\begin{align}
\label{eigenfunctions:id:partialx1tildeVi-2} &\partial_{\tilde L_{i}^\inn}  \partial_{x_1}\tilde V_{i,+}(a)  = 2^{-5}[\alpha^{3/2}(1-4i+4\tilde \lambda_i)-8\alpha^{1/2}D_{i,1}^\out]\tilde L_i^\inn+\Oc(\eta^{\sqrt{5}-1}),\\
\label{eigenfunctions:id:partialx1tildeVi-3}&\partial_{\tilde M_{i}^\inn}  \partial_{x_1}\tilde V_{i,+}(a)  = \Oc(\eta^{\sqrt{5}-1}).
\end{align}
Another computation, similar to the one leading to \eqref{eigenfunctions-id-partial-tildex1-tildeVi+}, shows
\begin{align}
\nonumber \partial_{x_1}^2\tilde V_{i,+}(a) & =\frac{1}{2\pi} \int_{\mathbb R^2}\frac{z_{+,1}^2-z_{+,2}^2}{|z_+|^4}F_{i,+}^\bou(a+z_+)dz_+ \\
\label{eigenfunctions-id-partial-tildex12-tildeVi+}& =\frac{1}{2\pi \eta^2} \int_{0}^\infty \int_0^{2\pi}\left(-\partial_{rr}\chi(\frac{\zeta_+}{\eta})+\frac{\eta}{\zeta_+}\partial_r \chi(\frac{\zeta_+}{\eta})\right) \frac{\cos(2\theta_+)}{\zeta_+}V_{i,+}^\bou(a+z_+)d\zeta_+d\theta_+ .
\end{align}
Using \eqref{eigenfunctions:bd:V-bo}, \eqref{eigenfunctions-id-partial-tildeLi-Vibou} and \eqref{eigenfunctions-id-partial-tildeMi-Vibou} and $\int_0^\infty (-\partial_{rr}\chi(\frac{\zeta_+}{\eta})+3\frac{\eta}{\zeta_+}\partial_r \chi(\frac{\zeta_+}{\eta}))\zeta_+d\zeta_+=-4\eta^2$ we find
\begin{align}
 \label{eigenfunctions:id:partialx12tildeVi-1}
& \partial_{x_1}^2\tilde V_{i,+}[A_i^{\out*},...,M_i^{\inn*}](a) =\Oc(\eta^{\sqrt{5}-2}),\\
\label{eigenfunctions:id:partialx12tildeVi-2} &\partial_{\tilde L_{i}^\inn}  \partial_{x_1}^2\tilde V_{i,+}(a)  = \Oc(\eta^{\sqrt{5}-2}),\\
\label{eigenfunctions:id:partialx12tildeVi-3}&\partial_{\tilde M_{i}^\inn}  \partial_{x_1}^2\tilde V_{i,+}(a)  =\frac{\alpha^2}{8}+ \Oc(\eta^{\sqrt{5}-2}).
\end{align}
We have for $\eta $ small enough, using that $F_{i,-}^\bou$ is supported in $\{\eta<|z+a|<2\eta\}$ and performing a Taylor expansion,
\begin{align*}
\partial_{x_1}\tilde V_{i,-}(a) & =\int_{\mathbb R^2} \frac{z_--2a}{|z_--2a|^2}F_{i,-}^\bou(-a+z_-)dz_- \\
&= \frac{-1}{4|a|^2} \int_{\mathbb R^2} F_{i,-}^\bou(z)dz+\int_{\mathbb R^2} \Oc(|z_-|)F_{i,-}^\bou(-a+z_-)dz_-\\
& = \frac{-1}{4|a|^2} \int_{\mathbb R^2} F_{i,-}^\bou(z)dz+\Oc(\| |z+a|F_{i,-}^\bou\|_{L^1})\\
\end{align*}
Direct estimates using \eqref{eigenfunctions:bd:V-bo}, the analogue of \eqref{eigenfunctions-id-partial-tildeLi-Vibou} and \eqref{eigenfunctions-id-partial-tildeMi-Vibou} for $V_{i,-}^\bou$ for $k=1,2$ show that $\| F_{i,-}^\bou[A_i^{\out*},...,M_i^{\inn*}]\|_{L^1}=\Oc(\eta^{\sqrt{5}})$, $\| |z+a| \partial_{\tilde L_i^\inn} F_{i,-}^\bou\|_{L^1}=\Oc(\eta^2)$ and $\| |z+a| \partial_{\tilde M_i^\inn} F_{i,-}^\bou\|_{L^1}=\Oc(\eta^3)$. Using these estimates and $\int_0^{2\pi}\cos(k\theta_-)d\theta_-=0$ for $k=1,2$ we have
\begin{align}
 \label{eigenfunctions:id:partialx1tildeVi-1-}
& \partial_{x_1} \tilde V_{i,-}[A_i^{\out*},...,M_i^{\inn*}](a) =\Oc(\eta^{\sqrt{5}}),\\
\label{eigenfunctions:id:partialx1tildeVi-2-} &\partial_{\tilde L_{i}^\inn}  \partial_{x_1}\tilde V_{i,+}(a)  = \Oc(\eta^{2}),\\
\label{eigenfunctions:id:partialx1tildeVi-3-}&\partial_{\tilde M_{i}^\inn}  \partial_{x_1}\tilde V_{i,+}(a)  = \Oc(\eta^{\sqrt{5}}).
\end{align}
An analogue computation shows 
\begin{align}
 \label{eigenfunctions:id:partialx12tildeVi-1-}
& \partial_{x_1}^2\tilde V_{i,-}[A_i^{\out*},...,M_i^{\inn*}](a) =\Oc(\eta^{\sqrt{5}}),\\
\label{eigenfunctions:id:partialx12tildeVi-2-} &\partial_{\tilde L_{i}^\inn}  \partial_{x_1}^2\tilde V_{i,+}(a)  = \Oc(\eta^{2}),\\
\label{eigenfunctions:id:partialx12tildeVi-3-}&\partial_{\tilde M_{i}^\inn}  \partial_{x_1}^2\tilde V_{i,+}(a)  = \Oc(\eta^{\sqrt{5}}).
\end{align}
Combining \eqref{eigenfunctions:id:partialx1tildeVi-1}, \eqref{eigenfunctions:id:partialx1tildeVi-2}, \eqref{eigenfunctions:id:partialx1tildeVi-3}, \eqref{eigenfunctions:id:partialx12tildeVi-1}, \eqref{eigenfunctions:id:partialx12tildeVi-2}, \eqref{eigenfunctions:id:partialx12tildeVi-3} and \eqref{eigenfunctions:id:partialx1tildeVi-1-}-\eqref{eigenfunctions:id:partialx12tildeVi-3-} and using $D_{i,1}^\out=-\frac{\beta}{8}$ from Proposition \ref{pr:phi1out} with $|\alpha-\beta|\lesssim \nu$ shows that in the system \eqref{eigenfunctions:id:systeme-tildeLi-tildeMi}
$$
\mathfrak M=\begin{pmatrix} 2^{-5}\alpha^{3/2} [ 2-4i+4\tilde \lambda_i]+\Oc(\eta^{\sqrt{5}-1}) &   \Oc(\eta^{\sqrt{5}-1}) \\
 \Oc(\eta^{\sqrt{5}-2}) & \frac{\alpha^2}{8}+ \Oc(\eta^{\sqrt{5}-2})
\end{pmatrix}
$$
and
$$
\begin{pmatrix} \partial_{x_1}\tilde V_i[A_i^{\out*},B_i^{\out*},...,M_i^{\inn*}](a) \\ \partial_{x_1}^2\tilde V_i[A_i^{\out*},B_i^{\out*},...,M_i^{\inn*}](a) \end{pmatrix}=\begin{pmatrix} \Oc(\eta^{\sqrt{5}-1}) \\ \Oc(\eta^{\sqrt{5}-2})  \end{pmatrix} .
$$
In order to ensure the vanishing condition \eqref{eigenfunctions:id:vanishing-remainding-Poisson} for $\tilde V_i$, we choose
\begin{equation} \label{eigenfunctions:id:estimatetildeLtildeM}
\begin{pmatrix}\tilde L_i^\inn\\ \tilde M_i^\inn\end{pmatrix}=- \mathfrak M^{-1}\begin{pmatrix} \partial_{x_1}\tilde V_i[A_i^{\out*},B_i^{\out*},...,M_i^{\inn*}](a) \\ \partial_{x_1}^2\tilde V_i[A_i^{\out*},B_i^{\out*},...,M_i^{\inn*}](a) \end{pmatrix}=\begin{pmatrix}\Oc(\eta^{\sqrt{5}-1})\\ \Oc(\eta^{\sqrt{5}-2})\end{pmatrix}.
\end{equation}

\medskip

\noindent \textbf{Step 2}. \emph{Estimates for the eigenfunctions}. 

\smallskip

\noindent \underline{Proof of \eqref{est:pointwise_phii}.} We estimate all terms in \eqref{eigenfunctions:id:decomposition-phii-final}. First, by \eqref{def:phii_inn}, \eqref{expansion:phii_inn} and \eqref{expansion:phii_inn-estimate} we have
\begin{equation} \label{eigenfunctions:bd:pointwise-phii-inter1}
\left| \nabla^k \left(\frac{1}{\nu^4}\Phi_{i,\pm}^\inn(\frac{z\mp a}{\nu}) \right)\right|\lesssim \frac{1}{(\nu+|z\mp a|)^{4+k}}
\end{equation}
for $|z\mp a|\leq 1$. Second, by \eqref{eigenfunctions-exterior:id:expansion-phiiout-1} and \eqref{eigenfunctions-exterior:id:expansion-phiiout-1-bd} we have
\begin{equation} \label{eigenfunctions:bd:pointwise-phii-inter2}
\left|\nabla^k \phi_i^\out (z) \right|\lesssim \sum_\pm \frac{\langle z \rangle^{C_k}}{|z\mp a|^{4+k}}
\end{equation}
for $z\in \mathbb R^2 \backslash \{-a,a\}$. Third, by a direct computation,
\begin{equation} \label{eigenfunctions:bd:pointwise-phii-inter3}
\left| \nabla^k \left(\chi (\frac{z\mp a}{\nu^{\tilde \epsilon}}) \right)\right|\lesssim \frac{1}{|z\mp a|^{k}}\mathbbm 1 (\nu^{\tilde \epsilon}\leq |z\mp a|\leq 2 \nu^{\tilde \epsilon})
\end{equation}
for all $z\in \mathbb R^2$. Injecting \eqref{eigenfunctions:bd:pointwise-phii-inter1}, \eqref{eigenfunctions:bd:pointwise-phii-inter2} and \eqref{eigenfunctions:bd:pointwise-phii-inter3} in \eqref{eigenfunctions:id:decomposition-phii-final} shows $|\nabla^k \phi_i(z)|\lesssim \sum_\pm (\nu+|z\mp a|)^{-4-k}\langle z \rangle^{C_k}$. This proves the first inequality in \eqref{est:pointwise_phii}. The second inequality in \eqref{est:pointwise_phii} follows from applying $\nu\partial_\nu $ to \eqref{eigenfunctions:id:decomposition-phii-final} and performing analogue estimates.

\smallskip

\noindent \underline{Proof of the first inequality in \eqref{est:pointwise_phiitil}.} We have by \eqref{def:phii-2}, \eqref{def:phii} and then using $16 L_i=-\alpha^{1/2}L_i^\inn$ that
\begin{align}
\nonumber \tilde \phi_i(z) &= -\frac{1}{16\nu^4} \sum_\pm \phi_{i,\pm}^\inn (\frac{z\mp a}{\nu})\chi_{\pm a,\nu^{\tilde \epsilon}}(z)+\phi_i^\out (1-\sum_\pm\chi_{\pm a,\nu^{\tilde \epsilon}}(z))\\
\nonumber  & - \sum_\pm \left(\frac{-1}{16\nu^4}\Lambda U\pm \frac{L_i}{\nu^3}\partial_{x_1}U\right)(\frac{z\mp a}{\nu}))\chi_{\pm a,\zeta_*}(z)\\
\label{eigenfunctions:bd:estimate-tildephii-inter1}  &= -\frac{1}{16\nu^4}  \sum_\pm (\phi_{i,\pm}^\inn -\Lambda U\mp \alpha^{1/2}L_i^\inn\partial_{x_1}U) (\frac{z\mp a}{\nu})\chi_{\pm a,\nu^{\tilde \epsilon}}(z)\\
\nonumber  & + \sum_\pm \left( \phi_i^\out (z)-|z\mp a|^{-4}\right) (\chi_{\pm a,\zeta_*}(z)-\chi_{\pm a,\nu^{\tilde{\epsilon}}}(z))\\
\nonumber  & -\sum_\pm  \left(\frac{-1}{16\nu^4}(\Lambda U+16|\cdot|^{-4})\pm \frac{L_i}{\nu^3}\partial_{x_1}U\right)(\frac{z\mp a}{\nu}))(\chi_{\pm a,\zeta_*}(z)-\chi_{\pm a,\nu^{\tilde{\epsilon}}}(z)\\
\nonumber  &+\phi_i^\out (1-\sum_\pm\chi_{\pm a,\zeta_*}(z)).
\end{align}
We now estimate all terms in \eqref{eigenfunctions:bd:estimate-tildephii-inter1}. Recall that $|\nabla^k (\phi_{i,\pm}^\inn-\Lambda U\mp \alpha^{1/2}L_i^\inn\nu\partial_{x_1}U)(y)|\lesssim \nu^2 \langle y \rangle^{-2-k}$ by \eqref{eigenfunctions:bd:estimate-barphiinn-tech3}. Hence by \eqref{eigenfunctions:bd:pointwise-phii-inter3} we have 
\begin{equation} \label{eigenfunctions:bd:estimate-tildephii-inter2}
\left|\frac{1}{\nu^4} \nabla^k\left( \sum_\pm (\phi_{i,\pm}^\inn -\Lambda U\mp \alpha^{1/2}L_i^\inn\partial_{x_1}U) (\frac{z\mp a}{\nu})\chi_{\pm a,\nu^{\tilde \epsilon}}(z)\right)\right|\lesssim \sum_\pm \frac{1}{(\nu+|z\mp a|)^{2+k}}.
\end{equation}
We have $|\nabla^k (\phi_i^\out(z)-\sum_\pm |z\mp a|^{-4})|\lesssim \sum_\pm |z\mp a|^{-2-k}$ by \eqref{eigenfunctions-exterior:id:expansion-phiiout-1} and \eqref{eigenfunctions-exterior:id:expansion-phiiout-1-bd}. By this and \eqref{eigenfunctions:bd:pointwise-phii-inter3} we get 
\begin{equation} \label{eigenfunctions:bd:estimate-tildephii-inter3}
\left|\nabla^k \left(\sum_\pm \left( \phi_i^\out (z)-|z\mp a|^{-4}\right) (\chi_{\pm a,\zeta_*}(z)-\chi_{\pm a,\nu^{\tilde{\epsilon}}}(z))\right)\right|\lesssim \sum_\pm \frac{1}{(\nu+|z\mp a|)^{2+k}}.
\end{equation}
We have the tail cancellation $|\nabla^k (\Lambda U+16|y|^{-4})|\lesssim |y|^{-6-k}$, and $|\nabla^k \partial_{x_1}U(y)|\lesssim |y|^{-5-k}$. Hence
$$
\left|   \left(\frac{- \nabla^k}{16\nu^4}(\Lambda U+16|y|^{-4})\pm \frac{L_i}{\nu^3} \nabla^k \partial_{x_1}U\right)(y)\right|\lesssim \nu^{-4}|y|^{-6-k}+\nu^{-3}|y|^{-5-k}\lesssim \nu^{-3\tilde \epsilon}|y|^{-2-k}
$$
for $\nu^{-1+\tilde \epsilon}\lesssim |y|\lesssim \nu^{-1}$. Using that $\chi_{\pm a,\zeta_*}-\chi_{\pm a,\nu^{\tilde \epsilon}}$ has support in $\{\nu^{\tilde \epsilon}\leq |z\mp a|\leq 2 \zeta_*\}$ and \eqref{eigenfunctions:bd:pointwise-phii-inter3} this shows
\begin{equation} \label{eigenfunctions:bd:estimate-tildephii-inter5}
\left| \nabla^k\left( \sum_\pm  \left(\frac{-1}{16\nu^4}(\Lambda U+16|\cdot|^{-4})\pm \frac{L_i}{\nu^3}\partial_{x_1}U\right)(\frac{z\mp a}{\nu}))(\chi_{\pm a,\zeta_*}(z)-\chi_{\pm a,\nu^{\tilde{\epsilon}}}(z)\right)\right|\lesssim \sum_\pm \frac{\nu^{2-3\tilde \epsilon}}{(\nu+|z\mp a|)^{2+k}}.
\end{equation}
Finally, as $1-\sum_\pm \chi_{\pm a, \zeta_*}$ is supported in $\cap_\pm \{|z\mp a|\geq \zeta_*\}$ we have
\begin{equation} \label{eigenfunctions:bd:estimate-tildephii-inter6}
\left| \nabla^k \left(\phi_i^\out (1-\sum_\pm\chi_{\pm a,\zeta_*}(z))\right)\right|\lesssim \langle z \rangle^{C_k}.
\end{equation}
Injecting \eqref{eigenfunctions:bd:estimate-tildephii-inter2}, \eqref{eigenfunctions:bd:estimate-tildephii-inter3}, \eqref{eigenfunctions:bd:estimate-tildephii-inter5} and \eqref{eigenfunctions:bd:estimate-tildephii-inter6} in \eqref{eigenfunctions:bd:estimate-tildephii-inter1} shows $|\nabla^k  \tilde \phi_i(z) |\lesssim \sum_\pm (\nu+|z\mp a|)^{-2-k}\langle z \rangle^{C_k}$. This is the first inequality in \eqref{est:pointwise_phiitil}.

\smallskip

\noindent \underline{Proof of the second inequality in \eqref{est:pointwise_phiitil}.} We compute using \eqref{def:phii-2}, \eqref{def:phii}, $16 L_i=-\alpha^{1/2}L_i^\inn$ and \eqref{eigenfunctions:id:def-phi-bo} that
\begin{align}
  \label{eigenfunctions:bd:estimate-nupartialnu-tildephii-inter1} \nu \partial_\nu \tilde \phi_i(z)  &= -\frac{1}{16}  \sum_\pm \nu\partial_\nu \left(\frac{1}{\nu^4}(\phi_{i,\pm}^\inn -\Lambda U\mp \alpha^{1/2}L_i^\inn\partial_{x_1}U) (\frac{z\mp a}{\nu})\right)\chi_{\pm a,\nu^{\tilde \epsilon}}(z)\\
\nonumber  &  -\sum_\pm \nu\partial_\nu \left(\frac{-1}{16\nu^4}\Lambda U\pm \frac{L_i}{\nu^3}\partial_{x_1}U (\frac{z\mp a}{\nu})\right)(\chi_{\pm a,\zeta_*}(z)-\chi_{\pm a,\nu^{\tilde{\epsilon}}}(z)\\
\nonumber  &+\nu\partial_\nu \phi_i^\out (1-\sum_\pm\chi_{\pm a,\zeta_*}(z))+ \sum_\pm \phi_{i,\pm}^\bou(z)\nu \partial_\nu (\chi_{\pm a,\nu^{\tilde \epsilon}}(z))
\end{align}
We now estimate all terms in \eqref{eigenfunctions:bd:estimate-nupartialnu-tildephii-inter1}. We decompose using \eqref{eigenfunctions:id:decomposition-phiinn}:
\begin{align}
  \label{eigenfunctions:bd:estimate-nupartialnu-tildephii-inter2}& \nu \partial_\nu \left( \frac{1}{\nu^4}\left(\phi_{i,\pm}^\inn -\Lambda U\mp \alpha^{1/2}L_i^\inn\partial_{x_1}U\right) (\frac{z\mp a}{\nu})\right) \\
\nonumber =&\nu \partial_\nu \left( \frac{1}{\nu^4}\left(\tilde \phi_{i,\pm}^\inn -\Lambda U\right) (\frac{z\mp a}{\nu})\right)+\nu \partial_\nu \left( \frac{1}{\nu^4}\left(\bar \phi_{i,\pm}^\inn\mp \alpha^{1/2}L_i^\inn\partial_{x_1}U\right) (\frac{z\mp a}{\nu})\right).
\end{align}
For the first term in \eqref{eigenfunctions:bd:estimate-nupartialnu-tildephii-inter2} we have by \eqref{eigenfunctions:id:tildephiinn}
\begin{align*}
& \nu \partial_\nu \left( \frac{1}{\nu^4}\left(\tilde \phi_{i,\pm}^\inn -\Lambda U\right) (\frac{z\mp a}{\nu})\right)\\
&=  \nu \partial_\nu \left( \left( \frac{\alpha}{\nu^2}(T_2^{(i)}+\tilde \lambda_i\hat T_2)\pm \frac{\alpha^{\frac 32}}{\nu}T_3+\alpha^2 (T_4^{(i)}+\tilde \lambda_i \hat T_4^{(i)})\right) (\frac{z\mp a}{\nu})\right)\\
& =  \left( - \frac{\alpha}{\nu^2}(y.\nabla +2)(T_2^{(i)}+\tilde \lambda_i\hat T_2)\mp \frac{\alpha^{\frac 32}}{\nu}(y.\nabla+1)T_3+\alpha^2y.\nabla (T_4^{(i)}+\tilde \lambda_i \hat T_4^{(i)})\right) (\frac{z\mp a}{\nu})\\
& \qquad  +\frac{\alpha}{\nu^2} (\nu \partial_\nu \tilde \lambda_i)\hat T_2+\alpha^2(\nu \partial_\nu \tilde \lambda_i)T_4^{(i)}.
\end{align*}
We have the following tail cancellations by Lemmas \ref{lem:T20hatT2}, \ref{lem:T22}, \ref{lem:T3k}, \ref{lem:T40hatT40}, \ref{lem:T42hatT42} and \ref{lem:T44}, and using that $\tilde \lambda_i=\Oc(|\ln \nu|^{-1})$ and $\nu \partial_\nu \tilde \lambda_i=\Oc(|\ln \nu|^{-2})$ by \eqref{id:value-tilde-lambdai-tech}:
\begin{align*}
&(y.\nabla+2)(T_2^{(i)}+\tilde \lambda_i)(y)=\Oc(\langle y \rangle^{-\sqrt{8}}),\\
&(y.\nabla+1)T_3(y)=\Oc(\langle y \rangle^{2-\sqrt{13}}),\\
&y.\nabla (T_4^{(i)}+\tilde \lambda_i \hat T_4^{(i)})(y)=\Oc(\langle y \rangle^{4-\sqrt{20}})+\Oc(|\ln \nu|^{-1}),\\
&(\nu \partial_\nu \tilde \lambda_i)\hat T_2=\Oc(|\ln \nu|^{-2}\langle y \rangle^{-2}),\\
&(\nu \partial_\nu \tilde \lambda_i)\hat T_4^{(i)}=\Oc(|\ln \nu|^{-1}),
\end{align*}
for $|y|\lesssim \nu^{-1}$ and these estimates propagate to higher order derivatives. Letting $c=\min (\sqrt{8}-2,\sqrt{13}-3,\sqrt{20}-4)$ we thus obtain
\begin{align}
\nonumber \left|\nabla^k\left( \nu \partial_\nu \left( \frac{1}{\nu^4}\left(\tilde \phi_{i,\pm}^\inn -\Lambda U\right) (\frac{z\mp a}{\nu})\right)\right)\right| & \lesssim \sum_{i=0}^2 \frac{\nu^c}{(\nu+|z\mp a|)^{2+c-i+k}}+\frac{|\ln \nu|^{-2}}{(\nu+|z\mp a|)^{2+k}}+\frac{|\ln \nu|^{-1}}{(\nu+|z\mp a|)^{k}}\\
 \label{eigenfunctions:bd:estimate-nupartialnu-tildephii-inter3}& \lesssim \frac{1}{(\nu+|z\mp a|)^{2+k}}\left(\frac{\nu^c}{(\nu+|z\mp a|)^c}+|\ln \nu|^{-2}\right)
\end{align}
for $|z\mp a|\lesssim \nu^{\tilde \epsilon}$. We estimate similarly the second term in \eqref{eigenfunctions:bd:estimate-nupartialnu-tildephii-inter2} using \eqref{eigenfunctions:id:barphiinn} and Proposition \ref{pr:solution-system-SW}. This is simpler as we do not need to track tail cancellations for this term, and it gives
\begin{equation}  \label{eigenfunctions:bd:estimate-nupartialnu-tildephii-inter4}
\left| \nabla^k \left(\nu \partial_\nu \left( \frac{1}{\nu^4}\left(\bar \phi_{i,\pm}^\inn\mp \alpha^{1/2}L_i^\inn\partial_{x_1}U\right) (\frac{z\mp a}{\nu})\right)\right)\right|\lesssim  \frac{\nu^{c'}}{(\nu+|z\mp a|)^{2+k+c'}}.
\end{equation}
for some constant $c'>0$. Injecting \eqref{eigenfunctions:bd:estimate-nupartialnu-tildephii-inter3} and \eqref{eigenfunctions:bd:estimate-nupartialnu-tildephii-inter4} in \eqref{eigenfunctions:bd:estimate-nupartialnu-tildephii-inter2} we obtain for the first term in \eqref{eigenfunctions:bd:estimate-nupartialnu-tildephii-inter1}
\begin{equation}  \label{eigenfunctions:bd:estimate-nupartialnu-tildephii-inter5}
\left|\nabla^k \left( \nu \partial_\nu \left( \frac{1}{\nu^4}\left(\phi_{i,\pm}^\inn -\Lambda U\mp \alpha^{1/2}L_i^\inn\partial_{x_1}U\right) (\frac{z\mp a}{\nu})\right)\right)\right|\lesssim  \frac{1}{(\nu+|z\mp a|)^{2+k}}\left(\frac{\nu^c}{(\nu+|z\mp a|)^c}+|\ln \nu|^{-2}\right)
\end{equation}
up to changing the constant $c>0$. Next, for the second term in \eqref{eigenfunctions:bd:estimate-nupartialnu-tildephii-inter1} we compute
$$
 \nu\partial_\nu \left(\frac{-1}{16\nu^4}\Lambda U\pm \frac{L_i}{\nu^3}\partial_{x_1}U (\frac{z\mp a}{\nu})\right)= \left(\frac{1}{16\nu^4}(y.\nabla+4)\Lambda U\mp \frac{L_i}{\nu^3}(y.\nabla+3)\partial_{x_1}U (\frac{z\mp a}{\nu})\right)
$$
Using the tail cancellation $(y.\nabla+4)\Lambda U=\Oc(|y|^{-6})$ and $(y.\nabla+3)\partial_{x_1}U=\Oc(|y|^{-5})$, and repeating the same computations as that leading to \eqref{eigenfunctions:bd:estimate-tildephii-inter5} shows
\begin{equation}  \label{eigenfunctions:bd:estimate-nupartialnu-tildephii-inter6}
\left|\nabla^k \left( \nu\partial_\nu \left(\frac{-1}{16\nu^4}\Lambda U\pm \frac{L_i}{\nu^3}\partial_{x_1}U (\frac{z\mp a}{\nu})\right)\right)\right|\lesssim \sum_\pm\frac{\nu^{2-3\tilde \epsilon}}{(\nu+|z\mp a|)^{2+k}}.
\end{equation}
For the third term in \eqref{eigenfunctions:bd:estimate-nupartialnu-tildephii-inter1} we have by \eqref{eigenfunctions-exterior:id:decomposition-phiiout} and \eqref{id:value-tilde-lambdai-tech} that $\nu\partial_\nu \phi_i^\out=\nu \partial_\nu \tilde \lambda_i Z_i=\Oc(|\ln \nu|^{-2})Z_i$. Hence by \eqref{eigenfunctions-exterior:id:decomposition-tildeZ0}, \eqref{eigenfunctions-exterior:id:decomposition-tildeZ1}and \eqref{eigenfunctions:bd:pointwise-phii-inter3} we infer:
\begin{equation}  \label{eigenfunctions:bd:estimate-nupartialnu-tildephii-inter7}
\left| \nabla^k\left(\nu \partial_\nu \phi_i^\out (1-\sum_\pm \chi_{\pm a,\nu^{\tilde \epsilon}})\right)\right|=\left| \nabla^k\left((\nu \partial_\nu \tilde \lambda_i )Z_i(1-\sum_\pm \chi_{\pm a,\nu^{\tilde \epsilon}})\right)\right|\lesssim \frac{1}{|\ln \nu|^2}\sum_\pm \frac{1}{(\nu+|z\mp a|)^{2+k}}.
\end{equation}
Finally, for the fourth term in \eqref{eigenfunctions:bd:estimate-nupartialnu-tildephii-inter1}, using \eqref{eigenfunctions:bd:phi-bo}, \eqref{eigenfunctions:bd:pointwise-phii-inter3} and the fact that $\nu \partial_\nu \chi_{\pm a,\nu^{\tilde \epsilon}}$ has support inside $\{\nu^{\tilde \epsilon}\leq |z\mp a|\leq 2\nu^{\tilde \epsilon}\}$ we have
\begin{equation}  \label{eigenfunctions:bd:estimate-nupartialnu-tildephii-inter8}
\left|\nabla^k\left( \phi_{i,\pm}^\bou(z)\nu \partial_\nu (\chi_{\pm a,\nu^{\tilde \epsilon}}(z))\right)\right|\lesssim \frac{\nu^{c''}}{(\nu+|z\mp a|)^k}
\end{equation}
for some $c''>0$ depending on $\tilde \epsilon$. Injecting \eqref{eigenfunctions:bd:estimate-nupartialnu-tildephii-inter5}, \eqref{eigenfunctions:bd:estimate-nupartialnu-tildephii-inter6}, \eqref{eigenfunctions:bd:estimate-nupartialnu-tildephii-inter7} and \eqref{eigenfunctions:bd:estimate-nupartialnu-tildephii-inter8} in  \eqref{eigenfunctions:bd:estimate-nupartialnu-tildephii-inter1} shows the second inequality in \eqref{est:pointwise_phiitil} as desired.

\smallskip

\noindent \underline{Proof of \eqref{est:pointwise_Phi_phii}.} Let $k\geq 1$. We first estimate $\tilde V_i$. The two vanishing conditions \eqref{eigenfunctions:id:vanishing-remainding-Poisson} imply that
\begin{equation} \label{eigenfunctions:id:vanishing-remainding-Poisson-tech2}
\nabla \tilde V_i(\pm a)=0 \quad \mbox{and} \quad \nabla^2 \tilde V_i(\pm a)=0
\end{equation}
for both signs $\pm$ by symmetry. We first consider the zone $\cup_\pm\{|z\mp a|<\frac12\eta\}$. We notice by \eqref{eigenfunctions:id:expression-tilde-Vi} and \eqref{eigenfunctions:bd:phi-bo-tech2} that there since $F_{i,\pm}^\bou$ is supported in $\{\eta<|z\mp a|<2\eta\}$ using $\nabla^{k}\tilde V_i=-(2\pi)^{-1}\sum_\pm \nabla^{k}( \log|\cdot|)*F_{i,\pm}^\bou$ there holds $|\nabla^{2+k} \tilde V_i|\lesssim \eta^{-2-k}\sum_\pm \| F_i^\bou\|_{L^1}\lesssim \eta^{\sqrt{5}-4-k}$. Therefore, by the vanishing \eqref{eigenfunctions:id:vanishing-remainding-Poisson-tech2} and a Taylor expansion we have $|\nabla^{k} \tilde V_i|\lesssim \nu^{\tilde \epsilon (\sqrt{5}-2)}|z\mp a|^{2}(\nu+|z\mp a|)^{-k}$.

We next consider the zone $\cap_\pm\{3\eta<|z-a|\}$ where we estimate similarly using that $F_{i,\pm}^\bou$ is supported in $\{\eta<|z\mp a|<2\eta\} $ that $|\nabla^k \tilde V_i|\lesssim \sum_{\pm}\frac{1}{|z\mp a|^{-k}}\sum_\pm \| F_{i,\pm}^\bou\|_{L^1}\lesssim \eta^{\sqrt 5}|z\mp a|^{-k}\lesssim \nu^{\tilde \epsilon(\sqrt{5}-2)}(\nu+|z\mp a|)^{2-k}$.

We finally consider the zone $\cup_\pm\{\frac 12 \eta<|z-a|<3\eta\}$. At the boundaries $\cup_\pm\{|z-a|=\frac 12\eta\}$ and $\cup_\pm\{|z-a|=3\eta\}$ we have that $\nabla^k \tilde V_i=\Oc(\nu^{\tilde \epsilon(\sqrt{5}-2)} \eta^{2-k})$. Since $\tilde V_i$ solves the Poisson equation \eqref{eigenfunctions:id:decomposition-Vi-tildeVi-elliptic}, with $\nabla^k F_{i,\pm}^\bou=\Oc(\eta^{\sqrt{5}-2-k})=\Oc( \nu^{\tilde \epsilon(\sqrt{5}-2)}\eta^{-k})$, we deduce by standard elliptic regularity estimates and a scaling argument that $\nabla^k \tilde V_i=\Oc( \nu^{\tilde \epsilon(\sqrt{5}-2)}\eta^{2-k})$.

Combining the analysis made in the three zones, we have shown that for $k\geq 1$,
\begin{equation} \label{eigenfunctions:bd:estimate-tildeVi}
|\nabla^k \tilde V_i|\lesssim \nu^{\tilde \epsilon(\sqrt{5}-2)}(\nu+|z\mp a|)^{2-k}.
\end{equation}
Th estimate \eqref{est:pointwise_Phi_phii} then follows directly from the decomposition \eqref{eigenfunctions:id:decomposition-Vi-tildeVi} and of the estimates \eqref{expansion:Vi_inn}, \eqref{eigenfunctions-exterior:id:expansion-Viiout-1} and \eqref{eigenfunctions:bd:estimate-tildeVi}.

\smallskip

\noindent \underline{Proof of \eqref{est:pointwise_Phiphiitil}}. Using successively \eqref{def:phii}, \eqref{eigenfunctions:id:decomposition-phii-final} with $L_i=-\frac{\alpha^{1/2}L_i^\inn}{16}$ and $A_i^\out=\frac{\alpha^{1/2}L_i^\inn}{4}$ by \eqref{id:value-parameters-tech} and \eqref{id:value-tilde-Aiout-tech}, and then \eqref{eigenfunctions:id:def-phi-bo} we find
\begin{align}
 \label{eigenfunctions:bd:Phitildephi:inter-1}\nabla \Phi_{\chi \tilde \phi_i} & =  - \frac{1}{16\nu^2 }  \sum_\pm  \chi_{\pm a,\nu^{\tilde \epsilon}} \nabla\left((V_{i,\pm}^\inn-\Phi_{\Lambda U}\mp \alpha^{1/2}\nu L_i^\inn \Phi_{\partial_{x_1}U}) (\frac{\cdot-a}{\nu}\right)\\
\nonumber&\quad +\left(1-\sum_\pm \chi_{\pm a,\nu^{\tilde \epsilon}}\right)\left( \nabla V_{i}^\out -\sum_\pm \left(\frac{1}{2|z\mp a|^2}\pm \nabla(\frac{A_i^\out}{|z\mp a|}\cos(\theta_\pm))\right)   \right)\\
\nonumber&\quad  +\frac{1}{\nu^3}\left(1-\sum_\pm \chi_{\pm a,\nu^{\tilde \epsilon}}\right)\sum_\pm \left(\frac{1}{16}\nabla \Phi_{\Lambda U}+\frac{1}{2|\cdot|^3}\mp L_i\nu (\nabla \partial_{x_1} \Phi_U+4\nabla(|\cdot|^{-1}\cos(\frac{\cdot}{|\cdot|}))\right)   (\frac{\cdot-a}{\nu}) \\
\nonumber&\quad  +\sum_\pm V_{i,\pm}^\bou \nabla \chi_{\pm a,\nu^{\tilde \epsilon}} +\nabla \sum_\pm \Phi_{(-\frac{1}{16\nu^4}\pm \frac{L_i}{\nu^2}\partial_{x_1}U)(\frac{\cdot\pm a}{\nu})(1-\chi_{\pm a,\zeta_*})}+\nabla \tilde V_i .
\end{align}
We now estimate all terms in the right-hand side of \eqref{eigenfunctions:bd:Phitildephi:inter-1}. We have by \eqref{eigenfunctions:id:decomposition-Vinn}:
\begin{equation}  \label{eigenfunctions:bd:Phitildephi:inter-2}
V_{i,\pm}^\inn-\Phi_{\Lambda U}\mp \alpha^{1/2}\nu L_i^\inn \Phi_{\partial_{x_1}U}=\tilde V_{i,\pm}^\inn-\Phi_{\Lambda U}+\bar V_{i,\pm}\mp \alpha^{1/2}\nu L_i^\inn \Phi_{\partial_{x_1}U}.
\end{equation}
For $k\geq 1$, using \eqref{eigenfunctions:id:tildephiinn} and then Lemmas \ref{lem:T20hatT2}, \ref{lem:T22}, \ref{lem:T3k}, \ref{lem:T40hatT40}, \ref{lem:T42hatT42} and \ref{lem:T44}, and using that $\tilde \lambda_i=\Oc(|\ln \nu|^{-1})$ we have for all $|y|\lesssim \nu^{-1+\tilde \epsilon}$:
\begin{align}
\nonumber |\nabla^{k}(\tilde V_{i,\pm}^\inn-\Phi_{\Lambda U})|&=|\nabla^k(\alpha\nu^2(V_2^{(i)}+\tilde \lambda_i \hat V_2)\pm \alpha^{\frac 32}\nu^3V_3+\alpha^2 \nu^4 (V_4^{(i)}+\tilde \lambda_i\hat V_4))|\\
\nonumber & \lesssim \nu^2 \langle y \rangle^{-k} \langle \log \langle y \rangle\rangle^i+ \nu^3 \langle y \rangle^{-1-k} \langle \log \langle y \rangle\rangle+ \nu^4 \langle y \rangle^{2-k} \langle \log \langle y \rangle\rangle\\
 \label{eigenfunctions:bd:Phitildephi:inter-3}&\lesssim \nu^2 \langle y \rangle^{-k} | \log\nu|^i.
\end{align}
We have similarly by \eqref{eigenfunctions:id:barVinn} and Proposition \ref{pr:solution-system-SW}:
\begin{equation} \label{eigenfunctions:bd:Phitildephi:inter-4}
|\nabla^k(\bar V_{i,\pm}\mp \alpha^{1/2}\nu L_i^\inn \Phi_{\partial_{x_1}U})(y)|\lesssim \nu^2 \langle y \rangle^{-1-k}  \langle \log \langle y\rangle \rangle
\end{equation}
for all $k\geq 1$ and $|y|\lesssim \nu^{-1+\tilde \epsilon}$. Injecting \eqref{eigenfunctions:bd:Phitildephi:inter-3} and \eqref{eigenfunctions:bd:Phitildephi:inter-4} in \eqref{eigenfunctions:bd:Phitildephi:inter-2} we obtain $|\nabla^k (V_{i,\pm}^\inn-\Phi_{\Lambda U}\mp \alpha^{1/2}\nu L_i^\inn \Phi_{\partial_{x_1}U})(y)|\lesssim \nu^2 \langle y \rangle^{-k} | \log\nu|^i$ for $k\geq 1$ and $|y|\lesssim \nu^{-1+\tilde \epsilon}$. In turn, combining this inequality and \eqref{eigenfunctions:bd:pointwise-phii-inter3} we get
\begin{equation}\label{eigenfunctions:bd:Phitildephi:inter-5}
\left|\nabla^{ k}\left( \frac{1}{\nu^2 }   \chi_{\pm a,\nu^{\tilde \epsilon}} \nabla\left((V_{i,\pm}^\inn-\Phi_{\Lambda U}\mp \alpha^{1/2}\nu L_i^\inn \Phi_{\partial_{x_1}U}) (\frac{\cdot-a}{\nu})\right)\right)(z)\right|\lesssim \frac{|\ln \nu|^i}{(\nu+|z\mp a|)^{k+1}}.
 \end{equation}
 We have by \eqref{eigenfunctions-exterior:id:expansion-Viiout-1}, \eqref{eigenfunctions-exterior:id:expansion-Viiout-2}, \eqref{eigenfunctions:bd:pointwise-phii-inter3}, and $|A^\out_i|,|B^\out_i|\lesssim |\ln \nu|^i$ (by \eqref{id:value-parameters-tech}, \eqref{id:value-tilde-Aiout-tech} and \eqref{id:value-tilde-Biout-tech}) that
\begin{equation}\label{eigenfunctions:bd:Phitildephi:inter-5}
\left|\nabla^{ k}\left( \left(1-\sum_\pm \chi_{\pm a,\nu^{\tilde \epsilon}}\right)\left( \nabla V_{i}^\out -\sum_\pm \left(\frac{1}{2|\cdot\mp a|^2}\pm \nabla(\frac{A_i^\out}{|z\mp a|}\cos(\theta_\pm))\right)   \right)\right)(z)\right|\lesssim \sum_\pm \frac{|\ln \nu|^i}{(\nu+|z\mp a|)^{k+1}}
\end{equation}
We have using the cancellations $\nabla \Phi_{\Lambda U}(y)-\frac{-8}{|y|^3}=\Oc(|y|^{-5})$ and $\partial_{x_1}\Phi_U(y)+\frac{4}{|y|}\cos(\frac{y}{|y|})=\Oc(|y|^{-3})$ as $|y|\to \infty$, and that $1-\sum_\pm \chi_{\pm a ,\nu^{\tilde \epsilon}}$ is supported in $\cap_\pm \{|z\mp a|\geq \nu^{\tilde \epsilon}\}$ with \eqref{eigenfunctions:bd:pointwise-phii-inter3} and $|L_i|\lesssim |\ln \nu|$ that:
\begin{align}
& \nonumber \left|\nabla^{ k}\left( \frac{1}{\nu^3}\left(1-\sum_\pm \chi_{\pm a,\nu^{\tilde \epsilon}}\right)\sum_\pm (\frac{1}{16}\nabla \Phi_{\Lambda U}+\frac{1}{2|\cdot|^3}\mp L_i\nu (\nabla \partial_{x_1} \Phi_U+4\nabla(|\cdot|^{-1}\cos(\frac{\cdot}{|\cdot|})))  (\frac{\cdot-a}{\nu})\right)(z)\right|\\
& \label{eigenfunctions:bd:Phitildephi:inter-6} \lesssim \sum_\pm \frac{\nu^{2-4\tilde{\epsilon}}}{(\nu+|z\mp a|)^{k+1}}.
\end{align}
Next, we have by \eqref{eigenfunctions:bd:V-bo} with $|\tilde L^\inn_i|,|\tilde M^\inn_i|\lesssim 1$, using that$\nabla \chi_{\pm a,\nu^{\tilde \epsilon}}$ is supported in $\cup_\pm \{\nu^{\tilde \epsilon}\leq |z\mp a |\leq 2 \nu^{\tilde \epsilon}\}$ with \eqref{eigenfunctions:bd:pointwise-phii-inter3}:
\begin{equation}\label{eigenfunctions:bd:Phitildephi:inter-7}
 \left|\nabla^{ k}\left( \sum_\pm V_{i,\pm}^\bou \nabla \chi_{\pm a,\nu^{\tilde \epsilon}}\right)\right|\lesssim \sum_\pm \frac{1}{(\nu+|z\mp a|)^k}.
\end{equation}
Using the asymptotic behaviour $|\nabla^k U(y)|\lesssim \langle y \rangle^{-4-k}$ as $|y|\to \infty$ and that $1-\chi_{\pm a,\zeta_*}$ is supported inside $\cap_{\pm}\{ |z\mp a|\geq \zeta_*\}$ we infer that $|\nabla^k((-\frac{1}{16\nu^4}\pm \frac{L_i}{\nu^2}\partial_{x_1}U)(\frac{\cdot\pm a}{\nu})(1-\chi_{\pm a,\zeta_*}))|\lesssim \langle z \rangle^{-4-k}$. Using $\Phi_f=-\frac{1}{2\pi}\log |\cdot|*f $ this implies
\begin{equation}\label{eigenfunctions:bd:Phitildephi:inter-8}
\left| \nabla^k \sum_\pm \Phi_{(-\frac{1}{16\nu^4}\pm \frac{L_i}{\nu^2}\partial_{x_1}U)(\frac{\cdot\pm a}{\nu})(1-\chi_{\pm a,\zeta_*})} \right|\lesssim \langle z \rangle^{-k}
\end{equation}
for $k\geq 1$. Injecting \eqref{eigenfunctions:bd:Phitildephi:inter-5}, \eqref{eigenfunctions:bd:Phitildephi:inter-6}, \eqref{eigenfunctions:bd:Phitildephi:inter-7}, \eqref{eigenfunctions:bd:Phitildephi:inter-8} and \eqref{eigenfunctions:bd:estimate-tildeVi} in \eqref{eigenfunctions:bd:Phitildephi:inter-1} shows the desired bound \eqref{est:pointwise_Phiphiitil}.

\smallskip

\noindent \underline{Proof of \eqref{est:pointwise_phi1m0}.} By \eqref{def:phii} we have
$$
\phi_1-\phi_0=\tilde \phi_1-\tilde \phi_0+\sum_\pm \pm \frac{L_1-L_0}{\nu^3}\partial_{x_1} U\left(\frac{z\mp a}{\nu}\right)\chi_{\pm a,\zeta_*}
$$
so that the first inequality in \eqref{est:pointwise_phi1m0} follows from \eqref{est:pointwise_phiitil} and the estimates $|L_0|,|L_1|\lesssim| \ln \nu|$ and $|\nabla^k \partial_{x_1}U(y)=\Oc(\langle y \rangle^{-5-k})$. 

We have by \eqref{eigenfunctions:id:decomposition-Vi-tildeVi}
$$
\Phi_{\phi_1-\phi_0}=  -\frac{1}{16\nu^{4}}\sum_\pm (V_{1,\pm}^\inn-V_{0,\pm}^\inn) \Big(\frac{z\mp a}{\nu}\Big)  \chi(\frac{z\mp a}{\eta}) + (V_{1}^\out(z)-V_{0}^\out(z) ) \big(1 - \sum_\pm \chi(\frac{z\mp a}{\eta})\big)+\tilde V_1-\tilde V_0.
$$
We decompose using \eqref{def:Vi_inn}:
\begin{align} \label{eigenfunction:id:decomposition-V1-V0}
V_{1,\pm}^\inn-V_{0,\pm}^\inn= \sum_{i=0,1} \left(\alpha \nu^2 V_2^{(i)}+\alpha^2 \nu^4 V_4^{(i)}+\tilde \lambda_1\nu^2 \hat V_2+\tilde \lambda_i \alpha^2 \nu^4 \hat V_4^{(i)}+\bar V_{i,\pm}^\inn\right).
\end{align}
We estimate using \eqref{eigenfunctions:id:expansion-V2i}, \eqref{eigenfunctions:id:expansion-V4i}, \eqref{eigenfunctions:id:expansion-hatV2} and \eqref{eigenfunctions:id:expansion-hatV4}
\begin{align*}
& \alpha \nu^2 V_2^{(i)}+\alpha^2 \nu^4 V_4^{(i)}+\tilde \lambda_1\nu^2 \hat V_2+\tilde \lambda_i \alpha^2 \nu^4 \hat V_4^{(i)} \\
&=\Oc(\nu^2\langle \ln^2 \langle y \rangle\rangle)+\Oc(\nu^4 \langle y \rangle^2 \langle \ln \langle y \rangle \rangle)+\Oc(\frac{\nu^2}{|\ln \nu|} \langle \ln \langle y \rangle \rangle)+\Oc(\frac{\nu^4}{|\ln \nu|}\langle y \rangle^2 \langle \ln \langle y \rangle \rangle)=\Oc(\nu \langle y \rangle^{-1})
\end{align*}
for $|y|\lesssim \nu^{-1+\tilde \epsilon}$. We have using \eqref{eigenfunctions:id:expansion-barVinn} and $|L_i^\inn|,|M_i^\inn|\lesssim |\ln \nu|$:
$$
\bar V_{i,\pm}^\inn (y)=\Oc(\nu |\ln \nu|\langle y \rangle)^{-1}+\Oc(\nu^3 |\ln \nu|\langle y \rangle)+\Oc(\nu^4 |\ln \nu|\langle y \rangle^2)+\Oc(\nu \langle y \rangle^{-2}\langle \ln \langle y \rangle \rangle^2)=\Oc(\nu |\ln \nu|\langle y \rangle)
$$
for $|y|\lesssim \nu^{-1+\tilde \epsilon}$. Combining, we get $V_{1,\pm}^\inn-V_{0,\pm}^\inn=\Oc(\nu |\ln \nu|\langle y \rangle)$ for $|y|\lesssim \nu^{-1+\tilde \epsilon}$, so that
\begin{equation} \label{eigenfunction:id:decomposition-V1-V0-tech1}
\left| \nabla^k \left(  -\frac{1}{16\nu^{4}}\sum_\pm (V_{1,\pm}^\inn-V_{0,\pm}^\inn) \Big(\frac{z\mp a}{\nu}\Big)  \chi(\frac{z\mp a}{\eta}) \right)\right|\lesssim \sum_\pm \frac{|\ln \nu|}{(\nu+|z\mp a|)^{1+kl}}.
\end{equation}
The decomposition \eqref{eigenfunctions-exterior:id:expansion-Viiout-1} shows a cancellation for the leading term in $V_1^{\out}-V_0^\out$, so that since $|A_i^\out|,|B_i^\out|,|C_i^\out|,|L_i^\inn|,|M_i^\inn|\lesssim |\ln\nu|$, we have $V_1^{\out}-V_0^\out=\Oc(|\ln \nu| \sum_{\pm} \frac{\langle z\rangle \langle \ln \langle z \rangle \rangle}{|z\mp a|})$ and hence
\begin{equation} \label{eigenfunction:id:decomposition-V1-V0-tech2}
\left| \nabla^k \left((V_{1}^\out(z)-V_{0}^\out(z) ) \big(1 - \sum_\pm \chi(\frac{z\mp a}{\eta})\big)\right)\right|\lesssim|\ln \nu| \sum_{\pm} \frac{\langle z\rangle}{|z\mp a|^{1+k}}.
\end{equation}
Combining \eqref{eigenfunctions:bd:estimate-tildeVi} and the fact that $\nabla^k(\tilde V_1-\tilde V_2)=\Oc(\langle z \rangle^{-k})$ for $|z|\gg 1$ we have $\nabla^k(\tilde V_1-\tilde V_2)=\Oc((\nu+|z- a|)^2(\nu+|z+ a|)^2\langle z \rangle^{4-k})$. Injecting this inequality, \eqref{eigenfunction:id:decomposition-V1-V0-tech1} and \eqref{eigenfunction:id:decomposition-V1-V0-tech2} shows the second inequality in \eqref{est:pointwise_phi1m0}.

\smallskip

\noindent \underline{Proof of \eqref{est:pointwise_phiitil_da} and \eqref{est:pointwise_phi1m0_dnu}.} They follow from \eqref{eigenfunctions-exterior:id:expansion-phiiout-1-bd}, \eqref{eigenfunctions-exterior:bd:expansion-phiiout-infty}, \eqref{expansion:phii_inn-estimate}, \eqref{expansion:phii_inn-estimate-partialnu-partiala-1}
and \eqref{expansion:phii_inn-estimate-partialnu-partiala-2} via direct computations.

\medskip

\noindent \textbf{Step 3}. \emph{Estimate for the remainder in the approximate eigenfunction equation}. Injecting the estimate \eqref{eigenfunctions:id:estimatetildeLtildeM} in \eqref{eigenfunctions:bd:V-bo}, we obtain that
\begin{align} \label{eigenfunctions:bd:phi-bo-tech2}
&|\nabla^k \phi_{i,\pm}^\bou (z)|\lesssim \eta^{\sqrt{5}-2-k},\\
& \label{eigenfunctions:bd:V-bo-tech2} |\nabla^k V_{i,\pm}^\bou (z)|\lesssim   \eta^{\sqrt{5}-k} ,
\end{align}
for all $z$ such that $|z\mp a|\approx \eta$.

We inject the two decompositions \eqref{eigenfunctions:id:decomposition-phii-final} and \eqref{eigenfunctions:id:decomposition-Vi-tildeVi} in the approximate eigenfunction equation \eqref{eq:eigenfunctions_phi10}, using \eqref{eigenfunctions-interior:id:eigenfunction-system} and \eqref{eigenfunctions-exterior:id:eigenfunction-system} and get
\begin{align*}
R_i & = -\frac{1}{16}\sum_{\pm}\chi(\frac{z\mp a}{\eta})\frac{1}{\nu^6}R_i^\inn (\frac{z\mp a}{\nu})+(1-\sum_\pm \chi(\frac{z\mp a}{\eta}))R_i^\out\\
& +\sum_\pm \left(\frac 2 \eta \nabla \chi(\frac{z\mp a}{\eta}) . \nabla \phi_{i,\pm}^\bou+\frac{1}{\eta^2} \Delta \chi(\frac{z\mp a}{\eta})  \phi_{i,\pm}^\bou\right)-\left(\beta z+\nabla \Phi_{U_{1+2,\nu}}\right).\frac{1}{\eta}\nabla  \chi(\frac{z\mp a}{\eta})  \phi_{i,\pm}^\bou \\
& - \nabla \tilde V_i.\left(\sum_\pm \frac{1}{\nu^3}\nabla U\left(\frac{z\mp a}{\nu}\right)\right)
\end{align*}
We estimate the first line using \eqref{estimate:Ri_inn-pointwise} and \eqref{eigenfunctions-exterior:bd:Ri}:
\begin{align*}
&\left| \nabla^k\left( -\frac{1}{16}\sum_{\pm}\chi(\frac{z\mp a}{\eta})\frac{1}{\nu^6}R_i^\inn (\frac{z\mp a}{\nu})+(1-\sum_\pm \chi(\frac{z\mp a}{\eta}))R_i^\out\right)\right| \\
&\lesssim \sum_\pm \left(\frac{1}{|\log \nu|^2}\frac{1}{(\nu+|z\mp a|)^{2+k}}+\frac{1}{(\nu+|z\mp a|)^{1+k}} \right)\mathbbm 1(|z\mp a|<2 \nu^{\tilde \epsilon}) \\
& \qquad \qquad +\frac{1}{|\log \nu|^2}\sum_\pm \frac{\langle z\rangle^{C_k}}{|z\mp a|^{2+k}}\mathbbm 1(|z\mp a|> \nu^{\tilde \epsilon})\\
&\lesssim \frac{1}{|\log \nu|^2}\sum_\pm \frac{\langle z\rangle^{C_k}}{(\nu+|z\mp a|)^{2+k}}.
\end{align*}
We estimate the second line using \eqref{eigenfunctions:bd:phi-bo-tech2} and $\nabla \Phi_{U_{1+2,\nu}}=\Oc(\sum_\pm (\nu+|z\mp a|)^{-1})$:
\begin{align*}
& \left| \nabla^k \left(\sum_\pm \left(\frac 2 \eta \nabla \chi(\frac{z\mp a}{\eta}) . \nabla \phi_{i,\pm}^\bou+\frac{1}{\eta^2} \Delta \chi(\frac{z\mp a}{\eta})  \phi_{i,\pm}^\bou\right)-\left(\beta z+\nabla \Phi_{U_{1+2,\nu}}\right).\frac{1}{\eta}\nabla  \chi(\frac{z\mp a}{\eta})  \phi_{i,\pm}^\bou\right)\right|\\
&\lesssim \eta^{-\sqrt{5}-4-k} \sum_\pm \mathbbm 1(\eta<|z\mp a|<2\eta) \ \lesssim \nu^{\tilde \epsilon (\sqrt{5}-2)}\sum_{\pm}\frac{1}{(\nu+|z\mp a|)^{2+k}}.
\end{align*}
We estimate the third term using $\nabla^k (U(\frac{z\mp a}{\nu}))\lesssim \nu^4(\nu+|z\mp a|)^{-4-k}$ and \eqref{eigenfunctions:bd:estimate-tildeVi} so that
$$
\left|\nabla^k\left(- \nabla \tilde V_i.\left(\sum_\pm \frac{1}{\nu^3}\nabla U\left(\frac{z\mp a}{\nu}\right)\right)\right)\right|\lesssim \nu^{\tilde \epsilon(\sqrt{5}-2)+2}(\nu+|z\mp a|)^{1-5-k}\lesssim \nu^{\tilde \epsilon(\sqrt{5}-2)}(\nu+|z\mp a|)^{-2-k}
$$
Gathering the above estimates shows \eqref{est:pointwise_Ri}.
\end{proof}

\section{Coercivity of the linearized operator} \label{sec:coercivity}

In self-similar variables \eqref{def:wztau_vars-1} and around $U_{1+2,\nu}$ given by \eqref{def:fnuU12}, the linearized operator is
$$
\Ls^z u =\Delta u- \nabla. \Big( u  \nabla \Phi_{U_{1 + 2, \nu}}+ U_{1 + 2, \nu}\nabla \Phi_u \Big) - \frac 12 \Lambda u.
$$
In this section we construct a matched scalar product for which $\tilde{\Ls}^z$, which is the operator $\Ls^z$ with a slight truncation of the Poisson field defined by \eqref{def:LszNottilde}, is coercive on a finite co-dimensional set.

\subsection{Matched scalar products}  \label{subsec:matched-scalar-product}

\subsubsection{The scalar product for the interior zone}

We present in this subsection a scalar product for which the linearized operator $\Ls^z$ is to leading order symmetric, for functions that are located around $\pm a$. Namely, consider the bilinear functional
\begin{equation} \label{def:scalar-interior}
\langle u, v \rangle_\flat= \int_{\Rb^2} u\Ms^z v \, dz 
\end{equation}
where we introduce the operator 
\begin{equation}\label{def:M12z}
\Ms^z u  = \gamma_{\nu} u  - \nu^2 \Phi_u, \qquad \gamma_\nu(z)= \nu^2 / U_{1+2, \nu}(z)
\end{equation}
We note that the weight $\gamma_\nu$ satisfies
\begin{equation} \label{bd:equivalence-weight-U1+2nu-1}
\gamma_\nu(z) \approx (\nu+|z-a|)^4(\nu+|z+a|)^4\langle z\rangle^{-4}.
\end{equation}
The motivation of introducing \eqref{def:scalar-interior} comes from the fact that there is the free energy functional for positive solutions of the problem in original variables introduced in \cite{NSYfe97} (see also \cite{BDPjde06}, \cite{Bamsa98}, \cite{GZmn98})
\begin{equation}
\Fc[u] = \int_{\Rb^2} u \Big(\ln u - \frac{1}{2}\Phi_u \Big) dx.
\end{equation}  
It decreases along the flow for positive solutions with enough regularity and integrability as
$$
\frac{d}{dt}\Fc[u] = -\int_{\Rb^2} u \Big|\nabla \ln u - \nabla \Phi_u \Big|^2dx.
$$
The quadratic form \eqref{def:scalar-interior} is that associated to the Hessian of this free energy functional at $U_{1+2,\nu}$. 

We first show that $\langle \cdot, \cdot \rangle_\flat$ is well defined in $L^2(\gamma_\nu dz)$, where $L^2(\gamma_\nu dz)$ is the weighted $L^2$ space equipped with the scalar product 
\begin{equation}\label{def:L2gamma}
\langle u, v \rangle_{\gamma_\nu} = \int_{\Rb^2} u v \gamma_\nu dz, \quad \|u\|_{\gamma_\nu} = \sqrt{\langle u, u \rangle_{\gamma_\nu}}
\end{equation}
and it is equivalent to $\langle \cdot,\cdot \rangle_{\gamma_\nu}$ on a suitable subspace.

\begin{proposition}[Scalar product for the interior zone]  \label{pr:scalar-product-interior1}

There exists $C>0$ such that the following hold for $u,v\in L^2( \gamma_\nu dz)$. The functional $\langle \cdot,\cdot \rangle_\flat$ is symmetric
\begin{equation} \label{scalarproduct1:symmetry}
\langle u,v\rangle_\flat=\langle v ,u\rangle_\flat,
\end{equation}
and it is continuous for $|a|>0$ and $\nu$ small enough,
\begin{equation} \label{scalarproduct1:continuity}
|\langle u,v\rangle_\flat| \leq C \|u\|_{\gamma_\nu} \|v\|_{\gamma_\nu}.
\end{equation}
Moreover, let $\eta>0$ small enough and $u$ satisfy the orthogonality conditions
\begin{equation} \label{scalarproduct1:orthogonality-condition}
 \langle u, \mathbbm{1}(|z+\iota a|<\eta) \rangle =  \langle u, (\Lambda U)_{i, \nu}\mathbbm{1}(|z+\iota_i a|<\eta) \rangle =\langle u, \pa_j U_{i, \nu}  \mathbbm{1}(|z+\iota_i a|<\eta) \rangle =0,
\end{equation}
for all $\iota\in \{\pm1\}$, $i=1,2$, $j=1,2$, where $\iota_i=(-1)^i$. Then, the quadratic form $\langle u , u \rangle_\flat $ is positive
\begin{equation} \label{scalarproduct1:positivity}
\|u\|^2_{\gamma_\nu} \leq C \langle u,u\rangle_\flat.
\end{equation}
\end{proposition}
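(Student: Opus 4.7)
The plan is to establish the three properties in turn, with the last requiring real work.

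For the symmetry \eqref{scalarproduct1:symmetry}, I would first observe that the weight $\gamma_\nu \sim \langle z\rangle^4$ at infinity (by \eqref{bd:equivalence-weight-U1+2nu-1}), so $u,v\in L^2(\gamma_\nu dz)$ implies $u,v\in L^1\cap L^2$ with controlled decay, in particular $\nabla \Phi_u, \nabla \Phi_v \in L^2$ on compact sets. A standard cutoff argument, carefully handling the logarithmic growth of $\Phi_v$ and $\Phi_u$ at infinity via the control of $\int u\, dz$ and $\int v\, dz$ by Cauchy-Schwarz, yields $\int u \Phi_v dz = \int v \Phi_u dz$. Symmetry of the first term $\int \gamma_\nu uv\, dz$ is immediate, so \eqref{scalarproduct1:symmetry} follows.

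For the continuity \eqref{scalarproduct1:continuity}, the diagonal piece is bounded directly by Cauchy-Schwarz: $|\int u \gamma_\nu v\, dz|\leq \|u\|_{\gamma_\nu}\|v\|_{\gamma_\nu}$. The Poisson piece $\nu^2|\int u \Phi_v dz|$ is more delicate because of the $\log|z|$ growth of $\Phi_v$. I would split $u$ and $v$ via smooth cutoffs into three zones (neighborhoods of $+a$, $-a$, and the exterior) and analyze the resulting nine contributions separately. In the inner zones where $\gamma_\nu \sim \nu^4$, after rescaling $y_\pm=(z\mp a)/\nu$, the $\nu^2$ prefactor combines with the logarithmic kernel to give a Hardy-Littlewood-Sobolev bound controlled by the rescaled weighted norms. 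In the outer zone, the strong weight $\gamma_\nu\sim\langle z\rangle^4$ produces enough decay to integrate against $\log|z-z'|\Phi_v$ bounds directly. All cross-terms are controlled via the separation between support regions and the decay of the logarithmic kernel.

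The main work is the positivity \eqref{scalarproduct1:positivity}, and the strategy is a localization to each soliton scale combined with the known positivity of the free-energy Hessian at the ground state $U$. I introduce smooth cutoffs $\chi_\pm$ supported in $\{|z\mp a|<\eta\}$ and the complementary $\chi_{\textup{ex}}$, and decompose $u=u_++u_-+u_{\textup{ex}}$. Substituting into $\langle u,u\rangle_\flat$ produces the three diagonal inner pieces plus cross-terms. For each inner piece, the rescaling $y_\pm=(z\mp a)/\nu$, $\tilde u_\pm(y_\pm)=\nu^2 u_\pm(a\pm\nu y_\pm)$ together with the expansion $\gamma_\nu(z)=\nu^4/U(y_\pm)\,(1+O(\nu^2 \langle y_\pm\rangle^C))$ transforms the leading contribution into $\nu^2\bigl(\int \tilde u_\pm^2/U\, dy - \int \tilde u_\pm \Phi_{\tilde u_\pm}\, dy\bigr)+O(\nu^4)$, which is exactly the Hessian of the free energy $\mathcal F$ at the stationary state $U$. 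The classical coercivity of this quadratic form on the orthogonal complement of $\mathrm{ker}(\Ls_0)=\mathrm{span}\{\Lambda U,\partial_{x_1}U,\partial_{x_2}U\}$ intersected with the mass-zero subspace (see e.g. \cite{RSma14,CGMNcpam21}) gives $\gtrsim \|\tilde u_\pm\|_{L^2(U^{-1}dy)}^2\sim \nu^{-2}\|u_\pm\|_{\gamma_\nu}^2$, since the orthogonality conditions \eqref{scalarproduct1:orthogonality-condition} translate at leading order to $\int\tilde u_\pm\, dy=\int\tilde u_\pm\Lambda U\, dy=\int\tilde u_\pm\partial_j U\, dy=0$ on the inner region $\{|y_\pm|<\eta/\nu\}$ (which exhausts $\mathbb R^2$ as $\nu\to 0$). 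For the exterior piece, $\gamma_\nu$ is bounded below away from zero and the Poisson term $\nu^2\int u_{\textup{ex}}\Phi_{u_{\textup{ex}}}$ is subleading, so direct domination works. The main obstacle I anticipate is controlling the nonlocal cross-terms $\nu^2\int u_+ \Phi_{u_-}\,dz$, $\nu^2\int u_\pm \Phi_{u_{\textup{ex}}}\, dz$, and the commutator errors from the cutoffs; these I expect to absorb using the spatial separation of supports, the $\nu^2$ smallness, and the a priori bounds already established from the inner coercivity, with $\nu$ sufficiently small so that the error constants do not overwhelm the main lower bound.
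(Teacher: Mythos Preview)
Your approach follows the paper's closely: three-zone decomposition, rescaling near each soliton to the single-bubble problem, and invoking the free-energy Hessian coercivity of \cite{RSma14}. Two points deserve sharpening.

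First, the paper uses \emph{sharp} indicator cutoffs $\mathbbm{1}(|z\mp a|<\eta)$ rather than smooth ones. This matters because the orthogonality conditions \eqref{scalarproduct1:orthogonality-condition} are integrals over exactly $\{|z\mp a|<\eta\}$, so with sharp cutoffs the localized pieces $u_\pm$ satisfy the orthogonality \emph{exactly} after rescaling, and the single-bubble coercivity applies directly. With smooth cutoffs you introduce errors in the orthogonality that you would then need to track and absorb.

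Second, and more seriously, your treatment of the cross-term $\nu^2\int u_+\Phi_{u_-}\,dz$ is too optimistic. You write that ``the $\nu^2$ smallness'' handles it, but this is not enough: since $\|u_\pm\|_{L^1}\lesssim \nu^{-1}\|u\|_{\gamma_\nu}$ (from Cauchy--Schwarz against $\gamma_\nu^{-1}\sim\nu^{-2}$ near the solitons) and $|\Phi_{u_-}(z)|\lesssim\|u_-\|_{L^1}$ on the support of $u_+$, the crude bound gives $\nu^2\cdot\nu^{-1}\cdot\nu^{-1}\|u\|_{\gamma_\nu}^2=\|u\|_{\gamma_\nu}^2$, which is $O(1)$ and cannot be absorbed. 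The paper's key step is to use the zero-mass condition $\int u_+=0$ to subtract the constant $\Phi_{u_-}(a)$, yielding $\nu^2\int u_+(\Phi_{u_-}-\Phi_{u_-}(a))\,dz$; then $|\nabla\Phi_{u_-}|\lesssim\|u_-\|_{L^1}$ on $\textup{supp}\,u_+$ and the mean-value theorem gains a factor of $\eta$, giving $O(\eta)\|u\|_{\gamma_\nu}^2$. It is $\eta$, not $\nu$, that makes this term small.
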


\medskip 

\noindent We then show that $\Ls^z$ can be decomposed as the sum of two operators, one which is symmetric and coercive for $\langle \cdot,\cdot \rangle_\flat$, and the other which is negligible for functions that are localized near $\pm a$.

\begin{lemma}[Coercivity and almost symmetry in the interior zone] \label{lem:almost-symmetry-Lsz-interior}
The operator $\Ls^z$ can be written
\begin{equation} \label{def:L12z_M12}
\Ls^z q  = \nabla. \Big[\gamma_\nu^{-1} \nabla \Ms^z  q \Big]+\nabla.\big[ q \nabla V \big]
\end{equation}
where
\begin{equation} \label{def:V:L12z}
V =  \ln \frac{U_{1+2,\nu}}{U_{12,\nu}} - \frac 14 z^2.
\end{equation}
For Schwartz functions $u$ and $v$, we have
\begin{equation} \label{id:symmetry-Lsz-interior}
\langle \Ls^z u,v\rangle_\flat = - \int_{\Rb^2} \gamma_\nu^{-1} \nabla \Ms^z  u \cdot \nabla \Ms^z  v  dz+\int \nabla .(u\nabla V) \Ms^z  v dz.
\end{equation}
Moreover, for $|a|\approx 1$, there exists $\delta>0$ such that for $\eta',\eta>0$ small enough, if $u$ has support inside $\{|z-a|< \eta'\}\cup \{|z+a|<\eta'\}$ and satisfies the orthogonality conditions 
\begin{equation} \label{scalarproduct1:orthogonality-condition-coercivity-operator}
  \langle u, (\Lambda U)_{i, \nu}\mathbbm{1}(|z+\iota_i a|<\eta) \rangle  =\langle u, \pa_j U_{i, \nu}  \mathbbm{1}(|z+\iota_i a|<\eta) \rangle =0,
\end{equation}
for all $\iota\in \{\pm1\}$, $i=1,2$, $j=1,2$, where $\iota_i=(-1)^i$, there holds
\begin{equation} \label{bd:coercivity-Lsz-interior}
-\langle \Ls^z u,u\rangle_\flat \geq \delta \int |\nabla u|^2  \gamma_\nu dz.
\end{equation}
\end{lemma}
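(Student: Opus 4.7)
The plan is to verify \eqref{def:L12z_M12} by direct computation, deduce \eqref{id:symmetry-Lsz-interior} by one integration by parts, and prove the coercivity \eqref{bd:coercivity-Lsz-interior} by reducing to the well-known spectral gap for the linearization $\Ls_0$ around a single ground state.

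For \eqref{def:L12z_M12}, I expand
\[
\gamma_\nu^{-1}\nabla \Ms^z u = \nabla u + u\,\nabla \log \gamma_\nu - \nu^2\gamma_\nu^{-1}\nabla \Phi_u = \nabla u - u\,\nabla \log U_{1+2,\nu} - U_{1+2,\nu}\nabla \Phi_u,
\]
using $\nu^2\gamma_\nu^{-1}=U_{1+2,\nu}$. Taking divergence and comparing with the definition \eqref{def:L12z} of $\Ls^z$, and rewriting $\beta \Lambda u = \nabla\cdot(\tfrac z2 u)$, the discrepancy equals $\nabla\cdot[u(\nabla \log U_{1+2,\nu} - \nabla \Phi_{U_{1+2,\nu}} - \tfrac z 2)]$. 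The key algebraic identity is that each single soliton is stationary for the Keller-Segel flow, so $\nabla \log U_{i,\nu} = \nabla \Phi_{U_{i,\nu}}$; summing yields $\nabla \log U_{12,\nu} = \nabla \Phi_{U_{1+2,\nu}}$, which identifies the bracket above with $\nabla V$ as in \eqref{def:V:L12z}. Testing \eqref{def:L12z_M12} against $\Ms^z v$ and integrating by parts the first divergence gives \eqref{id:symmetry-Lsz-interior}.

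Taking $v=u$ in \eqref{id:symmetry-Lsz-interior} gives $-\langle \Ls^z u, u\rangle_\flat = \int \gamma_\nu^{-1}|\nabla \Ms^z u|^2 dz - \int \nabla\cdot(u\nabla V)\Ms^z u \,dz$. For $\eta'<|a|/4$ the support hypothesis allows the decomposition $u=u_++u_-$ with $u_\pm$ localized in the disjoint balls $\{|z\mp a|<\eta'\}$. Changing variable $y=(z\mp a)/\nu$ near each bubble, one has $U_{1+2,\nu}=U_{i,\nu}(1+\Oc(\nu^4))$, so $\gamma_\nu = (\nu^4/U(y))(1+\Oc(\nu^4))$, and the operator $\Ms^z$ reduces, up to $\Oc(\nu^2 |\log \nu|)$ additive corrections coming from the Poisson field generated by the opposite bubble, to the single-soliton operator $\Ms_0$ from \eqref{def:Ms0}. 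The conditions \eqref{scalarproduct1:orthogonality-condition-coercivity-operator} are precisely orthogonality to the three zero modes $\Lambda U$, $\partial_1 U$, $\partial_2 U$ of $\Ls_0$ at each soliton. The classical spectral gap for $\Ls_0$ with respect to the bilinear form $\int f\, \Ms_0 g\, dy$ (see \cite{CGMNcpam21,RSma14} and references therein) then yields, after undoing the rescaling and summing over both bubbles,
\[
\int_{\Rb^2} \gamma_\nu^{-1}|\nabla \Ms^z u|^2 \,dz \;\gtrsim\; \int_{\Rb^2} |\nabla u|^2\,\gamma_\nu\, dz + \int_{\Rb^2} u^2\, \gamma_\nu \, dz
\]
at leading order in $\nu$.

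The perturbation term is $\int u\nabla V\cdot \nabla \Ms^z u\, dz$ after one integration by parts. Since $\nabla V$ is smooth and $\Oc(1)$ on the supports of $u_\pm$ (its singularities at $\mp a$ lie in the opposite support), Cauchy-Schwarz with a small parameter bounds it by $\tfrac12 \int \gamma_\nu^{-1}|\nabla \Ms^z u|^2 dz + C\int u^2 \gamma_\nu\,dz$, and the coercivity of the main term absorbs both contributions, yielding \eqref{bd:coercivity-Lsz-interior}. The main obstacle is the nonlocality of the Poisson field $\Phi_u$, which couples the two bubbles through off-diagonal terms such as $-\nu^2\int u_+\Phi_{u_-}\,dz$ in the scalar product $\langle u,u\rangle_\flat$ and similar cross contributions in the dissipation; controlling them requires exploiting that $\Phi_{u_-}$ is smooth on $\{|z-a|<\eta'\}$ at distance $|a|\gtrsim 1$ from its source, so that its contribution carries an extra $\nu^2$ factor ensuring a clean decoupling into two independent single-bubble coercivities.
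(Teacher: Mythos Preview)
Your approach is correct and follows the same structure as the paper: verify \eqref{def:L12z_M12} via the identity $\nabla\log U_{12,\nu}=\nabla\Phi_{U_{1+2,\nu}}$, reduce the dissipation $\int \gamma_\nu^{-1}|\nabla\Ms^z u|^2$ to the single-bubble coercivity of Lemma~\ref{lem:coercivity-one-bubble-H1-orthogonality} near each soliton, and treat $\int\nabla\cdot(u\nabla V)\Ms^z u$ as a perturbation.  Two small points to tighten: the estimate $U_{1+2,\nu}=U_{i,\nu}(1+\Oc(\nu^4))$ is only valid at scale $|z\mp a|\lesssim\nu$; on the full support $\{|z\mp a|<\eta'\}$ the correction is $\Oc((\nu+|z\mp a|)^4)=\Oc(\eta'^4)$, which is still small but for a different reason.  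Likewise, absorbing the term $C\int u^2\gamma_\nu$ from your Cauchy--Schwarz step requires the Hardy-type bound $\int u^2\gamma_\nu\lesssim \eta'^2\int|\nabla u|^2\gamma_\nu$ coming from the small support (Lemma~\ref{lem:hardy-interior-product}); this is where the smallness of $\eta'$ actually enters, and the paper makes this explicit in its estimate \eqref{id:symmetry-Lsz-interior-inter20}.
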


\begin{remark}If the stationary states are in their equilibrium position $a=(2,0)$, then a direct computation shows that $\nabla V(\pm a)=O(\nu^2)$. Hence, by \eqref{def:L12z_M12} we have that $\Ls^z u$ is well approximated by the operator $\nabla. \Big[\gamma_\nu^{-1} \nabla \Ms^z  u \Big]$, for functions $u$ that are localized near $\pm a$. We remark that this operator is symmetric for the scalar product $\langle \cdot,\cdot \rangle_\flat$ and that its associated quadratic form is the first term in the right-hand side of \eqref{id:symmetry-Lsz-interior}. This is what motivated the introduction of the scalar product \eqref{def:scalar-interior} to study the coercivity of $\Ls^z$. Note that the coercivity \eqref{bd:coercivity-Lsz-interior} holds even if $a$ is not in the equilibrium position, as the term $\nabla.[ q \nabla V ]$ is negligible with respect to the dissipation effect of the first term in \eqref{def:L12z_M12}.
\end{remark}

\medskip

We now proceed with the proof of Proposition \ref{pr:scalar-product-interior1} and Lemma \ref{lem:almost-symmetry-Lsz-interior}. In our analysis, we shall rely numerous times on the following inequality.

\begin{lemma}[Hardy inequality in $L^2(\gamma_\nu dz)$] \label{lem:hardy-interior-product}
For all $|a|\sim 1$, there exists $C>0$ such that for all $u\in H^1(\gamma_\nu  dz)$ we have
\begin{equation} \label{bd:hardy-interior-product}
\int_{\Rb^2} \left(\frac{1}{(\nu+|z-a|)^2}+\frac{1}{(\nu+|z+a|)^2}\right)u^2 \gamma_\nu dz \leq C\int_{\Rb^2} |\nabla u|^2 \gamma_\nu dz.
\end{equation}

\end{lemma}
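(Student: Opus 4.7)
The plan is to prove the inequality by a partition of unity, reducing to regional weighted Hardy estimates. By density of Schwartz functions in $H^1(\gamma_\nu dz)$, it suffices to treat $u$ Schwartz. Fix a small $\eta > 0$ (independent of $\nu$, depending only on $|a| \sim 1$) and introduce smooth cutoffs $\chi_\pm$ supported in $\{|z \mp a| \leq \eta\}$, equal to $1$ on $\{|z\mp a|\leq \eta/2\}$, together with $\chi_\infty = 1 - \chi_+ - \chi_-$. From the pointwise bound $u^2 \leq 3\sum_i(\chi_i u)^2$, the problem reduces to proving the inequality for each $v_i := \chi_i u$ separately. The commutator terms $\int u^2 |\nabla \chi_i|^2 \gamma_\nu dz$ generated on passing from $|\nabla v_i|^2$ to $|\nabla u|^2$ are supported on thin annuli where $|\nabla \chi_i| \lesssim \eta^{-1}$ while the left-hand-side weight satisfies $(\nu+|z\mp a|)^{-2} \gtrsim \eta^{-2}$; they are therefore bounded by $C\eta^2$ times the full LHS and absorbed for $\eta$ small.

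For $v = \chi_\pm u$ localized near $\pm a$, the weight equivalence \eqref{bd:equivalence-weight-U1+2nu-1} gives $\gamma_\nu(z) \sim (\nu + |z\mp a|)^4$ on the support, reducing the task to the one-bubble weighted Hardy inequality
\[
\int v^2 (\nu+r)^2 dz \leq C \int |\nabla v|^2 (\nu+r)^4 dz, \qquad r = |z\mp a|.
\]
This follows from the pointwise divergence identity $\nabla \cdot [(z\mp a)(\nu+r)^2] = 2(\nu+r)(\nu+2r) \geq 2(\nu+r)^2$ by integrating against $v^2$, integrating by parts, and applying Cauchy--Schwarz together with the elementary bound $|z\mp a| \leq \nu+r$.

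For the outer piece $v = \chi_\infty u$ supported in $\cap_\pm \{|z\mp a|\geq \eta/2\}$, the LHS weight is now bounded, so the problem reduces to showing $\int v^2 \gamma_\nu dz \leq C \int |\nabla v|^2 \gamma_\nu dz$. In the far zone $\{|z| \geq R\}$ where $\gamma_\nu \sim |z|^4$, the Hardy-at-infinity inequality $\int v^2 |z|^2 dz \leq \tfrac{1}{4}\int |\nabla v|^2 |z|^4 dz$ obtained from $\nabla \cdot (z|z|^2) = 4|z|^2$ by the identical integration-by-parts scheme yields the bound. The delicate region is the bounded annulus $A = \{\eta/2 \leq |z\mp a|,\ |z|\leq R\}$ where both weights are of unit order and no singular divergence identity is available. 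I handle it by a one-dimensional representation: since $v \in L^2(\gamma_\nu dz)$ with $\gamma_\nu \sim \langle z\rangle^4$ at infinity the function $v$ must decay, so for $z_0 \in A$ and any direction $\theta$ avoiding $\pm a$, $v(z_0) = -\int_0^\infty \partial_r v(z_0 + r\theta) dr$. Cauchy--Schwarz with weights $\gamma_\nu$ and $\gamma_\nu^{-1}$, combined with the integrability $\int_0^\infty \gamma_\nu(z_0+r\theta)^{-1} dr < \infty$ coming from the $r^{-4}$ decay at infinity, gives $|v(z_0)|^2 \leq C\int_0^\infty |\partial_r v|^2 \gamma_\nu dr$. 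Averaging over $\theta$ in an open cone pointing to infinity (which exists as the complement of two small discs is connected) and integrating in $z_0 \in A$, a change of variables argument shows each point of the exterior is visited with bounded multiplicity and bounded Jacobian, yielding $\int_A v^2 dz \leq C\int |\nabla v|^2 \gamma_\nu dz$.

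The main obstacle is precisely this bounded annulus: neither of the singular-weight Hardy mechanisms near $\pm a$ or at infinity operates directly there. The key is to convert the decay of $u$ at infinity, encoded in the weighted $H^1$ norm, into an effective boundary condition at infinity via the 1D ray representation, so that the bounded annulus inherits its Poincaré-type bound from the far-field integrability of $\gamma_\nu^{-1}$.
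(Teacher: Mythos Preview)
Your overall strategy (partition of unity plus regional weighted Hardy inequalities) matches the paper's, and your vector-field argument for the inner piece near $\pm a$ is clean and correct. However, there is a genuine gap in the reassembly step.

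You claim the commutator terms $\int u^2|\nabla\chi_i|^2\gamma_\nu\,dz$ are bounded by $C\eta^2$ times the full left-hand side. This is wrong: on the annulus $\{\eta/2\le|z-a|\le\eta\}$ one has $|\nabla\chi_+|^2\sim\eta^{-2}$ while the left-hand-side weight $(\nu+|z-a|)^{-2}$ is \emph{also} $\sim\eta^{-2}$ there, so the ratio of the commutator to the annular contribution of the left-hand side is $O(1)$, not $O(\eta^2)$. You cannot absorb by taking $\eta$ small. The paper resolves this differently: writing $u=u_1+u_2+u_3$ with the identity $|\nabla u|^2=\sum_i|\nabla u_i|^2+2(\nabla u_1+\nabla u_2)\cdot\nabla u_3$ (disjoint supports kill the $u_1$--$u_2$ cross term), it applies the regional Hardy bounds to $\sum_i\int|\nabla u_i|^2\gamma_\nu$ and controls the cross term via an asymmetric Young inequality $ab\le\sqrt{\kappa}\,a^2/2+b^2/(2\sqrt{\kappa})$, yielding an error $C\kappa^{3/2}\int_{\text{annuli}}u^2\gamma_\nu$ that \emph{is} absorbable because of the extra $\kappa^{1/2}$ factor. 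You would need either this $\kappa$ device or a dyadic pigeonhole over the cutoff radius to close your argument.

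A smaller point: your reduction for the outer piece to $\int v^2\gamma_\nu\le C\int|\nabla v|^2\gamma_\nu$ is misstated (that inequality is false---consider $v$ approximating $\mathbf{1}_{|z|\le R}$ for $R$ large), though what you actually prove is the correct weaker bound $\int v^2\langle z\rangle^2\lesssim\int|\nabla v|^2\langle z\rangle^4$. Your split into far zone plus bounded annulus with the ray-averaging argument is valid but unnecessarily elaborate; the paper handles the entire outer region at once via the single vector field $\phi=(|z|^3+1)z/|z|$, whose divergence satisfies $\nabla\cdot\phi=4|z|^2+|z|^{-1}\gtrsim\langle z\rangle^2$ uniformly, including at the origin where your choice $\phi=z|z|^2$ degenerates.
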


\begin{proof}[Proof of Lemma \ref{lem:hardy-interior-product}]

Let $\chi$ be a smooth cut-off function with $\chi(z)=1$ for $|z|\leq 1$ and $\chi(z)=0$ for $|z|\geq 2$. For $\eta>0$ we decompose
\begin{align*}
u(z) & =\chi(\frac{z-a}{\eta})u+\chi(\frac{z+a}{\eta})u+(1-\chi(\frac{z-a}{\eta})-\chi(\frac{z-a}{\eta}))u \\
&=u_1+u_2+u_3
\end{align*}

\noindent \underline{Hardy inequality for $u_1$ and $u_2$}. We claim that
\begin{equation} \label{bd:hardy-interior-inter1}
\int \left(\frac{u_1^2}{(\nu+|z-a|)^2}+\frac{u_2^2}{(\nu+|z+a|)^2}\right)\gamma_\nu dz\lesssim \int  (|\nabla u_1|^2+|\nabla u_2|^2 ) \gamma_\nu dz.
\end{equation}
We now prove \eqref{bd:hardy-interior-inter1}. We estimate for $|z-a|\leq 2\eta $,
\begin{equation} \label{innerproduct:bd:I-second-term}
\left| \gamma_\nu(z) -\frac{\nu^2}{U_{1,\nu}(z)} \right|=\gamma_\nu(z) \frac{U_{2,\nu}(z)}{U_{1,\nu}(z)}\lesssim \frac{\nu^2}{U_{1,\nu}} \eta^{4}
\end{equation}
since $|U_{2,\nu}(z)|\lesssim \nu^2$ and $U_{1+2,\nu}(z)\geq U_{1,\nu}(z)\approx \nu^{-2}\langle \frac{z-a}{\nu}\rangle^{-4}$.  As $u_1$ is supported for $|z-a|\leq 2\eta$, we have by using \eqref{innerproduct:bd:I-second-term},
\begin{equation} \label{bd:hardy-interior-inter2}
\int \frac{u_1^2}{(\nu+|z-a|)^2}\gamma_\nu dz \approx \int \frac{u_1^2}{(\nu+|z-a|)^2}\frac{\nu^2dz}{U_{1,\nu}}  \quad \mbox{and} \quad   \int  |\nabla u_1|^2\gamma_\nu dz \approx \int  |\nabla u_1|^2 \frac{\nu^2dz}{U_{1,\nu}}.
\end{equation}
We now change variables and let $z=a+\nu y$ and $u_1(z)=v_1(y)$ so that
\begin{equation} \label{bd:hardy-interior-inter3}
 \int \frac{u_1^2}{(\nu+|z-a|)^2}\frac{\nu^2dz}{U_{1,\nu}} = \nu^4 \int \frac{v_1^2}{(1+|y|)^2}\frac{dy}{U} \quad \mbox{and} \quad   \int  |\nabla u_1|^2 \frac{\nu^2dz}{U_{1,\nu}}= \nu^4 \int  |\nabla v_1|^2 \frac{dy}{U}.
\end{equation}
We recall the Hardy-type inequality $\int v_1^2 \langle y \rangle^{-2}\frac{dy}{U}\lesssim \int |\nabla v_1|^2\frac{dy}{U}$, see \eqref{bd:hardy-L2U-1}. Combining this inequality with \eqref{bd:hardy-interior-inter2} and \eqref{bd:hardy-interior-inter3} yields
$$
\int \frac{u_1^2}{(\nu+|z-a|)^2} \gamma_\nu dz\lesssim \int  |\nabla u_1|^2 \gamma_\nu dz.
$$
The analogue inequality holds for $u_2$ by symmetry, which proves \eqref{bd:hardy-interior-inter1}.

\smallskip

\noindent \underline{Hardy inequality for $u_3$}. We claim that 
\begin{equation} \label{bd:hardy-interior-inter4}
\int \left(\frac{1}{(\nu+|z-a|)^2}+\frac{1}{(\nu+|z+a|)^2}\right)u^2_3 \gamma_\nu  dz\lesssim \int |\nabla u_3|^2\gamma_\nu dz.
\end{equation}
Indeed, we have $\gamma_\nu \approx \langle z\rangle^4$ for $z$ in the support of $u_3$ by \eqref{bd:equivalence-weight-U1+2nu-1}. The inequality \eqref{bd:hardy-interior-inter4} then follows from this equivalence and the Hardy-type inequality \eqref{bd:hardy-L2U-1}.

\smallskip

\noindent \underline{Hardy inequality for $u$}. Combining \eqref{bd:hardy-interior-inter1} and \eqref{bd:hardy-interior-inter4} yields
\begin{equation} \label{bd:hardy-interior-inter5}
\int \left(\frac{1}{(\nu+|z-a|)^2}+\frac{1}{(\nu+|z+a|)^2}\right)u^2 \gamma_\nu dz\leq C \int (|\nabla u_1|^2+|\nabla u_2|^2+|\nabla u_3|^2)\gamma_\nu dz.
\end{equation}
We  decompose
$$
 \int|\nabla u|^2\gamma_\nu dz = \int (|\nabla u_1|^2+|\nabla u_2|^2+|\nabla u_3|^2)\gamma_\nu dz +2 \int ( \nabla u_1+\nabla u_2).\nabla u_3\gamma_\nu dz .
$$
Injecting \eqref{bd:hardy-interior-inter5} in the above identity, we have that for any $\kappa>0$,
\begin{align}
\nonumber \int|\nabla u|^2\gamma_\nu dz & = (1-\kappa)\int|\nabla u|^2\gamma_\nu dz + \kappa \int|\nabla u|^2\gamma_\nu dz\\
\nonumber & \geq (1-\kappa)\int|\nabla u|^2\gamma_\nu dz +\frac{\kappa }{C}  \int \left(\frac{1}{(\nu+|z-a|)^2}+\frac{1}{(\nu+|z+a|)^2}\right)u^2\gamma_\nu dz \\
 \label{bd:hardy-interior-inter6} &\qquad +2\kappa  \int ( \nabla u_1+\nabla u_2).\nabla u_3\gamma_\nu dz.
 \end{align}
We have that $|\nabla u_i|\lesssim |\nabla u|+|u|$ for $i=1,2,3$. Hence, using the inequality $ab\leq \sqrt{\kappa}\frac{a^2}{2}+\frac{b^2}{2\sqrt{\kappa}}$ we have 
\begin{equation} \label{bd:hardy-interior-inter7}
2\kappa  \int ( \nabla u_1+\nabla u_2).\nabla u_3 \gamma_\nu dz \geq -C \kappa^{\frac 12}\int |\nabla u|^2 \gamma_\nu dz-C\kappa^{\frac 32}\int_{\{|z\pm a|\leq 2\eta\}} | u|^2 \gamma_\nu dz.
\end{equation}
Injecting \eqref{bd:hardy-interior-inter7} in \eqref{bd:hardy-interior-inter6} and taking $\kappa$ small enough shows the desired inequality \eqref{bd:hardy-interior-product}.
\end{proof}

The proof of Proposition \ref{pr:scalar-product-interior} is intimately related to the following spectral and coercivity properties for the operator $\Ms_0$ around the stationary state $U$. The following originates from \cite{RSma14} (see also \cite{CGMNapde22}).

\begin{lemma}[Kernel and coercivity of $\Ms_0$] \label{lem:coercivity-one-bubble-H1-orthogonality} We have
\begin{equation} \label{id:kernel-Ms0}
\Ms_0 \Lambda U=-2 \qquad \mbox{and} \qquad \Ms_0 \nabla U=0.
\end{equation}
Let $u$ be  a Schwartz function with $\int u = 0$. Then, we have
\begin{equation}\label{est:positivityofMs0}
\langle u, \Ms_0 u \rangle	 \geq 0.
\end{equation}
Moreover, there exists a constant $\delta > 0$ such that for any $R \in [1, \infty]$, if $u$ satisfies in addition
\begin{equation} \label{bd:coercivity-one-bubble-H1-orthogonality}
\int_{|y|\leq R} u\Lambda U dy=\int_{|y|\leq R} u \partial_{y_1}U dy=\int_{|y|\leq R} u \partial_{y_2}U dy=0,
\end{equation}
where $\{|y|\leq R\}=\mathbb R^2$ if $R=\infty$, then
\begin{equation} \label{bd:coercivity-one-bubble-H1}
\langle u, \Ms_0 u \rangle \geq \delta \int \frac{|u|^2}{U}dy \quad \mbox{ and }\quad  \int U |\nabla \Ms_0 u|^2 dy \geq \delta \int \frac{|\nabla u|^2}U dy.
\end{equation}
\end{lemma}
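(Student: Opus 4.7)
The lemma originates from \cite{RSma14}, with the localized-orthogonality refinement appearing in \cite{CGMNapde22}; I sketch the main ideas rather than grinding through the details.

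\emph{Kernel identities \eqref{id:kernel-Ms0}.} By translation invariance of $U$ one has $\Phi_{\partial_i U} = \partial_i \Phi_U$, and with the explicit formulas $U(y) = 8(1+|y|^2)^{-2}$ and $\Phi_U(y) = -2\log(1+|y|^2)$ direct computation gives $\partial_i U/U = -4y_i/(1+|y|^2) = \partial_i \Phi_U$, whence $\Ms_0 \partial_i U = 0$. For $\Ms_0 \Lambda U$, the scaling family $U_\lambda(y) = \lambda^{-2} U(y/\lambda)$ together with $\int U = 8\pi$ yields $\Phi_{U_\lambda}(y) = \Phi_U(y/\lambda) - 4\log\lambda$; differentiating at $\lambda = 1$ and using $\Lambda u = -\partial_\lambda u_\lambda|_{\lambda=1}$ gives $\Phi_{\Lambda U} = y\cdot\nabla\Phi_U + 4 = 4/(1+|y|^2)$, and combined with $\Lambda U / U = 2(1-|y|^2)/(1+|y|^2)$ one obtains $\Ms_0 \Lambda U = -2$.

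\emph{Positivity and coercivity at $R = \infty$.} The main tool is the partial mass reduction. For a radial $u$ with $\int u = 0$, introduce $m_u(r) = \int_0^r u(s) s\,ds$, so $m_u(0) = m_u(\infty) = 0$ and $\partial_r \Phi_u = -m_u/r$; integrating by parts one obtains
\[
\langle u, \Ms_0 u \rangle = \int_0^\infty \left(\frac{(\partial_r m_u)^2}{r\, U(r)} - \frac{m_u(r)^2}{r}\right) dr,
\]
a one-dimensional Schr\"odinger-type quadratic form whose explicit ground state corresponds to $\Lambda U$ at eigenvalue zero. Sturm-Liouville theory then yields positivity on mean-zero radial functions and coercivity on the further complement of $\Lambda U$. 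For non-radial $u$, decompose $u = \sum_k u_k(r) e^{ik\theta}$: the operator $\Ms_0$ preserves this splitting, and the analogous 1D reduction on each mode gives strict positivity for $|k| \geq 2$ and the extra zero modes $\{\partial_1 U, \partial_2 U\}$ in the $k = \pm 1$ sector. Summing the sectors yields the first estimate in \eqref{bd:coercivity-one-bubble-H1} at $R = \infty$, and the second then follows from the identity $\int U |\nabla \Ms_0 u|^2\,dy = -\langle u, \Ls_0 u\rangle$ (integration by parts via \eqref{def:Ls0Ms0}) combined with the analogous coercivity for $\Ls_0$ established in the same way.

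\emph{Uniformity over $R \in [1,\infty]$.} This is the main obstacle. I would argue by contradiction: if no uniform $\delta$ exists, extract a sequence $(u_n, R_n)$ with $R_n \in [1,\infty]$, $\int u_n^2 / U = 1$, $\langle u_n, \Ms_0 u_n \rangle \to 0$, satisfying the localized orthogonality on $\{|y| \leq R_n\}$. Passing to a weak limit $u_\infty$ in $L^2(U^{-1}\,dy)$, the lower semicontinuity of the quadratic form (handling the nonlocal $\int |\nabla \Phi_u|^2$ via a Rellich argument on bounded balls and the fast decay of $U$) combined with the global positivity forces $u_\infty \in \mathrm{span}\{\Lambda U, \partial_1 U, \partial_2 U\}$. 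Since these kernel-type elements decay as $|y|^{-4}$ and $|y|^{-3}$ respectively and thus lie in $L^2(U\,dy)$, their tails outside $\{|y| \leq R_n\}$ are uniformly small for $R_n \geq 1$, so the localized orthogonality passes to global orthogonality in the limit, forcing $u_\infty = 0$. A strong compactness argument in the partial mass variables (Rellich-Kondrachov on bounded intervals plus uniform tail control from the explicit decay of the Schr\"odinger potential) then gives $\int u_n^2 / U \to 0$, contradicting the normalization.
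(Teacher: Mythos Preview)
Your overall contradiction strategy matches the paper's, but there are two issues in the uniformity step, one minor and one substantive.

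\textbf{Minor (orthogonality in the limit).} The claim that the localized orthogonality ``passes to global orthogonality'' is not correct as stated: if $R_n\to R_\infty<\infty$, the tail integral $\int_{|y|>R_n} u_n\,\Lambda U$ is only bounded, not small, since $u_n$ is merely bounded in $L^2(U^{-1}dy)$. The paper instead passes to orthogonality on $\{|y|\le R_\infty\}$ (using $\Lambda U,\nabla U\in L^2$ and weak $L^2$ convergence of $u_n$), then uses that $\{\Lambda U,\partial_1 U,\partial_2 U\}$ have distinct angular modes, so their Gram matrix on any ball is diagonal and nondegenerate; hence a nontrivial span element cannot satisfy these three conditions on $\{|y|\le R_\infty\}$.

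\textbf{Substantive (compactness).} Your final step needs $u_n\to 0$ \emph{strongly} in $L^2(U^{-1}dy)$, a space with growing weight $\langle y\rangle^4$; Rellich on balls plus ``tail control from the Schr\"odinger potential'' does not obviously give this, and from $\langle u_n,\Ms_0 u_n\rangle\to 0$ alone you have no gradient bound on $u_n$ to feed into Rellich. The paper sidesteps this entirely by reversing the logic: it works with the second inequality, normalizes $\int|\nabla u_n|^2/U=1$, and invokes the subcoercivity estimate from \cite{CGMNapde22},
\[
\int U|\nabla \Ms_0 u|^2\,dy \ \ge\ \tfrac12\int \frac{|\nabla u|^2}{U}\,dy - C\int u^2\,dy,
\]
which forces $\int u_n^2\ge c>0$. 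Combined with the Hardy inequality in $\dot H^1(U^{-1}dy)$ (giving tight tails in plain $L^2$) and Rellich on a fixed ball, this yields $u_\infty\neq 0$ directly. The contradiction is then $u_\infty\neq 0$ versus $u_\infty=0$, with no need for strong convergence in the weighted space. The first inequality is handled by the same scheme.
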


\begin{proof} The identity \eqref{id:kernel-Ms0}, the inequality \eqref{est:positivityofMs0} and the first inequality in \eqref{bd:coercivity-one-bubble-H1} for $R=\infty$ are proved in \cite{RSma14} (see Proposition 2.3). We only prove the second inequality in \eqref{bd:coercivity-one-bubble-H1}, as the proof of the first inequality for $R\in [1,\infty)$ is the same. We recall the subcoercivity estimate from \cite{CGMNapde22} (estimate (3.8)), 
\begin{equation} \label{bd:coercivity-one-bubble-H1-inter1}
\int U |\nabla \Ms_0 u|^2 dy \geq\frac 12 \int |\nabla u|^2\frac{dy}{U}-C \int u^2.
\end{equation}
We now assume by contradiction that \eqref{bd:coercivity-one-bubble-H1} is false. This means that there exist $R_n\in [1,\infty]$ and $u_n$ a sequence of Schwartz functions satisfying \eqref{bd:coercivity-one-bubble-H1-orthogonality} with $R=R_n$, such that 
\begin{equation}\label{bd:coercivity-one-bubble-H1-inter1}
\int |\nabla u_n|^2\frac{dy}{U}=1 \quad \mbox{for all } n\in \mathbb N \qquad \mbox{and}\quad  \int U |\nabla \Ms_0 u_n|^2 dy\to 0.
\end{equation}
We introduce the space $\dot H^1_{U^{-1}}$ which is the completion of $C^\infty_c(\mathbb R^2)$ for the norm $\sqrt{\int |\nabla u|^2U^{-1}dy}$. We may assume, up to a subsequence, that $R_n\to R_\infty \in [1,\infty]$ and that there exists $u_\infty\in \dot H^1_{U^{-1}}$ such that $u_n\rightarrow u_\infty$ in $L^2_{loc}$ and $\nabla u_n \rightharpoonup \nabla u_\infty$ in $L^2(U^{-1}dy)$. We claim that $u_\infty$ satisfies the following three properties:
\begin{equation}\label{bd:coercivity-one-bubble-H1-inter2}
u_\infty \neq 0, \qquad \int U |\nabla \Ms_0 u_\infty|^2dy =0 \qquad \mbox{and }\eqref{bd:coercivity-one-bubble-H1-orthogonality}\mbox{ with }R=R_\infty.
\end{equation}
We now prove \eqref{bd:coercivity-one-bubble-H1-inter2}. By the Hardy inequality \eqref{bd:hardy-L2U-1}, we have for $\epsilon>0$ small that $\int_{|y|\geq \epsilon^{-1}}u^2_n \lesssim \epsilon^2$ for all $n$. By \eqref{bd:coercivity-one-bubble-H1-inter1} and \eqref{bd:coercivity-one-bubble-H1-inter1} we have that $ \int u_n^2\geq c$ for all $n$ for some $c>0$. Combining, there exists $\epsilon_0>0$ such that $\int_{|y|\leq \epsilon_0^{-1}} u_n^2 \geq c/2$ for all $n$. By the compactness of the Sobolev embedding from $H^1(|y|\leq \epsilon_0^{-1})$ into $L^2$, $u_n$ converges strongly in $L^2(|y|\leq \epsilon_0^{-1})$ towards $u_\infty$. Thus, $ \int u_\infty^2\geq c/2$ which implies the first property in \eqref{bd:coercivity-one-bubble-H1-inter2}.

Next, by decomposing $-2\pi \nabla \Phi_u = -[\chi (y)\log |y|]*\nabla u+[\nabla( (1-\chi (y))\log |y|)]* u$, using Cauchy-Schwarz and the Hardy inequality \eqref{bd:hardy-L2U-1} we bound
\begin{equation} \label{scalarproduct1:temp3000}
|\nabla \Phi_u(y)|^2 \lesssim \langle y\rangle^{-2}\int_{\mathbb R^2} |\nabla u|^2 \frac{1}{U(\tilde y)}d\tilde y.
\end{equation}
Using $\nabla \Ms_0 u=\nabla (u/U)-\nabla \Phi_{u}$, by \eqref{scalarproduct1:temp3000} and the Hardy inequality \eqref{bd:hardy-L2U-1}, the map $u\mapsto \nabla \Ms_0 u$ is continuous from $\dot H^1_{U^{-1}}$ into $L^2(Udy)$. As $u_n \rightharpoonup  u_\infty$ in $\dot H^1_{U^{-1}}$, this implies $\nabla \Ms_0 u_n\rightharpoonup \nabla \Ms_0 u_\infty$ in $L^2(Udy)$. The second property in \eqref{bd:coercivity-one-bubble-H1-inter2} thus follows from \eqref{bd:coercivity-one-bubble-H1-inter1} and semi-continuity.

Then, as  $\nabla u_n \rightharpoonup  \nabla u_\infty$ in $L^2_{U^{-1}}$, we have that $ u_n$ converges weakly towards $ u_\infty$ in $L^2$ by the Hardy inequality \eqref{bd:hardy-L2U-1}. As $\Lambda U,\nabla U\in L^2$, we can pass to the limit in the orthogonality \eqref{bd:coercivity-one-bubble-H1-orthogonality} for $u_n$, implying the third property in \eqref{bd:coercivity-one-bubble-H1-inter1}. This concludes the proof of \eqref{bd:coercivity-one-bubble-H1-inter2}.

To conclude, we have $u_\infty \in \dot H^1(U^{-1}dy)  $ and $\int U |\nabla \Ms_0 u_\infty|^2 dy=0$ by \eqref{bd:coercivity-one-bubble-H1-inter2}. Hence $\nabla (u_{\infty}/U)=\nabla \Phi_{u_{\infty}}$, which by \eqref{scalarproduct1:temp3000} implies $|\nabla (u_{\infty}/U)(y)|\lesssim \langle y \rangle^{-1}$. Integrating this inequality and using $|U(y)|\lesssim \langle y \rangle^{-4}$ shows $|u_{\infty}(y)|\lesssim \langle y \rangle^{-4}\langle \ln \langle y \rangle \rangle$. This implies that $| \Phi_{u_{\infty}}(y)|\lesssim |\log |y||* |u_{\infty}|\lesssim \langle  \ln \langle y \rangle \rangle$. By Lemma 2.1 of \cite{RSma14},  there holds $u_\infty\in \mbox{Span}( \Lambda U,\frac{\partial}{\partial x_1}U,\frac{\partial}{\partial x_2}U)$. By the third property in \eqref{bd:coercivity-one-bubble-H1-inter2}, this implies $u_\infty=0$. But this contradicts the first property in \eqref{bd:coercivity-one-bubble-H1-inter2}.

\end{proof}

We now prove Proposition \ref{pr:scalar-product-interior1} and Lemma \ref{lem:almost-symmetry-Lsz-interior}.

\begin{proof}[Proof of Proposition \ref{pr:scalar-product-interior1}]

\textbf{Step 1}. \emph{Proof of the symmetry identity \eqref{scalarproduct1:symmetry}}. By \eqref{def:M12z} we have
\begin{equation} \label{scalarproduct1:expression}
\langle u,v\rangle =  \int uv \gamma_\nu \, dz  - \nu^2 \int  u \Phi_v \, dz .
\end{equation}
Since $\Delta \Phi_f=-f$, the identity \eqref{scalarproduct1:symmetry} then follows from \eqref{scalarproduct1:expression} and an integration by parts.

\medskip

\noindent \textbf{Step 2}. \emph{Proof of the continuity estimate \eqref{scalarproduct1:continuity}}. We claim that
\begin{equation} \label{scalarproduct1:temp-1}
 \int \nu^2 |\Phi_u|^2 U_{1+2,\nu} dz \lesssim  \int u^2 \gamma_\nu dz .
\end{equation}
The continuity estimate \eqref{scalarproduct1:continuity} will then follows from applying the Cauchy-Schwarz inequality and \eqref{scalarproduct1:temp-1} to \eqref{scalarproduct1:expression}. Hence, it remains to show \eqref{scalarproduct1:temp-1}. For that we introduce a small parameter $\eta>0$ (we name it $\eta$ since our computations will also be used in the proof of \eqref{scalarproduct1:positivity}) and decompose:
\begin{align}
\nonumber u & =\mathbbm{1}(|z-a|<\eta)u+\mathbbm{1}(|z+a|<\eta) u +\mathbbm{1}(|z-a|\geq \eta \mbox{ and } |z+a|\geq \eta)u \\
\label{scalarproduct1:decomposition-ui}&=u_1+u_2+u_3.
\end{align}
The estimate \eqref{scalarproduct1:temp-1} will then follow from the estimates \eqref{scalarproduct1:estimate-u3}, \eqref{scalarproduct1:estimate-u1} and \eqref{scalarproduct1:estimate-u2} that we prove below for $u_3$, $u_1$ and $u_2$ respectively.

\smallskip

\noindent \underline{Estimate for the exterior part $u_3$}. We notice that for $\nu $ small enough $U_{1+2,\nu}\approx \frac{\nu^2}{\langle z\rangle^4}$ if simultaneously $|z-a|\geq \eta$ and $|z+a|\geq \eta$, while $ U_{1+2,\nu}\lesssim \frac{1}{\nu^2}$ if either $|z-a|\leq \eta$ or $|z+a|\leq \eta$. Hence, the corresponding left-hand and right-hand sides of \eqref{scalarproduct1:temp-1} satisfy
$$
 \int \nu^2 U_{1+2,\nu}|\Phi_{u_3}|^2 dz  \lesssim \int \langle z \rangle^{-4} |\Phi_{u_3}|^2 dz   \quad \mbox{and} \quad \int u_3^2 \gamma_\nu dz \approx \int \langle z \rangle^4 u_3^2 dz .
$$
Next, we recall the inequality (see the Appendix of \cite{CGMNapde22})
\begin{equation} \label{scalarproduct1:temp}
\int \langle y \rangle^{-4} |\Phi_v|^2 dy \lesssim \int \langle y \rangle^4 v^2dy.
\end{equation}
Combining the above identity with \eqref{scalarproduct1:temp} yields
\begin{equation} \label{scalarproduct1:estimate-u3}
 \int \nu^2 U_{1+2,\nu}|\Phi_{u_3}|^2 dz \lesssim \int u_3^2 \gamma_\nu dz.
\end{equation}

\noindent \underline{Estimate for the inner parts $u_1$ and $u_2$}. We first decompose
\begin{equation} \label{scalarproduct1:estimate-u1-expression}
\int  \nu^2 U_{1+2,\nu}|\Phi_{u_1}|^2 dz \leq \int_{|z+a|>  \eta} \nu^2 U_{1+2,\nu}|\Phi_{u_1}|^2+\int_{|z+a|<  \eta} \nu^2 U_{1+2,\nu}|\Phi_{u_1}|^2.
\end{equation}
To estimate the first term, we change variables and set $z=a+y\nu$ and $v_1(y)=u_1(z)$. Using $\nu^2 U_{1+2,\nu}(z)\approx \nu^2U_{1,\nu}(z)\approx \langle y \rangle^{-4}$ if $|z+a|>\eta$ and $\Phi_{v_1}(y)=\nu^{-2}\Phi_{u_1}(z)$ (because $\int u_1=0$), we obtain
$$
\int_{|z+a|\geq \eta} \nu^2 U_{1+2,\nu}|\Phi_{u_1}|^2 dz \approx \nu^6 \int_{|y+\frac{a}{\nu}|\geq\frac{\eta}{\nu}} |\Phi_{v_1}|^2\langle y \rangle^{-4} dy  \quad \mbox{and} \quad \int u_1^2 \gamma_\nu dz \approx \nu^6 \int  v_1^2\langle y \rangle^4dy .
$$
Hence,  using \eqref{scalarproduct1:temp} we get
\begin{equation} \label{scalarproduct1:estimate-u1-1}
\int_{|z+a|\geq \eta} \nu^2 U_{1+2,\nu}|\Phi_{u_1}|^2 \lesssim \int u_1^2 \gamma_\nu dz.
\end{equation}
To estimate the second term in \eqref{scalarproduct1:estimate-u1-expression} we bound $|\Phi_{u_1}(z)|\lesssim \int |\log (z-z')||u_1|(z')dz'$. As the set $\{|z+a|\leq \eta\}$ is for $|\eta|\leq |a|$ at distance greater than $|a|$ from the support of $u_1$ this implies $\| \Phi_{u_1}\|_{L^\infty(|z+a|\leq \eta)}\lesssim \| u_1\|_{L^1}$. By H\"older we then get
\begin{align*}
\int_{|z+a|\leq \eta} \nu^2 U_{1+2,\nu}|\Phi_{u_1}|^2  dz & \lesssim   \| \Phi_{u_1}\|_{L^\infty(|z-a|\leq \eta)}^2  \int  \nu^2 U_{1+2,\nu} dz  \lesssim  \| u_1\|_{L^1}^2  \nu^2 .
\end{align*}
By Cauchy-Schwarz, we have the estimate for $f\in L^2(\gamma_\nu dz)$,
\begin{equation} \label{scalarproduct1:estimate-u1-L1}
\| f\|_{L^1}^2 \leq  \int \nu^{-2}U_{1+2,\nu} dz   \int f^2 \gamma_\nu dz  \lesssim \nu^{-2}    \int f^2 \gamma_\nu dz.
\end{equation}
Combining the two inequalities above we get
\begin{equation} \label{scalarproduct1:estimate-u1-2}
\int_{|z+a|\leq \eta} \nu^2 U_{1+2,\nu}|\Phi_{u_1}|^2 dz  \lesssim \int u_1^2 \gamma_\nu dz.
\end{equation}
Injecting \eqref{scalarproduct1:estimate-u1-1} and \eqref{scalarproduct1:estimate-u1-2} in \eqref{scalarproduct1:estimate-u1-expression} shows
\begin{equation} \label{scalarproduct1:estimate-u1}
\int  \nu^2 U_{1+2,\nu}|\Phi_{u_1}|^2 dz \lesssim \int u_1^2 \gamma_\nu dz.
\end{equation}
By symmetry, we get the analogue estimate for $u_2$:
\begin{equation} \label{scalarproduct1:estimate-u2}
\int  \nu^2 U_{1+2,\nu}|\Phi_{u_2}|^2 dz \lesssim  \int u_2^2 \gamma_\nu dz.
\end{equation}

\noindent \textbf{Step 3}. \emph{Proof of the positivity \eqref{scalarproduct1:positivity}}. Pick $u\in L^2(\gamma_\nu dz)$ satisfying \eqref{scalarproduct1:orthogonality-condition}. We decompose again $u=u_1+u_2+u_3$ as in \eqref{scalarproduct1:decomposition-ui}. Injecting this decomposition in \eqref{scalarproduct1:expression}, and integrating by parts using $-\Delta \Phi_f=f$ we get
\begin{align}
\nonumber \langle u,u\rangle_\flat & =  \int  u_1^2 \gamma_\nu \, dz -\nu^2 \int u_1\Phi_{u_1} dz\\
\nonumber&+ \int   u_2^2 \gamma_\nu \, dz -\nu^2 \int u_2\Phi_{u_2}d z \\
\nonumber&+\int  u^2_3 \gamma_\nu dz -2\nu^2 \int u_1\Phi_{u_2} dz-2\nu^2 \int \Phi_{u_3}(u_1+u_2)d z  -\nu^2 \int u_3 \Phi_{u_3} d z \\
\nonumber &= I+II+\int  u^2_3 \gamma_\nu dz+IV.
\end{align}
The desired estimate \eqref{scalarproduct1:positivity} is a consequence of the below estimates \eqref{innerproduct:bd:positivity-I-II} and \eqref{innerproduct:bd:positivity-IV}. 

\smallskip

\noindent \underline{Lower bounds for the first terms $I$ and $II$.} We claim that
\begin{equation} \label{innerproduct:bd:positivity-I-II} 
I \geq \delta   \int  u_1^2 \gamma_\nu \, dz \qquad \mbox{and} \qquad II \geq \delta  \int   u_2^2 \gamma_\nu \, dz 
\end{equation}
for some universal constant $\delta>0$. To show the first inequality in \eqref{innerproduct:bd:positivity-I-II} we decompose
\begin{align} \label{innerproduct:id:decomposition-I} 
&I = \int u_1^2 \frac{\nu^2}{U_{1,\nu}} \, dz-\nu^2 \int u_1\Phi_{u_1} \, dz+ \int u_1^2(\gamma_\nu -\frac{\nu^2}{U_{1,\nu}})\, dz.
\end{align}
We change variables and set $z=a+y\nu$ and $v_1(y)=u_1(z)$. We have $\Phi_{v_1}(y)=\nu^{-2}\Phi_{u_1}(z)$. The first term in \eqref{innerproduct:id:decomposition-I} is
\begin{equation} \label{innerproduct:id:inter-change-of-variables-u1}
 \int u_1^2 \frac{\nu^2}{U_{1,\nu}} \, dz-\nu^2 \int u_1\Phi_{u_1} \, dz= \nu^6 \left(  \int v_1^2 \frac{1}{U} \, dy- \int v_1\Phi_{v_1} \, dy\right)
\end{equation}
and the orthogonality conditions \eqref{scalarproduct1:orthogonality-condition} give
$$
\int v_1 \, dy= \int \Lambda U v_1  \, dy = \int \partial_{y_1} U v_1  \, dy = \int \partial_{y_2} U v_1  \, dy =0.
$$
Applying the coercivity result of \cite{RSma14}, we obtain that for some $\delta>0$:
$$
 \int v_1^2 \frac{1}{U} \, dy- \int v_1\Phi_{v_1} \, dy \geq \delta  \int v_1^2 \frac{1}{U} \, dy .
$$
Injecting this inequality in \eqref{innerproduct:id:inter-change-of-variables-u1}, using that $\int v_1^2 \frac{1}{U} \, dy =  \nu^{-4} \int u_1^2 \frac{1}{U_{1,\nu}} \, dz$, shows:
\begin{equation} \label{innerproduct:bd:I-first-term}
 \int u_1^2 \frac{\nu^2}{U_{1,\nu}} \, dz-\nu^2 \int u_1\Phi_{u_1} \, dz \geq \delta  \int u_1^2 \frac{\nu^2}{U_{1,\nu}} \, dz.
\end{equation}
Injecting \eqref{innerproduct:bd:I-first-term} and \eqref{innerproduct:bd:I-second-term} in \eqref{innerproduct:id:decomposition-I} shows the first inequality in \eqref{innerproduct:bd:positivity-I-II} for $\eta$ small enough. The proof of the second inequality is exactly the same by symmetry.

\smallskip

\noindent \underline{Upper bound for the fourth term $IV$.} We claim that for $\eta$ small enough and then $\nu$ small enough,
\begin{equation} \label{innerproduct:bd:positivity-IV} 
|IV| \lesssim \eta  \int  u^2 \gamma_\nu \, dz 
\end{equation}
We now prove this bound. For the first term in $IV$, by \eqref{scalarproduct1:orthogonality-condition} we have $\int u_1=0 $, hence:
$$
\nu^2 \int u_1\Phi_{u_2} dz = \nu^2 \int u_1 (\Phi_{u_2}(z)-\Phi_{u_2}(a)) dz .
$$
Using $|\nabla \Phi_2(z)|\lesssim \int |z-z'|^{-1}|u_2(z')|dz'$, and that $u_2$ is supported for $|z+a|<\eta $ we obtain for $\eta$ small enough that $|\nabla \Phi_{u_2}(z)|\lesssim \| u_2\|_{L^1}$ for all $|z-a|\leq \eta$. Hence by the mean value Theorem
$$
\nu^2 \left| \int u_1\Phi_{u_2} dz \right| \lesssim \nu^2 \| u_1\|_{L^1} \eta  \| u_2\|_{L^1} .
$$
Using the estimate \eqref{scalarproduct1:estimate-u1-L1} yields
\begin{equation} \label{innerproduct:id:positivity-IV-term1}
\nu^2 \left| \int u_1\Phi_{u_2} dz \right| \lesssim \eta \int u^2 \gamma_\nu  dz.
\end{equation}
For the second term in $IV$, we estimate that for $z\in \mathbb R^2$,
\begin{align}
\nonumber |\Phi_{u_3}(z)| & \lesssim \int |\ln |z-z'|||u_3(z')|\lesssim \left(\int |\ln |z-z'||^2 \langle z'\rangle^{-4}dz'\right)^{\frac 12}\left(\int u_3^2 \langle z'\rangle^4dz'\right)^{\frac 12} \\
\label{innerproduct:id:IV:term2:inter1} &\leq C_{\eta}\left(\int u^2 \gamma_\nu(z')dz'\right)^{\frac 12}
\end{align}
where we used Cauchy-Schwarz and that $\gamma_\nu(z')\geq c_{\eta} \langle z'\rangle^4$ for some $c_{\eta}>0$ for $z'$ in the support of $u_3$ from \eqref{bd:equivalence-weight-U1+2nu-1}. Using \eqref{innerproduct:id:IV:term2:inter1} and \eqref{scalarproduct1:estimate-u1-L1} we obtain
\begin{equation} \label{innerproduct:id:IV:term2}
\nu^2 \left|  \int \Phi_{u_3}(u_1+u_2)d z
 \right|  \leq \nu^2 C_{\eta}\left(\int u^2\gamma_\nu dz\right)^{\frac 12} \| u_1+u_2\|_{L^1} \leq C_{\eta}\nu \int u^2 \gamma_\nu dz
\end{equation}
For the third term in $IV$, applying Cauchy-Schwarz, \eqref{scalarproduct1:temp} and $\gamma_\nu^{-1}\approx \langle z \rangle^{-4}$ on the support of $u_3$ yields
\begin{equation} \label{innerproduct:bd-positivity-IV-3}
| \nu^2 \int \Phi_{u_3} u_3| \lesssim \nu^2 \left( \int \Phi_{u_3}^2 \langle z \rangle^{-4} \right)^{\frac 12}\left( \int u_3^2 \langle z \rangle^{4} \right)^{\frac 12}\lesssim  \nu^2 \int u^2_3 \gamma_\nu dz.
\end{equation}
Combining \eqref{innerproduct:id:positivity-IV-term1}, \eqref{innerproduct:id:IV:term2} and \eqref{innerproduct:bd-positivity-IV-3} shows \eqref{innerproduct:bd:positivity-IV} as desired.
\end{proof}

\begin{proof}[Proof of Lemma \ref{lem:almost-symmetry-Lsz-interior}]

\noindent \textbf{Step 1}. \emph{Proof of the identities \eqref{def:L12z_M12} and \eqref{id:symmetry-Lsz-interior}.} We recall the identity $\nabla U=U\nabla \Phi_U$. Hence $\nabla \Phi_{U_{i,\nu}}=\nabla \ln U_{i,\nu}$ for $i=1,2$. We thus rewrite
\begin{align*}
\nabla \Phi_{U_{1+2,\nu}} &=\nabla \Phi_{U_{1,\nu}} +\nabla \Phi_{U_{2,\nu}}  =\nabla \ln U_{1,\nu}+\nabla \ln U_{2,\nu}= \nabla \ln U_{12,\nu}\\
&\quad = \nabla \ln U_{1+2,\nu} +\nabla \ln \frac{U_{12,\nu}}{U_{1+2,\nu}}.
\end{align*}
Using the above identity, we rewrite
\begin{align*}
\Delta u+\nabla. (u\nabla \Phi_{U_{1+2,\nu}}) &=  \nabla . \left( \nabla u -u \nabla \ln U_{1+2,\nu} +u \nabla \ln \frac{U_{1+2,\nu}}{U_{12,\nu}}\right)  \\
&=  \nabla . \left( U_{1+2,\nu}\nabla \left( \frac{u}{U_{1+2,\nu}} \right) +u \nabla \ln \frac{U_{1+2,\nu}}{U_{12,\nu}}\right) .
\end{align*}
The first identity \eqref{def:L12z_M12} of the lemma follows from injecting the above identity and $\Lambda u =\nabla . (u\nabla (z^2/2))$ in \eqref{def:LszNot}. The second identity \eqref{id:symmetry-Lsz-interior} then follows from \eqref{def:L12z_M12} by integrating by parts.

\medskip

\noindent \textbf{Step 2}. \emph{Proof of the coercivity \eqref{bd:coercivity-Lsz-interior}.} By \eqref{id:symmetry-Lsz-interior} we have:
\begin{equation} \label{id:symmetry-Lsz-interior-inter1}
- \langle \Ls^z u,u\rangle_\flat = \int \gamma_\nu^{-1}|\nabla \Ms^z  u|^2 dz-\int \nabla .(u\nabla V) \Ms^z  u dz.
\end{equation}
The coercivity estimate \eqref{bd:coercivity-Lsz-interior} then follows from the two estimates \eqref{id:symmetry-Lsz-interior-inter15} and \eqref{id:symmetry-Lsz-interior-inter20} for the first and second terms in \eqref{id:symmetry-Lsz-interior-inter1} respectively, and by taking $\eta'$ small enough.

\smallskip

\noindent \underline{Estimate for the first term in \eqref{id:symmetry-Lsz-interior-inter1}.} We claim that
\begin{equation} \label{id:symmetry-Lsz-interior-inter15}
\int \gamma_\nu^{-1}|\nabla \Ms^z  u|^2 dz\gtrsim \int |\nabla u|^2 \gamma_\nu dz.
\end{equation}
We first split 
\begin{equation} \label{id:symmetry-Lsz-interior-inter12}
\int\gamma_\nu^{-1}|\nabla \Ms^z  u|^2 dz \geq \int_{|z-a|\leq \eta'} \nu^{-2}U_{1, \nu}|\nabla \Ms^z  u|^2 dz+\int_{|z+a|\leq \eta'} \nu^{-2}U_{2, \nu}|\nabla \Ms^z  u|^2 dz.
\end{equation}
We study the first term in \eqref{id:symmetry-Lsz-interior-inter12}. We decompose
\begin{align*}
u&= \mathbbm{1} (|z-a|\leq \eta')u+\mathbbm{1} (|z+a|\leq \eta')u \\
&= u_1+u_2.
\end{align*}
Using $\Phi_u=\Phi_{u_1}+\Phi_{u_2}$ and $U_{1+2,\nu}^{-1}=U_{1,\nu}^{-1}-U_{2,\nu}U_{1+2,\nu}^{-1} U_{1,\nu}^{-1}$, we decompose for $|z-a|\leq \eta'$:
\begin{align*}
\Ms^zu =\Ms^a_0 u_1- \frac{\nu^2U_{2,\nu}}{U_{1+2,\nu} U_{1,\nu}} u-\nu^2\Phi_{u_2},
\end{align*}
where 
$$\Ms^a_0 u_1 = \nu^2\frac{u_1}{U_{1,\nu}}-\Phi_{u_1}.$$
Therefore, using $(a+b+c)^2\geq a^2-2b^2-2c^2$ we have
\begin{align}
\label{id:symmetry-Lsz-interior-inter10} \int_{|z-a|\leq \eta'} \gamma_\nu^{-1}|\nabla \Ms^z  u|^2 dz & \geq \int \nu^{-2}U_{1, \nu}\left( |\nabla \Ms_{0}^a u_1|^2-2\left|\nabla \Phi_{u_2} \right|^2 -2 \left|\nabla \left(  \frac{U_{2,\nu}}{U_{1+2,\nu} U_{1,\nu}} u\right)\right|^2\right)dz  .
\end{align}
We study the first term in \eqref{id:symmetry-Lsz-interior-inter10}. We change variables $z=a+\nu y$ and $v_1(y)=u_1(z)$. We have $\Ms^a_0 u_1(z)=\nu^4 \Ms_0 v_1(y)$, so that
$$
 \int \nu^{-2}U_{1, \nu}|\nabla \Ms_{0}^a u_1|^2 dz= \nu^4 \int U |\nabla \Ms_0 v_1|^2 dy.
$$
Using the orthogonality conditions \eqref{scalarproduct1:orthogonality-condition-coercivity-operator}, applying Lemma \ref{lem:coercivity-one-bubble-H1-orthogonality} shows
$$
 \int U |\nabla \Ms_0 v_1|^2 dy \gtrsim  \int  |\nabla v_1|^2\frac{dy}{U(y)}.
$$
Combining the two inequalities above, and unwinding the change of variables, we get
\begin{equation} \label{id:symmetry-Lsz-interior-inter14}
 \int \nu^{-2}U_{1, \nu}|\nabla \Ms_{0}^z u_1|^2 dz \gtrsim \int |\nabla u_1|^2\frac{\nu^2}{U_{1,\nu}}dz.
\end{equation}
We turn to the second term in \eqref{id:symmetry-Lsz-interior-inter10}. Using that $|\nabla \Phi_{u_2}(z)|\lesssim (|\cdot|^{-1}*|u_2|)(z) $ and that $u_2$ is supported in $\{|z+a|\leq \eta'\}$ we have that $|\nabla \Phi_{u_2}(z)|\lesssim \| u_2\|_{L^1}$ for $|z-a|\leq \eta'$. As $\int U_{1,\nu}\lesssim 1$ we bound
\begin{equation} \label{id:symmetry-Lsz-interior-inter13}
\int_{|z-a|\leq \eta'} \nu^{2}U_{1, \nu} \left|\nabla \Phi_{u_2} \right|^2dz\lesssim \nu^2 \| u_2\|_{L^1}^2\lesssim \int u^2_2 \gamma_\nu  dz \lesssim \eta^{'2} \int |\nabla u|^2\gamma_\nu dz
\end{equation} 
where we used successively \eqref{scalarproduct1:estimate-u1-L1}, and \eqref{bd:hardy-interior-product} with $\min (\nu+|z+a|)\leq \eta'$ on the support of $u_2$.

We now estimate the third term in \eqref{id:symmetry-Lsz-interior-inter10}. For $|z-a|\leq \eta'$ one has
$$
 \nu^{-4}(U_{1, \nu}(z))^2  \left( \frac{\nu^2U_{2,\nu}}{U_{1+2,\nu} U_{1,\nu}} \right)^2 = \frac{(U_{2,\nu}(z))^2}{(U_{1+2,\nu}(z))^2}\lesssim \eta^{'8},
$$
where we used that $U_{2,\nu}(z)\approx \nu^2$ and $U_{1+2,\nu}(z)\approx \nu^2 (\nu+|z-a|)^{-4}$. Similarly,
$$
 \nu^{-4}(U_{1, \nu}(z))^2  \left|\nabla \left( \frac{\nu^2U_{2,\nu}}{U_{1+2,\nu} U_{1,\nu}}\right)\right|^2 \lesssim \eta^{'6}.
$$
Using the two inequalities above, we have
$$
\nu^{-2}U_{1, \nu} \left|\nabla \left(u  \frac{\nu^2U_{2,\nu}}{U_{1+2,\nu} U_{1,\nu}}  \right)\right|^2\lesssim \eta^{'6}\frac{\nu^2}{U_{1,\nu}}(u^2+|\nabla u|^2).
$$
Therefore,
\begin{align} \label{id:symmetry-Lsz-interior-inter11}
\int_{|z-a|\leq \eta'} \nu^{-2}U_{1, \nu} \left|\nabla \left(u \frac{\nu^2U_{2,\nu}}{U_{1+2,\nu} U_{1,\nu}} \right)\right|^2dz & \lesssim \eta^{'6} \int_{|z-a|\leq \eta'} (u^2+|\nabla u|^2)\frac{\nu^2dz}{U_{1,\nu}} \lesssim \eta^{'6} \int |\nabla u|^2 \frac{\nu^2}{U_{1,\nu}}dz
\end{align}
where we used the Hardy inequality \eqref{bd:hardy-interior-product} and \eqref{innerproduct:bd:I-second-term}.

Injecting \eqref{id:symmetry-Lsz-interior-inter14}, \eqref{id:symmetry-Lsz-interior-inter13} and \eqref{id:symmetry-Lsz-interior-inter11} in \eqref{id:symmetry-Lsz-interior-inter10}, we get 
$$
\int_{|z-a|\leq \eta'} \nu^{-2}U_{1, \nu}|\nabla \Ms^z  u|^2 dz  \geq C^{-1} \int |\nabla u_1|^2\frac{\nu^2}{U_{1,\nu}}dz-C \eta^{'2} \int |\nabla u|^2 \frac{\nu^2}{U_{1,\nu}}dz.
$$
Since the second term in \eqref{id:symmetry-Lsz-interior-inter12} plays a symmetric role, we have similarly
$$
\int_{|z+a|\leq \eta'} \nu^{-2}U_{2, \nu}|\nabla \Ms^z  u|^2 dz  \geq C^{-1} \int |\nabla u_2|^2\frac{\nu^2}{U_{1,\nu}}dz-C \eta^{'2} \int |\nabla u|^2 \frac{\nu^2}{U_{1,\nu}}dz.
$$
Combining the two estimates above shows the desired identity \eqref{id:symmetry-Lsz-interior-inter15}.

\smallskip

\noindent \underline{Estimate for the second term in \eqref{id:symmetry-Lsz-interior-inter1}.} We claim that
\begin{equation} \label{id:symmetry-Lsz-interior-inter20}
\left| \int \nabla .(u\nabla V) \Ms^z  u dz\right|\lesssim \eta' \int |\nabla u|^2 \gamma_\nu dz.
\end{equation}
Using the continuity estimate \eqref{scalarproduct1:continuity}
$$
\left|\int \nabla .(u\nabla V) \Ms^z  u dz\right| \lesssim \left( \int | \nabla .(u\nabla V)|^2 \gamma_\nu dz \right)^{\frac 12}\left( \int u^2 \gamma_\nu dz \right)^{\frac 12} .
$$
From \eqref{def:V:L12z}, we have that $V(z)=\ln ((\nu^2+|z-a|^2)^2+(\nu^2+|z+a|^2)^2)-\ln (8\nu^2)-\frac{\beta}{2}|z|^2$, and  $(\nu^2+|z-a|^2)^2+(\nu^2+|z+a|^2)^2\geq c(a)$ for all $z\in \mathbb R^2$ for some $c(a)>0$. Hence, $\nabla V$ and $\Delta V$ are bounded on compact sets. This implies $|\nabla .(u\nabla V)|\lesssim |u|+|\nabla u|$ so that
$$
\int | \nabla .(u\nabla V)|^2\gamma_\nu dz  \lesssim \int (u^2+|\nabla u|^2)\gamma_\nu.
$$
Using again the support of $u$, and then the Hardy inequality \eqref{bd:hardy-interior-product}, we have 
$$
 \int u^2\gamma_\nu dz \lesssim \eta^{'2}\int \frac{u^2}{\min \left((\nu+|z-a|)^2,(\nu+|z+a|)^2 \right)}\gamma_\nu dz \lesssim \eta^{'2}\int |\nabla u|^2 \gamma_\nu dz.
$$
Combining the three above inequalities yields the desired estimate \eqref{id:symmetry-Lsz-interior-inter20}. This ends the proof of Lemma \ref{lem:almost-symmetry-Lsz-interior}.
\end{proof}

\subsubsection{The scalar product for the exterior zone}

We now present another scalar product, for which the linearized operator is almost symmetric for functions which are localized away from $\pm a$. Namely, consider the weighted space $L^2(\omega_\nu dz)$ with the weight function
\begin{equation} \label{def:omega12}
\omega_{\nu}(z) = \frac{\nu^4}{U_{12,\nu}(z)}e^{-\beta |z|^2/2}
\end{equation}
and the scalar product $\langle \cdot , \cdot \rangle_{\omega_\nu}$. Note that we have normalized the weight to have
\begin{equation} \label{bd:equivalence-weight-omeganu}
\omega_\nu (z) \approx  (\nu+|z-a|)^4(\nu+|z+a|)^4 e^{-\beta |z|^2/2} .
\end{equation}
Functions $u\in L^2_{\omega_\nu}$ may diverge at infinity in which case their Poisson field $\Phi_u$ is not well defined. Since in our proof of Theorem \ref{thm:main} the contribution from the zone $|z|\gtrsim 10$ to the dynamics will be showed to be negligible (up to modulation terms that we will single out), we circumvent this problem by localizing the Poisson field. Let $\chi^*(z)$ be the cut-off defined by \eqref{def:chi-star} and define
\begin{align}\label{def:L12ztilde}
\Lst^z u &=\Delta u- \nabla. \Big( u  \nabla \Phi_{U_{1 + 2, \nu}}\Big)-\nabla U_{1 + 2, \nu}.\nabla \Phi_{\chi^*u}+U_{1+2,\nu}u  - \beta \Lambda u\\
&= \nabla \cdot \Big( \nabla u -  u  \nabla \Phi_{U_{1 + 2, \nu}} - U_{1 + 2, \nu}.\nabla \Phi_{\chi^*u}+U_{1+2,\nu}u - \beta z u\Big),\nonumber
\end{align}
so that by \eqref{def:L12ztilde} we have
$$
\Ls^zu=\Lst^zu-\nabla U_{1+2,\nu}.\nabla \Phi_{(1 - \chi^*)u}.
$$

\begin{lemma}[Coercivity and almost symmetry in the exterior zone] \label{lem:almost-symmetry-Lsz-exterior}
We have
\begin{equation} \label{id:decomposition-Lsz-exterior}
\Lst^z u = \frac{1}{\omega_\nu } \nabla. (\omega_\nu \nabla u ) + 2(U_{1+2, \nu} - \beta) u-\nabla U_{1+2,\nu}.\nabla \Phi_{\chi^* u},
\end{equation}
and, for Schwartz functions $u$ and $v$,
\begin{equation} \label{id:symmetry-Lsz-exterior}
\langle \Lst^z u,v \rangle_{L^2_{\omega_\nu}}= -\int \nabla u . \nabla v \; \omega_\nu dz+\int 2 (U_{1+2,\nu}-\beta)uv \omega_\nu dz -\int \nabla U_{1+2,\nu}.\nabla \Phi_{\chi^*u} v \omega_\nu dz.
\end{equation}
Moreover, there exists $\delta>0$ such that for $|a|\leq 5$ and $\eta>0$ close to $0$, we have for $\nu>0$ small enough that for all $u\in H^1_{\omega_{\nu}}$ that is supported in $\{|z-a|\geq \eta \mbox{ and }|z+a|\geq \eta \}$ that
\begin{equation} \label{bd:coercivity-exterior}
-\langle \Lst^z u,u \rangle_{L^2_{\omega_\nu}}\geq \delta \int (\langle z \rangle^2 u^2+|\nabla u|^2)\omega_{\nu}dz
\end{equation}
\end{lemma}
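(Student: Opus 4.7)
The argument proceeds in three steps: verify the pointwise identity \eqref{id:decomposition-Lsz-exterior}, deduce the symmetry formula \eqref{id:symmetry-Lsz-exterior}, and then establish the coercivity estimate \eqref{bd:coercivity-exterior}. The first two are computational, while the third is the heart of the matter.

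Step one (identities). The key is the design of $\omega_\nu$ in \eqref{def:omega12}, which yields $\nabla\ln\omega_\nu = -\nabla\Phi_{U_{1+2,\nu}} - \beta z$. Expanding
\begin{equation*}
\tfrac{1}{\omega_\nu}\nabla\cdot(\omega_\nu \nabla u) = \Delta u - \nabla u\cdot \nabla\Phi_{U_{1+2,\nu}} - \beta z\cdot \nabla u
\end{equation*}
and combining with $\beta\Lambda u = 2\beta u + \beta z\cdot\nabla u$ in \eqref{def:L12ztilde} gives \eqref{id:decomposition-Lsz-exterior} directly. Multiplying by $v\omega_\nu$ and integrating the divergence term by parts yields \eqref{id:symmetry-Lsz-exterior}. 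Taking $v=u$, one gets
\begin{equation*}
-\langle\tilde{\Ls}^z u,u\rangle_{L^2_{\omega_\nu}} = \int|\nabla u|^2\omega_\nu\,dz + 2\int(\beta - U_{1+2,\nu})u^2\omega_\nu\,dz + \int \nabla U_{1+2,\nu}\cdot\nabla\Phi_{\chi^* u}\,u\,\omega_\nu\,dz.
\end{equation*}

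Step two (easy terms). On the support $\{|z-a|\geq\eta\}\cap\{|z+a|\geq\eta\}$, the elementary bounds $U_{1+2,\nu}(z) + |\nabla U_{1+2,\nu}(z)|\lesssim \nu^2\eta^{-5}$ hold, so for $\nu$ small we have $2(\beta - U_{1+2,\nu})\geq \beta$ pointwise. The nonlocal term will be handled by Cauchy--Schwarz combined with an estimate of the form $\int|\nabla\Phi_{\chi^* u}|^2\langle z\rangle^{-C}dz \lesssim \int u^2\omega_\nu\,dz$, analogous to \eqref{scalarproduct1:temp}, which exploits the compact support of $\chi^*$. The smallness factor $\nu^2$ from $\nabla U_{1+2,\nu}$ then makes this contribution absorbable into a small fraction of $\int u^2\omega_\nu\,dz$.

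Step three (weighted Poincar\'e, main difficulty). The remaining task is to upgrade $\int|\nabla u|^2\omega_\nu\,dz + \beta\int u^2\omega_\nu\,dz$ to a lower bound of the form $\delta\int(\langle z\rangle^2 u^2 + |\nabla u|^2)\omega_\nu\,dz$. I would perform a Witten conjugation $\tilde u = u\,\omega_\nu^{1/2}$: using $\nabla\ln\omega_\nu^{1/2} = -\tfrac{1}{2}\nabla\Phi_{U_{1+2,\nu}} - \tfrac{\beta}{2}z$, expansion of $|\nabla u|^2\omega_\nu = |\nabla\tilde u - \tilde u\,\nabla\ln\omega_\nu^{1/2}|^2$ and integration by parts give the standard Hermite-type identity
\begin{equation*}
\int|\nabla u|^2\omega_\nu\,dz = \int|\nabla\tilde u|^2\,dz + \int\bigl(\tfrac{\beta^2}{4}|z|^2 - \beta + \Xi(z)\bigr) u^2\omega_\nu\,dz,
\end{equation*}
where $\Xi$ gathers the contributions generated by the polynomial prefactor $(\nu+|z-a|)^4(\nu+|z+a|)^4$ in $\omega_\nu$. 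The quadratic potential $\tfrac{\beta^2}{4}|z|^2$ gives the sought $\langle z\rangle^2$ weight after absorbing $\beta\int u^2\omega_\nu\,dz$ and the inhomogeneous constants on a large ball using the $|\nabla\tilde u|^2$ term via the classical Gaussian Poincar\'e inequality. The main obstacle is controlling $\Xi(z)$: its logarithmic derivative is singular at $\pm a$, but harmless on the support of $u$ thanks to the threshold $\eta$, and it grows only at rate $\langle z\rangle^{-2}$ at infinity for $|a|\leq 5$, so that the quadratic Gaussian potential dominates outside a large ball. A two-region argument (bounded ball versus its complement) combined with the hypothesis $|a|\leq 5$ ensures uniform absorption of $\Xi$, concluding \eqref{bd:coercivity-exterior}.
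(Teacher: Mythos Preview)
Your Steps 1 and 2 are correct and match the paper's argument (the paper integrates by parts once more on the Poisson term before Cauchy--Schwarz, but your direct estimate is equally valid).

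Step 3 contains a genuine gap. The claim that $\Xi$ ``grows only at rate $\langle z\rangle^{-2}$ at infinity'' is wrong: writing $\nabla\ln\omega_\nu^{1/2}=-\tfrac12\nabla\Phi_{U_{1+2,\nu}}-\tfrac{\beta}{2}z$, the expansion of $|\nabla\ln\omega_\nu^{1/2}|^2+\Delta\ln\omega_\nu^{1/2}$ produces the \emph{cross term} $\tfrac{\beta}{2}\,z\cdot\nabla\Phi_{U_{1+2,\nu}}$, which tends to $-4\beta=-2$ as $|z|\to\infty$ since $\nabla\Phi_{U_{1+2,\nu}}\sim-8z/|z|^2$. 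Hence $\Xi\to-2$, and after adding the available $+\beta$ (or even $+2\beta$) the asymptotic potential is $\tfrac{1}{16}|z|^2-\tfrac32$, whose associated harmonic oscillator has a negative ground state. Your two-region argument does not rescue this: on the bounded ball you invoke a ``Gaussian Poincar\'e inequality'' for $\int|\nabla\tilde u|^2$, but after conjugation $\tilde u$ carries Lebesgue measure, no Gaussian weight, and no orthogonality or boundary condition, so no such inequality is available. The Witten route \emph{can} be repaired, but by a different mechanism: one must use that $\omega_\nu^{1/2}$ is the exact ground state of $-\Delta+V$ with eigenvalue $0$ (equivalently $\int|\nabla u|^2\omega_\nu\geq0$), so that adding $2\beta$ shifts the spectrum bottom to $2\beta>0$; a continuity-of-spectrum argument then yields the weighted bound for small $\delta$.

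The paper avoids all of this. It first establishes the Hardy--Poincar\'e inequality \eqref{bd:hardy-poincare-H1-omeganu} by an elementary near/far splitting: on $\{|z|\geq10\}$ one writes $\int u^2|z|^{10}e^{-|z|^2/4}=20\int u^2|z|^8e^{-|z|^2/4}+4\int u\,z\cdot\nabla u\,|z|^8e^{-|z|^2/4}$ and applies Cauchy's inequality directly; on the bounded part the interior Hardy inequality \eqref{bd:hardy-interior-product} suffices. Then \eqref{bd:coercivity-exterior} follows immediately from $(1-2U_{1+2,\nu})\geq\tfrac12$ on the support of $u$ together with \eqref{bd:hardy-poincare-H1-omeganu}. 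This is shorter and sidesteps any spectral analysis.
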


For $z$ away from $\pm a$, we have $\nabla U_{1+2,\nu}=O(\nu^2)$. Hence, by \eqref{id:decomposition-Lsz-exterior}, $\Lst^z u$ is well approximated by the operator $\omega_\nu^{-1} \nabla. (\omega_\nu \nabla u ) + 2(U_{1+2, \nu} - \beta) u$ for functions $u$ that are localized away from $\pm a$. This operator is self-adjoint in $L^2(\omega_\nu dz)$ and its associated quadratic form is the sum of the first two terms in the right-hand side of \eqref{id:symmetry-Lsz-exterior}. This is what motivated the introduction of the scalar product with the weight function \eqref{def:omega12}.

We now proceed with the proof of Lemma \ref{lem:almost-symmetry-Lsz-exterior}. We first claim the following inequality which shall be used numerous times in the analysis. 
\begin{lemma}[Hardy-Poincar\'e inequality in $H^1_{\omega_\nu}$] There exists $C>0$ such that for all $u\in H^1_{\omega_\nu}$, we have for $\nu$ small enough and $|a|\leq 5$,
\begin{equation} \label{bd:hardy-poincare-H1-omeganu}
\int \left(\frac{1}{(\nu+|z-a|)^2}+\frac{1}{(\nu+|z+a|)^2}+|z|^2 \right)u^2 \omega_\nu dz \leq C \int (u^2+|\nabla u|^2)\omega_\nu dz.
\end{equation}
\end{lemma}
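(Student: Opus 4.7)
The proof would follow the three-zone partition used in Lemma \ref{lem:hardy-interior-product}. Let $\chi$ be a smooth radial cut-off with $\chi\equiv 1$ on $B(0,1)$ and $\chi\equiv 0$ off $B(0,2)$, and for a small parameter $\eta\in(0,1)$ decompose $u=u_1+u_2+u_3$ with $u_1=\chi(\tfrac{z-a}{\eta})u$, $u_2=\chi(\tfrac{z+a}{\eta})u$, and $u_3=(1-\chi(\tfrac{z-a}{\eta})-\chi(\tfrac{z+a}{\eta}))u$. Writing $F(z)=(\nu+|z-a|)^{-2}+(\nu+|z+a|)^{-2}+|z|^2$ for the weight on the left-hand side of \eqref{bd:hardy-poincare-H1-omeganu}, the plan is to establish $\int F u_i^2\,\omega_\nu\,dz\lesssim \int (u_i^2+|\nabla u_i|^2)\,\omega_\nu\,dz$ for $i=1,2,3$, and then recombine via the pointwise identity $|\nabla u_i|^2\leq 2\chi_i^2|\nabla u|^2+2u^2|\nabla\chi_i|^2$ together with $\sum_i\chi_i^2\leq 1$ and $|\nabla\chi_i|\lesssim\eta^{-1}$.

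For the inner pieces $u_1,u_2$, on their supports the opposite cutoff and the Gaussian factor $e^{-\beta|z|^2/2}$ are uniformly bounded above and below, so $\omega_\nu(z)\approx c(a)(\nu+|z\mp a|)^4$ and $F(z)\lesssim (\nu+|z\mp a|)^{-2}+1$. I would rescale $z=\pm a+\nu y$ and set $v_i(y)=u_i(\pm a+\nu y)$, reducing the singular term to $\nu^4 c(a)\int v_i^2(1+|y|)^2\,dy$ and the dissipation to $\nu^4 c(a)\int|\nabla v_i|^2(1+|y|)^4\,dy$; using $U(y)^{-1}\approx (1+|y|^2)^2/8$, this is exactly the Hardy inequality \eqref{bd:hardy-L2U-1}. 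The contributions from the $|z|^2$ term and the opposite-sign Hardy term are trivially absorbed on these compact supports.

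For the outer piece $u_3$, both inverse-square weights are bounded by $\eta^{-2}$ on $\mathrm{supp}\,u_3$ and so contribute at most a constant times $\int u_3^2\omega_\nu$. The core step is the Gaussian Poincar\'e estimate for the $|z|^2$ term. Using the identity $\nabla\ln\omega_\nu=-\nabla\Phi_{U_{1+2,\nu}}-\beta z$ from \eqref{def:omega12}, integration by parts on $\int\tfrac12 u_3^2\,\nabla\cdot(z\omega_\nu)\,dz$ gives
\begin{equation*}
\beta\int|z|^2 u_3^2\,\omega_\nu\,dz=\int\bigl(2-z\cdot\nabla\Phi_{U_{1+2,\nu}}\bigr)u_3^2\,\omega_\nu\,dz+2\int u_3\,z\cdot\nabla u_3\,\omega_\nu\,dz.
\end{equation*}
On the support of $u_3$, the estimate $|z\cdot\nabla\Phi_{U_{i,\nu}}(z)|\leq 4|z|/|z\mp a|\leq 4(1+|a|/\eta)$ for $i=1,2$ shows that the first right-hand integral is bounded by a constant (depending on $\eta$) times $\int u_3^2\omega_\nu$; applying Cauchy--Schwarz $2|u_3\,z\cdot\nabla u_3|\leq\tfrac{\beta}{2}|z|^2u_3^2+\tfrac{2}{\beta}|\nabla u_3|^2$ to the gradient term and absorbing delivers $\int|z|^2 u_3^2\,\omega_\nu\lesssim\int(u_3^2+|\nabla u_3|^2)\omega_\nu$.

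The main obstacle I anticipate is the tension between the Gaussian Poincar\'e mechanism, which relies on the linear drift $-\beta z$ appearing in $\nabla\ln\omega_\nu$, and the singular factor $|\nabla\Phi_{U_{1+2,\nu}}|\sim(\nu+|z\mp a|)^{-1}$ near $\pm a$, which would wreck the integration by parts if it were applied to $u$ globally. Restricting the $|z|^2$ analysis to $u_3$ replaces this singular factor by the harmless bound $O(|a|/\eta)$ used above, and cleanly decouples it from the genuine Hardy-type singularities at $\pm a$, which are treated separately by the rescaling in the inner zone.
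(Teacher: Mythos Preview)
Your argument is correct, but the paper takes a more modular route. Rather than a three-zone split around $\pm a$, the paper decomposes $u=\chi^* u+(1-\chi^*)u$ into a part supported in $\{|z|\le 20\}$ and a part supported in $\{|z|\ge 10\}$. For the inner part it observes the weight equivalence $\omega_\nu\approx\gamma_\nu$ on $\{|z|\le 20\}$ (from \eqref{bd:equivalence-weight-U1+2nu-1} and \eqref{bd:equivalence-weight-omeganu}) and then simply quotes the already-proved Hardy inequality of Lemma~\ref{lem:hardy-interior-product}, which itself contains the three-zone analysis around $\pm a$ that you reproduce by hand. For the outer part it uses $\omega_\nu\approx\langle z\rangle^4 e^{-|z|^2/4}$ on $\{|z|\ge 10\}$ and invokes the appendix Poincar\'e inequality~\eqref{bd:poincare-gaussian}, whereas you derive the Gaussian Poincar\'e estimate directly by integrating by parts with the vector field $z\omega_\nu$ and exploiting $\nabla\ln\omega_\nu=-\nabla\Phi_{U_{1+2,\nu}}-\beta z$. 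Your observation that restricting this integration by parts to $u_3$ renders the cross-term $z\cdot\nabla\Phi_{U_{1+2,\nu}}$ harmless is exactly right and is a clean self-contained alternative to quoting~\eqref{bd:poincare-gaussian}. The paper's approach is shorter given the surrounding lemmas; yours is more self-contained and makes the Gaussian drift mechanism explicit.
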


\begin{proof}

We decompose $u=u_1+u_2$ where $u_1=\chi^* u$ and $u_2=(1-\chi^*)u$. For $z$ in the support of $\chi^*$, we have $|z|\leq 20$. By \eqref{bd:equivalence-weight-U1+2nu-1} and \eqref{bd:equivalence-weight-omeganu} we have
\begin{equation} \label{id:matched-scalar-product-equivalence-weights-z<20}
\omega_\nu(z)\approx \gamma_\nu(z) \qquad \mbox{for }|z|\leq 20.
\end{equation}
By the Hardy inequality \eqref{bd:hardy-interior-product} and \eqref{id:matched-scalar-product-equivalence-weights-z<20}, we have
\begin{equation} \label{bd:hardy-poincare-H1-omeganu-u1}
\int \left(\frac{1}{(\nu+|z-a|)^2}+\frac{1}{(\nu+|z+a|)^2} \right)u^2_1 \omega_\nu dz \leq C \int |\nabla u_1|^2 \omega_\nu dz.
\end{equation}
For $|z|\geq 10$ we have $\omega_\nu(z)\approx \langle z \rangle^4 e^{-|z|^2/4}$ by \eqref{bd:equivalence-weight-omeganu}. Hence, by the Poincar\'e inequality \eqref{bd:poincare-gaussian},
\begin{equation} \label{bd:hardy-poincare-H1-omeganu-u2}
\int \langle z \rangle^2 u^2_2 \omega_\nu dz \leq C \int (u_2^2+|\nabla u_2|^2) \omega_\nu dz.
\end{equation}
Combining \eqref{bd:hardy-poincare-H1-omeganu-u1} and \eqref{bd:hardy-poincare-H1-omeganu-u2} yields the desired inequality \eqref{bd:hardy-poincare-H1-omeganu}, since
$$
((\nu+|z-a|)^{-2}+(\nu+|z+a|)^{-2}+|z|^2 )u^2\approx ((\nu+|z-a|)^{-2}+(\nu+|z+a|)^{-2})u_1^2+|z|^2u_2^2
$$
and $|\nabla u_i|\lesssim |u|+|\nabla u|$ for $i=1,2$.
\end{proof}

\begin{proof}[Proof of Lemma \ref{lem:almost-symmetry-Lsz-exterior}]

\noindent \textbf{Step 1}. \emph{Proof of the identities \eqref{id:decomposition-Lsz-exterior} and \eqref{id:symmetry-Lsz-exterior}.} Using $-\Delta \Phi_f=f$ we expand \eqref{def:L12ztilde} as
$$
\Lst^z u =\Delta u -\nabla u .\left( \nabla \Phi_{U_{1+2,\nu}}+\beta z\right)+2(U_{1+2,\nu}-\beta)u-\nabla U_{1+2,\nu}.\nabla \Phi_{\chi^* u}.
$$
Hence, the first identity \eqref{id:decomposition-Lsz-exterior} of the Lemma holds true provided that the weight $\omega_\nu$ satisfies
$$
\nabla \ln \omega_\nu =-\nabla \Phi_{U_{1+2,\nu}}-\beta z.
$$
Since $\nabla \Phi_U=\nabla \ln U$, this equation can be solved explicitly, yielding to the formula \eqref{def:omega12}. Hence \eqref{id:decomposition-Lsz-exterior} holds true. The second identity \eqref{id:symmetry-Lsz-exterior} is then obtained from \eqref{id:decomposition-Lsz-exterior} by integration by parts.

\medskip

\noindent \textbf{Step 2}. \emph{Proof of the coercivity \eqref{bd:coercivity-exterior}.} By \eqref{id:symmetry-Lsz-exterior} we have
\begin{equation} \label{temp1}
-\langle \Lst u ,u \rangle_{\omega_\nu}= \int (|\nabla u|^2+(1-2U_{1+2,\nu})u^2)\omega_\nu dz +\int \nabla U_{1+2,\nu}.\nabla \Phi_{\chi^*u} u \omega_\nu dz.
\end{equation}
We study the first term in \eqref{temp1}. Using $U_{i,\nu}(z)=\nu^28(1+|z\pm a|^2)^{-2}$ we have that $|U_{1+2,\nu}|\lesssim \nu^2 $ on the support of $u$. Using this and the Poincar\'e inequality \eqref{bd:hardy-poincare-H1-omeganu} we find
\begin{equation} \label{temp2}
 \int (|\nabla u|^2+(1-2U_{1+2,\nu})u^2)\omega_\nu dz \geq \delta \int (\langle z \rangle^2 u^2+|\nabla u|^2) \omega_\nu dz.
 \end{equation}
 We now turn to the second term in \eqref{temp1}. We make a general estimate for some other Schwartz function $v$ that is supported on $\cap_{\pm}\{|z\pm a|\geq \eta\}$. Integrating by parts, we have
\begin{equation} \label{temp3}
 \int \nabla U_{1+2,\nu}.\nabla \Phi_{\chi^*u} v \omega_\nu dz=-\int \Phi_{\chi^*u} \left[ (\Delta U_{1+2,\nu}+\nabla U_{1+2,\nu}.\nabla \ln \omega_\nu)v+\nabla U_{1+2,\nu}.\nabla v \right] \omega_\nu dz.
\end{equation}
 Using again that $U_{i,\nu}(z)=\nu^28(1+|z\pm a|^2)^{-2}$ and that $u$ and $v$ are supported on $\cap_{\pm}\{|z\pm a|\geq \eta\}$ we have $\left|(\Delta U_{1+2,\nu}+\nabla U_{1+2,\nu}.\ln \omega_\nu)v+\nabla U_{1+2,\nu}.\nabla v \right|\lesssim \nu^2(v+|\nabla v|)$ for $\nu$ small enough. By Cauchy-Schwarz and this inequality, \eqref{temp3} gives
\begin{equation} \label{temp4}
\left| \int \nabla U_{1+2,\nu}.\nabla \Phi_{\chi^*u}v \omega_\nu dz\right| \lesssim \nu^2 \left( \int_{\min_\pm (|z\pm a|\geq \eta} |\Phi_{\chi^*u}|^2\omega_\nu dz\right)^{\frac 12} \left( \int (v^2+|\nabla v|^2)\omega_\nu dz\right)^{\frac 12}
\end{equation}
where we used that the integrand is supported on $\cap_{\pm}\{|z\pm a|\geq \eta\}$. Using that on the support of $\chi^*$ we have $\omega_\nu \lesssim \langle z \rangle^{-4}$ and $\omega_\nu \approx_\eta 1$ for $\nu$ small enough, we have by \eqref{scalarproduct1:temp} that
\begin{equation} \label{temp6}
\int_{\min_\pm (|z\pm a|\geq \eta} |\Phi_{\chi^*u}|^2\omega_\nu dz\lesssim \int \langle z \rangle^{-4} |\Phi_{\chi^*u}|^2 dz \lesssim \int \langle z \rangle^{4} \chi^{*2} u^2\lesssim \int u^2 \omega_\nu dz.
\end{equation}
Injecting the above inequality in \eqref{temp4} shows
\begin{equation} \label{temp5}
\left| \int \nabla U_{1+2,\nu}.\nabla \Phi_{\chi^*u}v \omega_\nu dz\right| \lesssim \nu^2 \left( \int u^2 \omega_\nu dz\right)^{\frac 12} \left( \int (v^2+|\nabla v|^2)\omega_\nu dz\right)^{\frac 12}
\end{equation}
Injecting \eqref{temp2} and \eqref{temp5} with $u=v$ in \eqref{temp1} shows the desired identity \eqref{bd:coercivity-exterior} for $\nu$ small enough.
\end{proof}

\subsubsection{Matched scalar product}

Recall that we fix $\beta =\frac 12$ for sake of simplicity. We want to match the scalar product for the exterior zone
$$
\langle u,v\rangle_{\omega_\nu}= \int uv \omega_\nu dz
$$
which is convenient for functions located away from $a$ and $-a$, with the scalar product for the interior zone
\begin{equation} \label{id:expression-inter-interior-product}
\langle u, v \rangle_\flat= \int   u v \gamma_\nu \, dz -  \nu^2 \int\Phi_u vdz,
\end{equation}
which is suitable for functions located near $\pm a$. 

Firstly, we compare the weights. Using $U(x)= 8 (1+|x|^2)^{-2}$, a Taylor expansion yields
$$
\omega_\nu (z)= \gamma_\nu(z) \left(2|a|^4 e^{-|a|^2/4} +o(1) \right)
$$
as $\nu\to 0$ and $z\to a$ or $z\to-a$ (see \eqref{id:matched-product-expansion-weights} for the details). Thus, the two weights match up to a factor $c_*=2|a|^4e^{-|a|^2/4}=  32 e^{-1}+O(|a-a_\infty|)$.

Secondly, we consider the second term in \eqref{id:expression-inter-interior-product}. This term does not need to be matched to anything and can be kept as is. The reason is that the contribution from the exterior zone in this term is negligible. Indeed, firstly, the weight function $\gamma_\nu$ in the first term in \eqref{id:expression-inter-interior-product} is of order one away from $a$ and $-a$, and the weight in the second term in \eqref{id:expression-inter-interior-product} is of size $\nu^2$. Secondly, in the second term in \eqref{id:expression-inter-interior-product}, the interaction between the interior zone and the exterior has a cancellation if $v$ satisfies the zero mass condition in the interior zone $\int_{|z\pm a|\leq \eta} v=0$. Indeed, in this case $\int_{|z\pm a|\leq \eta} \nu^2 \Phi_u vdz=\int_{|z\pm a|\leq \eta} \nu^2 (\Phi_u-\Phi_u(\pm a)) vdz$, so that the contribution from the Poisson field of $u$ is lowered.

As the contribution from the exterior zone in the second term in \eqref{id:expression-inter-interior-product} is negligible, we further localise it. The reason is that the Poisson field $\Phi_u$ is not well-defined in the space $L^2_{\omega_\nu}$ due to possible growth at infinity. For that, we will use the cut-off function $\chi^*$.

We thus introduce the "matched scalar product"
\begin{equation}\label{def:scalar12}
\langle u,v\rangle_* = \int u v \omega_\nu dz - c_*\nu^2 \int \chi^* u \, \Phi_{\chi^*v} dz, \qquad c_*=2 |a|^4 e^{-|a|^2/4}.
\end{equation}
We first show that $\langle \cdot, \cdot\rangle_*$ is well defined on $L^2_{\omega_\nu}$, and it defines a scalar product that is equivalent to $\langle \cdot,\cdot \rangle_{\omega_\nu}$ on a suitable subspace. We shall again abuse language and refer to $\langle \cdot,\cdot \rangle_{*}$ as a scalar product.

\begin{proposition}[Matched scalar product]  \label{pr:scalar-product-interior}

There exists $C>0$ such that the following hold for $a$ close to $a_\infty$, for all $u,v\in L^2_{\omega_\nu}$. The functional $\langle \cdot,\cdot \rangle_*$ is symmetric
\begin{equation} \label{matchedscalarproduct1:symmetry}
\langle u,v\rangle_*=\langle v ,u\rangle_*
\end{equation}
and for $\nu$ small enough, it is continuous
\begin{equation} \label{matchedscalarproduct1:continuity}
|\langle u,v\rangle_*|^2 \leq C \int u^2 \omega_\nu dz \int v^2 \omega_\nu dz.
\end{equation}
Moreover, for any $\eta>0$ small enough, then for all $\nu$ small enough, it is positive for functions satisfying the orthogonality conditions
\begin{equation} \label{matchedscalarproduct1:orthogonality-condition}
 \langle u, \mathbbm{1}(|z\pm a|<\eta) \rangle =  \langle u, \Lambda U_{i, \nu}\mathbbm{1}(|z\pm a|<\eta) \rangle  =\langle u, \pa_j U_{i, \nu}  \mathbbm{1}(|z\pm a|<\eta) \rangle =0
\end{equation}
for all $i=1,2$, $j=1,2$ and where $\pm$ designates $-$ or $+$ if $i=1$ or $i=2$, in the sense that
\begin{equation} \label{matchedscalarproduct1:positivity}
  \int u^2\omega_\nu dz \leq C \langle u,u\rangle_* .
\end{equation}
\end{proposition}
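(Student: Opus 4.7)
The plan is to establish the three properties in order, mirroring the structure of the proof of Proposition \ref{pr:scalar-product-interior1}. For the symmetry \eqref{matchedscalarproduct1:symmetry}, the first term $\int uv\omega_\nu dz$ is obviously symmetric, and the second term is symmetric by integration by parts against $-\Delta\Phi_{\chi^*\cdot}$, exactly as in \eqref{scalarproduct1:symmetry}, noting that $\chi^*u$ and $\chi^*v$ are compactly supported so their Poisson fields are well defined and decay at infinity. For the continuity \eqref{matchedscalarproduct1:continuity}, I would bound the first term by Cauchy--Schwarz and, for the second, exploit that $\chi^*$ is supported in $\{|z|\leq 20\}$ where $\omega_\nu \approx \gamma_\nu$ by \eqref{id:matched-scalar-product-equivalence-weights-z<20}, so that $\|\chi^*u\|_{\gamma_\nu}\lesssim \|u\|_{\omega_\nu}$; the continuity \eqref{scalarproduct1:continuity} of the interior product applied to $\chi^*u$ and $\chi^*v$ then bounds $\nu^2|\int\chi^*u\,\Phi_{\chi^*v}dz|$ by the product of $L^2(\omega_\nu)$ norms.

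The main step is the positivity \eqref{matchedscalarproduct1:positivity}, and the entire construction hinges on the fact that the constant $c_*=2|a|^4 e^{-|a|^2/4}$ matches the two weights asymptotically near $\pm a$. I will follow Step 3 of the proof of Proposition \ref{pr:scalar-product-interior1} and decompose $u=u_1+u_2+u_3$ with $u_1=u\mathbbm{1}(|z-a|<\eta)$, $u_2=u\mathbbm{1}(|z+a|<\eta)$, and $u_3$ supported away from $\pm a$. Since $\chi^*\equiv 1$ near $\pm a$ (as $|a|\approx 2$ and $\eta$ is small), $\chi^*u_i=u_i$ for $i=1,2$. Using the disjointness of supports yields
\[
\langle u,u\rangle_* = \sum_{i=1}^3\int u_i^2\omega_\nu\,dz - c_*\nu^2\int u_1\Phi_{u_1}\,dz - c_*\nu^2\int u_2\Phi_{u_2}\,dz - c_*\nu^2\int\chi^*u_3\,\Phi_{\chi^*u_3}\,dz + \textup{cross terms}.
\]

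The core of the argument will be to extract positivity from the diagonal contribution near $a$. A Taylor expansion shows $U_{2,\nu}(z) = \nu^2/(2|a|^4) + O(\eta)$ and $e^{-|z|^2/4}=e^{-|a|^2/4}(1+O(\eta))$ on the support of $u_1$, which gives $\omega_\nu(z)=c_*\gamma_\nu(z)(1+O(\eta))$ there. After the inner rescaling $z=a+\nu y$, $v_1(y)=u_1(a+\nu y)$, the quantity $\int u_1^2\omega_\nu\,dz - c_*\nu^2\int u_1\Phi_{u_1}\,dz$ becomes $c_*\nu^6(\int v_1^2/U\,dy - \int v_1\Phi_{v_1}\,dy)$ up to lower order terms, and the orthogonality conditions \eqref{matchedscalarproduct1:orthogonality-condition} translate exactly to the hypotheses of Lemma \ref{lem:coercivity-one-bubble-H1-orthogonality}, yielding a coercive lower bound proportional to $\int u_1^2\omega_\nu\,dz$. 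The analogous bound for the $u_2$ diagonal term will follow by symmetry.

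What remains is to check that the $u_3$ diagonal and the cross terms are negligible. The term $\nu^2\int\chi^*u_3\Phi_{\chi^*u_3}\,dz$ will be bounded by $C_\eta\nu^2\|u_3\|_{\omega_\nu}^2$ using that $\chi^*u_3$ is compactly supported in a region where $\omega_\nu$ is of order one, and is hence small for $\nu$ small. The cross terms $\nu^2\int u_i\Phi_{u_j}\,dz$ ($i\neq j$) and $\nu^2\int u_i\Phi_{\chi^*u_3}\,dz$ will be handled exactly as in Step 3 of the proof of Proposition \ref{pr:scalar-product-interior1}: the zero-mass condition $\int u_i=0$ coming from \eqref{matchedscalarproduct1:orthogonality-condition} lets me replace $\Phi_{u_j}$ by $\Phi_{u_j}-\Phi_{u_j}(\pm a)$ and exploit the smoothness of $\Phi_{u_j}$ on the support of $u_i$ (since $u_i$ and $u_j$ have disjoint supports at distance $\sim |a|$) to gain an $\eta$ factor via the mean value theorem, combined with the $L^1$-to-weighted-$L^2$ estimate \eqref{scalarproduct1:estimate-u1-L1}. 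Choosing $\eta$ small first and then $\nu$ small absorbs all perturbative contributions into the coercive diagonal terms and yields \eqref{matchedscalarproduct1:positivity}.
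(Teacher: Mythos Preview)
Your proposal is correct and follows essentially the same approach as the paper. The paper organises the argument identically: symmetry via the interior product identity \eqref{id:matched-scalar-product-intermediaire}, continuity by reducing to \eqref{scalarproduct1:continuity} and the weight equivalence $\omega_\nu\approx\gamma_\nu$ on $\{|z|\leq 20\}$, and positivity via the same $u=u_1+u_2+u_3$ decomposition, the Taylor expansion $\omega_\nu/\gamma_\nu=c_*+O(\nu^2+|z-a|)$, and the one-bubble coercivity (the paper routes through \eqref{scalarproduct1:positivity} rather than directly invoking Lemma \ref{lem:coercivity-one-bubble-H1-orthogonality}, but this is the same content).
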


\begin{proof}

\textbf{Step 1}. \emph{Proof of the symmetry \eqref{matchedscalarproduct1:symmetry}}. We rewrite \eqref{def:scalar12} as
\begin{equation} \label{id:matched-scalar-product-intermediaire}
\langle u,v\rangle_*= \langle u,v \rangle_{\omega_{\nu}}+ c_* \langle \chi^*u,\chi^* v\rangle_\flat - c_* \langle \chi^* u,\chi^* v\rangle_{\gamma_\nu},
\end{equation}
where $\langle \cdot, \cdot \rangle_\flat$ is introduced in \eqref{def:scalar-interior}. We then obtain \eqref{matchedscalarproduct1:symmetry} by using \eqref{scalarproduct1:symmetry}.

\medskip

\noindent \textbf{Step 2}. \emph{Proof of the continuity \eqref{matchedscalarproduct1:continuity}}. From the identity \eqref{id:matched-scalar-product-intermediaire}, we have by using \eqref{scalarproduct1:continuity} and Cauchy-Schwarz,
\begin{equation} \label{id:matched-scalar-product-intermediaire2}
|\langle u,v\rangle_*|^2\lesssim  \int (\chi^*)^2u^2 \gamma_\nu dz \int (\chi^*)^2v^2 \gamma_\nu dz + \int \omega_\nu u^2dz \int \omega_\nu v^2dz.
\end{equation}
Combining \eqref{id:matched-scalar-product-intermediaire2} and the equivalence $\omega_\nu(z)\approx \gamma_\nu(z)$ for $|z|\leq 20$, one gets \eqref{matchedscalarproduct1:continuity}.

\medskip

\noindent \textbf{Step 3}. \emph{Proof of the positivity \eqref{matchedscalarproduct1:positivity}}. We pick $u\in L^2_{\omega_\nu}$ satisfying \eqref{matchedscalarproduct1:orthogonality-condition} and decompose
\begin{align*}
u& =\mathbbm{1}(|z-a|<\eta)u+\mathbbm{1}(|z+a|<\eta) u +\mathbbm{1}(|z-a|\geq \eta \mbox{ and } |z+a|\geq \eta)u \\
&=u_1+u_2+u_3,
\end{align*}
so that
\begin{align}
\nonumber \langle u,u\rangle_* & =  \int \omega_\nu u_1^2 \, dz -c_* \nu^2 \int u_1\Phi_{u_1} dz\\
\nonumber&+  \int \omega_\nu u_2^2 \, dz -c_* \nu^2 \int u_2\Phi_{u_2} dz \ +\int \omega_\nu u^2_3 dz  \\
\nonumber &-c_* \left(2\nu^2 \int u_1\Phi_{u_2} dz-2\nu^2 \int \Phi_{\chi^*u_3}(u_1+u_2)d z  -\nu^2 \int \chi^*u_3 \Phi_{\chi_{ 10}u_3} d z \right) \\
\nonumber &= I+II+\int \omega_\nu u^2_3 dz+IV.
\end{align}
The desired inequality \eqref{matchedscalarproduct1:positivity} will then follow from the estimates \eqref{id:matched-scalar-product-intermediaire4}, \eqref{id:matched-scalar-product-intermediaire5} and \eqref{matchedscalarproduct1:estimate-IV} that we prove below for $I$, $II$ and $IV$ respectively, by taking $a$ close enough to $(2,0)$, $\eta $ to $0$ and $\nu$ small enough.

\smallskip

\noindent \underline{Lower bounds for $I$ and $II$}. We compute that for $z$ close to $a$:
\begin{equation} \label{bd:estimate-U2nu-near-a}
U_{2,\nu}(z)=\frac{8\nu^2}{(\nu^2+|z+a|^2)^2}=\frac{\nu^2}{2|a|^4}+O(\nu^4+\nu^2|z-a|),
\end{equation}
and
\begin{equation} \label{bd:estimate-U1+2nu/U1nu-near-a}
\frac{U_{1+2,\nu}(z)}{U_{1,\nu}(z)}=1+\frac{U_{2,\nu}(z)}{U_{1,\nu}(z)}=1+\frac{(\nu^2+|z-a|^2)^2}{(\nu^2+|z+a|^2)^2}=1+O((\nu+|z-a|)^4).
\end{equation}
Combining, we get that for $a$ close to $(2,0)$:
\begin{align} \label{id:matched-product-expansion-weights}
\frac{\omega_\nu (z)}{\gamma_\nu (z)} & =\frac{U_{1+2,\nu}(z)}{U_{1,\nu}(z)}\frac{\nu^2}{U_{2,\nu}(z)}e^{-|z|^2/4}=c_*+O(\nu^2+|z-a|).
\end{align}
Using \eqref{bd:estimate-U2nu-near-a} and \eqref{bd:estimate-U1+2nu/U1nu-near-a} this estimate propagates to higher order derivatives:
\begin{equation} \label{id:matched-product-expansion-weights-2}
\left| \nabla \left( \frac{\omega_\nu }{\gamma_\nu } \right) \right|\lesssim 1.
\end{equation}

As $u_1$ is supported inside $\{|z-a|\leq \eta\}$ we infer from \eqref{id:matched-product-expansion-weights} that
\begin{align*}
I & =   \int \left(c_* \gamma_\nu+O\left(\omega_\nu(\nu^2+|z-a|)\right) \right) u_1^2 \, dz -c_*\nu^2 \int u_1\Phi_{u_1} dz \\
&=c_* \langle u_1 ,u_1\rangle_\flat +O\left((\nu^2+\eta)\| u\|_{{\omega_\nu}}^2\right)
\end{align*}
As $u$ satisfies \eqref{matchedscalarproduct1:orthogonality-condition}, we infer that $u_1$ satisfies the orthogonality \eqref{scalarproduct1:orthogonality-condition}. Applying \eqref{scalarproduct1:positivity} from Proposition \ref{pr:scalar-product-interior1} and \eqref{id:matched-product-expansion-weights}, we get
\begin{equation} \label{id:matched-scalar-product-intermediaire4}
I\geq \delta \int \omega_\nu u_1^2 dz  +O\left((\nu^2+\eta)\| u\|_{{\omega_\nu}}^2\right)
\end{equation}
for some $\delta>0$. As $u_1$ and $u_2$ play a symmetric role, we have similarly
\begin{equation} \label{id:matched-scalar-product-intermediaire5}
II\geq \delta \int \omega_\nu u_2^2 dz  +O\left((\nu^2+\eta)\| u\|_{{\omega_\nu}}^2\right).
\end{equation}
\underline{Upper bound for $IV$}. We recall that this term has already been estimated in \eqref{innerproduct:bd:positivity-IV} (up to replacing $u$ by $\chi^* u$ in \eqref{innerproduct:bd:positivity-IV}) so that
\begin{equation} \label{matchedscalarproduct1:estimate-IV}
|IV| \lesssim \eta  \int  (\chi^*)^2u^2 \gamma_\nu \, dz  \lesssim \eta \int \omega_\nu u^2 dz,
\end{equation}
where we used \eqref{id:matched-scalar-product-intermediaire} in the last inequality.
\end{proof}

\subsection{Coercivity of the linearized operator for the matched scalar product} \label{subsec:coercivity}

In this subsection, we first show that the bilinear form associated to the linearized operator with localized Poisson field \eqref{def:L12ztilde} for the matched scalar product \eqref{def:scalar12} is almost symmetric. Then we show it is coercive under suitable orthogonality conditions.

\begin{proposition} \label{prop:almost-symmetry-Lz}

There exists $C>0$ such that for all $a$ close to $a_{\infty}$ and $\nu>0$ small enough the following holds true. For $u$ and $v$ Schwartz functions we have
\begin{equation} \label{id:bilinear-form-linearized-1}
\langle \tilde{\Ls}^z u,v\rangle_*= \mathcal Q(u,v)+\tilde{\mathcal Q}(u,v),
\end{equation}
where $\mathcal Q$ is the following symmetric bilinear form:
\begin{align} \nonumber
\mathcal Q(u,v)&= -\int \nabla u.\nabla v \omega_\nu dz +2\int  uv (U_{1+2,\nu}-\beta) \omega_\nu dz -\int (v \nabla \Phi_{\chi^* u} +u \nabla \Phi_{\chi^* v} ).\nabla U_{1+2,\nu}\omega_\nu dz \\
\label{id:bilinear-form-linearized-2}&+c_* \nu^2 \left(\int \chi^{*}  uv dz- \int \chi^* \nabla \Phi_{\chi^* u}.\nabla \Phi_{\chi^* v}U_{1+2,\nu}dz \right)
\end{align}
and where $\tilde{\mathcal Q}$ is a lower order bilinear form (given by \eqref{bilinear-form:id:tilde-mathcal-Q}) which satisfies:
\begin{align} \label{bd:symmetry-Lz-mathcal Q}
|\tilde{\mathcal Q}(u,v)| & \leq C (\nu+|a-a_\infty|)\sqrt{ |\ln \nu|}  \min \Big( \nu \sqrt{|\ln \nu|}\| u\|_{H^1_{\omega_\nu}}\| v\|_{H^1_{\omega_\nu}} \ , \ \| u\|_{L^2_{\omega_\nu}}\| v\|_{H^1_{\omega_\nu}} \ ,  \ \| u\|_{H^1_{\omega_\nu}}\| v\|_{L^2_{\omega_\nu}} \Big)
\end{align}

\end{proposition}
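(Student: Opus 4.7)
The plan is to compute $\langle \tilde{\Ls}^z u, v\rangle_*$ directly by unfolding the definition \eqref{def:scalar12} of the scalar product, collect the pieces that assemble into the symmetric form $\mathcal Q(u,v)$ of \eqref{id:bilinear-form-linearized-2}, and place everything else into the remainder $\tilde{\mathcal Q}(u,v)$ to be estimated. Writing $\langle \tilde{\Ls}^z u, v\rangle_* = I + II$ with $I = \int (\tilde{\Ls}^z u)\, v\, \omega_\nu dz$ and $II = -c_* \nu^2 \int \chi^* (\tilde{\Ls}^z u) \Phi_{\chi^* v}\, dz$, the term $I$ is handled immediately by the identity \eqref{id:symmetry-Lsz-exterior} of Lemma \ref{lem:almost-symmetry-Lsz-exterior}: it produces the kinetic term $-\int \nabla u\cdot \nabla v\, \omega_\nu$ and the potential term $2\int uv(U_{1+2,\nu}-\beta)\omega_\nu$ which already appear in $\mathcal Q$, together with the non-symmetric coupling $-\int v\, \nabla \Phi_{\chi^*u}\cdot \nabla U_{1+2,\nu}\,\omega_\nu$ whose symmetric mate must then come from $II$.

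To treat $II$, I would write $\tilde{\Ls}^z$ in the divergence form \eqref{id:decomposition-Lsz-exterior} and integrate by parts twice: first transfer the outer divergence $\omega_\nu^{-1}\nabla\cdot(\omega_\nu\nabla u)$ onto $\chi^*\Phi_{\chi^*v}$, and then use the Poisson relation $-\Delta \Phi_{\chi^*v}=\chi^*v$ to turn $\int \Delta u\, \chi^*\Phi_{\chi^*v}$ into $\int \chi^* uv$ up to gradient commutators with $\chi^*$. This recovers the remaining symmetric pieces: the missing mate $-\int u\,\nabla\Phi_{\chi^*v}\cdot\nabla U_{1+2,\nu}\,\omega_\nu$ (from the coupling with $U_{1+2,\nu}\nabla\Phi_{\chi^*u}$ in divergence form), the local inner product $c_*\nu^2\int \chi^* uv$, and the Poisson–Poisson coupling $-c_*\nu^2\int \chi^*\nabla \Phi_{\chi^* u}\cdot \nabla \Phi_{\chi^* v}\, U_{1+2,\nu}$. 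All of these match \eqref{id:bilinear-form-linearized-2} up to the three natural sources of remainder collected in $\tilde{\mathcal Q}$.

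These remainder sources are: (a) commutators of $\tilde{\Ls}^z$ with $\chi^*$ (supported on the annulus $\{|z|\sim \zeta^*\}$, far from $\pm a$); (b) the weight mismatch between $\omega_\nu$ and $c_*\gamma_\nu$ in the inner zones around $\pm a$, which by the expansion \eqref{id:matched-product-expansion-weights} is of size $O(\nu^2+|z\mp a|+|a-a_\infty|)$ and thus produces the factor $(\nu+|a-a_\infty|)$ after integrating inner-zone quantities; and (c) the contribution of the drift $-\beta \Lambda u$ paired against $\chi^*\Phi_{\chi^*v}$, which I would handle by an extra integration by parts exploiting the Gaussian decay of $\omega_\nu$. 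The first group is bounded by Cauchy–Schwarz using $|\nabla U_{1+2,\nu}|\lesssim \nu^2$ in the annulus and the Gaussian decay; the second group is bounded by inner-zone Cauchy–Schwarz producing the $(\nu+|a-a_\infty|)$ factor; the third group uses the Hardy–Poincaré inequality \eqref{bd:hardy-poincare-H1-omeganu}.

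The final bound \eqref{bd:symmetry-Lz-mathcal Q} follows from Cauchy–Schwarz applied in three configurations. To estimate factors involving $\Phi_{\chi^*u}$, I would use the two-dimensional weighted logarithmic bound $\int \nu^2|\Phi_{\chi^* u}|^2 U_{1+2,\nu}\,dz \lesssim \|u\|_{L^2_{\omega_\nu}}^2$ from \eqref{scalarproduct1:temp-1}, and the logarithmic $L^\infty$ estimate $\|\Phi_{\chi^* u}\|_{L^\infty}\lesssim \sqrt{|\ln\nu|}\,\|u\|_{L^2_{\omega_\nu}}$ when a pointwise bound is needed—this is precisely where the $\sqrt{|\ln \nu|}$ enters. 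Placing both $u$ and $v$ in $H^1_{\omega_\nu}$ and letting each Poisson field absorb one factor of $\sqrt{|\ln \nu|}$ and one factor of $\nu$ produces the first alternative $\nu\sqrt{|\ln\nu|}\|u\|_{H^1_{\omega_\nu}}\|v\|_{H^1_{\omega_\nu}}$ in the minimum; trading one of the Poisson estimates for the direct $L^2_{\omega_\nu}$ control via the Hardy–Poincaré inequality produces the mixed $L^2_{\omega_\nu}\times H^1_{\omega_\nu}$ alternatives. The main obstacle will be the careful bookkeeping to ensure that every residual integral is either localised away from $\pm a$ (so that $\nabla U_{1+2,\nu}=O(\nu^2)$ contributes the $\nu$ factor) or localised near $\pm a$ (so that the weight mismatch contributes $\nu+|a-a_\infty|$), with none of them costing more than the single $\sqrt{|\ln \nu|}$ already accounted for.
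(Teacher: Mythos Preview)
Your overall decomposition $\langle\tilde{\Ls}^z u,v\rangle_* = I + II$ and the treatment of $I$ via \eqref{id:symmetry-Lsz-exterior} match the paper. But there is a structural gap in how you obtain the symmetric mate and identify the principal remainder.

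You claim the mate $-\int u\,\nabla\Phi_{\chi^*v}\cdot\nabla U_{1+2,\nu}\,\omega_\nu$ comes from integrating the divergence $\nabla\cdot(U_{1+2,\nu}\nabla\Phi_{\chi^*u})$ against $\chi^*\Phi_{\chi^*v}$; but that integration produces the Poisson--Poisson coupling $\int\chi^*U_{1+2,\nu}\nabla\Phi_{\chi^*u}\cdot\nabla\Phi_{\chi^*v}$, which you already list separately. What $II$ actually contributes from the drift $-\nabla\cdot(u(\nabla\Phi_{U_{1+2,\nu}}+\beta z))$ is $-c_*\nu^2\int\chi^*u\,(\nabla\Phi_{U_{1+2,\nu}}+\beta z)\cdot\nabla\Phi_{\chi^*v}$, which carries the weight $c_*\nu^2$ rather than $\omega_\nu$. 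The mate must therefore be added to $\mathcal Q$ by hand, and the principal remainder is precisely the \emph{difference}
\[
\tilde{\mathcal Q}_1(u,v)=\int\chi^*u\,\nabla\Phi_{\chi^*v}\cdot\Bigl(\nabla U_{1+2,\nu}\,\omega_\nu-c_*\nu^2(\nabla\Phi_{U_{1+2,\nu}}+\beta z)\Bigr),
\]
whose smallness is the heart of the estimate. Writing $\nabla U_{1,\nu}=U_{1,\nu}\nabla\Phi_{U_{1,\nu}}$, the parenthesis near $a$ equals $\nu^2\nabla\Phi_{U_{1,\nu}}\bigl(\tfrac{\nu^2}{U_{2,\nu}}e^{-\beta|z|^2/2}-c_*\bigr)-c_*\nu^2(\nabla\Phi_{U_{2,\nu}}+\beta z)$; the bracket vanishes at $z=a$ to $O(\nu^2)$ by the definition of $c_*$, and its \emph{gradient} vanishes at $a_\infty$ to $O(\nu^2)$ by the equilibrium condition $\nabla\Phi_{U_{2,\nu}}(a_\infty)+\beta a_\infty=O(\nu^2)$. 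This double cancellation gives the size $O((\nu^2+\nu|a-a_\infty|)(\nu+|z-a|))$. The factor $|a-a_\infty|$ does \emph{not} come from the weight expansion \eqref{id:matched-product-expansion-weights} (which contains no such term) but from the equilibrium position; your source (b) misattributes it.

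Separately, the pointwise bound $\|\Phi_{\chi^*u}\|_{L^\infty}\lesssim\sqrt{|\ln\nu|}\,\|u\|_{L^2_{\omega_\nu}}$ you invoke is false: the correct loss is $\nu^{-1}|\ln\nu|$. In the paper the $\sqrt{|\ln\nu|}$ factors arise instead from $L^1$ bounds on the inner-localized pieces, namely $\|f_{in,a}\|_{L^1}\lesssim\sqrt{|\ln\nu|}\,\|f\|_{H^1_{\omega_\nu}}$ via Cauchy--Schwarz against $\|(\nu+|z-a|)^{-1}\|_{L^2(|z-a|<\eta)}\approx\sqrt{|\ln\nu|}$ combined with the Hardy--Poincar\'e inequality, together with the bilinear Poisson estimate \eqref{annexe:bd:poisson-field-divergence-2} applied to $\tilde{\mathcal Q}_1$.
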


\begin{remark}

Thanks to \eqref{id:bilinear-form-linearized-1}, we have the almost symmetry identity
\begin{equation} \label{id:bilinear-form-linearized-symmetry}
\langle \tilde{\Ls}^z u,v\rangle_*= \langle  u,\tilde{\Ls}^z v\rangle_* +\tilde{\mathcal Q}(u,v)-\tilde{\mathcal Q}(v,u).
\end{equation}

The estimate \eqref{bd:symmetry-Lz-mathcal Q} shows that the lower order part $\tilde{\mathcal Q}$ of the bilinear form $(u,v)\mapsto \langle \tilde{\Ls}^z u,v\rangle_*$ gains one derivative and one factor $\nu+|a-a_\infty|$ in comparison with the leading order symmetric part $\mathcal Q$. This corresponds to two gains, one gain stems from the fact that we have matched at leading order the scalar products of the interior and exterior zones in the definition of $\langle \cdot, \cdot \rangle_*$, and the other from the fact that there is an extra cancellation if the two stationary states are in their equilibrium positions $a_\infty$ and $-a_\infty$.

\end{remark}

\begin{proof}

\noindent \textbf{Step 1}. \emph{Proof of \eqref{id:bilinear-form-linearized-1} and computation of $\tilde{\mathcal Q}$}. We first decompose by \eqref{def:scalar12}:
\begin{equation} \label{bilinear-form:id:decomposition-1}
\langle \tilde{\Ls}^z u,v\rangle_* = \int  \tilde{\Ls}^z u v\omega_\nu dz -c_* \nu^2 \int \chi^* \tilde{\Ls}^z u \Phi_{\chi^*v} dz.
\end{equation}
To compute the first term in \eqref{bilinear-form:id:decomposition-1}, we use the decomposition $\Lst^z u = \frac{1}{\omega_\nu } \nabla. (\omega_\nu \nabla u ) + 2(U_{1+2, \nu} - \beta) u-\nabla U_{1+2,\nu}.\nabla \Phi_{\chi^* u}$ from \eqref{id:decomposition-Lsz-exterior}. Therefore, this term is after integrating by parts
\begin{equation} \label{bilinear-form:id:decomposition-2}
 \int  \tilde{\Ls}^z u v\omega_\nu dz =  -\int \nabla u.\nabla v \omega_\nu dz +2\int  uv (U_{1+2,\nu}-\beta) \omega_\nu dz -\int \nabla U_{1+2,\nu}.\nabla \Phi_{\chi^* u} v\omega_\nu dz.
\end{equation}
To compute the second term in \eqref{bilinear-form:id:decomposition-1}, we use the decomposition
$$
\Lst^z u =\Delta u- \nabla. \Big( u ( \nabla \Phi_{U_{1 + 2, \nu}}+\beta z)\Big)-\nabla .( U_{1 + 2, \nu}.\nabla \Phi_{\chi^*u})+(1-\chi^*)U_{1+2,\nu}u 
$$
obtained from \eqref{def:L12ztilde} using $-\Delta \Phi_{\chi^*u}=\chi^* u$. We find
\begin{equation} \label{bilinear-form:id:decomposition-3}
\int \chi^* \tilde{\Ls}^z u \Phi_{\chi^*v}  =  \int \chi^* \left( \Delta u- \nabla. \Big( u  (\nabla \Phi_{U_{1 + 2, \nu}}+\beta z)\Big)-\nabla .( U_{1 + 2, \nu}.\nabla \Phi_{\chi^*u})+(1-\chi^*)U_{1+2,\nu}u \right)\Phi_{\chi^* v} .
\end{equation}
Using $\chi^* \Delta u =\Delta (\chi^* u)-2\nabla \chi^*.\nabla u-\Delta \chi^* u$, integrating by parts and using $-\Delta \Phi_{\chi^* v}=\chi^* v$, the first term in the right-hand side of \eqref{bilinear-form:id:decomposition-3} is
\begin{equation} \label{bilinear-form:id:decomposition-4}
 \int \chi^* \Delta u  \Phi_{\chi^* v} =-\int \chi^* u v +\int \chi^* (1-\chi^*)uv-\int (2\nabla \chi^* +\Delta \chi^*)u \Phi_{\chi^* v}.
\end{equation}
Integrating by parts, the second term in the right-hand side of \eqref{bilinear-form:id:decomposition-3} is
\begin{equation} \label{bilinear-form:id:decomposition-5}
- \int \chi^* \nabla. \Big( u  \nabla \Phi_{U_{1 + 2, \nu}}+\beta z \Big) \Phi_{\chi^* v} = \int \chi^* u ( \nabla \Phi_{U_{1 + 2, \nu}}+\beta z).\nabla \Phi_{\chi^* v} +\int \nabla \chi^*.( \nabla \Phi_{U_{1 + 2, \nu}} +\beta z)u  \Phi_{\chi^* v} .
\end{equation}
Integrating by parts, the third term in the right-hand side of \eqref{bilinear-form:id:decomposition-3} is
\begin{equation} \label{bilinear-form:id:decomposition-6}
- \int \chi^* \nabla .( U_{1 + 2, \nu}.\nabla \Phi_{\chi^*u}) \Phi_{\chi^* v} = \int \chi^*  U_{1 + 2, \nu}.\nabla \Phi_{\chi^*u}.\nabla \Phi_{\chi^* v} +\int U_{1 + 2, \nu} \nabla \chi^* .\nabla \Phi_{\chi^*u}  \Phi_{\chi^* v}.
\end{equation}
Injecting \eqref{bilinear-form:id:decomposition-4}, \eqref{bilinear-form:id:decomposition-5} and \eqref{bilinear-form:id:decomposition-6} in \eqref{bilinear-form:id:decomposition-3} we get that the second term in \eqref{bilinear-form:id:decomposition-1} is
\begin{align} 
\nonumber \int \chi^* \tilde{\Ls}^z u \Phi_{\chi^*v} dz  & =  -\int \chi^* u v + \int \chi^* u ( \nabla \Phi_{U_{1 + 2, \nu}}+\beta z).\nabla \Phi_{\chi^* v} + \int \chi^*  U_{1 + 2, \nu}.\nabla \Phi_{\chi^*u}.\nabla \Phi_{\chi^* v}   \\
\nonumber &+ \int \chi^* (1-\chi^*)uv-\int (2\nabla \chi^* +\Delta \chi^*)u\Phi_{\chi^* v} +\int \nabla \chi^*. (\nabla \Phi_{U_{1 + 2, \nu}} +\beta z)u  \Phi_{\chi^* v}\\
\label{bilinear-form:id:decomposition-7} &+\int U_{1 + 2, \nu} \nabla \chi^* .\nabla \Phi_{\chi^*u}  \Phi_{\chi^* v} + \int \chi^* \left((1-\chi^*)U_{1+2,\nu}u\right)\Phi_{\chi^* v} dz.
\end{align}
Injecting \eqref{bilinear-form:id:decomposition-2} and \eqref{bilinear-form:id:decomposition-7} in \eqref{bilinear-form:id:decomposition-1} we obtain
\begin{align} \label{bilinear-form:id:decomposition-7}
\langle \tilde{\Ls}^z u,v\rangle_* & =   -\int \nabla u.\nabla v \omega_\nu dz +2\int  uv (U_{1+2,\nu}-\beta) \omega_\nu dz -\int \nabla U_{1+2,\nu}.\nabla \Phi_{\chi^* u} v\omega_\nu dz \\
\nonumber & +c_* \nu^2 \left(\int \chi^* u v - \int \chi^* u ( \nabla \Phi_{U_{1 + 2, \nu}}+\beta z).\nabla \Phi_{\chi^* v} - \int \chi^*  U_{1 + 2, \nu}.\nabla \Phi_{\chi^*u}.\nabla \Phi_{\chi^* v} \right)  \\
\nonumber &- c_*\nu^2 \Bigg( \int \chi^* (1-\chi^*)uv-\int (2\nabla \chi^* +\Delta \chi^*)u\Phi_{\chi^* v} +\int \nabla \chi^*.( \nabla \Phi_{U_{1 + 2, \nu}} +\beta z)u  \Phi_{\chi^* v}\\
\nonumber &\qquad\qquad +\int U_{1 + 2, \nu} \nabla \chi^* .\nabla \Phi_{\chi^*u}  \Phi_{\chi^* v} + \int \chi^* (1-\chi^*)U_{1+2,\nu}u \Phi_{\chi^* v} dz\Bigg)
\end{align}
Combining \eqref{bilinear-form:id:decomposition-7} and \eqref{id:bilinear-form-linearized-2}, we obtain the desired decomposition \eqref{id:bilinear-form-linearized-1} with $\tilde{\mathcal Q}$ given by
\begin{align} \label{bilinear-form:id:tilde-mathcal-Q}
\tilde{\mathcal Q}(u,v) & = \int \chi^* u \nabla \Phi_{\chi^* v}.(\nabla U_{1+2,\nu} \omega_\nu -c_* \nu^2( \nabla \Phi_{U_{1 + 2, \nu}}+\beta z))  \\
\nonumber & - c_*\nu^2 \Bigg( \int \chi^* (1-\chi^*)uv-\int (2\nabla \chi^* +\Delta \chi^*)u\Phi_{\chi^* v} +\int \nabla \chi^*.( \nabla \Phi_{U_{1 + 2, \nu}} +\beta z)u  \Phi_{\chi^* v}\\
\nonumber &\qquad\qquad +\int U_{1 + 2, \nu} \nabla \chi^* .\nabla \Phi_{\chi^*u}  \Phi_{\chi^* v} + \int \chi^* (1-\chi^*)U_{1+2,\nu}u \Phi_{\chi^* v} dz\Bigg)
\end{align}

\noindent \textbf{Step 2.} \emph{Proof of \eqref{bd:symmetry-Lz-mathcal Q}}. We decompose \eqref{bilinear-form:id:tilde-mathcal-Q} into
\begin{equation} \label{bilinear-form:id:tilde-mathcal-Q-decomposition}
\tilde{\mathcal Q}(u,v)  =\tilde{\mathcal Q}_1(u,v) +\tilde{\mathcal Q}_2(u,v) +\tilde{\mathcal Q}_3(u,v) 
\end{equation}
where
\begin{align*}
& \tilde{\mathcal Q}_1(u,v) = \int \chi^* u \nabla \Phi_{\chi^* v}.(\nabla U_{1+2,\nu} \omega_\nu -c_* \nu^2( \nabla \Phi_{U_{1 + 2, \nu}}+\beta z)) , \\
& \tilde{\mathcal Q}_2(u,v)=- c_*\nu^2 \Bigg( \int \chi^* (1-\chi^*)uv-\int (2\nabla \chi^* +\Delta \chi^*)u\Phi_{\chi^* v} +\int \nabla \chi^*.( \nabla \Phi_{U_{1 + 2, \nu}} +\beta z)u  \Phi_{\chi^* v}\\
 &\qquad\qquad \qquad \qquad +\int U_{1 + 2, \nu} \nabla \chi^* .\nabla \Phi_{\chi^*u}  \Phi_{\chi^* v} + \int \chi^* (1-\chi^*)U_{1+2,\nu}u \Phi_{\chi^* v} dz\Bigg), \\
 & \tilde{\mathcal Q}_3(u,v)=- c_*\nu^2 \int U_{1 + 2, \nu} \nabla \chi^* .\nabla \Phi_{\chi^*u}  \Phi_{\chi^* v} .
\end{align*}
The desired estimate \eqref{bd:symmetry-Lz-mathcal Q} is then a consequence of the estimate \eqref{bilinear-form:bd:tilde-mathcal-Q-1}, \eqref{bilinear-form:bd:tilde-mathcal-Q-2} and \eqref{bilinear-form:bd:tilde-mathcal-Q-3} we prove below for $\tilde{\mathcal Q}_1$, $\tilde{\mathcal Q}_2$ and $\tilde{\mathcal Q}_3$ respectively.

Let $\chi$ be the cut-off function defined by \eqref{def:chiK}. We decompose for $\eta>0$ small,
\begin{align}
\nonumber  u &=  u \chi(\frac{z-a}{\eta})+f\chi(\frac{z+a}{\eta})+u \left(1-\chi(\frac{z-a}{\eta})-\chi(\frac{z+a}{\eta})\right) \\
\label{symmetry-Lztilde-matched-product-def-vin-vex} &= u_{in,a}+u_{in,-a}+u_{ex}\\
\nonumber &= u_{in}+u_{ex}
\end{align}
and
\begin{align}
\nonumber v &= v \chi(\frac{z-a}{4\eta})+u\chi(\frac{z+a}{4\eta})+v \left(1-\chi(\frac{z-a}{4\eta})-\chi(\frac{z+a}{4\eta})\right) \\
\label{symmetry-Lztilde-matched-product-def-uin-uex} &= v_{in,a}+v_{in,-a}+v_{ex}\\
\nonumber &= v_{in}+v_{ex}.
\end{align}
Without mentioning it explicitly, we shall use numerous times that for $f=u,v$,
$$
|f_{ex}|+|f_{in,a}|+|f_{in,-a}|+|f_{in}|\lesssim |f| \quad \mbox{and} \quad|\nabla f_{ex}|+|\nabla f_{in,a}|+|\nabla f_{in,-a}|+|\nabla f_{in}|\lesssim |f|+|\nabla f| 
$$
and that $\omega_\nu \approx_\eta 1$ on the set $\{z\leq 20\}\cap_{\pm}\{|z\pm a|\geq \eta\}$ which contains the supports of $\chi^* u_{ex}$ and $\chi^* v_{ex}$.

\smallskip 

\noindent \underline{Estimate for $\tilde{\mathcal Q}_1(u,v)$}. We decompose:
\begin{align} 
\nonumber \tilde{\mathcal Q}_1(u,v) & = \int  (u_{in,a} \nabla \Phi_{ v_{in,a}}+u_{in,-a} \nabla \Phi_{ v_{in,-a}}+u_{in,a} \nabla \Phi_{ v_{in,-a}+\chi^* v_{ex}}+u_{in,-a} \nabla \Phi_{ v_{in,a}+\chi^* v_{ex}}+u_{ex} \nabla \Phi_{\chi^* v}) \\
\label{eva-air-1} &\qquad \qquad \qquad  .(\nabla U_{1+2,\nu} \omega_\nu -c_* \nu^2\chi^*( \nabla \Phi_{U_{1 + 2, \nu}}+\beta z))dz
\end{align}
We compute that in the above integral
\begin{align}
\nonumber \nabla U_{1+2,\nu} \omega_\nu-c_* \nu^2( \nabla \Phi_{U_{1 + 2, \nu}}+\beta z) &= \nabla U_{1,\nu}\frac{\nu^4 e^{-\beta |z|^2/2}}{U_{1,\nu}U_{2,\nu}}+\nabla U_{2,\nu}  \omega_\nu-c_* \nu^2( \nabla \Phi_{U_{1, \nu}}+\nabla \Phi_{U_{2,\nu}}+\beta z) \\
\label{eva-air-100}&= \nu^2 \nabla \Phi_{U_{1,\nu}} \left(\frac{\nu^2}{U_{2,\nu}}e^{-\beta |z|^2/2}-c_*\right)-c_* \nu^2( \nabla \Phi_{U_{2,\nu}}+\beta z).
\end{align}
Let now $z$ be a point near $a$. By definition of $c_*$, and by definition of $a_\infty$, we have
\begin{equation} \label{eva-air-101}
\frac{\nu^2}{U_{2,\nu}(a)}e^{-\beta |a|^2/2}-c_*=O(\nu^2) \quad \mbox{and} \quad \nabla \Phi_{U_{2,\nu}}(a_\infty)+\beta z(a_\infty)=O(\nu^2).
\end{equation}
Since
$$
\nabla \left(\frac{\nu^2}{U_{2,\nu}}e^{-\beta |z|^2/2}\right)=-\left( \frac{\nabla U_{2,\nu}}{U_{2,\nu}}+\beta z \right)\frac{\nu^2}{U_{2,\nu}}e^{-\beta |z|^2/2}=-\left(\nabla \Phi_{U_{2,\nu}}+\beta z \right)\frac{\nu^2}{U_{2,\nu}}e^{-\beta |z|^2/2},
$$
this implies
$$
\nabla \left(\frac{\nu^2}{U_{2,\nu}}e^{-\beta |z|^2/2}\right) (a_\infty)=O(\nu^2).
$$
Combining, we perform Taylor expansions and get
\begin{align}
\nonumber & \frac{\nu^2}{U_{2,\nu}(z)}e^{-\beta |z|^2/2}-c_*  \ =\frac{\nu^2}{U_{2,\nu}(a)}e^{-\beta |a|^2/2}-c_*+\frac{\nu^2}{U_{2,\nu}(z)}e^{-\beta |z|^2/2}-\frac{\nu^2}{U_{2,\nu}(a)}e^{-\beta |a|^2/2} \\
\nonumber &= O(\nu^2) +(z-a).\nabla \left(\frac{\nu^2}{U_{2,\nu}(\cdot)}e^{-\beta |\cdot |^2/2}\right)(a)+O(|z-a|^2)\\
\nonumber &= O(\nu^2) +(z-a).\left(\nabla \left(\frac{\nu^2}{U_{2,\nu}(\cdot)}e^{-\beta |\cdot |^2/2}\right)(a)-\nabla \left(\frac{\nu^2}{U_{2,\nu}(\cdot)}e^{-\beta |\cdot |^2/2}\right)(a_\infty)\right)+O(|z-a|^2)\\
\label{eva-air-102}&= O(\nu^2+|z-a||a-a_\infty|+ |z-a|^2)
\end{align}
Injectin \eqref{eva-air-100} and \eqref{eva-air-101}, using that $\nabla \Phi_{U_{1,\nu}}=O((\nu+|z-a|)^{-1})$ and a Taylor expansion, we obtain
\begin{align}
\label{eva-air-2} \nabla U_{1+2,\nu}(z) \omega_\nu(z)-c_* \nu^2\chi^*(z)( \nabla \Phi_{U_{1 + 2, \nu}}(z)+\beta z)   &= O((\nu^2+\nu|a-a_\infty|)(\nu+ |z-a|)).
\end{align}
For the first term in \eqref{eva-air-1}, we use \eqref{eva-air-2} and then the estimate \eqref{annexe:bd:poisson-field-divergence-2} with variable $y=z-a$ and $\lambda=\nu$ and get
\begin{align*}
& \left|\int  u_{in,a} \nabla \Phi_{ v_{in,a}}.(\nabla U_{1+2,\nu} \omega_\nu -c_* \nu^2\chi^* ( \nabla \Phi_{U_{1 + 2, \nu}}+\beta z)\right| \\
& = \left| \int O((\nu^2+\nu|a-a_\infty|)(\nu+ |z-a|)) |u_{in,a}|) \nabla \Phi_{ v_{in,a}} \right|\\
& \lesssim (\nu^2+\nu |a-a_\infty|) \left( \int (\nu+|z-a|)^2 u_{in,a}^2 dz \right)^{\frac 12} \left( \int (\nu+|z-a|)^2 v_{in,a}^2 dz \right)^{\frac 12}\\
&+ (\nu^2+\nu |a-a_\infty|) \int  |u_{in,a}| dz  \int |v_{in,a}| dz .
\end{align*}
Using the Hardy-Poincar\'e inequality \eqref{bd:hardy-poincare-H1-omeganu} and \eqref{bd:equivalence-weight-omeganu}, we bound for $f=u,v$,
$$
 \left( \int (\nu+|z-a|)^2 f_{in,a}^2 dz \right)^{\frac 12} \lesssim \min \left( \| f\|_{H^1_{\omega_\nu}} \ , \nu^{-1}\| f\|_{L^2_{\omega_\nu}} \right)  ,
$$
and, using Cauchy-Schwarz with $\| (\nu+|z-a|)^{-1}\mathbbm 1(|z-a|<\eta)\|_{L^2}\approx \sqrt{|\ln \nu|}$ or  $\| (\nu+|z-a|)^{-2}\mathbbm 1(|z-a|<\eta)\|_{L^2}\approx \nu^{-1}$ and \eqref{bd:equivalence-weight-omeganu}, then \eqref{bd:hardy-poincare-H1-omeganu}, we bound
\begin{align} 
\nonumber \| f_{in,a}\|_{L^1} & \lesssim  \min \left( \sqrt{|\ln \nu|} \| (\nu+|z-a|)^{-1} f\|_{L^2_{\omega_\nu}} \ , \nu^{-1}\| f\|_{L^2_{\omega_\nu}} \right)\\
\label{eva-air-1000} & \lesssim  \min \left( \sqrt{|\ln \nu|}\| f\|_{H^1_{\omega_\nu}} \ , \nu^{-1}\| f\|_{L^2_{\omega_\nu}} \right) .
\end{align}
Combining the three inequalities above shows
\begin{align*}
& \left|\int u_{in,a} \nabla \Phi_{ v_{in,a}}.(\nabla U_{1+2,\nu} \omega_\nu -c_* \nu^2\chi^*  ( \nabla \Phi_{U_{1 + 2, \nu}}+\beta z)\right| \\
&\lesssim (\nu+|a-a_{\infty}|)|\ln \nu|^{1/2} \min \left(\nu \sqrt{|\ln \nu|}  \| u\|_{H^1_{\omega_\nu}}\| v\|_{H^1_{\omega_\nu}} \ , \  \| u\|_{L^2_{\omega_\nu}}\| v\|_{H^1_{\omega_\nu}} \ , \ \| u\|_{H^1_{\omega_\nu}}\| v\|_{L^2_{\omega_\nu}}\right).
\end{align*}
The second term in \eqref{eva-air-1} at $-a$ can be bounded by the exact same estimates, so that we obtain
\begin{align}
\nonumber & \left|\int  (u_{in,a} \nabla \Phi_{ v_{in,a}}+u_{in,-a} \nabla \Phi_{ v_{in,-a}}).(\nabla U_{1+2,\nu} \omega_\nu -c_* \nu^2\chi^* ( \nabla \Phi_{U_{1 + 2, \nu}}+\beta z))\right| \\
\label{br086}&\lesssim  \min \Big( \nu (\nu+|a-a_{\infty}|)|\ln \nu| \| u\|_{H^1_{\omega_\nu}}\| v\|_{H^1_{\omega_\nu}} \ , \  (\nu+|a-a_{\infty}|)|\ln \nu|^{1/2} \| u\|_{L^2_{\omega_\nu}}\| v\|_{H^1_{\omega_\nu}}\ ,\\
& \qquad \qquad \qquad  \ (\nu+|a-a_{\infty}|)|\ln \nu|^{1/2} \| u\|_{H^1_{\omega_\nu}}\| v\|_{L^2_{\omega_\nu}} \Big)
\end{align}
For the third term in \eqref{eva-air-1} we bound using \eqref{eva-air-2},
\begin{align}
\nonumber & \left| \int u_{in,a} \nabla \Phi_{ v_{in,-a}+\chi^* v_{ex}} (\nabla U_{1+2,\nu} \omega_\nu -c_* \nu^2\chi^* ( \nabla \Phi_{U_{1 + 2, \nu}}+\beta z))\right| \\
\nonumber &\lesssim (\nu^2+\nu|a-a_\infty|) \| (\nu+ |z-a|)) u_{in,a}\|_{L^1} \| \nabla \Phi_{ v_{in,-a}+\chi^* v_{ex}}\|_{L^\infty (|z-a|<2\eta)} \\
\label{eva-air-200} &\lesssim (\nu^2+\nu|a-a_\infty|) \| (\nu+ |z-a|)) u_{in,a}\|_{L^1} \| { v_{in,-a}+\chi^* v_{ex}}\|_{L^1}.
\end{align}
Above, we used that for $z$ in the support of $u_{in,a}$ and $z'$ in the support of $v_{in,-a}+\chi^* v_{ex}$ we have $|z-z'|\geq 2\eta$ so that $|\nabla \Phi_{v_{in,-a}+\chi^* v_{ex}}(z)|\lesssim \int |\ln |z-z'|| |v_{in,-a}+\chi^* v_{ex}|(z')dz'\lesssim \| v_{in,-a}+\chi^* v_{ex}\|_{L^1}$. We have by Cauchy-Schwarz, \eqref{bd:equivalence-weight-omeganu} and \eqref{bd:hardy-poincare-H1-omeganu}:
\begin{align}
\nonumber \| (\nu+ |z-a|)) u_{in,a}\|_{L^1} & \lesssim  \| (\nu+ |z-a|)) u_{in,a}\|_{L^2}\lesssim \| (\nu+ |z-a|))^{-1} u_{in,a}\|_{L^2_{\omega_\nu}}\\
\label{eva-air-201}& \quad \lesssim \min\left(\nu^{-1}\|  u\|_{L^2_{\omega_\nu}} \ , \ \| u\|_{H^1_{\omega_\nu}} \right).
\end{align}
Also, decomposing $v=v_{in}+v_{ex}$, using \eqref{eva-air-1000} and Cauchy-Schwarz with the fact that $\omega_\nu\approx 1$ on the support of $v_{ex}$,
\begin{align} \label{eva-air-202}
 \|  \chi^* v \|_{L^1} \lesssim  \| v_{in} \|_{L^1}+ \| \chi^* v_{ex} \|_{L^1} \lesssim \min\left(\sqrt{|\ln \nu|}\|  v\|_{H^1_{\omega_\nu}} \ , \ \nu^{-1} \| v\|_{L^2_{\omega_\nu}} \right)
\end{align}
Injecting \eqref{eva-air-201} and \eqref{eva-air-202} into \eqref{eva-air-200}, noticing that the fourth term in \eqref{eva-air-1} can be estimated by a symmetric reasoning, we get
\begin{align}
\nonumber & \left| \int (u_{in,a} \nabla \Phi_{ v_{in,-a}}+u_{in,-a} \nabla \Phi_{ v_{in,a}}) (\nabla U_{1+2,\nu} \omega_\nu -c_* \nu^2\chi^* ( \nabla \Phi_{U_{1 + 2, \nu}}+\beta z))\right| \\
\label{br087} &\lesssim  (\nu+|a-a_\infty|)|\ln \nu|^{1/2} \min \left(\nu \| u\|_{H^1_{\omega_\nu}}\| v\|_{H^1_{\omega_\nu}} \ , \  \| u\|_{H^1_{\omega_\nu}}\| v\|_{L^2_{\omega_\nu}}\ , \| u\|_{L^2_{\omega_\nu}}\| v\|_{H^1_{\omega_\nu}}\right).
\end{align}
For the fifth and last term in \eqref{eva-air-1} we decompose $\chi^* v=\tilde v_{in}+\tilde v_{ex}$ where $\tilde v_{in,a}= v \chi(\frac{4(z-a)}{\eta})+v\chi(\frac{4(z+a)}{\eta})$ and $\tilde v_{ex}=\chi^* v-\tilde v_{in}$. Using that $u_{ex}$ has support inside $\cap_\pm \{|z\pm a|\geq \eta\}$ where $|U_{1+2,\nu}|\lesssim \nu^2$ and $|\nabla \Phi_{U_{1 + 2, \nu}}|\lesssim 1$, we bound
\begin{align}
\nonumber & \left| \int u_{ex} \nabla \Phi_{ v} (\nabla U_{1+2,\nu} \omega_\nu -c_* \nu^2\chi^* ( \nabla \Phi_{U_{1 + 2, \nu}}+\beta z))\right|\\
\nonumber &\lesssim \nu^2 \| u_{ex}\omega_{\nu}\|_{L^1} \| \nabla \Phi_{ \tilde v_{in}}\|_{L^{\infty}(\cap_{\pm}|z\pm a|>\eta)}+ \nu^2 \| u_{ex}\|_{L^2_{\omega_\nu}} \| \nabla \Phi_{ \tilde v_{ex}}\|_{L^{2}_{\omega_\nu}}\\
\label{br088}&\lesssim \min \left( \nu \| u \|_{L^2_{\omega_\nu}} \|  v \|_{L^{2}_{\omega_\nu}} \ , \ \nu^2\sqrt{|\ln \nu|} \| u \|_{L^2_{\omega_\nu}} \|  v \|_{H^{1}_{\omega_\nu}}\right)
\end{align}
where we used that for $z$ in the support of $u_{ex}$ and $z'$ in the support of $\tilde v_{in}$ we have $|z-z'|\geq \eta/2$ so that $|\nabla \Phi_{\tilde v_{in}}(z)|\lesssim \int |z-z'|^{-1}|\tilde v_{in}(z')|dz'\lesssim \| \tilde v_{in}\|_{L^1}\lesssim \min(\sqrt{|\ln \nu|}\| v\|_{H^1_{\omega_\nu}}, \nu^{-1}\| v\|_{L^2_{\omega_\nu}})$ by \eqref{eva-air-202} and that  $\| \tilde v_{ex}\|_{L^2}\lesssim \| \mathbbm 1(|z|<10)|z|^{-1}\|_{L^1}\| \tilde v_{ex}\|_{L^2}\lesssim \| v\|_{L^2_{\omega_\nu}}$ using that $\tilde v_{ex}$ is supported inside $\{|z|<20\}\cap \{|z\pm a|>\eta/4\}$ and Young's inequality for convolution.

Injecting \eqref{br086}, \eqref{br087} and \eqref{br088} in \eqref{eva-air-1} shows:
\begin{equation}  \label{bilinear-form:bd:tilde-mathcal-Q-1}
|\tilde{\mathcal Q}_1(u,v)|\lesssim (\nu+|a-a_{\infty}|)\sqrt{|\ln \nu|} \min \left(\nu \sqrt{|\ln \nu|}  \| u\|_{H^1_{\omega_\nu}}\| v\|_{H^1_{\omega_\nu}} \ , \  \| u\|_{L^2_{\omega_\nu}}\| v\|_{H^1_{\omega_\nu}} \ , \ \| u\|_{H^1_{\omega_\nu}}\| v\|_{L^2_{\omega_\nu}}\right)
\end{equation}

\smallskip 

\noindent \underline{Estimate for $\tilde{\mathcal Q}_2(u,v)$}. Using that the functions $\chi^* (1-\chi^*)$, $\nabla \chi^*$ and $\Delta \chi^*$ all have their support included in the zone $\{10\leq |z|\leq 20\}$ where $\omega_\nu \approx 1$, $|\nabla \Phi_{U_{1+2,\nu}}|\lesssim 1$ and $|U_{1+2,\nu}|\lesssim \nu^2$, we bound
\begin{align} \nonumber
& \Bigg| \int \chi^* (1-\chi^*)uv+u\Phi_{\chi^* v} \left( -(2\nabla \chi^* +\Delta \chi^*) +\nabla \chi^*. (\nabla \Phi_{U_{1 + 2, \nu}} +\beta z)+ \chi^* (1-\chi^*)U_{1+2,\nu}\right) dz\Bigg|\\
 \label{bilinear-form:bd:tilde-mathcal-Q-1} &\qquad \lesssim \int_{10<|z|<20} |u||v|\omega_\nu +\int_{10<|z|<20} |u| |\Phi_{\chi^* v}|\omega_\nu \lesssim \| u\|_{L^2_{\omega_\nu}}\left( \| v\|_{L^2_{\omega_\nu}}+ \| \Phi_{\chi^* v}\|_{L^2(10<|z|<20)}\right).
\end{align}
Let $z\in \mathbb R^2$ with $10< |z|< 20$. We have for $z'$ in the support of $f_{in}$, since this set is supported in $\cup_{\pm}\{|z-a|\leq 2\eta\}\subset \{|z|\leq 6\}$ as $|a|<5$ and $ \eta<1$, that $|z-z'|>1$. Hence, using \eqref{eva-air-1000},
\begin{equation}\label{symmetry-Lz-mathcal Q-mixed-tech-1}
|\nabla^k \Phi_{f_{in}}(z)|\lesssim \int |\nabla^k\ln |z-z'|||f_{in}|dz'\lesssim \| f_{in}\|_{L^1}\lesssim \min \left( \sqrt{|\ln \nu|}\| f\|_{H^1_{\omega_\nu}} \ , \nu^{-1}\| f\|_{L^2_{\omega_\nu}} \right)
\end{equation}
for $k=0,1$. We have $\omega_\nu \lesssim \langle z \rangle^{-4}$ on $\mathbb R^2$, and $\omega_\nu \approx_\eta 1$ for $\nu$ small enough on the support of $f_{ex}$. Hence by \eqref{scalarproduct1:temp}:
\begin{equation} \label{temp666}
\| \Phi_{\chi^*f_{ex}}\|_{L^2_{\omega_\nu}}^2 \lesssim \int \langle z \rangle^{-4} |\Phi_{\chi^*f_{ex}}|^2 dz \lesssim \int \langle z \rangle^{4} \chi^{*2} f_{ex}^2\lesssim \| f\|_{L^2_{\omega_\nu}}^2
\end{equation}
We decompose $\Phi_{\chi^* v}=\Phi_{v_{in}}+\Phi_{\chi^* v_{ex}}$  in \eqref{bilinear-form:bd:tilde-mathcal-Q-1}, and then use \eqref{symmetry-Lz-mathcal Q-mixed-tech-1} and \eqref{temp666} to obtain
\begin{align*} 
& \left| \int \chi^* (1-\chi^*)uv+u\Phi_{\chi^* v} \left( -(2\nabla \chi^* +\Delta \chi^*) +\nabla \chi^*. (\nabla \Phi_{U_{1 + 2, \nu}}+\beta z) + \chi^* (1-\chi^*)U_{1+2,\nu}\right) dz \right| \\ 
&\qquad\qquad \lesssim  \| u\|_{L^2_{\omega_\nu}}\left( \| v\|_{L^2_{\omega_\nu}}+ \| \Phi_{v_{in}}\|_{L^2(10<|z|<20)}+ \| \Phi_{v_{ex}}\|_{L^2(10<|z|<20)}\right)\\
&\qquad \qquad \lesssim  \| u\|_{L^2_{\omega_\nu}}  \min \left( \sqrt{|\ln \nu|}\| v\|_{H^1_{\omega_\nu}} \ , \nu^{-1}\| v\|_{L^2_{\omega_\nu}} \right)
\end{align*}
This shows
\begin{equation}  \label{bilinear-form:bd:tilde-mathcal-Q-2}
|\tilde{\mathcal Q}_2(u,v)|\lesssim  \nu \| u\|_{L^2_{\omega_\nu}}   \min \left(\nu \sqrt{|\ln \nu|}\| v\|_{H^1_{\omega_\nu}} \ , \| v\|_{L^2_{\omega_\nu}} \right)
\end{equation}

\smallskip

\noindent \underline{Estimate for $\tilde{\mathcal Q}_3(u,v)$}. We use that $|U_{1+2,\nu}|\lesssim \nu^2$ and $\omega_\nu\approx 1$ on the support of $\nabla \chi^*$ which is contained in the zone $\{10<|z|<20\}$, and the decompositions $\nabla \Phi_{\chi^*u} =\nabla \Phi_{u_{in}}+\nabla \Phi_{\chi^* u_{ex}}$ and $\Phi_{\chi^* v}=\Phi_{v_{in}}+\Phi_{v_{ex}}$ so that by H\"older
\begin{align}
\label{bilinear-form:bd:tilde-mathcal-Q-3}  & \left| \int U_{1 + 2, \nu} \nabla \chi^* .\nabla \Phi_{\chi^*u}  \Phi_{\chi^* v} \right| \ \lesssim \nu^2 \int_{10<|z|<20} (|\nabla \Phi_{u_{in}}|+|\nabla \Phi_{\chi^* u_{ex}}|) (|\Phi_{v_{in}}|+|\Phi_{\chi^* v_{ex}}|)\omega_\nu\\
\nonumber & \lesssim  \nu^2 (\| \nabla \Phi_{u_{in}}\|_{L^1(10<|z|<20)}+ \| \nabla \Phi_{\chi^* u_{ex}}\|_{L^1(10<|z|<20)}) ( \| \Phi_{v_{in}}\|_{L^\infty(10<|z|<20)}+\| \Phi_{\chi^* v_{ex}}\|_{L^\infty(10<|z|<20)} ).
\end{align}
By \eqref{symmetry-Lz-mathcal Q-mixed-tech-1} we have
$$
\| \nabla \Phi_{u_{in}}\|_{L^1(10<|z|<20)}\lesssim \nu^{-1}\| u \|_{L^2_{\omega_\nu}}  \quad \mbox{and}\quad \| \Phi_{v_{in}}\|_{L^\infty(10<|z|<20)}\lesssim \nu^{-1}\| v \|_{L^2_{\omega_\nu}}
$$
We have using successively H\"older, Sobolev, Lebesgue estimates for the Riesz kernel, the identity $\Delta \Phi_{\chi^* u_{ex}}=-\chi^* u_{ex}$ and H\"older again:
$$
\| \nabla \Phi_{\chi^* u_{ex}}\|_{L^1(10<|z|<20)})\lesssim \| \nabla \Phi_{\chi^* u_{ex}}\|_{L^6} \lesssim \| \nabla^2 \Phi_{\chi^* u_{ex}}\|_{L^{3/2}}\lesssim \| \Delta \Phi_{\chi^* u_{ex}}\|_{L^{3/2}}\lesssim \| \chi^* u_{ex}\|_{L^{3/2}}\lesssim \| u\|_{L^2_{\omega_\nu}}.
$$
Using Young's inequality for convolution with that fact that $\chi^* v_{ex}$ has support inside $\{|z|<20\}$, we have
$$
\| \Phi_{\chi^*v_{ex}}\|_{L^\infty(10<|z|<20)}  \lesssim \| \ln |\cdot| * \chi^*v_{ex}\|_{L^\infty(10<|z|<20)} \lesssim \| \chi^* v_{ex}\|_{L^2}\lesssim \| v\|_{L^2_{\omega_\nu}}.
$$
Injecting the three inequalities above in \eqref{bilinear-form:bd:tilde-mathcal-Q-3} shows
\begin{equation}  \label{bilinear-form:bd:tilde-mathcal-Q-2}
|\tilde{\mathcal Q}_3(u,v)|\lesssim  \nu^2 \| u\|_{L^2_{\omega_\nu}}\| v\|_{L^2_{\omega_\nu}}.
\end{equation}

\end{proof}

\begin{proposition}[Coercivity of the operator for the matched product] \label{Prop:coercive-Lz-global}

There exists $\delta>0$ such that for $\eta>0$ small enough, for $a$ close to $a_\infty$ and $\nu$ close to $0$, if $u$ is a Schwartz function satisfying the orthogonality conditions \eqref{scalarproduct1:orthogonality-condition-coercivity-operator}, then
\begin{equation} \label{bd:coercivity-Lztilde-matched-product}
-\langle \tilde{\Ls}^z u,u\rangle_*\geq \delta \int |\nabla u|^2\omega_{\nu}dz+\delta \int \left(\frac{1}{(\nu+|z-a|)^2})+\frac{1}{(\nu+|z+a|)^2})+\langle z \rangle^2\right)u^2 \omega_\nu dz.
\end{equation}

\end{proposition}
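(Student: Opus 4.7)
The strategy is to combine the two coercivity estimates already established at the interior scale (Lemma \ref{lem:almost-symmetry-Lsz-interior}, using $\langle\cdot,\cdot\rangle_\flat$) and at the exterior scale (Lemma \ref{lem:almost-symmetry-Lsz-exterior}, using $\langle\cdot,\cdot\rangle_{\omega_\nu}$), by exploiting the matching built into the scalar product $\langle\cdot,\cdot\rangle_*$. The starting point is the decomposition
\begin{equation*}
\langle u,v\rangle_* \;=\; \langle u,v\rangle_{\omega_\nu} + c_*\langle \chi^* u,\chi^* v\rangle_\flat - c_*\langle \chi^* u,\chi^* v\rangle_{\gamma_\nu},
\end{equation*}
together with the expansion $\omega_\nu = c_*\gamma_\nu(1+\Oc(\nu^2+|z\mp a|))$ near $\pm a$ from \eqref{id:matched-product-expansion-weights}. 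Fix intermediate scales $\eta\ll \eta'\ll 1$ and introduce smooth cutoffs $\psi_\pm = \chi((z\mp a)/\eta')$, $\psi_{ex}=1-\psi_+-\psi_-$, so that writing $u_\pm = \psi_\pm u$ and $u_{ex}=\psi_{ex} u$, each $u_\pm$ is supported in $\{|z\mp a|<2\eta'\}$ and inherits the orthogonality conditions \eqref{scalarproduct1:orthogonality-condition-coercivity-operator} since the indicators $\mathbbm 1(|z\mp a|<\eta)$ are contained in the region where $\psi_\pm\equiv 1$.

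The first step is to rewrite $-\langle\tilde\Ls^z u,u\rangle_*$ using Proposition \ref{prop:almost-symmetry-Lz} as $-\mathcal Q(u,u)-\tilde{\mathcal Q}(u,u)$ and absorb the error $\tilde{\mathcal Q}(u,u)$ using \eqref{bd:symmetry-Lz-mathcal Q}: since $|a-a_\infty|\lesssim\nu$, we get $|\tilde{\mathcal Q}(u,u)|\lesssim \nu\sqrt{|\log\nu|}\|u\|_{L^2_{\omega_\nu}}\|u\|_{H^1_{\omega_\nu}}$, which is absorbable by the right-hand side of \eqref{bd:coercivity-Lztilde-matched-product} via Young's inequality. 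The remaining task is then a lower bound on the symmetric bilinear form $-\mathcal Q(u,u)$. To carry this out, I would split $-\mathcal Q(u,u)$ along the decomposition $u=u_++u_-+u_{ex}$ and observe that cross terms are either supported in thin annuli (from derivatives of cutoffs, small by Hardy and by $\omega_\nu$-equivalence), or are mediated by the nonlocal Poisson field across disjoint supports of size $\gtrsim\eta'$, in which case the Poisson kernel is bounded and the standard estimates used in the proof of Proposition \ref{prop:almost-symmetry-Lz} control them.

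For the diagonal pieces, the key observation is:
\begin{itemize}
\item on $\mathrm{supp}(u_\pm)\subset\{|z\mp a|<2\eta'\}$, we have $\chi^*\equiv 1$, $\nabla\Phi_{(1-\chi^*)u}$ contributes only to cross terms, and $\omega_\nu=c_*\gamma_\nu(1+\Oc(\eta'))$, so $-\mathcal Q(u_\pm,u_\pm)$ agrees up to an $\Oc(\eta')$ error with $-c_*\langle\Ls^z u_\pm,u_\pm\rangle_\flat$. Applying Lemma \ref{lem:almost-symmetry-Lsz-interior} (whose orthogonality assumption \eqref{scalarproduct1:orthogonality-condition-coercivity-operator} is inherited) then gives $-\mathcal Q(u_\pm,u_\pm)\geq c\int|\nabla u_\pm|^2\omega_\nu dz$, and the Hardy inequality \eqref{bd:hardy-interior-product} (transferred from $\gamma_\nu$ to $\omega_\nu$ by weight equivalence) upgrades this to control the $(\nu+|z\mp a|)^{-2}$-weighted $L^2$ norm;
\item on $\mathrm{supp}(u_{ex})\subset\cap_\pm\{|z\mp a|>\eta'\}$, Lemma \ref{lem:almost-symmetry-Lsz-exterior} applies directly to give $-\mathcal Q(u_{ex},u_{ex})\geq\delta\int(\langle z\rangle^2 u_{ex}^2+|\nabla u_{ex}|^2)\omega_\nu dz$ after the observation that on this region the part of $\mathcal Q$ originating from the matching correction is of size $\Oc(\nu^2)$ and is absorbable.
\end{itemize}
Summing these three diagonal lower bounds, controlling cross terms by choosing $\eta'$ small and then $\nu$ small (so that the $\Oc(\eta')+\Oc(\nu\sqrt{|\log\nu|})$ errors are dominated by the coercive diagonal), and combining with the Hardy--Poincar\'e inequality \eqref{bd:hardy-poincare-H1-omeganu} to recombine weighted norms of $u_\pm$ and $u_{ex}$ into weighted norms of $u$, yields the desired estimate \eqref{bd:coercivity-Lztilde-matched-product}.

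The main obstacle is the handling of the nonlocal Poisson-field contribution $\int u\,\nabla\Phi_{\chi^*u}\cdot\nabla U_{1+2,\nu}\,\omega_\nu\,dz$ appearing in $\mathcal Q$, which mixes the interior and exterior zones and \emph{a priori} is not sign-definite. The very design of the matched scalar product $\langle\cdot,\cdot\rangle_*$---together with the value $c_*=2|a|^4e^{-|a|^2/4}$---is tailored so that the leading-order part of this nonlocal term is cancelled against the $c_*\nu^2$ corrections, as shown in \eqref{eva-air-2} where one sees $\nabla U_{1+2,\nu}\omega_\nu-c_*\nu^2\chi^*(\nabla\Phi_{U_{1+2,\nu}}+\beta z)=\Oc((\nu+|a-a_\infty|)(\nu+|z\mp a|))$ at leading order near $\pm a$. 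A secondary technical point is ensuring that the modified operator $\tilde\Ls^z$, whose Poisson field is truncated by $\chi^*$, still inherits the interior coercivity of $\Ls^z$: this holds because the correction $-\nabla U_{1+2,\nu}\cdot\nabla\Phi_{(1-\chi^*)u}$ is an $\Oc(\nu^2)$ contribution to the bilinear form for $u$ supported near $\pm a$, harmless given the dissipative diagonal bounds produced above.
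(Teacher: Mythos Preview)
Your overall architecture is correct and matches the paper's: decompose $u$ into pieces localized near $\pm a$ and an exterior piece, apply the interior coercivity (Lemma \ref{lem:almost-symmetry-Lsz-interior}) and exterior coercivity (Lemma \ref{lem:almost-symmetry-Lsz-exterior}) to the diagonal terms, and control the cross terms. Your reduction to the symmetric form $\mathcal Q$ via Proposition \ref{prop:almost-symmetry-Lz} is also legitimate; the paper does the analogous almost-symmetry step when treating the mixed terms.

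The gap is in your treatment of the cutoff cross terms. You write that they are ``supported in thin annuli (from derivatives of cutoffs, small by Hardy and by $\omega_\nu$-equivalence)'' and can be absorbed by taking $\eta'$ small. This is not correct: the annulus term is of the form
\[
\int_{\cup_\pm\{\eta'<|z\mp a|<2\eta'\}}\Big(|\nabla u|^2+\frac{u^2}{\eta'^{2}}\Big)\omega_\nu\,dz,
\]
and while Hardy bounds the $u^2/\eta'^2$ part by $C\int|\nabla u|^2\omega_\nu$, the constant $C$ does \emph{not} tend to zero as $\eta'\to 0$. In general nothing prevents the mass of $|\nabla u|^2\omega_\nu$ from concentrating on precisely this annulus, so the cross term can be of the same order as the diagonal coercivity and cannot be absorbed by choosing $\eta'$ small.

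The paper resolves this with a pigeonhole argument that you are missing: one considers $N$ dyadic scales $\eta,2\eta,\dots,2^{N-1}\eta$ and observes that the sum of the corresponding annulus integrals is bounded by the full coercive quantity, so at least one scale $\eta'=2^{n_0}\eta$ (depending on $u$) has annulus contribution $\leq C/N$ times the coercive term. Choosing this good scale and taking $N$ large then closes the estimate. Without this step your argument does not give a uniform lower bound.
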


\begin{proof}

Let $\chi$ be the cut-off function defined by \eqref{def:chiK}. We decompose for $\eta'>\eta$, 
\begin{align}
\nonumber u &= u\left(\chi(\frac{z-a}{\eta'})+\chi(\frac{z+a}{\eta'})\right)+u\left(1-\chi(\frac{z-a}{\eta'})-\chi(\frac{z+a}{\eta'})\right) \\
\label{bd:coercivity-Lztilde-matched-product-def-uin-uex} &= u_{in}+u_{ex}.
\end{align}
As $\tilde{\Ls}^z u_{in}=\Ls^z u_{in}$ we further write
\begin{equation} \label{bd:coercivity-Lztilde-matched-product-decomposition}
\langle \tilde{\Ls}^z u,u \rangle_*=\langle \Ls^z u_{in},u_{in} \rangle_*+\langle \tilde{\Ls}^z u_{ex},u_{ex} \rangle_*+\langle \Ls^z u_{in},u_{ex} \rangle_*+\langle \tilde{\Ls}^z u_{ex},u_{in} \rangle_*
\end{equation}
The main features of the proof are the following. The first two terms in \eqref{bd:coercivity-Lztilde-matched-product-decomposition} will be shown to be coercive thanks to Lemmas \ref{lem:almost-symmetry-Lsz-exterior} and \ref{lem:almost-symmetry-Lsz-interior}, see \eqref{bd:coercivity-Lztilde-matched-product-interior} and \eqref{bd:coercivity-Lztilde-matched-product-exterior} below. Then there are two main contributions in the remaining mixed terms in \eqref{bd:coercivity-Lztilde-matched-product-decomposition}. The first one consists of the term \eqref{bd:coercivity-Lztilde-matched-product-mixed1-1}, which involve the boundary terms due to the cut-off in the decomposition $u=u_{in}+u_{ex}$. It will be showed to be negligible in Step 5 by choosing a cut-off location $\eta'=\eta'(u)$ that depends on the function $u$, for which it is small in the zone $\cup_{\pm} \{\eta'<|z\pm a|<2\eta'\}$. The second one consists of the term \eqref{bd:coercivity-Lztilde-matched-product-mixed1-3} due to the nonlocal effects of the exterior zone in the interior zone. We crucially use the cancellation that $\Ls^z u_{in}$ has zero mean as it can be put in divergence form (due to the mass criticality of the problem at hand) to estimate that contribution in Step 3.

\medskip

\noindent \textbf{Step 1}. \emph{Estimate for the interior term}. We claim that for $a$ close $(0,2)$, $\nu$ and $\eta'$ small enough,
\begin{equation} \label{bd:coercivity-Lztilde-matched-product-interior}
\langle \Ls^z u_{in},u_{in} \rangle_* \geq  \delta \int |\nabla u_{in}|^2\omega_\nu dz+\delta \int u_{in}^2 \sum_{\pm} \frac{1}{(\nu+|z\pm a|)^2} \omega_\nu dz,
\end{equation}
and we now prove this estimate. We decompose using \eqref{def:scalar12} and \eqref{def:scalar-interior}:
\begin{equation} \label{bd:coercivity-Lztilde-matched-product-interior-decomposition}
-\langle \Ls^z u_{in},u_{in}\rangle_*= -c_* \langle \Ls^z u_{in},u_{in}\rangle+\int (\Ls^z u_{in})u_{in}\widetilde{\omega_\nu}dz, \qquad \widetilde{\omega_\nu}= c_* \gamma_\nu-\omega_\nu.
\end{equation}
For the first term in \eqref{bd:coercivity-Lztilde-matched-product-interior-decomposition}, we have that $u_{in}$ is supported in $\cup_\pm \{|z\pm a|\leq 2 \eta'\}$ and that it satisfies the orthogonality conditions \eqref{scalarproduct1:orthogonality-condition-coercivity-operator} as $\eta'>\eta$. Hence by Lemma \ref{lem:almost-symmetry-Lsz-interior}, the Hardy-type inequality \eqref{bd:hardy-interior-product} and the equivalence of the weights \eqref{id:matched-scalar-product-equivalence-weights-z<20}:
\begin{equation}  \label{bd:coercivity-Lztilde-matched-product-inter6}
- \langle \Ls^z u_{in},u_{in}\rangle\geq \delta'  \int |\nabla u_{in}|^2\omega_\nu dz+\delta' \int u_{in}^2 \sum_{\pm} \frac{1}{(\nu+|z\pm a|)^2}  \omega_\nu dz.
\end{equation}
The second term in \eqref{bd:coercivity-Lztilde-matched-product-interior-decomposition} is
\begin{align}  \label{bd:coercivity-Lztilde-matched-product-inter5}
\int (\Ls^z u_{in})u_{in}\widetilde{\omega_\nu} dz & =\int \Delta u_{in} u_{in} \widetilde{\omega_\nu} dz\\
\nonumber &-\int \left(\nabla.(u_{in}\nabla \Phi_{U_{1+2,\nu}}+U_{1+2,\nu}\nabla \Phi_{u_{in}})+\frac 12 \Lambda u_{in}\right)u_{in}\widetilde{\omega_\nu} dz.
\end{align}
To estimate the first term in \eqref{bd:coercivity-Lztilde-matched-product-inter5}, we integrate by parts
$$
\int \Delta u_{in} u_{in}  \widetilde{\omega_\nu} dz =-\int |\nabla u_{in}|^2\widetilde{\omega_\nu} dz-\int u_{in}\nabla u_{in}.\nabla \widetilde{\omega_\nu} dz,
$$
By \eqref{id:matched-product-expansion-weights} and \eqref{id:matched-product-expansion-weights-2} we have that for $i=0,1,$ for $|z\pm a|\leq 2 \eta'$,
\begin{equation} \label{bd:widetildeomeganu}
\left| \nabla^i\widetilde{\omega_\nu} \right|=\left| \nabla^i\left(\frac{\nu^2}{U_{1+2,\nu}} \left(c_*-\frac{\omega_\nu}{\gamma_\nu}\right)\right)\right|\lesssim \frac{\nu^2}{U_{1+2,\nu}}\frac{\nu^2+\eta'}{(\nu+|z\pm a|)^i}.
\end{equation}
Combining the identity and inequality above, and using $|xy|\leq x^2+y^2$ and \eqref{id:matched-scalar-product-equivalence-weights-z<20}, we bound
\begin{equation}  \label{bd:coercivity-Lztilde-matched-product-inter4}
\left| \int \Delta u_{in} u_{in} \widetilde{\omega_\nu} dz\right|\lesssim (\nu^2+\eta')\int \left(|\nabla u_{in}|^2+u_{in}^2 \sum_{\pm} \frac{1}{(\nu+|z\pm a|)^2} \right)\omega_\nu dz.
\end{equation}
We next turn to the second term in \eqref{bd:coercivity-Lztilde-matched-product-inter5}. Using $-\Delta \Phi_{f}=f$ and $\nabla \Phi_f=\Phi_{\nabla f}$, that $|U_{1,\nu}|\lesssim \nu^2(\nu+|z-a|)^{-4}$, $|\nabla U_{1,\nu}|\lesssim \nu^2(\nu+|z-a|)^{-5}$ and $|\nabla \Phi_{U_{1,\nu}}|\lesssim (\nu+|z-a|)^{-1}$ and $\nu(\nu+|z-a|)^{-1}\leq 1$, and the analogue estimates for $U_{2,\nu}$, we have
$$
\left|\nabla.(u_{in}\nabla \Phi_{U_{1+2,\nu}}+U_{1+2,\nu}\nabla \Phi_{u_{in}})+\frac 12 \Lambda u_{in}\right|\lesssim \sum_{\pm} \frac{|\nabla u_{in}|}{\nu+|z\pm a|}+\frac{|u_{in}|}{(\nu+|z\pm a|)^2}+\frac{\nu^2|\Phi_{\nabla u_{in}}|}{(\nu+|z\pm a|)^5}.
$$
Using this inequality, \eqref{bd:widetildeomeganu}, and Cauchy-Schwarz,
\begin{align} \label{bd:coercivity-Lztilde-matched-product-inter2}
& \left| \int \left(\nabla.(u_{in}\nabla \Phi_{U_{1+2,\nu}}+U_{1+2,\nu}\nabla \Phi_{u_{in}})+\frac 12 \Lambda u_{in}\right)u_{in}\widetilde{\omega_\nu} dz\right| \\ 
\nonumber  & \lesssim  (\nu^2+\eta')\int \left(|\nabla u_{in}|^2+u_{in}^2 \sum_{\pm} \frac{1}{(\nu+|z\pm a|)^2}+|\Phi_{\nabla u_{in}}|^2  \sum_{\pm} \frac{\nu^4}{(\nu+|z\pm a|)^8}\right)\omega_\nu dz. 
\end{align}
Using successively \eqref{bd:equivalence-weight-omeganu} and $U_{1+2,\nu}\approx \nu^2\sum_{\pm}(\nu+|z\pm a|)^{-4}$ we have
$$
\nu^4  \sum_{\pm} \frac{\nu^4}{(\nu+|z\pm a|)^8} \omega_\nu \lesssim \nu^4 \sum_{\pm}\frac{1}{(\nu+|z\pm a|)^4}\approx \nu^2 U_{1+2,\nu}.
$$
Using this inequality, then \eqref{scalarproduct1:temp-1} and the equivalence of weights \eqref{id:matched-scalar-product-equivalence-weights-z<20} we have
$$
\int  |\Phi_{\nabla u_{in}}|^2  \sum_{\pm} \frac{\nu^4}{(\nu+|z\pm a|)^8} \omega_\nu dz \lesssim \int \nu^2 |\Phi_{\nabla u_{in}}|^2 U_{1+2,\nu}  dz\lesssim \int |\nabla u_{in}|^2\frac{\nu^2}{U_{1+2,\nu}}dz\approx  \int |\nabla u_{in}|^2\omega_\nu dz
$$
Injecting the above inequality in \eqref{bd:coercivity-Lztilde-matched-product-inter2} shows
\begin{align} \label{bd:coercivity-Lztilde-matched-product-inter3}
& \left| \int \left(\nabla.(u_{in}\nabla \Phi_{U_{1+2,\nu}}+U_{1+2,\nu}\nabla \Phi_{u_{in}})+\frac 12 \Lambda u_{in}\right)u_{in}\widetilde{\omega_\nu} dz\right| \\ 
\nonumber  & \lesssim  (\nu^2+\eta')\int \left(|\nabla u_{in}|^2+u_{in}^2 \sum_{\pm} \frac{1}{(\nu+|z\pm a|)^2}\right)\omega_\nu dz. 
\end{align}
Injecting \eqref{bd:coercivity-Lztilde-matched-product-inter4} and \eqref{bd:coercivity-Lztilde-matched-product-inter3} in \eqref{bd:coercivity-Lztilde-matched-product-inter5} shows 
$$
\left|\int (\Ls^z u_{in})u_{in}\widetilde{\omega_\nu} dz \right|\lesssim   (\nu^2+\eta')\int \left(|\nabla u_{in}|^2+u_{in}^2 \sum_{\pm} \frac{1}{(\nu+|z\pm a|)^2}\right)\omega_\nu dz. 
$$
Injecting this inequality and \eqref{bd:coercivity-Lztilde-matched-product-inter6} in \eqref{bd:coercivity-Lztilde-matched-product-interior-decomposition} shows the desired inequality \eqref{bd:coercivity-Lztilde-matched-product-interior}.

\medskip

\noindent \textbf{Step 2}. \emph{Estimate for the exterior term}. We claim that for any $|a|\leq 5$ and $\eta'>0$, if $\nu$ is small enough:
\begin{equation} \label{bd:coercivity-Lztilde-matched-product-exterior}
- \langle \Lst^z u_{ex},u_{ex} \rangle_* \geq  \delta \int |\nabla u_{ex}|^2\omega_\nu dz+\delta \int u_{ex}^2 \left(\frac{1}{|z-a|^2}+\frac{1}{|a+z|^2}+ | z|^2\right) \omega_\nu dz,
\end{equation}
We now show \eqref{bd:coercivity-Lztilde-matched-product-exterior}. We decompose using \eqref{def:scalar12}:
\begin{equation} \label{bd:coercivity-Lztilde-matched-product-exterior-inter1}
 \langle \Lst^z u_{ex},u_{ex} \rangle_* =\langle \Lst^z u_{ex} ,u_{ex}\rangle_{L^2_{\omega_\nu}} -c^*\nu^2 \int \chi^*( \Lst^z u_{ex}) \Phi_{\chi^* u_{ex}} dz .
\end{equation}
For the first term, we have by Lemma \ref{id:decomposition-Lsz-exterior} and the Hardy-Poincar\'e type inequality \eqref{bd:hardy-poincare-H1-omeganu}:
\begin{equation} \label{bd:coercivity-Lztilde-matched-product-exterior-inter2}
- \langle \Lst^z u_{ex} ,u_{ex}\rangle_{L^2_{\omega_\nu}} \geq \delta' \int \left(|\nabla u_{ex}|^2+ u_{ex}^2 \left(\frac{1}{|z-a|^2}+\frac{1}{|z+a|^2}+ | z|^2\right) \right)\omega_\nu dz.
\end{equation}
We rewrite the second term in \eqref{bd:coercivity-Lztilde-matched-product-exterior-inter1} using \eqref{def:L12ztilde} and $\nabla \Phi_f=\Phi_{\nabla f}$:
\begin{align} \label{bd:coercivity-Lztilde-matched-product-exterior-inter3}
&  \int \chi^*( \Lst^z u_{ex}) \Phi_{\chi^* u_{ex}} dz  \\
\nonumber & =    \int \chi^* \left(\Delta u_{ex}-\nabla .(u_{ex}\nabla \Phi_{U_{1+2,\nu}})+U_{1+2,\nu}u_{ex} -\nabla U_{1+2,\nu}\Phi_{\nabla (\chi^*u_{ex})}-\beta \Lambda u_{ex} \right)  \Phi_{\chi^* u_{ex}}  dz.
 \end{align}
 Integrating by parts and using $\nabla \Phi_f=\Phi_{\nabla f}$, and then using $|xy|\leq x^2+y^2$ and that $\chi^*$ is supported in $\{|z|\leq 20\}$, we estimate the first term in \eqref{bd:coercivity-Lztilde-matched-product-exterior-inter3} by
\begin{equation}  \label{bd:coercivity-Lztilde-matched-product-exterior-inter4}
\left| \int \chi^* \Delta u_{ex}  \Phi_{\chi^* u_{ex}}  dz\right|\lesssim  \int_{|z|\leq 20} (|\nabla u_{ex}|^2+  \Phi_{\chi^* u_{ex}}^2+|  \Phi_{\nabla(\chi^* u_{ex})} |^2) dz.
 \end{equation}
Since the integral is performed over $\{|z|\leq 20\}$ and \eqref{scalarproduct1:temp}, we have
\begin{align}
& \nonumber \int_{|z|\leq 20}( \Phi_{\chi^* u_{ex}}^2+|  \Phi_{\nabla(\chi^* u_{ex})} |^2) dz \lesssim  \int \langle z \rangle^4 ( (\chi^* u_{ex})^2+| \nabla(\chi^* u_{ex}) |^2) dz \\
&\qquad \qquad \qquad \qquad \qquad \qquad \qquad  \lesssim  \int_{|z|\leq 20}  (u_{ex}^2+|\nabla u_{ex} |^2) dz\   \leq  \ C_{\eta'} \int (u_{ex}^2+|\nabla u_{ex} |^2)\omega_\nu dz,  
\label{bd:coercivity-Lztilde-matched-product-exterior-inter5}
\end{align}
where we used that 
\begin{equation} \label{bd:coercivity-Lztilde-matched-product-exterior-inter6}
\omega_\nu(z)\geq c_{\eta'}>0 \qquad \mbox{for } |z|\leq 20 \mbox{ and } \min(|z-a|,|z+a|)\geq \eta'
\end{equation}
from \eqref{bd:equivalence-weight-omeganu} for the last inequality. Combining \eqref{bd:coercivity-Lztilde-matched-product-exterior-inter4}, \eqref{bd:coercivity-Lztilde-matched-product-exterior-inter5} and \eqref{bd:coercivity-Lztilde-matched-product-exterior-inter6} shows
\begin{equation} \label{bd:coercivity-Lztilde-matched-product-exterior-inter7}
\left| \int \chi^* \Delta u_{ex}  \Phi_{\chi^* u_{ex}}  dz\right|\leq C_{\eta'} \int (u_{ex}^2+|\nabla u_{ex}|^2)\omega_\nu dz .
\end{equation}
For the remaining terms in \eqref{bd:coercivity-Lztilde-matched-product-exterior-inter3}, using that $\Delta f=-f$, and $|\nabla \Phi_{U_{1+2,\nu}}(z)|\leq C_{\eta'}$ and $|U_{1+2,\nu}(z)|+|\nabla U_{1+2,\nu}(z)|\lesssim \nu^2$ on the support of $u_{ex}$ for $\nu$ small enough, then $|xy|\leq x^2+y^2$ and the support of $\chi^*$, we get
\begin{align} \nonumber
&  \left|  \int \chi^* \left(-\nabla .(u_{ex}\nabla \Phi_{U_{1+2,\nu}})+U_{1+2,\nu}u_{ex} -\nabla U_{1+2,\nu}\Phi_{\nabla (\chi^*u_{ex})}-\beta \Lambda u_{ex} \right)  \Phi_{\chi^* u_{ex}}  d z \right| \\
\nonumber & \leq C_{\eta'} \int_{|z|\leq 20} (|\nabla u_{ex}|^2+u_{ex}^2+(\Phi_{\chi^*u_{ex}})^2+|\Phi_{\nabla(\chi^*u_{ex})}|^2)dz \\
& \leq C_{\eta'} \int (|\nabla u_{ex}|^2+u_{ex}^2)\omega_\nu dz,
 \label{bd:coercivity-Lztilde-matched-product-exterior-inter8}
 \end{align}
where we used \eqref{bd:coercivity-Lztilde-matched-product-exterior-inter5} and \eqref{bd:coercivity-Lztilde-matched-product-exterior-inter6} for the last inequality. Injecting \eqref{bd:coercivity-Lztilde-matched-product-exterior-inter7} and \eqref{bd:coercivity-Lztilde-matched-product-exterior-inter8} in \eqref{bd:coercivity-Lztilde-matched-product-exterior-inter3} shows
\begin{equation} 
\left| \int \chi^*( \Lst^z u_{ex}) \Phi_{\chi^* u_{ex}} dz \right|\leq C_{\eta'} \int (|\nabla u_{ex}|^2+u_{ex}^2)\omega_\nu dz.
 \label{bd:coercivity-Lztilde-matched-product-exterior-inter9}
\end{equation}
In turn, injecting \eqref{bd:coercivity-Lztilde-matched-product-exterior-inter2} and \eqref{bd:coercivity-Lztilde-matched-product-exterior-inter9} in \eqref{bd:coercivity-Lztilde-matched-product-exterior-inter1} and taking $\nu$ small enough shows the desired inequality \eqref{bd:coercivity-Lztilde-matched-product-exterior}.

\medskip

\noindent \textbf{Step 3}. \emph{Estimate for the mixed terms}. We claim that for some universal $C>0$, 
\begin{align} \label{bd:coercivity-Lztilde-matched-product-exterior}
& \left|\langle \Ls^z u_{in},u_{ex}\rangle_* \right|+\left|\langle \Ls^z u_{ex},u_{in}\rangle_* \right| \\
\nonumber & \leq C \int_{\cup_{\pm}\{\eta'<|z\pm a|<2\eta'\}}\left( |\nabla u|^2+\frac{u^2}{\eta^{'2}}\right)\omega_\nu dz+C_{\eta'}\nu |\ln \nu| \int (|\nabla u|^2+u^2)\omega_{\nu}dz
\end{align}
and we now prove this inequality. Using the almost symmetry identity \eqref{id:bilinear-form-linearized-symmetry} and the bound \eqref{bd:symmetry-Lz-mathcal Q} we have 
\begin{equation} \label{bd:coercivity-Lztilde-matched-product-almost-symmetry}
\langle \Ls^z u_{ex},u_{in}\rangle_*= \langle \Ls^z u_{in},u_{ex}\rangle_* +\tilde{\mathcal{Q}}(u_{ex},u_{in}) = \langle \Ls^z u_{in},u_{ex}\rangle_* +O(\nu |\ln\nu|\| u\|_{H^1_{\omega_\nu}}^2).
\end{equation}
so that it suffices to bound $\langle \Ls^z u_{in},u_{ex}\rangle_*$. Using \eqref{def:scalar12}, then \eqref{def:LszNot} and the divergence form of the operator $\Ls^z f=\nabla .(\nabla f-f\nabla \Phi_{U_{1+2,\nu}}-U_{1+2,\nu}\nabla \Phi_f-\beta z f)$ and $-\Delta \Phi_f=f$, we expand
\begin{align} \nonumber
\langle \Ls^z u_{in},u_{ex}\rangle_* & = \int (\Ls^z u_{in})u_{ex} \omega_\nu dz -\nu^2 c_* \int \chi^*(\Ls^z u_{in})\Phi_{\chi^*u_{ex}} \\
\label{bd:coercivity-Lztilde-matched-product-mixed1-1} &= \int (\Delta u_{in}-\nabla u_{in}.\nabla \Phi_{U_{1+2,\nu}}+2U_{1+2,\nu}u_{in}-\beta \Lambda u_{in})u_{ex} \omega_\nu dz \\
\label{bd:coercivity-Lztilde-matched-product-mixed1-2}  &-\int \nabla U_{1+2,\nu}.\nabla \Phi_{u_{in}}u_{ex}\omega_\nu dz\\
&\label{bd:coercivity-Lztilde-matched-product-mixed1-3}  -\nu^2 c_* \int \nabla .(\nabla u_{in}-u_{in}\nabla \Phi_{U_{1+2,\nu}}-U_{1+2,\nu}\nabla \Phi_{u_{in}}-\beta z u_{in})\chi_{ 10} \Phi_{\chi^*u_{ex}}.
\end{align}
The estimates \eqref{bd:coercivity-Lztilde-matched-product-mixed1-9}, \eqref{bd:coercivity-Lztilde-matched-product-mixed1-10} and \eqref{bd:coercivity-Lztilde-matched-product-mixed1-17} we show below for \eqref{bd:coercivity-Lztilde-matched-product-mixed1-1}, \eqref{bd:coercivity-Lztilde-matched-product-mixed1-2} and \eqref{bd:coercivity-Lztilde-matched-product-mixed1-3} respectively, combined with \eqref{bd:coercivity-Lztilde-matched-product-almost-symmetry}, imply the desired inequality \eqref{bd:coercivity-Lztilde-matched-product-exterior}.

\smallskip

\noindent \underline{Estimate for \eqref{bd:coercivity-Lztilde-matched-product-mixed1-1}}. For the first term in \eqref{bd:coercivity-Lztilde-matched-product-mixed1-1}, we integrate by parts and find
$$
\int  \Delta u_{in}u_{ex} \omega_\nu dz =-\int \nabla u_{in}.\nabla \left(u_{ex}\omega_\nu \right).
$$
We have $|\nabla \chi(\frac{z\pm a}{\eta'})|\lesssim \eta^{'-1}\mathbbm 1(\eta'<|z\pm a |<2\eta')$. By \eqref{bd:coercivity-Lztilde-matched-product-def-uin-uex} and the definition of $\chi$ we have
\begin{align}
\label{bd:coercivity-Lztilde-matched-product-mixed1-6} & |u_{in}|\leq |u|\mathbbm 1(\cup_{\pm}\{|z\pm a|<2\eta'\}), \quad |\nabla u_{in}|\lesssim (|\nabla u|+\eta^{'-1}|u|)\mathbbm 1(\cup_{\pm}\{|z\pm a|<2\eta'\}) ,\\
\label{bd:coercivity-Lztilde-matched-product-mixed1-7} & |u_{ex}|\leq |u|\mathbbm 1(\cap_{\pm}\{|z\pm a|>\eta'\}), \quad |\nabla u_{ex}|\lesssim (|\nabla u|+\eta^{'-1}|u|)\mathbbm 1(\cap_{\pm}\{|z\pm a|>\eta'\}) .
\end{align}
The integral is located in $\cup_{\pm}\{\eta'< |z\pm|<2\eta'\}$, where $|\nabla \omega_\nu|\lesssim \eta^{'-1}\omega_\nu  $ from \eqref{def:omega12}. Using this, \eqref{bd:coercivity-Lztilde-matched-product-mixed1-6} and \eqref{bd:coercivity-Lztilde-matched-product-mixed1-7} and $|xy|\leq x^2+y^2$,
\begin{equation}\label{bd:coercivity-Lztilde-matched-product-mixed1-5}
\left| \int  \Delta u_{in}u_{ex} \omega_\nu dz \right|\lesssim \int_{\cup_{\pm}\{\eta'<|z\pm a|<2\eta'\}}\left( |\nabla u|^2+\frac{u^2}{\eta^{'2}}\right)\omega_\nu dz. 
\end{equation}
The remaining terms in \eqref{bd:coercivity-Lztilde-matched-product-mixed1-1} are also located in $\cup_{\pm}\{\eta'<|z\pm a|<2\eta'\}$. Since $|\nabla \Phi_{U_{1+2,\nu}}|\lesssim \sum_\pm (\nu+|z\pm a|)^{-1}$ and $|U_{1+2,\nu}|\lesssim \sum_\pm \nu^2(\nu+|z\pm a|)^{-4}$, we have $|\nabla \Phi_{U_{1+2,\nu}}|\lesssim \eta^{'-1}$ and $|U_{1+2,\nu}|\leq C_{\eta'}\nu^2$ on this set. Using this, $|\Lambda u_{in}|\lesssim |\nabla u_{in}|+ |u_{in}|$, \eqref{bd:coercivity-Lztilde-matched-product-mixed1-6}, \eqref{bd:coercivity-Lztilde-matched-product-mixed1-7} and $|xy|\leq x^2+y^2$,
\begin{equation} \label{bd:coercivity-Lztilde-matched-product-mixed1-8}
\left| \int (-\nabla u_{in}.\nabla U_{1+2,\nu}+2U_{1+2,\nu}u_{in}-\beta \Lambda u_{in})u_{ex} \omega_\nu dz \right|\lesssim \int_{\cup_{\pm}\{\eta'<|z\pm a|<2\eta'\}}\left( |\nabla u|^2+\frac{u^2}{\eta^{'2}}\right)\omega_\nu dz. 
\end{equation}
Combining \eqref{bd:coercivity-Lztilde-matched-product-mixed1-5} and \eqref{bd:coercivity-Lztilde-matched-product-mixed1-8} shows
\begin{equation} \label{bd:coercivity-Lztilde-matched-product-mixed1-9}
\left| \int (\Delta u_{in}-\nabla u_{in}.\nabla \Phi_{U_{1+2,\nu}}+2U_{1+2,\nu}u_{in}-\beta \Lambda u_{in})u_{ex} \omega_\nu dz\right|\lesssim \int_{\cup_{\pm}\{\eta'<|z\pm a|<2\eta'\}}\left( |\nabla u|^2+\frac{u^2}{\eta^{'2}}\right)\omega_\nu dz.
\end{equation}

\smallskip

\noindent \underline{Estimate for \eqref{bd:coercivity-Lztilde-matched-product-mixed1-2}}. We estimate that for $|z|\leq 20$, by Cauchy-Schwarz and as the support of $u_{in}$ is included in $\{|z|\leq 20\}$,
\begin{align}\nonumber
|\Phi_{u_{in}}(z)| & \lesssim \int |\ln |z-z'|| |u_{in}(z')|dz' \lesssim \left(\int_{|z'|\leq20} |\ln|z-z'||^2\frac{dz'}{\omega_\nu (z')} \right)^{\frac 12} \left(\int u_{in}^2(z') \omega_{\nu}(z')dz' \right)^{\frac 12} \\
 \label{bd:coercivity-Lztilde-matched-product-mixed1-200}  &\lesssim \nu^{-1}|\ln \nu| \left(\int u^2 \omega_\nu dz' \right)^{\frac 12}
\end{align}
where we used that $\omega_\nu (z')\approx \frac{\nu^2}{U_{1+2,\nu}(z')}$ for $|z|\leq 20$ from \eqref{id:matched-scalar-product-equivalence-weights-z<20} and $\int |\ln |z-z'||U_{1+2,\nu}(z')dz'\leq |\ln \nu|$ and \eqref{bd:coercivity-Lztilde-matched-product-mixed1-6} for the last inequality. Similarly, using also \eqref{bd:coercivity-Lztilde-matched-product-mixed1-6} and \eqref{bd:hardy-poincare-H1-omeganu},
\begin{equation} \label{bd:coercivity-Lztilde-matched-product-mixed1-100} 
|\nabla \Phi_{u_{in}}(z)|=|\Phi_{\nabla u_{in}}(z)|\lesssim \nu^{-1} |\ln \nu| \left(\int (|\nabla u|^2+u^2)d\omega_\nu dz'\right)^{\frac 12}.
\end{equation}
Using that $|\nabla U_{1+2,\nu}|\leq C_{\eta'} \nu^2$ on the support of $u_{ex}$ and \eqref{bd:coercivity-Lztilde-matched-product-mixed1-100}  we find
\begin{align} \nonumber
\left| \int \nabla U_{1+2,\nu}.\nabla \Phi_{u_{in}}u_{ex}\omega_\nu dz\right| & \leq C_{\eta'} \nu |\ln \nu| \left(\int (|\nabla u|^2+u^2)d\omega_\nu dz'\right)^{\frac 12} \int |u_{ex}|\omega_{\nu} \\
\label{bd:coercivity-Lztilde-matched-product-mixed1-10}  & \leq C_{\eta'}\nu |\ln \nu| \int (|\nabla u|^2+u^2)\omega_\nu dz,
\end{align}
where we used Cauchy-Schwarz, \eqref{bd:coercivity-Lztilde-matched-product-mixed1-7}  and $\int \omega_\nu dz \lesssim 1$ for the last inequality. 

\smallskip

\noindent \underline{Estimate for \eqref{bd:coercivity-Lztilde-matched-product-mixed1-3}}. Integrating by parts,
\begin{align*}
& \int \nabla .(\nabla u_{in}-u_{in}\nabla \Phi_{U_{1+2,\nu}}-U_{1+2,\nu}\nabla \Phi_{u_{in}}-\beta z u_{in})\chi^* \Phi_{\chi^*u_{ex}}dz \\
& =\int  (\nabla u_{in}-u_{in}\nabla \Phi_{U_{1+2,\nu}}-U_{1+2,\nu}\nabla \Phi_{u_{in}}-\beta z u_{in}). \nabla ( \chi^* \Phi_{\chi^*u_{ex}})dz .
\end{align*}
We then estimate by using $|\nabla \Phi_{U_{1+2,\nu}}|\lesssim \sum_\pm (\nu+|z\pm a|)^{-1}$, the support of $u_{in}$ and the H\"older inequality,
\begin{align} \label{bd:coercivity-Lztilde-matched-product-mixed1-14}
&\left| \int \nabla .(\nabla u_{in}-u_{in}\nabla \Phi_{U_{1+2,\nu}}-U_{1+2,\nu}\nabla \Phi_{u_{in}}-\beta z u_{in})\chi^* \Phi_{\chi^*u_{ex}}dz \right| \\
\nonumber &\lesssim \left( \| \nabla u_{in}\|_{L^1}+\|u_{in}\sum_\pm \frac{1}{\nu+|z\pm a|}\|_{L^1}+\| \nabla \Phi_{u_{in}}\|_{L^\infty}\int U_{1+2,\nu}dz\right) \| \nabla ( \chi^* \Phi_{\chi^*u_{ex}})\|_{L^\infty}.
\end{align}
We have by \eqref{scalarproduct1:estimate-u1-L1} and  $\omega_{\nu}\approx \frac{\nu^2}{U_{1+2,\nu}}$ on the support of $u_{in}$,
\begin{align} \nonumber
 \| \nabla u_{in}\|_{L^1}^2+\|u_{in}\sum_\pm \frac{1}{\nu+|z\pm a|}\|_{L^1}^2 & \lesssim \nu^{-2}\int (|\nabla u_{in}|^2+u_{in}^2 \sum_\pm \frac{1}{(\nu+|z\pm a|)^2} )\omega_\nu dz \\
\label{bd:coercivity-Lztilde-matched-product-mixed1-15} &\lesssim \nu^{-2}\int (|\nabla u|^2+u^2)\omega_{\nu}dz,
\end{align}
where we used \eqref{bd:coercivity-Lztilde-matched-product-mixed1-6} and \eqref{bd:hardy-poincare-H1-omeganu} in the last inequality. We next estimate that for $|z|\leq 20$, using the supports of $\chi^*$ and Cauchy-Schwarz,
\begin{align}\nonumber
|\Phi_{\chi^*u_{ex}}(z)| & \lesssim \int_{|z'|\leq 20} |\ln |z-z'|| |u_{ex}(z')|dz' \lesssim \left(\int_{|z'|\leq 20} |\ln|z-z'||^2 dz' \right)^{\frac 12} \left(\int_{|z'|\leq 20} u_{ex}^2(z') dz' \right)^{\frac 12} \\
\label{bd:coercivity-Lztilde-matched-product-mixed1-16}&\leq C_{\eta'} \left(\int u^2 \omega_\nu dz' \right)^{\frac 12},
\end{align}
where we used \eqref{bd:coercivity-Lztilde-matched-product-mixed1-7} and that $\omega_{\nu}\geq c_{\eta'}>0$ for $z$ in the support of $u_{ex}$ with $|z|\leq 20$. We obtain similarly using $\nabla \Phi_f=\Phi_{\nabla f}$ and \eqref{bd:coercivity-Lztilde-matched-product-mixed1-7} that 
\begin{equation} \label{bd:coercivity-Lztilde-matched-product-mixed1-18}
|\nabla \Phi_{\chi^*u_{ex}}(z)| \leq C_{\eta'} \left(\int |\nabla u|^2+u^2) \omega_\nu dz' \right)^{\frac 12}
\end{equation}
Injecting \eqref{bd:coercivity-Lztilde-matched-product-mixed1-15}, \eqref{bd:coercivity-Lztilde-matched-product-mixed1-100}, $\int U_{1+2,\nu}dz\lesssim 1$ and \eqref{bd:coercivity-Lztilde-matched-product-mixed1-16} in \eqref{bd:coercivity-Lztilde-matched-product-mixed1-14} shows
\begin{align}
&\left| \int \nabla .(\nabla u_{in}-u_{in}\nabla \Phi_{U_{1+2,\nu}}-U_{1+2,\nu}\nabla \Phi_{u_{in}}-\beta z u_{in})\chi^* \Phi_{\chi^*u_{ex}}dz\right| \nonumber \\
& \qquad \qquad  \qquad \qquad \qquad \qquad  \leq C_{\eta'}|\ln \nu| \nu^{-1} \int ( |\nabla u|^2+u^2)\omega_\nu dz.  \label{bd:coercivity-Lztilde-matched-product-mixed1-17}
\end{align}

\medskip

\noindent \textbf{Step 4}. \emph{End of the proof}. Injecting \eqref{bd:coercivity-Lztilde-matched-product-exterior} and \eqref{bd:coercivity-Lztilde-matched-product-interior} in \eqref{bd:coercivity-Lztilde-matched-product-decomposition}, we obtain that for some universal constants $C,\delta>0$,
\begin{align} 
- \langle \tilde{\Ls}^z u,u\rangle_* & \geq \delta \int \Big[|\nabla u|^2+u^2\big(\sum_{\pm}\frac{1}{(\nu+|z\pm a|)^2}+|z|^2\big)\Big]\omega_{\nu}dz \nonumber \\
& \qquad \qquad -C \int_{\cup_{\pm}\{\eta'<|z\pm a|<2\eta'\}}\left( |\nabla u|^2+\frac{u^2}{\eta^{'2}}\right)\omega_\nu dz, \label{bd:coercivity-Lztilde-matched-product-end-1}
\end{align}
if $a$ is close enough to $(2,0)$, $\eta'>\eta$ are small enough, and $\nu$ is small enough, up to possibly reducing the value of $\delta$. Let $N\in \mathbb N$ be such that $2^N\eta\ll 1$. Note that by taking $\eta$ small enough, we can ensure that $N$ is as large as we want. Performing a dyadic decomposition,
\begin{align*}
\int \left( |\nabla u|^2+u^2 \sum_{\pm}\frac{1}{(\nu+|z\pm a|)^2}\right)\omega_\nu dz \gtrsim \sum_{n=0}^{N-1} \int_{\cup_{\pm}\{2^n \eta<|z\pm a|<2^{n+1} \eta\}} \left( |\nabla u|^2+\frac{u^2}{2^{2n}\eta^{2}}\right)\omega_\nu dz \\
\geq N \min_{0\leq n \leq N-1} \int_{\cup_{\pm}\{2^n \eta<|z\pm a|<2^{n+1} \eta\}} \left( |\nabla u|^2+\frac{u^2}{2^{2n}\eta^{'2}}\right)\omega_\nu dz.
\end{align*}
Let $0\leq n_0\leq N-1$ be the integer at which the minimum above is attained, and set $\eta'=2^{n_0}\eta$. Then
\begin{equation} \label{bd:coercivity-Lztilde-matched-product-end-2}
 \int_{\cup_{\pm}\{ \eta'<|z\pm a|< 2\eta'\}} \left( |\nabla u|^2+\frac{u^2}{\eta^{'2}}\right)\omega_\nu dz \leq \frac{C}{N}\int \left( |\nabla u|^2+u^2 \sum_{\pm}\frac{1}{(\nu+|z\pm a|)^2}\right)\omega_\nu dz .
\end{equation}
Injecting \eqref{bd:coercivity-Lztilde-matched-product-end-2} in \eqref{bd:coercivity-Lztilde-matched-product-end-1} and taking $N$ large enough shows the desired inequality \eqref{bd:coercivity-Lztilde-matched-product-decomposition}. \end{proof}

\subsection{Coercivity for a second decomposition} \label{subsec:coercivity-second-decomposition}


We introduce the decomposition
\begin{equation} \label{id:coercivity-definition-projection}
u = \hat u +a_{0} \phi_{0}, 
\end{equation}
where the projection $\hat u=\hat u [\nu,a](u)$ and the parameter $a_{0}=a_0[\nu,a](u)$ are chosen so that
\begin{equation} \label{id:coercivity-orthogonalite-projection}
\langle \hat u , \phi_{0}\rangle_{*}=0.
\end{equation}
This projection satisfies the following properties.

\begin{proposition} \label{prop:decomp_vephat} 

For all $a$ close to $a_\infty$ and $\nu$ small enough, for all $u \in H^1_{\omega_\nu}$ there exits a unique decomposition \eqref{id:coercivity-definition-projection} with the property \eqref{id:coercivity-orthogonalite-projection}. Furthermore, there exists $C>0$ such that if $u$ satisfies the orthogonality conditions $\int_{|z\pm a|< \eta} u dz =0$, we have the estimates
\begin{equation} \label{bd:coercivity-control-adapted-norm-hatu}
\langle u, u\rangle_*\leq \langle \hat u, \hat u\rangle_*
\end{equation}
and
\begin{equation} \label{bd:coercivity-control-adapted-norm-hatu-2}
|\langle \hat u, \hat u\rangle_*| \leq C \| u\|_{L^2_{\omega_\nu}}^2
\end{equation}
and
\begin{equation} \label{bd:coercivity-partial-nu-adapted-norm-hat-u}
\left|\nu \partial_\nu (\langle \hat u, \hat u\rangle_*) \right|+\left| \nu  \partial_a (\langle \hat u, \hat u\rangle_*) \right|  \leq C \| u \|_{L^2_{\omega_\nu}}^2,
\end{equation}
as well as
\begin{equation} \label{bd:coercivity-projection-a0}
|a_0| \leq C \min \left( \frac{1}{\sqrt{|\ln \nu|}} \|  u \|_{L^2_{\omega_\nu}} ,\frac{1}{|\ln \nu|} \| \nabla u \|_{L^2_{\omega_\nu}} \right).
\end{equation}
If $u$ satisfies the orthogonality conditions \eqref{matchedscalarproduct1:orthogonality-condition} then one has $\langle \hat u, \hat u\rangle_*\geq 0$ and the equivalence
\begin{equation} \label{bd:coercivity-control-adapted-norm-hatu-2}
\| u\|_{L^2_{\omega_\nu}}^2 \approx \langle u,u\rangle_* \approx \langle \hat u, \hat u\rangle_*.
\end{equation}
Above, the implicit constants in the $\lesssim$ are independent of $a$ and $\nu$. 


\end{proposition}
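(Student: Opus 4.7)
I set $a_0 := \langle u,\phi_0\rangle_*/\langle \phi_0,\phi_0\rangle_*$ and $\hat u := u-a_0\phi_0$, so that $\langle \hat u,\phi_0\rangle_*=0$ and $u=\hat u+a_0\phi_0$ hold by construction. All the conclusions hinge on understanding the size and sign of $\langle \phi_0,\phi_0\rangle_*$. A naive estimate using the pointwise bound \eqref{est:pointwise_phii} would give $\|\phi_0\|^2_{L^2_{\omega_\nu}}\sim \nu^{-2}$, but the matched scalar product is designed precisely so that this leading $\nu^{-2}$ contribution cancels with the leading piece of $c_*\nu^2\int \chi^*\phi_0\,\Phi_{\chi^*\phi_0}\,dz$. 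Using the inner expansion \eqref{expansion:phii_inn}--\eqref{expansion:Vi_inn}, the outer expansion \eqref{eigenfunctions-exterior:id:expansion-phiiout-1}--\eqref{eigenfunctions-exterior:id:expansion-Viiout-1}, the weight identity \eqref{id:matched-product-expansion-weights}, and the matching values \eqref{id:value-parameters-tech}, I would extract the subleading contribution and prove
\begin{equation*}
\langle \phi_0,\phi_0\rangle_* = -c_0\,|\ln \nu|\,\bigl(1+o(1)\bigr),\qquad c_0>0.
\end{equation*}
This non-vanishing makes $a_0$ well-defined and provides the negative sign that drives the remaining estimates.

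Next, I would bound $\langle u,\phi_0\rangle_*$ using the cancellation $\int_{|z\pm a|<\eta}u\,dz=0$. Writing $\langle u,\phi_0\rangle_* = \int u\phi_0\omega_\nu\,dz - c_*\nu^2\int \chi^* u\,\Phi_{\chi^*\phi_0}\,dz$, the key point is that near each $\pm a$ the leading piece of $\phi_0\omega_\nu$ is, by the weight match \eqref{id:matched-product-expansion-weights}, a smooth function of $y=(z\mp a)/\nu$ of size $O(1)$, and similarly $\nu^2\Phi_{\chi^*\phi_0}$ is of size $O(1)$ by \eqref{est:pointwise_Phi_phii}. Subtracting the values at $\pm a$ (legal thanks to the mass orthogonality) and using Cauchy--Schwarz with the Hardy inequality \eqref{bd:hardy-interior-product} produces $|\langle u,\phi_0\rangle_*|\lesssim \sqrt{|\ln\nu|}\,\|u\|_{L^2_{\omega_\nu}}$, hence $|a_0|\lesssim \|u\|_{L^2_{\omega_\nu}}/\sqrt{|\ln\nu|}$. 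For the improved bound in terms of $\|\nabla u\|_{L^2_{\omega_\nu}}$, I would integrate by parts, exploiting that the dominant part of $\phi_0$ comes from $\Lambda U=\nabla\cdot(yU)$ and that the leading piece of $\phi_0\omega_\nu$ is therefore in divergence form; one derivative transfers onto $u$, compensated by a factor $\nu$ on the kernel, yielding $|a_0|\lesssim \|\nabla u\|_{L^2_{\omega_\nu}}/|\ln\nu|$.

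The Pythagorean identity $\langle u,u\rangle_* = \langle \hat u,\hat u\rangle_* + a_0^2 \langle \phi_0,\phi_0\rangle_*$ follows at once from $\langle \hat u,\phi_0\rangle_*=0$. Since $\langle\phi_0,\phi_0\rangle_*<0$ by the first paragraph, this immediately gives \eqref{bd:coercivity-control-adapted-norm-hatu}. The continuity \eqref{matchedscalarproduct1:continuity} bounds $|\langle u,u\rangle_*|\lesssim \|u\|^2_{L^2_{\omega_\nu}}$, and the previous paragraph yields $a_0^2\,|\langle\phi_0,\phi_0\rangle_*|\lesssim (\|u\|_{L^2_{\omega_\nu}}/\sqrt{|\ln\nu|})^2\cdot |\ln\nu| \lesssim \|u\|^2_{L^2_{\omega_\nu}}$, hence \eqref{bd:coercivity-control-adapted-norm-hatu-2}. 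Under the stronger orthogonalities \eqref{matchedscalarproduct1:orthogonality-condition}, Proposition \ref{pr:scalar-product-interior} provides $\langle u,u\rangle_*\gtrsim \|u\|^2_{L^2_{\omega_\nu}}$, so the identity yields $\langle\hat u,\hat u\rangle_*\gtrsim \|u\|^2_{L^2_{\omega_\nu}}$ and the desired equivalence.

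For the derivative bounds \eqref{bd:coercivity-partial-nu-adapted-norm-hat-u}, I would differentiate the Pythagorean identity with $u$ fixed, using the pointwise bounds \eqref{est:pointwise_phii} and \eqref{est:pointwise_phiitil_da} on $\partial_\nu\phi_0$ and $\partial_a\phi_0$, together with the explicit expression of $\partial_\nu\omega_\nu$ (which is comparable to $\omega_\nu$), and repeating the arguments of the previous paragraphs with $\phi_0$ replaced by $\partial_\nu\phi_0$ or $\partial_a\phi_0$. The main obstacle is the first step: establishing the precise $-c_0|\ln\nu|$ asymptotic of $\langle\phi_0,\phi_0\rangle_*$. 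Because the leading $\nu^{-2}$ contributions from the $L^2_{\omega_\nu}$ part and the Poisson-field part cancel by the matched construction, one must match logarithmic subleading corrections across the inner and outer zones and verify that the sign of the resulting constant is negative. This is a somewhat delicate bookkeeping exercise that rests crucially on the whole inner/outer matching carried out in Section~2.
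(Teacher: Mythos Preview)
Your proposal is correct and follows essentially the same approach as the paper. The paper streamlines the argument by first isolating the identity $\langle \nu^{-4}\Lambda U(\tfrac{\cdot-a}{\nu}), f\rangle_* = -2c_*\int_{|z-a|<\eta} f + O\bigl(\min(\sqrt{|\ln\nu|}\,\|f\|_{L^2_{\omega_\nu}},\ \|(\nu+|z-a|)^{-1/2}f\|_{L^2_{\omega_\nu}})\bigr)$, obtained from the algebraic cancellation $\Ms_0(\Lambda U)=-2$; this single identity yields both the $-c_0|\ln\nu|$ asymptotic of $\langle\phi_0,\phi_0\rangle_*$ (through a direct Taylor expansion of $\omega_\nu - c_*\nu^2/U_{1,\nu}$ near $\pm a$, rather than the full inner/outer eigenfunction matching you invoke) and the gradient bound on $a_0$ (via the second remainder estimate combined with Hardy, rather than your integration-by-parts, which does not work as cleanly since $\phi_0\,\omega_\nu\sim \Lambda U/U$ is not itself in divergence form).
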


Before we prove Proposition \ref{prop:decomp_vephat}, we show the following intermediate result.

\begin{lemma}

There exists $C_0(a)$ that is positive for $a$ close to $a_\infty$, such that for all $a$ close to $a_\infty$ and $\nu$ small enough:
\begin{align} 
\label{id:projection-norm-phi0-matched-product} & \langle \phi_0,\phi_0\rangle_*=-C_0|\ln \nu|+O(1),\\
\label{id:projection-norm-phi0-matched-product-2} & |\nu \partial_\nu \langle \phi_0,\phi_0\rangle_*|+|\ln \nu|^{-1}|\partial_a \langle \phi_0,\phi_0\rangle_*| \lesssim 1.
\end{align}

\end{lemma}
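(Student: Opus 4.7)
\emph{Proof plan.} The starting point is the three-term decomposition \eqref{id:matched-scalar-product-intermediaire}:
\begin{equation*}
\langle \phi_0, \phi_0 \rangle_* = \int \chi^{*2}\phi_0^2(\omega_\nu - c_*\gamma_\nu)\,dz + \int (1-\chi^{*2})\phi_0^2\omega_\nu\,dz + c_*\langle \chi^*\phi_0, \chi^*\phi_0\rangle_\flat.
\end{equation*}
The second integral is $O(1)$ using the pointwise bound $|\phi_0^\out|\lesssim \langle z\rangle^C$ from Proposition \ref{prop:eigen} and the Gaussian decay of $\omega_\nu$. The plan is to extract the $-C_0|\ln\nu|$ behavior from the first integral and to show that the third term is $O(1)$.

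For the first integral, I will Taylor expand, using $\omega_\nu/\gamma_\nu = \nu^2(1/U_{1,\nu}+1/U_{2,\nu})e^{-|z|^2/4}$, around each bubble:
\begin{equation*}
\frac{\omega_\nu(z)}{\gamma_\nu(z)} = c_*(a) + \nabla f(\pm a)\cdot(z\mp a) + \tfrac12(z\mp a)^T H_\pm (z\mp a) + O\big(|z\mp a|^3 + \nu^2\big) \quad \text{near } \pm a.
\end{equation*}
The linear term integrates to zero against the isotropic leading part of $\phi_0^2\gamma_\nu$ by the angular orthogonality $\int_0^{2\pi}\cos\theta\,d\theta = \int_0^{2\pi}\sin\theta\,d\theta=0$. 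The Hessian $H_\pm$ is negative definite, with $\textup{tr}(H_\pm) = -c_*(a) + O(|a-a_\infty|)$: indeed a direct computation reduces $\nabla f(a_\infty)=O(\nu^2)$ to the equilibrium identity $\nabla\Phi_{U_{2,\nu}}(a_\infty)+\beta a_\infty=O(\nu^2)$, which is built into the definition $a_\infty=(\sqrt{2/\beta},0)$, and then one evaluates $\nabla^2\ln(\omega_\nu/\gamma_\nu)(a_\infty)$ explicitly. Combined with the matching-region asymptotics $\phi_0^2\gamma_\nu\sim 1/(8|z\mp a|^4)$ from the leading $1/\zeta_\pm^4$ term of \eqref{expansion:phii_inn}, polar integration $\int_\nu^\eta r^{-1}dr = |\ln\nu|+O(1)$ gives
\begin{equation*}
\int_{\cup_\pm\{|z\mp a|<\eta\}}\chi^{*2}\phi_0^2(\omega_\nu - c_*\gamma_\nu)\,dz = \frac{\pi(\textup{tr}(H_+)+\textup{tr}(H_-))}{16}|\ln\nu|+O(1) = -\frac{\pi c_*}{8}|\ln\nu|+O(1),
\end{equation*}
the anisotropic corrections to $\phi_0$ in \eqref{expansion:phii_inn} not entering the $|\ln\nu|$-coefficient by $\int_0^{2\pi}\cos(k\theta)d\theta=0$ for $k\geq 1$. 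This identifies $C_0 = \pi c_*/8>0$.

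For the third term, the key scaling identity is: if $u(z)=\nu^{-4}\tilde v((z-a)/\nu)$ near $a$, then $\Ms^z u(z) = \Ms_0 \tilde v(y)$ in the rescaled variable $y=(z-a)/\nu$, which follows from $\gamma_\nu(z)\approx \nu^4/U(y)$ combined with the Poisson scaling $\nu^2\Phi_u(z) = \Phi_{\tilde v}(y)$. Applied to $\tilde v=-\phi_0^\inn/16$, a change of variable yields
\begin{equation*}
\int_{\text{near }a}\phi_0\,\Ms^z\phi_0\,dz = \nu^{-2}\int_{|y|<\eta/\nu}\tilde v\,\Ms_0\tilde v\,dy.
\end{equation*}
Expanding $\tilde v=-\Lambda U/16 + O(\nu^2)$ and using $\Ms_0(\Lambda U)=-2$ together with $\int\Lambda U\,dy=0$ from Lemma \ref{lem:coercivity-one-bubble-H1-orthogonality}, the leading $\nu^{-2}$ divergence is eliminated. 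Subleading pairings with the inner profiles $T_k^{(0)},\hat T_k^{(0)}$ of Proposition \ref{prop:phi1inn} are controlled through their defining equations \eqref{eigenfunction:id:T2}--\eqref{eigenfunction:id:hatT4} and yield $O(1)$ after the $\nu^{-2}\cdot\nu^2$ compensation; the symmetric computation near $-a$ and the matching contributions at scale $\nu^{\tilde\epsilon}$ are also $O(1)$. This establishes \eqref{id:projection-norm-phi0-matched-product}. The derivative bounds \eqref{id:projection-norm-phi0-matched-product-2} follow by differentiating under the integral using the regularity of $\phi_0$ in $(\nu,a)$ from Proposition \ref{prop:eigen}: since $\nu\partial_\nu|\ln\nu|=-1$, the $\nu\partial_\nu$ derivative is $O(1)$, and since $\partial_a C_0(a)$ is bounded, $|\partial_a\langle\phi_0,\phi_0\rangle_*|\lesssim|\ln\nu|$.

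\emph{Main obstacle.} The principal technical difficulty lies in the $\flat$-term analysis: the subleading profiles in $\phi_0^\inn$ decay only like $r^{-2}$ or $r^{-1}$ at infinity, so the inner integration must be truncated at $|y|\sim\nu^{-1}$, and one must verify that the boundary contributions, together with the interactions between different Fourier modes, do not produce a hidden $|\ln\nu|$ contribution. This requires exploiting the divergence structure $\Ls_0 = \nabla\cdot(U\nabla\Ms_0)$, the symmetry $\int f\,\Ms_0 g\,dy = \int g\,\Ms_0 f\,dy$ for suitably decaying pairs, and the precise angular decomposition of each profile provided by Lemmas \ref{lem:T20hatT2}--\ref{lem:T44}.
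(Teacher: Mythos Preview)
Your decomposition $\langle\phi_0,\phi_0\rangle_* = \int\chi^{*2}\phi_0^2(\omega_\nu-c_*\gamma_\nu) + \int(1-\chi^{*2})\phi_0^2\omega_\nu + c_*\langle\chi^*\phi_0,\chi^*\phi_0\rangle_\flat$ is valid, and your extraction of the $|\ln\nu|$ coefficient from the first integral via the Hessian of $\omega_\nu/\gamma_\nu$ is correct and rather elegant. A direct computation confirms $\textup{tr}(H_\pm)(a_\infty) = \Delta(\frac{\nu^2}{U_{2,\nu}}e^{-|z|^2/4})(a_\infty) = -c_*$, so your formula $C_0 = \pi c_*/8$ is consistent with what the paper obtains (the paper's explicit values of the Taylor coefficients $c_2,c_3$ in fact carry a factor error, though its sign conclusion remains valid).

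The paper organises the computation differently: instead of isolating the $\flat$-term, it first proves a preliminary identity
\[
\langle \nu^{-4}\Lambda U(\tfrac{\cdot-a}{\nu}),f\rangle_* = -2c_*\int_{|z-a|<\eta}f + \text{remainder},
\]
which packages the cancellation $\Ms_0\Lambda U=-2$ once (this is your key scaling identity, applied at the level of the full $*$-product rather than the $\flat$-product alone). It then decomposes $\phi_0 = -\tfrac{1}{16}\sum_\pm\nu^{-4}\Lambda U + \check\phi_0$ and reduces to the self-pairing $\langle\sum_\pm\nu^{-4}\Lambda U,\sum_\pm\nu^{-4}\Lambda U\rangle_*$, whose $|\ln\nu|$ part comes precisely from the $\int\nu^{-8}|\Lambda U|^2(\omega_\nu - c_*\nu^2/U_{1,\nu})$ piece — the same Taylor-expansion integral you compute. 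The advantage of the paper's route is that the $\flat$-cancellation is absorbed into the preliminary identity and never needs separate analysis; the cross-terms with $\check\phi_0$ are then controlled by the continuity bound \eqref{matchedscalarproduct1:continuity} since $\|\check\phi_0\|_{L^2_{\omega_\nu}}=O(1)$.

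Your approach instead requires showing $\langle\chi^*\phi_0,\chi^*\phi_0\rangle_\flat=O(1)$ directly. This is where the proposal is thin. The leading cancellation via $\int\Lambda U\,\Ms_0\Lambda U=-2\int\Lambda U=0$ is fine, but you then need the cross-terms $\nu^{-2}\cdot\nu^2\int\Lambda U\,\Ms_0 T_2^{(0)}$, $\nu^{-2}\cdot\nu^2\int T_2^{(0)}\,\Ms_0\Lambda U$, etc., to be $O(1)$. These require verifying that the symmetry $\int f\,\Ms_0 g=\int g\,\Ms_0 f$ holds on the truncated domain $|y|<\eta/\nu$ for the slowly decaying profiles (the $\cos(2\theta)$ component of $T_2^{(0)}$ decays only like $r^{-2}$), together with the fact that the angular cancellation $\int_0^{2\pi}\cos(2\theta)d\theta=0$ kills the would-be logarithmic divergence in $\int\Lambda U\cdot T_{2,2}/U$. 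You flag this as the obstacle; it is genuine but tractable, and your outline of the mechanism is correct. The paper's preliminary-identity approach simply bypasses this bookkeeping.
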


\begin{proof}

\textbf{Step 1}. \emph{A preliminary identity}. Pick $\eta>0$ small. We claim that
\begin{equation} \label{id:projection-matched-scalar-LambdaU1nu}
\langle \frac{1}{\nu^4} \Lambda U (\frac{z-a}{\nu}),f\rangle_* = -2 c_* \int_{|z-a|<\eta} f  +O\left(\min \Big( \sqrt{|\ln \nu|}\| f\|_{L^2_{\omega_\nu}}
\ , \  \|\frac{f}{\sqrt{\nu+|z-a|}}\|_{L^2_{\omega_\nu}}\Big)\right).
\end{equation}
To prove it, we decompose using \eqref{def:scalar12}
\begin{align}
\nonumber & \langle \frac{1}{\nu^4} \Lambda U (\frac{z-a}{\nu}),f\rangle_*  =\int \frac{1}{\nu^4} \Lambda U (\frac{z-a}{\nu})f\omega_\nu dz-c_*\nu^2 \int \chi^* f \Phi_{\chi^*  \frac{1}{\nu^4} \Lambda U (\frac{\cdot-a}{\nu})} dz \\
\nonumber &= c_* \int \chi_{a,\eta} f \left(  \frac{1}{\nu^4} \Lambda U (\frac{z-a}{\nu}) \frac{\nu^2}{U_{1,\nu}}-\nu^2 \Phi_{\frac{1}{\nu^4} \Lambda U (\frac{\cdot-a}{\nu})}\right)+c_*\nu^2\int (\chi_{a,\eta}-\chi^*) f\Phi_{ \frac{1}{\nu^4} \Lambda U (\frac{\cdot-a}{\nu})} \\
\nonumber & \qquad + \int f  \frac{1}{\nu^4} \Lambda U (\frac{z-a}{\nu}) \chi_{a,\eta} \left(\omega_\nu-\frac{c_*\nu^2}{U_{1,\nu}} \right)+c_*\nu^2\int \chi^* f\Phi_{(1-\chi^*)  \frac{1}{\nu^4} \Lambda U (\frac{\cdot-a}{\nu})} \\
\label{id:projection-matched-scalar-LambdaU1nu-1} &\qquad + \int f  \frac{1}{\nu^4} \Lambda U (\frac{z-a}{\nu}) \omega_\nu (1-\chi_{a,\eta})
\end{align}
We have $ \frac{1}{\nu^4} \Lambda U (\frac{\cdot-a}{\nu}) \frac{\nu^2}{U_{1,\nu}}-\nu^2 \Phi_{\frac{1}{\nu^4} \Lambda U (\frac{\cdot-a}{\nu})}=\Ms_0(\Lambda U)(\frac{\cdot-a}{\nu})=-2$ by \eqref{id:kernel-Ms0}, hence
\begin{equation} \label{id:projection-matched-scalar-LambdaU1nu-2}
 \int \chi_{a,\eta} f \left(  \frac{1}{\nu^4} \Lambda U (\frac{z-a}{\nu}) \frac{\nu^2}{U_{1,\nu}}-\nu^2 \Phi_{\frac{1}{\nu^4} \Lambda U (\frac{\cdot-a}{\nu})}\right) =-2\int \chi_{a,\eta}f
\end{equation}
We have that $\nu^2 \Phi_{ \frac{1}{\nu^4} \Lambda U (\frac{\cdot-a}{\nu})} = (z-a).\nabla \Phi_{\frac{1}{\nu^2}U(\frac{\cdot-a}{\nu})}+\frac{1}{2\pi}\int U=O(\nu^2)$ for $\zeta_*<|z-a|\lesssim 1$ so that
\begin{equation} \label{id:projection-matched-scalar-LambdaU1nu-3}
\nu^{2}\int (\chi_{a,\eta}-\chi^*) f\Phi_{ \frac{1}{\nu^4} \Lambda U (\frac{\cdot-a}{\nu})} = O(\nu^2  \| \chi^*f \|_{L^1})= O(  \nu \| f \|_{L^2_{\omega_{\nu}}})
\end{equation}
where we used \eqref{scalarproduct1:estimate-u1-L1}, \eqref{bd:equivalence-weight-U1+2nu-1} and \eqref{bd:equivalence-weight-omeganu}. We have for $a$ and $z$ close to $a_\infty$ by \eqref{def:scalar12} that
\begin{equation} \label{id:projection-matched-scalar-LambdaU1nu-4}
\omega_\nu-\frac{c_*\nu^2}{U_{1,\nu}}=\frac{\nu^2}{U_{1,\nu}}\left(\frac{\nu^2}{U_{2,\nu}}e^{-|z|^4/4} -c_*\right)=\frac{\nu^2}{U_{1,\nu}}O(\nu^2+|z-a|).
\end{equation}
Hence by Cauchy-Schwarz, using $\| \chi_{a,\eta}\frac{1}{\nu^4} \Lambda U (\frac{z-a}{\nu}) (\nu^2+|z-a|) \|_{L^2_{\frac{\nu^2}{U_{1,\nu}}}}\lesssim \sqrt{|\ln \nu|}$:
\begin{equation} \label{id:projection-matched-scalar-LambdaU1nu-4000}
\left|\int f  \frac{1}{\nu^4} \Lambda U (\frac{z-a}{\nu}) \chi_{a,\eta} \left(\omega_\nu-\frac{c_*\nu^2}{U_{1,\nu}} \right)dz \right|\lesssim \sqrt{|\ln \nu|} \| f\|_{L^2_{\omega_\nu}} .
\end{equation}
By Cauchy-Schwarz again $\int f  \frac{1}{\nu^4} \Lambda U (\frac{z-a}{\nu}) \chi_{a,\eta} \left(\omega_\nu-\frac{c_*\nu^2}{U_{1,\nu}} \right)dz =O(\| (\nu+|z-a|)^{-1/2}f\|_{L^2_{\omega_\nu}})$ using $\| \sqrt{\nu+|z-a|} \chi_{a,\eta}\frac{1}{\nu^4} \Lambda U (\frac{z-a}{\nu}) (\nu^2+|z-a|) \|_{L^2_{\frac{\nu^2}{U_{1,\nu}}}}\lesssim 1$. Combining this inequality with \eqref{id:projection-matched-scalar-LambdaU1nu-4000} shows
\begin{equation} \label{id:projection-matched-scalar-LambdaU1nu-5}
\left|\int f  \frac{1}{\nu^4} \Lambda U (\frac{z-a}{\nu}) \chi_{a,\eta} \left(\omega_\nu-\frac{c_*\nu^2}{U_{1,\nu}} \right)dz \right|\lesssim \min \left(\sqrt{|\ln \nu|}\| f\|_{L^2_{\omega_\nu}}
\ , \ \|\frac{f}{\sqrt{\nu+|z-a|}}\|_{L^2_{\omega_\nu}}\right).
\end{equation}
We have that $\Phi_{(1-\chi_*)  \frac{1}{\nu^4} \Lambda U (\frac{\cdot-a}{\nu})}$ is bounded as $|(1-\chi_*)  \frac{1}{\nu^4} \Lambda U (\frac{\cdot-a}{\nu})|\lesssim \langle z \rangle^{-4}$, so that
\begin{equation} \label{id:projection-matched-scalar-LambdaU1nu-6}
|\nu^2 \int \chi^* f\Phi_{(1-\chi^*)  \frac{1}{\nu^4} \Lambda U (\frac{\cdot-a}{\nu})}|\lesssim \nu^2 \| \chi^* f\|_{L^1}\lesssim \nu \| f\|_{L^2_{\omega_\nu}}
\end{equation}
by \eqref{scalarproduct1:estimate-u1-L1}, \eqref{bd:equivalence-weight-U1+2nu-1} and \eqref{bd:equivalence-weight-omeganu}. Using that $|\frac{1}{\nu^4}\Lambda U(\frac{z-a}{\nu})|\lesssim |z-a|^{-4}\lesssim 1$ for $|z-a|\gtrsim \eta$ we have by Cauchy-Schwarz
\begin{equation} \label{id:projection-matched-scalar-LambdaU1nu-600000}
 \int f  \frac{1}{\nu^4} \Lambda U (\frac{z-a}{\nu}) \omega_\nu (1-\chi_{a,\eta}) =\Oc(\| f\|_{L^2_{\omega_\nu}}).
\end{equation}
Injecting \eqref{id:projection-matched-scalar-LambdaU1nu-2} and \eqref{id:projection-matched-scalar-LambdaU1nu-3} using $\omega_\nu \approx 1$ for $|z\pm a|\approx \eta$, and \eqref{id:projection-matched-scalar-LambdaU1nu-4}, \eqref{id:projection-matched-scalar-LambdaU1nu-5}, \eqref{id:projection-matched-scalar-LambdaU1nu-6} and \eqref{id:projection-matched-scalar-LambdaU1nu-600000} in \eqref{id:projection-matched-scalar-LambdaU1nu-1} shows \eqref{id:projection-matched-scalar-LambdaU1nu}.

\medskip

\noindent \textbf{Step 2}. \emph{A secondary identity}. We claim that
\begin{equation} \label{id:projection-matched-scalar-partialxU1nu}
\langle \frac{1}{\nu^3} \partial_{x_1} U (\frac{z-a}{\nu}),f\rangle_* = \nu \| f\|_{L^2_{\omega_\nu}}
\end{equation}
The proof is similar to Step 1. We decompose using \eqref{def:scalar12}
\begin{align}
\nonumber & \langle \frac{1}{\nu^3} \partial_{x_1} U (\frac{z-a}{\nu}),f\rangle_* \\
\nonumber &= c_* \int \chi_{a,\eta} f \left(  \frac{1}{\nu^3} \partial_{x_1} U (\frac{z-a}{\nu}) \frac{\nu^2}{U_{1,\nu}}-\nu^2 \Phi_{\frac{1}{\nu^3} \partial_{x_1} U (\frac{\cdot-a}{\nu})}\right)+c_*\nu^2\int (\chi_{a,\eta}-\chi^*) f\Phi_{ \frac{1}{\nu^3} \partial_{x_1} U (\frac{\cdot-a}{\nu})} \\
\nonumber & \qquad + \int f  \frac{1}{\nu^3} \partial_{x_1} U (\frac{z-a}{\nu}) \chi_{a,\eta} \left(\omega_\nu-\frac{c_*\nu^2}{U_{1,\nu}} \right)+c_*\nu^2\int \chi^* f\Phi_{(1-\chi^*)  \frac{1}{\nu^3} \partial_{x_1} U (\frac{\cdot-a}{\nu})} \\
\label{id:projection-matched-scalar-partialx1U1nu-1} &\qquad + \int f  \frac{1}{\nu^3} \partial_{x_1} U (\frac{z-a}{\nu}) \omega_\nu (1-\chi_{a,\eta})
\end{align}
We have $ \frac{1}{\nu^3} \partial_{x_1} U (\frac{\cdot-a}{\nu}) \frac{\nu^2}{U_{1,\nu}}-\nu^2 \Phi_{\frac{1}{\nu^3} \partial_{x_1} U (\frac{\cdot-a}{\nu})}=\nu\Ms_0(\partial_{x_1} U)(\frac{\cdot-a}{\nu})=0$ by \eqref{id:kernel-Ms0}, hence
\begin{equation} \label{id:projection-matched-scalar-partialx1U1nu-2}
\int \chi_{a,\eta} f \left(  \frac{1}{\nu^3} \partial_{x_1} U (\frac{z-a}{\nu}) \frac{\nu^2}{U_{1,\nu}}-\nu^2 \Phi_{\frac{1}{\nu^3} \partial_{x_1} U (\frac{\cdot-a}{\nu})}\right)=0
\end{equation}
The remaining terms in \eqref{id:projection-matched-scalar-partialx1U1nu-1} are estimated exactly as \eqref{id:projection-matched-scalar-LambdaU1nu-3} using $\nu^2 \Phi_{ \frac{1}{\nu^3} \partial_{x_1} U (\frac{\cdot-a}{\nu})} =\nu^2 \partial_{x_1}\Phi_U (\frac{\cdot-a}{\nu})=O(\nu^2)$ for $\zeta_*<|z-a|\lesssim 1$, as \eqref{id:projection-matched-scalar-LambdaU1nu-600000} using $\| \chi_{a,\eta}\frac{1}{\nu^3} \partial_{x_1} U (\frac{z-a}{\nu}) (\nu+|z-a|) \|_{L^2_{\frac{\nu^2}{U_{1,\nu}}}}\lesssim \nu$, as \eqref{id:projection-matched-scalar-LambdaU1nu-6} using $\Phi_{(1-\chi_*)  \frac{1}{\nu^3} \partial_{x_1} U (\frac{\cdot-a}{\nu})}=O(\nu)$ for $|z|\lesssim 1$ and as \eqref{id:projection-matched-scalar-LambdaU1nu-4000} using $|\frac{1}{\nu^3}\partial_{x_1}U(\frac{z-a}{\nu})|\lesssim \nu^2$ for $|z-a|>\eta$, yielding
\begin{align}\label{id:projection-matched-scalar-partialx1U1nu-3}
|\nu^{2}\int (\chi_{a,\eta}-\chi^*) f\Phi_{ \frac{1}{\nu^3} \partial_{x_1} U (\frac{\cdot-a}{\nu})}| \lesssim \nu \| f \|_{L^2_{\omega_\nu}}\\
 \label{id:projection-matched-scalar-partialx1U1nu-4000}
\left|\int f  \frac{1}{\nu^3} \partial_{x_1} U (\frac{z-a}{\nu}) \chi_{a,\eta} \left(\omega_\nu-\frac{c_*\nu^2}{U_{1,\nu}} \right)dz \right|\lesssim \nu \| f\|_{L^2_{\omega_\nu}} ,\\
\label{id:projection-matched-scalar-partialx1U1nu-6}
|\nu^2 \int \chi^* f\Phi_{(1-\chi^*)  \frac{1}{\nu^3} \partial_{x_1} U (\frac{\cdot-a}{\nu})}|\lesssim \nu^2 \| f\|_{L^2_{\omega_\nu}},\\
\label{id:projection-matched-scalar-partialx1U1nu-600000}\left| \int f  \frac{1}{\nu^3} \partial_{x_1} U (\frac{z-a}{\nu}) \omega_\nu (1-\chi_{a,\eta}) \right|\lesssim \nu^2 \| f\|_{L^2_{\omega_\nu}}.
\end{align}
Injecting \eqref{id:projection-matched-scalar-partialx1U1nu-2}, \eqref{id:projection-matched-scalar-partialx1U1nu-3}, \eqref{id:projection-matched-scalar-partialx1U1nu-4000}, \eqref{id:projection-matched-scalar-partialx1U1nu-6} and \eqref{id:projection-matched-scalar-partialx1U1nu-600000} in \eqref{id:projection-matched-scalar-partialx1U1nu-1} shows the desired identity \eqref{id:projection-matched-scalar-partialxU1nu}.

\medskip

\noindent \textbf{Step 3}. \emph{Proof of the Lemma}. Injecting the expansions \eqref{def:phii_inn}, \eqref{expansion:phii_inn} and \eqref{eigenfunctions-exterior:id:expansion-phiiout-1} in \eqref{def:phii-2} we decompose
\begin{equation} \label{id:coercivity:decomposition-phi0}
\phi_0(z)= -\frac{1}{16}\sum_{\pm}\frac{1}{\nu^4} \Lambda U(\frac{z\pm a}{\nu})+\sum_{\pm}\frac{\pm L_0}{\nu^3} \partial_{x_1} U(\frac{z\pm a}{\nu})+\check \phi_0(z),
\end{equation}
with $\check \phi_0=\check \phi_0^1+\check \phi_0^2$ where
\begin{equation} \label{id:coercivity:decomposition-phi0check}
 \check \phi_0^1(z)= \sum_{\pm}\frac{\cos (2\theta_\pm)}{8|z\pm a|^2}  \quad \mbox{and}  \quad \check \phi_0^2(z)=\sum_\pm O\left(\frac{1}{|z\pm a|^2}(\frac{1}{|\ln \nu|}+\frac{\nu^2}{(\nu+|z\pm a|)^2})+\frac{\langle z\rangle^c}{|z\pm a|} \right).
\end{equation}
We then decompose:
\begin{align}
\nonumber & \langle \phi_0,\phi_0\rangle_* \\
\nonumber &=\frac{1}{256} \langle \sum_{\pm}\frac{1}{\nu^4} \Lambda U(\frac{\cdot \pm a}{\nu}),\sum_{\pm}\frac{1}{\nu^4} \Lambda U(\frac{\cdot\pm a}{\nu})\rangle_*\\
\nonumber & +\langle -\frac{1}{8} \sum_{\pm}\frac{1}{\nu^4} \Lambda U(\frac{\cdot \pm a}{\nu})+ \sum_{\pm}\frac{\pm L_0}{\nu^3} \partial_{x_1} U(\frac{z\pm a}{\nu}), \sum_{\pm}\frac{\pm L_0}{\nu^3} \partial_{x_1} U(\frac{z\pm a}{\nu})\rangle_*\\
 \label{coercivity:id:LambdaU-matched-product-1}&+2\langle -\frac{1}{16} \sum_{\pm}\frac{1}{\nu^4} \Lambda U(\frac{\cdot \pm a}{\nu})\pm\frac{ L_0}{\nu^3} \partial_{x_1} U(\frac{z\pm a}{\nu}), \check \phi_0\rangle_*+O(1)
\end{align}
where we used $|\langle \check \phi_0,\check \phi_0\rangle_*|\lesssim \| \check \phi_0\|_{L^2_{\omega_\nu}}^2\lesssim 1$ which follows from \eqref{matchedscalarproduct1:continuity} and \eqref{id:coercivity:decomposition-phi0check}. Note that the analogue estimate of \eqref{id:projection-matched-scalar-LambdaU1nu} holds for $\Lambda U(\frac{z+a}{\nu})$ and $\partial_{x_1}U(\frac{z+a}{\nu})$ by symmetry. Using \eqref{id:projection-matched-scalar-LambdaU1nu}, \eqref{id:projection-matched-scalar-partialxU1nu}, \eqref{id:coercivity:decomposition-phi0check} and $|L_0|\lesssim 1$ shows:
\begin{equation}  \label{coercivity:id:LambdaU-matched-product-2}
2\langle -\frac{1}{16} \sum_{\pm}\frac{1}{\nu^4} \Lambda U(\frac{\cdot \pm a}{\nu})\pm\frac{ L_0}{\nu^3} \partial_{x_1} U(\frac{z\pm a}{\nu}), \check \phi_0\rangle_*=O(1).
\end{equation}
Using \eqref{id:projection-matched-scalar-partialxU1nu} and $|L_0|\lesssim 1$ shows
\begin{align}
\nonumber &\left| \langle -\frac{1}{8} \sum_{\pm}\frac{1}{\nu^4} \Lambda U(\frac{\cdot \pm a}{\nu})+ \sum_{\pm}\frac{\pm L_i}{\nu^3} \partial_{x_1} U(\frac{z\pm a}{\nu}), \sum_{\pm}\frac{\pm L_i}{\nu^3} \partial_{x_1} U(\frac{z\pm a}{\nu})\rangle_*\right|\\
 \label{coercivity:id:LambdaU-matched-product-3}&\lesssim \sum_{\pm} \nu \left\|\frac{1}{\nu^4} \Lambda U(\frac{\cdot \pm a}{\nu})\right\|_{L^2_{\omega_\nu}}+ \sum_{\pm}\nu \left\|\frac{1}{\nu^3} \partial_{x_1} U(\frac{z\pm a}{\nu})\right\|_{L^2_{\omega_\nu}} \lesssim 1+\nu.
\end{align}
Next, combining \eqref{id:projection-matched-scalar-LambdaU1nu-1}, \eqref{id:projection-matched-scalar-LambdaU1nu-2}, \eqref{id:projection-matched-scalar-LambdaU1nu-3}, \eqref{id:projection-matched-scalar-LambdaU1nu-4} and \eqref{id:projection-matched-scalar-LambdaU1nu-6}, with the fact that $\int \chi_{a,\eta}\frac{1}{\nu^4} \Lambda U(\frac{\cdot-a }{\nu})dz=O(1)$ and $\int (\chi^*-\chi_{a,\eta})\frac{1}{\nu^4} \Lambda U(\frac{\cdot-a }{\nu})dz=O(1)$ as $\int \Lambda U=0$, and $ \| \frac{1}{\nu^4} \Lambda U(\frac{\cdot-a }{\nu})\|_{L^2_{\omega_\nu}}\lesssim \nu^{-1}$:
\begin{equation}  \label{coercivity:id:LambdaU-matched-product-4}
\langle \sum_{\pm}\frac{1}{\nu^4} \Lambda U(\frac{\cdot \pm a}{\nu}),\sum_{\pm}\frac{1}{\nu^4} \Lambda U(\frac{\cdot\pm a}{\nu})\rangle_* =  \sum_{\pm} \int  \frac{1}{\nu^8} |\Lambda U (\frac{z\mp a}{\nu})|^2 \chi_{\pm a,\eta} \left(\omega_\nu-\frac{c_*\nu^2}{U_{i_\pm,\nu}} \right)+O(1)
\end{equation}
where $i_+=1$ and $i_-=2$. We perform a Taylor expansion and get
\begin{align*}
\omega_\nu-c_* \frac{\nu^2}{U_{1,\nu}} & =\frac{\nu^2}{U_{1,\nu}}\left(\frac{(\nu^2+|z+a|^2)^2}{8}e^{-|z|^2/4}-\frac{|2a|^4}{8}e^{-|2a|^2/4} \right) \\
& =\frac{\nu^2}{U_{1,\nu}} e^{-|a|^2/4} \left(\frac{(|z+a|^2)^2}{8}e^{(|a|^2-|z|^2)/4}-\frac{|2a|^4}{8}+O(\nu^2) \right)  \\
&= \frac{\nu^2}{U_{1,\nu}} \left( c_1 (1,0).(z-a)+c_2 |z-a|^2+c_3 |(1,0).(z-a)|^2+O(|z-a|^3)\right)
\end{align*}
with $e^{|a|^2/4} c_1=32|a|^3-8|a|^5$, $e^{|a|^2/4}c_2=8|a|^2-4|a|^4$ and $e^{|a|^2/4}c_3=16|a|^2-2|a|^6-16|a|^4$. Therefore, after changing variables $z=a+\nu y$:
\begin{equation}  \label{coercivity:id:LambdaU-matched-product-5}
 \int  \frac{1}{\nu^8} |\Lambda U (\frac{z- a}{\nu})|^2 \chi_{ a,\eta} \left(\omega_\nu-\frac{c_*\nu^2}{U_{1,\nu}} \right) = \int \chi(\frac{\nu y}{\eta})\frac{(\Lambda U)^2}{U}(c_2|y|^2+c_3 y_1^2)dy +O(1).
\end{equation}
where we used that both $\Lambda U$ and $U$ are radial so that the first term cancels. Using that $\frac{(\Lambda U)^2}{U^2}=\frac{32}{|y|^4}+O(|y|^{-6})$ as $|y|\to \infty$ we compute 
\begin{equation} \label{coercivity:id:LambdaU-matched-product-6}
\int \chi(\frac{\nu y}{\eta})\frac{(\Lambda U)^2}{U}(c_2|y|^2+c_3 y_1^2)dy=(64\pi c_2+32\pi c_3)|\ln \nu|+O(1).
\end{equation}
Injecting \eqref{coercivity:id:LambdaU-matched-product-5} and \eqref{coercivity:id:LambdaU-matched-product-6} in \eqref{coercivity:id:LambdaU-matched-product-4} shows
\begin{equation} \label{coercivity:id:LambdaU-matched-product-7}
\langle \sum_{\pm}\frac{1}{\nu^4} \Lambda U(\frac{\cdot \pm a}{\nu}),\sum_{\pm}\frac{1}{\nu^4} \Lambda U(\frac{\cdot\pm a}{\nu})\rangle_* =(128\pi c_2+64\pi c_3)|\ln \nu|+O(1).
\end{equation}
Finally, injecting \eqref{coercivity:id:LambdaU-matched-product-2}, \eqref{coercivity:id:LambdaU-matched-product-3} and \eqref{coercivity:id:LambdaU-matched-product-7} in \eqref{coercivity:id:LambdaU-matched-product-1} shows the desired identity \eqref{id:projection-norm-phi0-matched-product}. The sign of the constants follows from the explicit computations $c_2(a_\infty)=-32 e^{-1}$ and $c_3(a_\infty)=-320 e^{-1}$. The proof of \eqref{id:projection-norm-phi0-matched-product-2} follows from the exact same computations, by differentiating all expressions with respect to $\nu$ and $a$ ; we omit it.

\end{proof}

\begin{proof}[Proof of Proposition \ref{prop:decomp_vephat} ]

By the orthogonality \eqref{id:coercivity-orthogonalite-projection} the projection is given by
\begin{equation} \label{id:coercivity-projection-formula}
a_0=\frac{\langle u ,\phi_0\rangle_*}{\langle \phi_0,\phi_0\rangle_*},
\end{equation}
the denominator being nonzero by \eqref{id:projection-norm-phi0-matched-product}.

\smallskip

\noindent \underline{Proof of \eqref{bd:coercivity-control-adapted-norm-hatu}}. We have by the orthogonality \eqref{id:coercivity-orthogonalite-projection}
\begin{equation} \label{id:coercivity-projection-pythagorus}
\langle u,u\rangle_* =\langle \hat u,\hat u\rangle_*+a_0^2 \langle \phi_0,\phi_0\rangle_*.
\end{equation}
This implies \eqref{bd:coercivity-control-adapted-norm-hatu} since $\langle \phi_0,\phi_0\rangle_*<0$ by \eqref{id:projection-norm-phi0-matched-product}.

\smallskip

\noindent \underline{Proof of \eqref{bd:coercivity-control-adapted-norm-hatu-2}}. Injecting \eqref{id:coercivity-projection-formula} in \eqref{id:coercivity-projection-pythagorus} gives
\begin{equation} \label{id:matched-product-hatu-expression}
\langle \hat u,\hat u\rangle_*=\langle u,u\rangle_* -\frac{\langle u ,\phi_0\rangle_*^2}{\langle \phi_0,\phi_0\rangle_*}.
\end{equation}
We already know $|\langle u,u\rangle_*|\lesssim \| u\|_{L^2_{\omega_\nu}}^2$ from \eqref{matchedscalarproduct1:continuity} and $|\langle \phi_0,\phi_0\rangle_*|\approx |\ln \nu|$ from \eqref{id:projection-norm-phi0-matched-product} so it suffices to prove
\begin{equation} \label{bd:matched-product-u-phi0}
|\langle u ,\phi_0\rangle_*| \lesssim \min(\sqrt{|\ln \nu|} \| u\|_{L^2_{\omega_\nu}},\| u\|_{H^1_{\omega_\nu}}).
\end{equation}
Using the decomposition \eqref{id:coercivity:decomposition-phi0} we have $\langle u ,\phi_0\rangle_*=\langle u ,\sum_\pm \frac{1}{\nu^4}\Lambda U (\frac{z\pm a}{\nu})+\check \phi_0\rangle_*$. We have by \eqref{id:projection-matched-scalar-LambdaU1nu}, since $\int_{|z\pm a|<\eta} u dz=0$, that $\langle u , \frac{1}{\nu^4}\Lambda U (\frac{z\pm a}{\nu})\rangle_* =O(\min(\sqrt{|\ln \nu|}\| u\|_{L^2_{\omega_\nu}},\| u\|_{H^1_{\omega_\nu}}))$. We have by \eqref{matchedscalarproduct1:continuity}, since $\| \check \phi_0\|_{L^2_{\omega_\nu}}\lesssim 1$ by \eqref{id:coercivity:decomposition-phi0check}, that $\langle u , \check \phi_0 \rangle_* =O(\| u\|_{L^2_{\omega_\nu}})$. This shows \eqref{bd:matched-product-u-phi0} as desired.

\smallskip

\noindent \underline{Proof of \eqref{bd:coercivity-partial-nu-adapted-norm-hat-u}}. We differentiate the first term in \eqref{id:matched-product-hatu-expression}:
\begin{align*}
\nu \partial_\nu \langle u,u \rangle_*=\int u^2 \nu \partial_\nu \omega_\nu dz -2c_* \nu^2 \int \chi^* u \Phi_{\chi^* u}dz,\\
\nu \partial_a \langle u,u \rangle_*=\int u^2 \nu \partial_a \omega_\nu dz -2 \partial_a (c_*) \nu^3 \int \chi^* u \Phi_{\chi^* u}dz.
\end{align*}
Using $|\nu\partial_\nu \omega_\nu|+|\nu \partial_a \omega_\nu|\lesssim \omega_\nu$ from \eqref{def:omega12}, $\partial_a c_*=O(1)$,\eqref{scalarproduct1:temp-1}, \eqref{bd:equivalence-weight-omeganu} and \eqref{bd:equivalence-weight-U1+2nu-1} we get:
$$
|\nu \partial_\nu \langle u,u \rangle_*|+|\nu \partial_a \langle u,u \rangle_*|\lesssim \| u\|_{L^2_{\omega_\nu}}^2.
$$
The derivative with respect to $a$ and $\nu$ of the second term in \eqref{id:matched-product-hatu-expression} are estimated similarly, injecting \eqref{id:projection-norm-phi0-matched-product-2} and performing the same estimates as in the proof of \eqref{bd:coercivity-control-adapted-norm-hatu-2}.

\smallskip

\noindent \underline{Proof of \eqref{bd:coercivity-projection-a0}}. It follows from injecting \eqref{bd:matched-product-u-phi0} and \eqref{id:projection-norm-phi0-matched-product} in \eqref{id:coercivity-projection-formula}.

\smallskip

\noindent \underline{Proof of \eqref{bd:coercivity-control-adapted-norm-hatu-2}.} It follows from \eqref{bd:coercivity-control-adapted-norm-hatu}, \eqref{bd:coercivity-control-adapted-norm-hatu-2} and the coercivity \eqref{matchedscalarproduct1:positivity}.

\end{proof}

\begin{proposition}[Coercivity of the operator for the second decomposition] \label{Prop:coercive-Lz-global-second}

There exists $\delta>0$ such that for $\eta>0$ small enough, for $\nu$ close to $0$ and $|a-a_{\infty}|\lesssim \nu$, if $u$ is a Schwartz function satisfying the orthogonality conditions \eqref{scalarproduct1:orthogonality-condition-coercivity-operator}, then
\begin{equation} \label{bd:coercivity-Lztilde-matched-product-second}
-\left\langle \widehat{\tilde{\Ls}^z u},\hat u\right\rangle_*\geq \delta \int |\nabla u|^2\omega_{\nu}dz+\delta \int \left(\frac{1}{(\nu+|z-a|)^2})+\frac{1}{(\nu+|z+a|)^2})+\langle z \rangle^2\right)u^2 \omega_\nu dz.
\end{equation}

\end{proposition}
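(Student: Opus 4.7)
The plan is to reduce the coercivity for the projected quantities to that of Proposition \ref{Prop:coercive-Lz-global}, at the price of a controllable correction. Using the definitions \eqref{id:coercivity-definition-projection}--\eqref{id:coercivity-projection-formula}, the symmetry \eqref{matchedscalarproduct1:symmetry}, and the orthogonality \eqref{id:coercivity-orthogonalite-projection}, a direct bilinear expansion yields the identity
\begin{equation*}
\langle \widehat{\tilde\Ls^z u}, \hat u\rangle_*
  = \langle \tilde\Ls^z u, u\rangle_*
  - \frac{\langle u,\phi_0\rangle_* \langle \tilde\Ls^z u,\phi_0\rangle_*}{\langle \phi_0,\phi_0\rangle_*}.
\end{equation*}
Proposition \ref{Prop:coercive-Lz-global}, applied to $u$, provides the coercive lower bound for $-\langle \tilde\Ls^z u,u\rangle_*$, so it remains to show that the correction term above is bounded, in absolute value, by a small fraction of the coercive terms.

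For the correction, my plan is to combine the approximate eigenfunction equation \eqref{eq:eigenfunctions_phi10} with the almost-symmetry identity \eqref{id:bilinear-form-linearized-symmetry} to obtain
\begin{equation*}
\langle \tilde\Ls^z u, \phi_0\rangle_*
 = \lambda_0 \langle u,\phi_0\rangle_* + \langle u, R_0\rangle_*
 + \tilde{\mathcal Q}(u,\phi_0) - \tilde{\mathcal Q}(\phi_0, u).
\end{equation*}
Using $|\langle \phi_0,\phi_0\rangle_*|\approx|\ln \nu|$ from \eqref{id:projection-norm-phi0-matched-product} and $\lambda_0 = \Oc(1)$, the leading contribution to the correction is $\lambda_0\langle u,\phi_0\rangle_*^2 /\langle \phi_0,\phi_0\rangle_*$, of order $|\ln\nu|^{-1}|\langle u,\phi_0\rangle_*|^2$. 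The $R_0$-term is controlled by the pointwise estimate \eqref{est:pointwise_Ri}, which implies $\|R_0\|_{L^2_{\omega_\nu}}\lesssim |\ln\nu|^{-2}$; the $\tilde{\mathcal Q}$-terms are controlled via \eqref{bd:symmetry-Lz-mathcal Q} with the small prefactor $(\nu+|a-a_\infty|)\sqrt{|\ln\nu|}\lesssim \nu\sqrt{|\ln\nu|}$, combined with the pointwise bounds on $\phi_0$ from Proposition \ref{prop:eigen} (yielding in particular $\|\phi_0\|_{L^2_{\omega_\nu}}\lesssim \nu^{-1}$).

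The main obstacle is establishing a sharp bound of the form $|\langle u, \phi_0\rangle_*|\lesssim \|u\|_{H^1_{\omega_\nu}}$ using only the orthogonality \eqref{scalarproduct1:orthogonality-condition-coercivity-operator}, which differs from the stronger hypothesis used in Proposition \ref{prop:decomp_vephat} by not assuming the zero-mass condition $\int_{|z\pm a|<\eta}u\,dz = 0$. My plan is to decompose $\phi_0$ via \eqref{id:coercivity:decomposition-phi0} into its dominant singular modes near $\pm a$ (proportional to $\frac{1}{\nu^4}\Lambda U((\cdot\mp a)/\nu)$ and $\frac{1}{\nu^3}\partial_{x_1} U((\cdot\mp a)/\nu)$) plus a bounded remainder $\check \phi_0$; the pairing with $\check \phi_0$ is directly bounded by $\|u\|_{L^2_{\omega_\nu}}$ via \eqref{matchedscalarproduct1:continuity}. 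For the pairing with the singular modes, the identity \eqref{id:projection-matched-scalar-LambdaU1nu} produces an unwanted bulk term proportional to $\int_{|z\pm a|<\eta}u\,dz$; I plan to control this term by leveraging the orthogonality to $(\Lambda U)_{i,\nu}\mathbbm 1$ via integration by parts (using $\Lambda U = \nabla\cdot(yU)$ to transfer the derivative onto $u$), combined with the Hardy-Poincar\'e inequality \eqref{bd:hardy-poincare-H1-omeganu}. Once this refined bound is obtained, the three subleading contributions to the correction are all of order at most $|\ln\nu|^{-1/2}\|u\|_{H^1_{\omega_\nu}}^2$, and hence negligible compared to the coercive terms of Proposition \ref{Prop:coercive-Lz-global} for $\nu$ sufficiently small, which closes the argument.
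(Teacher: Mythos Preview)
Your overall architecture matches the paper's proof exactly: write $\langle \widehat{\tilde\Ls^z u},\hat u\rangle_* = \langle \tilde\Ls^z u,u\rangle_* - a_0\langle \tilde\Ls^z u,\phi_0\rangle_*$, apply Proposition~\ref{Prop:coercive-Lz-global} to the first term, and control the second via the almost-symmetry identity~\eqref{id:bilinear-form-linearized-symmetry} together with the approximate eigenfunction equation~\eqref{eq:eigenfunctions_phi10}. Your treatment of the $R_0$ and $\tilde{\mathcal Q}$ corrections is correct and is what the paper does.

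You have, however, correctly put your finger on a real issue that the paper itself glosses over: the bound $|a_0|\lesssim |\ln\nu|^{-1}\|\nabla u\|_{L^2_{\omega_\nu}}$ from~\eqref{bd:coercivity-projection-a0} is proved in Proposition~\ref{prop:decomp_vephat} \emph{under the additional hypothesis} $\int_{|z\pm a|<\eta}u\,dz=0$, and the paper's proof of the present proposition simply invokes~\eqref{bd:coercivity-projection-a0} without comment. Your proposed workaround, though, does not close the gap. The orthogonality $\int_{|z-a|<\eta}u\,\Lambda U_\nu(z-a)\,dz=0$ cannot recover control of $\int_{|z-a|<\eta}u\,dz$: writing $\Lambda U=\nabla\cdot(yU)$ and integrating by parts produces $\int\nabla u\cdot(z-a)U_\nu\,dz$ plus a boundary term, neither of which is the local mass. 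Without the zero-mass condition one can only get $\bigl|\int_{|z-a|<\eta}u\bigr|\lesssim \sqrt{|\ln\nu|}\,\|u\|_{H^1_{\omega_\nu}}$ via Cauchy--Schwarz and Hardy, which yields a correction of size $\|u\|_{H^1_{\omega_\nu}}^2$ rather than $|\ln\nu|^{-1}\|u\|_{H^1_{\omega_\nu}}^2$---not absorbable.

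The clean resolution is simply to strengthen the hypothesis of the proposition to the full set~\eqref{matchedscalarproduct1:orthogonality-condition}, including the local zero-mass condition. This costs nothing: in the only application (the energy estimate of Lemma~\ref{lemm:L2energy}) the remainder $\vep$ satisfies all of~\eqref{eq:orthog}, and in particular $\langle\vep,\mathbbm 1\rangle_{L^2_{loc,\pm}}=0$. With this hypothesis in place, \eqref{bd:coercivity-projection-a0} applies directly and your (and the paper's) argument goes through.
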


\begin{proof}

Using successively the orthogonality \eqref{id:coercivity-orthogonalite-projection}, the decomposition $u=\hat u+a_0\phi_0$, the almost symmetry identity \eqref{id:bilinear-form-linearized-symmetry} and the almost eigenfunction identity \eqref{eq:eigenfunctions_phi10} we have
\begin{align}
\label{printemps1} \langle \widehat{\tilde{\Ls}^z u},\hat u\rangle_* & = \langle \tilde{\Ls}^z u, \hat u \rangle_* \\
\nonumber& = \langle \tilde{\Ls}^z u, u\rangle_*-a_0 \langle \tilde{\Ls}^z u, \phi_0 \rangle_*  \\
\nonumber& = \langle \tilde{\Ls}^z u, u \rangle_*-a_0( \langle  u,\tilde{\Ls}^z \phi_0 \rangle_*+\tilde{\mathcal Q}(u,\phi_0)-\tilde{\mathcal Q}(\phi_0,u) )  \\
\nonumber & = \langle \tilde{\Ls}^z u, u\rangle_*-a_0\left(  2\beta \Big(1 + \frac{\gamma_0}{\ln \nu} \Big) \langle  u, \phi_0 \rangle_*+ \langle u,R_0\rangle_*+\tilde{\mathcal Q}(u,\phi_0)-\tilde{\mathcal Q}(\phi_0,u) \right)  
\end{align}
By \eqref{bd:coercivity-projection-a0} and \eqref{id:projection-norm-phi0-matched-product} the second term in the right-hand side above is
\begin{equation}\label{printemps100}
| a_0 2\beta \Big(1 + \frac{\gamma_0}{\ln \nu} \Big) \langle  \phi_0, \phi_0 \rangle_*|\lesssim \| \nabla u\|_{L^2_{\omega_\nu}}.
\end{equation}
We have for the third term
\begin{equation}\label{printemps2}
\langle u,R_{0}\rangle_*\lesssim \| u\|_{L^2_{\omega_\nu}}\| R_{0}\|_{L^2_{\omega_\nu}}\lesssim \frac{1}{|\ln \nu|^2}\| u\|_{L^2_{\omega_\nu}}
\end{equation}
by \eqref{matchedscalarproduct1:continuity}, \eqref{est:pointwise_Ri} and a direct estimate. We have by \eqref{bd:symmetry-Lz-mathcal Q} and \eqref{est:pointwise_phii} for the fourth and fifth terms in \eqref{printemps1}
\begin{align}
|\tilde{\mathcal Q}(u,\phi_0)|+|\tilde{\mathcal Q}(\phi_0,u)|& \lesssim (\nu+|a-a_{\infty}|)\sqrt{|\ln \nu|}\| u\|_{H^1_{\omega_\nu}}\| \phi_0\|_{L^2_{\omega_\nu}} \nonumber \\
& \lesssim \sqrt{|\ln \nu|}(1+\nu^{-1}|a-a_{\infty}|)\| u\|_{H^1_{\omega_\nu}}.\label{printemps3}
\end{align}
Injecting \eqref{bd:coercivity-projection-a0}, \eqref{printemps100}, \eqref{printemps2} and \eqref{printemps3} in \eqref{printemps1} shows:
$$
\langle \widehat{\tilde{\Ls}^z u},\hat u\rangle_* =  \langle \tilde{\Ls}^z u, u\rangle_* +O\left(\frac{1+\nu^{-1}|a-a_\infty|}{\sqrt{|\ln \nu|}}\| u\|_{H^1_{\omega_\nu}}^2 \right)
$$
The desired inequality \eqref{bd:coercivity-Lztilde-matched-product-second} then follows from the above identity and Proposition \ref{Prop:coercive-Lz-global}.

\end{proof}

\section{Corrected multisoliton in parabolic variables} \label{sec:corrected-multisoliton}

In this section we consider the evolution term of the Keller-Segel equations in self-similar parabolic variables
\begin{equation} \label{id:stationary-equation-parabolic-variables}
\Delta w -\nabla. (w\nabla \Phi_{w})-\beta \Lambda w.
\end{equation}
By the scaling renormalization $w(z)=2\beta \tilde w(\sqrt{2\beta}z)$ it will suffice to consider the case $\beta=\frac 12$:
\begin{equation} \label{id:stationary-equation-parabolic-variables-normalized}
\Delta w -\nabla. (w\nabla \Phi_{ w})-\frac 12 \Lambda w.
\end{equation}
We apply the terminology "soliton" in analogy with wave equations, and call the sum of two stationary states a multisoliton. We look for a solution that is to leading order a multisoliton. We aim at relating the evolution term \eqref{id:stationary-equation-parabolic-variables} it generates to the eigenfunctions of Proposition \ref{prop:eigen} and to the modulation directions (that is to say, the parameter derivatives) of the multisoliton.

\subsection{Derivation of the corrected stationary equation}

\subsubsection{The equilibrium position $\overline{a}$ of the stationary states}

We first look for an approximate stationary solution, i.e. that makes \eqref{id:stationary-equation-parabolic-variables-normalized} vanish to leading order, of the form
$$
U_{1+2,\nu}=U_{1,\nu}+U_{2,\nu},
$$
where
$$
U_{1,\nu} (z)= \frac{1}{\nu^2}U(\frac{z-a}{\nu}) \quad \mbox{and}\quad U_{2,\nu} (z)=\frac{1}{\nu^2}U(\frac{z+a}{\nu})
$$
for $\nu>0$ small. We compute the evolution term \eqref{id:stationary-equation-parabolic-variables-normalized}
\begin{align*}
\Delta U_{1+2,\nu} -\nabla. (U_{1+2,\nu}\nabla \Phi_{U_{1+2,\nu}})-\frac 12 \Lambda U_{1+2,\nu} & =- \left(\nabla \Phi_{U_{2,\nu}}+\frac z2\right).\nabla U_{1,\nu}-\left(\nabla \Phi_{U_{1,\nu}}+\frac z2\right). \nabla U_{2,\nu} \\
&\qquad -U_{1+2,\nu}+2U_{1,\nu}U_{2,\nu}
\end{align*}
The most singular term being on the first line, the above is minimised when the vector field vanish at the positions of the stationary states, i.e. when
$$
\nabla \Phi_{U_{2,\nu}}(a)+\frac a2=0.
$$
Since $\nabla \Phi_{U_{2,\nu}}(z)=- \frac{4(z+a)}{\nu^2+|z+a|^2}$ the above leads in the limit $\nu\to 0$ to solving for $a_\infty$ in the equation $-\frac{2a_\infty}{|a_\infty|^2}+\frac{a_\infty}{2}=0$. The solution is $a_\infty=(2,0)$ given our symmetry assumptions. For equation \eqref{id:stationary-equation-parabolic-variables} the equilibrium position is thus
\begin{equation} \label{id:definition-bar-a}
a_\infty = \left( \sqrt{\frac{2}{\beta}},0\right).
\end{equation}

\subsubsection{Projection of the error on the eigenfunctions and the modulation directions}

We now aim at a refined decomposition of the evolution term \eqref{id:stationary-equation-parabolic-variables} generated by $U_{1+2,\nu}$ as an approximate solution to \eqref{id:stationary-equation-parabolic-variables}, when the stationnary states are near their equilibrium position \eqref{id:definition-bar-a}. To do so, we expand
$$
\nabla \Phi_{U_{2,\nu}}(z)=- \frac{8a}{\nu^2+4|a|^2}+(\nabla \Phi_{U_{2,\nu}}(z)-\nabla \Phi_{U_{2,\nu}}(a)),
$$
which leads to
\begin{align}
\nonumber & \Delta U_{1+2,\nu} -\nabla. (U_{1+2,\nu}\nabla \Phi_{U_{1+2,\nu}})-\beta \Lambda U_{1+2,\nu}\\
\label{multisoliton:id:error}&\qquad \qquad =- \frac{\beta}{\nu^2}\Lambda U \left(\frac{\cdot-a}{\nu}\right)- \frac{\beta}{\nu^2}\Lambda U \left(\frac{\cdot+a}{\nu}\right)+ \left( \frac{8a}{\nu^2+4|a|^2}-\beta a\right).\left( \nabla U_{1,\nu}- \nabla U_{2,\nu}\right) \\
\nonumber &\qquad \qquad \quad - \left(\nabla \Phi_{U_{2,\nu}}(z)-\nabla \Phi_{U_{2,\nu}}(a) \right). \nabla U_{1,\nu}- \left(\nabla \Phi_{U_{1,\nu}}(z)-\nabla \Phi_{U_{1,\nu}}(-a)\right) .\nabla U_{2,\nu}  +2U_{1,\nu}U_{2,\nu}.
\end{align}
Our aim is to link the right-hand side, which is the time derivative term in the Keller-Segel equation  in parabolic variables generated by the multisoliton, to the eigenfunctions constructed in Proposition \ref{prop:eigen}, and to the modulation directions (directional derivatives) of the multisoliton. We notice from the expansion of the eigenfunctions that $- \frac{\beta}{\nu^2}\Lambda U \left(\frac{\cdot-a}{\nu}\right)- \frac{\beta}{\nu^2}\Lambda U \left(\frac{\cdot+a}{\nu}\right)$ is the leading order term in the expansion of $16\beta \nu^2 \phi_0$ near $\pm  a$. The second term $ \nabla U_{1,\nu}- \nabla U_{2,\nu}$ is the directional derivative of the multisoliton with respect to the position $a$ of the stationary states. The effect of second line will be of lower order.

Therefore, the main problem in this section will be to construct a small enough function $\Xi$ such that the corrected multisoliton
\begin{equation}\label{def:ImprovedSS}
\mathfrak{U} = U_{1+2, \nu} + \Xi ,
\end{equation}
satisfies the corrected stationary equation
\begin{align} \label{id:corrected-stationary-equation}
E_0 = & \Delta \mathfrak{U} -\nabla. (\mathfrak{U}\nabla \Phi_{ U_{1+2, \nu} +\chi^* \Xi})-\beta \Lambda \mathfrak{U}  \\
\nonumber & = 8\lambda_0 \nu^2\phi_0+\mu' .\left( \nabla U_{1,\nu}- \nabla U_{2,\nu}\right) +\Psi_0
\end{align}
with $\mu=\mu[a,\nu]\in \mathbb R$ and $\mu'=\mu'[a,\nu]\in \mathbb R^2$ some parameters to be found and $\Psi_0$ suitably small, where $\chi^*$ is defined in \eqref{def:chiK}. Since $\Xi$ grows at infinity that makes $\Phi_{\Xi}$ is not well-defined, the localization $\Phi_{\chi^*\Xi}$ is technically necessary. This localization will nonetheless be harmless since the contribution of the far away zone $|z| \gg 1$ will be of lower order.

\subsection{The corrected multisoliton} \label{subsec:corrected-multisoliton}

\begin{proposition}[Improved stationary solution in parabolic variables] \label{prop:E0} There exists a smooth function $\Xi[\nu,a]$ defined for all  $\nu $ small enough and $|a - \bar a| \lesssim \frac{\nu}{|\ln \nu|}$ that satisfies the following for
\begin{equation}\label{def:mumuprime}
 \mu' = \frac{8a}{\nu^2+4|a|^2}-\beta a+\tilde \mu',
\end{equation}
where $\tilde \mu'(a)=\Oc (\nu^2)$.
\begin{itemize}
\item[i)] \textup{(Pointwise estimates)} For $z \in \Rb^2$ and $j \in \mathbb{N}$,
\begin{align}
&|\nabla^k \Xi (z)| + |\nabla^j \nu \partial_\nu \Xi(z)|\lesssim \sum_{\pm}\frac{\nu^2}{(\nu+ |z \pm a|)^{2+k}} \langle z \rangle^{C_k},\label{est:pointwise_Xi}\\
& \quad |\nabla^k \partial_a \Xi(z)| \leq \sum_{\pm} \frac{\nu^2}{(\nu + |z \pm a|)^{3+k}} \langle z \rangle^{C_k},\label{est:pointwise_Xi_dk}
\end{align}
and 
\begin{equation}\label{est:Xi_localmass}
\sum_{\pm}\Big|\int_{|z \pm a| \leq \zeta_*} \nu \pa_\nu \Xi(z)dz \Big| \lesssim \nu^2. 
\end{equation}
\item[ii)] \textup{(Estimate of the Poisson field)}
\begin{equation}\label{est:PhiXi}
|\nabla^k \Phi_{\chi^* \Xi}|\lesssim \sum_\pm \frac{\nu^2}{(\nu+|z\pm a|)^{k}} \langle z\rangle^2 \langle \ln \langle z \rangle \rangle \qquad \mbox{for }k\geq 1,
\end{equation}
and in particular
\begin{equation}
|\nabla \Phi_{\chi^*\Xi}(a)|\lesssim \nu. \label{est:nablaPhiXia}
\end{equation}
\item[iii)] \textup{(Generated error)} The function $\mathfrak {U}$ solves the corrected stationary equation \eqref{id:corrected-stationary-equation} with
\begin{align} \label{multisoliton:bd:estimate-Psi0-pointwise}
&|\Psi_0(z)| \lesssim \sum_{\pm} \frac{\langle z \rangle^C}{|\log \nu|^2(\nu + |z \pm a|)^2}, 
\end{align}
so that we have in particular
\begin{align}\label{est:Psi_L2omega}
&\|\Psi_0\|_{L^2_{\omega_\nu }} \lesssim \frac{\nu^{2}}{|\log \nu|^2},\\
&\label{est:Psi0mass} \int_{|z \pm a| \lesssim 1}\Psi_0 \lesssim  \frac{\nu^{2}}{|\log \nu|}.
\end{align}

\item[iv)] \textup{(Parameter derivative)} The function $\mathfrak {U}$ satisfies the parameter derivative identity
\begin{align} \label{multisoliton:id:nupartialnu-mathfrakU-identity}
\nu\partial_\nu \mathfrak U & =16 \phi_0-\nu^2(L_0,0).(\nabla U_{1,\nu}-\nabla U_{2,\nu})+\tilde{\mathfrak U},\\
|\nabla^k \tilde{\mathfrak U}| &  \lesssim \sum_\pm \frac{\nu^2}{(\nu+|z\mp a|)^{2+k}}\left(\frac{\nu^{\delta_0}}{(\nu+|z\mp a|)^{\delta_0}}+\frac{1}{|\ln \nu|^2}\right). \label{est:Ukpointwise}
\end{align}

\end{itemize}
\end{proposition}

\noindent We postpone the proof of Proposition \ref{prop:E0} to the end of this section.

\subsection{The first ansatz for the leading order correction} \label{subsec:first-ansatz-multisoliton}

Injecting the decomposition \eqref{def:ImprovedSS} and \eqref{multisoliton:id:error} in \eqref{id:corrected-stationary-equation} we have
\begin{align} 
\nonumber \Psi_0 & = \Delta \mathfrak{U} -\nabla. (\mathfrak{U}\nabla \Phi_{ U_{1+2, \nu} +\chi^* \Xi})-\beta \Lambda \mathfrak{U}-8\lambda_0 \nu^2\phi_0-\mu'.\left( \nabla U_{1,\nu}- \nabla U_{2,\nu}\right)   \\
\nonumber &= - \frac{\beta}{\nu^2}\Lambda U \left(\frac{\cdot-a}{\nu}\right)- \frac{\beta}{\nu^2}\Lambda U \left(\frac{\cdot+a}{\nu}\right)+ \left( \frac{8a}{\nu^2+4|a|^2}-\beta a\right).\left( \nabla U_{1,\nu}- \nabla U_{2,\nu}\right) \\
\nonumber &\quad - \left(\nabla \Phi_{U_{2,\nu}}(z)-\nabla \Phi_{U_{2,\nu}}(a) \right). \nabla U_{1,\nu}- \left(\nabla \Phi_{U_{1,\nu}}(z)-\nabla \Phi_{U_{1,\nu}}(-a)\right) .\nabla U_{2,\nu}  +2U_{1,\nu}U_{2,\nu}\\
\nonumber & \quad +\tilde \Ls^z(\Xi) -\nabla. (\Xi \nabla \Phi_{ \chi^* \Xi})-\mu \nu^2\phi_0- \mu'.\left( \nabla U_{1,\nu}- \nabla U_{2,\nu}\right) \\
\nonumber &= - \frac{\beta}{\nu^2}\Lambda U \left(\frac{\cdot-a}{\nu}\right)- \frac{\beta}{\nu^2}\Lambda U \left(\frac{\cdot+a}{\nu}\right)-8\lambda_0 \nu^2\phi_0+\left( \frac{8a}{\nu^2+4|a|^2}-\beta a-\mu'\right).\left( \nabla U_{1,\nu}- \nabla U_{2,\nu}\right) \\
\nonumber &\quad - \left(\nabla \Phi_{U_{2,\nu}}(z)-\nabla \Phi_{U_{2,\nu}}(a) \right). \nabla U_{1,\nu}- \left(\nabla \Phi_{U_{1,\nu}}(z)-\nabla \Phi_{U_{1,\nu}}(-a)\right) .\nabla U_{2,\nu}  +2U_{1,\nu}U_{2,\nu}\\
\label{multisoliton:id:expression-Psi0-first} & \quad +\tilde \Ls^z(\Xi) -\nabla. (\Xi \nabla \Phi_{ \chi^* \Xi}) 
\end{align}
Our first ansatz is to consider the decomposition
\begin{align}
\label{multisoliton:id:decomposition-Xi-1}  & \Xi= 8 \left[ \nu^2 \phi_0+(\frac{1}{16} \Lambda U-L_0\nu \partial_{x_1}U)_{1,\nu}+(\frac{1}{16}\Lambda U+L_0\nu \partial_{x_1}U)_{2,\nu} \right]+\tilde \Xi,\\
\label{multisoliton:id:decomposition-PhiXi-1} & \Phi_{\chi^* \Xi}= 8 \left[ \nu^2 \Phi_{\chi^* \phi_0}+\Phi_{(\frac{1}{16} \Lambda U-L_0\nu \partial_{x_1}U)_{1,\nu}}+\Phi_{(\frac{1}{16}\Lambda U+L_0\nu \partial_{x_1}U)_{2,\nu}} \right]+\chi^*\tilde{\mathfrak V}+\bar{\mathfrak V}
\end{align}
where for $f:\mathbb R^2\rightarrow \mathbb R$ we introduce the notation
$$
f_{1,\nu}=\frac{1}{ \nu^2}f \left(\frac{\cdot-a}{\nu}\right) \quad \mbox{and}\quad  f_{2,\nu}=\frac{1}{ \nu^2}f \left(\frac{\cdot+a}{\nu}\right)
$$
and where $-\Delta \tilde{\mathfrak V}=\tilde{\Xi}$. Recalling that $\Phi_f$ is the unique solution modulo constant to $-\Delta \Phi_f=f$ that grows at most logarithmically, we choose in view of the identity \eqref{multisoliton:id:decomposition-PhiXi-1}
\begin{align} \label{multisoliton:id:elliptic-mathfrakV}
& \bar{\mathfrak V}= \bar{\mathfrak V}_1+ \bar{\mathfrak V}_2,\\
\label{multisoliton:id:elliptic-mathfrakV-2}& \bar{\mathfrak V_1}= \Phi_{ \left( (\frac{1}{2}\Lambda U-8L_0\nu \partial_{x_1}U)_{1,\nu}+(\frac{1}{2}\Lambda U+8L_0\nu \partial_{x_1}U)_{2,\nu} \right)(\chi^*-1)}, \qquad \bar{\mathfrak V}_2=2\Phi_{\nabla \tilde{\mathfrak V}.\nabla \chi^*}+\Phi_{\tilde{\mathfrak V} \Delta \chi^*}.
\end{align}
We then have
\begin{align*}
\tilde{\Ls}^z (\Xi) &= 8 \nu^2 \tilde{\Ls}^z \phi_0+\frac{1}{2}\Ls^z ((\Lambda U)_{1,\nu})-\nu 8L_0  \Ls^z ((\partial_{x_1} U)_{1,\nu}) +\frac{1}{2}\Ls^z ((\Lambda U)_{2,\nu})+8\nu L_0 \Ls^z ((\partial_{x_1} U)_{2,\nu})\\
&\quad +\Ls^z(\tilde{\Xi},\tilde{\mathfrak V})- \nabla\bar{\mathfrak V}(a).(\nabla U_{1,\nu}-\nabla U_{2,\nu})\\
&\quad -\left(\nabla\bar{\mathfrak V}-\nabla\bar{\mathfrak V}(a)+\nabla((\chi^*-1)\tilde {\mathcal V})\right).\nabla U_{1,\nu}-\left(\nabla \bar{\mathfrak V}-\nabla\bar{\mathfrak V}(-a)+\nabla((\chi^*-1)\tilde {\mathcal V})\right).\nabla U_{2,\nu}.
\end{align*}
We compute for $j=1,2$, using the cancellations $\Delta  (\Lambda U)_{j,\nu} -\nabla .( (\Lambda U)_{j,\nu}\nabla \Phi_{U_{j,\nu}})-\nabla .(U_{j,\nu}\nabla \Phi_{ (\Lambda U)_{j,\nu} })=0$ and $\Delta  (\partial_{x_1} U)_{j,\nu} -\nabla .( (\partial_{x_1} U)_{j,\nu}\nabla \Phi_{U_{j,\nu}})-\nabla .(U_{j,\nu}\nabla \Phi_{ (\partial_{x_1}U)_{j,\nu} })=0$, that
\begin{align*}
& \Ls^z\left( (\Lambda U)_{1,\nu} \right)= -\nabla U_{2,\nu} \nabla \Phi_{(\Lambda U)_{1,\nu}}+2U_{2,\nu}(\Lambda U)_{1,\nu}-\nabla (\Lambda U)_{1,\nu} \nabla \Phi_{U_{2,\nu}}-\beta (\Lambda^2 U)_{1,\nu}-\beta a.\nabla (\Lambda U)_{1,\nu},\\
& \Ls^z\left( (\partial_{x_1} U)_{1,\nu} \right)= -\nabla U_{2,\nu} \nabla \Phi_{(\partial_{x_1} U)_{1,\nu}}+2U_{2,\nu}(\partial_{x_1} U)_{1,\nu}-\nabla (\partial_{x_1} U)_{1,\nu} \nabla \Phi_{U_{2,\nu}}-\beta (\Lambda \partial_{x_1} U)_{1,\nu}-\beta a.\nabla (\partial_{x_1} U)_{1,\nu},\\
& \Ls^z\left( (\Lambda U)_{2,\nu} \right)= -\nabla U_{1,\nu} \nabla \Phi_{(\Lambda U)_{2,\nu}}+2U_{1,\nu}(\Lambda U)_{2,\nu}-\nabla (\Lambda U)_{2,\nu} \nabla \Phi_{U_{1,\nu}}-\beta (\Lambda^2 U)_{2,\nu}+\beta a.\nabla (\Lambda U)_{2,\nu},\\
& \Ls^z\left( (\partial_{x_1}U)_{2,\nu} \right)= -\nabla U_{1,\nu} \nabla \Phi_{(\partial_{x_1} U)_{2,\nu}}+2U_{1,\nu}(\partial_{x_1} U)_{2,\nu}-\nabla (\partial_{x_1} U)_{2,\nu} \nabla \Phi_{U_{1,\nu}}-\beta (\Lambda \partial_{x_1} U)_{2,\nu}+\beta a.\nabla (\partial_{x_1} U)_{2,\nu},
\end{align*}
Injecting \eqref{eigen:id:Ri} and the above identities in \eqref{multisoliton:id:expression-Psi0-first}, the error becomes
\begin{align}
\nonumber  \Psi_0 & = - \beta (\Lambda U)_{1,\nu} - \beta (\Lambda U)_{2,\nu} - 8\lambda_0 \nu^2\phi_0+\left( \frac{8a}{\nu^2+4|a|^2}-\beta a-\nabla\bar{\mathfrak V}(a)-\mu'\right).\left( \nabla U_{1,\nu}- \nabla U_{2,\nu}\right) \\
\nonumber &\quad - \left(\nabla \Phi_{U_{2,\nu}}(z)-\nabla \Phi_{U_{2,\nu}}(a) \right). \nabla U_{1,\nu}- \left(\nabla \Phi_{U_{1,\nu}}(z)-\nabla \Phi_{U_{1,\nu}}(-a)\right) .\nabla U_{2,\nu}  +2U_{1,\nu}U_{2,\nu}\\
\nonumber  & \quad +\frac{1}{2}\left(-\nabla U_{2,\nu} \nabla \Phi_{(\Lambda U)_{1,\nu}}+2U_{2,\nu}(\Lambda U)_{1,\nu}-\nabla (\Lambda U)_{1,\nu} \nabla \Phi_{U_{2,\nu}}-\beta (\Lambda^2 U)_{1,\nu}-\beta a.\nabla (\Lambda U)_{1,\nu}\right),\\
\nonumber  &  \quad -8 L_0\nu \left(-\nabla U_{2,\nu} \nabla \Phi_{(\partial_{x_1} U)_{1,\nu}}+2U_{2,\nu}(\partial_{x_1} U)_{1,\nu}-\nabla (\partial_{x_1} U)_{1,\nu} \nabla \Phi_{U_{2,\nu}}-\beta (\Lambda \partial_{x_1} U)_{1,\nu}-\beta a.\nabla (\partial_{x_1} U)_{1,\nu}\right)\\
\nonumber  & \quad +\frac{1}{2}\left(-\nabla U_{1,\nu} \nabla \Phi_{(\Lambda U)_{2,\nu}}+2U_{1,\nu}(\Lambda U)_{2,\nu}-\nabla (\Lambda U)_{2,\nu} \nabla \Phi_{U_{1,\nu}}-\beta (\Lambda^2 U)_{2,\nu}+\beta a.\nabla (\Lambda U)_{2,\nu}\right)\\
\nonumber  &\quad +8 \nu L_0 \left(-\nabla U_{1,\nu} \nabla \Phi_{(\partial_{x_1} U)_{2,\nu}}+2U_{1,\nu}(\partial_{x_1} U)_{2,\nu}-\nabla (\partial_{x_1} U)_{2,\nu} \nabla \Phi_{U_{1,\nu}}-\beta (\Lambda \partial_{x_1} U)_{2,\nu}+\beta a.\nabla (\partial_{x_1} U)_{2,\nu}\right)\\
\nonumber  & \quad -\left(\nabla \bar{\mathfrak V}-\nabla \bar{\mathfrak V}(a)+\nabla((\chi^*-1)\tilde {\mathcal V})\right).\nabla U_{1,\nu}-\left(\nabla \bar{\mathfrak V}-\nabla\bar{\mathfrak V}(-a)+\nabla((\chi^*-1)\tilde {\mathcal V})\right).\nabla U_{2,\nu} \\
\nonumber  & \quad +8\lambda_0 \nu^2 \phi_0+8\nu^2 R_0\\
\nonumber  & \quad +\Ls^z(\tilde \Xi,\tilde{\mathfrak V}) -\nabla. (\Xi \nabla \Phi_{ \chi^* \Xi})\\
\label{multisoliton:id:decomposition-Psi0-1}  & = - \beta [ \Lambda U+\frac 12 \Lambda^2 U-8 \nu L_0\Lambda \partial_{x_1}U]_{1,\nu} - \beta [ \Lambda U+\frac 12 \Lambda^2 U+8 L_0\nu \Lambda \partial_{x_1}U]_{2,\nu}\\
\nonumber  &\quad  +\left( \frac{8a}{\nu^2+4|a|^2}-\beta a-\nabla \Phi_{(\frac{1}{2}\Lambda U+8L_0\nu \partial_{x_1}U)_{2,\nu}}(a)-\nabla\bar{\mathfrak V}(a)-\mu'\right).\left( \nabla U_{1,\nu}- \nabla U_{2,\nu}\right) \\
\nonumber &\quad - \left(\nabla \Phi_{U_{2,\nu}}(z)-\nabla \Phi_{U_{2,\nu}}(a) \right). \nabla (U+\frac{1}{2}\Lambda U-8L_0\nu \partial_{x_1}U)_{1,\nu}- \left(\nabla \Phi_{U_{1,\nu}}(z)-\nabla \Phi_{U_{1,\nu}}(-a)\right) .\nabla (U +\frac{1}{2}\Lambda U+8L_0\nu \partial_{x_1}U)_{2,\nu} \\
\nonumber  & \quad  +\left(-\nabla \Phi_{U_{2,\nu}}(a)-\beta a\right).\nabla (\frac{1}{2}\Lambda U-8L_0\nu \partial_{x_1})_{1,\nu}+\left(\nabla \Phi_{U_{2,\nu}}(a)+\beta a\right).\nabla (\frac{1}{2}\Lambda U+8L_0\nu \partial_{x_1}U)_{2,\nu} +2U_{1,\nu}U_{2,\nu} \\
\nonumber  &\quad -\nabla U_{2,\nu} .(\nabla \Phi_{(\frac{1}{2}\Lambda U-\nu 8 L_0\partial_{x_1}U)_{1,\nu}}-\nabla \Phi_{(\frac{1}{2}\Lambda U+8 \nu L_0 \partial_{x_1}U)_{1,\nu}}(-a)) +2U_{2,\nu}(\frac{1}{2}\Lambda U-8\nu L_0\partial_{x_1}U)_{1,\nu} \\
\nonumber  & \quad -\nabla U_{1,\nu}. ( \nabla \Phi_{(\frac{1}{2}\Lambda U+8\nu L_0\partial_{x_1}U)_{2,\nu}}-\nabla \Phi_{(\frac{1}{2}\Lambda U+8 \nu L_0\partial_{x_1}U)_{2,\nu}}(a))+2U_{1,\nu}(\frac{1}{2}\Lambda U+8 \nu L_0 \partial_{x_1}U)_{2,\nu} \\
\nonumber  & \quad -\left(\nabla\bar{\mathfrak V}-\nabla\bar{\mathfrak V}(a)+\nabla((\chi^*-1)\tilde {\mathcal V})\right).\nabla U_{1,\nu}-\left(\nabla\bar{\mathfrak V}-\nabla \bar{\mathfrak V}(-a)+\nabla((\chi^*-1)\tilde {\mathcal V})\right).\nabla U_{2,\nu} \\
\nonumber  & \quad +8 \nu^2 R_0\\
\nonumber  & \quad +\Ls^z(\tilde \Xi,\tilde{\mathfrak V}) -\nabla. (\Xi \nabla \Phi_{ \chi^* \Xi}) 
\end{align}
All terms in the right-hand side above (except at this stage those involving $\tilde \Xi$, $\tilde{\mathfrak V}$ and $\bar{\mathfrak V}$ that have not been yet defined) will be shown to be suitably small away from the stationary states.

\subsection{The second ansatz for higher order corrections} \label{subsec:second-corrections-multisolitons}

\subsubsection{Decomposition of the error terms generated by the first ansatz}

 There remains to introduce further corrections that are localized near the stationary states. To do so we perform Taylor expansions to get the leading order terms in the above expression \eqref{multisoliton:id:decomposition-Psi0-1}. Thanks to Lemma \ref{lem:taylorpoissonfield} we have
\begin{align}
\nonumber & \left(\nabla \Phi_{U_{2,\nu}}(z)-\nabla \Phi_{U_{2,\nu}}(a) \right). \nabla (U+\frac{1}{2}\Lambda U-8L_0\nu \partial_{x_1}U)_{1,\nu} \\
\nonumber & = \sum_{i=1}^3 \alpha^{\frac{i+1}{2}}\nu^{i-3}(G_{i,+}.\nabla (U+\frac 12 \Lambda U-8L_0\nu \partial_{x_1}U))(\frac{\cdot-a}{\nu})-\frac{1}{\nu}(\gamma_1,0).\nabla (U+\frac 12 \Lambda U-8L_0\nu \partial_{x_1}U))(\frac{\cdot-a}{\nu})\\
\label{multisoliton:id:decomposition-evolution-term-1} &\quad -\frac{1}{\nu^3}[\nabla \Phi_{2,\nu}(a)+\alpha a-\nu^2 (\gamma_1,0)].\nabla (U+\frac{1}{2}\Lambda U-8L_0\nu \partial_{x_1}U)(\frac{\cdot-a}{\nu}) +\frac{1}{\nu^4}(\tilde G_+.\nabla (U+\frac{1}{2}\Lambda U-8L_0\nu \partial_{x_1}U)(\frac{\cdot-a}{\nu}).
\end{align}
where $\gamma_1=\frac{\alpha^{\frac 32}}{4\sqrt{2}}$. Next,
\begin{equation} \label{multisoliton:id:decomposition-evolution-term-2} 
\frac{1}{2}(-\nabla \Phi_{U_{2,\nu}}(a)-\beta a).\nabla(\frac 12 \Lambda U-8L_0\nu \partial_{x_1}U)_{1,\nu}=\frac{1}{\nu^2}(\gamma_2,0). \nabla \Lambda U(\frac{\cdot-a}{\nu})-\frac{1}{\nu}(16L_0\gamma_2,0). \nabla\partial_{x_1} U(\frac{\cdot-a}{\nu})
\end{equation}
where $(\gamma_2,0)=-\frac{\beta a+\nabla \Phi_{U_{2,\nu}}(a)}{2\nu}\in \mathbb R^2$ satisfies $|\gamma_2|\lesssim 1$ since $|a-a_\infty|\lesssim \nu$. Then,
\begin{align}
\nonumber & 2U_{1,\nu}U_{2,\nu}+U_{2,\nu}(\Lambda U)_{1,\nu}+U_{1,\nu}(\Lambda U)_{2,\nu}\\
\label{multisoliton:id:decomposition-evolution-term-3}  & = \gamma_3 \Lambda U(\frac{\cdot-a}{\nu})+  \gamma_3 \Lambda U(\frac{\cdot+a}{\nu})\\
\nonumber &\quad  +2U_{1,\nu}U_{2,\nu}+U_{2,\nu}(\Lambda U_{1,\nu}+U_{1,\nu}(\Lambda U)_{2,\nu} -U_{2,\nu}(a)(\Lambda U)_{1,\nu} -U_{1,\nu}(-a)(\Lambda U)_{2,\nu}
\end{align}
where $\gamma_3=\frac{U_{2,\nu}(a)}{\nu^2}$ satisfies $|\gamma_3|\lesssim 1$ and the last line satisfies
\begin{align}
\label{multisoliton:id:decomposition-produits}& 2U_{1,\nu}U_{2,\nu}+U_{2,\nu}(\Lambda U_{1,\nu}+U_{1,\nu}(\Lambda U)_{2,\nu} -U_{2,\nu}(a)(\Lambda U)_{1,\nu} -U_{1,\nu}(-a)(\Lambda U)_{2,\nu}\\
\nonumber & = (2U+\Lambda U)_{2,\nu}(a)U_{1,\nu}+ (2U+\Lambda U)_{1,\nu}(-a)U_{2,\nu}\\
\nonumber & \quad + 2(U_{1,\nu}-U_{1,\nu}(-a))(U_{2,\nu}-U_{2,\nu}(a))+ (U_{2,\nu}-U_{2,\nu}(a))( (\Lambda U)_{1,\nu}-(\Lambda U)_{1,\nu}(-a))\\
\nonumber & \quad + (U_{1,\nu}-U_{1,\nu}(-a))( (\Lambda U)_{2,\nu}-(\Lambda U)_{2,\nu}(a))\\
\nonumber & \quad -2U_{1,\nu}(-a)U_{2,\nu}(a)-U_{2,\nu}(a)(\Lambda U)_{1,\nu}(-a)-U_{1,\nu}(-a)(\Lambda U)_{2,\nu}(a).
\end{align}
All terms in \eqref{multisoliton:id:decomposition-produits} will be shown to be suitably small later on. We expand
\begin{align}
\label{multisoliton:id:decomposition-evolution-term-4}   &- \nabla U_{1,\nu}.(\nabla \Phi_{(\frac 12 \Lambda U+8L_0\nu \partial_{x_1}U)_{2,\nu}}(z)-\nabla \Phi_{(\frac 12\Lambda U+8L_0\nu \partial_{x_1}U)_{2,\nu}}(a))\\
\nonumber & =  \frac{\alpha^2}{16}(\nabla U.( -3y_1,y_2 ))(\frac{z-a}{\nu}) \\
\nonumber &\quad -\nabla U_{1,\nu}.\Bigg( \nabla \Phi_{(\frac 12 \Lambda U+8L_0\nu \partial_{x_1}U)_{2,\nu}}(z)-\frac 12\nabla \Phi_{(\frac 12 \Lambda U+8L_0\nu \partial_{x_1}U)_{2,\nu}}(a) +\frac{\alpha^2}{16}\nu^2 (-3(z_1-|a|),z_2)    \Bigg)
\end{align}
We compute for future use that
$$
\nabla \Phi_{\Lambda U}(y)=\nabla (y.\nabla \Phi_U)(y)=-\frac{8y}{(1+|y|^2)^2}.
$$
This implies
$$
\partial_{y_1}^2  \Phi_{\Lambda U}(y)= \frac{32y_1^2-8-8|y|^2}{(1+|y|^2)^3}, \quad \partial_{y_2}^2  \Phi_{\Lambda U}(y)= \frac{32y_2^2-8-8|y|^2}{(1+|y|^2)^3}, \quad \partial_{y_1}\partial_{y_2} \Phi_{\Lambda U}(y)=\frac{32y_1y_2}{(1+|y|^2)^3}.
$$
Hence
$$
\partial_{z_1}^2 \Phi_{(\Lambda U)_{2,\nu}}(a)=\nu^2\frac{96|a|^2-8\nu^2}{(4|a|^2+\nu^2)^3}, \quad \partial_{z_2}^2 \Phi_{(\Lambda U)_{2,\nu}}(a)=\nu^2\frac{-32 |a|^2-8\nu^2}{(4|a|^2+\nu^2)^3}, \quad \partial_{z_1}\partial_{z_2} \Phi_{(\Lambda U)_{2,\nu}}(a)=0.
$$
Therefore, recalling that $\alpha=\frac{2}{|a|^2}$,
$$
(\nabla^2  \Phi_{(\Lambda U)_{2,\nu}}(a)) (z-a)=\frac{\alpha^2}{8}\nu^2 (3(z_1-|a|),-z_2)+O(\nu^4|z-a|).
$$
Hence by a Taylor expansion, we have the following estimate
$$
\left| \nabla \Phi_{(\Lambda U)_{2,\nu}}(z)-\nabla \Phi_{(\Lambda U)_{2,\nu}}(a) +\frac{\alpha^2}{8}\nu^2 (-3(z_1-|a|),z_2)\right|\lesssim \frac{\nu^2(\nu+|z-a|)^2}{(\nu+|z+a|)^3}\langle z \rangle^2.
$$
Using $\Phi_{(\partial_{x_1}U)_{2,\nu}}(z)=\partial_{x_1}\Phi (\frac{z+a}{\nu})$ and $|\nabla^k \partial_{x_1}\Phi_U(y)|\lesssim \langle y \rangle^{-1-k}$ we have by Taylor expansion
$$
|\nabla \Phi_{(\partial_{x_1}U)_{2,\nu}}(z)-\nabla \Phi_{(\partial_{x_1}U)_{2,\nu}}(a)|\lesssim \frac{\nu^2(\nu+|z-a|)}{(\nu+|z+a|)^3}\langle z \rangle^2.
$$
Combining, we have the following estimate for the remainder term appearing in \eqref{multisoliton:id:decomposition-evolution-term-4},
\begin{align} 
\nonumber  & \left|\nabla \Phi_{(\frac 12\Lambda U+8L_0\nu \partial_{x_1}U)_{2,\nu}}(z)-\nabla \Phi_{(\frac12\Lambda U+8L_0\nu \partial_{x_1}U)_{2,\nu}}(a) +\frac{\alpha^2}{16}\nu^2 (-3(z_1-|a|),z_2) \right|\\
\label{multisoliton:bd:expansion-nabla-Phi-tech-estimate} & \lesssim \frac{\nu^2(\nu+|z-a|)^2}{(\nu+|z+a|)^3}\langle z \rangle^2
\end{align}
which will be used to show that it is suitably small everywhere later on. Next, we have $\nabla^2 \bar V_1(a)(z-a)=\partial_{z_1}^2 \bar V_1(a)(z_1-|a|,-z_2)$ because $\bar V_1$ is even in $z_2$ and because it satisfies $-\Delta \bar V_1(a)=0$. We introduce $\gamma_4=\nu^{-2}\nabla^2 \bar V_1(a)$ and expand accordingly
\begin{equation} \label{multisoliton:id:decomposition-evolution-term-999}
\nabla \bar V_1(z)=\nabla \bar V_1(a)+\gamma_4\nu^2 (z_1-|a|,-z_2)+\left(\nabla \bar V_1(z)-\nabla \bar V_1(a)-\nabla^2 \bar V_1(a)(z-a)\right).
\end{equation}
By the formula \eqref{multisoliton:id:elliptic-mathfrakV-2} and elliptic regularity, using $|\nabla^k U(y)|\lesssim \langle y \rangle^{-4-k}$ and that $\chi^*-1$ is supported for $|z|\geq 10$, we have $|\gamma_4|\lesssim 1$ and
\begin{equation} \label{multisoliton:bd:expansion-nabla-barV1-estimate} 
|\nabla \bar V_1(z)-\nabla \bar V_1(a)-\nabla^2 \bar V_1(a)(z-a)|\lesssim \nu^2 |z-a|^2.
\end{equation}
Injecting \eqref{multisoliton:id:decomposition-evolution-term-1}, \eqref{multisoliton:id:decomposition-evolution-term-2}, \eqref{multisoliton:id:decomposition-evolution-term-3}, \eqref{multisoliton:id:decomposition-evolution-term-4} and \eqref{multisoliton:id:decomposition-evolution-term-999} in \eqref{multisoliton:id:decomposition-Psi0-1} gives the decomposition
\begin{align} \label{multisoliton:id:decomposition-Psi0-2}
\Psi_0 & = \sum_{i=0}^2 \sum_{\pm}\nu^{i-2} F_{i+2,\pm}\left(\frac{z\mp a}{\nu}\right)+\Ls^z(\tilde \Xi,\tilde{\mathfrak V})+\bar \Psi_0^{(0)}\\
\nonumber & \qquad  +\left( \frac{8a}{\nu^2+4|a|^2}-\beta a-\nabla \Phi_{(\frac 12 \Lambda U+8L_0\nu \partial_{x_1}U)_{2,\nu}}(a)-\nabla \bar{\mathfrak V}(a)-\mu'\right)  .\left( \nabla U_{1,\nu}- \nabla U_{2,\nu}\right)-\nabla. (\Xi \nabla \Phi_{ \chi^* \Xi})
\end{align}
where
\begin{align}
\label{multisoliton:id:F2} & F_{2,\pm}=-\beta (\Lambda U+\frac 12\Lambda^2 U)-\alpha G_{1,+}.\nabla (U+\frac 12 \Lambda U)\pm(\gamma_2,0). \nabla \Lambda U ,\\
\label{multisoliton:id:F3} & F_{3,\pm}=\mp (\alpha^{\frac 32}G_{2,+}-(\gamma_1,0)).\nabla (U+\frac 12 \Lambda U)\pm 8 L_0 \beta \Lambda \partial_{x_1}U\pm8L_0\alpha G_{1,+}.\nabla \partial_{x_1}U-(16L_0 \gamma_2,0). \nabla \partial_{x_1} U,\\
\label{multisoliton:id:F4} & F_{4,\pm}=- \alpha^2 G_{3,+}.\nabla (U+\frac 12 \Lambda U)+\gamma_3 \Lambda U+  \frac{\alpha^2}{16} \nabla U.( -3y_1,y_2 )+8L_0\alpha^{\frac 32}G_{2,+}.\nabla \partial_{x_1}U\\
\nonumber &\qquad \qquad -8L_0 (\gamma_1,0).\nabla \partial_{x_1} U-\gamma_4(y_1,-y_2).\nabla U ,
\end{align}
and
\begin{align}
\label{multisoliton:id:def-barPsi0}\bar \Psi_0^{(0)} & = - \frac{1}{\nu^4}(\tilde G_+.\nabla (U+\frac{1}{2}\Lambda U-8L_0\nu \partial_{x_1}U)(\frac{\cdot-a}{\nu}) - \frac{1}{\nu^4}(\tilde G_-.\nabla (U+\frac{1}{2}\Lambda U+8L_0\nu \partial_{x_1}U)(\frac{\cdot+a}{\nu})\\
\nonumber & \quad -\frac{1}{\nu^3}[\nabla \Phi_{U_{2,\nu}}(a)+\alpha a-\nu^2(\gamma_1,0)].\left( \nabla (U+\frac{1}{2}\Lambda U-8L_0\nu \partial_{x_1}U)(\frac{\cdot-a}{\nu})-\nabla (U+\frac{1}{2}\Lambda U+8L_0\nu \partial_{x_1}U)(\frac{\cdot+a}{\nu})\right)\\
\nonumber & \quad + 8L_0\alpha^2 \nu (G_{3,+}.\nabla \partial_{x_1}U)(\frac{\cdot-a}{\nu})- 8L_0\alpha^2 \nu (G_{3,+}.\nabla \partial_{x_1}U)(\frac{\cdot+a}{\nu}) \\
\nonumber &\quad +2U_{1,\nu}U_{2,\nu}+U_{2,\nu}(\Lambda U_{1,\nu}+U_{1,\nu}(\Lambda U)_{2,\nu} -U_{2,\nu}(a)(\Lambda U)_{1,\nu} -U_{1,\nu}(-a)(\Lambda U)_{2,\nu}\\
\nonumber &\quad - \nabla U_{1,\nu}.\left( \nabla \Phi_{(\frac 12 \Lambda U+8L_0\nu \partial_{x_1}U)_{2,\nu}}(z)-\frac 12\nabla \Phi_{(\frac 12 \Lambda U+8L_0\nu \partial_{x_1}U)_{2,\nu}}(a) +\frac{\alpha^2}{16}\nu^2 (-3(z_1-|a|),z_2)\right)\\
\nonumber &\quad - \nabla U_{2,\nu}.\left( \nabla \Phi_{(\frac 12 \Lambda U-8L_0\nu \partial_{x_1}U)_{1,\nu}}(z)-\frac 12\nabla \Phi_{(\frac 12 \Lambda U-8L_0\nu \partial_{x_1}U)_{1,\nu}}(a) +\frac{\alpha^2}{16}\nu^2 (-3(z_1+|a|),z_2)\right)\\
\nonumber & \quad -16 L_0\nu U_{2,\nu}(\partial_{x_1}U)_{1,\nu}+16 L_0\nu U_{1,\nu}(\partial_{x_1}U)_{2,\nu}\\
\nonumber  & \quad -\left(\nabla \bar{\mathfrak V}(z)-\nabla \bar{\mathfrak V}(a)-\nabla^2 \bar{\mathfrak V}_1(a)(z-a)+\nabla((\chi^*-1)\tilde {\mathcal V})\right).\nabla U_{1,\nu}-\left(\nabla \bar{\mathfrak V}(z)-\nabla \bar{\mathfrak V}(-a)-\nabla^2 \bar{\mathfrak V}_1(-a)(z+a)+\nabla((\chi^*-1)\tilde {\mathcal V})\right).\nabla U_{2,\nu} \\
\nonumber &\quad +8 \nu^2 R_0  ,
\end{align}

\subsubsection{The second ansatz}

We introduce $\iota_+=1$ and $\iota_-=2$. We compute for two functions $\mathfrak T$ and $ \mathfrak V$ such that $-\Delta  \mathfrak V=\mathfrak T$,
\begin{align*}
& \Ls^z \left(\mathfrak T(\frac{\cdot \mp a}{\nu}),\nu^2 \mathfrak V(\frac{\cdot \mp a}{\nu})\right)\\
 &= \frac{1}{\nu^2}\Ls_0(\mathfrak T,\mathfrak V)(\frac{\cdot\mp a}{\nu})-\nabla .\left(\mathfrak T(\frac{\cdot\mp a}{\nu}) \nabla \Phi_{U_{\iota_\mp},\nu}\right)-\nu \nabla .\left( U_{\iota_\mp,\nu} \nabla \mathfrak V(\frac{\cdot\mp a}{\nu})\right)-\beta \Lambda (\mathfrak T(\frac{\cdot \mp a}{\nu})) \\
& = \frac{1}{\nu^2}\Ls_0(\mathfrak T,\mathfrak V)(\frac{\cdot\mp a}{\nu})-\frac{1}{\nu^2}\nabla \mathfrak T(\frac{\cdot\mp a}{\nu}).\nabla \Phi_{U}(\frac{\cdot \pm a}{\nu})-\frac{1}{\nu^2} \nabla U(\frac{\cdot \pm a}{\nu}).\left( \nabla \mathfrak V(\frac{\cdot\mp a}{\nu})-\nabla \mathfrak V(\frac{\mp 2a}{\nu})\right)\\
&\quad +\frac{2}{\nu^2}\mathfrak T(\frac{\cdot \mp a}{\nu})U(\frac{\cdot \pm a}{\nu}) -\beta \Lambda \mathfrak T(\frac{\cdot \mp a}{\nu})\mp\frac{\beta a}{\nu}.\nabla  \mathfrak T(\frac{\cdot \mp a}{\nu})- \nu \nabla \mathfrak V(\frac{\mp 2a}{\nu}).\nabla U_{\iota_\mp, \nu}
\end{align*}

We introduce for $0\leq i \leq 3$,
$$
\bar G_{>i,\pm}(z)=\frac{1}{\nu}\left(\sum_{j=i+1}^3 \alpha^{\frac{j+1}{2}}\nu^{j+1}G_{i,\pm}+\tilde G_{\pm}\right)(\frac{z\mp a}{\nu})
$$
so that from Lemma \ref{lem:taylorpoissonfield}  we have $\nabla \Phi_{U}(y\pm \frac{2a}{\nu})=\mp \alpha a \nu+\sum_{j=1}^i\alpha^{\frac{j+1}{2}}\nu^{j+1}G_{j,\pm}+\nu \tilde G_{>i,\pm}(\pm a +\nu y)$ for $0\leq i \leq 3$. This implies that
$$
\frac{1}{\nu}\nabla \Phi_{U}(\frac{z\pm a}{\nu})=\mp \alpha a+\sum_{j=1}^i \alpha^{\frac{j+1}{2}}\nu^{j}G_{j,\pm}(\frac{\cdot \mp a}{\nu})+\bar G_{>i,\pm}(z).
$$
By Lemma \ref{lem:taylorpoissonfield} we have for $|z\mp a|\ll 1$ that $|\bar G_{>i,\pm}(z)|\lesssim (\nu+|z\mp a|)^{i+1}$. Using $|\nabla \Phi_U(y)|\lesssim \langle y \rangle^{-1}$, \eqref{eigenfunctions:id:G1}, \eqref{eigenfunctions:id:G2} and \eqref{eigenfunctions:id:G3} we have for $|z\mp a|\gtrsim 1$ that $|\bar G_{>i,\pm}(z)|\lesssim (\nu+|z\pm a|)^{-1}+(\nu+|z\mp a|)^i$. Combining, we have for $i=0,1,2,3$,
\begin{equation} \label{multisoliton:bd:estimate-bar-G>i}
|\bar G_{>i,\pm}(y)|\lesssim (\nu+|z\mp a|)^{i+1}(\nu+|z\pm a|)^{-1}
\end{equation}
for $z\in \mathbb R^2$, which will be used later on. This leads to the decomposition
\begin{align*}
& \Ls^z \left(\mathfrak T(\frac{\cdot \mp a}{\nu}),\nu^2 \mathfrak V(\frac{\cdot \mp a}{\nu})\right)\\
& = \frac{1}{\nu^2}\Ls_0(\mathfrak T,\mathfrak V)(\frac{\cdot\mp a}{\nu})- \nu \nabla \mathfrak V(\frac{\mp 2a}{\nu}).\nabla U_{\iota_\pm, \nu}\pm \frac{\alpha-\beta}{\nu}a.\nabla \mathfrak T(\frac{\cdot\mp a}{\nu}) -\alpha (G_{1,\pm}.\nabla \mathfrak T)(\frac{\cdot\mp a}{\nu})-\beta \Lambda \mathfrak T (\frac{\cdot\mp a}{\nu})\\
&\quad -\frac{1}{\nu}\nabla \mathfrak T(\frac{\cdot\mp a}{\nu}).\bar G_{>1,\pm}(z) -\frac{1}{\nu^2} \nabla U(\frac{\cdot \pm a}{\nu}).\left( \nabla \mathfrak V(\frac{\cdot\mp a}{\nu})-\nabla \mathfrak V(\frac{\mp 2a}{\nu})\right)+\frac{2}{\nu^2}\mathfrak T(\frac{\cdot \mp a}{\nu})U(\frac{\cdot \pm a}{\nu})
\end{align*}
and for $j=1,2$ to the decomposition
\begin{align*}
& \Ls^z \left(\nu^j\mathfrak T(\frac{\cdot \mp a}{\nu}),\nu^{j+2} \mathfrak V(\frac{\cdot \mp a}{\nu})\right)\\
& =\nu^{j-2}\Ls_0(\mathfrak T,\mathfrak V)(\frac{\cdot\mp a}{\nu})- \nu^{j+1} \nabla \mathfrak V(\frac{\mp 2a}{\nu}).\nabla U_{\iota_\pm, \nu}\pm \nu^j \frac{\alpha-\beta}{\nu}a.\nabla \mathfrak T(\frac{\cdot\mp a}{\nu}) -\nu^j \beta \Lambda \mathfrak T (\frac{\cdot\mp a}{\nu})\\
&\quad -\nu^{j-1}\nabla \mathfrak T(\frac{\cdot\mp a}{\nu}).\bar G_{>0,\pm}(z) -\nu^{j-2}\nabla U(\frac{\cdot \pm a}{\nu}).\left( \nabla \mathfrak V(\frac{\cdot\mp a}{\nu})- \nabla \mathfrak V(\frac{\mp 2a}{\nu})\right)+2\nu^{j-2}\mathfrak T(\frac{\cdot \mp a}{\nu})U(\frac{\cdot \pm a}{\nu}).
\end{align*}
Our second ansatz is the refined decomposition
\begin{equation} \label{multisoliton:id:def-tildeXi}
(\tilde \Xi,\tilde{\mathfrak V})=\sum_\pm \sum_{i=0}^2 (\nu^i \mathfrak T_{i+2,\pm},\nu^{i+2}\mathfrak V_{i+2,\pm})(\frac{z\mp a}{\nu})
\end{equation}
Then, assuming that $\nabla \mathfrak V_{i+2,+}(\frac{-2 a}{\nu})=-\nabla \mathfrak V_{i+2,-}(\frac{2a}{\nu})$ for $i=0,1,2$ which will be ensured by symmetry properties, we get
\begin{align*}
& \Ls^z \left(\sum_{i=0}^2 (\nu^i \mathfrak T_{i+2,\pm},\nu^{i+2}\mathfrak V_{i+2,\pm})(\frac{\cdot \mp a}{\nu})\right)\\
 & = \sum_{i=0}^2 \sum_\pm \nu^{i-2}\Ls_0(\mathfrak T_{i+2,\pm},\mathfrak V_{i+2,\pm})(\frac{\cdot\mp a}{\nu})-\left(\sum_{i=0}^2\nu^{i+1}\nabla \mathfrak V_{i+2,-}(\frac{2a}{\nu})\right).(\nabla U_{1,\nu}-\nabla U_{2,\nu})\\
 & +\sum_{\pm} \left(\pm \frac{\alpha-\beta}{\nu}a.\nabla \mathfrak T_{2,\pm} - \beta \Lambda \mathfrak T_{2,\pm} -\alpha (G_{1,\pm}.\nabla \mathfrak T_{2,\pm}) \right)(\frac{\cdot \mp a}{\nu})\\
 & \quad  -\sum_\pm \frac{1}{\nu}\nabla \mathfrak T_{2,\pm}(\frac{\cdot\mp a}{\nu}).\bar G_{>1,\pm}(z)+\sum_\pm\sum_{i=1,2}\left(\pm \nu^i \frac{\alpha-\beta}{\nu}a.\nabla \mathfrak T_{i+2,\pm}(\frac{\cdot\mp a}{\nu}) -\nu^i \beta \Lambda \mathfrak T_{i+2,\pm} (\frac{\cdot\mp a}{\nu}\right) \\
 & -\sum_{\pm}\sum_{i=1,2}\nu^{i-1}\nabla \mathfrak T_{i+2,\pm}(\frac{\cdot\mp a}{\nu}).\bar G_{>0,\pm}(z)\\
 &+\sum_{\pm}\sum_{i=0}^2\left(-\nu^{i-2}\nabla U(\frac{\cdot \pm a}{\nu}). \left( \nabla \mathfrak V_{i+2,\pm}(\frac{\cdot\mp a}{\nu})-\nabla \mathfrak V_{i+2,\pm}(\frac{\mp 2 a}{\nu})\right)+2\nu^{i-2}\mathfrak T_{i+2}(\frac{\cdot \mp a}{\nu})U(\frac{\cdot \pm a}{\nu})\right).
\end{align*}
We furthermore choose
\begin{equation} \label{multisoliton:id:decomposition-mu'}
\mu'= \frac{8a}{\nu^2+4|a|^2}-\beta a- \nabla \Phi_{(\frac 12 \Lambda U+8L_0\nu \partial_{x_1}U)_{2,\nu}}(a)-\bar{\mathfrak V}(a)-\sum_{i=0}^2\nu^{i+1}\nabla \mathfrak V_{i+2,-}(\frac{2a}{\nu})+\nu^2( \mu'',0).
\end{equation}
The identity \eqref{multisoliton:id:decomposition-Psi0-2} becomes
\begin{align}
\label{multisoliton:id:decomposition-Psi0-3}  \Psi_0 &=  \sum_{\pm}\nu^{-2} \left(F_{2,\pm}+\Ls_0(\mathfrak T_{2,\pm},\mathfrak V_{2,\pm})\right)\left(\frac{z\mp a}{\nu}\right) \\
\nonumber & \quad + \sum_{\pm}\nu^{-1} \left(F_{3,\pm}\mp \mu''\partial_{x_1}U+\Ls_0(\mathfrak T_{3,\pm},\mathfrak V_{3,\pm})\right)\left(\frac{z\mp a}{\nu}\right) \\
\nonumber & \quad +\sum_{\pm}\left(F_{4,\pm}+\Ls_0(\mathfrak T_{4,\pm},\mathfrak V_{4,\pm})\pm \frac{\alpha-\beta}{\nu}a.\nabla \mathfrak T_{2,\pm} - \beta \Lambda \mathfrak T_{2,\pm} -\alpha (G_{1,\pm}.\nabla \mathfrak T_{2,\pm})\right)\left(\frac{z\mp a}{\nu}\right)\\
\nonumber &\quad  +\bar \Psi_0^{(0)}+\bar \Psi_0^{(1)}-\nabla. (\Xi \nabla \Phi_{ \chi^* \Xi})
\end{align}
where
\begin{align}
\nonumber \bar \Psi_0^{(1)} &= -\sum_\pm \frac{1}{\nu}\nabla \mathfrak T_{2,\pm}(\frac{\cdot\mp a}{\nu}).\bar G_{>1,\pm}(z)+\sum_\pm\sum_{i=1,2}\left(\pm \nu^i \frac{\alpha-\beta}{\nu}a.\nabla \mathfrak T_{i+2,\pm}(\frac{\cdot\mp a}{\nu}) -\nu^i \beta \Lambda \mathfrak T_{i+2,\pm} (\frac{\cdot\mp a}{\nu}\right) \\
\label{multisoliton:id:def-barPsi2}  &\quad -\sum_{\pm}\sum_{i=1,2}\nu^{i-1}\nabla \mathfrak T_{i+2,\pm}(\frac{\cdot\mp a}{\nu}).\bar G_{>0,\pm}(z)\\
\nonumber &\quad +\sum_{\pm}\sum_{i=0}^2\left(-\nu^{i-2}\nabla U(\frac{\cdot \pm a}{\nu}).\left( \nabla \mathfrak V_{i+2,\pm}(\frac{\cdot\mp a}{\nu})-\nabla \mathfrak V_{i+2,\pm}(\frac{\mp 2 a}{\nu})\right)+2\nu^{i-2}\mathfrak T_{i+2}(\frac{\cdot \mp a}{\nu})U(\frac{\cdot \pm a}{\nu})\right).
\end{align}

\subsubsection{Decomposition of the nonlinear terms}

We have
\begin{align}
& \Xi= 8\nu^2 \phi_0+\frac{1}{2}(\Lambda U)_{1,\nu}+\frac 12 (\Lambda U)_{2,\nu}-8L_0\nu (\partial_{x_1}U)_{1,\nu}+8L_0\nu (\partial_{x_1}U)_{2,\nu}+\sum_\pm \sum_{i=0}^2 \nu^i \mathfrak T_{i+2,\pm}(\frac{\cdot \mp a}{\nu}), \label{def:Xi_expansion}\\
& \Phi_{\chi^* \Xi}=8 \nu^2 \Phi_{\chi^* \phi_0}+\frac{1}{2}\Phi_{(\Lambda U)_{1,\nu}}+\frac{1}{2}\Phi_{(\Lambda U)_{2,\nu}}-8L_0\nu \Phi_{(\partial_{x_1}U)_{1,\nu}}+8L_0\nu \Phi_{(\partial_{x_1}U)_{2,\nu}} +\chi^*\tilde{\mathfrak V}+\bar{\mathfrak V}. \label{def:PhiXi_expansion}
\end{align}
We introduce
\begin{align*}
&( \tilde T_{0,\pm},\tilde V_{0,\pm})=(\Lambda U,\Phi_{\Lambda U}),\\
& (\tilde T_{1,\pm}, \tilde V_{1,\pm})=\mp 16 L_0 (\partial_{x_1}U, \partial_{x_1}\Phi_U), \\
&(\tilde T_{2,\pm},\tilde V_{2,\pm})= \alpha (T_2^{(0)},V_2^{(0)})+ \alpha \tilde{\lambda_0}(\hat T_2,\hat V_2)\pm \frac{\alpha-\beta}{\nu}(\mathcal S_{1,1},\mathcal W_{1,1}).
\end{align*}
We let
\begin{align*}
& (\tilde{\phi}_{0,>2},\tilde V_{0,>2})=  (\phi_0,\Phi_{\chi^* V_0}) -\frac{1}{16} \sum_{\pm}\sum_{j=0}^2 (\nu^{j-4} \tilde T_{j,\pm},\nu^{j-2} \tilde V_{j,\pm}) (\frac{\cdot\mp a}{\nu}),
\end{align*}
 so that
\begin{align*}
& \phi_0 =-\frac{1}{16}\sum_\pm\sum_{j=0}^2 \nu^{j-4} \tilde T_{j,\pm} (\frac{\cdot\mp a}{\nu})+\tilde{\phi}_{0,>2} ,\\
& \Phi_{\chi^* \phi_0} =-\frac{1}{16}\sum_\pm\sum_{j=0}^2 \nu^{j-4} \tilde V_{j,\pm} (\frac{\cdot\mp a}{\nu})+\tilde{V}_{0,>2},
\end{align*}
where by \textbf{mettre ref}
\begin{align}
\label{multisoliton:bd:tildephii>2} & |\tilde{\phi}_{0,>2}(z)|\lesssim \frac{1}{(\nu+|z\mp a|)(\nu+|z\pm a|)}\langle z \rangle^C,\\
\label{multisoliton:bd:tildeV>2}  &|\nabla \tilde{V}_{0,>2}(z)|\lesssim  |\ln \nu|  .
\end{align}
We define 
\begin{align}
\label{multisoliton:id:decomposition-tildemathfrakT2} &( \tilde{\mathfrak T}_{2,\pm}, \tilde{\mathfrak V}_{2,\pm} ) = ( \mathfrak T_{2,\pm}, \mathfrak V_{2,\pm})-\frac 12 ( \tilde T_{2,\pm},\tilde V_{2,\pm}), \\
\label{multisoliton:id:decomposition-tildeXi>2} & \tilde{\Xi}_{>2}  =8\nu^2 \tilde{\phi}_{0,>2}+ \sum_\pm \sum_{j=3}^4 \nu^{j-2}\mathfrak T_{j,\pm}(\frac{\cdot \mp a}{\nu}), \\
\label{multisoliton:id:decomposition-tildeV>2}  & \tilde{\mathfrak V}_{>2} =8\nu^2 \tilde V_{0,>2}+\chi^* \sum_\pm \sum_{j=3}^4  \nu^{j}\mathfrak V_{j,\pm}(\frac{\cdot \mp a}{\nu})+(\chi^*-1) \sum_\pm \nu^2 \tilde{\mathfrak V}_{2,\pm}(\frac{\cdot \mp a}{\nu}), 
\end{align}
so that
\begin{align}
\label{multisoliton:id:decomposition-Xi->2}& \Xi= \sum_\pm \tilde{\mathfrak T}_{2,\pm}(\frac{\cdot \mp a}{\nu})+ \tilde{\Xi}_{>2} ,\\ 
\nonumber & \Phi_{\chi^* \Xi}= \sum_\pm \nu^2 \tilde{\mathfrak V}_{2,\pm}(\frac{\cdot \mp a}{\nu})+  \tilde{\mathfrak V}_{>2}+\bar{\mathfrak V}.
\end{align}
We decompose the nonlinear term accordingly,
\begin{align} \label{multisoliton:id:decomposition-nonlinear-term} 
\nabla . (\Xi \nabla \Phi_{\chi^* \Xi})& =\sum_\pm \nabla . (\tilde{\mathfrak T}_{2,\pm}\nabla \tilde{\mathfrak V}_{2,\pm})(\frac{\cdot \mp a}{\nu})\\
\nonumber & \quad +\sum_\pm \nabla . (\tilde{\mathfrak T}_{2,\pm}(\frac{\cdot \mp a}{\nu})(\nu \nabla \tilde{\mathfrak V}_{2,\mp}(\frac{\cdot\pm a}{\nu})+\nabla \tilde{\mathfrak V}_{>2}+\nabla \bar{\mathfrak V})) \\
& \quad +\nabla . ((\tilde{\mathfrak T}_{2,\mp}(\frac{\cdot \pm a}{\nu})+\tilde{\Xi}_{>2})(\nu \nabla \tilde{\mathfrak V}_{2,\pm}(\frac{\cdot\mp a}{\nu})) \quad +\nabla.(\tilde{\Xi}_{>2}(\nabla \tilde{\mathfrak V}_{>2}+\nabla \bar{\mathfrak V})). \nonumber
\end{align}
Injecting \eqref{multisoliton:id:decomposition-nonlinear-term} in \eqref{multisoliton:id:decomposition-Psi0-3}, the error can be written as
\begin{align}
\label{multisoliton:id:decomposition-Psi0-4}  \Psi_0 &=  \sum_{\pm}\nu^{-2} \left(F_{2,\pm}+\Ls_0(\mathfrak T_{2,\pm},\mathfrak V_{2,\pm})\right)\left(\frac{z\mp a}{\nu}\right) \\
\nonumber & \quad + \sum_{\pm}\nu^{-1} \left(F_{3,\pm}\mp \mu''\partial_{x_1}U+\Ls_0(\mathfrak T_{3,\pm},\mathfrak V_{3,\pm})\right)\left(\frac{z\mp a}{\nu}\right) \\
\nonumber & \quad +\sum_{\pm}\left(F_{4,\pm}+\Ls_0(\mathfrak T_{4,\pm},\mathfrak V_{4,\pm})\pm \frac{\alpha-\beta}{\nu}a.\nabla \mathfrak T_{2,\pm} - \beta \Lambda \mathfrak T_{2,\pm} -\alpha (G_{1,\pm}.\nabla \mathfrak T_{2,\pm})- \nabla . (\tilde{\mathfrak T}_{2,\pm}\nabla \tilde{\mathfrak V}_{2,\pm})\right)\left(\frac{z\mp a}{\nu}\right)\\
\nonumber &\quad  +\bar \Psi_0^{(0)}+\bar \Psi_0^{(1)}+\bar \Psi_0^{(2)}
\end{align}
where
\begin{align}
\nonumber \bar \Psi_0^{(2)} &= -\sum_\pm \nabla . (\tilde{\mathfrak T}_{2,\pm}(\frac{\cdot \mp a}{\nu})(\nu \nabla \tilde{\mathfrak V}_{2,\mp}(\frac{\cdot\pm a}{\nu})+\nabla \tilde{\mathfrak V}_{>2}+\nabla \bar{\mathfrak V}))+\nabla . ((\tilde{\mathfrak T}_{2,\mp}(\frac{\cdot \pm a}{\nu})+\tilde{\Xi}_{>2})(\nu \nabla \tilde{\mathfrak V}_{2,\pm}(\frac{\cdot\mp a}{\nu}))\\
\label{multisoliton:id:def-barPsi2} & \quad -\nabla.(\tilde{\Xi}_{>2}(\nabla \tilde{\mathfrak V}_{>2}+\nabla \bar{\mathfrak V})),
\end{align}

\subsubsection{Elliptic equations for the higher order corrections}

In view of \eqref{multisoliton:id:decomposition-Psi0-4}, we determine $(\mathfrak T_{2,\pm},\mathfrak V_{2,\pm})$, $(\mathfrak T_{3,\pm},\mathfrak V_{3,\pm})$ and $(\mathfrak T_{4,\pm},\mathfrak V_{4,\pm})$ as solutions to the following systems
\begin{align}
\label{multisoliton:id:systeme-T2} & \left\{ \begin{array}{l l} \Ls_0(\mathfrak T_{2,\pm},\mathfrak V_{2,\pm})=-F_{2,\pm},\\
-\Delta \mathfrak V_{2,\pm}=\mathfrak T_{2,\pm},
\end{array}\right. \\
\label{multisoliton:id:systeme-T3} & \left\{ \begin{array}{l l} \Ls_0(\mathfrak T_{3,\pm},\mathfrak V_{3,\pm})=-F_{3,\pm}\pm \mu''\partial_{x_1}U,\\
-\Delta \mathfrak V_{3,\pm}=\mathfrak T_{3,\pm},
\end{array}\right. \\
\label{multisoliton:id:systeme-T4}& \left\{ \begin{array}{l l} \Ls_0(\mathfrak T_{4,\pm},\mathfrak V_{4,\pm})=-F_{4,\pm}\mp \frac{\alpha-\beta}{\nu}a.\nabla \mathfrak T_{2,\pm} + \beta \Lambda \mathfrak T_{2,\pm} +\alpha (G_{1,\pm}.\nabla \mathfrak T_{2,\pm})+\nabla . (\tilde{\mathfrak T}_{2,\pm}\nabla \tilde{\mathfrak V}_{2,\pm}),\\
-\Delta \mathfrak V_{4,\pm}=\mathfrak T_{4,\pm}.
\end{array}\right. 
\end{align}
This transforms \eqref{multisoliton:id:decomposition-Psi0-4} into
\begin{align}
\label{multisoliton:id:Psi0-decomposition-2nd-ansatz}\Psi_0 &=\bar \Psi_0^{(0)}+\bar \Psi_0^{(1)}+\bar \Psi_0^{(2)}
\end{align}

\subsubsection{Elliptic equations in polar coordinates}

We use polar coordinates to solve \eqref{multisoliton:id:systeme-T2}, \eqref{multisoliton:id:systeme-T3} and \eqref{multisoliton:id:systeme-T4}. We recall the formula
$$
\nabla (f(r)\cos(m\theta)).\nabla (g(r)\cos(k\theta))=\partial_r f\partial_r g\cos(m\theta)\cos(k\theta)+\frac{fg}{r}mk \sin(m\theta)\sin(k\theta)
$$
as well as the trigonometric identities
$$
2 \cos(m\theta)\cos(k\theta)= \cos((m+k)\theta)+\cos((m-k)\theta), \quad 2 \sin(m\theta)\sin(k\theta)=-\cos((m+k)\theta)+\cos((m-k)\theta).
$$
Using \eqref{eigenfunctions:id:G1} we compute
\begin{align*}
& G_{1,+}.\nabla (U+\frac{1}{2}\Lambda U)=\frac 12 r\partial_r (U+\frac{1}{2}\Lambda U)\cos(2\theta),\\
& (\gamma_2,0).\nabla \Lambda U=\gamma_2 \partial_r \Lambda U \cos (\theta),
\end{align*}
which, injected in \eqref{multisoliton:id:F2} gives
\begin{align}
\nonumber F_{2,\pm}&=-\beta (\Lambda U+\frac12\Lambda^2 U)\pm \gamma_2 \partial_r \Lambda U \cos \theta -\frac{\alpha}{2}r\partial_r (U+\frac 12 \Lambda U)\cos(2\theta)\\
\label{multisoliton:id:def-S2k}&=:-\mathfrak S_{2,0}(r)\mp\mathfrak S_{2,1}(r)\cos(\theta)-\mathfrak S_{2,2}(r)\cos(2\theta).
\end{align}
In order to solve \eqref{multisoliton:id:systeme-T2} we thus decompose
\begin{equation} \label{multisoliton:id:expansion-T2}
(\mathfrak T_{2,\pm},\mathfrak V_{2,\pm})= \sum_{k=0}^2 (\pm 1)^k (\mathfrak T_{2,k}(r)\cos(k\theta),\mathfrak V_{2,k}(r)\cos(k\theta))
\end{equation}
and we require that for $k=0,1,2$ the profiles $ (\mathfrak T_{2,k},\mathfrak V_{2,k})$ solve the systems
\begin{equation} \label{multisoliton:id:equation-T2-polar}
\left\{ \begin{array}{l l} \Ls_{0,k}(\mathfrak T_{2,k},\mathfrak V_{2,k})=\mathfrak S_{2,k},\\
-\Delta_k \mathfrak V_{2,k}=\mathfrak T_{2,k}.
\end{array} \right.
\end{equation}
Next, we have using \eqref{eigenfunctions:id:G2} the cancellation
$$
\alpha^{\frac 32}G_{2,+}-(\gamma_1,0) =  \alpha^{\frac 32}(-\frac{1}{12\sqrt{2}}\nabla(r^3\cos(3\theta))+\frac{1}{4\sqrt{2}}(1,0))-(\frac{\alpha^{\frac 32}}{4\sqrt{2}},0)= -\frac{\alpha^{\frac 32}}{12\sqrt{2}}\nabla(r^3\cos(3\theta))\\
$$
so that the first term in the expression \eqref{multisoliton:id:F3} is
$$
\mp (\alpha^{\frac 32}G_{2,+}-(\gamma_1,0)).\nabla (U+\frac 12 \Lambda U)=\pm \frac{\alpha^{\frac 32}}{4\sqrt{2}}r^2\partial_r (U+\frac 12 \Lambda U)\cos(3\theta).
$$
Moreover, we have
\begin{align*}
& \Lambda \partial_{x_1} U =\Lambda \partial_r U \cos \theta,\\
&G_{1,+}.\nabla \partial_{x_1}U = (\frac 14 r\partial_r^2 U+\frac 12 \partial_r U)\cos \theta + (\frac 14 r\partial_r^2 U-\frac 12 \partial_r U)\cos (3 \theta ),\\
&(1,0).\nabla \partial_{x_1}U= \frac{1}{2}\partial_r^2 U+\frac{1}{2r}\partial_r U+ (\frac{1}{2}\partial_r^2 U-\frac{1}{2r}\partial_r U)\cos (2\theta),
\end{align*}
and the identity \eqref{multisoliton:id:F3} then implies
\begin{align} \label{multisoliton:id:def-S33}
F_{3,\pm}\mp \mu''\partial_{x_1}U&=  -8L_0 \gamma_2 (\partial_r^2 U+\frac{1}{r}\partial_r U)\pm \left(8L_0 (\frac{\alpha}{4}+\beta )\Lambda \partial_r U-\mu'' \partial_r U\right)\cos (\theta )\\
\nonumber & -8L_0 \gamma_2 (\partial_r^2 U -\frac 1r \partial_r U)\cos (2\theta)\pm \left(\frac{\alpha^{\frac 32}}{4\sqrt{2}}r^2\partial_r (U+\frac12\Lambda U)+8L_0\alpha(\frac 14 r\partial_r^2 U-\frac 12 \partial_r U)\right)\cos (3\theta)\\
\nonumber &:= -\mathfrak S_{3,0}(r)\mp\mathfrak S_{3,1}(r)\cos(\theta)-\mathfrak S_{3,2}(r)\cos(2\theta)\mp \mathfrak S_{3,3}(r)\cos(3\theta).
\end{align}
To solve the system \eqref{multisoliton:id:systeme-T2} we thus take
\begin{equation} \label{multisoliton:id:expansion-T3}
(\mathfrak T_{3,\pm},\mathfrak V_{3,\pm})=\sum_{k=0}^3 (\pm 1)^k (\mathfrak T_{3,k}(r)\cos(k\theta),\mathfrak V_{3,k}(r)\cos(k\theta))
\end{equation}
where for $k=0,1,2,3$ the profiles $(\mathfrak T_{3,k},\mathfrak V_{3,k})$ solve the system
\begin{equation} \label{multisoliton:id:equation-T3-polar}
\left\{ \begin{array}{l l} \Ls_{0,k}(\mathfrak T_{3,k},\mathfrak V_{3,k})=\mathfrak S_{3,k},\\
-\Delta_k \mathfrak V_{3,k}=\mathfrak T_{3,k}.
\end{array} \right.
\end{equation}
Finally, we compute using \eqref{eigenfunctions:id:G3} the cancellation
\begin{align*}
& - \alpha^2 G_{3,+}.\nabla (U+\frac 12 \Lambda U)+\gamma_3 \Lambda U+  \frac{\alpha^2}{16} \nabla U.( -3y_1,y_2 ) \\
& =-\alpha^2 \left(\frac{1}{64}\nabla(r^4\cos(4\theta))+\frac{1}{16}(-3y_1,y_2)\right).\nabla(U+\frac 12 \Lambda U)+\gamma_3 \Lambda U+\frac{\alpha^2}{16}\nabla U.(-3y_1,y_2) \\
&=\frac{\alpha^2}{32}r\partial_r \Lambda U+\gamma_3 \Lambda U +\frac{\alpha^2}{16}r\partial_r \Lambda U \cos(2\theta)-\frac{\alpha^2}{16}r^3\partial_r (U+\frac{1}{2}\Lambda U)\cos(4\theta),
\end{align*}
and using $\gamma_1=\frac{\alpha^{3/2}}{4\sqrt{2}}$ and \eqref{eigenfunctions:id:G2} the other cancellation
\begin{align*}
&8L_0\alpha^{\frac 32}G_{2,+}.\nabla \partial_{x_1}U-8L_0(\gamma_1,0).\nabla \partial_{x_1}U\\
&=L_0\alpha^{\frac 32}(-\frac{1}{\sqrt{2}}r^2 \partial_r^2 U-\frac{1}{3\sqrt{2}}r\partial_r U)\cos(2\theta)+L_0\alpha^{\frac 32}(-\frac{1}{\sqrt{2}}r^2 \partial_r^2 U+\frac{1}{3\sqrt{2}}r\partial_r U)\cos(4\theta)
\end{align*}
and
$$
-\gamma_4(y_1,-y_2).\nabla U=-\gamma_4 r\partial_r U \cos(2\theta)
$$
so that the identity \eqref{multisoliton:id:F3} becomes
\begin{align*}
F_{4,\pm} &=\frac{\alpha^2}{32}r\partial_r \Lambda U+\gamma_3 \Lambda U +\left( (\frac{\alpha^2}{16}-\gamma_4)r\partial_r \Lambda U +L_0\alpha^{\frac 32}(-\frac{1}{\sqrt{2}}r^2 \partial_r^2 U-\frac{1}{3\sqrt{2}}r\partial_r U)\right) \cos(2\theta)\\
& \quad +\left( -\frac{\alpha^2}{16}r^3\partial_r (U+\frac{1}{2}\Lambda U)+L_0\alpha^{\frac 32}(-\frac{1}{\sqrt{2}}r^2 \partial_r^2 U+\frac{1}{3\sqrt{2}}r\partial_r U) \right) \cos(4\theta).
\end{align*}

We have by \eqref{multisoliton:id:expansion-T2} that
\begin{align*}
& a.\nabla \mathfrak T_{2,\pm}= \pm \frac{|a|}{2}[\partial_r \mathfrak T_{2,1}+\frac{ \mathfrak T_{2,1}}{r}]+|a|(\partial_r  \mathfrak T_{2,0}+\frac{\partial_r  \mathfrak T_{2,2}}{2}+\frac{ \mathfrak T_{2,2}}{2r})\cos(\theta)\\
&\qquad \qquad  \quad \pm \frac{a}{2}[\partial_r \mathfrak T_{2,1}-\frac{ \mathfrak T_{2,1}}{r}]\cos(2\theta)+\frac{|a|}{2}(\partial_r \mathfrak T_{2,2}-\frac{\mathfrak T_{2,2}}{r})\cos(3\theta), \\
& \beta \Lambda \mathfrak T_{2,\pm}= \beta \Lambda  \mathfrak T_{2,0}\pm \beta \Lambda  \mathfrak T_{2,1}\cos(\theta)+\beta \Lambda  \mathfrak T_{2,2}\cos(2\theta), \\
&G_{1,\pm}.\nabla  \mathfrak T_{2,\pm} =\frac{r}{4}\partial_r  \mathfrak T_{2,2}+\frac{1}{2}\mathfrak T_{2,2} \pm (\frac{r}{4}\partial_r  \mathfrak T_{2,1}+\frac{ \mathfrak T_{2,1}}{4})\cos(\theta)\\
& \qquad \qquad \quad \qquad +\frac{r}{2}\partial_r  \mathfrak T_{2,0}\cos(2\theta)\pm (\frac{r}{4}\partial_r  \mathfrak T_{2,1}-\frac{ \mathfrak T_{2,1}}{4})\cos(3\theta)+(\frac{r}{4}\partial_r  \mathfrak T_{2,2}-\frac 12  \mathfrak T_{2,2})\cos(4\theta).
\end{align*}
We introduce
\begin{align}
& \label{multisoliton:id:deftildeT20} (\tilde{T}_{2,0},\tilde{V}_{2,0})=\alpha(T_{2,0}^{(0)},V_{2,0}^{(0)})+\alpha \tilde{\lambda_0}(\hat T_{2},\hat V_{2}),\\
& \nonumber (\tilde{T}_{2,1},\tilde{V}_{2,1})=\frac{\alpha-\beta}{\nu}(\mathcal S_{1,1,1},\mathcal W_{1,1,1}),\\
& \nonumber (\tilde{T}_{2,2},\tilde{V}_{2,2})=\alpha(T_{2,2},V_{2,2})
\end{align}
and, for $k=0,1,2$,
$$
(\tilde{\mathfrak T}_{2,k},\tilde{\mathfrak V}_{2,k})= ( \mathfrak T_{2,k}, \mathfrak V_{2,k})-\frac 12 ( \tilde T_{2,k},\tilde V_{2,k})
$$
so that we have the decomposition
$$
(\tilde{\mathfrak T}_{2,\pm},\tilde{\mathfrak V}_{2,\pm})=\sum_{k=0}^2 (\pm 1)^k (\tilde{\mathfrak T}_{2,k}(r)\cos(k\theta),\tilde{\mathfrak V}_{2,k}(r)\cos (k\theta)).
$$
The nonlinear term is
$$
-\nabla . (\tilde{\mathfrak T}_{2,\pm}\nabla \tilde{\mathfrak V}_{2,\pm})=\tilde{\mathfrak T}_{2,\pm}^2-\nabla . \tilde{\mathfrak T}_{2,\pm}.\nabla \tilde{\mathfrak V}_{2,\pm}.
$$
We compute each term above:
\begin{align*}
\tilde{\mathfrak T}_{2,\pm}^2 & =\tilde{\mathfrak T}_{2,0}^2 +\frac 12 \tilde{\mathfrak T}_{2,1}^2 +\frac 12 \tilde{\mathfrak T}_{2,2}^2 \pm (2 \tilde{\mathfrak T}_{2,0} \tilde{\mathfrak T}_{2,1}+\tilde{\mathfrak T}_{2,1}\tilde{\mathfrak T}_{2,2})\cos(\theta) \\
& +  ( \frac 12 \tilde{\mathfrak T}_{2,1}^2+2 \tilde{\mathfrak T}_{2,0}\tilde{\mathfrak T}_{2,2})\cos(2 \theta) \pm  \tilde{\mathfrak T}_{2,1}\tilde{\mathfrak T}_{2,2})\cos(3 \theta)  +\frac 12 \tilde{\mathfrak T}_{2,2}^2 \cos(4 \theta) 
\end{align*}
and
\begin{align*}
& \nabla . \tilde{\mathfrak T}_{2,\pm}.\nabla \tilde{\mathfrak V}_{2,\pm} \\
&= \partial_r \tilde{\mathfrak T}_{2,0}\partial_r \tilde{\mathfrak V}_{2,0}+\frac 12 \partial_r \tilde{\mathfrak T}_{2,1}\partial_r \tilde{\mathfrak V}_{2,1}+\frac{1}{2r^2}  \tilde{\mathfrak T}_{2,1} \tilde{\mathfrak V}_{2,1}+\frac 12 \partial_r \tilde{\mathfrak T}_{2,2}\partial_r \tilde{\mathfrak V}_{2,2}+\frac{2}{r^2}  \tilde{\mathfrak T}_{2,2} \tilde{\mathfrak V}_{2,2}\\
& \pm \left(\partial_r \tilde{\mathfrak T}_{2,0}\partial_r \tilde{\mathfrak V}_{2,1}+\partial_r \tilde{\mathfrak T}_{2,1}\partial_r \tilde{\mathfrak V}_{2,0}+\frac 12\partial_r \tilde{\mathfrak T}_{2,2}\partial_r \tilde{\mathfrak V}_{2,1}+\frac{1}{2r^2} \tilde{\mathfrak T}_{2,2} \tilde{\mathfrak V}_{2,1}+\frac 12\partial_r \tilde{\mathfrak T}_{2,1}\partial_r \tilde{\mathfrak V}_{2,2}+\frac{1}{2r^2} \tilde{\mathfrak T}_{2,1} \tilde{\mathfrak V}_{2,2}  \right)\cos(\theta)\\
& +\left(\frac 12\partial_r \tilde{\mathfrak T}_{2,1}\partial_r \tilde{\mathfrak V}_{2,1}-\frac{1}{2r^2} \tilde{\mathfrak T}_{2,1} \tilde{\mathfrak V}_{2,1}+\partial_r \tilde{\mathfrak T}_{2,0}\partial_r \tilde{\mathfrak V}_{2,2}+\partial_r \tilde{\mathfrak T}_{2,2}\partial_r \tilde{\mathfrak V}_{2,0}     \right) \cos(2\theta) \\
& \pm \left(\frac 12\partial_r \tilde{\mathfrak T}_{2,2}\partial_r \tilde{\mathfrak V}_{2,1}-\frac{1}{r^2} \tilde{\mathfrak T}_{2,2} \tilde{\mathfrak V}_{2,1}+\frac 12 \partial_r \tilde{\mathfrak T}_{2,1}\partial_r \tilde{\mathfrak V}_{2,2}-\frac{1}{r^2} \tilde{\mathfrak T}_{2,1}\partial_r \tilde{\mathfrak V}_{2,2}     \right) \cos(3\theta)\\
& + \left(\frac 12\partial_r \tilde{\mathfrak T}_{2,2}\partial_r \tilde{\mathfrak V}_{2,2}-\frac{2}{r^2} \tilde{\mathfrak T}_{2,2} \tilde{\mathfrak V}_{2,2}   \right) \cos(4\theta),
\end{align*}
and so the nonlinear term can be decomposed as
\begin{align*}
& \nabla . (\tilde{\mathfrak T}_{2,\pm}\nabla \tilde{\mathfrak V}_{2,\pm})\\
& = -\tilde{\mathfrak T}_{2,0}^2 -\frac 12 \tilde{\mathfrak T}_{2,1}^2 -\frac 12 \tilde{\mathfrak T}_{2,2}^2+ \partial_r \tilde{\mathfrak T}_{2,0}\partial_r \tilde{\mathfrak V}_{2,0}+\frac 12 \partial_r \tilde{\mathfrak T}_{2,1}\partial_r \tilde{\mathfrak V}_{2,1}+\frac{1}{2r^2}  \tilde{\mathfrak T}_{2,1} \tilde{\mathfrak V}_{2,1}+\frac 12 \partial_r \tilde{\mathfrak T}_{2,2}\partial_r \tilde{\mathfrak V}_{2,2}+\frac{2}{r^2}  \tilde{\mathfrak T}_{2,2} \tilde{\mathfrak V}_{2,2}\\
& \pm \Bigg(-2 \tilde{\mathfrak T}_{2,0} \tilde{\mathfrak T}_{2,1}-\tilde{\mathfrak T}_{2,1}\tilde{\mathfrak T}_{2,2}+\partial_r \tilde{\mathfrak T}_{2,0}\partial_r \tilde{\mathfrak V}_{2,1}+\partial_r \tilde{\mathfrak T}_{2,1}\partial_r \tilde{\mathfrak V}_{2,0}+\frac 12\partial_r \tilde{\mathfrak T}_{2,2}\partial_r \tilde{\mathfrak V}_{2,1}+\frac{1}{2r^2} \tilde{\mathfrak T}_{2,2} \tilde{\mathfrak V}_{2,1}\\
&\qquad \qquad+\frac 12\partial_r \tilde{\mathfrak T}_{2,1}\partial_r \tilde{\mathfrak V}_{2,2}+\frac{1}{2r^2} \tilde{\mathfrak T}_{2,1} \tilde{\mathfrak V}_{2,2} \Bigg)\cos(\theta)\\
& +  \left(- \frac 12 \tilde{\mathfrak T}_{2,1}^2-2 \tilde{\mathfrak T}_{2,0}\tilde{\mathfrak T}_{2,2}+\frac 12\partial_r \tilde{\mathfrak T}_{2,1}\partial_r \tilde{\mathfrak V}_{2,1}-\frac{1}{2r^2} \tilde{\mathfrak T}_{2,1} \tilde{\mathfrak V}_{2,1}+\partial_r \tilde{\mathfrak T}_{2,0}\partial_r \tilde{\mathfrak V}_{2,2}+\partial_r \tilde{\mathfrak T}_{2,2}\partial_r \tilde{\mathfrak V}_{2,0}  \right)\cos(2 \theta) \\
& \pm \left(- \tilde{\mathfrak T}_{2,1}\tilde{\mathfrak T}_{2,2}+\frac 12\partial_r \tilde{\mathfrak T}_{2,2}\partial_r \tilde{\mathfrak V}_{2,1}-\frac{1}{r^2} \tilde{\mathfrak T}_{2,2} \tilde{\mathfrak V}_{2,1}+\frac 12 \partial_r \tilde{\mathfrak T}_{2,1}\partial_r \tilde{\mathfrak V}_{2,2}-\frac{1}{r^2} \tilde{\mathfrak T}_{2,1}\partial_r \tilde{\mathfrak V}_{2,2} \right) \cos(3 \theta)  \\
& +\left(- \frac 12 \tilde{\mathfrak T}_{2,2}^2+\frac 12\partial_r \tilde{\mathfrak T}_{2,2}\partial_r \tilde{\mathfrak V}_{2,2}-\frac{2}{r^2} \tilde{\mathfrak T}_{2,2} \tilde{\mathfrak V}_{2,2} \right) \cos(4 \theta)
\end{align*}

Therefore, by \eqref{multisoliton:id:F4},
\begin{align*}
-F_{4,\pm}\mp \frac{\alpha-\beta}{\nu}a.\nabla \mathfrak T_{2,\pm}+\Lambda \mathfrak T_{2,\pm}+\alpha (G_{1,\pm}.\nabla \mathfrak T_{2,\pm})+\nabla . (\tilde{\mathfrak T}_{2,\pm}\nabla \tilde{\mathfrak V}_{2,\pm})=\sum_{k=0}^4(\pm 1)^k \mathfrak S_{4,k}\cos(4\theta)
\end{align*}
with
\begin{align}
\label{multisoliton:id:def-S40}&\mathfrak S_{4,0}= -\frac{\alpha^2}{32}r\partial_r \Lambda U-\gamma_3 \Lambda U -\frac{\alpha-\beta}{\nu}\frac{|a|}{2}(\partial_r \mathfrak T_{2,1}+\frac{\mathfrak T_{2,1}}{r})+\beta \Lambda \mathfrak T_{2,0}+\frac{\alpha}{4}\Lambda \mathfrak T_{2,2}- \tilde{\mathfrak T}_{2,0}^2 -\frac 12 \tilde{\mathfrak T}_{2,1}^2\\
\nonumber & \qquad \quad  -\frac 12 \tilde{\mathfrak T}_{2,2}^2+ \partial_r \tilde{\mathfrak T}_{2,0}\partial_r \tilde{\mathfrak V}_{2,0}+\frac 12 \partial_r \tilde{\mathfrak T}_{2,1}\partial_r \tilde{\mathfrak V}_{2,1}+\frac{1}{2r^2}  \tilde{\mathfrak T}_{2,1} \tilde{\mathfrak V}_{2,1}+\frac 12 \partial_r \tilde{\mathfrak T}_{2,2}\partial_r \tilde{\mathfrak V}_{2,2}+\frac{2}{r^2}  \tilde{\mathfrak T}_{2,2} \tilde{\mathfrak V}_{2,2},\\
\label{multisoliton:id:def-S41}&\mathfrak S_{4,1}= - \frac{\alpha-\beta}{\nu}|a|(\partial_r \mathfrak T_{2,0}+\frac{\partial_r \mathfrak T_{2,2}}{2}+\frac{ \mathfrak T_{2,2}}{2r})+ \beta \Lambda  \mathfrak T_{2,1}+ \alpha (\frac{r}{4}\partial_r  \mathfrak T_{2,1}+\frac{ \mathfrak T_{2,1}}{4})\\
\nonumber &\qquad \quad -2 \tilde{\mathfrak T}_{2,0} \tilde{\mathfrak T}_{2,1}-\tilde{\mathfrak T}_{2,1}\tilde{\mathfrak T}_{2,2}+\partial_r \tilde{\mathfrak T}_{2,0}\partial_r \tilde{\mathfrak V}_{2,1}+\partial_r \tilde{\mathfrak T}_{2,1}\partial_r \tilde{\mathfrak V}_{2,0}+\frac 12\partial_r \tilde{\mathfrak T}_{2,2}\partial_r \tilde{\mathfrak V}_{2,1}\\
\nonumber &\qquad \quad +\frac{1}{2r^2} \tilde{\mathfrak T}_{2,2} \tilde{\mathfrak V}_{2,1} +\frac 12\partial_r \tilde{\mathfrak T}_{2,1}\partial_r \tilde{\mathfrak V}_{2,2}+\frac{1}{2r^2} \tilde{\mathfrak T}_{2,1} \tilde{\mathfrak V}_{2,2} ,\\
\label{multisoliton:id:def-S42}&\mathfrak S_{4,2}= (\gamma_4 -\frac{\alpha^2}{16})r\partial_r \Lambda U-L_0\alpha^{\frac 32}(-\frac{1}{\sqrt{2}}r^2 \partial_r^2 U-\frac{1}{3\sqrt{2}}r\partial_r U) -\frac{\alpha-\beta}{\nu}\frac{|a|}{2}(\partial_r  \mathfrak T_{2,1}-\frac{ \mathfrak T_{2,1}}{r})+\beta \Lambda  \mathfrak T_{2,2}+\frac{\alpha}{2}r\partial_r  \mathfrak T_{2,0},\\
\nonumber & \qquad \quad- \frac 12 \tilde{\mathfrak T}_{2,1}^2-2 \tilde{\mathfrak T}_{2,0}\tilde{\mathfrak T}_{2,2}+\frac 12\partial_r \tilde{\mathfrak T}_{2,1}\partial_r \tilde{\mathfrak V}_{2,1}-\frac{1}{2r^2} \tilde{\mathfrak T}_{2,1} \tilde{\mathfrak V}_{2,1}+\partial_r \tilde{\mathfrak T}_{2,0}\partial_r \tilde{\mathfrak V}_{2,2}+\partial_r \tilde{\mathfrak T}_{2,2}\partial_r \tilde{\mathfrak V}_{2,0} ,\\
\label{multisoliton:id:def-S43}&\mathfrak S_{4,3}=- \frac{\alpha-\beta}{\nu}\frac{|a|}{2}(\partial_r  \mathfrak T_{2,2}-\frac{ \mathfrak T_{2,2}}{r})+ \alpha (\frac{r}{4}\partial_r  \mathfrak T_{2,1}-\frac{ \mathfrak T_{2,1}}{4})\\
\nonumber&\qquad \quad - \tilde{\mathfrak T}_{2,1}\tilde{\mathfrak T}_{2,2}+\frac 12\partial_r \tilde{\mathfrak T}_{2,2}\partial_r \tilde{\mathfrak V}_{2,1}-\frac{1}{r^2} \tilde{\mathfrak T}_{2,2} \tilde{\mathfrak V}_{2,1}+\frac 12 \partial_r \tilde{\mathfrak T}_{2,1}\partial_r \tilde{\mathfrak V}_{2,2}-\frac{1}{r^2} \tilde{\mathfrak T}_{2,1}\partial_r \tilde{\mathfrak V}_{2,2} \\
\label{multisoliton:id:def-S44}&\mathfrak S_{4,4}= \frac{\alpha^2}{16}r^3 \partial_r (U+\frac 12 \Lambda U)-L_0\alpha^{\frac 32}(-\frac{1}{\sqrt{2}}r^2 \partial_r^2 U+\frac{1}{3\sqrt{2}}r\partial_r U)  +\frac{\alpha}{4}r\partial_r  \mathfrak T_{2,2}-\frac{\alpha}{2} \mathfrak T_{2,2}\\
\nonumber & \qquad \quad - \frac 12 \tilde{\mathfrak T}_{2,2}^2+\frac 12\partial_r \tilde{\mathfrak T}_{2,2}\partial_r \tilde{\mathfrak V}_{2,2}-\frac{2}{r^2} \tilde{\mathfrak T}_{2,2} \tilde{\mathfrak V}_{2,2}.
\end{align}
In order to solve the system \eqref{multisoliton:id:systeme-T2} we decompose accordingly
\begin{equation} \label{multisoliton:id:expansion-T4}
(\mathfrak T_{4,\pm},\mathfrak V_{4,\pm})= \sum_{k=0}^4 (-1)^k (\mathfrak T_{2,k}(r)\cos(k\theta),\mathfrak V_{2,k}(r)\cos(k\theta))
\end{equation}
where the profiles $(\mathfrak T_{4,k},\mathfrak V_{4,k})$ for $k=1,...,4$ solve the systems
\begin{equation} \label{multisoliton:id:equation-T4-polar}
\left\{ \begin{array}{l l} \Ls_{0,k}(\mathfrak T_{4,k},\mathfrak V_{4,k})=\mathfrak S_{4,k},\\
-\Delta_k \mathfrak V_{4,k}=\mathfrak T_{4,k}.
\end{array} \right.
\end{equation}

In the lemma below we make an abuse of language and say that $(\mathfrak T_{i,k},\mathfrak V_{i,k})$ is smooth if the function $y\mapsto (\mathfrak T_{i,k}(r)\cos(\theta),\mathfrak V_{i,k}(r)\cos(\theta))$ is smooth on $\mathbb R^2$.

\begin{lemma} \label{lem:profile_mathfrakTk}

Let $\mu''=-8L_0(\frac{\alpha}{4}+\beta)$. There exist smooth solutions to \eqref{multisoliton:id:equation-T2-polar}, \eqref{multisoliton:id:equation-T3-polar} and \eqref{multisoliton:id:equation-T4-polar} such that as $r\to \infty$, for $k\in \mathbb N$:
\begin{align}
&\label{multisoliton:bd:T20} |\partial_r^k \mathfrak T_{2,0}(r)|\lesssim r^{-4-k}\ln r, \qquad  |\partial_r^k \mathfrak V_{2,0}(r)|\lesssim r^{-2-k}\ln r,\\
&\label{multisoliton:bd:T21} |\partial_r^k \mathfrak T_{2,1}(r)|\lesssim r^{-3-k}, \qquad  |\partial_r^k \mathfrak V_{2,1}(r)|\lesssim r^{-1-k}\ln r,\\
&\label{multisoliton:bd:T22} |\partial_r^k \mathfrak T_{2,2}(r)|\lesssim r^{-4-k}, \qquad  |\partial_r^k \mathfrak V_{2,2}(r)|\lesssim r^{-2-k}\ln r,\\
&\label{multisoliton:bd:T30} |\partial_r^k \mathfrak T_{3,0}(r)|\lesssim r^{-4-k}\ln r,, \qquad  |\partial_r^k \mathfrak V_{3,0}(r)|\lesssim  r^{-2-k}\ln r,\\
&\label{multisoliton:bd:T31} |\partial_r^k \mathfrak T_{3,1}(r)|\lesssim r^{-3-k}, \qquad  |\partial_r^k \mathfrak V_{3,1}(r)|\lesssim  r^{-1-k}\ln r,\\
&\label{multisoliton:bd:T32} |\partial_r^k \mathfrak T_{3,2}(r)|\lesssim r^{-4-k}, \qquad  |\partial_r^k \mathfrak V_{3,2}(r)|\lesssim r^{-2-k}\ln r,\\
&\label{multisoliton:bd:T33} |\partial_r^k \mathfrak T_{3,3}(r)|\lesssim r^{-3-k}, \qquad  |\partial_r^k \mathfrak V_{3,3}(r)|\lesssim r^{-1-k},\\
&\label{multisoliton:bd:T40} |\partial_r^k \mathfrak T_{4,0}(r)|\lesssim r^{-2-k}\ln^2 r, \qquad  |\partial_r^k \mathfrak V_{4,0}(r)|\lesssim r^{-k}\ln^3 r,\\
&\label{multisoliton:bd:T41} |\partial_r^k \mathfrak T_{4,1}(r)|\lesssim r^{-1-k}\ln r, \qquad  |\partial_r^k \mathfrak V_{4,1}(r)|\lesssim r^{1-k}\ln r,\\
&\label{multisoliton:bd:T42} |\partial_r^k \mathfrak T_{4,2}(r)|\lesssim r^{-2-k}\ln r, \qquad  |\partial_r^k \mathfrak V_{4,2}(r)|\lesssim r^{-k}\ln r,\\
&\label{multisoliton:bd:T43} |\partial_r^k \mathfrak T_{4,3}(r)|\lesssim r^{-1-k}, \qquad  |\partial_r^k \mathfrak V_{4,3}(r)|\lesssim r^{1-k},\\
&\label{multisoliton:bd:T44} |\partial_r^k \mathfrak T_{4,4}(r)|\lesssim r^{-2-k}, \qquad  |\partial_r^k \mathfrak V_{4,4}(r)|\lesssim r^{-k}.
\end{align}

\end{lemma}

Thanks to the above Lemma, injecting \eqref{multisoliton:bd:T20}-\eqref{multisoliton:bd:T44} in \eqref{multisoliton:id:expansion-T2}, \eqref{multisoliton:id:expansion-T3} and \eqref{multisoliton:id:expansion-T4} shows
\begin{align}
& \label{multisoliton:bd:T2} |\nabla^k \mathfrak T_2(y)|\lesssim \langle y \rangle^{-3-k}, \qquad  |\nabla^k \mathfrak V_2(y)|\lesssim \langle y \rangle^{-1-k}\langle \ln \langle y \rangle\rangle,\\
& \label{multisoliton:bd:T3} |\nabla^k \mathfrak T_3(y)|\lesssim \langle y \rangle^{-3-k}, \qquad  |\nabla^k \mathfrak V_3(y)|\lesssim \langle y \rangle^{-1-k},\\
& \label{multisoliton:bd:T4} |\nabla^k \mathfrak T_4(y)|\lesssim \langle y \rangle^{-1-k}, \qquad  |\nabla^k \mathfrak V_4(y)|\lesssim \langle y \rangle^{1-k}\langle \ln \langle y \rangle\rangle,
\end{align}

\begin{proof}

As we will see, most of the equations in \eqref{multisoliton:id:equation-T2-polar}, \eqref{multisoliton:id:equation-T3-polar} and \eqref{multisoliton:id:equation-T4-polar} involve source terms with the same particular cancellations and estimates as for the equations \eqref{sys:S111}-\eqref{sys:S402'} that were solved in Proposition \ref{pr:solution-system-SW}. Therefore, we will refer to the proof of Proposition \ref{pr:solution-system-SW} when the computations will be the same, and will give details for the equations that are different.

The smoothness of these profiles (smoothness seen as the radial component of the corresponding spherical harmonics) follows from the choice of the integration constants in the formulas below and from the behaviour near $0$ of the solutions to the homogeneous systems in Lemma \ref{lemm:SolHom}. We use the notation $f(r)=\Oc_{r\to \infty}(g(r))$ if $|\partial_r^k f(r)|\lesssim r^{-k}g(r)$ for $r\geq 1$.

\medskip

\noindent \underline{Computation of $(\mathfrak T_{2,0},\mathfrak V_{2,0})$}. We move to the partial mass setting. We first notice that the source term $\mathfrak S_{2,0}$ enjoys three cancellations. Namely we have the mass cancellation $\int  \Lambda U+\frac 12 \Lambda^2 U=0$ so that $ \int_0^\infty r\mathfrak S_{2,0}(r)dr=0$ and hence $m_{\mathfrak S_{2,0}}(r)=\int_0^r \zeta \mathfrak S_{2,0}(\zeta)d\zeta=-\int_r^\infty \zeta \mathfrak S_{2,0}(\zeta)d\zeta$. This, and the tail cancellation $U+\frac 12 \Lambda U=\Oc(\langle y \rangle^{-6})$ that implies $\mathfrak S_{2,0}(r)=\Oc_{r\to \infty}(r^{-6})$, give
\begin{equation} \label{multisoliton:bd:S20-1}
m_{\mathfrak S_{2,0}}(r)=\Oc_{r\to \infty}(r^{-4}).
\end{equation}
Using $\Lambda f=r^{-1}\partial_r (r^2 f)$ we have the identity $m_{\mathfrak S_{2,0}}(r)=r^2 U(r)+\frac 12 r \partial_r (r^2 U)$. This implies the following third cancellation for the partial mass
\begin{equation} \label{multisoliton:bd:S20-2}
\int_0^\infty rm_{\mathfrak S_{2,0}}(r)dr=\int_0^\infty (r^3 U(r)+\frac 12 r^2 \partial_r (r^2 U))dr=0.
\end{equation}
We solve \eqref{multisoliton:id:equation-T2-polar} for $k=0$ using Lemma \eqref{lemm:invA0} and obtain
\begin{align}
\label{multisoliton:id:T20-formula} & \mathfrak T_{2,0}(r)=\frac{\partial_r m_{\mathfrak T_{2,0}}}{r}, \qquad \Phi_{\mathfrak T_{2,0}}=C-\int_1^r \frac{m_{\mathfrak T_{2,0}}(\zeta)}{\zeta}d\zeta,\\
\label{multisoliton:id:mT20-formula}& m_{\mathfrak T_{2,0}}(r)=  \frac{1}{2}\psi_0\int_r^1 \frac{\zeta^4  + 4\zeta^2 \ln \zeta -1}{\zeta} m_{\mathfrak S_{2,0}}(\zeta) d\zeta +\frac{1}{2}\tilde \psi_0\int_0^r \zeta m_{\mathfrak S_{2,0}}(\zeta) d\zeta,
\end{align}
for $C\in \mathbb R$ any integration constant. Injecting \eqref{multisoliton:bd:S20-1}, \eqref{multisoliton:bd:S20-2} and \eqref{systeme-odes:fundamental-solution-radial} in \eqref{multisoliton:id:mT20-formula} shows $m_{\mathfrak T_{2,0}}(r)=\Oc_{r\to \infty}(r^{-2}\ln r)$. In turn, injecting this in \eqref{multisoliton:id:T20-formula} shows the desired estimates \eqref{multisoliton:bd:T20}.

\medskip

\noindent \underline{Computation of $(\mathfrak T_{2,1},\mathfrak V_{2,1})$}. By \eqref{multisoliton:id:def-S2k}, the system \eqref{multisoliton:id:equation-T2-polar} for $k=1$ is the same as \eqref{sys:S111} up to a constant factor. We thus take $(\mathfrak T_{2,1},\mathfrak V_{2,1})=(\mathcal S_{1,1,1},\mathcal W_{1,1,1})$. The estimates \eqref{multisoliton:bd:T21} then follow from \eqref{bd:estimate-S111-W111}.

\medskip

\noindent \underline{Computation of $(\mathfrak T_{2,2},\mathfrak V_{2,2})$}. Injecting the tail cancellation $U+\frac 12 \Lambda U=\Oc(\langle y \rangle^{-6})$ in \eqref{multisoliton:id:def-S2k} gives
\begin{equation} \label{multisoliton:bd:S22}
\mathfrak S_{2,2}(r)=\Oc_{r\to \infty}(r^{-6}).
\end{equation}
We solve the system \eqref{multisoliton:id:equation-T2-polar} by applying Lemma \ref{lemm:sol_inhol} and obtain
$$
(\mathfrak T_{2,2},\mathfrak V_{2,2})=\sum_{k=0}^4 \gamma_{2,2,k}(r)(h_{2,k},g_{2,k})
$$
where we choose the integration constants so that
\begin{align*}
& \gamma_{2,2,1}(r)= -\int_r^\infty \frac{\mathfrak S_{2,2}W_{2,1}}{W_2}ds = \int_r^\infty  \Oc_{s\to \infty}(s^{-5})ds = \Oc_{r\to \infty}(r^{-4}),\\
& \gamma_{2,2,2}(r)= -\int_0^r \frac{\mathfrak S_{2,2}W_{2,2}}{W_2}ds = \int_0^r  \Oc_{s\to \infty}(s^{-1})ds =\Oc_{r\to \infty}(\ln r ),\\
& \gamma_{2,2,3}(r)= -\int_r^\infty \frac{\mathfrak S_{2,2}W_{2,3}}{W_2}ds  = \int_r^\infty  \Oc_{s\to \infty}(s^{-3-\sqrt{8}})ds=\Oc_{r\to \infty}(r^{-2-\sqrt{8}}),\\
& \gamma_{2,2,4}(r)= -\int_0^r \frac{\mathfrak S_{2,2}W_{2,4}}{W_2}ds = \int_0^r  \Oc_{s\to \infty}(s^{-3+\sqrt{8}})ds =\Oc_{r\to \infty}(r^{-2+\sqrt{8}})
\end{align*}
where the estimates of the above integrals follow from \eqref{multisoliton:bd:S22} and the estimates of Lemma \ref{lemm:sol_inhol}. Combining these estimates with the asymptotic behaviours of $(h_{2,k},g_{2,k})$ of Lemma \ref{lemm:SolHom} shows the desired estimates \eqref{multisoliton:bd:T22}.

\medskip

\noindent \underline{Computation of $(\mathfrak T_{3,0},\mathfrak V_{3,0})$}. We have by \eqref{multisoliton:id:def-S33} the cancellation
$$
\int_0^\infty \mathfrak S_{3,0}=8L_0\gamma_2 \int_0^\infty (r\partial_r^2 U+\partial_r U)dr=0
$$
and the asymptotic behaviour $ \mathfrak S_{3,0}(r)=\Oc_{r\to \infty}(r^{-6})$. Hence $\mathfrak S_{3,0}$ satisfies the same cancellation and estimate as the source term $\mathfrak S_{2,0}$ in the previous computation of $(\mathfrak T_{2,0},\mathfrak V_{2,0})$. The same arguments then yield  \eqref{multisoliton:bd:T30}.

\medskip

\noindent \underline{Computation of $(\mathfrak T_{3,1},\mathfrak V_{3,1})$}. The idea is to use the parameter $\mu''$ to enforce an orthogonality condition. Using the commutator relation $\partial_r \Lambda -\Lambda \partial_r =\partial_r$ and the identity $\partial_r \Lambda U=4\sqrt{2}\Sigma_{3,1}$, and choosing $\mu''=-8L_0(\frac{\alpha}{4}+\beta)$ we get by \eqref{multisoliton:id:def-S33} that $\mathfrak S_{3,1}=-32\sqrt{2}L_0(\frac{\alpha}{4}+\beta)\Sigma_{3,1}$. In view of Lemma \ref{lem:T3k}, we thus choose $(\mathfrak T_{3,1},\mathfrak V_{3,1})=-32\sqrt{2}L_0(\frac{\alpha}{4}+\beta)(T_{3,1},V_{3,1})$ and obtain the estimates \eqref{multisoliton:bd:T31}.

\medskip

\noindent \underline{Computation of $(\mathfrak T_{3,2},\mathfrak V_{3,2})$}. We have by \eqref{multisoliton:id:def-S33} that $\mathfrak S_{3,2}(r)=\Oc_{r\to \infty}(r^{-6})$. This is the same estimate as \eqref{multisoliton:bd:S22} for $\mathfrak S_{2,2}$. The same arguments used in the computation of $(\mathfrak T_{2,2},\mathfrak V_{2,2})$ then show \eqref{multisoliton:bd:T32}.

\medskip

\noindent \underline{Computation of $(\mathfrak T_{3,3},\mathfrak V_{3,3})$}. We inject the tail cancellation $U+\frac 12 \Lambda U=\Oc(\langle y \rangle^{-6})$ in \eqref{multisoliton:id:def-S33} and obtain
$$
\mathfrak S_{3,3}=\Oc_{r\to \infty}(r^{-5}).
$$
We notice that this asymptotic behaviour as $r \to \infty$ is exactly as that of $\Sigma_{3,0,3}$ in Step 4 of the proof of Proposition \ref{pr:solution-system-SW}, see \eqref{bd:Sigma313-asymptotics}. By reproducing exactly the same argument as that which was made to compute $(\mathcal S_{3,0,3},\mathcal W_{3,0,3})$, we get the same estimates \eqref{bd:estimate-S303-W303} for $(\mathfrak T_{3,3},\mathfrak V_{3,3})$. This shows \eqref{multisoliton:bd:T33}.

\medskip

\noindent \underline{Computation of $(\mathfrak T_{4,0},\mathfrak V_{4,0})$}. We first notice using \eqref{multisoliton:id:def-S40} the mass cancellation
\begin{align}
\nonumber \int_0^\infty r\mathfrak S_{4,0} dr & =-\frac{\alpha^2}{32}\int_0^\infty r^2\partial_r \Lambda U dr -\gamma_3 \int_0^\infty r \Lambda U dr -\frac{\alpha-\beta}{\nu}\frac{|a|}{2} \int_0^\infty ( r\partial_r \mathfrak T_{2,1}+\mathfrak T_{2,1})dr\\
 \label{multisoliton:bd:S40} & \qquad +\beta \int_0^\infty r\Lambda \mathfrak T_{2,0}dr+\frac{\alpha}{4}\int_0^\infty r\Lambda \mathfrak T_{2,2}dr +\int_{\mathbb R^2}\nabla . (\tilde{\mathfrak{ T}}_{2,+}\nabla \tilde{ \mathfrak V}_{2,+})dy\ =0.
\end{align}
Indeed, all terms in the right-hand side above vanish, because of integration by parts that are legitimate from the asymptotic behaviours \eqref{multisoliton:bd:T20}, \eqref{multisoliton:bd:T21} and \eqref{multisoliton:bd:T22} that have already been obtained previously in the proof. Moreover, these same asymptotic behaviours imply $\mathfrak S_{4,0} (r)=\Oc(r^{-4}\ln r)$. We notice that this estimate and the cancellation \eqref{multisoliton:bd:S40} are exactly as that for $\Sigma_{2,2,0}$ in Step 2 of the proof of Proposition \ref{pr:solution-system-SW}, see \eqref{id-mass-cancellation-Sigma220} and \eqref{bd-Sigma220}, up to a $\ln r$ factor for the estimate. The same argument as that of Step 2 in the proof of Proposition \ref{pr:solution-system-SW} can be applied, yielding for $\mathfrak S_{4,0}$ the same estimates \eqref{bd:estimate-S220-W220} as for $(\mathcal S_{2,2,0},\mathcal W_{2,2,0})$, with an additional $\log r$ factor. This is \eqref{multisoliton:bd:T40}.

\medskip

\noindent \underline{Computation of $(\mathfrak T_{4,1},\mathfrak V_{4,1})$}. Injecting \eqref{multisoliton:bd:T20}, \eqref{multisoliton:bd:T21} and \eqref{multisoliton:bd:T22} in \eqref{multisoliton:id:def-S41} we find
$$
\mathfrak S_{4,1}(r)=\Oc_{r\to \infty}(r^{-3}).
$$ 
This estimate is exactly as the estimate \eqref{bd:Sigma311-asymptotics} for $\Sigma_{3,1,1}$ in Step 1 of the proof of Proposition \ref{pr:solution-system-SW}. The exact same argument yield that $(\mathfrak T_{4,1},\mathfrak V_{4,1})$ satisfiies the same estimates as \eqref{bd:estimate-S311-W311} for $(\mathcal S_{3,1,1},\mathcal W_{3,1,1})$, yielding \eqref{multisoliton:bd:T41}.

\medskip

\noindent \underline{Computation of $(\mathfrak T_{4,2},\mathfrak V_{4,2})$}. By \eqref{multisoliton:bd:T20}, \eqref{multisoliton:bd:T21} and \eqref{multisoliton:bd:T22} and \eqref{multisoliton:id:def-S42} we have
$$
\mathfrak S_{4,2}(r)=\Oc_{r\to \infty}(r^{-4}\ln r).
$$ 
This is the same estimate as for $\Sigma_{2,2,2}$ in Step 3 of the proof of Proposition \ref{pr:solution-system-SW}, up to a $\log r$ factor. Reproducing the same arguments shows that $(\mathfrak T_{4,2},\mathfrak V_{4,2})$ enjoys the same estimates as \eqref{bd:estimate-S222-W222} for $(\mathcal S_{2,2,2},\mathcal W_{2,2,2})$ with an additional $\log r$ factor. This shows \eqref{multisoliton:bd:T42}.

\medskip

\noindent \underline{Computation of $(\mathfrak T_{4,3},\mathfrak V_{4,3})$}. Injecting \eqref{multisoliton:bd:T21} and \eqref{multisoliton:bd:T22} in \eqref{multisoliton:id:def-S43} we get
$$
\mathfrak S_{4,3}(r)=\Oc_{r\to \infty}(r^{-3} ).
$$ 
We recognise the same estimate as that for $\Sigma_{3,1,3}$ in Step 4 of the proof of Proposition \ref{pr:solution-system-SW}. The exact same reasoning shows for $(\mathfrak T_{4,3},\mathfrak V_{4,3})$ the same estimates as \eqref{bd:estimate-S313-W313} for $(\mathcal S_{3,1,3},\mathcal W_{3,1,3})$. This implies \eqref{multisoliton:bd:T43}.

\medskip

\noindent \underline{Computation of $(\mathfrak T_{4,4},\mathfrak V_{4,4})$}. The tail cancellation $U+\frac 12 \Lambda U=\Oc(\langle y \rangle^{-6})$ and \eqref{multisoliton:bd:T22} imply using \eqref{multisoliton:id:def-S44} that
$$
\mathfrak S_{4,4}(r)=\Oc_{r\to \infty}(r^{-4}).
$$
This estimate being the same as the estimate \eqref{bd:Sigma404-asymptotics} for $\Sigma_{4,0,4}$ in Step 5 of the proof of Proposition \ref{pr:solution-system-SW}, we obtain the same estimates \eqref{bd:estimate-S404-W404} for $(\mathfrak T_{4,4},\mathfrak V_{4,4})$. This is precisely \eqref{multisoliton:bd:T44}.

\end{proof}

\subsection{End of the construction of the corrected multisoliton}

\label{subsec:final-corrections-multisolitons}

\begin{proof}[Proof of Proposition \ref{prop:E0}]

\noindent \textbf{Step 1}. \emph{Proof of the pointwise estimates \eqref{est:pointwise_Xi}, \eqref{est:pointwise_Xi_dk} and \eqref{est:PhiXi}}. We estimate the right-hand sides of \eqref{multisoliton:id:decomposition-Xi-1} and \eqref{multisoliton:id:decomposition-PhiXi-1}. We have by \eqref{def:phii} that
$$
 \nu^2 \phi_0+(\frac{1}{16} \Lambda U-L_0\nu \partial_{x_1}U)_{1,\nu}+(\frac{1}{16}\Lambda U+L_0\nu \partial_{x_1}U)_{2,\nu} =\sum_\pm (1-\chi_{\pm a,\zeta_*})\nu^{-2}(\frac{1}{16} \Lambda U-L_0\nu \partial_{x_1}U)(\frac{\cdot \mp a}{\nu})+\nu^2 \tilde \phi_0.
$$
Using that $(1-\chi_{\pm a,\zeta_*})$ is supported for $|z\mp a|\geq \zeta_*$ and $|\nabla^k U(y)|\lesssim \langle y \rangle^{-4-k}$ we have 
$$
|\sum_\pm (1-\chi_{\pm a,\zeta_*})\nu^{-2}(\frac{1}{16} \Lambda U-L_0\nu \partial_{x_1}U)(\frac{\cdot \mp a}{\nu})|\lesssim \nu^2 \langle z\rangle^{C}.
$$
Similarly, since $\Phi_f=-(2\pi)^{-1}\log |\cdot|*f$ we have
$$
|\Phi_{\sum_\pm (\chi^*-\chi_{\pm a,\zeta_*})\nu^{-2}(\frac{1}{16} \Lambda U-L_0\nu \partial_{x_1}U)(\frac{\cdot \mp a}{\nu})}|\lesssim \nu^2 \langle \ln \langle z\rangle \rangle.
$$
Combining with \eqref{est:pointwise_phiitil} and \eqref{est:pointwise_Phiphiitil} this shows
\begin{equation} \label{multisoliton:bd:Xi-1}
\left| \nu^2 \phi_0+(\frac{1}{16} \Lambda U-L_0\nu \partial_{x_1}U)_{1,\nu}+(\frac{1}{16}\Lambda U+L_0\nu \partial_{x_1}U)_{2,\nu}\right|\lesssim  \sum_\pm \frac{\nu^2}{(\nu+|z\pm a|)^2}\langle z \rangle^C
\end{equation}
and
\begin{equation} \label{multisoliton:bd:PhiXi-1}
\left|\nabla \Phi_{\chi^* \nu^2 \phi_0+(\frac{1}{16} \Lambda U-L_0\nu \partial_{x_1}U)_{1,\nu}+(\frac{1}{16}\Lambda U+L_0\nu \partial_{x_1}U)_{2,\nu}}\right|\lesssim  \sum_\pm \frac{\nu^2}{(\nu+|z\pm a|)}\langle z \rangle^C.
\end{equation}
We have by injecting the estimates of Lemma \ref{lem:profile_mathfrakTk} in \eqref{multisoliton:id:def-tildeXi} using $\ln \langle y \rangle\lesssim \langle y \rangle^\epsilon$ that for any $0<\epsilon<1$ arbitrarily small that
\begin{equation} \label{multisoliton:bd:Xi-2}
|\tilde \Xi |\lesssim \sum_\pm \sum_{i=0}^2 \nu^i |\mathfrak T_{i+2,\pm} (\frac{z\mp a}{\nu})|\lesssim \sum_\pm  \sum_{i=0}^2 \frac{\nu^{3-\epsilon}}{(\nu+|z\mp a|)^{3-\epsilon-i}} \lesssim \sum_\pm \frac{\nu^{3-\epsilon}}{(\nu+|z\pm a|)^{3-\epsilon}}\langle z \rangle^C.
\end{equation}
and similarly
\begin{equation} \label{multisoliton:bd:PhiXi-2}
|\tilde{\mathfrak V} |\lesssim \sum_\pm \frac{\nu^{3-\epsilon}}{(\nu+|z\pm a|)^{1-\epsilon}}\langle z \rangle^C.
\end{equation}
Injecting \eqref{multisoliton:bd:PhiXi-2} and $|U|\lesssim \langle y \rangle^{-4}$ in \eqref{multisoliton:id:elliptic-mathfrakV}, then using $\Phi_f=-(2\pi)^{-1}\log |\cdot|*f$ we find
\begin{equation} \label{multisoliton:bd:PhiXi-3}
 \bar{\mathfrak V}= \Phi_{ \left( (\frac{1}{2}\Lambda U-8L_0\nu \partial_{x_1}U)_{1,\nu}+(\frac{1}{2}\Lambda U+8L_0\nu \partial_{x_1}U)_{2,\nu} \right)(\chi^*-1)}+2\Phi_{\nabla \tilde{\mathfrak V}.\nabla \chi^*}+\Phi_{\tilde{\mathfrak V} \Delta \chi^*}=\Oc(\nu^2 \langle z \rangle)
\end{equation}

Injecting \eqref{multisoliton:bd:Xi-1}, \eqref{multisoliton:bd:PhiXi-1}, \eqref{multisoliton:bd:Xi-2}, \eqref{multisoliton:bd:PhiXi-2} and \eqref{multisoliton:bd:PhiXi-3} in \eqref{multisoliton:id:decomposition-Xi-1} and \eqref{multisoliton:id:decomposition-PhiXi-1} shows the first inequality in \eqref{est:pointwise_Xi} and \eqref{est:PhiXi} for $k=0$. The estimates for general $k\geq 1$ follow similarly as all estimates above adapt to higher order derivatives. The $\nu\partial_\nu$ and $\partial_a$ also follow similarly. This shows \eqref{est:pointwise_Xi}, \eqref{est:pointwise_Xi_dk} and \eqref{est:PhiXi}.

\medskip

\noindent \textbf{Step 2}. \emph{Proof of \eqref{def:mumuprime}}. We define $\mu'$ by \eqref{multisoliton:id:decomposition-mu'} with $\mu''=-8L_0(\frac{\alpha}{4}+\beta)$ as in Lemma \ref{lem:profile_mathfrakTk}. We have
$$
\Phi_{(\frac 12 \Lambda U +8L_0 \nu \partial_{x_1}U)_{2,\nu}}(a)=(\nabla (\frac 12 y.\nabla \Phi_U+8L_0\nu \partial_{x_1}\Phi_U)(\frac{2a}{\nu})=\Oc(\nu^2).
$$
We have $\bar{\mathfrak V}(a)=\Oc(\nu^{2})$ by \eqref{multisoliton:bd:PhiXi-3}, and using the estimates of Lemma \ref{lem:profile_mathfrakTk} with the bound $\langle\log \langle y \rangle \rangle =\Oc(\langle y \rangle^{\epsilon})$ for any $0<\epsilon\ll1$ arbitrarily small we have
$$
\sum_{i=0}^2\nu^{i+1}\nabla \mathfrak V_{i+2,-}(\frac{2a}{\nu}) =\Oc(\nu^{3-\epsilon}).
$$
Injecting the above estimates in \eqref{multisoliton:id:decomposition-mu'}, we obtain that $\mu'= \frac{8a}{\nu^2+4|a|^2}-\beta a+\Oc(\nu^2)$ as desired.

\medskip

\noindent \textbf{Step 3}. \emph{Proof of \eqref{est:Xi_localmass}}. We have by the decomposition \eqref{multisoliton:id:decomposition-Xi->2}:
\begin{align}
\label{multisoliton:id:decomposition-local-mass-Xi}\int_{|z \mp a| \leq \zeta_*} \nu \pa_\nu \Xi(z)dz & =  \nu \pa_\nu \int_{|z \mp a| \leq \zeta_*}\tilde{\mathfrak T}_{2,\pm}(\frac{\cdot \mp a}{\nu})+ \nu \pa_\nu \int_{|z \mp a| \leq \zeta_*}\tilde{\mathfrak T}_{2,\mp}(\frac{\cdot \pm a}{\nu})     +\nu \partial_\nu  \int_{|z \mp a| \leq \zeta_*}\tilde \Xi 
\end{align}
Using the decomposition \eqref{multisoliton:id:decomposition-tildemathfrakT2} and decomposing onto spherical harmonics, and then performing an explicit computation by changing variables $y=\frac{z\pm a}{\nu}$ and using \eqref{multisoliton:bd:T20}, \eqref{multisoliton:id:deftildeT20}, \eqref{eigenfunction:id:asymptotichatT2infty}, \eqref{eigenfunction:id:asymptoticT20infty0} and $\tilde \lambda_0=\Oc(|\ln \nu|^{-1})$ we find
$$
\int_{|z \mp a| \leq \zeta_*}\tilde{\mathfrak T}_{2,\pm}(\frac{\cdot \mp a}{\nu})dz=\int_{|z \mp a| \leq \zeta_*} (\mathfrak T_{2,0}-\frac 12 \tilde T_{2,0})(\frac{\cdot \mp a}{\nu})dz =\Oc(\nu^2).
$$
Using the decomposition \eqref{multisoliton:id:decomposition-tildemathfrakT2} we find that
$$
\int_{|z \mp a| \leq \zeta_*}\tilde{\mathfrak T}_{2,\mp}(\frac{\cdot \pm a}{\nu})  =\Oc(\nu^2).
$$
Using \eqref{multisoliton:bd:Xi-2} we have 
$$
 \int_{|z \mp a| \leq \zeta_*}\tilde \Xi =\Oc(\nu^2)
$$
Injecting the above estimates in \eqref{multisoliton:id:decomposition-local-mass-Xi} shows \eqref{est:Xi_localmass} as desired.

\medskip

\noindent \textbf{Step 4}. \emph{Estimate for the error $\bar \Psi_0$ generated by the first terms in the correction}. We estimate all terms in the right-hand side of \eqref{multisoliton:id:def-barPsi0}. For the first line we use that for $|y|\ll \nu^{-1}$ there hold $|\tilde G_+|\lesssim \nu^5 \langle y \rangle^4$ from Lemma \ref{lem:taylorpoissonfield} and $U+\frac 12 \Lambda U-8L_0\nu \partial_{x_1}U=\Oc(\langle y \rangle^{-6})$ to infer that for $z$ close to $a$,
$$
 \frac{1}{\nu^4}(\tilde G_+.\nabla (U+\frac{1}{2}\Lambda U-8L_0\nu \partial_{x_1}U)(\frac{\cdot-a}{\nu})=\Oc( \frac{\nu^4}{(\nu+|z-a|)^3}).
$$
For $|z-a|\gtrsim 1$ we have $ \nabla (U+\frac{1}{2}\Lambda U-8L_0\nu \partial_{x_1}U)(\frac{\cdot-a}{\nu})=\Oc(\nu^7 |z-a|^{-6})=\Oc(\nu^7 \langle z \rangle^{-6})$, and $\tilde G_+(\frac{\cdot-a}{\nu})=\Oc(\frac{\nu}{\nu+|z+a|})+\Oc(\nu |z-a|^3)=\Oc(\frac{\nu}{\nu+|z+a|})+\Oc(\nu \langle z \rangle^3)$ by \eqref{id:expansion:PhiU-other-bubble} and $\nabla \Phi_U(y)=\Oc(\langle y \rangle^{-1})$. Hence 
$$
 \frac{1}{\nu^4}(\tilde G_+.\nabla (U+\frac{1}{2}\Lambda U)(\frac{\cdot-a}{\nu})=\Oc(\frac{\nu^8\langle z \rangle^{-6}}{\nu+|z+a|})+\Oc(\nu^4\langle z \rangle^{-3})=\Oc(\frac{\nu^4\langle z \rangle^{-2}}{\nu+|z+a|}).
$$
Combining, we obtain
\begin{equation} \label{multisoliton:bd:estimate-bar-Psi0-line1} 
 \frac{1}{\nu^4}(\tilde G_+.\nabla (U+\frac{1}{2}\Lambda U-8L_0\nu \partial_{x_1}U)(\frac{\cdot-a}{\nu})=\Oc( \frac{\nu^4\langle z\rangle}{(\nu+|z+a|)(\nu+|z-a|)^3})
\end{equation}
For the second line, we use Lemma \ref{lem:taylorpoissonfield} and estimate
$$
\Phi_{U_{2,\nu}}(a)+\alpha a-\nu^2 (\gamma_1,0)=\frac{1}{\nu}(-\alpha a \nu+\alpha^{\frac 32}\nu^3 (\frac{1}{4\sqrt{2}},0)+\Oc(\nu^5))+\alpha a-\nu^2 (\frac{\alpha^{\frac 32}}{4\sqrt{2}},0)=\Oc(\nu^4).
$$
Using $ \nabla (U+\frac{1}{2}\Lambda U-8L_0\nu \partial_{x_1}U)(y)=\Oc(\langle y \rangle^{-7})+\Oc(\nu \langle y \rangle^{-6})$ this implies
\begin{equation} \label{multisoliton:bd:estimate-bar-Psi0-line2} 
\frac{1}{\nu^3}[\Phi_{U_{2,\nu}}(a)+\alpha a-\nu^2(\gamma_1,0)].\nabla (U+\frac{1}{2}\Lambda U-8L_0\nu \partial_{x_1}U)(\frac{\cdot-a}{\nu}) =\Oc(\frac{\nu^8\langle z \rangle}{\nu^7+|z-a|^7}).
\end{equation}
For the third line, we use $G_{3,+}=\Oc(\langle y \rangle^3)$ from Lemma \ref{lem:taylorpoissonfield} and $\nabla \partial_{x_1}U=\Oc(\langle y \rangle^{-6})$ so that
\begin{equation} \label{multisoliton:bd:estimate-bar-Psi0-line3}
8L_0\alpha^2 \nu G_{3,+}.\nabla \partial_{x_1}U (\frac{\cdot-a}{\nu})=\Oc(\frac{\nu^4}{(\nu+|z-a|)^3}).
\end{equation}
For the fourth line, we recall the decomposition \eqref{multisoliton:id:decomposition-produits} and estimate all terms in the right-hand side of this decomposition. For the first line, using $2U+\Lambda U=\Oc(\langle y \rangle^{-6})$ we have $ (2U+\Lambda U)_{2,\nu}(a)= (2U+\Lambda U)_{1,\nu}(-a)=\Oc(\nu^4)$ so that since $U=\Oc(\langle y \rangle^{-4})$ we have
\begin{equation} \label{multisoliton:bd:decomposition-produits-tech1}
 (2U+\Lambda U)_{2,\nu}(a)U_{1,\nu}+ (2U+\Lambda U)_{1,\nu}(-a)U_{2,\nu}=\Oc(\frac{\nu^6}{\nu^4+|z-a|^4})+\Oc(\frac{\nu^6}{\nu^4+|z+a|^4}).
\end{equation}
For the second and third line, we have for $z$ close to $-a$ that $(U_{1,\nu}-U_{1,\nu}(-a))=\Oc(\nu^2|z+a|)$ by Taylor expansion, and for $z$ away from $-a$ that $(U_{1,\nu}-U_{1,\nu}(-a))=\Oc(\frac{\nu^2}{(\nu+|z-a|)^4}\langle z \rangle^4)$. This implies $(U_{1,\nu}-U_{1,\nu}(-a))=\Oc(\frac{\nu^2|z+a|}{(\nu+|z-a|)^4}\langle z \rangle^3)$ for $z\in \mathbb R^2$. Analogue estimates can be showed for $(U_{2,\nu}-U_{2,\nu}(a))$, $ (\Lambda U)_{1,\nu}-(\Lambda U)_{1,\nu}(-a)$ and $ (\Lambda U)_{2,\nu}-(\Lambda U)_{2,\nu}(a)$, producing:
\begin{align}
\nonumber & 2(U_{1,\nu}-U_{1,\nu}(-a))(U_{2,\nu}-U_{2,\nu}(a))+ (U_{2,\nu}-U_{2,\nu}(a))( (\Lambda U)_{1,\nu}-(\Lambda U)_{1,\nu}(-a))\\
\label{multisoliton:bd:decomposition-produits-tech2} &\qquad \qquad + (U_{1,\nu}-U_{1,\nu}(-a))( (\Lambda U)_{2,\nu}-(\Lambda U)_{2,\nu}(a)) \ = \Oc(\frac{\nu^4 \langle z \rangle^6}{(\nu+|z-a|)^3(\nu+|z+a|)^3}).
\end{align}
Finally, since $U_{1,\nu}(-a),U_{2,\nu}(a) , (\Lambda U)_{1,\nu}(-a),(\Lambda U)_{2,\nu}(a) =\Oc(\nu^2)$ we have
\begin{equation} \label{multisoliton:bd:decomposition-produits-tech3}
-2U_{1,\nu}(-a)U_{2,\nu}(a)-U_{2,\nu}(a)(\Lambda U)_{1,\nu}(-a)-U_{1,\nu}(-a)(\Lambda U)_{2,\nu}(a)= \Oc(\nu^4).
\end{equation}
Injecting \eqref{multisoliton:bd:decomposition-produits-tech1}, \eqref{multisoliton:bd:decomposition-produits-tech2} and \eqref{multisoliton:bd:decomposition-produits-tech3} in \eqref{multisoliton:id:decomposition-produits} shows
\begin{align}
\label{multisoliton:bd:estimate-bar-Psi0-line4} & 2U_{1,\nu}U_{2,\nu}+U_{2,\nu}(\Lambda U_{1,\nu}+U_{1,\nu}(\Lambda U)_{2,\nu} -U_{2,\nu}(a)(\Lambda U)_{1,\nu} -U_{1,\nu}(-a)(\Lambda U)_{2,\nu}\\
\nonumber & = \Oc(\frac{\nu^4 \langle z \rangle^6}{(\nu+|z-a|)^3(\nu+|z+a|)^3}).
\end{align}
For the fifth line, we use \eqref{multisoliton:bd:expansion-nabla-Phi-tech-estimate} and $\nabla U_{1,\nu}=\Oc(\frac{\nu^2}{(\nu+|z-a|)^5})$ to get
\begin{align} \label{multisoliton:bd:estimate-bar-Psi0-line5}
& \nabla U_{1,\nu}.\left(\nabla \Phi_{(\frac 12\Lambda U+8L_0\nu \partial_{x_1}U)_{2,\nu}}(z)-\nabla \Phi_{(\frac 12 \Lambda U+8L_0\nu \partial_{x_1}U)_{2,\nu}}(a)+\frac{\alpha^2}{8}\nu^2 (-3(z_1-|a|),z_2)\right)\\
\nonumber & \qquad \qquad \qquad \qquad =\Oc(\frac{\nu^4}{(\nu+|z+a|)^3(\nu+|z-a|)^3}\langle z \rangle^2).
\end{align}
For the seventh line, using $U_{2,\nu}(z)=\Oc(\frac{\nu^2}{(\nu+|z+a|)^4})$ and $(\partial_{x_1}U)_{1,\nu}(z)=\Oc(\frac{\nu^3}{(\nu+|z-a|)^5})$ we have
\begin{align} \label{multisoliton:bd:estimate-bar-Psi0-line7}
16L_0\nu U_{2,\nu}(\partial_{x_1}U)_{1,\nu}=\Oc(\frac{\nu^6}{(\nu+|z-a|)^5(\nu+|z+a|)^4}).
\end{align}
For the eighth line we have that $|\nabla^k\bar{\mathfrak V}_2(z)|\lesssim \nu^{3-\epsilon}$ for $z$ close to $a$ because of \eqref{multisoliton:id:elliptic-mathfrakV-2}, \eqref{multisoliton:bd:PhiXi-2} and elliptic regularity. Hence $|\nabla \bar{\mathfrak V}_2(z)-\nabla \bar{\mathfrak V}_2(a)|\lesssim \nu^{3-\epsilon}|z-a|$. Combining with \eqref{multisoliton:bd:expansion-nabla-barV1-estimate} and \eqref{multisoliton:bd:PhiXi-2} this shows
$$
\left|\nabla \bar{\mathfrak V}(z)-\nabla \bar{\mathfrak V}(a)-\nabla^2 \bar{V}_1(a)(z-a)+\nabla((\chi^*-1)\tilde {\mathcal V})\right|\lesssim \nu^2 (\nu+|z-a|)^{2-\epsilon}.
$$
Using that $|\nabla U_{1,\nu}|\lesssim \nu^{2}(\nu+|z-a|)^{-5}$ this implies
\begin{align} \label{multisoliton:bd:estimate-bar-Psi0-line8}
\left| \left(\nabla \bar{\mathfrak V}(z)-\nabla \bar{\mathfrak V}(a)-\nabla^2 \bar{V}_1(a)(z-a)+\nabla((\chi^*-1)\tilde {\mathcal V})\right).\nabla U_{1,\nu}\right|\lesssim \frac{\nu^4}{(\nu+|z-a|)^{3-\epsilon}}.
\end{align}
Injecting \eqref{multisoliton:bd:estimate-bar-Psi0-line1}, \eqref{multisoliton:bd:estimate-bar-Psi0-line2}, \eqref{multisoliton:bd:estimate-bar-Psi0-line3}, \eqref{multisoliton:bd:estimate-bar-Psi0-line4}, \eqref{multisoliton:bd:estimate-bar-Psi0-line5} and  \eqref{multisoliton:bd:estimate-bar-Psi0-line7} and their analogue estimates for the symmetric terms, and \eqref{est:pointwise_Ri}, in \eqref{multisoliton:id:def-barPsi0} shows
\begin{equation} \label{multisoliton:bd:estimate-barPsi00-pointwise}
|\bar \Psi_0^{(0)}|\lesssim  \frac{\nu^2}{|\ln \nu|^2} \sum_{\pm} \frac{1  }{(\nu + |z \pm a|)^{2+k}} \langle z \rangle^{C} 
\end{equation}

\noindent \textbf{Step 5}. \emph{Estimate for the error term $\bar \Psi_0^{(1)}$ due to higher order linear terms for higher order corrections}. We estimate the right-hand side of \eqref{multisoliton:id:def-barPsi2}. For the first term, we have by \eqref{multisoliton:bd:T2} and \eqref{multisoliton:bd:estimate-bar-G>i} that
\begin{equation} \label{multisoliton:bd:error-correction-1}
\left| \frac{1}{\nu}\nabla \mathfrak T_{2,\pm}(\frac{\cdot\mp a}{\nu}).\bar G_{>1,\pm}(z) \right|\lesssim \frac{1}{\nu}\langle \frac{z\mp a}{\nu}\rangle^{-4}(\nu+|z\mp a|)^2(\nu+|z\pm a|)^{-1}\lesssim \frac{\nu^3}{(\nu+|z\mp a|)^2(\nu+|z\pm a|)}.
\end{equation}
Next, the estimates \eqref{multisoliton:bd:T2}, \eqref{multisoliton:bd:T3} and \eqref{multisoliton:bd:T4} imply
\begin{equation}  \label{multisoliton:bd:Ti}
 |\nabla^k \mathfrak T_{2+i}(y)|\lesssim \langle y \rangle^{-3+i-k}, \qquad  |\nabla^k \mathfrak V_{2+i}(y)|\lesssim \langle y \rangle^{-1+i-k}\langle \ln \langle y \rangle\rangle
\end{equation}
for $i=0,1,2$. The above estimate for $i=1$ is weaker than \eqref{multisoliton:bd:T3} but this will be harmless. For the second and third terms, we have by \eqref{multisoliton:bd:Ti}, \eqref{multisoliton:bd:estimate-bar-G>i} and $|\frac{\alpha-\beta}{\nu}|\lesssim 1$ that for $i=1,2$,
\begin{align}
\nonumber & \left| \pm \nu^i \frac{\alpha-\beta}{\nu}a.\nabla \mathfrak T_{i+2,\pm}(\frac{\cdot\mp a}{\nu}) -\nu^i \beta \Lambda \mathfrak T_{i+2,\pm} (\frac{\cdot\mp a}{\nu})  -\nu^{i-1}\nabla \mathfrak T_{i+2,\pm}(\frac{\cdot\mp a}{\nu}).\bar G_{>0,\pm}(z)\right|\\
 \label{multisoliton:bd:error-correction-2}&\lesssim  \nu^i \langle \frac{z\mp a}{\nu} \rangle^{-5+i}+\nu^i \langle \frac{z\mp a}{\nu}\rangle^{-4+i}+\nu^{i-1}\langle \frac{z\mp a}{\nu} \rangle^{-5+i}\frac{\nu+|z\mp a|}{\nu+|z\pm a|} \ \lesssim \frac{\nu^4 \langle z \rangle^2}{(\nu+|z\mp a|)^3(\nu+|z\pm a|)}.
\end{align}
For the fourth term, we have using \eqref{multisoliton:bd:Ti} for $i=0,1,2$, and $\langle \ln \langle y \rangle \rangle\lesssim \langle y \rangle^{\epsilon}$ for any $\epsilon>0$, that $|\nabla^{1+k} \mathfrak V_{i+2,\pm}(\frac{z\mp a}{\nu})|\lesssim \frac{\nu^{2-i-k-\epsilon}}{(\nu+|z\mp a|)^{2-i-\epsilon}}$. Hence by a Taylor expansion we deduce that for $z$ close to $\mp a$ we have $|\mathfrak V_{i+2,\pm}(\frac{z\mp a}{\nu})-\mathfrak V_{i+2,\pm}(\frac{\mp 2a}{\nu})|\lesssim \nu^{2-i-\epsilon}|z\mp a|$. Combining, we obtain
$$
|\mathfrak V_{i+2,\pm}(\frac{z\mp a}{\nu})-\mathfrak V_{i+2,\pm}(\frac{\mp 2a}{\nu})|\lesssim \frac{\nu^{2-i-\epsilon}}{(\nu+|z\mp a|)^{2-i-\epsilon}}|z\pm a|\langle z \rangle^{1-i}
$$
for $z\in \mathbb R^2$. Using this inequality, \eqref{multisoliton:bd:Ti} and $|U|\lesssim \langle y \rangle^{-4}$ we infer
\begin{align}
\nonumber  & \left|-\nu^{i-2}\nabla U(\frac{\cdot \pm a}{\nu}).\left( \nabla \mathfrak V_{i+2,\pm}(\frac{\cdot\mp a}{\nu})-\nabla \mathfrak V_{i+2,\pm}(\frac{\mp 2 a}{\nu})\right)+2\nu^{i-2}\mathfrak T_{i+2,\pm}(\frac{\cdot \mp a}{\nu})U(\frac{\cdot \pm a}{\nu})\right| \\
\nonumber & \lesssim  \nu^{i-2}\langle \frac{z\mp a}{\nu}\rangle^{-5}\frac{\nu^{2-i-\epsilon}}{(\nu+|z\mp a|)^{2-i-\epsilon}}|z\pm a|\langle z \rangle^{1-i}+\nu^{i-2}\langle \frac{z\mp a}{\nu}\rangle^{-3+i}\langle \frac{z\pm a}{\nu}\rangle^{-4}\\
\label{multisoliton:bd:error-correction-3} &\lesssim \frac{\nu^{5-\epsilon}}{(\nu+|z\mp a|)^3 (\nu+|z\pm a|)^4}\langle z \rangle^2
\end{align}
Combining \eqref{multisoliton:bd:error-correction-1}, \eqref{multisoliton:bd:error-correction-2} and \eqref{multisoliton:bd:error-correction-3} shows
\begin{align}
\nonumber |\bar \Psi_0^{(1)}|&   \lesssim  \sum_{\pm}\left( \frac{\nu^3}{(\nu+|z\mp a|)^2(\nu+|z\pm a|)}+ \frac{\nu^4 \langle z \rangle}{(\nu+|z\mp a|)^3(\nu+|z\pm a|)}+ \frac{\nu^5\langle z \rangle^2}{(\nu+|z\mp a|)^3(\nu+|z\pm a|^4)} \right)\\
\label{multisoliton:bd:estimate-barPsi01-pointwise} &\lesssim \frac{\nu^{3-\epsilon}\langle  z \rangle}{(\nu+|z-a|)^2(\nu+|z+a|)^2}.
\end{align}

\noindent \textbf{Step 6}. \emph{Estimate for the error term $\bar \Psi_0^{(2)}$ due to nonlinear terms generated by the correction}. We estimate all terms in \eqref{multisoliton:id:def-barPsi2}. By \eqref{multisoliton:id:decomposition-tildemathfrakT2} and Lemmas \ref{lem:T20hatT2}, \ref{lem:T22} and \ref{lem:profile_mathfrakTk} we have
\begin{align}
\label{multisoliton:bd:estimqte-tildemathfrakT2}& |\nabla^k \tilde{\mathfrak T}_{2,\pm}(y)|\lesssim |\nabla^k \mathfrak T_{2,\pm}(y)|+|\nabla^k \tilde{ T}_{2,\pm}(y)|\lesssim \langle y \rangle^{-3-k}+\langle y \rangle^{-2-k}\lesssim \langle y \rangle^{-2-k},\\
\label{multisoliton:bd:estimqte-tildemathfrakV2}& |\nabla^k \tilde{\mathfrak V}_{2,\pm}(y)|\lesssim |\nabla^k \mathfrak V_{2,\pm}(y)|+|\nabla^k \tilde{ V}_{2,\pm}(y)|\lesssim \langle y \rangle^{-3-k}+\langle y \rangle^{-k}\langle \ln \langle y \rangle \rangle\lesssim \langle y \rangle^{\epsilon-k},
\end{align}
for any $0<\epsilon\ll1$ arbitrarily small. By \eqref{multisoliton:id:decomposition-tildeXi>2}, \eqref{multisoliton:bd:tildephii>2} and Lemma \ref{lem:profile_mathfrakTk} using $\langle \ln \langle y \rangle \rangle\lesssim \langle y \rangle^\epsilon$ for any $0<\epsilon\ll1$ we have
\begin{align}
\nonumber |\nabla^k \tilde{\Xi}_{>2}(z)| & \lesssim \nu^2 |\nabla^k \tilde \phi_{0,>2}(z)|+ \sum_\pm \sum_{j=3}^4 \nu^{j-2-k}|(\nabla^k\mathfrak T_{j,\pm})(\frac{z\mp a}{\nu})|\\
\label{multisoliton:bd:estimqte-tildeXi>2}&\qquad \lesssim \sum_\pm \frac{\nu^2\langle z \rangle^{C_k}}{(\nu+|z\mp a|)^{1+k}}+ \sum_\pm \sum_{j=3}^4 \frac{\nu^{3-\epsilon}}{(\nu+|z\mp a|)^{5-\epsilon+k-j}} \lesssim \sum_\pm \frac{\nu^2\langle z \rangle^{C_k}}{(\nu+|z\mp a|)^{1+k}}.
\end{align}
By \eqref{multisoliton:id:decomposition-tildeV>2}, \eqref{multisoliton:bd:tildeV>2}, Lemma \ref{lem:profile_mathfrakTk} with $\langle \ln \langle y \rangle \rangle\lesssim \langle y \rangle^\epsilon$ and \eqref{multisoliton:bd:estimqte-tildemathfrakV2} we have for $k\geq 1$,
\begin{align}
\nonumber |\nabla^k \tilde{\mathfrak V}_{>2}(z)| & \lesssim \nu^2 |\nabla^k \tilde V_{0,>2}|+|\nabla^k(\chi^* \sum_\pm \sum_{j=3}^4  \nu^{j}\mathfrak V_{j,\pm}(\frac{\cdot \mp a}{\nu}))|+|\nabla^k((\chi^*-1) \sum_\pm \nu^2 \tilde{\mathfrak V}_{2,\pm}(\frac{\cdot \mp a}{\nu}))|\\
\label{multisoliton:bd:estimqte-tildemathfrakV>2} & \lesssim \sum_{\pm}\frac{\nu^2 |\ln \nu|}{(\nu+|z\pm a|)^{k-1}}+ \sum_\pm \sum_{j=3}^4 \frac{\nu^{3-\epsilon}}{(\nu+|z\mp a|)^{3-\epsilon+k-j}} +\nu^{2-\epsilon} \ \lesssim  \sum_{\pm}\frac{\nu^{2-\epsilon}}{(\nu+|z\pm a|)^{k-1}}.
\end{align}
Using \eqref{multisoliton:bd:estimqte-tildemathfrakT2}, \eqref{multisoliton:bd:estimqte-tildemathfrakV2}, \eqref{multisoliton:bd:estimqte-tildemathfrakV>2} and \eqref{multisoliton:bd:PhiXi-3} we infer that for the first term in \eqref{multisoliton:id:def-barPsi2}:
\begin{align}
\nonumber & |\frac{1}{\nu}\nabla  \tilde{\mathfrak T}_{2,\pm}(\frac{\cdot \mp a}{\nu}).(\nu \nabla \tilde{\mathfrak V}_{2,\mp}(\frac{\cdot\pm a}{\nu})+\nabla \tilde{\mathfrak V}_{>2}+\nabla \bar{\mathfrak V})|\\
\label{multisoliton:bd:estimqte-nonlinear-particular} &\qquad \lesssim\frac{\nu^2}{(\nu+|z\mp a|)^3}\left(\frac{\nu^{2-\epsilon}}{\nu+|z\pm a|}+\nu^{2-\epsilon}+\nu^2\right)\lesssim \frac{\nu^{4-\epsilon}}{(\nu+|z\mp a|)^3(\nu+|z\pm a|)}
\end{align}
All other terms in \eqref{multisoliton:id:def-barPsi2} can be estimated by similar computations to the one establishing \eqref{multisoliton:bd:estimqte-nonlinear-particular}, using \eqref{multisoliton:bd:estimqte-tildemathfrakT2}, \eqref{multisoliton:bd:estimqte-tildemathfrakV2}, \eqref{multisoliton:bd:estimqte-tildeXi>2}, \eqref{multisoliton:bd:estimqte-tildemathfrakV>2} and \eqref{multisoliton:bd:PhiXi-3}, producing eventually the following estimate:
\begin{align} 
\label{multisoliton:bd:estimate-barPsi02-pointwise} |\bar \Psi_0^{(2)}| &\lesssim \sum_\pm  \frac{\nu^{4-\epsilon}\langle z \rangle^C}{(\nu+|z\mp a|)^3}.
\end{align}

\medskip

\noindent \textbf{Step 7}. \emph{Proof of \eqref{multisoliton:bd:estimate-Psi0-pointwise}, \eqref{est:Psi_L2omega} and \eqref{est:Psi0mass}}. The bound \eqref{multisoliton:bd:estimate-Psi0-pointwise} follows directly from injecting \eqref{multisoliton:bd:estimate-barPsi00-pointwise}, \eqref{multisoliton:bd:estimate-barPsi01-pointwise} and \eqref{multisoliton:bd:estimate-barPsi02-pointwise} in \eqref{multisoliton:id:Psi0-decomposition-2nd-ansatz}. Then, the bounds \eqref{est:Psi_L2omega} and \eqref{est:Psi0mass} follow directly from \eqref{multisoliton:bd:estimate-Psi0-pointwise}.

\medskip

\noindent \textbf{Step 8}. \emph{Proof of \eqref{multisoliton:id:nupartialnu-mathfrakU-identity}}. We have by \eqref{multisoliton:id:decomposition-Xi-1} and \eqref{def:phii}:
$$
\mathfrak U=U_{1+2,\nu}+8\left(\nu^2 \tilde \phi_0(z)+ \sum_\pm \left(\frac{1}{16\nu^2}\Lambda U\mp\frac{L_0}{\nu}\partial_{x_1}U\right)(\frac{z\mp a}{\nu})(1-\chi_{\pm a,\zeta_*}(z)) \right)+\tilde \Xi.
$$
Hence we have the identity
\begin{align*}
\nu\partial_\nu \mathfrak U & =-(\Lambda U)_{1,\nu}-(\Lambda U)_{2,\nu}+16\nu^2 \tilde \phi_0\\
& +\frac{8}{\nu^2}\left( \sum_\pm \left(\frac{-1}{16}\Lambda^2 U\pm L_0\nu (y.\nabla+1)( \partial_{x_1}U)\right)(\frac{z\mp a}{\nu})(1-\chi_{\pm a,\zeta_*}(z)) \right) +8\nu^3\partial_\nu \tilde \phi_0(z)+\nu\partial_\nu \tilde \Xi.
\end{align*}
We have by \eqref{def:phii} that
\begin{align*}
& -(\Lambda U)_{1,\nu}-(\Lambda U)_{2,\nu}+16\nu^2 \tilde \phi_0 \\
&= 16 \phi_0-\nu^2(L_0,0).(\nabla U_{1,\nu}-\nabla U_{2,\nu})+\sum_\pm \left(\frac{1}{\nu^2}\Lambda U \mp \frac{16 L_0}{\nu}\partial_{x_1}U\right)(\frac{z\mp a}{\nu})(\chi_{\pm a,\zeta_*}-1).
\end{align*}
Therefore,
\begin{align}
\label{multisoliton:id:nupartialnumathfrakU-inter1}\nu\partial_\nu \mathfrak U & =16 \phi_0-\nu^2(L_0,0).(\nabla U_{1,\nu}-\nabla U_{2,\nu})+8\nu^3\partial_\nu \tilde \phi_0(z)+\nu\partial_\nu \tilde \Xi\\
\nonumber & \qquad + \sum_\pm \left(\frac{1}{\nu^2}(\Lambda^2 U+\frac 12 \Lambda^2 U)\mp \frac{8L_0}{\nu} (y.\nabla+3)( \partial_{x_1}U)\right)(\frac{z\mp a}{\nu}) (\chi_{\pm a,\zeta_*}(z)-1) .
\end{align}
We have using \eqref{multisoliton:id:def-tildeXi} and Lemma \ref{lem:profile_mathfrakTk} with $\langle \log \langle y \rangle \rangle \lesssim \langle y \rangle^{\epsilon}$ for any $0<\epsilon\ll1$ arbitrarily small that
\begin{align}
\nonumber |\nabla^k \nu \partial_\nu \tilde \Xi| & \lesssim \sum_\pm \sum_{i=0}^2 \nu^i\left|\nabla^k\left( (i\mathfrak T_{i+2,\pm}-y.\nabla \mathfrak T_{i+2,\pm})(\frac{z\mp a}{\nu})\right) \right|\\
\label{multisoliton:id:nupartialnumathfrakU-inter2}& \lesssim \sum_\pm \frac{\nu^{3-\epsilon}}{(\nu+|z\mp a|)^{3-\epsilon-i+k}}\lesssim \sum_\pm \frac{\nu^{3-\epsilon}\langle z \rangle^{C_k}}{(\nu+|z\mp a|)^{3-\epsilon+k}}.
\end{align}
We have using the tail cancellation $\Lambda U(y)+\frac 12 \Lambda U(y)=\Oc(\langle y \rangle^{-6})$, that $\partial_{x_1}U(y)=\Oc(\langle y \rangle^{-5})$ and that $\chi_{\pm a,\zeta_*}-1$ is supported inside $\{|z\mp a|\geq \zeta_*\}$ that
\begin{equation} \label{multisoliton:id:nupartialnumathfrakU-inter3}
\left|\nabla^k \left( \sum_\pm \left(\frac{1}{\nu^2}(\Lambda^2 U+\frac 12 \Lambda^2 U)\mp \frac{8L_0}{\nu} (y.\nabla+3)( \partial_{x_1}U)\right)(\frac{z\mp a}{\nu}) (\chi_{\pm a,\zeta_*}(z)-1) \right)\right|\lesssim \nu^4 \langle z \rangle^{-5-k}.
\end{equation}
Injecting the second inequality in \eqref{est:pointwise_phiitil}, \eqref{multisoliton:id:nupartialnumathfrakU-inter2} and \eqref{multisoliton:id:nupartialnumathfrakU-inter3} in \eqref{multisoliton:id:nupartialnumathfrakU-inter1} shows the desired identity \eqref{multisoliton:id:nupartialnu-mathfrakU-identity}.

\end{proof}

\section{Approximate blowup solution and bootstrap regime} \label{sec:bootstrap}

\subsection{Full approximate blowup solution}\label{subsec:full-approx-sol}
We now define the approximate solution to the self-similar equation \eqref{eq:wztauIntro} in the sense that the generated error term is sufficiently small in $L^2_{\omega_\nu}$ after extracting the leading order terms. It happens that the leading order terms are linked to the approximate eigenfunctions constructed in Proposition \ref{prop:eigen}, from which we can derive appropriate modulation equations driving the law of blowup.  

\begin{proposition}[Approximate blowup solution] \label{prop:E} 
Assume that $(\nu, a, \alpha)$ are $\Cc^1$ maps 
$$(\nu, a, \alpha): [\tau_0, \tau_1) \mapsto (0, \nu^*] \times \big[2 - a^*, 2 + a^*\big] \times (0, \alpha^*], $$
for $0 < \nu^*, a^*, \alpha^* \ll 1$ and the a priori bounds: 
\begin{align} \label{est:aprioribounds_nu_a_alpha}
\Big| \frac{\nu_\tau}{\nu} \Big| \lesssim \frac{1}{|\ln \nu|}, \quad \quad |a_\tau| \lesssim \nu^2,  \quad |\alpha| \lesssim \nu^2, \quad |\alpha_\tau| \lesssim \frac{\nu^2}{|\ln \nu|}.  
\end{align}
We define an approximate solution to \eqref{eq:wztauIntro} by
\begin{equation}\label{def:Wap}
W[\nu, a, \alpha] = \Uk[\nu,a]+  \alpha \Big[ \phi_{1} -   \phi_{0} - \frac{L_1 - L_0}{\nu} \big(\pa_1 U_{1,\nu} \chi_{a,\zeta_*, a} - \pa_1 U_{2,\nu} \chi_{-a,\zeta_*} \big) \Big]  = U_{1+2, \nu} + \tilde{W}, 
\end{equation}
where $\Uk = U_{1+2, \nu} + \Xi$ is the improved stationary solution introduced in \eqref{def:ImprovedSS},  $\phi_{0}, \phi_{1}$ are the two approximate eigenfunctions described in Proposition \ref{prop:eigen} and 
\begin{equation}\label{def:Wtilde}
\tilde{W} = \Xi + \alpha H, \quad H =  \phi_{1} -   \phi_{0} - \frac{L_1 - L_0}{\nu} \big(\pa_1 U_{1,\nu} \chi_{a,\zeta_*} - \pa_1 U_{2,\nu} \chi_{-a,\zeta_*} \big).
\end{equation}
The error generated by $W$ to \eqref{eq:wztauIntro} is defined by 
\begin{align} \label{def:Eapp}
E = - \partial_\tau W  + \Delta W  - \nabla \cdot (W  \nabla \Phi_{U_{1+2,\nu} + \tilde{W}\chi^*}) - \beta \Lambda W ,
\end{align}
that can be decomposed as 
\begin{align}
E(z,\tau)&= \textup{Mod}_0(\tau) \; \phi_0(z) + \textup{Mod}_1(\tau) \; \phi_1(z) +\textup{Mod}_a(\tau) \cdot \big(\nabla U_{1,\nu} - \nabla U_{2,\nu} \big)  +  \Psi(z, \tau), \label{def:EztauMod}
\end{align}
where 
\begin{align}
\textup{Mod}_0 & = -16 \nu^2 \Big(\frac{\nu_\tau}{\nu} - \frac{\lambda_0}{2}\Big) + \alpha_\tau - \lambda_0 \alpha, \label{def:Mod0}\\
\textup{Mod}_1 & = -\alpha_\tau + \lambda_1 \alpha, \qquad \nu \textup{Mod}_a = a_\tau + \frac{8a}{\nu^2 + 4|a|^2} - \beta a + \hat \mu, \label{def:Mod1_Moda}
\end{align}
where $\lambda_0$ and $\lambda_1$ are the two approximate eigenvalues given in \ref{prop:eigen},  $L_0, L_1$ in \eqref{def:phii} and $\hat \mu = \Oc(\nu^2)$, and $\Psi$ is the remaining small term satisfying
\begin{align}\label{est:PsiinL2omega}
\sum \int_{|z \pm a| \leq \zeta_*} \Psi \lesssim \frac{\nu^2}{|\ln \nu|}, \qquad  \| \Psi\|_{\omega_\nu} +  \Big\| \big(\nu + \sum|z \pm a| \big) \nabla\Psi\Big\|_{\omega_\nu} \lesssim \frac{\nu^2}{|\ln \nu|^2}.
\end{align}
\end{proposition}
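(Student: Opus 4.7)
Writing $E = -\partial_\tau W + F(W)$ with $F(w) := \Delta w - \nabla\cdot(w\nabla\Phi_{U_{1+2,\nu}+\tilde W \chi^*}) - \beta\Lambda w$, the plan is to Taylor expand $F$ around $\mathfrak{U}$ along $W = \mathfrak{U} + \alpha\psi$, where $\psi := \phi_1 - \phi_0$. A direct expansion, using $-\Delta \Phi_f = f$ to convert divergences, gives
\begin{align*}
F(W) - F(\mathfrak{U}) &= \alpha \tilde\Ls^z \psi \;+\; \alpha U_{1+2,\nu}\psi(\chi^* - 1) \\
&\quad - \alpha\nabla\cdot(\psi\nabla\Phi_{\chi^*\Xi}) - \alpha\nabla\cdot(\Xi\nabla\Phi_{\chi^*\psi}) - \alpha^2 \nabla\cdot(\psi\nabla\Phi_{\chi^*\psi}).
\end{align*}
Now I would invoke the two master identities: Proposition~\ref{prop:E0} gives $F(\mathfrak{U}) = 8\lambda_0\nu^2\phi_0 + \mu'\cdot(\nabla U_{1,\nu} - \nabla U_{2,\nu}) + \Psi_0$ with $\mu = 8\lambda_0$ and $\mu' = \tfrac{8a}{\nu^2+4|a|^2} - \beta a + \tilde\mu'$, and Proposition~\ref{prop:eigen} gives $\tilde\Ls^z\psi = \lambda_1 \phi_1 - \lambda_0 \phi_0 + (R_1 - R_0)$.

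\textbf{Extracting the modulations.} For the time derivative, $\partial_\tau U_{j,\nu} = -\tfrac{\nu_\tau}{\nu}(\Lambda U)_{j,\nu} \mp a_\tau\cdot\nabla U_{j,\nu}$, so
$$-\partial_\tau U_{1+2,\nu} = \tfrac{\nu_\tau}{\nu}\big[(\Lambda U)_{1,\nu} + (\Lambda U)_{2,\nu}\big] + a_\tau\cdot(\nabla U_{1,\nu}-\nabla U_{2,\nu}).$$
The decisive step is the matching of the first bracket to $\phi_0$ via the explicit decomposition \eqref{def:phii}, which gives near $\pm a$
$$\tfrac{1}{16\nu^2}\big[(\Lambda U)_{1,\nu}+(\Lambda U)_{2,\nu}\big] = -\phi_0 + \text{corrections},$$
so that this piece contributes $-16\nu^2\tfrac{\nu_\tau}{\nu}\phi_0$ modulo a term we absorb in $\Psi$. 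Symmetrically, $-\alpha\partial_\tau\psi$ contains the contribution from $\nu_\tau\partial_\nu$ acting on the interior representation $\pm (L_1-L_0)\nu^{-3}\partial_{x_1}U(\tfrac{\cdot\mp a}{\nu})$ of $\psi$; using $\nu^{-3}\partial_{x_1}U(\tfrac{z-a}{\nu}) = \partial_{z_1}U_{1,\nu}$, this produces the $-\tfrac{\nu_\tau}{\nu}\alpha\nu^3\partial_\nu(\tfrac{L_1-L_0}{\nu^3})$ coefficient of $(\nabla U_{1,\nu}-\nabla U_{2,\nu})$ in $\mathrm{Mod}_a$. Collecting the coefficients of $\phi_0$ ($-16\nu^2\tfrac{\nu_\tau}{\nu} + 8\lambda_0\nu^2 + \alpha_\tau - \lambda_0\alpha$), of $\phi_1$ ($-\alpha_\tau + \lambda_1\alpha$), and of $(\nabla U_{1,\nu}-\nabla U_{2,\nu})$, one reads off $\mathrm{Mod}_0$, $\mathrm{Mod}_1$, $\mathrm{Mod}_a$ exactly as stated. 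Everything else is declared to be $\Psi$: namely $\Psi_0$, $\alpha(R_1-R_0)$, $-\partial_\tau\Xi$, the matching residuals from $-\partial_\tau U_{1+2,\nu}$ and $-\alpha\partial_\tau\psi$, the localization term $\alpha U_{1+2,\nu}\psi(\chi^*-1)$, and the two cross terms together with the quadratic $\alpha^2$ term above.

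\textbf{Estimating $\Psi$.} Each contribution is estimated in $L^2_{\omega_\nu}$, in the weighted first-derivative norm, and in local mass, using the pointwise and mass bounds already at our disposal: $\|\Psi_0\|_{\omega_\nu}\lesssim \nu^2/|\ln\nu|^2$ and \eqref{est:Psi0mass} from Proposition~\ref{prop:E0}; the sharp pointwise bound \eqref{est:pointwise_Ri} for $R_i$ combined with $|\alpha|\lesssim\nu^2$ (which dominates since the worst pointwise bound on $R_i$ is of size $|\ln\nu|^{-2}(\nu+|z\pm a|)^{-2}\langle z\rangle^C$ and the weight \eqref{bd:equivalence-weight-omeganu} absorbs the local singularities); the derivative estimates \eqref{est:pointwise_Xi}--\eqref{est:pointwise_Xi_da} and \eqref{est:Xi_localmass} for $\Xi$ combined with the a priori bounds \eqref{est:aprioribounds_nu_a_alpha} for $-\partial_\tau\Xi$; and H\"older/Hardy estimates together with the pointwise bounds \eqref{est:pointwise_phii}--\eqref{est:pointwise_Phi_phii} on $\phi_i$, $\nabla\Phi_{\chi^*\phi_i}$ and the corresponding bounds from Proposition~\ref{prop:E0} for $\nabla\Phi_{\chi^*\Xi}$, which yield $\Oc(\nu^4|\ln\nu|^C)$ control on the $\alpha\Xi$-cross and $\alpha^2$ quadratic terms (far better than the required $\nu^2/|\ln\nu|$). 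Finally, $\alpha U_{1+2,\nu}\psi(\chi^*-1)$ is supported in $|z|\geq 10$ where $U_{1+2,\nu}\lesssim\nu^2$ and $\omega_\nu$ is essentially gaussian, giving exponential smallness.

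\textbf{Main obstacle.} The most delicate step is the book-keeping of the matching residuals when replacing $(\Lambda U)_{j,\nu}$ by $-16\nu^2\phi_0$ (with corrections) and when differentiating the interior expansion of $\psi = \phi_1 - \phi_0$ in time: both steps produce subleading pieces along directions other than $\phi_0,\phi_1,(\nabla U_{1,\nu}-\nabla U_{2,\nu})$, for instance pieces involving $\tilde\phi_i$, $L_i\partial_{x_1}U_{j,\nu}$, $\partial_a\phi_i$, and $\partial_\nu\tilde\phi_i$. Sharpness of the bounds \eqref{est:pointwise_phiitil}--\eqref{est:pointwise_phi1m0_dnu} on these objects, combined with the a priori scales $|\nu_\tau|\lesssim \nu/|\ln\nu|$, $|a_\tau|\lesssim\nu$, $|\alpha_\tau|\lesssim \nu^2/|\ln\nu|$ from \eqref{est:aprioribounds_nu_a_alpha}, is exactly what makes every residual $\Oc(\nu^2/|\ln\nu|)$ in $L^2_{\omega_\nu}$; it will be important to track the $\langle z \rangle^{C_k}$ factors against the gaussian decay of $\omega_\nu$ for the far-field parts, and to use the local mass estimate on $\partial_\nu\Xi$ specifically for the local mass bound on $\Psi$.
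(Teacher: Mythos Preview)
Your proposal is correct and follows essentially the same approach as the paper: expand $E$ using the corrected stationary equation for $\mathfrak{U}$ (Proposition~\ref{prop:E0}) and the approximate eigenfunction relation $\tilde\Ls^z(\phi_1-\phi_0)=\lambda_1\phi_1-\lambda_0\phi_0+R_1-R_0$, then compute the time derivative, match $(\Lambda U)_{1,\nu}+(\Lambda U)_{2,\nu}$ to $-16\nu^2\phi_0$, extract the $\nu\partial_\nu$ action on the $(L_1-L_0)\nu^{-3}\partial_{x_1}U$ piece of $\phi_1-\phi_0$ into $\mathrm{Mod}_a$, and estimate each residual term in $\Psi$ using the pointwise bounds from Propositions~\ref{prop:eigen} and~\ref{prop:E0}. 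The only cosmetic differences are that the paper differentiates $\mathfrak{U}$ as a whole (writing $-\frac{\nu_\tau}{\nu}[\nu\partial_\nu\mathfrak{U}-16\nu^2\phi_0]$ in $\Psi$) rather than $U_{1+2,\nu}$ and $\Xi$ separately, and its bookkeeping of the cross/quadratic $\alpha$-terms is grouped slightly differently; neither affects the argument. One small slip: you list $|\alpha_\tau|\lesssim\nu^2/|\ln\nu|$ among the a priori bounds, but this is not assumed in \eqref{est:aprioribounds_nu_a_alpha} and is not needed, since $\alpha_\tau$ is fully absorbed into $\mathrm{Mod}_0$ and $\mathrm{Mod}_1$ and does not appear in $\Psi$.
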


\begin{proof} We first derive the expression of the generated error $\Psi$ by plugging $W  = U_{1+2, \nu} + \Xi + H = \Uk + H$ into the expression \eqref{def:Eapp} of $E$ and using the definition \eqref{id:corrected-stationary-equation} of $E_0$ and the definition \eqref{def:LszNottilde} of $\tilde \Ls^z$ to write 
\begin{align*}
E &= -\pa_\tau W + \nabla.(\nabla W - W \nabla \Phi_{U_{1 + 2, \nu} + \tilde W \chi^*}) - \beta \Lambda W \\
& =- \pa_\tau \big(\Uk + \alpha H \big) + E_0 +  \alpha \tilde \Ls^z H - \nabla . \Big( \alpha H \nabla \Phi_{(\Xi + \alpha H) \chi^*} +  (\Xi + \alpha H)) \nabla \Phi_{\alpha H \chi^*} \Big).
\end{align*}
We recall from \eqref{multisoliton:id:nupartialnu-mathfrakU-identity} the identity
\begin{align*}
    -\pa_\tau \Uk &= -\frac{\nu_\tau}{\nu} \nu \pa_\nu \Uk - a_\tau \pa_a \Uk\\
    & = -\frac{\nu_\tau}{\nu} \Big[16 \nu^2 \phi_0 - \nu^2 (L_0,0) \big(\nabla U_{1, \nu} - \nabla U_{2,\nu}\big) + \tilde \Uk \Big]  + \frac{a_\tau}{\nu}.\big(\nabla U_{1, \nu} - \nabla U_{2, \nu}\big) - a_\tau \pa_a  \Xi. 
\end{align*}
and from the expansion \eqref{def:phii} of $\phi_i$ the relation $H = \tilde \phi_1 - \tilde \phi_0$ and  the identity
\begin{align*}
    -\pa_\tau (\alpha H) &= -\alpha_\tau H  - \alpha \pa_\tau H \\
    & = -\alpha_\tau \Big[ \phi_1- \phi_0  - \frac{L_1 - L_0}{\nu}\big(\pa_1 U_{1, \nu} \chi_{a, \zeta_*} - \pa_1 U_{2, \nu} \chi_{-a, \zeta_*} \big)\Big] - \alpha \Big( \frac{\nu_\tau}{\nu} \nu \pa_\nu + a_\tau \pa_a\Big) (\tilde \phi_1 - \tilde \phi_0).
\end{align*}
From the expansion \eqref{id:corrected-stationary-equation} of $E_0$ and the approximate eigenproblem \eqref{eigen:id:Ri}, we write 
\begin{align*}
    E_0 + \alpha \tilde \Ls^z H &= 8 \lambda_0 \nu^2 \phi_0 + \mu'. (\nabla U_{1, \nu} - \nabla U_{2, \nu}) + \Psi_0 + \alpha \big( \lambda_1 \phi_1 - \lambda_0 \phi_0 + R_1 - R_0\big) \\
    & \quad - \frac{\alpha}{\nu}(L_1 - L_0) \tilde \Ls^z \big( \pa_1 U_{1, \nu} \chi_{a, \zeta_*} - \pa_1 U_{2, \nu} \chi_{-a, \zeta_*} \big). 
\end{align*}
Plugging all these identities together with $H = \tilde \phi_1 - \tilde \phi_0$, we then obtain 
\begin{align*}
    E &= (\alpha_\tau - 16 \nu_\tau \nu + 8\lambda_0 \nu^2 - \alpha \lambda_0)\phi_0 - (\alpha_\tau - \alpha \lambda_1) \phi_1 \\
    & \quad + \Big( \frac{a_\tau}{\nu} +\nu_\tau \nu (L_0, 0) + \frac{\alpha_\tau}{\nu} (L_1 - L_0, 0) + \mu' \Big).\big(\nabla U_{1, \nu} - \nabla U_{2, \nu} \big)\\
    & \quad  - \frac{\nu_\tau}{\nu} \tilde \Uk - a_\tau \pa_a \Xi  - \alpha \Big( \frac{\nu_\tau}{\nu} \nu \pa_\nu + a_\tau \pa_a\Big) (\tilde \phi_1 - \tilde \phi_0) + \Psi_0 + \alpha (R_1 - R_0) \\
    & \quad - \frac{\alpha_\tau}{\nu}(L_1 - L_0) \big(\pa_1 U_{1, \nu} (1 - \chi_{a, \zeta_*}) - \pa_1 U_{2, \nu} (1 - \chi_{-a, \zeta_*}) \big)\\
    & \quad - \frac{\alpha}{\nu}(L_1 - L_0) \tilde \Ls^z \big( \pa_1 U_{1, \nu} \chi_{a, \zeta_*} - \pa_1 U_{2, \nu} \chi_{-a, \zeta_*} \big)\\
    & \quad - \nabla . \Big( \alpha (\tilde \phi_1 - \tilde \phi_0) \nabla \Phi_{(\Xi + \alpha (\tilde \phi_1 - \tilde \phi_0)) \chi^*} +  (\Xi + \alpha (\tilde \phi_1 - \tilde \phi_0)) \nabla \Phi_{\alpha (\tilde \phi_1 - \tilde \phi_0) \chi^*} \Big)\\
    & = \textup{Mod}_0 \phi_0 + \textup{Mod}_1 \phi_1 + \textup{Mod}_a.\big(\nabla U_{1, \nu} - \nabla U_{2, \nu} \big) + \Psi.
\end{align*}
Here, $\textup{Mod}_0$, $\textup{Mod}_1$ and $\textup{Mod}_a$ are given by \eqref{def:Mod0} and \eqref{def:Mod1_Moda} with the precise definition thanks to \eqref{def:mumuprime} and the estimates $|L_0| \lesssim 1, |L_1| \lesssim |\ln \nu|$ (see Proposition \ref{prop:eigen})
$$\hat \mu = \tilde \mu' + \nu_\tau \nu (L_0, 0) + \frac{\alpha_\tau}{\nu} (L_1 - L_0, 0) = \Oc(\nu^2),$$
and 
\begin{align*}
    \Psi &= - \frac{\nu_\tau}{\nu} \tilde \Uk - a_\tau \pa_a \Xi  - \alpha \Big( \frac{\nu_\tau}{\nu} \nu \pa_\nu + a_\tau \pa_a\Big) (\tilde \phi_1 - \tilde \phi_0) + \Psi_0 + \alpha (R_1 - R_0) \\
    & \quad - \frac{\alpha_\tau}{\nu}(L_1 - L_0) \big(\pa_1 U_{1, \nu} (1 - \chi_{a, \zeta_*}) - \pa_1 U_{2, \nu} (1 - \chi_{-a, \zeta_*}) \big)\\
    & \quad - \frac{\alpha}{\nu}(L_1 - L_0) \tilde \Ls^z \big( \pa_1 U_{1, \nu} \chi_{a, \zeta_*} - \pa_1 U_{2, \nu} \chi_{-a, \zeta_*} \big)\\
    & \quad - \nabla . \Big( \alpha (\tilde \phi_1 - \tilde \phi_0) \nabla \Phi_{(\Xi + \alpha (\tilde \phi_1 - \tilde \phi_0)) \chi^*} +  (\Xi + \alpha (\tilde \phi_1 - \tilde \phi_0)) \nabla \Phi_{\alpha (\tilde \phi_1 - \tilde \phi_0) \chi^*} \Big).
\end{align*}
We now estimate all terms involved in the expression of $\Psi$. From the pointwise estimate \eqref{est:Ukpointwise} and the a prior estimate $|\nu_\tau/\nu| \lesssim 1/|\ln \nu|$, we have 
\begin{align*}
    \Big| \frac{\nu_\tau}{\nu} \tilde \Uk \Big| \lesssim \sum_\pm \frac{\nu^2}{(\nu + |z\pm a|)^2 |\ln \nu|} \Big( \frac{\nu^{\delta_0}}{(\nu + |z \pm a|)^{\delta_0}} + \frac{1}{|\ln \nu|^2} \Big),
\end{align*}
from which we estimate 
\begin{align*}
    \Big|\int_{|z \pm a| \leq \zeta_*} \frac{\nu_\tau}{\nu} \tilde \Uk (z) dz \Big| \lesssim \frac{\nu^2}{|\ln \nu|}\int_{|y| \leq \zeta_*/\nu} \frac{1}{\langle y \rangle^2} \Big( \frac{1}{\langle y \rangle^{\delta_0}} + \frac{1}{|\ln \nu|^2} \Big) dy \lesssim \frac{\nu^2}{|\ln \nu|},
\end{align*}
and 
\begin{align*}
    \Big\| \frac{\nu_\tau}\nu \tilde \Uk\Big\|_{\omega_\nu}^2 &\lesssim \frac{1}{|\ln \nu|^2} \int \sum_\pm \frac{\nu^4}{(\nu + |z \pm a|)^4} \Big( \frac{\nu^{2\delta_0}}{(\nu + |z \pm a|)^{2\delta_0}} + \frac{1}{|\ln \nu|^4} \Big) (\nu + |z \pm a|)^4 e^{-\beta |z|^2} dz\\
    & \lesssim \frac{\nu^6}{|\ln \nu|^2} \int \Big( \frac{1}{\langle y \rangle^{2 \delta_0}} + \frac{1}{|\ln \nu|^4} e^{-\beta \nu^2 |y|^2} \Big) dy \lesssim \frac{\nu^6}{|\ln \nu|^2}\big( \nu^{-2 + 2\delta_0} + \frac{\nu^{-2}}{|\ln \nu|^4} \big) \lesssim \frac{\nu^4}{|\ln \nu|^6}.
\end{align*}
From \eqref{est:pointwise_Xi_dk} and the a prior estimate $|a_\tau| \lesssim \nu^2$, we have 
$$\big|a_\tau \pa_a \Xi \big| \lesssim \sum_\pm \frac{\nu^4}{(\nu + |z \pm a|)^3} \langle z \rangle^{C_0},$$
which gives
\begin{align*}
    \Big|\int_{|z \pm a| \leq \zeta_*} a_\tau \pa_a \Xi dz \Big| \lesssim \nu^3 \int_{|y| \leq \frac{\zeta_*}{\nu}} \frac{1}{\langle y \rangle^3} dy \lesssim \nu^3,
\end{align*}
and 
\begin{align*}
    \Big\| a_\tau \pa_a\Xi \Big\|^2_{\omega_\nu} &\lesssim \nu^4 \int \sum_\pm \frac{\nu^4}{(\nu + |z \pm a|)^6} \langle z \rangle^{2C_0} (\nu + |z \pm a|)^4 e^{-\beta |z|^2} dz \\
    &\lesssim \nu^8 \int \langle y \rangle^{-2} \langle \nu y \rangle^{2 C_0} e^{-\beta \nu^2|y|^2} dy \lesssim \nu^8 |\ln \nu|. 
\end{align*}
From the pointwise estimates \eqref{est:pointwise_phiitil} and \eqref{est:pointwise_phiitil_da}, and the a prior bounds \eqref{est:aprioribounds_nu_a_alpha}, we have 
\begin{align*}
\Big| \alpha \frac{\nu_\tau}{\nu} \nu \pa_\nu (\tilde \phi_1 - \tilde \phi_0) \Big| \lesssim \frac{\nu^2}{|\ln \nu|} \sum_\pm \frac{\langle z \rangle^{C_0}}{(\nu + |z \pm a|)^2}\Big( \frac{\nu^{\delta_0}}{(\nu + |z \pm a|)^{\delta_0}} + \frac{1}{|\ln \nu|^2} \Big),
\end{align*}
which is the same pointwise estimate as for $\nu_\tau/\nu \tilde \Uk$, hence, 
$$\Big| \int_{|z \pm a| \leq \zeta_*} \alpha \frac{\nu_\tau}{\nu} \nu \pa_\nu (\tilde \phi_1 - \tilde \phi_0) dz \Big| \lesssim \frac{\nu^2}{|\ln \nu|}, \quad \big\|\alpha \frac{\nu_\tau}{\nu} \nu \pa_\nu (\tilde \phi_1 - \tilde \phi_0) \big\|^2_{\omega_\nu} \lesssim \frac{\nu^4}{|\ln \nu|^6}.$$
Similarly, we have 
\begin{align*}
\Big| \alpha a_\tau \pa_a (\tilde \phi_1 - \tilde \phi_0) \Big| \lesssim \nu^4 \sum_\pm \frac{\langle z \rangle^{C_0}}{(\nu + |z \pm a|)^3},
\end{align*}
which is the same pointwise estimate as $a_\tau \pa_a \Xi$, hence, 
$$\Big| \int_{|z \pm a| \leq \zeta_*} \alpha a_\tau \pa_a (\tilde \phi_1 - \tilde \phi_0) dz \Big| \lesssim \nu^3, \quad \big\|\alpha a_\tau \pa_a (\tilde \phi_1 - \tilde \phi_0) \big\|^2_{\omega_\nu} \lesssim \nu^8 |\ln \nu|.$$
From \eqref{est:Psi_L2omega} and \eqref{est:Psi0mass}, we have already obtained the estimates 
$$ \Big| \int_{|z \pm a|\leq \zeta_*} \Psi_0 dz \Big| \lesssim \frac{\nu^2}{|\ln \nu|}, \quad  \| \Psi_0\|_{\omega_\nu} \lesssim \frac{\nu^2}{|\ln \nu|^2}.$$
From \eqref{est:pointwise_Ri} and $|\alpha| \lesssim \nu^2$, we have already obtained the partial mass estimate 
$$\Big| \int \alpha (R_1 - R_0) \Big| \lesssim \frac{\nu^2}{|\ln \nu|}.$$
Using the pointwise estimate \eqref{est:pointwise_Ri}, we estimate
$$\| \alpha (R_1 - R_0)\|^2_{\omega_\nu} \lesssim \frac{\nu^4}{|\ln \nu|^4}\int \sum_\pm \frac{\langle z \rangle^{2C_0}}{(\nu + |z \pm a|)^4} (\nu + |z\pm a|)^4 e^{-\beta |z|^2} dz \lesssim \frac{\nu^4}{|\ln \nu|^4}.$$
Using the pointwise estimates \eqref{est:pointwise_Xi}, \eqref{est:pointwise_phiitil}, \eqref{est:pointwise_Phiphiitil}, and the a priori bound $|\alpha| \lesssim \nu^2$, we get 
\begin{align*}
&\|- \nabla . \Big( \alpha (\tilde \phi_1 - \tilde \phi_0) \nabla \Phi_{(\Xi + \alpha (\tilde \phi_1 - \tilde \phi_0)) \chi^*} +  (\Xi + \alpha (\tilde \phi_1 - \tilde \phi_0)) \nabla \Phi_{\alpha (\tilde \phi_1 - \tilde \phi_0) \chi^*} \Big)\|_{L^2_{\omega_\nu}} \\
& \lesssim \nu^4 \left\| \sum \frac{\langle z \rangle^C}{(\nu + |z \pm a|)^4}\right\|_{L^2_{\omega_\nu}} + \nu^3 \left\| \sum  \frac{\langle z \rangle^C}{(\nu + |z \pm a|)^3}\right\|_{L^2_{\omega_\nu}} \lesssim \nu^3 +  \nu^3 \sqrt{|\ln \nu|}.
\end{align*}
Using the divergence structure and the pointwise estimates \eqref{est:pointwise_phiitil}, \eqref{est:pointwise_Phiphiitil}, \eqref{est:pointwise_Xi},
\begin{align*}
&\Big|\int_{|z \pm a| \lesssim \zeta_*} - \nabla . \Big( \alpha (\tilde \phi_1 - \tilde \phi_0) \nabla \Phi_{(\Xi + \alpha (\tilde \phi_1 - \tilde \phi_0)) \chi^*} +  (\Xi + \alpha (\tilde \phi_1 - \tilde \phi_0)) \nabla \Phi_{\alpha (\tilde \phi_1 - \tilde \phi_0) \chi^*} \Big)\Big| \\
& \quad \lesssim \int_{|z \pm a| \sim \zeta_*} \Big| \alpha (\tilde \phi_1 - \tilde \phi_0) \nabla \Phi_{(\Xi + \alpha (\tilde \phi_1 - \tilde \phi_0)) \chi^*} +  (\Xi + \alpha (\tilde \phi_1 - \tilde \phi_0)) \nabla \Phi_{\alpha (\tilde \phi_1 - \tilde \phi_0) \chi^*} \Big| \lesssim \nu^4 |\ln \nu|.
\end{align*}
Collecting all the above estimates yields the desired estimates \eqref{est:PsiinL2omega}. This ends the proof of Proposition \ref{prop:E}. 
\end{proof}

\subsection{Formulation of the problem and bootstrap regime}\label{subsec:bootstrap-description}
We formulate the problem of constructing solutions to \eqref{eq:KS2d} that blow up in finite time by a collision of two single-solitons. In terms of the self-similarity variables \eqref{eq:wztauIntro}, we introduce the first linearization
\begin{align}
w(z, \tau) = U_{1+2, \nu}(z) + \tilde \vep(z,\tau),\label{def:AppSol-2nddecomp}
\end{align}
where $\tilde \vep$ solves 
\begin{equation}\label{eq:veptil}
\pa_\tau \tilde \vep = \Ls^z\tilde \vep - \nabla \cdot(\tilde \vep \nabla \Phi_{\tilde \vep}) + \tilde E,
\end{equation}
with
\begin{equation}\label{def:Etil}
\tilde E = -(\pa_\tau + \beta \Lambda)  U_{1+2, \nu} - \nabla \cdot (U_{1, \nu}\nabla \Phi_{U_{2,\nu}} + U_{2, \nu}\nabla \Phi_{U_{1, \nu}}).  
\end{equation}
We use this equation to estimate the outer part $\tilde{\vep}(1 - \chi^*)$ and the contribution of the Poision field $\nabla \Phi_{\tilde \vep (1 - \chi^*)}$, namely we decompose 
\begin{equation}
\tilde{\vep} = \tilde \vep \chi^* + \vep (1 - \chi^*), \quad  \nabla \Phi_{\tilde{\vep}} = \nabla \Phi_{\tilde{\vep}\chi^*} + \nabla \Phi_{\tilde \vep (1 - \chi^*)}.
\end{equation}
For the inner part, we use the refined approximate solution $W$ introduced in Proposition \ref{prop:E} to introduce the second linearization
\begin{equation}
w(z, \tau) = W[\nu, a, \alpha](z) + \vep(z, \tau) = U_{1+2, \nu}(z) +  \tilde{W}[\nu, a, \alpha](z) + \vep(z, \tau), \label{def:AppSol}
\end{equation} 
where $\nu, a, \alpha$ are parameter functions to be determined, $W$ and $\tilde W$ are defined by \eqref{def:Wap} and \eqref{def:Wtilde}.  We thus have the relation
\begin{equation*}
\tilde{\vep} = \tilde{W} + \vep,
\end{equation*}
where $\vep$ solves the linearized problem
\begin{equation}\label{eq:vep}
\pa_\tau \vep = \tilde \Ls^z\vep  - L(\vep)  - NL(\vep) + E, 
\end{equation}
where $\tilde \Ls^z$ is the linearized operator around $U_{1+2, \nu}$ with a cut-off of the Poison field introduced in \eqref{def:L12ztilde},  $L(\vep)$ is the small linear term
\begin{equation}
L \vep =  \nabla \cdot\Big(\vep \nabla \Phi_{\tilde W \chi^*} + \tilde W  \nabla \Phi_{\vep \chi^*} \Big) + \nabla \cdot \big( \tilde W\nabla \Phi_{\tilde \vep (1 - \chi^*)} \big), \label{def:Lep}
\end{equation}
and $NL(\vep)$ is the nonlinear term
\begin{equation}\label{def:NLvep}
NL(\vep) = \nabla \cdot\big( \vep  \nabla \Phi_{\vep \chi^*} + \vep \nabla \Phi_{\tilde \vep (1 - \chi^*)} \big),
\end{equation}
and $E$ is defined in \eqref{def:Eapp}.

Our aim is to construct a global in time solution $\vep$ to \eqref{eq:vep} though a robust energy estimate technique. Recall from Proposition \ref{Prop:coercive-Lz-global} that the localized linear operator $\tilde \Ls^z$ is coercive under the adapted scalar product $\langle \cdot, \cdot \rangle_\ast$ introduced in  \eqref{def:scalar12_intro}. It's natural to control the remainder through the adapted norm
\begin{equation}
\| \vep\|_\ast^2  = \langle \vep, \vep \rangle_\ast = \int_{\Rb^2} \vep^2 \omega_\nu dz - c_\ast \nu^2 \int_{\Rb^2} \chi^* \vep \Phi_{\chi^* \vep} dz, \quad c_\ast = 32e^{-1}.
\end{equation}
It is proved to be coercive once we impose the orthogonality conditions:
\begin{equation}\label{eq:orthog}
\langle \vep, 1 \rangle_{L^2_{loc, \pm}} = \langle \vep, \Lambda U_{i, \nu} \rangle_{L^2_{loc, \pm}} = \langle \vep, \pa_j U_{i, \nu} \rangle_{L^2_{loc, \pm}} = 0, \quad i,j = 1,2, 
\end{equation}
where we use the notation 
$$\langle f,g \rangle_{L^2_{loc, \pm}} = \int_{|z \mp a| < \eta} f g dz, \quad \textup{for some fixed constant $0 < \eta \ll 1$.}$$
The orthogonality condition \eqref{eq:orthog} ensures the coercivity estimate (see Proposition \ref{pr:scalar-product-interior})
\begin{equation} \label{est:coercivity_vep}
\|\vep \|_\ast^2 = \langle \vep, \vep \rangle_\ast \gtrsim  \|\vep\|^2_{L^2_{\omega_\nu }}.
\end{equation}

\noindent In order to close the modulation equations for the parameters $\nu, a, \alpha$ and to close the nonlinear analysis, we need an estimate of $\nabla \vep$ as well. To do so, we split the control of $\vep$ into three parts: \\

\noindent - the inner zone $B_{\zeta_*}(a) \cup B_{\zeta_*}(-a)$ with a fixed constant $0 < \zeta_* \ll 1$: we introduce 
\begin{equation}\label{def:qpminn}
 q^\inn_\pm(y_\pm, \tau) = \nu^2 \vep(z, \tau) \chi_{\pm a, \zeta_*}(z), \qquad y_\pm = \frac{z \mp a}{\nu},
 \end{equation}
 and the $H^1$ inner norm
\begin{equation}\label{def:H1innnorm}
\|q(\tau)\|^2_{H^1_\inn} = \sum_\pm \Big(\langle - \Ls_0 q^\inn_\pm, \Ms_0 q^\inn_\pm \rangle + \frac{\nu^2}{\zeta_*^2}\langle q^\inn_\pm, \Ms_0 q^\inn_\pm \rangle \Big) \gtrsim \sum_\pm \Big(\int \frac{|\nabla q^\inn_\pm|^2}{U(y_\pm)}  + \frac{\nu^2}{\zeta_*^2}\int \frac{|q^\inn_\pm|^2}{U(y_\pm)} \Big),
\end{equation} 
where the coercivity is archived by the orthogonality condition \eqref{eq:orthog} (see Lemma \ref{lem:coercivity-one-bubble-H1-orthogonality}). We also need a round bound of $L^\infty$, for which we introduce
\begin{equation}
q_{2, \pm}^\inn = \Ls_0 q_\pm^\inn, 
\end{equation}
and further decompose
\begin{equation}\label{def:q2tildecomp}
q_{2, \pm}^\inn(y_\pm, \tau)  = c_1 \Lambda U(y_\pm) + c_{2,1} \pa_1 U(y_\pm) + c_{2,2} \pa_2 U(y_\pm) + \tilde q_{2, \pm}^\inn(y_\pm),
\end{equation}
where $c_1$ and $c_{2,1}$, $c_{2,2}$ are chosen so that 
$$\tilde q_{2, \pm}^\inn \perp_{L^2} \{\Lambda U(y_\pm), \nabla U(y_\pm)\},$$
to ensure the coercivity  (see  Lemma \ref{lem:coercivity-one-bubble-H1-orthogonality})
\begin{equation}\label{est:coercityH2}
\|\tilde q(\tau)\|^2_{H^2_\inn} = \sum_\pm  \langle \tilde{q}_{2, \pm}^\inn,\Ms_0 \tilde{q}_{2, \pm}^\inn \rangle \gtrsim  \sum_\pm \int \frac{|\tilde{q}_{2, \pm}^\inn|^2}{U(y_\pm)} dy_\pm.
\end{equation} 
The coefficients 
\begin{equation}\label{est:c1c2}
|c_1| + |c_{2,1}| + |c_{2,2}| \lesssim \| q(\tau)\|_{H^1_\inn}, 
\end{equation}
together with $H^2_\inn$-norm and Sobolev inequality yield a round $L^\infty$ bound of $q^\inn_\pm$ which is enough for the nonlinear analysis. 

\medskip

\noindent - the intermediate zone $B_{\zeta^*} \setminus \{B_{\zeta_*/2}(a) \cup B_{\zeta_*/2}(-a)\}$ with a fixed  constant $\zeta^* \geq 10$: we use the parabolic regularity to control 
\begin{equation}
\|\vep(\tau)\|_{bd} = \sum_{\pm}\|\vep(\tau)\|_{H^2(B_{\zeta^*} \setminus B_{\zeta_*/2}(\pm a))}. 
\end{equation}

\medskip

\noindent - for the far away zone $\Rb^2 \setminus B_{\zeta^*}$, we simply use the maximum principle to obtain the bound
\begin{equation}
\| \tilde \vep(\tau)\|_\out = \big\|\tilde \vep (1 - \chi_{\zeta^*}) |z|^{\frac{3}{2}}\big\|_{L^\infty(\Rb^2)}. 
\end{equation}

\begin{definition}[Bootstrap regime] \label{def:bootstrap} Let $\tau_0 \gg 1$, $\bar K \gg 1$, $K \gg 1$, $0 < \zeta_*, 1/\zeta^* \ll 1$. For $\tau \geq \tau_0$, we define the set $\Sc(\tau)$ of all functions $\vep$ for which  the following holds: 
\begin{itemize}
\item[i)] \textup{(Modulation parameters)} There exist $\Cc^1$ functions $\nu(\tau), a(\tau)$ and $\alpha(\tau)$ satisfying
\begin{equation} \label{bootstrap:alpha}
|\alpha - 8\nu^2| \leq \bar K \frac{\nu^2}{|\ln \nu|} . 
\end{equation}
\begin{equation}\label{bootstrap:nu}
\left| \frac{\nu_\tau}{\nu} - \frac{\beta \gamma_1}{\ln \nu}\right| \leq \frac{\bar K}{|\ln \nu|^2}, 
\end{equation}
\begin{equation}\label{bootstrap:a}
\Big|a - (2,0) \Big| \leq \bar K \nu,
\end{equation}
\item[ii)] \textup{(Control of the remainder)}
\begin{equation}\label{bootstrap:L2omega_nu}
\| \vep(\tau)\|_{L^2_{\omega_\nu}} \leq K \frac{\nu^2}{|\ln \nu|^2},
\end{equation}
\begin{equation}\label{bootstrap:H2boundary}
\|\vep(\tau)\|_{bd} \leq K^2 \frac{\nu^2}{|\ln \nu|^2}.
\end{equation}
\begin{equation}\label{bootstrap:H1omega_0}
\| q(\tau)\|_{H^1_\inn} \leq K^3 \frac{\nu^2}{|\ln \nu|^2}.
\end{equation}
\begin{equation}\label{bootstrap:H2omega_0}
\|\tilde q(\tau)\|_{H^2_\inn} \leq K^4 \frac{\nu^2}{|\ln \nu|^2}.
\end{equation}
\begin{equation}\label{bootstrap:Linf_outer}
\|\tilde \vep(\tau)\|_\out \leq K^5 \nu^2.
\end{equation}
\end{itemize}
\end{definition}

\subsection{Modulation equations} \label{subsec:modulation-equations}

\begin{lemma}[Modulation equation] \label{lemm:mod} We have 

\begin{equation} \label{eq:Mod0}
\big| \textup{Mod}_0 \big| \lesssim \| q(\tau)\|_{H^1_{\inn}} + \|\vep(\tau)\|_{L^2_{\omega_\nu}} + \frac{\nu^2}{|\ln \nu|^2},
\end{equation}

\begin{equation}\label{eq:Mod1}
\big| \textup{Mod}_1 \big| \lesssim \frac{1}{|\ln \nu|} \Big(\| q(\tau)\|_{H^1_{\inn}} + \|\vep(\tau)\|_{L^2_{\omega_\nu}} + \frac{\nu^2}{|\ln \nu|} \Big),
\end{equation}

\begin{equation}\label{eq:Moda}
\big| \nu \textup{Mod}_a \big| \lesssim  \| q(\tau)\|_{H^1_{\inn}} + \|\vep(\tau)\|_{L^2_{\omega_\nu}} + \frac{\nu^2}{|\ln \nu|^2}.
\end{equation}

\end{lemma}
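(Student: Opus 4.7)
The plan is to derive the modulation equations as a consequence of the time-invariance of the orthogonality conditions \eqref{eq:orthog}. Differentiating each condition $\langle \vep, \psi_{k,\pm}\rangle_{L^2_{loc,\pm}}=0$ in $\tau$, where $\psi_{k,\pm}$ ranges over $\{\mathbbm{1},\Lambda U_{i,\nu},\partial_j U_{i,\nu}\}$ multiplied by the cutoff $\mathbbm{1}_{|z\mp a|<\eta}$, yields
\[
\langle \partial_\tau \vep,\psi_{k,\pm}\rangle_{L^2_{loc,\pm}}+\langle \vep,\partial_\tau \psi_{k,\pm}\rangle_{L^2_{loc,\pm}}=0.
\]
Substituting the evolution equation \eqref{eq:vep} and the decomposition \eqref{def:EztauMod} of the error, we obtain for each $(k,\pm)$ a scalar identity of the form
\[
\textup{Mod}_0\,\langle \phi_0,\psi_{k,\pm}\rangle_{L^2_{loc,\pm}}+\textup{Mod}_1\,\langle \phi_1,\psi_{k,\pm}\rangle_{L^2_{loc,\pm}}+\textup{Mod}_a\cdot\langle \nabla U_{1,\nu}-\nabla U_{2,\nu},\psi_{k,\pm}\rangle_{L^2_{loc,\pm}}=\mathcal{R}_{k,\pm},
\]
where $\mathcal{R}_{k,\pm}$ gathers the forcing coming from $\langle \tilde{\mathscr{L}}^z\vep,\psi_{k,\pm}\rangle$, $\langle L\vep,\psi_{k,\pm}\rangle$, $\langle NL(\vep),\psi_{k,\pm}\rangle$, $\langle \Psi,\psi_{k,\pm}\rangle$ and the term $\langle \vep,\partial_\tau\psi_{k,\pm}\rangle$ arising from the dependence of $\psi_{k,\pm}$ on the modulation parameters (which produces an $a_\tau,\nu_\tau$ prefactor that will be absorbed into $\mathcal{R}$).

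The first task is to compute the dominant contributions to the coefficient matrix by evaluating the projections of $\phi_0,\phi_1$ and $\nabla U_{1,\nu}-\nabla U_{2,\nu}$ against the five local test functions near $\pm a$. Using the inner expansion \eqref{def:phii-2} of $\phi_i$, one has after changing variables $y=(z\mp a)/\nu$ and expanding $\Lambda U$, $\partial_{x_1}U$ that (schematically)
\[
\langle \phi_0,\Lambda U_{i,\nu}\rangle_{L^2_{loc,\pm}}\approx -\tfrac{1}{16\nu^2}\,c_0,\qquad \langle \phi_1,\Lambda U_{i,\nu}\rangle_{L^2_{loc,\pm}}\approx -\tfrac{1}{16\nu^2}\,c_0,
\]
with identical leading term but differing at next order by a factor involving $L_1-L_0$, whereas $\langle \phi_i,\partial_j U_{i,\nu}\rangle\sim \pm \nu^{-3}L_i\|\partial_{x_1}U\|_{L^2}^2$ picks up the key $L_i\sim |\ln\nu|$ enhancement. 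In contrast, the projection $\langle \nabla U_{1,\nu}-\nabla U_{2,\nu},\partial_j U_{i,\nu}\rangle \sim \nu^{-2}\|\partial U\|_{L^2}^2$ gives the diagonal coefficient controlling $\textup{Mod}_a$. These computations show that the $5\times 5$ system (per sign) is block-triangular to leading order: the pairing against $\partial_j U_{i,\nu}$ determines $\textup{Mod}_a$, while the pairings against $\mathbbm{1}$ and $\Lambda U_{i,\nu}$ determine the pair $(\textup{Mod}_0,\textup{Mod}_1)$ via a $2\times 2$ block whose determinant is of order $|\ln\nu|/\nu^4$; the $|\ln\nu|$ enhancement in the $\phi_1$ row is what produces the extra $|\ln\nu|^{-1}$ factor in the bound \eqref{eq:Mod1}.

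The second task is to bound the right-hand sides $\mathcal{R}_{k,\pm}$. For the linear part, one integrates $\langle \tilde{\mathscr{L}}^z\vep,\psi_{k,\pm}\rangle$ by parts using the pointwise bounds on $\tilde{\mathscr{L}}^z\psi_{k,\pm}$ (each $\psi_{k,\pm}$ is supported on a fixed compact set where $\omega_\nu$ is of order one, so $|\langle \tilde{\mathscr{L}}^z\vep,\psi_{k,\pm}\rangle|\lesssim \|\vep\|_{L^2_{\omega_\nu}}+\|\nabla \vep\|_{L^2_{\omega_\nu}(|z\mp a|\lesssim \zeta_*)}\lesssim \|\vep\|_{L^2_{\omega_\nu}}+\|q\|_{H^1_{\inn}}$ after undoing the inner renormalization \eqref{def:qpminn}). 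The small linear term $L\vep$ and the nonlinear term $NL(\vep)$ are estimated using the pointwise bounds \eqref{est:pointwise_Xi}, \eqref{est:pointwise_phi1m0}, Hardy-type inequalities \eqref{bd:hardy-poincare-H1-omeganu} and the bootstrap bounds of Definition \ref{def:bootstrap}; they all contribute at the order $\nu^2/|\ln\nu|$ or better. For $\langle \Psi,\psi_{k,\pm}\rangle$ we use the sharp estimates \eqref{est:PsiinL2omega}, noting that the local mass bound $|\int_{|z\pm a|<\zeta_*}\Psi|\lesssim \nu^2/|\ln\nu|$ is precisely what is needed when pairing against the constant test function. Finally, the term $\langle \vep,\partial_\tau\psi_{k,\pm}\rangle$ produces a factor $|a_\tau|+|\nu_\tau/\nu|$ times $\|\vep\|_{L^2_{\omega_\nu}}$, which by the a priori bounds \eqref{est:aprioribounds_nu_a_alpha} is of lower order.

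The main obstacle I expect is bookkeeping the precise leading coefficients of the linear system: one must track the $|\ln\nu|$ factors carefully enough to identify the block structure described above, since the naive system is degenerate at order $\nu^{-2}$ and the invertibility with the desired sharpness is achieved only after extracting the $L_1\sim |\ln\nu|$ enhancement in the projection of $\phi_1$ against $\partial_j U_{i,\nu}$. Once the system is inverted and the $\mathcal{R}_{k,\pm}$ bounds are assembled, the estimates \eqref{eq:Mod0}, \eqref{eq:Mod1} and \eqref{eq:Moda} follow by the explicit formulas relating $\textup{Mod}_0,\textup{Mod}_1,\textup{Mod}_a$ to the projections, where the $1/|\ln\nu|$ gain on $\textup{Mod}_1$ reflects the cofactor coming from the $|\ln\nu|$-enhanced row, and the $1/\nu$ loss on $\textup{Mod}_a$ reflects the $\nu^{-2}$ scaling of $\langle\nabla U_{i,\nu},\partial_j U_{i,\nu}\rangle$ divided by the $\nu^{-3}L_i$ cross-terms.
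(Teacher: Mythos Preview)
Your overall architecture is right: differentiate the local orthogonality conditions \eqref{eq:orthog}, substitute \eqref{eq:vep} and \eqref{def:EztauMod}, and solve a linear system for $(\textup{Mod}_0,\textup{Mod}_1,\textup{Mod}_a)$. But you have misidentified the source of the $|\ln\nu|^{-1}$ gain in \eqref{eq:Mod1}, and this would cause the argument to stall.

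The mechanism is \emph{not} the $L_1\sim|\ln\nu|$ factor in the $\partial_{x_1}U$ component (that row is used for $\textup{Mod}_a$, as you say). It is the projection against the \emph{constant} test function. The point is that $\int_{|z\pm a|<\zeta_*}\phi_0\,dz=O(1)$ while $\int_{|z\pm a|<\zeta_*}\phi_1\,dz\sim|\ln\nu|$. The reason is the difference in the radial profiles $T_{2,0}^{(i)}$ of Lemma \ref{lem:T20hatT2}: $T_{2,0}^{(0)}=O(r^{-4}\ln r)$ is integrable, but $T_{2,0}^{(1)}\sim -8/r^2$ has a logarithmically divergent mass over $|y|<\zeta_*/\nu$. (The $\Lambda U$ and $\partial_{x_1}U$ leading parts of $\phi_i$ contribute $O(1)$ to the local mass by the cancellations $\int\Lambda U=\int\partial_{x_1}U=0$.) So the mass projection directly yields $|\ln\nu|\cdot|\textup{Mod}_1|\lesssim|\textup{Mod}_0|+\nu^2|\textup{Mod}_a|+\mathcal{R}$, which is \eqref{eq:Mod1tmp}. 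The $\Lambda U_\nu$ projection gives $|\textup{Mod}_0|+|\textup{Mod}_1|\lesssim\nu|\textup{Mod}_a|+\mathcal{R}$, and the $\partial_j U_\nu$ projection gives $\nu|\textup{Mod}_a|\lesssim|\textup{Mod}_0|+|\textup{Mod}_1|+\mathcal{R}$; combining these three closes the lemma. No delicate $2\times2$ determinant computation is needed.

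A second point: for the right-hand side $\mathcal{R}$ when testing against $\mathbbm{1}$, you should use that $\tilde{\Ls}^z\vep$, $L\vep$ and $NL(\vep)$ are all in divergence form (see \eqref{def:L12ztilde}, \eqref{def:Lep}, \eqref{def:NLvep}), so $\int_{|z\pm a|<\zeta_*}(\cdot)$ reduces to a boundary integral over $|z\pm a|\sim\zeta_*$ where $\omega_\nu\sim 1$; this is what produces the clean bound by $\|q\|_{H^1_\inn}+\|\vep\|_{L^2_{\omega_\nu}}$ without losing derivatives. Your formulation via ``pointwise bounds on $\tilde{\Ls}^z\psi_{k,\pm}$'' would run into trouble for $\psi=\mathbbm{1}$.
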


\begin{proof} \underline{$\textup{Mod}_1$  equation:} We first derive \eqref{eq:Mod1} by writing from the local orthogonality condition \eqref{eq:orthog} and equation \eqref{eq:vep} of $\vep$,
\begin{align*}
0 = \int_{|z \pm a | \lesssim \zeta_*} \pa_\tau\vep dz = \int_{|z \pm a| \lesssim \zeta_*} \big(\tilde{\Ls} \vep -L(\vep) - NL(\vep) + E\big) dz.
\end{align*}
Using the divergent structure of $\tilde{\Ls}(\vep), L(\vep), NL(\vep)$ defined in \eqref{def:L12ztilde}, \eqref{def:Lep}, \eqref{def:NLvep} to get the estimate
\begin{align*}
& \Big|\int_{|z \pm a| \lesssim \zeta_*} \tilde{\Ls} \vep -L(\vep) - NL(\vep)\Big| \lesssim \int_{|z \pm a| \sim \zeta_*} \Big| \nabla \vep -  \vep  \nabla \Phi_{U_{1 + 2, \nu}} - U_{1 + 2, \nu}.\nabla \Phi_{\chi^* \vep}+U_{1+2,\nu} \vep - \beta z \vep \Big|\\
& \quad + \int_{|z \pm a| \sim \zeta_*}\Big| \vep \nabla \Phi_{\tilde W \chi^*} +\tilde W \nabla \Phi_{\vep \chi^*} + \tilde W \nabla \Phi_{\tilde\vep(1 - \chi^*)}\Big|  + \int_{|z \pm a| \sim \zeta_*}\Big|\vep \nabla \Phi_{\vep \chi^*} + \vep \nabla \Phi_{\tilde \vep (1 - \chi^*)}   \Big|\\
& \lesssim \int_{|z \pm a| \sim \zeta_*} |\nabla \vep| + \|U_{1+2, \nu}. \nabla \Phi_{\chi^* \vep} + \tilde W \nabla \Phi_{\vep \chi^*}\|_{L^\infty(|z \pm a| \sim \zeta_*)} \\
& \quad \quad + \Big(\| \nabla \Phi_{U_{1+2, \nu}} + U_{1+2, \nu} + \beta z + \nabla \Phi_{\tilde W \chi^*} + \nabla \Phi_{\vep \chi^*}+ \nabla \Phi_{\tilde \vep (1 - \chi^*)}\|_{L^\infty(|z\pm a| \sim \zeta_*) }\Big) \int_{|z \pm a| \sim \zeta_*}|\vep|.
\end{align*}
Using the rough estimates \eqref{est:Wtilround}, \eqref{est:Phivepchi}, \eqref{est:PhiNabtilvepExt},  $|U_{1+2, \nu}(z)| \lesssim \nu^2$ and $|\nabla \Phi_{U_{1+2, \nu}}| \lesssim 1$ for $|z \pm a| \sim \zeta_*$ and the estimate
$$ |\nabla \Phi_{\chi^* \vep}(z)| \lesssim \int |\ln | z - z' | \; \nabla (\vep \chi^*) | \lesssim \frac{|\ln \nu|}{\nu} \big(\|q(\tau)\|_{H^1_\inn} + \|\vep\|_{L^2_{\omega_\nu}} + \|\vep\|_{bd}\big) \lesssim K^3\nu |\ln \nu|.$$
to get 
\begin{align}
\Big|\int_{|z \pm a| \lesssim \zeta_*} \tilde{\Ls} \vep -L(\vep) - NL(\vep)\Big| &\lesssim \int_{|z \pm a| \sim \zeta_*} \big(|\nabla \vep| + |\vep|\big) + \nu^2 \|\nabla \Phi_{\vep \chi^*}\|_{L^\infty}\nonumber\\
& \lesssim \| q(\tau)\|_{H^1_{\inn}} + \|\vep(\tau)\|_{L^2_{\omega_\nu}} + K^3 \nu^3 |\ln \nu|. \label{est:modtmp11}
\end{align}
For the error term, we use the decomposition \eqref{def:EztauMod} and \eqref{est:PsiinL2omega} to get 
\begin{align*}
\Big| \int_{|z \pm a| \lesssim \zeta_*} E dz \Big| &= \Big|\int_{|z \pm a| \lesssim \zeta_*} \Big(\textup{Mod}_0 \phi_0 + \textup{Mod}_1 \phi_1\Big) dz \Big| \\
& \quad  + \Oc \Big(\big|\textup{Mod}_a\big|\sum_{i=1}^2\int_{|z \pm a| \sim \zeta_* } \big( |U_{i, \nu}| + |\Xi| \big) dz  + \frac{\nu^2}{|\ln \nu|}\Big)\\
& = \Big|\int_{|z \pm a| \lesssim \zeta_*} \Big(\textup{Mod}_0 \phi_0 + \textup{Mod}_1 \phi_1\Big) dz \Big| + \Oc \Big(\nu^2\big|\textup{Mod}_a\big| + \frac{\nu^2}{|\ln \nu|} \Big).
\end{align*}
Using $\int_{\Rb^2} \Lambda U = 0$ and $\int_{\Rb^2}\pa_{1}U = 0$, the decomposition \eqref{id:coercivity:decomposition-phi0} and 
\eqref{id:coercivity:decomposition-phi0check}, we obtain the bound 
\begin{align*}
\Big|\int_{|z \pm a| \lesssim \zeta_*} \phi_0(z) dz\Big| \lesssim \Big| \int_{|z \pm a| \lesssim \zeta_*} \Big[\frac{1}{\nu^2}\Lambda U_{\nu}(z \pm a)  \pm \frac{L_0}{\nu} \pa_1 U_\nu(z \pm a)\Big] dz\Big| + \Big| \int_{|z \pm a| \lesssim \zeta_*} \check{\phi}_0(z) dz \Big| \lesssim 1.  
\end{align*}
From \eqref{def:phii_inn} and the fact that the term $T_2^{(1)}$ consists of a radial component $T_{2,0}^{(1)}$ which behaves like $\frac{1}{|y_\pm|^2}$ for $|y_{\pm}| \gg 1$, combined with the cancellation $\int_{\Rb^2} \Lambda U = \int_{\Rb^2} \pa_1 U =  0$, we end up with the estimate 
\begin{align*}
\Big|\int_{|z \pm a| \lesssim \zeta_*} \phi_1(z) dz\Big| = \Big| \int_{|z \pm a| \lesssim \zeta_*} \frac{1}{\nu^2} T_{2,0}^{(i)}\big( \frac{z \pm a}{\nu} \big) dz \Big| + \Oc(1) \sim |\ln \nu|. 
\end{align*}
Collecting all the estimates yields 
\begin{equation}\label{eq:Mod1tmp}
\big| \textup{Mod}_1\big| \lesssim \frac{1}{|\ln \nu|} \Big(\| q(\tau)\|_{H^1_{\inn}} + \|\vep(\tau)\|_{L^2_{\omega_\nu}} + \big| \textup{Mod}_0 \big| + \nu^2 \big| \textup{Mod}_a\big| + \frac{\nu^2}{|\ln \nu|} \Big),
\end{equation}
which concludes the proof of \eqref{eq:Mod1}. \\

\underline{$\textup{Mod}_0$  equation:} The derivation of \eqref{eq:Mod0} is proceeded similarly by writing from the local orthogonality \eqref{eq:orthog} and equation \eqref{eq:vep} of $\vep$,
\begin{align*}
0 &= \frac{d}{d\tau} \int_{|z \pm a | \lesssim \zeta_*} \vep \Lambda U_\nu (z \pm a) dz = \int_{|z \pm a| \lesssim \zeta_*} \big(\tilde{\Ls} \vep -L(\vep) - NL(\vep) + E\big)\Lambda U_\nu (z \pm a) dz\\
& + \frac{\nu_\tau}{\nu} \int_{|z \pm a| \lesssim \zeta_*} \vep \nu \pa_\nu \Lambda U_\nu(z \pm a) dz \pm a_\tau \int_{|z \pm a| \lesssim \zeta_*} \vep \nabla_a\Lambda U_\nu(z \pm a) dz.
\end{align*}
For the main linear term, small linear and nonlinear terms, we integrate by parts and estimate similarly as in \eqref{est:modtmp11}, then use Cauchy-Schwarz,
\begin{align*}
&\Big| \int_{|z \pm a| \lesssim \zeta_*} \big(\tilde{\Ls} \vep -L(\vep) - NL(\vep)\big)\Lambda U_\nu (z \pm a) dz \Big|\\
&  \qquad \lesssim \int_{|z \pm a| \sim \zeta_*} \big( |\nabla \vep| + |\vep| + \nu^2 \| \nabla \Phi_{\vep \chi^*}\|_{L^\infty} \big)|\Lambda U_\nu (z \pm a)| dz\\
& \quad \qquad  + \int_{|z \pm a| \lesssim \zeta_*} \big( |\nabla \vep| + |\vep| + \nu^2 \| \nabla \Phi_{\vep \chi^*}\|_{L^\infty} \big)|\nabla_a \Lambda U_\nu (z \pm a)| dz\\
& \qquad \lesssim \nu^2 \Big( \| q(\tau)\|_{H^1_{\inn}} + \|\vep(\tau)\|_{L^2_{\omega_\nu}} + K^3 \nu^3 |\ln \nu| \Big) \\
& \quad \qquad + \nu^{-4}\Big( \| q(\tau)\|_{H^1_{\inn}} + \|\vep(\tau)\|_{L^2_{\omega_\nu}} + K^3 \nu^3 |\ln \nu| \Big),
\end{align*}
where we used $\omega_\nu(z) \sim \sum_{\pm} (\nu + |z \pm a|)^4$ for $|z \pm a| \lesssim \zeta_*$ and the estimate 
$$\int_{|z \pm a| \lesssim \zeta_*} | \nabla \Lambda U_\nu(z\pm a)|^2 \omega_\nu^{-1} dz \lesssim \sum_\pm \int_{|z \pm a| \lesssim \zeta_*} \frac{\nu^4}{ (\nu + |z \pm a|)^{10}} \frac{1}{(\nu + |z \pm a|)^4} dz \lesssim \nu^{-8}.$$
The time derivative terms are estimated by Cauchy-Schwarz, 
\begin{align*}
&\Big|\frac{\nu_\tau}{\nu} \int_{|z \pm a| \lesssim \zeta_*} \vep \nu \pa_\nu \Lambda U_\nu(z \pm a) dz \pm a_\tau \int_{|z \pm a| \lesssim \zeta_*} \vep \nabla_z\Lambda U_\nu(z \pm a) dz\Big| \\
& \lesssim \Big|\frac{\nu_\tau}{\nu}\Big| \| \vep\|_{L^2_{\omega_\nu}} \Big(\sum_\pm \int_{|z \pm a| \lesssim \zeta_*} |\Lambda^2 U_\nu|^2 \omega_\nu^{-1} dz  \Big)^\frac{1}{2} + |a_\tau|\| \vep\|_{L^2_{\omega_\nu}} \Big(\sum_\pm \int_{|z \pm a| \lesssim \zeta_*} |\nabla_a \Lambda U_\nu|^2 \omega_\nu^{-1} dz  \Big)^\frac{1}{2}\\
& \lesssim \| \vep\|_{L^2_{\omega_\nu}} \Big(\Big|\frac{\nu_\tau}{\nu}\Big|\nu^{-3}  +|a_\tau|\nu^{-4} \Big). 
\end{align*}
For the error term, we recall from \eqref{def:phii} the estimate for $|z \pm a| \lesssim \zeta_*$,
$$\phi_i(z) \sim \sum \frac{1}{\nu^2} \Lambda U_\nu(z \pm a) + \sum \frac{1}{\nu} \pa_1 U_\nu (z \pm a)  \sim \sum \frac{1}{(\nu + |z \pm a|)^4} + \sum\frac{\nu^2}{(\nu + |z \pm a|)^5}.$$
We then estimate
\begin{align*}
&\Big|\int_{|z \pm a| \lesssim \zeta_*} E \Lambda U_\nu(z \pm a) dz  \Big| = \sum_{i=0}^1\Big| \textup{Mod}_i \int_{|z \pm a| \lesssim \zeta_*} \phi_i \Lambda U_\nu (z \pm a) dz \Big| \\
& \qquad + \Oc \Big( \big|\textup{Mod}_a\big| \sum_{i=0}^1 \sum_\pm \int_{|z \pm a| \lesssim \zeta_*} \big(|U_{i, \nu}| + |\Xi|\big) | \nabla \Lambda U_\nu(z \pm a)| dz + \int_{|z\pm a| \lesssim \zeta_*} |\Psi| |\Lambda U_\nu(z \pm a)| dz \Big)\\
& \sim  \sum_{i=0}^1 \big|\textup{Mod}_i \big| \Big(\sum_\pm \int_{|z \pm a| \lesssim \zeta_*} \frac{\nu^2}{(\nu + |z \pm a|)^8}dz + \sum_\pm \int_{|z \pm a| \lesssim \zeta_*} \frac{\nu^4}{(\nu + |z \pm a|)^9} dz  \Big)\\
& \qquad + \Oc\Big( \big| \textup{Mod}_a\big|   \sum_{\pm}\int_{|z \pm a| \lesssim \zeta_*} \frac{\nu^4}{(\nu + |z \pm a|)^9}  dz  + \|\Psi \|_{L^2_{\omega_\nu}} \Big( \sum_\pm \int_{|z \pm a| \lesssim \zeta_*} \frac{\nu^4}{(\nu + |z \pm a|)^{12}} dz   \Big)^\frac{1}{2} \Big)\\
& \sim \nu^{-4} \sum_{i=0}^1 \big|\textup{Mod}_i \big| +  \Oc\Big( \nu^{-3}\big| \textup{Mod}_a\big|  + \nu^{-3} \|\Psi \|_{L^2_{\omega_\nu}} \Big).
\end{align*}
A collection of the above estimates and using the bounds \eqref{est:PsiinL2omega}, $|\nu_\tau/\nu| \lesssim 1/|\ln \nu|, |a_\tau| \lesssim \nu$ yield 
\begin{equation}\label{eq:Mod0tmp}
\sum_{i=0}^1 \big|\textup{Mod}_i \big| \lesssim \| q(\tau)\|_{H^1_{\inn}} + \|\vep(\tau)\|_{L^2_{\omega_\nu}} + \nu \big| \textup{Mod}_a\big|  +K^3 \nu^3 |\ln \nu|. 
\end{equation}

\underline{$\textup{Mod}_a$  equation:} We now prove the equation \eqref{eq:Moda}. The derivation of \eqref{eq:Moda} is entirely similar as for \eqref{eq:Mod0tmp} and \eqref{eq:Mod1tmp} by using the orthogonality \eqref{eq:orthog} and equation \eqref{eq:vep}, integration by parts and Cauchy-Schwarz. We omit the detail here and only write the outcome estimate
\begin{equation}\label{eq:Modatmp}
\nu \big| \textup{Mod}_a\big| \lesssim  \| q(\tau)\|_{H^1_{\inn}} + \|\vep(\tau)\|_{L^2_{\omega_\nu}} + \sum_{i=0}^1 \big|\textup{Mod}_i \big| + \frac{\nu^2}{|\ln \nu|^2}. 
\end{equation}
Putting together the three estimates \eqref{eq:Mod1tmp}, \eqref{eq:Mod0tmp} and \eqref{eq:Modatmp} yields desired equations and concludes the proof of Lemma \ref{lemm:mod}. 
\end{proof}

%
%

\section{Analysis in the bootstrap regime} \label{sec:energy-estimates}

\subsection{Matched-scalar product based energy estimate}\label{subsec:energy-matched-scalar}
\subsubsection{Further decomposition and energy identity}
The orthogonality condition \eqref{eq:orthog} also allows us to derive a dynamical system driving the law of blowup as in Lemma \ref{lemm:mod} below. However, such a local condition \eqref{eq:orthog} is not enough, at the linear level, to control $\vep$ using an energy estimate due to the modulation terms which do not satisfy a suitable estimate (the Mod terms appearing in \eqref{def:EztauMod}). To overcome this issue, we use the projection \eqref{id:coercivity-definition-projection} and decompose
\begin{equation}\label{def:vephat}
\vep = \hat \vep+a_0 \phi_0,
\end{equation}
where $\hat \vep$ satisfies the orthogonality in the self-similar zone
\begin{equation}\label{eq:orthog_global}
\langle \hat \vep, \phi_0 \rangle_\ast  = 0.
\end{equation}
The uniqueness and the properties of this decomposition can be found in Proposition \ref{prop:decomp_vephat}, and imply the estimates
\begin{equation}\label{est:a0a1}
|a_0|  \lesssim \min \left(\frac{1}{\sqrt{\ln \nu}}  \|  \vep\|_{L^2_{\omega_\nu }}, \frac{1}{|\ln \nu|} \| \nabla \vep\|_{L^2_{\omega_\nu }}\right),
\end{equation}
and the equivalence
\begin{equation} \label{est:equivvep_vephat}
\|\vep \|^2_{\omega_\nu} \approx \langle \vep, \vep \rangle_\ast \approx \langle \hat \vep, \hat \vep \rangle_\ast.
\end{equation}
We compute
$$
\frac{d}{d\tau} \langle \hat \vep ,\hat \vep \rangle_* =\frac{\nu_\tau}{\nu} \nu \partial_\nu (\langle \hat \vep,\hat \vep \rangle_*)+\frac{a_\tau}{\nu} \nu \partial_a (\langle \hat \vep,\hat \vep \rangle_*)+2\langle \widehat{\partial_\tau \vep},\hat \vep\rangle_*,
$$
(we remark that $\widehat{\partial_\tau \vep}$ is the projection of $\partial_\tau \varepsilon$, so $ \widehat{\partial_\tau \vep}\neq \partial_\tau \hat \vep$), and use the estimate \eqref{bd:coercivity-partial-nu-adapted-norm-hat-u} to bound the first term by
$$
\left| \frac{\nu_\tau}{\nu} \nu \partial_\nu (\langle \hat \vep,\hat \vep \rangle_*)+\frac{a_\tau}{\nu} \nu \partial_a (\langle \hat \vep,\hat \vep \rangle_*)\right|\lesssim \frac{|\nu_\tau|+|a_\tau|}{\nu} \| \varepsilon \|_{L^2_{\omega_\nu}}^2,
$$
and the orthogonality \eqref{id:coercivity-orthogonalite-projection} to obtain that $\langle \widehat{\partial_\tau \vep},\hat \vep\rangle_*=\langle \partial_\tau \vep,\hat \vep\rangle_*$. Combining and injecting \eqref{def:NLvep} and \eqref{bd:coercivity-control-adapted-norm-hatu-2}, we arrive at the following energy identity for the adapted norm of the projection:
\begin{equation}\label{eq:hatvep}
\frac 12 \frac{d}{d\tau} \langle \hat \vep ,\hat \vep \rangle_*  +O\left(\frac{|\nu_\tau|+|a_\tau|}{\nu}\langle \hat \vep ,\hat \vep \rangle_*\right) =\langle  \tilde \Ls^z \vep  - L(\vep)  - NL(\vep) + E ,\hat \vep \rangle_*.
\end{equation}
Thanks to the global orthogonality \eqref{eq:orthog_global}, we can eliminate the Mod terms in $E$ once performing a standard energy estimate. Once the energy estimate for $\hat \vep$ is established, we can directly derive a similar estimate for $\vep$ thanks to Proposition \ref{prop:decomp_vephat}.

\subsubsection{$L^2_{\omega_\nu}$ energy estimate}
This section is devoted to the main energy estimate in $L^2_{\omega_\nu}$, where we crucially rely on the equivalence \eqref{est:equivvep_vephat}, the global coercivity of the linearized operator for the adapted scalar product \eqref{bd:coercivity-Lztilde-matched-product} to gain the dissipation, and the global orthogonality \eqref{eq:orthog_global} to cancel out the Mod terms in $E$. We claim the following. 
\begin{lemma}[$L^2_{\omega_\nu}$-estimate]  \label{lemm:L2energy} Let $\vep$ be a solution to \eqref{eq:vep} and satisfy the bootstrap bounds in Definition \ref{def:bootstrap} for $\tau \in [\tau_0, \tau_1]$. Then, we have the following estimate for $\tau \in [\tau_0, \tau_1]$:
\begin{equation}\label{est:energyL2_form}
 \frac{d}{d\tau} \langle \hat \vep, \hat \vep\rangle_\ast \leq -\delta_0 \langle \hat \vep, \hat \vep\rangle_\ast  + C\Big( |\textup{Mod}_1| +  \nu^2 |\textup{Mod}_a| + \frac{\nu^2}{|\ln \nu|^2}\Big)^2.
\end{equation}
for some $\delta_0 > 0$, and $C  = C(\zeta_*, \zeta^*)> 0$. 
\end{lemma}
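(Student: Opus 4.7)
\textbf{Proof plan for Lemma \ref{lemm:L2energy}.} The starting point is the energy identity \eqref{eq:hatvep}, which decomposes as linear, small linear, nonlinear, and source contributions. The first step will be to exploit the coercivity of the linearized operator on the projected remainder. Writing $\vep = \hat\vep + a_0 \phi_0$, the dissipative term becomes $\langle \tilde\Ls^z \vep, \hat\vep\rangle_* = \langle \widehat{\tilde\Ls^z u}, \hat u \rangle_*$ after using the orthogonality $\langle \hat\vep,\phi_0\rangle_* = 0$. Applying Proposition \ref{Prop:coercive-Lz-global-second} then yields
\begin{equation*}
\langle \tilde\Ls^z \vep, \hat\vep\rangle_* \leq -\delta \Big( \|\nabla \vep\|_{L^2_{\omega_\nu}}^2 + \sum_\pm \Big\| \frac{\vep}{\nu + |z\pm a|}\Big\|_{L^2_{\omega_\nu}}^2 + \|\langle z\rangle \vep\|_{L^2_{\omega_\nu}}^2\Big).
\end{equation*}
By the norm equivalence \eqref{bd:coercivity-control-adapted-norm-hatu-2} this gives, in particular, a dissipation of size $-\delta \langle \hat\vep, \hat\vep\rangle_*$ plus a strong weighted $H^1$ dissipation kept in reserve to absorb the perturbative contributions.

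Next I would estimate the source contribution $\langle E, \hat\vep\rangle_*$. Using the decomposition \eqref{def:EztauMod} of $E$, the key observation is that the $\textup{Mod}_0\,\phi_0$ term drops out thanks to the global orthogonality $\langle \hat\vep,\phi_0\rangle_*=0$. The remaining modulation terms $\textup{Mod}_1\,\phi_1$ and $\textup{Mod}_a\cdot(\nabla U_{1,\nu}-\nabla U_{2,\nu})$ are bounded by Cauchy--Schwarz in the adapted inner product (continuity estimate \eqref{matchedscalarproduct1:continuity}); for $\phi_1$ the pointwise bound \eqref{est:pointwise_phii} gives $\|\phi_1\|_{L^2_{\omega_\nu}}\lesssim 1$, while $\|\nabla U_{i,\nu}\|_{L^2_{\omega_\nu}}\lesssim \nu^{-1}$ by direct computation. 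The term $\langle \Psi,\hat\vep\rangle_*$ is controlled by \eqref{matchedscalarproduct1:continuity} and the smallness $\|\Psi\|_{L^2_{\omega_\nu}}\lesssim \nu^2/|\ln\nu|$ from \eqref{est:PsiinL2omega}. A Young inequality $2ab \leq \kappa a^2 + \kappa^{-1} b^2$ with $a=\|\hat\vep\|_*$ and $b$ the combined modulation/error contribution then yields a bound $-\tfrac{\delta}{2}\langle\hat\vep,\hat\vep\rangle_* + C(|\textup{Mod}_1| + \nu^2|\textup{Mod}_a| + \nu^2/|\ln\nu|)^2$, matching the shape of the right-hand side of \eqref{est:energyL2_form}.

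The hardest part will be controlling the small linear term $\langle L(\vep),\hat\vep\rangle_*$ and the nonlinear term $\langle NL(\vep),\hat\vep\rangle_*$, which must be shown to be absorbable by the weighted $H^1$ dissipation. For $L(\vep)$, by \eqref{def:Lep} each summand couples $\vep$ or $\tilde W$ with a gradient and with a Poisson field of the other factor; the strategy is to integrate by parts to put one derivative on $\hat\vep$, then use pointwise bounds $|\tilde W| + (\nu+|z\pm a|)|\nabla\tilde W|\lesssim \nu^2\sum_\pm(\nu+|z\pm a|)^{-4}\langle z\rangle^C$ coming from Proposition \ref{prop:E0} together with the Hardy inequality \eqref{bd:hardy-poincare-H1-omeganu} and the bound $\|\nabla\Phi_{\chi^*\vep}\|$ of Hardy-Littlewood type. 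The outcome should be $|\langle L(\vep),\hat\vep\rangle_*|\lesssim \nu^2\sqrt{|\ln\nu|}\,\|\vep\|_{H^1_{\omega_\nu}}^2$, small compared to the dissipation thanks to the factor $\nu^2$. For $NL(\vep)$, the gain is cubic in $\vep$, exploiting the bootstrap bounds \eqref{bootstrap:L2omega_nu}, \eqref{bootstrap:Linf_outer} to trade one $\vep$ for a small $L^\infty$ factor of order $K^3\nu^2$ in the outer zone and $K^4\nu^2/|\ln\nu|$ in the inner zone via the $H^1_\inn$ / $H^2_\inn$ control and Sobolev embedding, again rendering it negligible against the dissipation.

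Finally I would absorb the time-derivative commutator term $O(\tfrac{|\nu_\tau|+|a_\tau|}{\nu}\langle\hat\vep,\hat\vep\rangle_*)$ from \eqref{eq:hatvep} using the a priori bounds $|\nu_\tau/\nu|\lesssim 1/|\ln\nu|$ and $|a_\tau|\lesssim \nu$, which make this coefficient $o(1)$ and thus absorbable into $-\delta_0\langle\hat\vep,\hat\vep\rangle_*$ for $\nu$ small enough. Collecting all contributions and reducing $\delta$ to some strictly smaller $\delta_0>0$ produces \eqref{est:energyL2_form}. The main technical care will be in choosing the split between the dissipation budget and the Young inequality constants so that \emph{all} small linear, nonlinear and commutator terms fit within the coercive dissipation, which is the step where the robustness of the matched scalar product from Section~6 is genuinely essential.
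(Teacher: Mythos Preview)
Your overall architecture is right, but the treatment of the modulation terms in $\langle E,\hat\vep\rangle_*$ contains a genuine gap that would prevent the estimate from closing.

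The claim $\|\phi_1\|_{L^2_{\omega_\nu}}\lesssim 1$ is false: from \eqref{est:pointwise_phii} one has $|\phi_1|\sim (\nu+|z\pm a|)^{-4}$ near $\pm a$, and since $\omega_\nu\sim (\nu+|z\mp a|)^4$ there, a direct computation gives $\|\phi_1\|_{L^2_{\omega_\nu}}\sim \nu^{-1}$. Naive Cauchy--Schwarz therefore produces $|\textup{Mod}_1|\,\nu^{-1}\|\vep\|_{\omega_\nu}$, which after Young leaves $\nu^{-2}|\textup{Mod}_1|^2$ on the right-hand side---far too large. The paper first uses the orthogonality $\langle\hat\vep,\phi_0\rangle_*=0$ to rewrite $\langle\phi_1,\hat\vep\rangle_*=\langle\phi_1-\phi_0,\hat\vep\rangle_*$, then decomposes $\phi_1-\phi_0$ via \eqref{def:phii} into a leading $\frac{L_1-L_0}{\nu}\partial_1 U_\nu(z\pm a)$ piece plus a remainder of size $(\nu+|z\pm a|)^{-2}$. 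The remainder has bounded $L^2_{\omega_\nu}$ norm, and the leading piece is handled by the algebraic cancellation $\Ms_0(\partial_1 U)=0$ through the identity \eqref{id:projection-matched-scalar-partialxU1nu}, which kills the singular contribution. This is what yields $|\langle\phi_1-\phi_0,\hat\vep\rangle_*|\lesssim \|\vep\|_{\omega_\nu}+\|\nabla\vep\|_{\omega_\nu}$ with constant independent of $\nu$.

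The same issue arises for $\textup{Mod}_a$. Your estimate $\|\nabla U_{i,\nu}\|_{L^2_{\omega_\nu}}\lesssim\nu^{-1}$ is also wrong (it is $\lesssim 1$), but even the correct bound gives only $|\textup{Mod}_a|\,\|\vep\|_{\omega_\nu}$ by continuity, producing $|\textup{Mod}_a|^2$ on the right. Since $|\textup{Mod}_a|\sim\nu/|\ln\nu|$ in the bootstrap, this is of order $\nu^2/|\ln\nu|^2$, not $\nu^4/|\ln\nu|^2$, and the bootstrap cannot close. The paper again exploits $\Ms_0(\nabla U)=0$ to gain the missing $\nu^2$ factor, obtaining $|\langle\nabla U_{1,\nu}-\nabla U_{2,\nu},\hat\vep\rangle_*|\lesssim\nu^2(\|\vep\|_{\omega_\nu}+\|\nabla\vep\|_{\omega_\nu})$. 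A related subtlety you do not address: in estimating $\langle L\vep,\hat\vep\rangle_*$ and $\langle NL(\vep),\hat\vep\rangle_*$, the paper splits $\hat\vep=\vep-a_0\phi_0$ and handles the $a_0\langle\cdot,\phi_0\rangle_*$ part via the identity \eqref{id:projection-matched-scalar-LambdaU1nu}, since $\|\phi_0\|_{L^2_{\omega_\nu}}\sim\nu^{-1}$ makes naive continuity fail there too. These kernel cancellations of $\Ms_0$ are not optional refinements but the mechanism that makes the adapted scalar product effective.
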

 \begin{proof} We estimate all the terms in the right hand side of \eqref{eq:hatvep}. As for the main linear term, we recall from Proposition \ref{Prop:coercive-Lz-global-second} and the fact that $\langle \tilde{\Ls}^z \vep, \hat \vep \rangle_\ast = \langle \widehat{ \tilde{\Ls}^z \vep}, \hat \vep \rangle_\ast$ the coercivity estimate
\begin{align}\label{est:spectralgap_hateps}
\langle \tilde{\Ls}^z \vep, \hat \vep \rangle_\ast &  \leq - \delta \left[ \langle \hat \vep, \hat \vep \rangle_\ast +  \int \big(  \sum_\pm \frac{|\vep|^2}{\nu^2 + |z \pm a|^2} +  \langle z \rangle^2 \vep^2  + |\nabla \vep|^2\big) \omega_\nu\right]  \quad \textup{for some $\delta > 0$}.
\end{align}
We now use this coercivity estimate and the equivalence of the norm \eqref{est:equivvep_vephat} to control all the remaining terms in \eqref{eq:hatvep}. \\

\paragraph{\underline{\it The small linear term}:} From the pointwise estimate \eqref{est:pointwise_Xi} of $\Xi$, the improved pointwise estimate \eqref{est:pointwise_phi1m0} and the relation $|\alpha| \sim \nu^2$, we have the rough bound
\begin{equation}\label{est:Wtilround}
k \in \mathbb{N}, \quad \big| \nabla^k \tilde W(z)\big| \lesssim \big|\nabla^k\big(\Xi(z) + \alpha(\phi_1 - \phi_0)\big)\big| \lesssim \sum_\pm\frac{\nu^2}{(\nu + |z \pm a|)^{2 + k}}, \quad |\nabla \Phi_{\tilde W \chi^*}| \lesssim \nu |\ln \nu|.
\end{equation}
Let us write from the decomposition \eqref{def:vephat},
\begin{align*}
\langle L\vep, \hat \vep \rangle_\ast  &=  \langle L\vep, \vep \rangle_\ast + a_0 \langle L\vep, \phi_0 \rangle_\ast,
\end{align*}
where we recall $\langle , \rangle_\ast$ the adapted scalar product defined by \eqref{def:scalar12_intro}, and the definition of $L$ from \eqref{def:Lep} the expression 
\begin{align*}
L(\vep) &= \nabla.\Big[ \vep \nabla \Phi_{\tilde W \chi^*} + \tilde W \nabla \Phi_{\vep \chi^*} + \tilde W \nabla \Phi_{\tilde \vep (1 - \chi^*)} \Big]\\
& = \nabla \vep . \nabla \Phi_{\tilde W \chi^*} -  2 \chi^* \tilde W \vep + \nabla \tilde W . \big( \nabla \Phi_{\vep \chi^*}  + \nabla \Phi_{\tilde \vep (1 - \chi^*)}\big)  - \tilde W \tilde \vep  (1 - \chi^*).
\end{align*}
Since estimates concerning $\tilde W$  already has a gain of $\nu^2$ factor, so the estimate for $\langle L\vep, \vep \rangle_\ast $ is simply absorbed by the dissipation. From the continuity estimate \eqref{matchedscalarproduct1:continuity} and the bound \eqref{est:Wtilround}, we get 
\begin{align*}
|\langle \nabla \vep . \nabla \Phi_{\tilde W \chi^*}, \vep \rangle_\ast| \lesssim \|\nabla \Phi_{\tilde W \chi^*}\|_\infty \|\nabla \vep\|_{\omega_\nu}\|\vep \|_{\omega_\nu} \lesssim \nu |\ln \nu| \; \| \big( \nabla \vep\|_{L^2_{\omega_\nu}}^2 + \|\vep \|^2_{\omega_\nu}\big).
\end{align*}
We estimate from the convolution formula and Cauchy-Schwarz,
\begin{equation} \label{est:Phivepchi}
|\Phi_{\chi^* \vep}(z)| \lesssim \int \big|\ln |z - z'| \vep(z')\big| \chi^* dz' \lesssim \|\vep\|_{L^2_{\omega_\nu}} \Big(\int_{|z'| \leq 2 \zeta^*} \ln^2|z - z'| \omega_\nu^{-1} dz'  \Big)^\frac{1}{2} \lesssim \frac{|\ln \nu|}{\nu} \|\vep\|_{L^2_{\omega_\nu}},
\end{equation}
and 
\begin{equation}\label{est:PhiNabvepchi}
|\nabla \Phi_{\chi^* \vep}(z)| \lesssim \int_{|z'| \leq 2 \zeta^*} \big|\ln |z - z'| \big( \nabla \vep(z') + \vep(z')\nabla \chi^*(z')\big)\big|  dz' \lesssim \frac{|\ln \nu|}{\nu} \big( \|\nabla \vep\|_{L^2_{\omega_\nu}} + \|\vep\|_{L^2_{\omega_\nu}}\big).
\end{equation}
We have from Lemma \ref{lemm:PointwisePoisonField} with $\gamma = 3/2$ and $\sigma = 1/8$ and  
\begin{equation}\label{est:PhiNabtilvepExt}
|\nabla \Phi_{\tilde \vep(1 - \chi^*)}(z)| \lesssim \|\tilde \vep (1 - \chi^*)|z|^\frac{3}{2}\|_{\infty} \Big(|z|^{-\sigma} + \langle z \rangle^{-3/2}|z|\Big) \lesssim \|\vep\|_\out  \Big(|z|^{-1/8} + \langle z \rangle^{-3/2}|z|\Big).
\end{equation}

Using the estimate \eqref{est:Wtilround}, \eqref{est:Phivepchi}, the definition of $\langle, \rangle_\ast$ and Cauchy-Schwarz, we obtain
\begin{align*}
|\langle \chi^* \tilde W \vep, \vep \rangle_\ast | &\lesssim \nu^2 \int \sum_\pm \frac{|\vep|^2}{(\nu + |z \pm a|)^2} \omega_\nu dz + \int \chi^* \sum_\pm \frac{\nu^2 |\vep|}{(\nu + |z \pm a|)^2} |\Phi_{\vep \chi^*}| dz\\
& \lesssim \nu^2 \int \sum_\pm \frac{|\vep|^2}{(\nu + |z \pm a|)^2} \omega_\nu dz \\
& \quad +  \nu |\ln \nu| \|\vep\|_{L^2_{\omega_\nu}}\left(\int \sum_\pm \frac{|\vep|^2}{(\nu + |z \pm a|)^2} \omega_\nu dz \right)^\frac{1}{2} \left(\int_{|z| \leq 2\zeta^*} \frac{dz}{(\nu + |z \pm a|)^2} \right)^\frac{1}{2}\\
& \lesssim \nu^2 \int \sum_\pm \frac{|\vep|^2}{(\nu + |z \pm a|)^2} \omega_\nu dz +  \nu |\ln \nu|^\frac{3}{2} \|\vep\|_{L^2_{\omega_\nu}}\left(\int \sum_\pm \frac{|\vep|^2}{(\nu + |z \pm a|)^2} \omega_\nu dz \right)^\frac{1}{2}\\
&\lesssim  \nu |\ln \nu|^\frac 32\Big( \int \sum_\pm \frac{|\vep|^2}{(\nu + |z \pm a|)^2} \omega_\nu dz + \|\vep\|^2_{L^2_{\omega_\nu}}\Big).
\end{align*}
For the estimate of the Poisson field, we use the continuity \eqref{matchedscalarproduct1:continuity}, \eqref{est:Wtilround}, \eqref{est:PhiNabvepchi}, \eqref{est:PhiNabtilvepExt} to obtain 
\begin{align*}
&\big|\langle \nabla \tilde W (\nabla \Phi_{\vep \chi^*} + \nabla \Phi_{\tilde \vep (1 - \chi^*)}), \vep \rangle_\ast \big| \lesssim \| \nabla \tilde W . (\nabla \Phi_{\vep \chi^*} + \nabla \Phi_{\tilde \vep (1 - \chi^*)})\|_{\omega_\nu} \|\vep\|_{\omega_\nu}\\
& \qquad \lesssim \nu |\ln \nu| \| \vep\|_{\omega_\nu} (\|\nabla \vep\|_{\omega_\nu} + \|\vep\|_{\omega_\nu}) \sum_\pm \left( \int \frac{\langle z \rangle^C}{(\nu + |z \pm a|)^6} \omega_\nu  \right)^\frac{1}{2} \\
& \qquad \quad + \nu^2 \| \vep\|_{\omega_\nu} \|\vep\|_\out \sum_\pm \left(\int \frac{\langle z \rangle^C |z|^{-1/4}}{(\nu + |z \pm a|)^6} \omega_\nu \right)^\frac{1}{2}\\
& \qquad\lesssim \nu |\ln \nu|^ \frac{3}{2} \| \vep\|_{\omega_\nu} (\|\nabla \vep\|_{\omega_\nu} + \|\vep\|_{\omega_\nu}) + \nu^{2 - 1/8} |\ln \nu| \| \vep\|_{\omega_\nu}\|\vep\|_\out\\
& \qquad \lesssim \nu |\ln \nu|^ \frac{3}{2}\big(\|\nabla \vep\|^2_{\omega_\nu} + \|\vep\|^2_{\omega_\nu} \big) + K^4 \frac{\nu^{6 - 1/8}}{|\ln \nu|},
\end{align*}
where we used the bootstrap bounds \eqref{bootstrap:L2omega_nu} and \eqref{bootstrap:Linf_outer} int the last step. \\
The estimate of the small outer term is obtained from the continuity \eqref{matchedscalarproduct1:continuity} and \eqref{est:Wtilround},  
\begin{align*}
\big|\langle \tilde W \tilde{\vep}(1 - \chi^*), \vep \rangle_\ast \big| \lesssim \|\tilde W(1 - \chi^*)\|_\infty \|\vep\|^2_{\omega_\nu} \lesssim \nu^2\|\vep\|^2_{\omega_\nu}.
\end{align*}
A collection of all the above estimates and taking $\tau_0$ large so that $\nu(\tau_0)$ small yield
\begin{equation}\label{est:tmpL2Lvepvep}
|\langle L \vep, \vep \rangle_\ast| \lesssim \nu |\ln \nu|^\frac{3}{2}\Big(\|\nabla \vep\|^2_{\omega_\nu} + \|\vep\|^2_{\omega_\nu} + \sum_\pm \Big\|\frac{\vep}{\nu + |z \pm a|} \Big\|_{\omega_\nu}^2  \Big) + \frac{\nu^4}{|\ln \nu|^4}.
\end{equation}

\noindent We now estimate $\langle L\vep, \phi_0 \rangle_\ast$, we shall use the divergence structure of $L\vep$, the adapted scalar product \eqref{def:scalar12_intro} and the cancellation $ \nabla \Ms_0 \Lambda U = 0$ to gain a smallness for this estimate. We recall from \eqref{def:phii} and  \eqref{est:pointwise_phiitil}, 
\begin{equation}\label{id:decomp_phi0}
\phi_0(z) =\sum_\pm \Big( - \frac{1}{16\nu^2}\Lambda U_\nu (z \pm a) \pm \frac{L_0}{\nu} \pa_1 U_\nu(z \pm a) \Big)\chi_{_{\pm a, \zeta_*}}(z) + \tilde \phi_0, \quad |\nabla ^k \tilde \phi_0 (z)| \lesssim \sum_\pm \frac{\langle z \rangle ^{C_k}}{(\nu + |z \pm a|)^{2 + k}}. 
\end{equation}
We first handle the leading term $\frac{1}{\nu^2}\Lambda U_\nu(z \pm a)\chi_{_{\pm a, \zeta_*}}$ since the estimate concerning $\tilde{\phi}_0$ is straightforward without using the cancellation $ \nabla \Ms_0 \Lambda U = 0$. We remark that a direct estimate by using \eqref{matchedscalarproduct1:continuity} applied to the leading term is not sufficient since $\|\nu^{-2} \Lambda U_\nu(z \pm a)\|_{\omega_\nu} \lesssim \nu^{-1}$, this is the reason we do need to use the algebraic cancellation $\nabla \Ms_0 \Lambda U = 0$ to improve the estimate concerning the leading term of $\phi_0$.  We have from the identity \eqref{id:projection-matched-scalar-LambdaU1nu} and the divergence of $L\vep$, 
\begin{align*}
& \Big|\nu^{-2} \sum_\pm \langle L \vep, \Lambda U_\nu(z \pm a) \chi_{_{\pm a, \zeta_*}} \rangle_\ast  \Big| = \sum_\pm \Big| -2c_*\int_{|z\pm a| \leq \zeta_*} L \vep  \Big| + \Oc\Big( \sqrt{\ln \nu} \| L\vep\|_{\omega_\nu} \Big)\\
& \qquad \qquad  \lesssim \sum_\pm \int_{|z \pm a| \sim \zeta_*} \Big| \vep \nabla \Phi_{\tilde W \chi^*} + \tilde W \nabla \Phi_{\vep \chi^*} + \tilde W \nabla \Phi_{\tilde \vep (1 - \chi^*)} \Big| + \Oc\Big( \sqrt{\ln \nu} \| L\vep\|_{\omega_\nu} \Big). 
\end{align*} 
We use \eqref{est:Wtilround}, \eqref{est:PhiNabvepchi} and \eqref{est:PhiNabtilvepExt} to estimate 
\begin{align*}
&\sum_\pm \int_{|z \pm a| \sim \zeta_*} \Big| \vep \nabla \Phi_{\tilde W \chi^*} + \tilde W \nabla \Phi_{\vep \chi^*} + \tilde W \nabla \Phi_{\tilde \vep (1 - \chi^*)} \Big| \\
& \qquad  \lesssim \sum_\pm \| \nabla \Phi_{\tilde W \chi^*}\|_{L^\infty(|z \pm a| \sim \zeta_*)} \int_{|z \pm a| \sim \zeta_*} |\vep| + \sum_{\pm}\|\tilde W \nabla \Phi_{\vep \chi^*} + \tilde W \nabla \Phi_{\tilde \vep (1 - \chi^*)}\|_{L^\infty(|z \pm a| \sim \zeta_*)}\\
& \qquad  \lesssim \nu |\ln \nu| \|\vep\|_{\omega_\nu} + \nu^2 \frac{|\ln \nu|}{\nu} \Big( \|\nabla \vep\|_{\omega_\nu} + \|\vep\|_{\omega_\nu}\Big) + \nu^2 \|\vep\|_\out,
\end{align*}
and from \eqref{est:Wtilround} the bounds $\|\nabla \Phi_{\tilde W \chi^*}\| \lesssim \nu |\ln \nu|$, $\|\chi^* \tilde W\| \lesssim 1$,  $\| \nabla \tilde W\|_{\omega_\nu} \lesssim \nu^2 \sqrt{|\ln \nu|}$ and $\|\tilde W\|_{\omega_\nu} \lesssim \nu^2$,
\begin{align*}
\|L \vep\|_{\omega_\nu} &= \big\| \nabla \vep . \nabla \Phi_{\tilde W \chi^*} -  2 \chi^* \tilde W \vep + \nabla \tilde W . \big( \nabla \Phi_{\vep \chi^*}  + \nabla \Phi_{\tilde \vep (1 - \chi^*)}\big)  - \tilde W \tilde \vep  (1 - \chi^*) \big\|_{\omega_\nu}\\
& \lesssim \| \nabla \Phi_{\tilde W \chi^*}\|_\infty \|\nabla \vep\|_{\omega_\nu} + \|\chi^* \tilde W \|_\infty \|\vep\|_{\omega_\nu} + \|\nabla \Phi_{\chi^* \vep}\|_\infty \| \nabla \tilde W\|_{\omega_\nu} \\
& \qquad  + \|\vep\|_\out \| \nabla \tilde{W} (|z|^{-1/8} + \langle z \rangle^{-3/2}|z|)\|_{\omega_\nu} + \|\vep\|_\out \|\tilde W\|_{\omega_\nu}\\
& \lesssim \nu |\ln \nu| \|\nabla \vep\|_{\omega_\nu} + \|\vep\|_{\omega_\nu} + \nu |\ln \nu|^\frac{3}{2} \big(\|\nabla \vep\|_{\omega_\nu} + \|\vep\|_{\omega_\nu} \big) + \|\vep\|_\out \nu^{2 - 1/8} + \nu^2 \|\vep\|_{\out}.
\end{align*}
Using $\|\tilde \phi_0\|_{\omega_\nu} \lesssim 1$, the estimate \eqref{id:projection-matched-scalar-partialx1U1nu-1} $|\langle \nu^{-1} \pa_{1}U_\nu(z \pm a), L \vep \rangle_\ast| \lesssim \nu \| L \vep \|_{\omega_\nu}$, the continuity estimate \eqref{matchedscalarproduct1:continuity}, the smallness estimate \eqref{est:a0a1}, the bootstrap bound \eqref{bootstrap:Linf_outer} and gathering all the obtained estimates yield 
\begin{align*}
\big| a_0 \langle L \vep, \phi_0 \rangle_\ast \big| &\lesssim |a_0|\Big\{ \Big|\nu^{-2} \sum_\pm \langle L \vep, \Lambda U_\nu(z \pm a) \chi_{_{\pm a, \zeta_*}} \rangle_\ast  \Big| + \nu \|L\vep\|_{\omega_\nu} +  \| \vep\|_{\omega_\nu} \|\tilde \phi_0\|_{\omega_\nu} \Big\}\nonumber \\
& \lesssim \frac{1}{|\ln \nu|} \|\nabla \vep\|_{\omega_\nu} \Big\{ \nu |\ln \nu|^2 \|\nabla \vep\|_{\omega_\nu} + \| \vep\|_{\omega_\nu} + \nu^{2 - 1/8}\sqrt{|\ln \nu|} \|\vep\|_\out \Big\} \nonumber \\
& \lesssim \nu |\ln \nu| \| \nabla \vep\|_{\omega_\nu}^2 + \frac{1}{|\ln \nu|} \|\vep\|_{\omega_\nu}  \| \nabla \vep\|_{\omega_\nu} + \frac{\nu^4}{|\ln \nu|^4}.
\end{align*}
We combine this estimate and \eqref{est:tmpL2Lvepvep} to complete the control of the small linear term as
\begin{equation}\label{est:LvepL2tmp}
|\langle L \vep, \hat \vep \rangle| \lesssim \frac{1}{|\ln \nu|} \Big(\|\nabla \vep\|^2_{\omega_\nu} + \|\vep\|^2_{\omega_\nu} + \sum_\pm \Big\|\frac{\vep}{\nu + |z \pm a|} \Big\|_{\omega_\nu}^2  \Big) + \frac{\nu^4}{|\ln \nu|^4}. 
\end{equation}

\medskip

\paragraph{\underline{\it The nonlinear term}:} From the definitions \eqref{def:NLvep} and \eqref{def:vephat}, the continuity estimate \eqref{matchedscalarproduct1:continuity}, we write 
\begin{align*}
&\big| \langle NL(\vep), \hat \vep \rangle_\ast \big| = \big| \langle NL(\vep),  \vep \rangle_\ast  + a_0 \langle NL(\vep), \phi_0 \rangle_\ast\big| \lesssim \|NL(\vep)\|_{\omega_\nu} \|\vep\|_{\omega_\nu} + |a_0| |\langle NL(\vep), \phi_0 \rangle_\ast |.
\end{align*}
For the estimate of $\|NL(\vep)\|_{\omega_\nu}$, we expand $NL(\vep)$ and use \eqref{est:PhiNabvepchi}, \eqref{est:PhiNabtilvepExt}, 
\begin{align*}
\|NL(\vep)\|_{\omega_\nu} &= \|  \nabla \vep \cdot  \nabla \Phi_{\vep \chi^*} + \nabla\vep \cdot \nabla \Phi_{\tilde \vep (1 - \chi^*)} \big) - \vep^2 \chi^* - \vep \tilde{\vep}(1 - \chi^*)\|_{\omega_\nu}\\
& \lesssim \|\nabla \vep\|_{\omega_\nu} \|\nabla \Phi_{\vep \chi^*}\|_{\infty} + \|\nabla \vep (|z|^{-1/8} + \langle z \rangle^{-3/2}|z|)\|_{\omega_\nu} \|\vep\|_{\out} + \|\vep\|_{\omega_\nu} \big(\|\vep \chi^*\|_{\infty} + \|\vep\|_\out\big)\\
& \lesssim  \| \nabla \vep\|_{\omega_\nu}\big( \nu |\ln \nu| + K^3 \nu^{2 - 1/8} \big) + \|\vep\|_{\omega_\nu}\big(\frac{K^3}{|\ln \nu|} + K^3 \nu^2\big)\\
& \lesssim \nu |\ln \nu| \|\nabla \vep\|_{\omega_\nu} + \frac{K^3}{|\ln \nu|} \|\vep\|_{\omega_\nu}.
\end{align*}
where we used the bootstrap bound \eqref{bootstrap:Linf_outer} and the $L^\infty$ estimate from Sobolev and the bootstrap bounds \eqref{bootstrap:H2boundary}, \eqref{bootstrap:H1omega_0}, \eqref{bootstrap:H2omega_0} and \eqref{est:c1c2}, 
\begin{align*}
\| \vep \chi^*\|_\infty \leq \sum_\pm \nu^{-2}\|q^\inn_\pm \mathbf{1}_{\{|y_\pm| \lesssim \zeta^*/\nu}\}\|_\infty \lesssim \nu^{-2}\big(\|\tilde q\|_{H^2_\inn} + \|q\|_{H^1_\inn} + \|\vep\|_{bd}  \big) \lesssim \frac{K^3}{|\ln \nu|}. 
\end{align*}
The estimate for $|\langle NL(\vep), \phi_0 \rangle_\ast |$ is proceeded as for the small linear term. We decompose $\phi_0$ as in \eqref{id:decomp_phi0} to extract the leading term of $\phi_0$  for which we use the identity \eqref{id:projection-matched-scalar-LambdaU1nu} and the divergent form of $NL(\vep)$ to get the estimate 
\begin{align*}
|\langle NL(\vep), \phi_0 \rangle_\ast | &\lesssim \left|\sum_{\pm} \langle NL(\vep), \nu^{-2} \Lambda U_\nu(z \pm a) \chi_{\pm a, \zeta_*} \rangle_\ast\right| + \nu\|NL(\vep)\|_{\omega_\nu} + \|NL(\vep)\|_{\omega_\nu}\|\tilde{\phi}_0\|_{\omega_\nu}\\
&\lesssim \sum_{\pm} \left| -2c_* \int_{|z \pm a| \leq \zeta_*} NL(\vep) + 4c_*\int_{|z \pm a| \leq \zeta_*}NL(\vep)  \right| + \|NL(\vep)\|_{\omega_\nu} \big(\sqrt{\ln \nu} +\nu +  \|\tilde{\phi}_0\|_{\omega_\nu} \big)\\
& \lesssim \sum_\pm \int_{|z \pm a| \sim \zeta_*} \big|\vep \nabla \Phi_{\vep \chi^*} + \vep \nabla \Phi_{\tilde \vep (1 - \chi^*)}  \big| + \|NL(\vep)\|_{\omega_\nu} \big(\sqrt{\ln \nu} + 1\big)\\
& \lesssim \|\vep\|_{\omega_\nu}\big( \|\nabla \Phi_{\vep \chi^*} \mathbf{1}_{\{|z \pm a| \sim \zeta_*\}}\|_\infty + \|\nabla \Phi_{\tilde \vep (1 - \chi^*)} \mathbf{1}_{\{|z \pm a| \sim \zeta_*\}}\|_\infty  \big) + \|NL(\vep)\|_{\omega_\nu}\sqrt{\ln \nu} \\
& \lesssim \|\vep\|_{\omega_\nu} \big( \frac{|\ln \nu|}{\nu} (\|\nabla \vep\|_{\omega_\nu} + \|\vep\|_{\omega_\nu}) + \| \vep\|_\out  \big) + \nu |\ln \nu|^{3/2} \|\nabla \vep\|_{\omega_\nu} + \frac{K^3}{\sqrt{|\ln \nu|}}\|\vep\|_{\omega_\nu}\\
& \lesssim \frac{K^5 \nu^2}{|\ln \nu|^{5/2}}.
\end{align*}
A collection of the above estimates and using the estimate \eqref{est:a0a1} yield 
\begin{equation}\label{est:NLvepL2tmp}
\big| \langle NL(\vep), \hat \vep \rangle_\ast \big| \lesssim \Big( \nu |\ln \nu| \|\nabla \vep\|_{\omega_\nu} + \frac{K^3}{|\ln \nu|} \|\vep\|_{\omega_\nu}\Big) \|\vep\|_{\omega_\nu} + \frac{K^5 \nu^2}{|\ln \nu|^{5/2}} \frac{1}{|\ln \nu|} \|\nabla \vep\|_{\omega_\nu} \lesssim \frac{K^5 \nu^4}{|\ln \nu|^{9/2}}. 
\end{equation} 

\medskip

\paragraph{\underline{\it The error term}:} We use the decomposition \eqref{def:EztauMod}, the orthogonality \eqref{eq:orthog_global}  to write 
\begin{align*}
|\langle E, \hat \vep \rangle_\ast| \leq |\textup{Mod}_1| |\langle \phi_1 - \phi_0, \hat \vep \rangle_\ast | + |\textup{Mod}_a| |\langle \nabla U_{1, \nu} + \chi_{-a, \zeta_*}\nabla \Xi - \nabla U_{2, \nu} - \chi_{a, \zeta_*} \nabla \Xi , \hat \vep \rangle_\ast| + |\langle \Psi, \hat \vep\rangle_\ast|.
\end{align*}
We remark that canceling the $\textup{Mod}_0$ term in $E$ is crucial to avoid the loop in the analysis, the $\textup{Mod}_1$ already gains a $\frac{1}{|\ln \nu|}$ and $\textup{Mod}_a$ is small enough. Let recall from \eqref{def:vephat} the decomposition $\hat \vep = \vep + a_0 \phi_0$, and from  \eqref{def:phii} $\phi_1 - \phi_0 = \sum_\pm \frac{L_1 - L_0}{\nu}\pa_1 U_\nu(z \pm a)  + \Oc\Big(\sum_\pm \frac{\langle z \rangle^C}{(\nu + |z\pm a|)^2}\Big)$, then we use the cancellation $\Ms(\pa_1 U) = 0$ and  the identity \eqref{matchedscalarproduct1:continuity} to estimate
\begin{align*}
|\langle \phi_1 - \phi_0, \vep \rangle_\ast |  \lesssim \Big\|\sum_\pm \frac{\langle z \rangle^C}{(\nu + |z\pm a|)^2} \Big\|_{\omega_\nu} \|\vep\|_{\omega_\nu} \lesssim \|\vep\|_{\omega_\nu}. 
\end{align*} 
For the term $a_0 \phi_0$, we proceed as before for the linear small term by using \eqref{id:decomp_phi0}, the identity \eqref{id:projection-matched-scalar-LambdaU1nu}, the estimate \eqref{id:projection-matched-scalar-partialx1U1nu-1} and the continuity estimate \eqref{matchedscalarproduct1:continuity} to obtain
\begin{align*}
|\langle \phi_1 - \phi_0, \phi_0 \rangle_\ast | &\lesssim\Big| \nu^{-2}\sum_\pm \langle \phi_1 - \phi_0, \Lambda U_\nu(z \pm a)\chi_{\pm a, \zeta_*} \rangle_\ast  \Big| \\
& \qquad \qquad \qquad  + \Big|\sum \langle \phi_1 - \phi_0, \nu^{-1}\pa_1 U_\nu(z \pm a) \rangle_\ast \Big| + |\langle \phi_1 - \phi_0, \tilde \phi_0 \rangle_\ast | \\
& \lesssim  \sum_\pm \Big| \int_{|z \pm a| \lesssim \zeta_*} \phi_1 - \phi_0   \Big| +  \sum_{\pm }\Big\| \frac{\phi_1 - \phi_0}{\sqrt{\nu + |z \pm a|}} \Big\|_{\omega_\nu} + \|\phi_1 - \phi_0\|_{\omega_\nu} \big( \nu + \| \tilde \phi_0\|_{\omega_\nu}\big)\\
& \lesssim |\ln \nu|.
\end{align*}
Summing up the above bounds and using \eqref{est:a0a1} yields, 
\begin{equation}\label{est:tmpL2EMod1}
|\textup{Mod}_1| |\langle \phi_1 - \phi_0, \hat \vep \rangle_\ast | \lesssim |\textup{Mod}_1| \big( \|\vep\|_{\omega_\nu} + |a_0| |\ln \nu|\big) \lesssim  |\textup{Mod}_1| \big( \|\vep\|_{\omega_\nu} + \|\nabla \vep\|_{\omega_\nu}\big). 
\end{equation}
For the estimate of the $\textup{Mod}_a$ term, we recall the cancellation $\Ms_0 \nabla U = 0$, from which and the definition of the inner produce $\langle \cdot, \cdot \rangle_\ast$ to write 
\begin{align*}
&|\langle \nabla U_{1, \nu} - \nabla U_{2, \nu}, \hat \vep \rangle_\ast| \lesssim |\langle \nabla U_{1, \nu}(1 - \chi_{a, \zeta_*}) - \nabla U_{2, \nu}(1 - \chi_{-a, \zeta_*}), \hat \vep \rangle_\ast| \equiv \big| \langle G, \hat \vep \rangle_\ast  \big|, 
\end{align*}
where we have a rough pointwise bound from $|\nabla U_\nu(z \pm a)| \lesssim \frac{\nu^2}{(\nu + |z \pm a|)^5}$, 
\begin{align*}
|G| &= | \nabla U_{1, \nu}(1 - \chi_{a, \zeta_*})  - \nabla U_{2, \nu}(1 - \chi_{-a, \zeta_*})| \lesssim \nu^2 \mathbf{1}_{\{|z \pm a| \gtrsim \zeta_*\}}.
\end{align*}
We use this estimate and the decompositions \eqref{def:vephat}, \eqref{id:decomp_phi0}, the continuity estimate \eqref{matchedscalarproduct1:continuity}, the identity \eqref{id:projection-matched-scalar-LambdaU1nu} and \eqref{est:a0a1} to obtain 
\begin{align*}
\big| \langle G, \hat \vep \rangle_\ast  \big| &\lesssim \big| \langle G, \vep \rangle_\ast  \big| + |a_0|\big| \langle G, \nu^{-2}\sum_\pm \Lambda U_\nu(a \pm a)\chi_{\pm a, \zeta_*} \rangle_\ast  \big| + |a_0|\big| \langle G, \tilde \phi_0 \rangle_\ast  \big|\\
& \lesssim \|G\|_{\omega_\nu} \|\vep\|_{\omega_\nu} + |a_0|\left\{ \sum_\pm \int_{|z \pm a| \lesssim \zeta_*} |G| + \Big\| \frac{G}{\sqrt{\nu + |z \pm a|}} \Big\|_{\omega_\nu} +  \|G\|_{\omega_\nu} \|\tilde{\phi}_0\|_{\omega_\nu}\right\}\\
& \lesssim \nu^2  \|\vep\|_{\omega_\nu} + \frac{1}{|\ln \nu|}  \|\nabla \vep\|_{\omega_\nu} \nu^2 \lesssim \nu^2 \big( \|\vep\|_{\omega_\nu} + \|\nabla \vep\|_{\omega_\nu}\big).
\end{align*}
 We then arrive at the estimate 
 \begin{equation}\label{est:tmpL2EModa}
 |\textup{Mod}_a| |\langle \nabla U_{1, \nu}  - \nabla U_{2, \nu}, \hat \vep \rangle_\ast| \lesssim  \nu^2 |\textup{Mod}_a|\big( \|\vep\|_{\omega_\nu} + \|\nabla \vep\|_{\omega_\nu}\big). 
 \end{equation}
It remains to estimate the scalar with $\Psi$ term. Similarly, we use \eqref{def:vephat}, \eqref{id:decomp_phi0},  \eqref{matchedscalarproduct1:continuity},  \eqref{id:projection-matched-scalar-LambdaU1nu}, \eqref{est:Psi_L2omega} and  \eqref{est:a0a1} to write 
\begin{align}
|\langle \Psi, \hat \vep \rangle_\ast| &\leq |\langle \Psi, \vep \rangle_\ast| + |a_0| |\langle \Psi, \phi_0 \rangle_\ast| \nonumber \\
& \leq  |\langle \Psi, \vep \rangle_\ast| + |a_0| \Big( \sum_\pm | \langle \Psi, \nu^{-2} \Lambda U_\nu(z \pm a) \chi_{\pm a, \zeta_*} \rangle_\ast | +   |\langle \Psi, \tilde \phi_0 \rangle_\ast| \Big)\nonumber\\
& \lesssim \|\Psi\|_{\omega_\nu} \|\vep\|_{\omega_\nu} + |a_0| \Big( \Big|\sum_{\pm} \int_{|z\pm a| \lesssim \zeta_*} \Psi \Big|  + \sqrt{|\ln \nu|} \|\Psi\|_{\omega_\nu}  + \|\Psi\|_{\omega_\nu}\|\tilde \phi_0\|_{\omega_\nu}\Big)\nonumber\\
& \lesssim \frac{\nu^2}{|\ln \nu|^2} \| \vep\|_{\omega_\nu} + \frac{1}{|\ln \nu|} \| \nabla  \vep\|_{\omega_\nu} \Big( \frac{\nu^2}{|\ln \nu|} + \frac{\nu^2}{\sqrt{|\ln \nu|}} + \frac{\nu^2}{|\ln \nu|^2} \Big)\nonumber\\
& \lesssim \frac{\nu^2}{|\ln \nu|^2} \big( \| \vep\|_{\omega_\nu} + \| \nabla \vep\|_{\omega_\nu}\big).  \label{est:tmpL2EPsi}
\end{align}
Summing up \eqref{est:tmpL2EMod1}, \eqref{est:tmpL2EModa},  \eqref{est:tmpL2EPsi} yields 
\begin{equation}\label{est:PsivepL2tmp}
|\langle E, \hat \vep\rangle_\ast| \lesssim \big( \| \vep\|_{\omega_\nu} + \| \nabla \vep\|_{\omega_\nu}\big) \Big( |\textup{Mod}_1| +  \nu^2 |\textup{Mod}_a| + \frac{\nu^2}{|\ln \nu|^2}\Big).
\end{equation}

\noindent Plugging all the estimates \eqref{est:LvepL2tmp}, \eqref{est:NLvepL2tmp}, \eqref{est:PsivepL2tmp}, \eqref{est:spectralgap_hateps}  into \eqref{eq:hatvep} and using Cauchy-Schwarz with $\epsilon$ and the equivalence \eqref{est:equivvep_vephat}  yield
\begin{align*}
 \frac{1}{2} \frac{d}{d\tau} \langle \hat \vep, \hat \vep \rangle_\ast &\leq \Big(-\delta  + \Oc\big( \frac{|\nu_\tau| + |a_\tau|}{\nu}\big) \Big)\langle \hat \vep, \hat \vep \rangle_\ast  -\delta \Big( \|\nabla \vep\|_{\omega_\nu}^2 + \| \langle z \rangle \vep\|_{\omega_\nu}^2 + \sum_\pm \big\|  \frac{\vep}{\nu + |z \pm a|}\big\|^2_{\omega_\nu} \Big) \\
& \qquad + \frac{C}{|\ln \nu|} \Big(\|\nabla \vep\|^2_{\omega_\nu} + \|\vep\|^2_{\omega_\nu} + \sum_\pm \Big\|\frac{\vep}{\nu + |z \pm a|} \Big\|_{\omega_\nu}^2  \Big) + C\frac{\nu^4}{|\ln \nu|^4} + \frac{CK^5 \nu^4}{|\ln \nu|^{9/2}}\\
& \qquad + C\big( \| \vep\|_{\omega_\nu} + \| \nabla \vep\|_{\omega_\nu}\big) \Big( |\textup{Mod}_1| +  \nu^2 |\textup{Mod}_a| + \frac{\nu^2}{|\ln \nu|^2}\Big)\\
& \leq \Big(-\frac{\delta}{2}  + \Oc\big( \frac{|\nu_\tau| + |a_\tau|}{\nu}\big) \Big)\langle \hat \vep, \hat \vep \rangle_\ast -\frac{\delta}{2} \Big( \|\nabla \vep\|_{\omega_\nu}^2 + \| \langle z \rangle \vep\|_{\omega_\nu}^2 + \sum_\pm \big\|  \frac{\vep}{\nu + |z \pm a|}\big\|^2_{\omega_\nu} \Big)\\
& \qquad +  C\Big( |\textup{Mod}_1| +  \nu^2 |\textup{Mod}_a| + \frac{\nu^2}{|\ln \nu|^2}\Big)^2\\
& \leq -\frac{\delta}{4}\langle \hat \vep, \hat \vep \rangle_\ast  +  C\Big( |\textup{Mod}_1| +  \nu^2 |\textup{Mod}_a| + \frac{\nu^2}{|\ln \nu|^2}\Big)^2,
\end{align*}
which is the desired estimate \eqref{est:energyL2_form}. This concludes the proof of Lemma \ref{lemm:L2energy}.
\end{proof}

\subsection{$H^1$ estimate in the inner region} \label{subsec:H1-energy-estimate}
We derive in this section a monotonicity formula to control the remainder $\vep(z, \tau)$ in the inner region $|z \pm a|\ll 1$. In particular, we aim at improving the bootstrap bound \eqref{bootstrap:H1omega_0} thanks to the following lemma.
\begin{lemma}[$H^1$-estimate]  \label{lemm:H1energy} Let $\vep$ be a solution to \eqref{eq:vep} and satisfy the bootstrap bounds in Definition \ref{def:bootstrap} for $\tau \in [\tau_0, \tau_1]$. Then, we have the following estimate for $\tau \in [\tau_0, \tau_1]$:
\begin{equation}\label{est:qinH1}
\frac{d}{d\tau} \|q(\tau)\|^2_{H^1_\inn} \leq -\frac{1}{\zeta_*}\|q(\tau)\|^2_{H^1_\inn} + C\Big( \| \vep(\tau)\|^2_{bd} + \|\vep(\tau)\|^2_{L^2_{\omega_\nu}} + \frac{ \nu^4}{|\ln \nu|^4}\Big).
\end{equation}
for some  $C = C(\zeta_*, \zeta^*) > 0$. 
\end{lemma}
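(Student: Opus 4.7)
The plan is to mirror the proof of Lemma~\ref{lemm:L2energy} but carry the energy estimate out in the inner rescaled variables $y_\pm=(z\mp a)/\nu$, where the $\Ms_0$-paired inner product renders $\Ls_0$ self-adjoint and, under the orthogonality conditions \eqref{eq:orthog}, coercive via Lemma~\ref{lem:coercivity-one-bubble-H1-orthogonality}. Multiplying \eqref{eq:vep} by $\nu^2\chi_{\pm a,\zeta_*}$ and accounting for the $\tau$-dependence of $\nu$, $a$, and the cutoff, we first write the evolution of $q^\inn_\pm$ as $\partial_\tau q^\inn_\pm=\Ls_0 q^\inn_\pm+\Rc_\pm$, where $\Rc_\pm$ collects: (i)~the companion-bubble contribution from $U_{\iota_\mp,\nu}$ and $\nabla\Phi_{U_{\iota_\mp,\nu}}$, of size $O(\nu^2)$ on the support of $\chi_{\pm a,\zeta_*}$; (ii)~the scaling piece $-\beta\Lambda\vep$, whose leading contribution in the inner region is a drift of size $\beta|a|/\nu$; (iii)~cutoff commutator terms supported in the annulus $\{|z\mp a|\sim\zeta_*\}$ and controllable by $\|\vep\|_{bd}$; (iv)~modulation terms produced by $\nu_\tau/\nu$ and $a_\tau$ acting on $\chi_{\pm a,\zeta_*}$ and on the prefactor $\nu^2$; and (v)~the localized small linear, nonlinear and source pieces $-\nu^2\chi_{\pm a,\zeta_*}L(\vep)$, $-\nu^2\chi_{\pm a,\zeta_*}NL(\vep)$, and $\nu^2\chi_{\pm a,\zeta_*}E$.

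\paragraph{Energy identity and dissipation.} Differentiating the $H^1_\inn$ norm and using the symmetry $\langle\Ls_0 u,\Ms_0 v\rangle=\langle u,\Ms_0\Ls_0 v\rangle=-\int U\,\nabla\Ms_0 u\cdot\nabla\Ms_0 v\,dy$, the main quadratic contribution becomes
\begin{equation*}
-2\sum_{\pm}\int U|\nabla\Ms_0\Ls_0 q^\inn_\pm|^2\,dy \;-\;2\tfrac{\nu^2}{\zeta_*^2}\sum_{\pm}\int U|\nabla\Ms_0 q^\inn_\pm|^2\,dy,
\end{equation*}
both non-positive. The orthogonality conditions \eqref{eq:orthog} translate into the localized hypotheses required by Lemma~\ref{lem:coercivity-one-bubble-H1-orthogonality} at $R=\zeta_*/\nu$, yielding the spectral gap $\int U|\nabla\Ms_0 q^\inn_\pm|^2\gtrsim \delta\int|\nabla q^\inn_\pm|^2/U$ and similarly for $\Ls_0 q^\inn_\pm$. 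Combined with the support constraint $|y_\pm|\lesssim\zeta_*/\nu$ and a Poincar\'e-type comparison between $\int q^2/U$ and $\int|\nabla q|^2/U$, this upgrades to coercivity over the full $H^1_\inn$ norm at rate $1/\zeta_*$, producing the dissipative term $-\zeta_*^{-1}\|q\|_{H^1_\inn}^2$ claimed in \eqref{est:qinH1}.

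\paragraph{Remainder estimates.} The companion-bubble, cutoff-commutator, and modulation pieces of $\Rc_\pm$ are paired against $\Ms_0 q^\inn_\pm$ and $\Ms_0\Ls_0 q^\inn_\pm$ and controlled by Cauchy--Schwarz after using the decay of $U_{\iota_\mp,\nu}$ on the support of $\chi_{\pm a,\zeta_*}$, the bootstrap bound \eqref{bootstrap:H2boundary} on $\|\vep\|_{bd}$, and the a priori estimates $|\nu_\tau/\nu|\lesssim 1/|\ln\nu|$, $|a_\tau|\lesssim\nu$. The small linear term $L(\vep)$ and nonlinear term $NL(\vep)$ are treated exactly as in Lemma~\ref{lemm:L2energy}, with the extra $\chi_{\pm a,\zeta_*}$ localization only simplifying the estimates; they contribute a factor $O(\nu|\ln\nu|)\|q\|_{H^1_\inn}^2$ (absorbable for $\tau_0$ large) plus a source of size $\nu^4/|\ln\nu|^2$ via the bootstrap bounds \eqref{bootstrap:L2omega_nu}--\eqref{bootstrap:H2omega_0}. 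The source $E$ is decomposed using \eqref{def:EztauMod}, and the modulation contributions $\textup{Mod}_0, \textup{Mod}_1, \textup{Mod}_a$ are controlled via Lemma~\ref{lemm:mod} to yield the $\nu^4/|\ln\nu|^2$ remainder on the right-hand side of \eqref{est:qinH1}.

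\paragraph{Main obstacle.} The technical heart of the argument is the treatment of the scaling drift $-\beta\Lambda\vep$ inside the inner region: since $z\approx\pm a$ on the support of $\chi_{\pm a,\zeta_*}$, this drift produces a contribution $-\beta a\cdot\nabla q^\inn_\pm/\nu$ of size $O(\nu^{-1})$, whose pairing with $\Ms_0 q^\inn_\pm$ is not a priori small compared to $\|q\|_{H^1_\inn}^2$. We would handle this by integration by parts in $y$ against the weight $\Ms_0$ and use the orthogonalities $\int_{|z\mp a|<\eta}\vep\,\partial_j U_{i,\nu}\,dz=0$ from \eqref{eq:orthog} to cancel the non-sign-definite leading coefficient, leaving a bounded multiplicative correction that is absorbed by the dissipation once $\zeta_*$ is taken small. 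The time derivatives of $\chi_{\pm a,\zeta_*}$, which produce the other potential loss of smallness through $\nu_\tau,a_\tau$, are transferred onto $\nabla\chi_{\pm a,\zeta_*}$ via integration by parts and controlled by $\|\vep\|_{bd}$ since $\nabla\chi_{\pm a,\zeta_*}$ is supported where $|z\mp a|\sim\zeta_*$.
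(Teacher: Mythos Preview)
Your strategy is on the right track but has two connected gaps that prevent it from closing. First, the equation $\partial_\tau q^\inn_\pm=\Ls_0 q^\inn_\pm+\Rc_\pm$ is dimensionally incorrect: passing from $z$ to $y_\pm=(z\mp a)/\nu$ produces a factor $\nu^{-2}$ in front of $\Ls_0$, which is why the paper introduces the fast time $s$ with $ds/d\tau=\nu^{-2}$ and works with $\partial_s q=\Ls_0 q+L_1 q-NL(q)+\nu^4 E$. In that frame the dissipation rate is $\nu^2/\zeta_*^2$, not $1/\zeta_*$, and only after converting back to $\tau$ does one obtain the claimed rate. Second, and more seriously, your item~(i) is wrong: the companion-bubble Poisson field $\nabla\Phi_{U_{\iota_\mp,\nu}}$ is $O(1)$ on the support of $\chi_{\pm a,\zeta_*}$ (indeed $\nabla\Phi_{U_{2,\nu}}(a)\approx -2a/|a|^2$), not $O(\nu^2)$. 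This is precisely the piece that must be combined with the scaling drift you flag as the ``main obstacle'': in the $s$-frame the gradient coefficient is $\nu a_\tau-\beta\nu a-\nabla\Phi_U(y+2a/\nu)$, and the equilibrium relation $\beta\approx\alpha=2/|a|^2$ makes the two $O(\nu)$ contributions cancel, leaving $\nu\,\textup{Mod}_a+O(\nu^2)$. Without this cancellation the drift coefficient is $O(\nu)$ against a dissipation of order $\nu^2/\zeta_*^2$, so the drift is \emph{not} absorbable for fixed $\zeta_*$. Your proposed fix via the orthogonality $\langle\vep,\partial_j U_{i,\nu}\rangle_{L^2_{loc}}=0$ cannot work here: that orthogonality secures the coercivity of $\Ms_0$ and $\Ls_0$, but does not neutralize a first-order transport term $a\cdot\nabla q$---integration by parts against $\Ms_0 q^\inn$ leaves a nonvanishing quadratic term $\int q^{\inn 2}\,a\cdot\nabla\Phi_U/U$ with no sign.

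There is a secondary gap in your treatment of the error term. After localization, $\nu^4 E\chi_*$ has leading part $\textup{Mod}_i\cdot\Lambda U(y_\pm)+\nu\,\textup{Mod}_a\cdot\nabla U(y_\pm)$, whose naive pairing with $\Ms_0 q^\inn$ is too large to be absorbed by the $\nu^2/\zeta_*^2$ dissipation. The paper gains a crucial factor $\nu^2$ here by using the algebraic identities $\Ms_0\Lambda U=-2$ and $\Ms_0\nabla U=0$ together with the local mass orthogonality $\int_{|z\mp a|<\eta}\vep\,dz=0$, so that the leading contributions vanish exactly and only the $\tilde\phi_i$ tails and $\Psi$ survive. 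Invoking Lemma~\ref{lemm:mod} alone does not provide this gain.
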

\begin{proof} To ease the notation, we drop $\pm$ in the definition \eqref{def:qpminn} of $q^\inn_\pm(y_\pm, s)$ in this proof and write 
$$q(y,s) = \nu^2 \vep (z, \tau), \quad q^\inn(y,s) = q(y,s)\chi_*(y), \quad y = \frac{z - a}{\nu}, \quad \frac{ds}{d\tau} = \frac{1}{\nu^2},$$
where $\chi_*$ is introduced in \eqref{def:chi-star}. The case $y = \frac{z + a}{\nu}$ is handled in the same way. \\
\paragraph{Energy identities:} From \eqref{eq:vep}, we write the equation for $q(y, s)$, 
\begin{equation}\label{eq:qysH1}
\pa_s q = \Ls_0 q + L_1 q - NL(q) +  \nu^4 E,
\end{equation}
where $\Ls_0$ is defined in \eqref{def:Ls0Ms0} and $L_1$ is a small linear term
\begin{align}
L_1(q) & = \nu^2\Big( \frac{\nu_\tau}{\nu} - \beta \Big) \Lambda q + \Big(\nu a_\tau - \beta \nu a - \nabla \Phi_U(y + 2a/\nu)  \Big).\nabla q \nonumber\\
& \quad - \nabla.\big( q \nabla \Phi_{\nu^2 \tilde W \chi^*} + \nu^2\tilde W \nabla \Phi_{q\chi^*} \big) - \nabla \big(  \nu^2 \tilde W \nabla_{\tilde q (1 - \chi^*)} \big), \label{def:L1qys}
\end{align}
and $NL(q)$ is the nonlinear term 
\begin{equation}\label{def:NLq}
NL(q) = \nabla \cdot\big(q \nabla_{q \chi^*} + q \nabla \Phi_{\tilde q (1 - \chi^*)}\big), \quad \tilde q(y,s) = \nu^2 \tilde{\vep}(z, \tau).
\end{equation}
We multiply the equation of $q$ by $\chi_*$, take a scalar with $\frac{\nu^2}{\zeta_*^2} \Ms_0 q^\inn$ and $-\Ms_0 \Ls_0 q^\inn$ respectively, and use the fact that $\Ms_0$ is symmetric to get 
\begin{align}
\frac{1}{2}\frac{d}{ds}& \left\{- \int q_2^\inn\Ms_0 q^\inn + \frac{\nu^2}{\zeta_*^2}\int q^\inn \Ms_0 q^\inn \right\}  = - \int q_2^\inn \Ms_0 q_2^\inn +\frac{\nu^2}{\zeta_*^2} \int q_2^\inn \Ms_0 q^\inn \qquad \big(q_2^\inn = \Ls_0 q^\inn\big) \nonumber \\
& \qquad \quad  - \int q_2^\inn \Ms_0\big[\big( L_1(q) - NL(q) + \nu^4 E \big)\chi_*\big] \nonumber\\
& \qquad \quad + \frac{\nu^2}{\zeta_*^2} \int q^\inn \Ms_0\big[\big( L_1(q) - NL(q) + \nu^4 E \big)\chi_*\big]  - \frac{\nu^4}{\zeta_*^2} \frac{\nu_\tau}{\nu} \int q^\inn \Ms_0 q^\inn. \label{id:H1estimate}
\end{align}
The local orthogonality condition \eqref{eq:orthog} and Lemma \ref{lem:coercivity-one-bubble-H1-orthogonality} yields the coercivity
\begin{equation}
-\int q_2^\inn\Ms_0 (q^\inn) = \int U |\nabla \Ms_0(q^\inn)|^2 dy \gtrsim  \int \frac{|\nabla (q^\inn)|^2}{U} dy,
\end{equation}
and recall from \eqref{bd:coercivity-one-bubble-H1},
\begin{equation}
\int q^\inn \Ms_0 q^\inn dy \gtrsim \int \frac{|q^\inn|^2}{U}dy.
\end{equation}
We have by Hardy inequality and the support of $q^\inn$, 
\begin{equation}
-\int \frac{|\nabla q^\inn|^2}{U} dy \lesssim - \int \frac{|q^\inn|^2}{(1 + |y|^2) U} dy  \lesssim -\frac{\nu^2}{\zeta_*^2} \int \frac{|q^\inn|^2}{U}. 
\end{equation}
We recall the fact that $\langle f, \Ms_0 f\rangle_{L^2} \geq 0$ if $\int f = 0$  (see again Proposition 2.3 in \cite{RSma14}). The divergence form of $\Ls_0$ already yields the orthogonality $\langle q_2^\inn, 1 \rangle_{L^2} = 0$, hence, 
\begin{equation*}
\int q_2^\inn \Ms_0 q_2^\inn dy \geq 0. 
\end{equation*}
Collecting these inequalities and using the a priori bound $|\nu_\tau/\nu| \lesssim 1/|\ln \nu| \ll 1$ for $\nu(\tau_0) \ll 1$, we rewrite the energy identity \eqref{id:H1estimate} as 
\begin{align}
\frac{1}{2}\frac{d}{ds}& \left\{\int U \big| \nabla \Ms_0 q^\inn \big|^2 + \frac{\nu^2}{\zeta_*^2}\int q^\inn \Ms_0 q^\inn \right\} \leq -\delta\Big( \frac{\nu^2}{ \zeta_*^2} \int \frac{|\nabla q^\inn|^2}{U} + \frac{\nu^4}{\zeta_*^2} \int \frac{|q^\inn|^2}{U} \Big) \nonumber\\
& \qquad \quad  + \left|\int q_2^\inn \Ms_0\big[\big( L_1 (q) - NL(q) + \nu^4 E \big)\chi_*\big] \right| \nonumber\\
& \qquad \quad + \frac{\nu^2}{\zeta_*^2} \left|\int \Ms_0q^\inn \big[\big( L_1(q) - NL(q) + \nu^4 E \big)\chi_*\big] \right|. \label{id:H1estimate1}
\end{align}
Thanks to the dissipation, we now estimate all the remaining terms by using integration by parts, Cauchy-Schwarz inequality and the bootstrap bounds in Definition \ref{def:bootstrap}. We emphasize that  the algebraic  identities $ \Ms_0 \Lambda U = -2$ and $\Ms_0 \nabla U = 0$ and the local orthogonality \eqref{eq:orthog} $\langle q^\inn, 1 \rangle_{L^2_{loc}} = 0$ are crucial in getting a better estimate once the generated error term $E \chi_*$ is concerned. \\
\paragraph{The scaling linear term $\Lambda q$:} We first write
\begin{align*}
\Ms_0 (\Lambda q \chi_*) &= \Ms_0(\Lambda q^\inn - y.\nabla \chi_* q) \\&= y.\nabla \Ms_0 q^\inn + 2\Ms_0 q^\inn  + \frac{q^\inn}{U} y.\nabla \Phi_U - y.\nabla \Phi_{q^\inn} - \Phi_{y.\nabla q^\inn} - \Ms_0(y.\nabla \chi_* q).  
\end{align*}
We then estimate the highest order term by using the divergence form $q_2^\inn = \nabla. (U \nabla \Ms_0 q^\inn)$ and integration by parts
\begin{align*}
\left| \int q_2^\inn y .\nabla \Ms_0 q^\inn + 2 \int q_2^\inn \Ms_0 q^\inn \right| &\leq \left|\int U \nabla \Ms_0 q^\inn . \nabla (y . \nabla \Ms_0 q^\inn ) \right| + 2\int U|\nabla \Ms_0 q^\inn|^2 \\
&\lesssim \int U|\nabla \Ms_0 q^\inn|^2 + \int U |\nabla \Ms_0 q^\inn|^2 \Big| \frac{\nabla.(yU)}{U} \Big| \lesssim \int U|\nabla \Ms_0 q^\inn|^2.
\end{align*}
Using the round bound $|y.\nabla \Phi_U| \lesssim 1$, Cauchy-Schwarz and Hardy inequality, we estimate
\begin{align*}
\left|\int q_2^\inn \frac{q^\inn}{U} y.\nabla \Phi_U \right| &=\left| \int U \nabla \Ms_0 q^\inn . \nabla \big[ (\Ms_0 q^\inn + \Phi_{q^\inn}) y.\nabla \Phi_U\big] \right|\\
&\lesssim \int U|\nabla \Ms_0 q^\inn|^2 + \int U|\nabla \Phi_{q^\inn}|^2 + \int U |\Phi_{q^\inn}|^2|\nabla (y.\nabla \Phi_U)|^2\\
& \lesssim \int U|\nabla \Ms_0 q^\inn|^2.
\end{align*}
Similarly, we estimate for lower order terms by Cauchy-Schwarz and Hardy inequality,
\begin{align*}
\left|\int q_2^\inn \big(y.\nabla \Phi_{q^\inn} + \Phi_{y.\nabla q^\inn} \big) \right| & \lesssim  \int U|\nabla \Ms_0 q^\inn|^2. 
\end{align*}
As for the boundary term, we use the $H^2$-bound \eqref{bootstrap:H2boundary} to estimate 
\begin{align*}
\left| \int q_2^\inn \Ms_0 (y .\nabla \chi_* q)  \right| &\lesssim \int U|\nabla \Ms_0 q^\inn|^2 + \int U |\nabla \Ms_0 (y.\nabla \chi_* q)|^2 \\
& \lesssim \int U|\nabla \Ms_0 q^\inn|^2 + \int_{\nu |y| \sim \zeta_*} \frac{|\nabla q|^2}{U} + \frac{|q|^2}{|y|^2U} + U|\nabla \Phi_{y.\nabla \chi_* q}|^2 \\
& \lesssim \int U|\nabla \Ms_0 q^\inn|^2 + \frac{\zeta_*^4}{\nu^4}\|\nabla q\|^2_{L^2(|\nu y| \sim \zeta_*)} +  \frac{\zeta_*^2}{\nu^2}\|q\|^2_{L^2(|\nu y| \sim \zeta_*)} + \frac{\nu^4}{\zeta_*^4}\|\nabla \Phi_{y.\nabla \chi_* q}\|^2_{L^2(\nu|y| \sim \zeta_*)}\\
& \lesssim  \int U|\nabla \Ms_0 q^\inn|^2 + C(\zeta_*)\|\vep(\tau)\|^2_{bd}. 
\end{align*}
A collection of all the estimates and recall that $|\nu_\tau/\nu| \lesssim 1/|\ln \nu|$, we derive 
\begin{align}
\left|\nu^2 \Big( \frac{\nu_\tau}{\nu} - \beta \Big) \int q_2^\inn \Ms_0 (\Lambda q \chi_*) \right| &\lesssim  \nu^2 \int U |\nabla \Ms_0 q^\inn|^2 + C(\zeta_*) \nu^2 \| \vep(\tau)\|^2_{bd}. \label{est:H1scalingterm1}
\end{align}
The estimate for $\int q^\inn \Ms_0(\Lambda q \chi_*)$ is proceeded in the same manner by integration by parts and Cauchy-Schwarz inequality, where we end up with 
\begin{align}
\frac{\nu^2}{\zeta_*^2} \left|\nu^2 \big( \frac{\nu_\tau}{\nu} - \beta \big) \int \Ms_0 q^\inn \Lambda q \chi_* \right| \lesssim \frac{\nu^4}{\zeta^2_*} \left(\int \frac{|q^\inn|^2}{U}  + C(\zeta_*) \nu^{-2} \|\vep(\tau)\|_{bd}^2 \right).\label{est:H1scalingterm2}
\end{align}

\paragraph{The linear gradient term $\nabla q$:} We write
$$\Ms_0(\nabla q \chi_*) = \Ms_0 (\nabla q^\inn - y \nabla \chi_* q) = \nabla \Ms_0 q^\inn + \frac{q^\inn}{U}\nabla \Phi_U + \nabla \Phi_{q^\inn} - \Ms_0(\nabla \chi_* q), $$
and from the definition of $\textup{Mod}_a$ and Taylor expansion of $\nabla \Phi_U(y + 2a/\nu) = \nabla \Phi_U(2a/\nu) + \nu^2 F_1(y)$ with $|F_1(y)| + |y \nabla . F_1(y)| \lesssim 1 + |y|$ for $|y| \leq \frac{2\zeta_*}{\nu}$,
\begin{align*}
\nu a_\tau - \beta a \nu - \nabla \Phi_U(y + 2a/\nu) = \nu \textup{Mod}_a + \nu^2 F_1(y),
\end{align*}
where we recall from \eqref{eq:Moda} and  \eqref{est:nablaPhiXia} the bounds 
$$|\nu \textup{Mod}_a| \lesssim \|q(\tau)\|_\inn + \|\vep\|_{\omega_\nu} + \frac{\nu^2}{|\ln \nu|^2}.$$
We estimate by integration by parts
\begin{align*}
\left| \int q_2^\inn F_1(y) . \nabla \Ms_0 q^\inn \right| &\lesssim  \left| \int \nabla.(U \nabla \Ms_0 q^\inn) F_1(y). \nabla \Ms_0 q^\inn \right|  \lesssim \int U|\nabla \Ms_0 q^\inn|^2 \frac{|\nabla . (F_1 U)|}{U} \lesssim  \int U|\nabla \Ms_0 q^\inn|^2. 
\end{align*}
The lower order terms and the boundary term are estimated by using integration by parts, Hardy inequality and $H^2$-bound \eqref{bootstrap:H2boundary} as for the scaling term, 
\begin{align*}
\left| \int q_2^\inn F_1. \big( \frac{q^\inn}{U}\nabla \Phi_U + \nabla \Phi_{q^\inn} - \Ms_0(\nabla \chi_* q \big) \right| \lesssim \int U|\nabla \Ms_0 q^\inn|^2 + C(\zeta_*)\|\vep(\tau)\|_{bd}^2. 
\end{align*}
Collecting all the bounds yields 
\begin{align}
&\left|\int q_2^\inn \Ms_0 \big[ (\nu a_\tau - \beta a \nu - \nabla \Phi_U(y + 2a/\nu)).\nabla q \chi_*\big]\right| \nonumber \\
& \qquad \qquad \lesssim \big(\|q(\tau)\|_\inn + \|\vep\|_{\omega_\nu} + \frac{\nu^2}{|\ln \nu|^2} \big) \Big(\int U|\nabla \Ms_0 q^\inn|^2 + C(\zeta_*)\|\vep(\tau)\|^2_{bd}\Big). \label{est:H1nablaq1}
\end{align}
Similarly, we have
\begin{align}
&\frac{\nu^2}{\zeta_*^2} \left|\int \Ms_0 q^\inn  \big[ (\nu a_\tau - \beta a \nu - \nabla \Phi_U(y + 2a/\nu)).\nabla q \chi_*\big]\right|\nonumber  \\
& \qquad \qquad \lesssim \frac{\nu^2}{\zeta_*^2} \big(\|q(\tau)\|_\inn + \|\vep\|_{\omega_\nu} + \frac{\nu^2}{|\ln \nu|^2} \big) \Big(\int \frac{|q^\inn|^2}{U} + C(\zeta_*)\nu^{-2}\|\vep(\tau)\|^2_{bd}\Big). \label{est:H1nablaq1}
\end{align}
\paragraph{The small linear term:} From the definition \eqref{def:Wtilde}, the pointwise estimate \eqref{est:pointwise_Xi} of $\Xi$, the improved pointwise estimate \eqref{est:pointwise_phi1m0}, and the bound $|\alpha| \lesssim \nu^2$, we have the round bound
$$\nu^2 \big( (y.\nabla)^2 \tilde W(\nu y) +  |y. \nabla \tilde W(\nu y)| +  |\tilde W(\nu y)|\big) \lesssim \frac{\nu^2}{1 + |y|^2}, \quad \nu^2|\nabla \Phi_{\chi^* \tilde W(\nu y)}| \lesssim \nu^3 |\ln \nu|. $$
We write 
\begin{align*}
\big[\nabla.(q \nabla \Phi_{\tilde W \chi^*}) + \nabla.(\tilde W \nabla \Phi_{q \chi^*})\big]\chi_* = \nabla q^\inn. \nabla \Phi_{\tilde W \chi^*} - q \nabla \chi_*. \nabla \Phi_{\tilde W \chi^*} - 2 q^\inn \tilde W + \nabla \tilde W . \nabla \Phi_{q \chi^*} \chi_{_*}. 
\end{align*}
The highest order term $\nabla q^\inn.\nabla \Phi_{\tilde W \chi^*}$ is simply estimated by integration by parts,
\begin{align*}
\left|\nu^2 \int q_2^\inn \Ms_0 (\nabla q^\inn.\nabla \Phi_{\tilde W \chi^*})  \right| + \frac{\nu^4}{\zeta_*^2} \left|\int \Ms_0 q^\inn \nabla q^\inn.\nabla \Phi_{\tilde W \chi^*}  \right| \lesssim \nu^2 \int U|\nabla \Ms_0 q^\inn|^2 + \frac{\nu^4}{\zeta_*^2}\int \frac{|q^\inn|^2}{U}.
\end{align*} 
The lower and boundary terms are estimated by integration by parts, $H^2$-bound \eqref{bootstrap:H2boundary} and Hardy inequality in the same manner as for the scaling term, 
\begin{align*}
&\left|\nu^2 \int q_2^\inn \Ms_0 \big[- q \nabla \chi_*. \nabla \Phi_{\tilde W \chi^*} - 2 q^\inn \tilde W +\nabla \tilde W . \nabla \Phi_{q \chi^*} \chi_*\big]  \right| \\
& \qquad + \left| \frac{\nu^4}{\zeta_*^2} \int \Ms_0 q^\inn \big[- q \nabla \chi_*. \nabla \Phi_{\tilde W \chi^*} - 2 q^\inn \tilde W +\nabla \tilde W . \nabla \Phi_{q \chi^*} \chi_*\big]  \right|\\
& \qquad \qquad  \lesssim \nu^2 \left(\int U |\nabla \Ms_0 q^\inn|^2 + C(\zeta_*)\|\vep(\tau)\|_{bd}^2\right) + \frac{\nu^4}{\zeta_*^2}\left(\int \frac{|q^\inn|^2}{U} + C(\zeta_*)\nu^{-2}\|\vep(\tau)\|_{bd}^2 \right).
\end{align*} 
For the Poison term in the outer region, we estimate using \eqref{bootstrap:H2boundary}, \eqref{bootstrap:Linf_outer} and Lemma \ref{lemm:PointwisePoisonField} with $\gamma = 3/2$ and $\sigma = 1/4$
\begin{equation} \label{est:Phiqout}
|\chi_{_*} \nabla_y \Phi_{\tilde q(1 - \chi_{_*})}| \lesssim \nu |\chi_{\zeta_*, a} \nabla_z \Phi_{\vep(1 - \chi_{\zeta_*,a})}| \lesssim \nu^\frac{3}{4} \|\vep(\tau)\|_\out (|y|^{-1/4}), 
\end{equation}
From this and Cauchy-Schwarz inequality, we get 
\begin{align*}
\left|\nu^2\int q_2^\inn \Ms_0\big(\nabla \tilde W. \nabla \Phi_{\tilde q(1 - \chi_*)} \chi_* \big)\right| \lesssim \nu^2 \int U|\nabla \Ms_0 q^\inn|^2  +\nu^3\|\vep(\tau)\|_{\out}^2.
\end{align*}
and 
\begin{align*}
\left|\frac{\nu^4}{\zeta_*^2}\int \Ms_0 q^\inn \nabla \tilde W. \nabla \Phi_{\tilde q(1 - \chi_*)} \chi_* \right| \lesssim \frac{\nu^4}{\zeta_*^2} \int \frac{|q^\inn|^2}{U} + C(\zeta_*)\nu^3\|\vep(\tau)\|_{\out}^2.
\end{align*}
Collecting all these estimates yields the estimate for the small linear term
\begin{align}
&\left|\nu^2 \int q_2^\inn \Ms_0 \Big(\big[ (\nabla .(q \nabla \Phi_{\tilde W \chi_*}) + \nabla.(\tilde W \Phi_{q \chi_*}) + \nabla .(\tilde{W}\nabla \Phi_{\tilde q (1 - \chi^*)} ) \big] \chi_* \Big)\right| \nonumber \\
& \qquad + \left|\frac{\nu^4}{\zeta_*^2} \int \Ms_0 q^\inn \big[ (\nabla .(q \nabla \Phi_{\tilde W \chi_*}) + \nabla.(\tilde W \Phi_{q \chi_*}) + \nabla .(\tilde{W}\nabla \Phi_{\tilde q (1 - \chi^*)} )\big] \chi_*\right| \nonumber \\
& \qquad \qquad \lesssim \nu^2 \int U|\nabla \Ms_0 q^\inn|^2 + \frac{\nu^4}{\zeta_*^2}\int \frac{|q^\inn|^2}{U} + C(\zeta_*) \big( \nu^2\|\vep(\tau)\|_{bd} + \nu^3 \|\vep(\tau)\|_{\out}\big). \label{est:H1smalllinearterm}
\end{align}

\paragraph{The generated error term $\nu^4 E$:} By using the algebraic identities $ \Ms_0 \Lambda U = -2, \Ms_0 \nabla U = 0$ and the local orthogonality $\langle q, 1 \rangle_{L^2_{loc}} = 0$, we cancel out the leading order term $\Lambda U$ and $\nabla U$ of $\nu^4 E$ that crucially gains the factor $\nu^2$.  We recall from the definition \eqref{def:phii} of $\phi_0$ and $\phi_1$, and $\chi_*(y) = \chi_{\zeta_*/\nu}(y) =  \chi_{a, \zeta_*}(z)$, 
\begin{equation*}
\nu^4 \phi_i(z) \chi_* = \Big(-\frac{1}{16}\Lambda U(y)  \pm \nu L_i \pa_1 U(y) \Big)\chi_*(y) + \nu^4\tilde{\phi}_i(z) \chi_*(y),
\end{equation*}
where we have by \eqref{est:pointwise_phiitil_da}
\begin{equation}\label{est:phitilE}
k \in \mathbb{N}, \quad   |\nu^4 \nabla^k_y \tilde{\phi}_i(z)| = |\nu^{4 + k} \nabla_z \tilde \phi_i(z)| \lesssim \frac{\nu^2}{1 + |y|^{2+k}}, \quad \quad |y| \leq \frac{2\zeta_*}{\nu}. 
\end{equation}
From the cancellation $\nabla \Ms_0 \Lambda U = 0$ and $\Ms_0 \nabla U= 0$, integration by parts and Cauchy-Schwarz inequality, we estimate
\begin{align*}
\left| \int q_2^\inn \Ms_0 (\nu^4 E \chi_*) \right| &\lesssim \sum_{i=0}^1 |\Mod_i | \int \left|U \nabla \Ms_0 q^\inn \right| \big| \nabla \Ms_0\big(\Lambda U(1 - \chi_*) + \nu \nabla \Ms_0\big(\pa_1U(1 - \chi_*)  + \nu^4 \tilde \phi_i \chi_* \big) \big|\\
& \quad +  |\Mod_a| \int \big|U \nabla \Ms_0 q^\inn \big| \big|\nabla \Ms_0\big(\nu \nabla U(y) (1 - \chi_*) - \nu \nabla U(y + 2a/\nu) \chi_* + \nu^5\nabla \Xi \chi_* \big) \big|\\
& \qquad  \int \big|U \nabla \Ms_0 q^\inn \big| \big|\nabla \Ms_0 (\nu^4\Psi \chi_*) \big|\\
& \lesssim \nu^2 \int U|\nabla \Ms_0 q^\inn|^2 + \sum_{i =0}^1 \nu^{-2} |\Mod_i|^2 \int U\Big|\nabla \Ms_0\big(\Lambda U(1 - \chi_*) + \nu \nabla \Ms_0\big(\pa_1U(1 - \chi_*) + \nu^4 \tilde \phi_i \chi_* \big)\Big|^2 \\
& \qquad + |\Mod_a|\int U \Big|\nabla \Ms_0\big(\nabla U(y) (1 - \chi_*) - \nabla U(y + 2a/\nu) \chi_* + \nu^4\nabla \Xi \chi_* \big) \Big|^2\\
& \qquad + \int U\big|\nabla \Ms_0(\nu^3 \Psi \chi_*)\big|^2\\
&\lesssim \nu^2 \int U|\nabla \Ms_0 q^\inn|^2 + \nu^2 \sum_{i = 0}^1|\Mod_i|^2 + \nu^3 |\Mod_a|^2 + \frac{\nu^6}{|\ln \nu|^4},
\end{align*}
where we used in the last line the decays $|\Lambda U(y)| \lesssim (1 + |y|)^{-4}$ and $|\nabla U(y)|\lesssim (1 + |y|)^{-5}$, the pointwise estimate \eqref{est:pointwise_Xi} $|\nabla_y \Xi(z) \chi_*| = \nu|\nabla_z \Xi(z) \chi_{a, \zeta_*}(z)| \lesssim (1 + |y|)^{-3}$ and \eqref{est:PsiinL2omega}.
From Lemma \ref{lemm:mod}, we thus derive 
\begin{equation}\label{est:errortermH1}
\left| \int q_2^\inn \Ms_0 (\nu^4 E \chi_*) \right| \lesssim \nu^2 \int U|\nabla \Ms_0 q^\inn|^2 + \nu^2\| \vep(\tau)\|^2_{L^2_{\omega_\nu}} + \nu^2\|q(\tau)\|^2_{H^1_\inn} + \frac{\nu^6}{|\ln \nu|^4}.
\end{equation}
The estimate for $\frac{\nu^2}{\zeta_*^2} \int q^\inn \Ms_0 (\nu^4 E)$ is to use the cancellation $\Ms_0 \Lambda U = -2$, $\Ms_0 \nabla U = 0$ together with the local orthogonality $\langle q^\inn, 1 \rangle_{L^2} = 0$ to cancel out the leading order terms in $\nu^4 E$. The remaining terms are estimated similarly and we omit it here. \\

\paragraph{The nonlinear term:} We write from \eqref{def:NLq}  and use $ \chi_* \chi^* = \chi_*$, $ \chi_*(1 - \chi^*) = 0$, 
\begin{align*}
NL(q) \chi_* &= \nabla \cdot \big[q \nabla \big(\Phi_{q \chi^*} + \Phi_{\tilde q(1 - \chi^*)} \big) \big] \chi_* \\
& =  \nabla q^\inn . \nabla \big(\Phi_{q \chi^*} + \Phi_{\tilde q(1 - \chi^*)} \big) - q\nabla \chi_*.\nabla \big(\Phi_{q \chi^*} + \Phi_{\tilde q(1 - \chi^*)} \big) - q^\inn q. 
\end{align*}
We first get the bound from the definition of $\nabla \Phi_f$, Cauchy-Schwarz inequality and the $H^2$-bound \eqref{bootstrap:H2boundary},
\begin{align*}
|\nabla \Phi_{q \chi^*}(y)| &\lesssim |\nabla \Phi_{q \chi_*}| + \nu|\nabla_z \Phi_{\vep (1 - \chi_*)\chi^*}| \\
& \lesssim \int |\ln |x - y| \; |\nabla q^\inn(x)|  dx + \nu\int |\ln |x - z| \; |\nabla_x \vep (1 - \chi_*) \chi^*| dx\\
& \lesssim \left(\int \frac{|\nabla q^\inn|^2}{U}\right)^\frac{1}{2} + C(\zeta_*, \zeta^*)\nu \|\vep(\tau)\|_{bd} \lesssim \|q(\tau)\|_{H^1_\inn} +  C(\zeta_*, \zeta^*, K)\frac{\nu^3}{|\ln \nu|}. 
\end{align*}
Using this bound, the definition of $\Ms_0$, Cauchy-Schwartz inequality, the estimate $\|\Phi_{\nabla q^\inn}\|_\infty^2 \lesssim \int \frac{|\nabla q^\inn|^2}{U}$, the bootstrap bounds \eqref{bootstrap:H1omega_0}, \eqref{bootstrap:H2omega_0} and \eqref{est:c1c2} to obtain
\begin{align*}
\left| \int q_2^\inn \Ms_0(\nabla q^\inn . \nabla \Phi_{q \chi^*}) \right| & \lesssim \left| \int \frac{q_2^\inn \nabla q^\inn.\nabla \Phi_{q\chi^*}}{U} \right| + \left| \int q_2^\inn \Phi_{\nabla q^\inn.\nabla \Phi_{q\chi^*}}\right|\\
&\lesssim \| \nabla \Phi_{q \chi^*}\|_\infty  \left( \int \frac{|q_2^\inn|^2}{U} \right)^{\frac{1}{2}}\left[ \left( \int \frac{|\nabla q^\inn|^2}{U} \right)^\frac{1}{2} + \|\Phi_{|\nabla q^\inn|}\|_\infty\left(\int U  \right)^\frac{1}{2}\right]\\
& \lesssim \| \nabla \Phi_{q \chi^*}\|_\infty \left( \int \frac{|q_2^\inn|^2}{U} \right)^{\frac{1}{2}} \left( \int \frac{|\nabla q^\inn|^2}{U} \right)^\frac{1}{2}\lesssim  \frac{ C(\zeta_*, \zeta^*, K) \nu^6}{|\ln \nu|^{5}}.
\end{align*}
Using the bound \eqref{est:Phiqout}, we similarly estimate
\begin{align*}
\left| \int q_2^\inn \Ms_0(\nabla q^\inn . \nabla \Phi_{\tilde q(1- \chi^*)}) \right| & \lesssim \nu^\frac{3}{4}\| \vep(\tau)\|_\out  \left( \int \frac{|q_2^\inn|^2}{U} \right)^{\frac{1}{2}} \left( \int \frac{|\nabla q^\inn|^2}{U} \right)^\frac{1}{2} \lesssim  \frac{ C(\zeta_*, \zeta^*, K) \nu^{6 + 3/4}}{|\ln \nu|^4}.
\end{align*}
and the cut-off term 
\begin{align*}
\left| \int q_2^\inn \Ms_0 \big (q \nabla \chi_* . (\nabla \Phi_{q \chi^*} + \nabla \Phi_{\tilde q(1  - \chi^*)}  \big)   \right| &\lesssim  \| \nabla \Phi_{q \chi^*} + \nabla \Phi_{\tilde q(1  - \chi^*)}\|_\infty \left( \int \frac{|q_2^\inn|^2}{U} \right)^\frac{1}{2} \times \\
& \qquad \times  \left[ \left( \int \frac{|q y. \nabla \chi_*|^2}{y^2 U} \right)^\frac{1}{2} + \| \Phi_{q \nabla \chi_*}\|_\infty \left( \int U\right)^\frac{1}{2} \right]\\
&  \lesssim  \frac{ C(\zeta_*, \zeta^*, K) \nu^6}{|\ln \nu|^5},
\end{align*}
and the quadratic term by using the $\textup{supp}(\chi_*) = \{|y| \leq \frac{2\zeta_*}{\nu}\}$, the $L^\infty$ bound from Sobolev, \eqref{def:q2tildecomp} and \eqref{est:c1c2}, $\|q(s)\|_{L^\infty(|y| \leq 2\zeta_*/\nu)} \lesssim \|\tilde q(\tau)\|_{H^2_\inn} + \|q(\tau)\|_{H^1_\inn} + \|\vep(\tau)\|_{bd}$,
\begin{align*}
\left| \int q_2^\inn \Ms_0 (q^\inn q) \right| &\lesssim \int U |\nabla \Ms_0 q^\inn| \left( \frac{|\nabla q^\inn q|}{U} + \frac{q^\inn \nabla q}{U} - \frac{|q^\inn q|}{U}|\nabla \Phi_U| +  |\nabla \Phi_{q^\inn q}| \right)\\
& \lesssim \left(\int U|\nabla \Ms_0 q^\inn|^2 \right)^\frac{1}{2} \left[\| q\|_{L^\infty(|y| \leq \frac{2\zeta_*}{\nu})} \left( \int \frac{|\nabla q^\inn|^2}{U} \right)^\frac{1}{2} + \|q^\inn\|_\infty  \left( \int_{|y| \leq \frac{2\zeta_*}{\nu}} \frac{|\nabla q|^2}{U} \right)^\frac{1}{2} \right. \\
& \qquad \qquad  \cdots \left. + \left(\int \frac{|q^\inn|^2}{U}|\nabla \Phi_U|^2\right)^\frac{1}{2} + \| q\|_{L^\infty(|y| \leq \frac{2\zeta_*}{\nu})} \| \nabla \Phi_{q^\inn}\|_\infty \left( \int U \right)^\frac{1}{2}    \right]\\
& \lesssim \|q(\tau)\|_{H^1_\inn}^2\big( \|\tilde q(\tau)\|_{H^2_\inn} + \| q(\tau)\|_{H^1_\inn}+ \|\vep\|_{bd}  \big) \lesssim \frac{ C(\zeta_*, \zeta^*, K) \nu^6}{|\ln \nu|^5}. 
\end{align*}
The estimate for $\frac{\nu^2}{\zeta_*^2} \int \Ms_0 q^\inn NL(q)\chi_*$ is simpler, so we   omit it. Collecting all the estimates to the energy identity \eqref{id:H1estimate1} and taking $\zeta_* \ll 1$, $\tau_0 \gg 1$ such that $\frac{1}{\zeta_*} \gg 1$ and $\frac{1}{|\ln \nu|} \ll 1$ and changing the available $\frac{d\tau}{ds} = \nu^2$ yield the desired estimate \eqref{est:qinH1}. This concludes the proof of Lemma \ref{lemm:H1energy}.
\end{proof}

\subsection{$H^2$ energy estimate in the inner region} \label{subsec:H2-energy-estimate}
We perform a similar energy estimate to the precedent subsection. The extra condition we use here is the coercivity \eqref{est:coercityH2}. Note by scaling that the typical size of $\|q(\tau)\|_{H^2_\inn}$ is of size $\Oc(\nu^3/|\ln \nu|)$, but we don't need to go to this order as an $H^2_\inn$ estimate of size $\Oc(\nu^2/|\ln \nu|)$ is enough to get a round $L^\infty$ bound by Sobolev for $q^\inn$ of size $\Oc(\nu^2/|\ln \nu|)$  in order to  close the nonlinear estimate. We claim the following. 
\begin{lemma}[$H^2$-estimate]  \label{lemm:H2energy} Let $\vep$ be a solution to \eqref{eq:vep} and satisfy the bootstrap bounds in Definition \ref{def:bootstrap} for $\tau \in [\tau_0, \tau_1]$. Then, we have the following estimate for $\tau \in [\tau_0, \tau_1]$:
\begin{equation}\label{est:qinH1}
\frac{d}{d\tau} \|\tilde q(\tau)\|^2_{H^2_\inn} \leq -\delta_2 \|\tilde q(\tau)\|^2_{H^2_\inn} + C\Big( \| q(\tau)\|^2_{H^1_\inn} + \| \vep(\tau)\|^2_{bd} + \|\vep(\tau)\|^2_{L^2_{\omega_\nu}} + \frac{ \nu^4}{|\ln \nu|^4}\Big).
\end{equation}
for some $\delta_2 > 0$ and $C = C(\zeta_*, \zeta^*) > 0$. 
\end{lemma}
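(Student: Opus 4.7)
The strategy closely mirrors that of Lemma \ref{lemm:H1energy}, but performed at one derivative higher. Since $q^\inn_{2,\pm}=\Ls_0 q^\inn_\pm$ and $\tilde q^\inn_{2,\pm}$ is defined by subtracting the three kernel directions $\Lambda U$, $\partial_1 U$, $\partial_2 U$, I expect to use as the controlling quantity
\[
\mathcal{E}_2(\tau)=\sum_\pm \left\{\int U\,\big|\nabla \Ms_0 \tilde q^\inn_{2,\pm}\big|^2\,dy_\pm+\frac{\nu^2}{\zeta_*^2}\int \tilde q^\inn_{2,\pm}\,\Ms_0\tilde q^\inn_{2,\pm}\,dy_\pm\right\},
\]
coupled with the coercivity \eqref{est:coercityH2} and the subcoercivity estimate \eqref{bd:coercivity-one-bubble-H1} adapted to the orthogonalities satisfied by $\tilde q^\inn_{2,\pm}$, to produce a dissipation of the form $-\delta\big(\int |\nabla\tilde q^\inn_{2,\pm}|^2/U+(\nu^2/\zeta_*^2)\int |\tilde q^\inn_{2,\pm}|^2/U\big)$.

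The first step is to derive the evolution equation for $\tilde q^\inn_{2,\pm}$. Applying $\Ls_0$ to \eqref{eq:qysH1} and localising, one obtains
\[
\partial_s q^\inn_{2,\pm}=\Ls_0 q^\inn_{2,\pm}+\Ls_0\!\left[\big(L_1 q-NL(q)+\nu^4 E\big)\chi_*\right]+[\partial_s,\Ls_0]\text{-type commutators from }\chi_*.
\]
Then, enforcing the $L^2$-orthogonality of $\tilde q^\inn_{2,\pm}$ to $\{\Lambda U,\partial_1 U,\partial_2 U\}$ yields ODEs for $c_1(\tau)$ and $c_{2,j}(\tau)$ whose right-hand sides can, thanks to \eqref{est:c1c2}, be bounded by $\|q\|_{H^1_\inn}$ plus source/error terms. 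I would then take scalar products of this equation successively with $-\Ms_0\Ls_0 \tilde q^\inn_{2,\pm}$ and with $(\nu^2/\zeta_*^2)\Ms_0 \tilde q^\inn_{2,\pm}$, use the self-adjointness of $\Ms_0$ and the cancellations $\Ls_0\Lambda U=0$, $\Ls_0\nabla U=0$, $\Ms_0\Lambda U=-2$, $\Ms_0\nabla U=0$, to arrive at an energy identity of the exact form \eqref{id:H1estimate1} with $q^\inn_\pm$ replaced by $\tilde q^\inn_{2,\pm}$.

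The bulk of the remaining work is to estimate each right-hand side contribution, term by term, following verbatim the six categories of Lemma \ref{lemm:H1energy}: the scaling $\Lambda q$, the transport $\nabla\Phi_U(y+2a/\nu)\cdot\nabla q$, the small linear interaction with $\tilde W$, the outer Poisson tail, the error $\nu^4 E$, and $NL(q)$. In each case, I would integrate by parts using the divergence structure $\Ls_0 f=\nabla\cdot(U\nabla\Ms_0 f)$, apply Cauchy--Schwarz with the weight $U$, and distribute the resulting bulk piece into the dissipation and the $\|\tilde q\|_{H^2_\inn}^2$ factor on the left. The Poisson-field bound \eqref{est:Phiqout}, the pointwise controls on $\tilde W$ from \eqref{est:Wtilround}, the bound \eqref{est:phitilE} on the non-kernel part of $\phi_i$, and the estimate \eqref{est:PsiinL2omega} on $\Psi$ are all exactly the inputs needed; the $L^\infty$ bounds on $q$ and $\nabla\Phi_{\chi^*q}$ required for the nonlinear piece follow from Sobolev and \eqref{est:c1c2} combined with the bootstrap bounds \eqref{bootstrap:H1omega_0}--\eqref{bootstrap:H2omega_0}.

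The main obstacle, which I do not expect to be fatal but which requires care, is controlling the commutator of $\Ls_0$ with the non-constant-coefficient pieces of $L_1$---in particular with the scaling term $\nu^2(\nu_\tau/\nu-\beta)\Lambda q$ and with the transport term involving $\nabla\Phi_U(y+2a/\nu)$. Commuting $\Ls_0$ past $\Lambda$ or past a smooth non-homogeneous drift produces terms of the same order as $\Ls_0 q=q_2^\inn$, which cannot naively be absorbed in the dissipation unless one exploits the smallness of $\nu^2(\nu_\tau/\nu-\beta)=O(\nu^2/|\ln\nu|)$, the Taylor cancellation $\nu a_\tau-\beta\nu a-\nabla\Phi_U(2a/\nu)=\nu\,\mathrm{Mod}_a+O(\nu^2)$, and the Hardy weight $U$ to convert $|y|$-growth into finite integrals. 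The cut-off $\chi_*$ also produces boundary contributions in $\{|y|\sim\zeta_*/\nu\}$, but these are precisely of size $\zeta_*$-dependent multiples of $\|\vep\|_{bd}^2$ and therefore fit on the right-hand side. Assembling all estimates, choosing $\zeta_*$ small and $\tau_0$ large, and converting back to the variable $\tau$ via $d\tau/ds=\nu^2$ yields \eqref{est:qinH1} with some $\delta_2>0$.
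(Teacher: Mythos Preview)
Your approach is broadly correct in spirit but is one derivative level too high, and your main worry is misdirected. The quantity $\|\tilde q\|_{H^2_\inn}^2$ is defined in \eqref{est:coercityH2} as $\sum_\pm\langle \tilde q_{2,\pm}^\inn,\Ms_0\tilde q_{2,\pm}^\inn\rangle$, i.e.\ an $L^2$-type norm of $\tilde q_2^\inn$, not the $\dot H^1$-type norm $\int U|\nabla\Ms_0\tilde q_2^\inn|^2$ that your $\mathcal E_2$ starts with. The paper simply differentiates $\int\tilde q_2^\inn\Ms_0\tilde q_2^\inn$ directly: the linear part produces $-\int U|\nabla\Ms_0\tilde q_2^\inn|^2$, and every source term $\Ls_0\big[[\chi_*,\Ls_0]q+(L_1q-NL(q)+\nu^4E)\chi_*\big]$ is handled by a single Cauchy--Schwarz against this dissipation, reducing matters to bounding $\int U\big|\nabla\Ms_0\big[[\chi_*,\Ls_0]q+(L_1q-NL(q)+\nu^4E)\chi_*\big]\big|^2$. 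This last integral sits at exactly the same derivative level as the quantities already treated in Lemma~\ref{lemm:H1energy}, so the same arguments apply verbatim, with the extra derivative on $q$ (relative to the $H^1$ proof) absorbed by the $\|q\|_{H^1_\inn}^2$ term on the right-hand side.

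Two simplifications you are missing: first, there is no commutator $[\Ls_0,L_1]$ anywhere---you apply $\Ls_0$ to the entire localized source $(L_1q)\chi_*$, not commute it past $L_1$; the only commutator is $[\chi_*,\Ls_0]q$, which is supported in $\{|y|\sim\zeta_*/\nu\}$ and contributes $C(\zeta_*)\nu^2\|\vep\|_{bd}^2$ thanks to the decay of $U$. Second, you do not need to track ODEs for $c_1,c_{2,j}$: since $\Ms_0\Lambda U=-2$, $\Ms_0\nabla U=0$, and $\int\tilde q_2^\inn=\int q_2^\inn=0$ (divergence form), the contributions of $\partial_s c_1\,\Lambda U+\partial_s c_{2,j}\,\partial_jU$ to $\langle\partial_s q_2^\inn,\Ms_0\tilde q_2^\inn\rangle$ vanish identically, and $\Ls_0\Lambda U=\Ls_0\nabla U=0$ gives $\Ls_0 q_2^\inn=\Ls_0\tilde q_2^\inn$.
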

\begin{proof} We write from \eqref{eq:qysH1}mthe equation for $q_2^\inn = \Ls_0 q^\inn$ with $q^\inn = q \chi_*$,
\begin{equation}
\pa_s q_2^\inn = \Ls_0 q_2^\inn + \Ls_0 \Big[ [\chi_*, \Ls_0]q + \big(L_1 q -NL(q) + \nu^4 E \big)\chi_* \Big],
\end{equation}
where the commutator $[ \chi_*, \Ls_0] q = \chi_* \Ls_0 q - \Ls_0(q \chi_*)$. We then write the energy identity for  $\tilde{q}_2^\inn$ introduced by \eqref{def:q2tildecomp} by using $\Ms_0 \Lambda U = -2$, $\Ms_0 \nabla U = 0$, $\int q_2^\inn  = 0$, Cauchy-Schwarz inequality and the divergence form of $\Ls_0$,
\begin{align*}
\frac{1}{2}\frac{d}{ds} \int \tilde q_2^\inn \Ms_0 \Ls_0 \tilde q_2^\inn &= -\int U|\nabla \Ms_0 \tilde q^\inn_2|^2 + \int \Ms_0 \tilde q_2^\inn \Ls_0 \Big[ [\chi_*, \Ls_0]q + \big(L_1 q -NL(q) + \nu^4 E \big)\chi_* \Big]\\
& \lesssim -\frac{1}{2}\int U|\nabla \Ms_0 \tilde q_2^\inn|^2 + \int U \Big|\nabla \Ms_0 \Big( [\chi_*, \Ls_0]q + \big(L_1 q -NL(q) + \nu^4 E \big)\chi_* \Big) \Big|^2.
\end{align*}
We recall from Lemma \ref{lem:coercivity-one-bubble-H1-orthogonality} the coercivity estimate $-\int U|\nabla \Ms_0 \tilde q_2^\inn| \leq -\delta \int \frac{|\nabla \tilde q_2^\inn|^2}{U}$. We have by Hardy inequality and the support of $\chi_*$ the estimate $- \int \frac{|\nabla \tilde q_2^\inn|^2}{U} \lesssim -\frac{\nu^2}{\zeta_*^2} \int \frac{|\tilde q_2^\inn|^2}{U}$ which shows the dissipation to control all lower order terms. The estimate of all the terms in $\int U \big|\nabla \Ms_0 \big[ \big(L_1 q -NL(q) + \nu^4 E \big)\chi_*  \big]\big|^2$ follows the same lines as for Lemma \ref{lemm:H1energy}, where we emphasize again the crucial algebraic identities $\nabla \Ms_0 \Lambda U = \nabla \Ms_0 \nabla U = 0$ to gain a factor $\nu^2$ once the generated error term $\nu^4 E$ is concerned. The small linear term is estimated in the same way, except that there is one more derivative in comparison with the proof of Lemma \ref{lemm:H1energy} that we use in addition $\|q(\tau)\|_{H^1_\inn}$ bound. The nonlinear term is estimated by $\frac{C(\zeta_*, \zeta^*, K)\nu^6}{|\ln \nu|^5}$ thanks to the round $L^\infty$ bound $\|q^\inn(\tau)\|_\infty \lesssim C(K) \frac{\nu^2}{|\ln \nu|^2}$, hence, we always gain an extra small factor $\frac{1}{|\ln \nu|}$. The extra commutator term is located in the support of $\nabla \chi_*$ and is bounded by $C(\zeta_*)\nu^2 \|\vep(\tau)\|^2_{bd}$ thanks to the decay of $U(y) \lesssim (1 + |y|^4)^{-1}$. This concludes the proof of Lemma \ref{lemm:H2energy}.
\end{proof}

\subsection{Control in the middle and outer regions} \label{subsec:control-mid-outer}

We recall the two fixed positive constants 
$$\zeta^* \gg 1, \quad \zeta_* \ll 1,$$
and claim the following. 
\begin{lemma}[$H^2$ control of $\vep$ in the middle zone] \label{lemm:H2mid} Let $\vep$ be a solution to \eqref{eq:vep} and satisfy the bootstrap bounds in Definition \ref{def:bootstrap} for $\tau \in [\tau_0, \tau_1]$. Then, we have the following estimate for $\tau \in [\tau_0, \tau_1]$:
\begin{equation}
\|\vep(\tau)\|_{H^2(B_{\zeta^*} \setminus \cup_\pm B_{\zeta_*}(\pm a))} \leq C(\zeta^*, \zeta_*) K \frac{\nu^2}{|\ln \nu|^2}.
\end{equation}
\end{lemma}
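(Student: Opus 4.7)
The plan is to exploit the fact that in the middle zone $\mathcal{M} := B_{\zeta^*} \setminus \cup_\pm B_{\zeta_*}(\pm a)$, the linearized operator $\tilde{\Ls}^z$ has smooth and bounded coefficients, since $U_{1+2,\nu} = \Oc(\nu^2)$, $|\nabla \Phi_{U_{1+2,\nu}}| = \Oc(1)$, and the weight $\omega_\nu$ is comparable to a constant depending only on $\zeta_*, \zeta^*$. Thus the bootstrap bound \eqref{bootstrap:L2omega_nu} gives immediately
$$\|\vep(\tau)\|_{L^2(\mathcal{M})} \leq C(\zeta_*,\zeta^*) K \frac{\nu^2}{|\ln \nu|},$$
and the equation \eqref{eq:vep} on $\mathcal{M}$ is essentially a parabolic equation of the form $\pa_\tau \vep = \Delta \vep - \beta \Lambda \vep + F(\vep, z, \tau)$ with a forcing term $F$ containing the small linear, nonlinear, and error contributions.

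First I would introduce a cutoff $\chi_{\textup{mid}}$ supported in $\tilde{\mathcal{M}} := B_{2\zeta^*} \setminus \cup_\pm B_{\zeta_*/2}(\pm a)$ and equal to $1$ on $\mathcal{M}$, and derive the equation for $\chi_{\textup{mid}} \vep$. The right-hand side splits into: (i) the small linear term $L(\vep)$, controlled in $L^2(\tilde{\mathcal{M}})$ using \eqref{est:Wtilround}, \eqref{est:Phivepchi}, \eqref{est:PhiNabvepchi}, \eqref{est:PhiNabtilvepExt} and the bootstrap bounds \eqref{bootstrap:L2omega_nu}, \eqref{bootstrap:Linf_outer}; (ii) the nonlinear term $NL(\vep)$, bounded by the same pointwise Poisson estimates together with the round $L^\infty$ bound on $\vep$ in the middle zone coming from \eqref{bootstrap:H2boundary}, \eqref{bootstrap:H1omega_0}, \eqref{bootstrap:H2omega_0} and Sobolev embedding; (iii) the error $E$, whose $L^2(\tilde{\mathcal{M}})$ norm is $\Oc(\nu^2/|\ln \nu|)$ by Proposition \ref{prop:E} and the modulation bounds of Lemma \ref{lemm:mod}; (iv) commutator terms $[\chi_{\textup{mid}}, \tilde{\Ls}^z] \vep$ supported in the transition annuli $\{\zeta_*/2 < |z \pm a| < \zeta_*\} \cup \{\zeta^* < |z| < 2\zeta^*\}$, which are bounded using the $H^1_\inn$ bootstrap \eqref{bootstrap:H1omega_0} on the inner side and the decay in $\omega_\nu$ on the outer side.

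With these inputs I would apply interior parabolic regularity in a two-step bootstrap: a standard energy estimate with weight $\chi_{\textup{mid}}^2$ upgrades $L^2$ to $H^1(\mathcal{M}')$ on a slightly smaller middle zone $\mathcal{M}' \supset \mathcal{M}$, and a second application with a further-localized cutoff upgrades $H^1$ to $H^2(\mathcal{M})$. Equivalently, one may invoke $L^p$-maximal regularity for the heat semigroup on the fixed compact annular region $\tilde{\mathcal{M}}$ (the Fokker-Planck drift $-\beta \Lambda$ has bounded coefficients there) to obtain $\|\chi_{\textup{mid}} \vep\|_{W^{2,2}} \lesssim \|F\|_{L^2} + \|\chi_{\textup{mid}} \vep\|_{L^2}$. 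Since all the source terms are of size $\Oc(K\nu^2/|\ln \nu|)$ in $L^2(\tilde{\mathcal{M}})$, the final bound follows.

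The hard part will be controlling the nonlocal Poisson field contributions, namely $\nabla \Phi_{\chi^* \vep}$ and especially its second derivatives $\nabla^2 \Phi_{\chi^* \vep}$ at points of $\mathcal{M}$, since $\Phi_{\chi^* \vep}(z)$ depends on $\vep$ globally, including its singular behavior near $\pm a$. To handle this I would split $\chi^* \vep = (\chi^* \vep) \chi_* + (\chi^* \vep)(1 - \chi_*)$: the contribution from the inner piece $(\chi^* \vep) \chi_*$ to $\nabla^2 \Phi$ in $\mathcal{M}$ is controlled by the $L^1$ norm of $\nabla(\chi^* \vep \chi_*)$ (since the kernel $\nabla^2 \log |\cdot|$ is integrable away from the origin, and $\mathcal{M}$ is separated from the support of $\chi_*$), which is $\Oc(\nu^{-1}\|q\|_{H^1_\inn}) = \Oc(\nu/|\ln \nu|)$, while the middle piece is handled by standard Calder\'on-Zygmund theory on the compactly supported function $(\chi^* \vep)(1-\chi_*)$. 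These estimates combine to give the claimed $H^2$ bound.
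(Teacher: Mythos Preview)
Your approach is correct and matches the paper's route: first extract the unweighted $L^2$ bound on the middle zone from the bootstrap $\|\vep\|_{L^2_{\omega_\nu}}$ estimate (since $\omega_\nu \approx_{\zeta_*,\zeta^*} 1$ there), then upgrade to $H^2$ by parabolic regularity. The paper's proof is considerably more terse than yours---it simply records the $L^2$ bound on a slightly enlarged zone $Q_4 = B_{4\zeta^*}\setminus \cup_\pm B_{\zeta_*/4}(\pm a)$ and then invokes a black-box parabolic regularity lemma (Lemma~4.7 in \cite{CGMNcpam21}) without spelling out the control of the forcing, commutator, or Poisson-field terms. Your detailed treatment of the nonlocal Poisson contributions (splitting $\chi^*\vep$ into inner and middle pieces and using kernel separation plus Calder\'on--Zygmund) is correct and is presumably what that reference encapsulates, but the paper does not reproduce it.
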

\begin{proof} We note from the bound \eqref{bootstrap:L2omega_nu} and the definition of the weight function $\omega_{\nu}$ which is of size $\omega_\nu(z) \sim 1$ for $|z\pm a| \sim 1$, the $L^2$ estimate 
\begin{equation}
\|\vep(\tau)\|_{L^2(Q_4)} \leq C(\zeta_*, \zeta^*) K \frac{\nu^2}{|\ln \nu|^2}, \quad Q_{4} = \big\{ z \in  B_{4\zeta^*} \setminus \cup_\pm B_{\zeta_*/4}(\pm a)\big\}.
\end{equation}
From this $L^2$ bound and a standard parabolic regularity, see for example Lemma 4.7 in \cite{CGMNcpam21}, we obtain the desired estimate. 
\end{proof}

As for the control in the outer region, we use the maximum principle for a sake of simplicity even through other techniques such as standard-energy estimates gives a similar result. We claim the following. 
\begin{lemma}[Weighted $L^\infty$ control of $\tilde\vep$ in the outer zone] \label{lemm:Linfout} Let $\tilde \vep$ be a solution to \eqref{eq:veptil} and satisfy the bootstrap bounds in Definition \ref{def:bootstrap} for $\tau \in [\tau_0, \tau_1]$ and $|\tilde{\vep}(z, \tau_0)| \leq  \nu^2(\tau_0)\langle z \rangle^{-\frac{3}{2}}$ for $z \in \Rb^2$. Then, we have the following estimate for $\tau \in [\tau_0, \tau_1]$ and $|z| \geq \zeta^* \gg 1$:
\begin{equation}
\big|\tilde \vep(z,\tau)\big| \leq \frac 12 K^3 \nu^2 \, |z|^{-\frac{3}{2}}.
\end{equation}
\end{lemma}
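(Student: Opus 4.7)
The natural approach is a barrier/maximum principle argument for the equation \eqref{eq:veptil}. I would introduce the pointwise barrier
$$F(z,\tau)=\tfrac{K^3}{2}\,\nu^2(\tau)\,|z|^{-3/2}$$
and show that $G_\sigma:=\sigma\tilde\vep-F$ stays nonpositive for $\sigma\in\{\pm 1\}$ on the parabolic cylinder $\{|z|\ge\zeta^*\}\times[\tau_0,\tau_1]$. The dominant dissipative contribution comes from the scaling term, via the computation (using $\beta=\tfrac12$ and the 2D formula $\Delta r^{-s}=s^2 r^{-s-2}$)
$$-\beta\Lambda F=-\tfrac14 F,\qquad \Delta F=\tfrac{9}{4}\cdot\tfrac{K^3}{2}\nu^2|z|^{-7/2},$$
so that in the outer zone $|z|\ge\zeta^*\gg 1$,
$$(\partial_\tau-\Ls^z)F\;\ge\;\tfrac{K^3}{16}\,\nu^2|z|^{-3/2},$$
once we absorb $|\partial_\tau F|\lesssim K^3\nu^2|z|^{-3/2}/|\ln\nu|$ (using $|\nu_\tau/\nu|\lesssim|\ln\nu|^{-1}$), the harmless convective contribution $\nabla F\cdot\nabla\Phi_{U_{1+2,\nu}}\sim F/|z|$ (since $\nabla\Phi_{U_{1+2,\nu}}(z)\sim -8z/|z|^2$ by Newton's theorem on the total mass $16\pi$), and the tiny multiplicative $U_{1+2,\nu} F\lesssim\nu^2|z|^{-11/2}F$.

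Next, I would bound pointwise the right-hand side $\tilde E-\nabla\!\cdot\!(\tilde\vep\nabla\Phi_{\tilde\vep})$. For the source $\tilde E$ defined in \eqref{def:Etil}, the fast decay $U_{i,\nu}\lesssim \nu^2|z|^{-4}$, $\nabla\Phi_{U_{j,\nu}}\lesssim|z|^{-1}$ together with $\partial_\tau U_{1+2,\nu},\Lambda U_{1+2,\nu}=\Oc(\nu^2|z|^{-4})$ yield $|\tilde E(z)|\lesssim\nu^2|z|^{-5}$, hence $|\tilde E|\ll(\partial_\tau-\Ls^z)F$ for $\zeta^*$ large. For the nonlinear term, the bootstrap $\|\tilde\vep\|_\out\le K^3\nu^2$ combined with Lemma \ref{lemm:PointwisePoisonField} and standard control of $\Phi_{U_{1+2,\nu}}$-interactions give $|\tilde\vep\nabla\Phi_{\tilde\vep}|\lesssim K^6\nu^4|z|^{-3/2}(|z|^{-1/8}+\langle z\rangle^{-3/2}|z|)$, from which $|\nabla\!\cdot\!(\tilde\vep\nabla\Phi_{\tilde\vep})|\lesssim K^6\nu^4|z|^{-3/2-\epsilon}$ for some $\epsilon>0$ after differentiation. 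For $\tau_0$ sufficiently large (so that $\nu(\tau_0)$ is small), this too is dominated by $(\partial_\tau-\Ls^z)F/2$.

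For the parabolic boundary, the initial data hypothesis $|\tilde\vep(z,\tau_0)|\le\nu^2(\tau_0)|z|^{-3/2}$ directly gives $G_\sigma(\cdot,\tau_0)\le-\tfrac{K^3-2}{2}\nu^2(\tau_0)|z|^{-3/2}<0$ for $K\ge 2$. At the spatial boundary $|z|=\zeta^*$, I would use the Sobolev embedding $H^2(\mathbb R^2)\hookrightarrow L^\infty$ and the $H^2$-control in the middle zone (Lemma \ref{lemm:H2mid}, which follows from \eqref{bootstrap:H2boundary}), together with the smallness of $\tilde W$ on $|z|=\zeta^*$, to get $|\tilde\vep(z,\tau)|\le CK^2\nu^2/|\ln\nu|$, which is strictly below $F(\zeta^*,\tau)=\tfrac{K^3}{2}\nu^2(\zeta^*)^{-3/2}$ provided $K$ is chosen large depending on $\zeta^*$ (and $\tau_0$ large so $|\ln\nu|\gg(\zeta^*)^{3/2}$).

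Finally, apply the maximum principle: if $G_\sigma$ first touches zero at an interior point $(z_*,\bar\tau)$ with $|z_*|>\zeta^*$, then at that point $\partial_\tau G_\sigma\ge 0$, $\nabla G_\sigma=0$, $\Delta G_\sigma\le 0$, contradicting $(\partial_\tau-\Ls^z)G_\sigma\le \sigma(\tilde E-\nabla\!\cdot\!(\tilde\vep\nabla\Phi_{\tilde\vep}))-(\partial_\tau-\Ls^z)F<0$. The main obstacle, and the step I would be most careful with, is that $\Ls^z$ contains the nonlocal piece $-\nabla\!\cdot\!(U_{1+2,\nu}\nabla\Phi_{G_\sigma})$ which does not satisfy a classical maximum principle; the fix is that $U_{1+2,\nu}$ and $\nabla U_{1+2,\nu}$ are $\Oc(\nu^2|z|^{-4})$ in the outer region, so this term contributes only $\Oc(\nu^2|z|^{-4}\cdot|\nabla\Phi_{G_\sigma}|)$, which after using \eqref{scalarproduct1:temp}-type estimates on $\Phi_{G_\sigma}$ and the bootstrap decay of $\tilde\vep$ is absorbable into the favorable dissipation coming from $-\beta\Lambda F$. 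This yields $G_\sigma\le 0$ globally, which is the claim.
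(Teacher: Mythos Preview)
Your approach is correct and is essentially the same as the paper's: a comparison/maximum principle argument with the barrier $\tfrac{K^3}{2}\nu^2|z|^{-3/2}$, where the favorable constant $\tfrac14$ comes from the scaling term (equivalently, the $-\tilde\vep$ potential in the expanded operator), and the nonlocal Poisson term is treated as a small source using $|\nabla U_{1+2,\nu}|\lesssim\nu^2|z|^{-5}$ together with the bootstrap bound on $\tilde\vep$ and Lemma~\ref{lemm:PointwisePoisonField}. The only inaccuracy is your decay claim $|\tilde E|\lesssim\nu^2|z|^{-5}$: the term $-\beta\Lambda U_{1+2,\nu}$ contributes at order $\nu^2|z|^{-4}$ (the paper uses this rate), but since $\nu^2|z|^{-4}\ll\tfrac{K^3}{16}\nu^2|z|^{-3/2}$ for $|z|\ge\zeta^*$ this does not affect the argument.
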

\begin{proof} We rewrite equation \eqref{eq:veptil} as 
\begin{equation}\label{eq:vep_ex}
\tilde \vep_\tau = \Delta \tilde \vep - \big(\nabla \Phi_{U_{1+2, \nu}} + \frac{1}{2}z\big). \nabla \tilde \vep - \nabla U_{1+2, \nu}.\nabla \Phi_{\tilde \vep}  - (1 - 2U_{1+2, \nu})\tilde \vep + \tilde E(z,\tau). 
\end{equation}
We note from the definition of $U_{1+2, \nu}(z)$ and the bootstrap bound $|\nu_\tau/\nu| \leq K/|\ln \nu|$, the estimates for $|z| \geq \zeta^* \gg 1$, 
\begin{equation}\label{est:U12_DU12_Ez_out}
|U_{1+2, \nu}(z)| \lesssim \frac{\nu^2}{|z|^4}, \quad |\nabla U_{1+2, \nu}(z)| \lesssim \frac{\nu^2}{|z|^5}, \quad |\nabla \Phi_{U_{1+2, \nu}}(z)| \lesssim \frac{1}{|z|}, \quad |E(z,\tau)| \lesssim K\frac{\nu^2}{|z|^4}. 
\end{equation}
We note that there is $-\tilde \vep$ term in \eqref{eq:vep_ex} that gives a decay estimate, we simply consider the functions
$$\tilde h^*(z,\tau) = \frac{1}{2}K^3\nu^2 |z|^{-\frac{3}{2}}, \quad \tilde h_*(z,\tau) = -\frac{1}{2}K^3\nu^2 |z|^{-\frac{3}{2}},$$
that serves as a sub and super solutions to \eqref{eq:vep_ex}. The information we need to justify this fact is a pointwise estimate of the Poison field $\nabla \Phi_{\tilde h^*}$ and the boundary data $\tilde \vep(z, \tau)$ at $|z| = \zeta^*$ that can be retrieved from the a priori bootstrap bounds given in Definition \ref{def:bootstrap}. As for the estimate of $\nabla \Phi_{\tilde h^*}$, we have from Lemma \ref{lemm:PointwisePoisonField} with $\gamma = \frac{3}{2}$ the estimate 
\begin{equation}\label{est:Poison_out}
\forall |z| \geq \zeta^*, \quad |\nabla \Phi_{\tilde h^*}(z)| \lesssim \big\|\langle z \rangle^\frac{3}{2} \tilde h^*(z) \big\|_{L^\infty(\Rb^2)} |z|^{-\sigma} \lesssim K^3 \nu^2 |z|^{-\sigma}, \quad \sigma \in (0, \frac 12).
\end{equation}
From \eqref{est:U12_DU12_Ez_out} and \eqref{est:Poison_out}, we estimate  for $|z| \geq \zeta^*$,
\begin{align*}
&\frac{2}{\tilde h^*}\Big[\tilde h^*_\tau - \Delta \tilde h^* + \big(\nabla \Phi_{U_{1+2, \nu}} + \frac{1}{2}z\big). \nabla \tilde h^* + \nabla U_{1+2, \nu}.\nabla \Phi_{\tilde h^*}  + (1 - 2U_{1+2, \nu})\tilde h^* - \tilde E(z,\tau)\Big]\\
& \geq -\Big|\frac{2\nu_\tau}{\nu}\Big| - \frac{9}{4|z|^2} - \frac{C}{|z|^2} -  \frac{3}{4} - \frac{C\nu^2}{|z|^{7/2}} \frac{1}{|z|^\sigma} + 1 - \frac{C\nu^2}{|z|^4} - \frac{C}{K^3 |z|^{5/2}}\\
& \geq \frac{1}{4} - \frac{C}{|\ln \nu|} - \frac{C}{(\zeta^*)^2} > 0,
\end{align*}
for $\tau_0$ and $\zeta^*$ large enough. 
We check the boundary $|z| = \zeta^*$ from  the decomposition \eqref{def:AppSol-2nddecomp}, $\tilde \vep = \tilde W  + \vep$, the bootstrap $H^2$-bound \eqref{bootstrap:H2boundary},  and the pointwise estimate on $\tilde W  = \Xi + \alpha (\phi_1 - \phi_0)$ for $|z| = \zeta^*$, 
$$\forall |z| = \zeta^*, \quad  |\tilde \vep(z, \tau)|  = | \tilde W (z, \tau) | + |\vep (z, \tau)| \leq  \frac{C \nu^2}{|z|^2} +  K^3 \frac{\nu^2}{|\ln \nu|} < \frac{K^3 \nu^2}{2|z|^\frac{3}{2}}. $$
Similar estimates hold for $\tilde h_*$, and the desired estimate follows from a comparison principle. \end{proof}
\subsection{Proof of the main theorem} \label{subsec:proof-main-theorem}

The existence proof of the main theorem \ref{thm:main} follows once we show that the residue $\tilde{\vep}$ belongs to the bootstrap regime \ref{def:bootstrap} for all time $\tau \geq \tau_0$ provided that the initial data $\vep(z, \tau_0)$ is well prepared and belongs to the bootstrap regime \ref{def:bootstrap} as well.  We assume by contradiction that  
$$\tau^* = \sup\{\tau_1 \geq \tau_0 \; \textup{such that  the solution belongs to \ref{def:bootstrap} on} \;[\tau_0, \tau_1]\} < +\infty.$$
We will show that this is impossible by solving the modulation equations obtained in Lemma \ref{lemm:mod}, the various monotone energy estimates in Lemmas \ref{lemm:L2energy}, \ref{lemm:H1energy}, \ref{lemm:H2energy}, \ref{lemm:H2mid} and \ref{lemm:Linfout}. \\

\noindent - \textit{Improved estimates for modulation parameter functions:} We write from Lemma \ref{lemm:mod}, the bootstrap definition \eqref{def:bootstrap}, the eigenvalues $\lambda_0 = 1 + \frac{\gamma_0}{\ln \nu} + \Oc(|\ln \nu|^{-2})$ and $\lambda_1 = \frac{\gamma_1}{\ln \nu}  + \Oc(|\ln \nu|^{-2})$ from Proposition \ref{prop:eigen} and the fixed $\beta = \frac{1}{2}$,  the system for the three parameter functions $\nu(\tau), \alpha(\tau)$ and $a(\tau)$, 
\begin{align*}
& -16\nu^2 \Big( \frac{\nu_\tau}{\nu} - \frac{1}{2} - \frac{\gamma_0}{2\ln \nu} \Big) + \alpha_\tau - \Big(1 + \frac{\gamma_0}{\ln \nu})\alpha  = \Oc\Big( \frac{\nu^2}{|\ln \nu|^2}\Big),\\
&\frac{\alpha_\tau}{\alpha} - \frac{\gamma_1}{\ln \nu} = \Oc\Big( \frac{1}{|\ln \nu|^2}\Big), \qquad \qquad a_\tau =  \frac{a}{2} - \frac{8a}{\nu^2 + 4|a|^2} + \Oc\Big( \frac{\nu}{|\ln \nu|^2}\Big). 
\end{align*} 
The first equation can be rearranged as 
$$\Big( 8\nu^2 - \alpha\Big)\Big(1 + \frac{\gamma_0}{\ln \nu} \Big) + (\alpha - 8\nu^2)_\tau = \Oc\Big( \frac{\nu^2}{|\ln \nu|^2} \Big).$$
We note that the choice $\alpha = 8\nu^2$ makes the left-hand side vanish and the relation $\frac{\alpha_\tau}{\alpha} = 2\frac{\nu_\tau}{\nu}$. Thus, the second equation for $\alpha$ gives 
\begin{equation}\label{eq:nulaw}
  \Big|\frac{\nu_\tau}{\nu} -  \frac{\gamma_1}{2\ln \nu}\Big| \leq  \frac{C}{|\ln \nu|^2}.  
\end{equation}
where $C > 0$ is independent of the bootstrap constant $K$. Hence, we take $\bar K$ large for which the bootstrap estimate \eqref{bootstrap:nu} holds with a strict inequality. The solution to \eqref{eq:nulaw} can be explicitly obtained by integrating in time, 
$$\ln^2 \nu(\tau) = \gamma_1 \tau \Big(1 + \Oc\big(\frac{1}{\sqrt{\tau}}\big) \Big), \quad \nu(\tau) = e^{-\sqrt{\gamma_1 \tau} + \Oc(1)}.$$
In terms of the variable $t$, we recall that $\tau = - \ln(T-t)$, we then obtain the final blowup law as 
$$\lambda(t) = \sqrt{T-t}\nu(\tau) = C_0 \sqrt{T - t}e^{- \sqrt{\gamma_1 |\ln (T-t)|}} \quad \textup{as} \;\; t \to T.$$
We then introduce the linearization 
$$ \alpha = 8\nu^2 + \tilde \alpha, \quad a = \bar a + \tilde a, \quad \bar a = (2,0),$$
where $\bar a$ are the exact solutions to 
$$  \frac{2\bar a}{|a|^2} = \frac{\bar a}{2}.$$
The linearized equations for $\tilde \alpha$ and $\tilde{a}$ satisfy 
\begin{align}
&\tilde{\alpha}_\tau = \tilde \alpha \Big(1 + \frac{\gamma_0}{\ln \nu} \Big) + \Oc\Big( \frac{\nu^2}{|\ln \nu|^2} \Big),\label{ode:alphatil}\\
&\tilde{a}_\tau = \frac{\bar a}{2} \bar a . \tilde a + \Oc\Big(|\tilde a|^2 + \frac{\nu}{|\ln \nu|^2}\Big).\label{ode:atil}
\end{align}
These two ODEs do not have a decaying solution for arbitrary initial data, and we shall solve them by a Brouwer fixed point argument later.

\noindent - \textit{Improved energy bounds:} From Lemma \ref{lemm:L2energy}, \ref{lemm:mod} and the bootstrap bounds in \ref{def:bootstrap}, we have 
\begin{align*}
\frac{d}{d\tau}\langle \hat \vep, \hat \vep \rangle_\ast  &\leq - \delta_0 \langle \hat \vep, \hat \vep \rangle_\ast + \frac{C}{|\ln \nu|^2}\Big(\| q(\tau)\|_{H^1_\inn}^2 + \| \vep(\tau)\|_{\omega_\nu}^2 \Big) + \frac{C \nu^4}{|\ln \nu|^4}\\
& \leq  - \delta_0 \langle \hat \vep, \hat \vep \rangle_\ast + \frac{C \nu^4}{|\ln \nu|^4} \leq  - \delta_0 \langle \hat \vep, \hat \vep \rangle_\ast + C K \frac{e^{-4\sqrt{\gamma_1 \tau}}}{\tau^2},
\end{align*}
where we used the bootstrap  $|\nu(\tau)| \leq \bar K e^{-\sqrt{\gamma_1 \tau}}$ and $|\ln \nu(\tau)| \sim \sqrt{\tau}$. For a fixed $\tau_0 \gg 1$, consider $\tau_1 \in [\tau_0, 2 \tau_0]$ and  integrate in time the above inequality, we get 
\begin{align*}
\langle \hat \vep, \hat \vep \rangle_\ast &\leq e^{-\delta_0(\tau_1 - \tau_0)}  + CK e^{-\delta_0 \tau_1} \int_{\tau_0}^{\tau_1} e^{\delta_0 \tau} \frac{e^{-4 \sqrt{\gamma_1 \tau}}}{\tau^2} d\tau\\
&\leq CK \frac{e^{-4\sqrt{\gamma_1 \tau_1}}}{\tau_1^2} \leq \frac{K^2}{4}\frac{\nu(\tau_1)^4}{|\ln \nu(\tau_1)|^4},
\end{align*}
where we used integration by parts and $1/\tau_1 \sim 1/\tau \sim 1/\tau_0$ for $\tau, \tau_1 \in [\tau_0, 2\tau_0]$ and $\tau_0 \gg 1$. We recall from \eqref{bd:coercivity-control-adapted-norm-hatu-2} the equivalence $\hat \vep, \hat \vep \rangle_\ast \sim \|\vep \|^2_{\omega_\nu}$, hence, we get the improved bound 
$$\|\vep(\tau)\|_{\omega_\nu} \leq \frac{K}{2}\frac{\nu^2}{|\ln \nu|^2}, \quad \forall \tau \in [\tau_0, 2\tau_0]. $$
From Lemmas \ref{lemm:H2mid} and \eqref{lemm:Linfout}, we already obtain the improved bounds 
$$\|\vep(\tau)\|_{bd} \leq CK \frac{\nu^2}{|\ln \nu|^2} \leq \frac{K^2}{2}\frac{\nu^2}{|\ln \nu|^2}, \quad \|\vep(\tau)\|_{\out} \leq \frac{K^5}{2} \nu^2. $$
For the improvement of $H^1$ bound, we have from the energy estimate obtained in Lemma \ref{lemm:H1energy} and the bootstrap definition \ref{def:bootstrap}, 
\begin{align*}
\frac{d}{d\tau}\|q(\tau)\|_{H^1_\inn}^2 & \leq -\frac{1}{\zeta_*}\|q(\tau)\|_{H^1_\inn}^2 + CK^4 \frac{\nu^4}{|\ln \nu|^4},
\end{align*}
from which we integrate in time to get the estimate
$$\|q(\tau)\|_{H^1_\inn}^2 \leq CK^4 \frac{\nu^4}{|\ln \nu|^4} \leq \frac{K^6}{4}\frac{\nu^4}{|\ln \nu|^4}. $$
Similarly, we write from the energy estimate obtained in Lemma \ref{lemm:H2energy} and the bootstrap definition \ref{def:bootstrap}, 
\begin{align*}
\frac{d}{d\tau}\|\tilde q(\tau)\|_{H^2_\inn}^2 \leq -\delta_2\|\tilde q(\tau)\|_{H^2_\inn}^2 + CK^6 \frac{\nu^4}{|\ln \nu|^4}, 
\end{align*}
thus, the improved bound 
$$\|\tilde q(\tau)\|_{H^2_\inn}^2 \leq CK^6 \frac{\nu^4}{|\ln \nu|^2} \leq \frac{K^{8}}{4}\frac{\nu^4}{|\ln \nu|^4}. $$

\noindent We have shown that all the bootstrap estimates in Definition \ref{def:bootstrap} are improved, except for the growing modes $\tilde \alpha$ and $\tilde a$. In fact, at the exit time $\tau = \tau^*$, we compute from the two odes satisfied by $\tilde \alpha$ and $\tilde a$,
$$\frac{d}{d\tau} \big(|\tilde \alpha|^2\big) \Big\vert_{\tau = \tau^*} > 0, \quad \frac{d}{d\tau} \big(|\tilde a|^2\big) \Big\vert_{\tau = \tau^*} > 0.$$
By a Brouwer fixed-point argument,  there exists initial data $\tilde \alpha (\tau_0)$ and $\tilde a(\tau_0)$ such that $|\tilde \alpha(\tau)| \leq \frac{\bar K \nu^2}{|\ln \nu|}$ and  $|\tilde a(\tau)| \leq \bar K \nu^2$ for all $\tau \geq \tau_0$.  Hence, the solution $\vep(z,\tau)$ is trapped in the bootstrap regime \ref{def:bootstrap} for all $\tau \geq \tau_0$. \\

\noindent - \textit{Positivity and $16 \pi$ mass concentration at the origin of the constructed solution:} In terms of the self-similar variable \eqref{eq:wztau} and the decomposition \eqref{def:AppSol}, we have constructed initial data $w(\tau_0)$ for \eqref{eq:wztau}  of the form 
\begin{equation} \label{def:initialdatawtau0}
 w(z, \tau_0) = U_{\nu_0}(z - a_0) + U_{\nu_0}(z + a_0) + \tilde W[\nu_0, a_0, \alpha_0](z) + \vep_0(z),   
\end{equation}
where $(\vep_0, \nu_0, a_0, \alpha_0)$ satisfy estimates given in the bootstrap regime \ref{def:bootstrap}, from which the solution $w(z, \tau)$ to \eqref{eq:wztau} is decomposed as 
\begin{equation}\label{decomp:wfinal}
w(z, \tau) = U_{\nu}(z - a) + U_{\nu}(z + a) + \tilde W[\nu, a, \alpha](z) + \vep(z, \tau), \quad \forall \tau \geq \tau_0,
\end{equation}
where $(\vep(\tau), \nu(\tau), a(\tau), \alpha(\tau))$ is trapped in the bootstrap regime \ref{def:bootstrap} for all time $\tau \geq \tau_0$ with a specific choice of initial data $\tilde a(\tau_0)$ and $\tilde \alpha (\tau_0)$. We first claim that the initial data \eqref{def:initialdatawtau0} is nonnegative, so is our constructed solution $w(z,\tau)$. We fix $0< \eta_0 \ll 1$ and split the domain into $|z \pm a_0| \leq \eta_0$ and $|z \pm a_0| \leq \eta_0$. In the region $|z \pm a_0| \geq \eta_0$, by the construction of the approximate eigenfunctions (see Proposition  \ref{prop:eigen}) and the corrected steady state \eqref{def:ImprovedSS} (see Proposition \ref{prop:E0}) and $\alpha_0 \sim \nu_0^2$, we have 
$$|\tilde W[\nu_0, a_0, \alpha_0](z)| = \big|\alpha_0 \big(\phi_1[\nu_0, a_0](z) - \phi_0[\nu_0, a_0](z)\big) + \Xi[\nu_0, a_0](z)\big| \lesssim \frac{\nu_0^2}{\nu_0^2 + |z \pm a_0|^2} \lesssim \frac{\nu_0^2}{\eta_0^2},$$
and since $\vep_0$ satisfies the bootstrap estimates which implies that $|\vep_0(z)| \leq C(K) \nu_0^2$, and by the definition of $U_{\nu_0}(z \pm a_0)$, we have for $|z \pm a_0| \geq \eta_0$,
$$U_{\nu_0}(z \pm a_0) = \frac{8 \nu_0^2}{(\nu_0 + |z \pm a_0|)^4} \geq \frac{8\nu_0^2}{\eta_0^4} \gg \frac{\nu_0^2}{\eta_0^2} + C(K)\nu_0^2,$$
from which we get $w(z, \tau_0) > 0$ for $|z \pm a_0| \geq \eta_0$. In the region $|z \pm a| \leq \eta_0$, we compute from the expansion of the inner approximate eigenfunction (see Proposition \ref{prop:phi1inn}), we write in the variable $ y_\pm = \frac{z \mp a_0}{\nu_0}$,
$$U_{\nu_0}(z\pm a) + \tilde W(\nu_0, a_0, \alpha_0)(z) + \vep_0(z) =  \frac{1}{\nu_0^2}\Big[ U(y_\pm) + \Oc (\sum_{k = 2}^4 \nu_0^k \langle y_\pm \rangle^{k - 4}) + \|q_0 \mathbf{1}_{\{|y_\pm| \leq \frac{\eta_0}{\nu_0}\}}\|_{\infty}\Big],$$
where $q_0(y_\pm) = \nu_0^2 \vep_0(z)$ and we have from the bootstrap definition \ref{def:bootstrap} the bound $\|q_0 \mathbf{1}_{\{|y_\pm| \leq \frac{\eta_0}{\nu_0}\}}\|_{\infty} \lesssim \frac{\nu_0^2}{|\ln \nu_0|^2}$, which implies that $w(z, \tau_0) > 0$ in the region $\langle y_\pm \rangle \leq \frac{\eta_0}{\nu_0}$. This proves the positivity of the initial data $w(z, \tau_0)$, so is $w(z, \tau)$.

We also compute the total mass concentrating at the origin by estimating from the bootstrap definition \ref{def:bootstrap},
$$\|\vep(\tau)\|_{L^1(|z| \lesssim 1)} \lesssim \|\vep(\tau)\|_{L^2_{\omega_\nu}} \lesssim \frac{\nu^2}{|\ln \nu|^2}, \quad \|\tilde W \|_{L^1(|z| \lesssim 1)} \lesssim \nu^2 |\ln \nu|,$$
hence, for any $A \gg 1$ fixed constant,
$$\int_{|x| \leq A\sqrt{T-t}} u(x,t) dx = \int_{|z| \leq A} w(z, \tau) dz = \int_{\Rb^2} U_{1+2, \nu}(z) dz + \Oc(\nu^2 |\ln \nu|) = 16\pi + \Oc(\nu^2 |\ln \nu|). $$
This fact shows that our constructed solution has a $16\pi$ mass concentrating at the origin. This concludes the proof of the existence part of Theorem \ref{thm:main}. \\

\appendix
\section{Inhomogenous linearized system around a stationary state} \label{sec:linear-system}

In this subsection we solve for $\ell\in \mathbb N$ the system
$$ 
\left\{\begin{array}{l l} \Ls_{0}( w,\phi_w) = f(r)\cos(\ell\theta),\\
-\Delta \phi_w = w,
\end{array}  \right.
$$
following to a larger extent \cite{Vsiam02,SSVnon13}.

\subsection{The radial case}

For $\ell = 0 $, we introduce the partial mass variable: 
$$m_w(r) = \int_0^r w(r')r'dr', \quad w = \frac{\partial_r m_w}{r}, \quad \partial_r \Phi_w = -\frac{m_w}{r},$$
where $m_w$ solves the equation 
\begin{equation}
\As_0 m_w = m_f \quad \textup{with} \quad \As_0  = \partial_r^2 - \frac{\partial_r}{r} + \frac{Q}{r} \partial_r + \frac{\partial_r Q}{r}, \quad Q = \frac{4 r^2}{1 + r^2}. 
\end{equation}
\begin{lemma}[Inversion of $\As_0$] \label{lemm:invA0} For any $g \in \Cc(\Rb^+, \Rb)$, a solution to $\As_0 u  =g$ is given by 
\begin{equation} \label{systeme-odes:formula-radial}
\As_0^{-1} g = \frac{1}{2}\psi_0\int_r^1 \frac{\zeta^4  + 4\zeta^2 \ln \zeta -1}{\zeta} g(\zeta) d\zeta +\frac{1}{2}\tilde \psi_0\int_0^r \zeta g(\zeta) d\zeta,
\end{equation}
where $\psi_0$ and $\tilde{\psi}_0$ are the two fundamental solutions to $\As_0 \phi = 0$ given by 
\begin{equation} \label{systeme-odes:fundamental-solution-radial}
\psi_0 = \frac{r^2}{(1  + r^2)^2}, \quad \tilde{\psi}_0 =  \frac{r^4 + 4 r^2 \ln r  -1}{(1 + r^2)^2}. 
\end{equation}
\end{lemma}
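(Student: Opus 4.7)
The plan is to treat $\As_0 u = g$ as a standard inhomogeneous second-order linear ODE and apply the method of variation of parameters using the two homogeneous solutions $\psi_0$ and $\tilde\psi_0$ already exhibited. First I would verify by direct differentiation that both $\psi_0 = r^2(1+r^2)^{-2}$ and $\tilde\psi_0 = (r^4 + 4r^2 \ln r - 1)(1+r^2)^{-2}$ solve $\As_0 \phi = 0$. The solution $\psi_0$ is essentially $-\tfrac12 m_{\Lambda U}$ coming from the scaling invariance of the Keller--Segel steady state, while $\tilde\psi_0$ can be produced from $\psi_0$ by reduction of order, which accounts for the logarithmic term.

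Next I would compute the Wronskian. Rewriting the operator as $\As_0 = \partial_r^2 + \frac{Q-1}{r}\partial_r + \frac{\partial_r Q}{r}$, Abel's identity gives $W' = -\frac{Q-1}{r} W$. The partial-fraction decomposition $\frac{Q-1}{r} = -\frac{1}{r} + \frac{4r}{1+r^2}$ integrates to $-\ln r + 2\ln(1+r^2)$, so $W(r) = C r(1+r^2)^{-2}$ for some constant $C$; evaluating at $r=1$ (where $\tilde\psi_0$ and its derivative take particularly simple values) fixes $C = 2$.

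The standard variation-of-parameters formula then reads
\[
u_p(r) = -\psi_0(r)\! \int_{a_1}^r \frac{\tilde\psi_0(\zeta) g(\zeta)}{W(\zeta)}\, d\zeta + \tilde\psi_0(r)\! \int_{a_2}^r \frac{\psi_0(\zeta) g(\zeta)}{W(\zeta)}\, d\zeta.
\]
The two ratios simplify to $\psi_0/W = \zeta/2$ and $\tilde\psi_0/W = (\zeta^4 + 4\zeta^2\ln\zeta - 1)/(2\zeta)$, which are precisely the kernels appearing in the claimed formula. Choosing $a_1 = 1$ (natural because $\tilde\psi_0(1) = 0$, and also to localize the singular $\ln\zeta$ contribution away from the origin) and $a_2 = 0$ (so that the second term is well-defined and vanishes appropriately at $r=0$ for integrable $g$), and flipping the sign of the first integral by swapping its limits, reproduces exactly the stated expression.

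The proof is therefore essentially bookkeeping: the only genuinely computational step is verifying that $\tilde\psi_0$ is a homogeneous solution, which involves cancellations between the $\ln r$ contributions from the $\partial_r^2$ and $\frac{Q-1}{r}\partial_r$ terms but is otherwise routine. No genuine analytical obstacle appears, and the particular choice of limits $1$ and $0$ is dictated by matching to the explicit kernels $\tfrac12(\zeta^4 + 4\zeta^2 \ln \zeta - 1)/\zeta$ and $\tfrac12 \zeta$ displayed in the statement.
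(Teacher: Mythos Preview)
Your proof is correct and complete; the paper itself states this lemma without proof, treating it as a standard variation-of-parameters computation, which is exactly what you have carried out. Your Wronskian computation $W = 2r(1+r^2)^{-2}$ and the resulting kernels $\psi_0/W = \zeta/2$, $\tilde\psi_0/W = (\zeta^4 + 4\zeta^2\ln\zeta - 1)/(2\zeta)$ are precisely right, and the choice of integration endpoints is the natural one.
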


\subsection{The non radial case}

\noindent For $\ell \geq 1$, we look for the solution  of the form
$$w = \psi(r)\cos(\ell \theta), \quad \phi_w = \omega(r)\cos(\ell \theta),$$
where $(\psi, \omega)$ satisfies the system
\begin{equation}\label{sys:psiomega}
\arraycolsep=1.4pt\def\arraystretch{2.2}
\left\{\begin{array}{l}
\pr^2\psi + \dfrac{\pr \psi}{r} - \dfrac{\ell^2}{r^2}\psi + \dfrac{4r }{1 + r^2}\pr \psi + \dfrac{32r }{(1+r^2)^3}\pr \omega + \dfrac{16\psi}{(1+r^2)^2} = f(r),\\
\pr^2\omega + \dfrac{\pr \omega}{r} - \dfrac{\ell^2}{r^2}\omega + \psi = 0.
\end{array}\right.
\end{equation}
System \eqref{sys:psiomega} can be solved in the closed form thanks to the following result.

\begin{lemma}[Solution of the homogeneous system \eqref{sys:psiomega}] \label{lemm:SolHom} For $\ell  \geq 1, $ there exist four linearly independent solutions $(\psi, \omega)$ of the homogegenous system \eqref{sys:psiomega} with $f \equiv 0$:
\begin{align*}
(h_{\ell,1}, g_{\ell,1}) & =[(\ell - 1)r^2 + \ell + 1] \left(\frac{8r^\ell}{(r^2 + 1)^3}, \frac{r^\ell}{r^2 + 1}\right),\\
(h_{\ell,2}, g_{\ell,2}) &=\Big(r^4 + 4r^2\ln r - 1\Big)\left(\frac{8}{r(r^2 + 1)^3}, \frac{1}{r(r^2+1)} \right) \quad \textup{for $\ell = 1$},\\
(h_{\ell,2}, g_{\ell,2}) & = [(\ell + 1)r^2 + \ell - 1]\left(\frac{8}{r^\ell(r^2 + 1)^3}, \frac{1}{r^\ell(r^2 + 1)}\right) \quad \textup{for $\ell \geq 2$},\\
(h_{\ell,3}, g_{\ell,3}) &= (8r^\ell+O(r^{\ell+2}), - r^\ell+O(r^{\ell+2})) \qquad \textup{as} \;\; r \to 0,\\
(h_{\ell,3}, g_{\ell,3}) &= K_\ell r^{\sqrt{\ell^2 + 4}}\left(16r^{-2}+O(r^{-4}), - 4+O(r^{\ell-\sqrt{\ell^2+4}})\right) \quad \textup{as} \;\; r \to +\infty,\\
(h_{\ell,4}, g_{\ell,4}) &= (c_{1,4}r^{-1}, c_{1,4}' r^{-1})+O(r) \qquad \text{as} \;\; r \to 0, \quad (c_{1,4},c_{1,4}')\neq (0,0), \quad \textup{for $\ell = 1$},\\
(h_{\ell,4}, g_{\ell,4}) & = \left(r^{-\sqrt{5}-2}+O(r^{-\sqrt{5}-4}),\frac{-1}{4}r^{-\sqrt{5}}+O(r^{-\sqrt{5}-2})\right) \quad \textup{as} \;\; r \to +\infty \quad \textup{for $\ell = 1$},\\
(h_{\ell,4}, g_{\ell,4}) &=(c_{\ell,4}r^{-\ell}+O(r^{-\ell+2}), c_{\ell,4}' r^{-\ell}+O(r^{-\ell+2})) \qquad \text{as} \;\; r \to 0, \quad  (c_{\ell,4},c_{\ell,4}')\neq (0,0) \quad \textup{for $\ell \geq 2$},\\
(h_{\ell,4}, g_{\ell,4}) &= C_\ell \left(16 r^{-\sqrt{\ell^2 + 4} - 2}+O(r^{-\ell-4}), - 4r^{-{\sqrt{\ell^2 + 4}}}+o(r^{-\ell-2})\right) \quad \textup{as} \;\; r \to +\infty \quad \textup{for $\ell \geq 2$},
\end{align*}
for some constants $K_\ell ,C_\ell > 0$ and $ \kappa_\ell \geq 0$. These estimates propagate for higher order derivatives. Moreover, we have for $\ell \geq 2$,
\begin{equation}\label{eq:ClKl}
C_\ell K_\ell = \frac{\ell}{\sqrt{\ell^2 + 4}}.
\end{equation}
and we have for $\ell\geq 1$,
\begin{equation}\label{eq:detA}
W_\ell := \left|\begin{matrix} h_{\ell,1} & h_{\ell,2} & h_{\ell,3} & h_{\ell,4}\\
g_{\ell,1} & g_{\ell,2} & g_{\ell,3} & g_{\ell,4}\\
h_{\ell,1}' & h_{\ell,2}' & h_{\ell,3}' & h_{\ell,4}'\\
g_{\ell,1}' & g_{\ell,2}' & g_{\ell,3}' & g_{\ell,4}'
\end{matrix}\right| = \frac{E_\ell}{r^2(1 + r^2)^2}.
\end{equation}
with $E_1=-128\sqrt{5}K_1$ and $E_\ell = -2^{10}(\ell + 1)(\ell - 1)\ell^2$ for $\ell\geq 2$. 
\end{lemma}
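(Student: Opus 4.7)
The plan is to construct the four-dimensional space of solutions to the homogeneous system \eqref{sys:psiomega} in two steps: first exhibit two explicit algebraic pairs via symmetries of the Liouville equation, then build the remaining two via Frobenius/Cauchy--Lipschitz analysis, finally extracting the Wronskian and the matching constant $C_\ell K_\ell$.

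\textbf{Explicit pairs.} The stationary profile $U$ solves the Liouville equation $\Delta \ln U = -U$, which is invariant under conformal deformations $U \mapsto U_f(z) = 8|f'(z)|^2/(1+|f(z)|^2)^2$ for holomorphic $f$. Differentiating in $\epsilon$ at $\epsilon = 0$ the one-parameter families $f(z) = z + \epsilon z^\ell$ and $f(z) = z + \epsilon z^{-\ell}$ (with a log correction when $\ell = 1$) produces two formal solutions to the linearized system whose $\ell$-th spherical harmonic components match the stated formulas for $(h_{\ell,1}, g_{\ell,1})$ (regular at $0$, growing like $r^\ell$) and $(h_{\ell,2}, g_{\ell,2})$ (singular at $0$, decaying). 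This heuristic can be bypassed by a direct substitution: plugging the proposed formulas into \eqref{sys:psiomega} and simplifying (using $\partial_r \Phi_U = -4r/(1+r^2)$ and $U = 8/(1+r^2)^2$) reduces to a polynomial identity in $r$ that one checks term by term.

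\textbf{The remaining two solutions via Frobenius and ODE theory.} Rewriting \eqref{sys:psiomega} as a first-order system $\partial_r X = M(r) X$ with $X = (\psi, \omega, \psi', \omega')^\top$, Cauchy--Lipschitz away from $r=0, \infty$ gives a 4-dimensional solution space. The indicial equation at $r=0$, obtained by balancing the dominant terms $\partial_r^2 - \ell^2/r^2$, has roots $\pm \ell$: this yields one regular branch behaving like $r^\ell$ and one singular branch like $r^{-\ell}$ (with a logarithmic correction when $\ell = 1$ because the roots differ by an integer). The solutions $(h_{\ell,1}, g_{\ell,1})$ and $(h_{\ell,2}, g_{\ell,2})$ already occupy one direction in each branch, so I define $(h_{\ell,3}, g_{\ell,3})$ as the unique (up to scalar) solution regular at $0$ and linearly independent from $(h_{\ell,1}, g_{\ell,1})$, and $(h_{\ell,4}, g_{\ell,4})$ as the unique solution singular at $0$ and linearly independent from $(h_{\ell,2}, g_{\ell,2})$.

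\textbf{Asymptotics at infinity.} As $r \to \infty$ the system decouples at leading order because $U \sim 8/r^4$ and $\partial_r \Phi_U \sim -4/r$, so the first equation in \eqref{sys:psiomega} becomes $\psi'' + 5\psi'/r - \ell^2\psi/r^2 \approx 0$, whose indicial equation $\alpha^2 + 4\alpha - \ell^2 = 0$ has roots $\alpha = -2 \pm \sqrt{\ell^2+4}$. This dictates the two possible decay rates at infinity, $r^{\sqrt{\ell^2+4}-2}$ (growing) and $r^{-\sqrt{\ell^2+4}-2}$ (decaying). Since $(h_{\ell,1}, g_{\ell,1})$ and $(h_{\ell,2}, g_{\ell,2})$ have explicit polynomial/rational growth, the solutions $(h_{\ell,3}, g_{\ell,3})$ and $(h_{\ell,4}, g_{\ell,4})$ carry these irrational exponents; matching the leading behaviors of $\omega$ via $-\Delta_\ell \omega = \psi$ gives the relations between $h_{\ell,k}$ and $g_{\ell,k}$ with coefficients $K_\ell$ and $C_\ell$ encoding the global connection problem. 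Subdominant terms are obtained iteratively by variation of parameters.

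\textbf{Wronskian computation.} The Wronskian determinant $W_\ell$ of the four solutions obeys Abel's identity $W_\ell' = -\mathrm{tr}(M) W_\ell$; writing the first-order system explicitly gives $\mathrm{tr}(M) = -(1/r + 1/r + 4r/(1+r^2)) = -2/r - 4r/(1+r^2)$, whence $W_\ell = E_\ell/(r^2(1+r^2)^2)$ for some constant $E_\ell$. To pin down $E_\ell$ I would evaluate the determinant as $r \to \infty$ using the explicit leading terms: the rows involving $h_{\ell,1}, h_{\ell,2}$ contribute powers of $r$ cleanly, and the $r^{\pm \sqrt{\ell^2+4}}$ factors from $h_{\ell,3}, h_{\ell,4}$ must cancel in the determinant, which forces the identity $C_\ell K_\ell = \ell/\sqrt{\ell^2+4}$ (for $\ell \geq 2$) simultaneously with the stated value $E_\ell = -2^{10}(\ell+1)(\ell-1)\ell^2$. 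The case $\ell = 1$ is handled similarly, with the logarithmic solution $(h_{1,2}, g_{1,2})$ contributing a simpler evaluation $E_1 = -128\sqrt{5}\,K_1$.

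The most delicate step is the connection problem, i.e., the computation of the prefactors $K_\ell$ and $C_\ell$: these are nonlocal quantities that require either a careful series expansion of $(h_{\ell,3}, g_{\ell,3})$ matched to its asymptotic form at infinity, or the Wronskian-based derivation of $C_\ell K_\ell$ above. The relation $C_\ell K_\ell = \ell/\sqrt{\ell^2+4}$ cannot be proven by local analysis alone; it depends on the global normalization fixed by the explicit solutions $(h_{\ell,1}, g_{\ell,1})$ and $(h_{\ell,2}, g_{\ell,2})$, which is precisely why computing $E_\ell$ in two ways (from the Wronskian ODE and from the leading behavior of the four explicit/asymptotic expressions) provides the bridge.
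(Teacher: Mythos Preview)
Your outline captures the broad structure correctly---explicit solutions via symmetry, Frobenius analysis at the singular points, and Abel's identity for the Wronskian---but there is a genuine gap in the final step, and the paper's proof takes a rather different route (largely by citation to \cite{SSVnon13,Vsiam02}, with an explicit construction only for $(h_{1,4},g_{1,4})$ via a decoupling change of variables).

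The gap is in your claim that the $4\times 4$ Wronskian computation ``forces'' the identity $C_\ell K_\ell=\ell/\sqrt{\ell^2+4}$. It does not: Abel's identity gives $W_\ell=E_\ell/(r^2(1+r^2)^2)$ with $E_\ell$ a single undetermined constant, and evaluating the leading-order determinant at $r\to\infty$ yields $E_\ell=-2^{10}(\ell^2-1)\ell\sqrt{\ell^2+4}\,C_\ell K_\ell$. This is \emph{one} equation in the two unknowns $E_\ell$ and $C_\ell K_\ell$. To close the system you would evaluate the determinant at $r\to 0$, but there the fourth column involves the connection constants $c_{\ell,4},c_{\ell,4}'$ (the leading coefficients of $(h_{\ell,4},g_{\ell,4})$ at the origin), which are themselves global data you have not determined. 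Your definition of $(h_{\ell,4},g_{\ell,4})$ as ``singular at $0$ and linearly independent from $(h_{\ell,2},g_{\ell,2})$'' is in fact ambiguous up to adding multiples of $(h_{\ell,2},g_{\ell,2})$; the correct characterization is by the decay $r^{-\sqrt{\ell^2+4}-2}$ at infinity, and its behavior at $0$ is then the output of the connection problem, not an input.

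The missing ingredient, used in \cite{SSVnon13} and visible in the paper's treatment of $\ell=1$, is a \emph{decoupling}: one writes $(\psi,\omega)$ in terms of new unknowns $(F,G)$ such that $G$ satisfies a second-order ODE independent of $F$. The Wronskian of that second-order equation (involving only the two solutions corresponding to $k=3,4$) then gives the second relation needed to pin down $C_\ell K_\ell$, after which $E_\ell$ follows from the formula above. Without this step (or an equivalent direct solution of the connection problem), the argument does not close.
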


\begin{proof}

\textbf{Step 1}. \emph{The solutions}. The functions $(h_{\ell,k},g_{\ell,k})$ are obtained for $\ell\geq 2$ and $k=1,2,3$ in Theorem 3.2 in \cite{SSVnon13} in Theorem 4.3 in \cite{Vsiam02}, and for $\ell=1$ and $k=1,2,3$ in Theorem 4.2 in \cite{Vsiam02}. The main term in the asymptotic behaviours as $r\to 0$ and $r\to \infty$ for $h_{\ell,3}$ and $g_{\ell,3}$ is obtained in these references. The remainders in the $O()'s$ for $h_{\ell,3}$ and $g_{\ell,3}$ are easily obtained from the proofs of Theorems 4.2 and 4.3 in \cite{Vsiam02}. 

The only difference between these references and the present lemma is the behaviour as $r\to \infty$ of $(h_{\ell,4}, g_{\ell,4})$. First, for $\ell\geq 2$, in \cite{SSVnon13} a solution $(\tilde h_{\ell,4}, \tilde g_{\ell,4})$ is obtained such that 
\begin{align*}
(\tilde h_{\ell,4}, \tilde g_{\ell,4}) &\sim (8r^{-\ell}, - r^{-\ell}) \qquad \text{as} \;\; r \to 0,  \quad \textup{for $\ell \geq 2$},\\
(\tilde h_{\ell,4}, \tilde g_{\ell,4}) &= C_\ell \left(16 r^{-\sqrt{\ell^2 + 4} - 2}+O(r^{-\ell-4}), \kappa_\ell r^{-\ell} - 4r^{-{\sqrt{\ell^2 + 4}}}+o(r^{-\ell-2})\right) \; \textup{as} \; r \to +\infty \;\; \textup{for $\ell \geq 2$}.
\end{align*}
where $\kappa_\ell\in \mathbb R$ and $C_\ell>0$. We define $( h_{\ell,4},g_{\ell,4})=(\tilde h_{\ell,4}, \tilde g_{\ell,4})-\frac{C_\ell \kappa_\ell }{\ell+1}(h_{\ell,2},g_{\ell,2})$ which is a solution that satisfies the desired behaviour of the Lemma.

For $\ell=1$ we actually need to compute $(g_{1,4},h_{1,4})$. We choose one that has an asymptotic behaviour that is different from an analogue solution obtained in Theorem 4.2 in \cite{Vsiam02} because there is an error there in the computation of a constant called $B_1$ on page 1598.

We recall from page 1596 in \cite{Vsiam02} first that for $\ell=1$ a function $(\psi,\omega)$ solves \eqref{sys:psiomega} if and only if it can be decomposed as 
$$
(\psi,\omega)=\left(\frac{8}{r(r^2+1)^3}(F+G),\frac{1}{r(r^2+1)}(F-G)\right)
$$
where $(F,G)$ solves
\begin{align}
\label{sys:ell=1:eq-F}&\partial_{r}^2 F-\left(\frac{1}{r}+\frac{4r}{r^2+1}\right)\partial_r F+\frac{8}{r^2+1}F=\frac{4}{r^2+1}(r\partial_r G-3G),\\
\label{sys:ell=1:eq-G}&\partial_r^2 G-\left(\frac{1}{r}+\frac{8r}{r^2+1}\right)\partial_r G +\left(\frac{20}{r^2+1}-\frac{16}{(r^2+1)^2}\right)G=0,
\end{align}
and second that solutions to \eqref{sys:ell=1:eq-F} with $G\equiv 0$ are given by
\begin{align}
\label{sys:ell=1:id-F1-F2}
F_1=2r^2 \qquad \mbox{and}\qquad F_2=r^4+4r^2\log r-1.
\end{align}

\noindent \underline{Construction of a particular solution to \eqref{sys:ell=1:eq-G}}.
We claim that there exists a solution to \eqref{sys:ell=1:eq-G} of the form
\begin{equation} \label{sys:ell=1:id-Gbeta}
G(r)=r^{5-\sqrt{5}}+cr^{3-\sqrt{5}}+\tilde G, \quad c\in \mathbb R, \quad |\nabla^k \tilde G(r)|\lesssim r^{1-\sqrt{5}-k}\quad \mbox{as } r\to \infty.
\end{equation}
Indeed, we decompose
\begin{align*}
& \partial_r^2 -\left(\frac{1}{r}+\frac{8r}{r^2+1}\right)\partial_r  +\left(\frac{20}{r^2+1}-\frac{16}{(r^2+1)^2}\right) = L+\tilde L,\\
& L=\partial_r^2-\frac 9 r \partial_r +\frac{20}{r^2},\\
&\tilde L=-\frac{8}{r(r^2+1)}\partial_r+\left(\frac{16}{(r^2+1)^2}+\frac{20}{r^2(r^2+1)}\right)= \frac{8}{r^3}\partial_r-\frac{36}{r^4}+O(r^{-5})\partial_r+O(r^{-6}).
\end{align*}
Hence for the choice $c=\frac{8\sqrt{5}-4}{6+4\sqrt{5}}$ we have
$$
H(r)=\left(\partial_r^2 -\left(\frac{1}{r}+\frac{8r}{r^2+1}\right)\partial_r  +\left(\frac{20}{r^2+1}-\frac{16}{(r^2+1)^2}\right)\right)(r^{5-\sqrt{5}}+cr^{3-\sqrt{5}})=O(r^{-1-\sqrt{5}}).
$$
as $r\to \infty$. The kernel of $L$ is spanned by $r^{5-\sqrt{5}}$ and $r^{5+\sqrt{5}}$ so that for Schwartz functions $f$ a particular solution $u=L^{-1}f$ to $Lu=f$ is given by
$$
L^{-1}f (r)= -\frac{r^{5+\sqrt{5}}}{2\sqrt{5}}\int_r^\infty \rho^{-4-\sqrt{5}}f(\rho)d\rho+\frac{r^{5+\sqrt{5}}}{2\sqrt{5}}\int_r^\infty \rho^{-4+\sqrt{5}}f(\rho)d\rho.
$$
We compute, by an integration by parts, that for Schwartz functions $f$,
\begin{align*}
L^{-1}\tilde L f & =\sum_\pm \mp \frac{r^{5\pm\sqrt{5}}}{2\sqrt{5}}\int_r^\infty \left[\rho^{-4\mp \sqrt{5}}\left(\frac{16}{(\rho^2+1)^2}+\frac{20}{\rho^2(\rho^2+1)}\right)+8\partial_\rho \left(\frac{\rho^{-5\mp\sqrt{5}}}{\rho^2+1}\right)\right]d\rho.
\end{align*}
A solution of \eqref{sys:ell=1:eq-G} is of the form \eqref{sys:ell=1:id-Gbeta} if and only if $\tilde G$ solves $(L+\tilde L)\tilde G=+H$. We obtain the solution $\tilde G$ of this latter equation by solving the fixed point equation $L\tilde G=-L^{-1}\tilde L \tilde F+L^{-1}H$, i.e.
\begin{align*}
\tilde G(r) & =\sum_\pm \mp \frac{r^{5\pm\sqrt{5}}}{2\sqrt{5}}\int_r^\infty \rho^{-4\mp \sqrt{5}} H (\rho)d\rho\\
&+\sum_\pm \pm \frac{r^{5\pm\sqrt{5}}}{2\sqrt{5}}\int_r^\infty \left[\rho^{-4\mp \sqrt{5}}\left(\frac{16}{(\rho^2+1)^2}+\frac{20}{\rho^2(\rho^2+1)}\right)+8\partial_\rho \left(\frac{\rho^{-5\mp\sqrt{5}}}{\rho^2+1}\right)\right]G(\rho)d\rho .
\end{align*}
The above defines a contraction in $L^\infty([R,\infty),r^{-1+\sqrt{5}}dr)$ for $R$ large enough, providing the existence of the desired function $\tilde G$ satisfying the inequality in \eqref{sys:ell=1:id-Gbeta} for $k=0$. The estimates of higher order derivatives $k\geq 1$ of $\tilde G$ are obtained by induction from the above formula.

\smallskip

\noindent \underline{Construction of a particular corresponding solution to \eqref{sys:ell=1:eq-F}}. We claim that for $G$ given by \eqref{sys:ell=1:id-Gbeta} there exists a solution to \eqref{sys:ell=1:eq-F} of the form
\begin{equation} \label{sys:ell=1:id-Fbeta}
F(r)=r^{5-\sqrt{5}}+c'r^{3-\sqrt{5}}+\tilde F, \quad c'\in \mathbb R, \quad |\nabla^k \tilde F(r)|\lesssim r^{1-\sqrt{5}-k}\quad \mbox{as } r\to \infty.
\end{equation}
Indeed, a direct computation shows
\begin{align*}
I=& -\left(\partial_{r}^2 -\left(\frac{1}{r}+\frac{4r}{r^2+1}\right)\partial_r +\frac{8}{r^2+1}\right)\left(r^{5-\sqrt{5}}+c'r^{3-\sqrt{5}}\right)+\frac{4}{r^2+1}(r\partial_r G-3G)   \\
& =(-12+4\sqrt{5}+6c'-8+4\sqrt{5}(1-c))r^{1-\sqrt{5}}+O(r^{-1-\sqrt{5}}).
\end{align*}
By the above identity and \eqref{sys:ell=1:id-Gbeta}, choosing $c'$ such that $-12+4\sqrt{5}+6c'-8+4\sqrt{5}(1-c)=0$ yields $|\nabla^k I(r)|=O(r^{-1-\sqrt{5}-k})$ for all $k\in \mathbb N$ as $r\to \infty$. A function of the form \eqref{sys:ell=1:id-Fbeta} then solves \eqref{sys:ell=1:eq-F} if and only if $\tilde F$ solves $\partial_{r}^2 \tilde F-\left(\frac{1}{r}+\frac{4r}{r^2+1}\right)\partial_r \tilde F+\frac{8}{r^2+1}\tilde F=I$. Given the two particular solutions \eqref{sys:ell=1:id-F1-F2}, a solution $\tilde F$ is provided by the formula
$$
\tilde F=\frac{r^2}{2}\int_r^\infty \frac{\rho^4+4\rho^2\log r-1}{\rho (\rho^2+1)^2}I(\rho)d\rho -\frac{r^4+4r^2\log r-1}{2}\int_r^\infty \frac{\rho}{(\rho^2+1)^2}I(\rho)d\rho.
$$
The above identity gives the desired function $\tilde F$ that satisfies the inequality in \eqref{sys:ell=1:id-Fbeta}.

\smallskip

\noindent \underline{Construction of $(h_{1,4},g_{1,4})$}. We let $h_{1,4}=\frac{1}{2r(r^2+1)^3}(F+G) $ and $g_{1,4}=\frac{1}{16 r(r^2+1)}(F-G)$. Then by \eqref{sys:ell=1:id-Gbeta} and \eqref{sys:ell=1:id-Fbeta} the function $h_{1,4}$ satisfies the desired asymptotics $h_{1,4}(r)=r^{-\sqrt{5}-2}+O(r^{-4-\sqrt{5}})$ as $\to \infty$. We also infer $g_{1,4}=c'' r^{-\sqrt{5}}+O(r^{-\sqrt{5}-2})$. Since $g_{1,4}$ solves $\pr^2g_{1,4} + \dfrac{\pr g_{1,4}}{r} - \dfrac{\ell^2}{r^2}g_{1,4} + h_{1,4}=0$ we get $c''=-\frac{1}{4}$ hence the desired result.

\medskip

\noindent \textbf{Step 2}. \emph{Remaining identities}. The identity \eqref{eq:ClKl} is obtained in Proposition 3.5 in \cite{SSVnon13}. This proposition also proves the identity \eqref{eq:detA} for $\ell\geq 2$, but with $(h_{\ell,4},g_{\ell,4})$ replaced by $(\tilde h_{\ell,4},\tilde g_{\ell,4})$ in the last row. Since $( h_{\ell,4},g_{\ell,4})=(\tilde h_{\ell,4}, \tilde g_{\ell,4})-\frac{C_\ell \kappa_\ell }{\ell+1}(h_{\ell,2},g_{\ell,2})$, these determinants with $( h_{\ell,4},g_{\ell,4})$ or  $(\tilde h_{\ell,4}, \tilde g_{\ell,4})$ in the last row are actually equal, which shows \eqref{eq:detA}.

There only remains to prove \eqref{eq:detA} for $\ell=1$. By a direct computation, $W_1$ solves $W_1'=-(1/r^2+4r/(1+r^2))W_1$ so that $W_1=E_1/(r^2(1 + r^2)^2)$ for some $E_1\in \mathbb R$. We compute as $r\to \infty$ using the asymptotics of $(h_{1,k},g_{1,k})$ for $k=1,2,3,4$ that $W_1(r)=-128\sqrt{5}K_1r^{-6}+o(r^{-8})$, and deduce $E_1=-128\sqrt{5}K_1$.
\end{proof}

\begin{lemma}[Variation of parameters formula for \eqref{sys:psiomega}] \label{lemm:sol_inhol} For $\ell\geq 1$ the solution to the inhomogeneous system \eqref{sys:psiomega} is given by 
\begin{equation} \label{id:formula-solution-system}
\Big(\psi(r), \omega(r)\Big) = \sum_{k=1}^4 \Big(h_{\ell,k}(r), g_{\ell, k}(r) \Big) (-1)^{k +1} \int \frac{f(s) W_{\ell, k} (s) }{W_\ell(s)} ds, 
\end{equation}
where 
\begin{equation}\label{def:Wlk}
W_{\ell,k}(r) = \left|\begin{array}{ccc}
h_{\ell,i}(r) & h_{\ell,j}(r) & h_{\ell,m}(r)\\
g_{\ell,i}(r) & g_{\ell,j}(r) & g_{\ell,m}(r)\\
g_{\ell,i}'(r)& g_{\ell,j}'(r)& g_{\ell,m}'(r)
\end{array}\right|, \quad i,j,m \in \{1,2,3,4\}\setminus \{k\}, \;\; i < j< m.
\end{equation}
We have the identity
\begin{equation} \label{id:W12-W1}
W_{1,2}=\frac{1}{8}W_1r^2.
\end{equation}
The function $W_{\ell, k}$ admits the asymptotic behaviors: 
\begin{equation}\label{eq:asyDlkr0-l=1}
\left(\begin{array}{c}
W_{1,1}(r)\\
W_{1,2}(r)\\
W_{1,3}(r)\\
W_{1,4}(r)
\end{array} \right) = \left(\begin{array}{l}
\frac{2(c_{1,4}-8c_{1,4}')}{r^2}+O(1)\\
-64c_{1,4}'+O(r^2)\\
\frac{4(c_{1,4}-8c_{1,4}')}{r^2}+O(1)\\
-64+O(r^2)
\end{array} \right), \quad \textup{as} \quad r \to 0,
\end{equation}
and for $\ell\geq 2$,
\begin{equation}\label{eq:asyDlkr0}
\left(\begin{array}{c}
W_{\ell,1}(r)\\
W_{\ell,2}(r)\\
W_{\ell,3}(r)\\
W_{\ell,4}(r)
\end{array} \right) =  -\frac{32\ell}{r}\left(\begin{array}{l}
(\ell - 1)r^{-\ell}+o(r^{-\ell})\\
(\ell + 1)r^{\ell}+o(r^{\ell})\\
(\ell^2 - 1)r^{-\ell}+o(r^{-\ell})\\
(\ell^2 - 1)r^{\ell}+o(r^\ell)
\end{array} \right), \quad \textup{as} \quad r \to 0,
\end{equation}
and as $r \to +\infty$,
\begin{equation}\label{eq:asyDlkrinf}
\left(\begin{array}{c}
W_{1,1}(r)\\
W_{1,2}(r)\\
W_{1,3}(r)\\
W_{1,4}(r)
\end{array} \right) =  \left(\begin{array}{c}
-16K_1\sqrt{5} r^{-2}\Big[ 1 + \Oc(r^{-(\sqrt{5}-1)})\Big] \\
-16K_1 \sqrt{5}r^{-4}\Big[ 1 + \Oc(r^{1-\sqrt{5}} )\Big]\\
4 r^{-\sqrt{5}-3}\Big[1 + \Oc(r^{-2}\ln r  )\Big] \\
64 K_1 r^{\sqrt{5}-3} \big[1 + \Oc(r^{-4}) \big]
\end{array} \right),
\end{equation}
and for $\ell\geq 2$,
\begin{equation}\label{eq:asyDlkrinf}
\left(\begin{array}{c}
W_{\ell,1}(r)\\
W_{\ell,2}(r)\\
W_{\ell,3}(r)\\
W_{\ell,4}(r)
\end{array} \right) =-\frac{32\ell}{r^3}  \left(\begin{array}{c}
4 (\ell+1)
r^{-\ell}\Big[ 1 + \Oc(r^{\ell-\sqrt{\ell^2 + 4} })\Big] \\
4 ( \ell -1) r^{\ell}\Big[ 1 + \Oc(r^{\ell - \sqrt{\ell^2 + 4}})\Big]\\
(\ell^2-1) C_\ell r^{-\sqrt{\ell^2 + 4}} \Big[1 + \Oc(r^{\ell - \sqrt{\ell^2 + 4}})\Big] \\
(\ell^2-1)K_\ell r^{\sqrt{\ell^2 + 4}} \Big[1 + \Oc(r^{\ell - \sqrt{\ell^2 + 4}})\Big]
\end{array} \right),
\end{equation}
where $C_\ell, K_\ell$ and $\kappa_\ell$ are the constants introduced in Lemma \ref{lemm:SolHom}. These estimates propagate for higher order derivatives.\\
\end{lemma}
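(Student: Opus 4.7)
The plan is to treat \eqref{sys:psiomega} as a fourth-order linear system in $(\psi,\omega)$ (via the relation $-\Delta_\ell \omega = \psi$, the second equation can be used to eliminate $\psi$ in favor of $\omega$ and its derivatives, yielding a scalar fourth-order ODE for $\omega$). Lemma \ref{lemm:SolHom} furnishes four linearly independent solutions $(h_{\ell,k},g_{\ell,k})_{k=1,\dots,4}$ of the homogeneous system, and $W_\ell$ given by \eqref{eq:detA} is the associated Wronskian, which is nonzero. The variation of parameters formula \eqref{id:formula-solution-system} is then the standard Cramer-type inversion: write a particular solution as $(\psi,\omega)=\sum_k c_k(r)(h_{\ell,k},g_{\ell,k})$, impose the usual three constraints $\sum_k c_k'(h_{\ell,k},g_{\ell,k})=0$ and $\sum_k c_k' g_{\ell,k}'=0$, and the fourth produces $\sum_k c_k'(r)$ times the bottom row of the matrix whose determinant is $W_\ell$ equal to $f(r)$. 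Solving by Cramer's rule gives $c_k'(r)=(-1)^{k+1}f(r)W_{\ell,k}(r)/W_\ell(r)$ with $W_{\ell,k}$ exactly the $3\times 3$ minor defined in \eqref{def:Wlk}; integration produces \eqref{id:formula-solution-system}.

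Next I would verify \eqref{id:W12-W1} by direct computation. By definition $W_{1,2}$ is the $3\times 3$ minor of $W_1$ obtained by deleting the second column and the last row, and the claim $W_{1,2}=\tfrac{1}{8}W_1 r^2$ is a polynomial identity in $r$. Using the explicit formulas for $(h_{1,k},g_{1,k})$ given in Lemma \ref{lemm:SolHom} (the first column and the third, fourth columns behave as shown), a short matrix-algebra manipulation — factoring out the common prefactors $r(1+r^2)^{-3}$ etc. — reduces both sides to the same expression. This is essentially computational and can also be confirmed by comparing the two sides at one point and showing both satisfy the same first-order ODE in $r$ that comes from differentiating the Wronskian relation.

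The main work is the asymptotic behavior of $W_{\ell,k}$ as $r\to 0$ and $r\to\infty$, which is the step that will require the most care. The strategy is to plug the asymptotic expansions of $(h_{\ell,k},g_{\ell,k})$ from Lemma \ref{lemm:SolHom} into the $3\times 3$ minors \eqref{def:Wlk} and extract the leading order term. For generic pairs of solutions, naive leading-order substitution produces spurious cancellations: for instance, in the near-origin expansion of $W_{\ell,1}$ the two solutions with $r^{\pm\ell}$ behavior combine with $(h_{\ell,3},g_{\ell,3})$ whose leading terms conspire to cancel in the $3\times 3$ determinant, so the true leading order comes from the subleading correction. This is where the estimates on the remainders in Lemma \ref{lemm:SolHom} (the $O(r^{\pm\ell\pm 2})$ corrections, and the $O(r^{\pm\sqrt{\ell^2+4}\pm 2})$ corrections at infinity) become essential. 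I would compute each $W_{\ell,k}$ by expanding each entry to the first two orders in the relevant variable, then collect contributions — keeping track of the algebraic identity \eqref{eq:ClKl} at infinity — and match with the claimed formulas. The $\ell=1$ case has to be handled separately because $(h_{1,2},g_{1,2})$ has a different form; the identity \eqref{id:W12-W1} together with the already known $W_1=E_1/(r^2(1+r^2)^2)$ immediately gives $W_{1,2}=\frac{E_1}{8(1+r^2)^2}$, which is consistent with the $r\to 0$ and $r\to \infty$ limits claimed in \eqref{eq:asyDlkr0-l=1}--\eqref{eq:asyDlkrinf}. Finally, that these asymptotic estimates propagate for higher order derivatives follows since each $W_{\ell,k}$ is a polynomial expression in the $(h_{\ell,k},g_{\ell,k},g_{\ell,k}')$, whose derivatives satisfy analogous estimates by Lemma \ref{lemm:SolHom}.
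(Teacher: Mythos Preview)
Your treatment of the variation of parameters formula and the asymptotics of $W_{\ell,k}$ matches the paper's approach: both invoke Cramer's rule (the paper cites Lemma 3.2 of \cite{SSVnon13}) and direct substitution of the asymptotics from Lemma \ref{lemm:SolHom}.

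However, there is a genuine gap in your verification of the exact identity \eqref{id:W12-W1}. You propose to check it by direct computation using the explicit formulas for $(h_{1,k},g_{1,k})$, but Lemma \ref{lemm:SolHom} gives only $(h_{1,1},g_{1,1})$ and $(h_{1,2},g_{1,2})$ in closed form; the solutions $(h_{1,3},g_{1,3})$ and $(h_{1,4},g_{1,4})$ are only specified through their asymptotic behaviors (involving transcendental powers like $r^{\pm\sqrt 5}$), so neither side of \eqref{id:W12-W1} is a polynomial in $r$, and no finite algebraic computation can verify the identity. Your fallback --- matching at a point and appealing to an ODE --- would require an Abel-type first-order ODE for the minor $W_{1,2}$ (or for $W_{1,2}/W_1$), but $W_{1,2}$ is not the Wronskian of any natural subsystem and no such identity is available a priori.

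The paper instead proves \eqref{id:W12-W1} by a duality argument exploiting the structure of $\Ls_0$. For any compactly supported $\mathsf f$ with $\int_0^\infty \frac{W_{1,2}}{W_1}\mathsf f\,dr=0$, one constructs via \eqref{id:formula-solution-system} (with this integration constant for the $k=2$ term) a solution $(u,v)$ of $\Ls_0(u,v)=f$ that is smooth and decays rapidly; the decay forces $v=\Phi_u$, so $\Ls_0 u=f$ in the usual sense. Then the adjoint cancellation $\Ls_0^*(y_1)=0$ (momentum conservation, from \cite{RSma14}) gives $\int y_1 f\,dy=0$, i.e.\ $\int_0^\infty r^2\mathsf f\,dr=0$. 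Since this holds for all such $\mathsf f$, one concludes $W_{1,2}/W_1=Cr^2$, and the constant $C=\tfrac18$ is read off from the known asymptotics of $W_1$ and $W_{1,2}$. This argument sidesteps the non-explicitness of $(h_{1,3},g_{1,3})$ and $(h_{1,4},g_{1,4})$ entirely.
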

\begin{proof} The formula \eqref{id:formula-solution-system} can be found using Cramer's formula to solve an ODE, what is done in the proof of Lemma 3.2. in \cite{SSVnon13}. The asymptotic behavior of $W_{\ell, k}$ is a direct computation using Lemma \ref{lemm:SolHom}, using that $\sqrt{\ell^2+4}-\ell<\ell+2-\sqrt{\ell^2+4}$ for $\ell\geq 2$ in order to estimate the $O()$'s.

There remains to prove the identity \eqref{id:W12-W1}. Let $f$ be a smooth compactly supported function of the form $f(x)=\mathsf f(r)\cos \theta$, such that $\int_0^\infty \frac{W_{1,2}(r)}{W_1(r)}\mathsf f(r)dr=0$. Let then $(u,v)(x)=(\mathsf u(r)\cos \theta,\mathsf v(r)\cos \theta)$ be given by $(\mathsf u(r),\mathsf v(r))= \sum_{k=1}^4 \gamma_{1,k}(r) (h_{1,k}(r),g_{1,k}(r))$ where
\begin{align*}
&\gamma_{1,1}(r)=-\int_r^\infty \frac{W_{1,1}(s)}{W_1(s)}\mathsf f(s) ds, \qquad \gamma_{1,2}(r)=-\int_0^r \frac{W_{1,2}(s)}{W_1(s)}\mathsf f(s) ds, \\
&\gamma_{1,3}(r)=-\int_r^\infty \frac{W_{1,3}(s)}{W_1(s)}\mathsf f(s) ds, \qquad \gamma_{1,k}(r)=-\int_0^r \frac{W_{1,4}(s)}{W_1(s)}\mathsf f(s) ds.
\end{align*}
Then by \eqref{eq:asyDlkrinf} and \eqref{eq:asyDlkr0-l=1} $(u,v)$ is smooth, and by  \eqref{id:formula-solution-system} it is a solution of $\Ls_0(u,v)=f$ and $-\Delta v=u$. Since $f$ is compactly supported and $\int_0^\infty \frac{W_{1,2}(r)}{W_1(r)}\mathsf f(r)dr=0$, we have for all $r$ large enough that $(\mathsf u(r),\mathsf v(r))=\left(-\int_0^\infty \frac{W_{1,4}(s)}{W_1(s)}\mathsf f(s)ds\right) (h_{1,4}(r),g_{1,4}(r))$. We deduce from Lemma \ref{lemm:SolHom} that for all $k\in \mathbb N$ there holds $|\nabla^k u(x)|\lesssim |x|^{-\sqrt{5}-2-k}$ and $|\nabla^k v(x)|\lesssim |x|^{-\sqrt{5}-k}$ for $|x|\geq 1$. By a uniqueness argument for the Laplace equation, this decay implies that $v=\Phi_u$. In particular,  this implies that $u$ solves $\Ls_0 u=f$. We then compute using polar coordinates that on the one hand
$$
\int_{\mathbb R^2} y_1 f dy = \pi \int_0^\infty r^2 \mathsf f(r)dr.
$$
On the other hand, by performing integration by parts, which is permitted by the decay of $u$ and $\Phi_u=v$ as $|x|\to \infty$, 
\begin{align*}
\int_{\mathbb R^2} y_1 f dy & = \int_{\mathbb R^2} y_1 \Ls_0 u dy  = \int_{\mathbb R^2} \Ls_0^*( y_1) u dy 
\end{align*}
where $ \Ls_0^* \varepsilon =\Ms_0 \nabla . (U\nabla \varepsilon)$. We recall that $ \Ls_0^*( y_1) =0$ from \cite{RSma14}, Lemma 2.4., so that 
$$
\int_0^\infty r^2 \mathsf f(r)dr=0.
$$
We have proved that any smooth compactly supported function $\mathsf f$ that satisfies $\int_0^\infty \frac{W_{1,2}(r)}{W_1(r)}\mathsf f(r)dr=0$ must satisfy $\int_0^\infty r^2 \mathsf f(r)dr=0$, which is only possible if $\frac{W_{1,2}}{W_1}=Cr^2$ for some $C\in \mathbb R$. The asymptotic behaviours \eqref{eq:detA} and \eqref{eq:asyDlkrinf} of $W_1$ and $W_{1,2}$ as $r\to \infty$ then imply $C=\frac{1}{8}$. The desired identity \eqref{id:W12-W1} follows.

\end{proof}

\section{Estimates related to the Poisson field} \label{ap:3}
We claim the following pointwise estimate of Poison field
\begin{lemma} \label{lemm:PointwisePoisonField} Let $\gamma \in (1,2)$ and $f \in L^\infty_\gamma(\Rb^2)$, where $$L^\infty_\gamma(\Rb^2) = \{f \in L^\infty(\Rb^2),  \| f\|_{L^\infty_\gamma(\Rb^2)} = \| \langle x \rangle^\gamma f(x)\|_{L^\infty(\Rb^2)} < +\infty\}.$$
Then, it holds 
$$\forall z \in \Rb^2, \quad  |\nabla \Phi_f(z)| \lesssim  \| f\|_{L^\infty_\gamma (\Rb^2)} \big(|z|^{-\sigma} + \langle z \rangle^{-\gamma}|z|\big),$$ 
for $\sigma \in (0, \gamma - 1)$.  
\end{lemma}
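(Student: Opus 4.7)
The plan is to exploit the convolution representation $\nabla\Phi_f(z) = -\tfrac{1}{2\pi}\int \frac{z-z'}{|z-z'|^2} f(z')\,dz'$, so that $|\nabla\Phi_f(z)| \leq \tfrac{1}{2\pi}\int \frac{|f(z')|}{|z-z'|}dz' \leq \tfrac{1}{2\pi}\|f\|_{L^\infty_\gamma} I(z)$, where $I(z) = \int \frac{dz'}{|z-z'|\langle z'\rangle^\gamma}$. The whole problem reduces to bounding $I(z)$, and this is done by splitting the integration domain according to the relative sizes of $|z|$, $|z-z'|$, and $|z'|$, a classical Hardy--Littlewood--Sobolev-style partition.

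I would first treat the regime $|z| \geq 1$, where I decompose $\mathbb{R}^2$ into three zones: the inner zone $A = \{|z'| \leq |z|/2\}$, where $|z-z'|\geq |z|/2$ and polar coordinates give $\int_A \lesssim |z|^{-1}\int_{|z'|\leq |z|/2}\langle z'\rangle^{-\gamma} dz' \lesssim |z|^{1-\gamma}$ (using $\gamma < 2$); the coincidence zone $B = \{|z-z'| \leq |z|/2\}$, where $|z'|\geq |z|/2$ so $\langle z'\rangle^{-\gamma}\lesssim |z|^{-\gamma}$, giving $\int_B \lesssim |z|^{-\gamma}\int_{|z-z'|\leq |z|/2}|z-z'|^{-1}dz' \lesssim |z|^{1-\gamma}$; and the far zone $C = \{|z'|\geq 2|z|\}$, where $|z-z'|\geq |z'|/2$ and the integral becomes $\int_C \lesssim \int_{2|z|}^\infty r^{-\gamma}dr \lesssim |z|^{1-\gamma}$ (using $\gamma > 1$). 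Summing gives $I(z)\lesssim |z|^{1-\gamma} \sim \langle z\rangle^{-\gamma}|z|$ for $|z|\geq 1$, which is already the second term in the stated bound.

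For $|z|\leq 1$, one splits only at $|z'|=2$: on $\{|z'|\leq 2\}$, $\langle z'\rangle^{-\gamma}$ is bounded and the local integrability of $|z-z'|^{-1}$ in $\mathbb{R}^2$ gives a constant bound; on $\{|z'|>2\}$, one has $|z-z'|\geq |z'|/2$ and $\int_2^\infty r^{-\gamma}dr < \infty$ since $\gamma>1$. Thus $I(z)\lesssim 1$ for $|z|\leq 1$, and since $|z|^{-\sigma}\geq 1$ on this range, the bound is absorbed by the first term $|z|^{-\sigma}$ on the right-hand side. Combining the two regimes yields the claim for any $\sigma\in(0,\gamma-1)$ (the constraint on $\sigma$ is actually unused here but allows the bound to degrade gracefully in applications where $I(z)$ is compared to negative powers of $|z|$).

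This is a routine potential-theoretic computation with no real obstacle; the one point requiring mild care is checking that the power of $|z|$ produced in each zone is exactly $1-\gamma$ rather than $(1-\gamma)\log|z|$, which would occur in borderline cases $\gamma=1$ or $\gamma=2$ — both excluded by hypothesis. The bookkeeping of the three-zone decomposition is the bulk of the work.
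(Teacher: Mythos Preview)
Your approach is correct and in fact slightly more elementary than the paper's. The paper splits into only two zones, $\{|x-z|\leq |z|/2\}$ and its complement: the near zone is handled exactly as your zone $B$, yielding the term $\langle z\rangle^{-\gamma}|z|$, while on the complement the paper applies H\"older with conjugate exponents $p,q$ chosen so that $|x-z|^{-p}$ is integrable on $\{|x-z|\geq |z|/2\}$ and $\langle x\rangle^{-\gamma q}$ is globally integrable. This is precisely where the constraint $\sigma = 1-2/p \in (0,\gamma-1)$ enters. Your direct polar-coordinate integration in each zone avoids H\"older and gives the sharper bound $I(z)\lesssim |z|^{1-\gamma}$ for $|z|\geq 1$, so you are right that the restriction on $\sigma$ is not actually needed in your argument; the paper's version simply records the weaker estimate that suffices for its later applications.

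One small bookkeeping oversight: your three zones $A,B,C$ for $|z|\geq 1$ do not cover all of $\mathbb{R}^2$; the region $D=\{|z|/2<|z'|<2|z|\}\setminus B$ is missing (e.g.\ $z'=-z$ lies there). This is harmless, since on $D$ one has $|z-z'|\gtrsim |z|$ and $\langle z'\rangle\sim |z|$, giving $\int_D \lesssim |z|^{-1-\gamma}|D|\lesssim |z|^{1-\gamma}$, and the conclusion is unchanged.
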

\begin{proof} The proof is straightforward from the convolution formula
\begin{align*}
|\nabla \Phi_f(z)| \lesssim \int_{\Rb^2} \frac{|f(x)|}{|x - z|} dx  &\lesssim  \int_{|x - z| \leq \frac{|z|}2}\frac{|f(x)|}{|x - z|} dx + \int_{|x - z| \geq \frac{|z|}2}\frac{|f(x)|}{|x - z|}dx.
\end{align*}
Since $|x- z| \leq \frac{|z|}{2}$ implies $\frac{|z|}{2} \leq |x| \leq \frac{3|z|}{2}$, we get
\begin{align*}
\int_{|x - z| \leq \frac{|z|}2}\frac{|f(x)|}{|x - z|} dx & \lesssim \big\|f(z) \big\|_{L_\gamma^\infty(\Rb^2)}\int_{|x - z| \leq \frac{|z|}{2}}  \frac{\langle x \rangle^{-\gamma}}{ |x-z|} dx \lesssim   \big\| f\big\|_{L_\gamma^\infty(\Rb^2)} \langle z \rangle^{-\gamma}|z|. 
\end{align*}
The second integral is estimated by Holder inequality, 
\begin{align*}
 \int_{|x - z| \geq \frac{|z|}2}\frac{|f(x)|}{|x - z|}dx & \lesssim  \big\| f \big\|_{L_\gamma^\infty(\Rb^2)}  \int_{|x - z| \geq \frac{|z|}{2}} \frac{ \langle x \rangle^{-\gamma}}{|x - z|} dx\\
 & \lesssim \big\| f\big\|_{L_\gamma^\infty(\Rb^2)} \Big(\int_{|x - z| \geq \frac{|z|}{2}} |x - z|^{-p} dx   \Big)^\frac{1}{p}\Big(\int_{|x - z| \geq \frac{|z|}{2}} \langle x \rangle^{-\gamma q} dx\Big)^\frac{1}{q}\\
 & \lesssim  \big\| f\big\|_{L_\gamma^\infty(\Rb^2)} |z|^{-\sigma}, 
\end{align*}
where 
$$ \frac{1}{p} + \frac{1}{q} = 1, \quad \gamma q - 2 > 0, \quad \sigma = 1 - \frac{2}{p} \in (0, \gamma -1).$$
This concludes the proof of Lemma \ref{lemm:PointwisePoisonField}. 
\end{proof}

We Taylor expanse the Poison field from the other bubble in order to refine the expansion of the inner approximate eigenfunction. 
\begin{proof}[Proof of Lemma \ref{lem:taylorpoissonfield}]
Recall that $\nabla \Phi_U(y)=-\frac{4y}{1+|y|^2}$ and let $b=\pm 2a$, we write
$$
\nabla \Phi_U(y\pm \frac{2a}{\nu})=-4\frac{y+\frac{b}{\nu}}{1+|y+\frac{b}{\nu}|^2}= \frac{-4\nu(b+\nu y)}{|b|^2+2\nu y.b+\nu^2 (1+|y|^2)}.
$$
Let $A=2\nu y.b+\nu^2 (1+|y|^2)$, we expand
$$
\frac{1}{|b|^2+A}=\frac{1}{|b|^2}\left(1-\frac{1}{|b|^2} A+\frac{1}{|b|^4}A^2-\frac{1}{|b|^6}A^3+O(\nu^4|y|^4)  \right),
$$
where
\begin{align*}
& A^2=4\nu^2 (y.b)^2+4\nu^3 (y.b)(1+|y|^2)+O(\nu^4|y|^4),\\
&A^3=8 \nu^3(y.b)^3+O(\nu^4|y|^4).
\end{align*}
Thus, 
\begin{align*}
& \frac{1}{|b|^2+2\nu y.b+\nu^2 (1+|y|^2)} \\
&=\frac{1}{|b|^2}\left[1-\nu \frac{2y.b}{|b|^2}+\nu^2\left(4\frac{(y.b)^2}{|b|^4} -\frac{1+|y|^2}{|b|^2}\right)+\nu^3 \left(\frac{4(y.b)(1+|y|^2)}{|b|^4}-\frac{8(y.b)^3}{|b|^6} \right)\right]\\
& \qquad +O(\nu^4|y|^4),
\end{align*}
and hence,
\begin{align*}
\nabla \Phi_U\big(y\pm \frac{2a}{\nu}\big) &=\frac{-4\nu b - 4 \nu^2 y}{|b|^2+2\nu y.b+\nu^2 (1+|y|^2)}  \\
& = -\frac{4\nu b}{|b|^2}+4\frac{\nu^2}{|b|^2} \left(\frac{2y.b}{|b|^2}b-y \right)+4\frac{\nu^3}{|b|^3}\left(\left( 1+|y|^2-4\frac{(y.b)^2}{|b|^2} \right)\frac{b}{|b|}+\frac{2y.b}{|b|}y \right)\\
& +4\frac{\nu^4}{|b|^4} \left( \left(\frac{8(y.b)^3}{|b|^3}-\frac{4(y.b)(1+|y|^2)}{|b|} \right)\frac{b}{|b|}+\left( 1+|y|^2 \right)y-4\frac{(y.b)^2}{|b|^2}y \right).
\end{align*}
Recalling that $b=\pm 2a=(\pm 2|a|,0)$ with $a^2=2/\beta$, we obtain
$$
-4\nu \frac{b}{|b|^2}=\mp \beta \nu a,
$$
and
\begin{align*}
\frac{2y.b}{|b|^2}b-y = 2(y. (\pm 1,0))(\pm 1,0)-y=(2y_1,0)-y=(y_1,-y_2),
\end{align*}
and
\begin{align*}
\left( 1+|y|^2-4\frac{(y.b)^2}{|b|^2} \right)\frac{b}{|b|}+\frac{2y.b}{|b|}y &= \left( 1+|y|^2-4y_1^2 \right)(\pm 1,0)+2(y.(\pm 1,0))y\\
&= \pm \left( (1-3y_1^2+y_2^2,0)+(2y_1^2,2y_1y_2)\right)\\
&=\pm  \left( (1-y_1^2+y_2^2,2y_1y_2)\right),
\end{align*}
and
\begin{align*}
& \left(\frac{8(y.b)^3}{|b|^3}-\frac{4(y.b)(1+|y|^2)}{|b|} \right)\frac{b}{|b|}+\left( 1+|y|^2 \right)y-4\frac{(y.b)^2}{|b|^2}y\\
&= \left(8y_1^3-4y_1(1+|y|^2) \right)(1,0)+\Big((1+|y|^2)y_1,(1+|y|^2)y_2 \Big)-4 \big(y_1^3,y_1^2y_2\big)\\
&= (-3y_1-3y_1y_2^2+y_1^3,y_2-3y_1^2y_2+y_2^3).
\end{align*}
We substitute $b=\pm 2a$ and arrive at
\begin{align*}
\nabla \Phi_U\big(y\pm \frac{2a}{\nu}\big) & = \mp \beta \nu a+\frac{\nu^2}{|a|^2}(y_1,-y_2)\pm \frac{\nu^3}{2|a|^3} \left( (1-y_1^2+y_2^2,2y_1y_2)\right)\\
& +\frac{\nu^4}{4|a|^4}(-3y_1-3y_1y_2^2+y_1^3,y_2-3y_1^2y_2+y_2^3)+O(\nu^5|y|^4),
\end{align*}
which concludes the proof of Lemma \ref{lem:taylorpoissonfield}. 
\end{proof}

\section{Functional analysis} \label{sec:functional-analysis}

\begin{lemma}[Hardy inequality in $H^1(\langle y \rangle^{4}dy)$]

There exists $C>0$ such that for any $u\in H^1(U^{-1}dy)$ one has
\begin{equation} \label{bd:hardy-L2U-1}
\int_{\mathbb R^2} u^2 \langle y \rangle^{2}dy \leq C\int_{\mathbb R^2} |\nabla u|^2 \langle y \rangle^{4}dy.
\end{equation}

\end{lemma}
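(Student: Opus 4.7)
The plan is to prove this weighted Hardy inequality by a standard integration-by-parts argument exploiting the identity $\nabla\cdot y = 2$ in $\mathbb R^2$, which converts the left-hand side into a boundary-like expression that can be dominated by the right-hand side via Cauchy--Schwarz. I would first establish the estimate for $u\in C_c^\infty(\mathbb R^2)$ and then extend to $u\in H^1(U^{-1}dy)$ by the usual density argument, noting that $U^{-1}(y)\approx \langle y\rangle^4$ so the weighted space on the right matches the assumption.

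The key computation is as follows. Write
\begin{equation*}
\int_{\mathbb R^2} u^2\langle y\rangle^2\,dy = \frac{1}{2}\int_{\mathbb R^2} u^2 \langle y\rangle^2\,\nabla\cdot y\,dy,
\end{equation*}
and integrate by parts. The boundary term vanishes for $u\in C_c^\infty(\mathbb R^2)$, and using $\nabla(\langle y\rangle^2)\cdot y = 2|y|^2$ one obtains
\begin{equation*}
\int_{\mathbb R^2} u^2(1+2|y|^2)\,dy = -\int_{\mathbb R^2} u\,\langle y\rangle^2\,(y\cdot\nabla u)\,dy.
\end{equation*}
Applying Cauchy--Schwarz to the right-hand side with the splitting $u\langle y\rangle^2 \,(y\cdot\nabla u) = (u|y|)\cdot(\langle y\rangle^2 \,\hat y\cdot\nabla u)$, where $\hat y = y/|y|$, I would bound
\begin{equation*}
\Big|\int u\,\langle y\rangle^2(y\cdot\nabla u)\,dy\Big| \leq \Big(\int u^2|y|^2\,dy\Big)^{1/2}\Big(\int |\nabla u|^2\langle y\rangle^4\,dy\Big)^{1/2}.
\end{equation*}

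Inserting this into the identity, using $u^2|y|^2 \leq u^2(1+2|y|^2)$, and absorbing a factor of $\bigl(\int u^2(1+2|y|^2)\,dy\bigr)^{1/2}$ from both sides yields
\begin{equation*}
\int u^2(1+2|y|^2)\,dy \leq C\int |\nabla u|^2\langle y\rangle^4\,dy.
\end{equation*}
Since $\langle y\rangle^2 = 1+|y|^2 \leq 1+2|y|^2$, this gives the desired conclusion \eqref{bd:hardy-L2U-1}.

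I do not anticipate any real obstacle: the argument is essentially one integration by parts followed by Cauchy--Schwarz. The only minor technical point is the density step—one must verify that for $u\in H^1(U^{-1}dy)$ one can approximate by smooth compactly supported functions in this weighted norm, and that the intermediate quantity $\int u^2(1+2|y|^2)\,dy$ is finite a posteriori, which follows from the inequality itself applied to truncations $u\,\chi_R$ and Fatou's lemma as $R\to\infty$.
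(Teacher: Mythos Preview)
Your proof is correct and follows the same strategy as the paper: an integration-by-parts (Rellich-type) identity with a radial vector field, followed by Cauchy--Schwarz and absorption. The only cosmetic difference is the choice of vector field---the paper uses $\phi(y)=(|y|^3+1)\,y/|y|$ and checks $\nabla\cdot\phi\gtrsim\langle y\rangle^2$, $|\phi|\lesssim\langle y\rangle^3$, while you effectively work with $\langle y\rangle^2\,y$, which is smooth at the origin and gives the slightly cleaner identity $\int u^2(1+2|y|^2)\,dy=-\int u\langle y\rangle^2(y\cdot\nabla u)\,dy$ with an explicit constant.
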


\begin{proof}

Consider the vector field $\phi(y)=(|y|^3+1)\frac{y}{|y|}$. A direct computation shows that it satisfies
\begin{equation} \label{bd:hardy-L2U-1-inter1}
|\phi|\lesssim \langle y \rangle^3 \quad \mbox{and} \quad \nabla . \phi\leq c \langle y\rangle^2
\end{equation}
for some $c>0$. For $u$ a Schwartz function we have by integration by parts and \eqref{bd:hardy-L2U-1-inter1}:
$$
-2\int u\nabla u.\phi = \int u^2 \nabla . \phi \geq c \int \langle y \rangle^2 u^2 dy.
$$
On the other hand, by Cauchy-Schwarz and \eqref{bd:hardy-L2U-1-inter1}:
\begin{align*}
-2\int u\nabla u.\phi & \leq \left( \int |\nabla u|^2 |\phi|\langle y \rangle dy \right)^{\frac 12}\left( \int |u|^2 |\phi|\langle y \rangle^{-1} dy \right)^{\frac 12} \\
& \leq C \left( \int |\nabla u|^2 \langle y \rangle^4 dy \right)^{\frac 12}\left( \int |u|^2 \langle y \rangle^{2} dy \right)^{\frac 12} .
\end{align*}
Combining, we get $\int_{\mathbb R^2} u^2 \langle y \rangle^{2}dy \leq C^2c^{-2}\int_{\mathbb R^2} |\nabla u|^2 \langle y \rangle^{4}dy$ as desired.
\end{proof}

\begin{lemma}[Poincar\'e inequality in $H^1(\langle y\rangle^8 e^{-|y|^2/4}dy)$]

We have for all $u\in H^1(\langle y\rangle^8 e^{-|y|^2/4}dy)$:
\begin{equation} \label{bd:poincare-gaussian}
\int u^2 \langle y \rangle^{10} e^{-|y|^2/4}dy \lesssim\int (u^2+|\nabla u|^2) \langle y \rangle^8 e^{-|y|^2/4}dy.
\end{equation}

\end{lemma}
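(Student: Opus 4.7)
The strategy is a standard integration-by-parts argument exploiting the fact that the Gaussian weight generates a factor of $|y|^2$ upon differentiation, which is precisely the weight discrepancy $\langle y\rangle^{10}/\langle y\rangle^8 \approx |y|^2$ that we need to produce on the left-hand side.

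\medskip

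\textbf{Main identity.} I would introduce the vector field $\phi(y) = y\langle y\rangle^8$ and compute
\begin{equation*}
\nabla \cdot \bigl(\phi(y) e^{-|y|^2/4}\bigr) = (\nabla \cdot \phi)\, e^{-|y|^2/4} - \tfrac{1}{2}|y|^2 \langle y \rangle^8 e^{-|y|^2/4},
\end{equation*}
so that, rearranging,
\begin{equation*}
\tfrac{1}{2}|y|^2 \langle y \rangle^8 e^{-|y|^2/4} = (\nabla \cdot \phi)\, e^{-|y|^2/4} - \nabla \cdot \bigl(\phi\, e^{-|y|^2/4}\bigr).
\end{equation*}
A direct computation gives $\nabla\cdot \phi = 2\langle y\rangle^8 + 8|y|^2\langle y \rangle^6 \lesssim \langle y\rangle^8$ and $|\phi| \leq \langle y\rangle^9$.

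\medskip

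\textbf{Integration by parts.} Multiplying by $u^2$ and integrating, the divergence term integrates to zero (for Schwartz $u$, then by density), and after distributing a derivative via $-\int u^2 \nabla\cdot(\phi e^{-|y|^2/4}) = 2 \int u \nabla u \cdot \phi\, e^{-|y|^2/4}$, I obtain
\begin{equation*}
\int u^2 |y|^2\langle y\rangle^8 e^{-|y|^2/4}\, dy \lesssim \int u^2 \langle y \rangle^8 e^{-|y|^2/4}\, dy + \int |u|\,|\nabla u|\, \langle y\rangle^9 e^{-|y|^2/4}\, dy.
\end{equation*}
Using the elementary identity $|y|^2\langle y\rangle^8 = \langle y\rangle^{10} - \langle y\rangle^8$, the left-hand side becomes $\int u^2 \langle y\rangle^{10} e^{-|y|^2/4}\,dy$ minus a harmless lower-order term.

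\medskip

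\textbf{Absorption.} The cross term is handled by Cauchy--Schwarz with a small parameter $\varepsilon>0$:
\begin{equation*}
\int |u|\,|\nabla u|\, \langle y\rangle^9 e^{-|y|^2/4}\, dy \leq \varepsilon \int u^2 \langle y\rangle^{10} e^{-|y|^2/4}\, dy + C_\varepsilon \int |\nabla u|^2 \langle y\rangle^8 e^{-|y|^2/4}\, dy.
\end{equation*}
Choosing $\varepsilon$ small enough to absorb the first term on the left produces the desired inequality. Density of Schwartz functions in $H^1(\langle y\rangle^8 e^{-|y|^2/4}dy)$ then extends it to the full space.

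\medskip

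\textbf{Obstacles.} There is essentially no difficulty here beyond the bookkeeping: the mechanism is the standard Gaussian Poincar\'e device, and the polynomial prefactor $\langle y\rangle^8$ interacts well with the differentiation because both $\nabla \cdot \phi$ and $|\phi|^2/\langle y\rangle^8$ remain controlled by $\langle y\rangle^8$ and $\langle y\rangle^{10}$ respectively. The only care required is tracking the exponents so that the absorption step is legitimate, which it is since the cross term supplies exactly one extra power of $\langle y\rangle$ over $\langle y\rangle^8$ in each factor.
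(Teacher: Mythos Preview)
Your proof is correct and follows essentially the same approach as the paper: both exploit the identity $\nabla e^{-|y|^2/4} = -\tfrac{y}{2}e^{-|y|^2/4}$ via integration by parts against a radial vector field, then absorb the cross term by Cauchy--Schwarz. The only cosmetic difference is that the paper works with $|y|^{10}$ and $|y|^8$ rather than $\langle y\rangle^{10}$ and $\langle y\rangle^8$, passing to the Japanese brackets at the very end.
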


\begin{proof}

An integration by parts shows that
$$
\int u^2 |y|^{10} e^{-|y|^2/4}dy =20 \int u^2|y|^8 e^{-|y|^2/4}dy+4\int u \nabla u.y |y|^8 e^{-|y|^2/4}dy.
$$
By Cauchy' inequality we have $4u\nabla u.y|y|^8\leq u^2|y|^{10}/2+8|\nabla u|^2 |y|^8$ so we deduce from the above identity that
$$
\int u^2 |y|^{10} e^{-|y|^2/4}dy \leq  \int (40 u^2+16|\nabla u|^2)|y|^4 e^{-|y|^2/4}dy.
$$
This implies \eqref{bd:poincare-gaussian}.
\end{proof}

\begin{lemma}[Estimates for the Poisson field]

For any $\epsilon>0$, there exists $C_{\epsilon}>0$ such that for any Schwartz function $u$ we have
\begin{equation} \label{annexe:bd:poisson-field}
\int |\Phi_u|^2 \langle y\rangle^{-2-\epsilon} dy \leq C_{\epsilon}\int u^2 \langle y\rangle^{2+\epsilon} dy.
\end{equation}
There exists $C>0$ such that if $u$ and $v$ are Schwartz functions with $u$ taking values in $\mathbb R^2$,
For any $\lambda>0$,
\begin{equation} \label{annexe:bd:poisson-field-divergence-2}
\left| \int \nabla.u\Phi_v dy \right|\leq C\left( \int  |u|^2 dy \right)^{\frac 12} \left( \int (\lambda+| y|)^2 v^2 dy \right)^{\frac 12}+C  \int  \frac{|u|}{\lambda+|y|} dy \int v dy .
\end{equation}

\end{lemma}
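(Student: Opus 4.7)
The plan is to prove both estimates by direct manipulation of the Poisson field representation $\Phi_f(y) = -\frac{1}{2\pi}\int \log|y-y'|\, f(y')\, dy'$, using Cauchy--Schwarz and careful splitting of the integration domain. The two estimates address different obstructions: the first controls the logarithmic growth of $\Phi_u$ at infinity in two dimensions, while the second controls $\nabla \Phi_v$ while accounting for the monopole (non-zero mean) contribution that prevents $\nabla\Phi_v$ from lying in $L^2$.

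For \eqref{annexe:bd:poisson-field}, I would split $u = u_1 + u_2$ with $u_1 = u\,\mathbf{1}_{|y'|\leq 2\langle y\rangle}$ and $u_2 = u - u_1$ (for each fixed $y$). In the near region, Cauchy--Schwarz against the weight $\langle y'\rangle^{2+\epsilon}$ gives
\[
|\Phi_{u_1}(y)|^2 \;\leq\; \Big(\int u^2 \langle y'\rangle^{2+\epsilon} dy'\Big)\, \int_{|y'|\leq 2\langle y\rangle}\log^2|y-y'|\,\langle y'\rangle^{-2-\epsilon}\,dy',
\]
and the inner integral is bounded by $C\log^2\langle y\rangle$ after distinguishing $|y-y'|\leq 1$ (where $\log^2$ is locally integrable) from $|y-y'|>1$ (where $|\log|y-y'||\lesssim \log\langle y\rangle$). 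In the far region $|y'|>2\langle y\rangle$ we have $|y-y'|\gtrsim \langle y'\rangle$, so $|\log|y-y'||\lesssim \log\langle y'\rangle$, and Cauchy--Schwarz leaves an integral that decays in $\langle y\rangle$. Combining, $\int |\Phi_u|^2 \langle y\rangle^{-2-\epsilon} dy$ is controlled by $\|u\|_\omega^2 \cdot \int \log^2\langle y\rangle\,\langle y\rangle^{-2-\epsilon} dy$, which is finite precisely because $\epsilon>0$.

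For \eqref{annexe:bd:poisson-field-divergence-2}, I would integrate by parts to rewrite $\int \nabla\cdot u\,\Phi_v\,dy = -\int u\cdot \nabla \Phi_v\,dy$ and then perform a monopole subtraction
\[
\nabla\Phi_v(y) \;=\; -\frac{1}{2\pi}\frac{y}{\lambda^2+|y|^2}\int v(y')\,dy' \;-\;\frac{1}{2\pi}\int K_\lambda(y,y')\,v(y')\,dy',
\]
where $K_\lambda(y,y') = \frac{y-y'}{|y-y'|^2}-\frac{y}{\lambda^2+|y|^2}$. The monopole piece contributes $\lesssim \int \frac{|u|}{\lambda+|y|}dy\cdot |\int v|$, using $|y|/(\lambda^2+|y|^2)\lesssim 1/(\lambda+|y|)$; this yields the second term in the right-hand side. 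For the remainder, Cauchy--Schwarz gives $\|u\|_{L^2}\,\|T_v\|_{L^2}$ with $T_v(y)=\int K_\lambda(y,y')v(y')\,dy'$, and it remains to prove $\|T_v\|_{L^2}\lesssim \|(\lambda+|y|)v\|_{L^2}$, which I would establish by a Schur test exploiting the improved decay of $K_\lambda$: the cancellation between the two terms yields $|K_\lambda(y,y')|\lesssim \min\{|y-y'|^{-1},\,(\lambda+\max(|y|,|y'|))^{-1}\}$, which balances the $(\lambda+|y'|)$ weight on the source side.

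The main obstacle is the logarithmic long-range behavior of the 2D Poisson kernel. For the first inequality this fixes the exponent $-2-\epsilon$ as sharp, since $\int \log^2\langle y\rangle\,\langle y\rangle^{-2-\epsilon}dy$ diverges for $\epsilon=0$. For the second inequality, the difficulty is that $\nabla\Phi_v\sim -\frac{1}{2\pi}\frac{y}{|y|^2}\int v$ at infinity fails to be in $L^2$ whenever $\int v\neq 0$, which forces the monopole subtraction; the regularization by $\lambda^2+|y|^2$ rather than $|y|^2$ is needed to prevent a spurious singularity at the origin, and one must verify that the resulting remainder kernel $K_\lambda$ retains enough decay for Schur's test to close.
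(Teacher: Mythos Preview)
Your treatment of \eqref{annexe:bd:poisson-field} is correct and in fact more direct than the paper's: the paper uses a full dyadic decomposition $u=\sum_k u_k$, $\Phi_u=\sum_{k,l}(\Phi_{u_k})_l$ and sums the resulting double series, whereas your near/far split relative to the target point $y$ achieves the same control with a single Cauchy--Schwarz application.

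For \eqref{annexe:bd:poisson-field-divergence-2}, however, there is a genuine gap: the operator bound $\|T_v\|_{L^2}\lesssim \|(\lambda+|y|)v\|_{L^2}$ is \emph{false}, and the kernel estimate you assert does not hold. For $|y|\lesssim\lambda$ and $|y'|\to\infty$ one has $K_\lambda(y,y')\to -\frac{y}{\lambda^2+|y|^2}$, a constant in $y'$ of size $\sim 1/\lambda$, not $\sim(\lambda+|y'|)^{-1}$. A concrete counterexample: take $\lambda=1$ and $v=\mathbf 1_{1<|y'|<R}$. Since $v$ is radial, for $1<|y|<R$ one computes
\[
T_v(y)=\pi\,\frac{|y|^2-1}{|y|^2}\,y-\pi\,\frac{R^2-1}{1+|y|^2}\,y,
\]
so $|T_v(y)|\sim R^2/|y|$ on $1\ll|y|\ll R$, giving $\|T_v\|_{L^2}\gtrsim R^2\sqrt{\log R}$, while $\|(1+|y'|)v\|_{L^2}\approx R^2$. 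The point is that a single monopole subtraction removes only the contribution of $\int v$; the remaining ``far'' interactions are not small in $L^2$ and must instead be absorbed by the second right-hand side term involving $\int |v|$, which your splitting does not allow.

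The paper proceeds by a dyadic decomposition $u=\sum_k u_k$, $v=\sum_l v_l$. Off-diagonal pairings $|k-l|>2$ are integrated by parts and estimated by $\|\langle y\rangle^{-1}u_k\|_{L^1}\|v_l\|_{L^1}$, summing to the second right-hand side term; diagonal pairings $|k-l|\leq 2$ are rescaled to unit scale and handled by the Sobolev/Calder\'on--Zygmund chain $\|\nabla\Phi_{\tilde v_l}\|_{L^6}\lesssim\|\nabla^2\Phi_{\tilde v_l}\|_{L^{3/2}}\lesssim\|\tilde v_l\|_{L^{3/2}}\lesssim\|\tilde v_l\|_{L^2}$, summing to the first term. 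The parameter $\lambda$ then enters by the rescaling $y\mapsto\lambda^{-1}y$.
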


\begin{remark}

Note that we need $\epsilon>0$ in both the left-hand side and right-hand side of the inequality \eqref{annexe:bd:poisson-field}. Indeed, since $|\Phi_u|$ can have logarithmic growth at infinity, the integral $\int |\Phi_u|^2 \langle y\rangle^{-2} dy$ may diverge. For similar reasons, the quantity $\int_{|y|<1} |\Phi_u|^2 dy$ cannot be bounded by $(\int u^2 \langle y\rangle^{2}dy)^{1/2} $.

The inequality \eqref{annexe:bd:poisson-field-divergence-2} somehow corresponds to the case $\epsilon=0$ in \eqref{annexe:bd:poisson-field}. This is permitted because of the divergence structure of the first term.

\end{remark}

\begin{proof}

\noindent \underline{Proof of \eqref{annexe:bd:poisson-field}}. We perform a dyadic partition $1=\sum_{k\geq 0} \chi_k(y)$ with $\chi_0$ being supported on $\{|y|<2\}$ and $\chi_k$ being supported on $\{2^{k}<|y|<2^{k+2}\}$ for $k\geq 1$, which are smooth functions such that $\nabla^j \chi_k\lesssim 2^{-jk} $. We decompose $f=\sum_k f_k$ where $f_k=\chi_k f$. We have $\Phi_u=\sum_{k,l} (\Phi_{u_k})_l$. For $y$ in the support of $(\Phi_{u_k})_l$ we have $2^{l}<|y|<2^{l+2}$ and we estimate by Cauchy-Schwarz
\begin{align*}
|(\Phi_{u_k})_l (y)|& \lesssim \int_{2^k<|y'|<2^{k+2}} |\ln |y-y'|| u_k(y') dy' \lesssim  \| \ln |y-y'|\|_{L^2(2^k<|y'|<2^{k+2})} \| u_k\|_{L^2} \\
& \lesssim 2^k \max(l,k) \| u_k\|_{L^2} \lesssim 2^{-k\epsilon} \max(l,k) \| u \langle y \rangle^{1+\epsilon}\|_{L^2} 
\end{align*}
Hence 
$$
\| (\Phi_{u_k})_l \langle y \rangle^{-1-\epsilon} \|_{L^2} \lesssim 2^{-l \epsilon} \| (\Phi_{u_k})_l\|_{L^\infty}\lesssim 2^{-k\epsilon-l\epsilon} \max(l,k) \| u \langle y \rangle^{1+\epsilon}\|_{L^2} .
$$
Therefore,
$$
\| \Phi_{u}\langle y \rangle^{-1-\epsilon} \|_{L^2} \lesssim \sum_{k,l} \| (\Phi_{u_k})_l \langle y \rangle^{-1-\epsilon} \|_{L^2} \lesssim \| u \langle y \rangle^{1+\epsilon}\|_{L^2}  \sum_{k,l}2^{-k\epsilon-l\epsilon} \max(l,k) \leq C_\epsilon\| u \langle y \rangle^{1+\epsilon}\|_{L^2}
$$
as desired.

\smallskip

\noindent \underline{Proof of \eqref{annexe:bd:poisson-field-divergence-2}}. We decompose:
$$
\int \nabla .u \Phi_v= \sum_{k<l-2} \int \nabla . u_k \Phi_{v_l}+\sum_{k>l+2} \int \nabla .u_k \Phi_{v_l} +\sum_{l-2\leq k \leq 2+2} \int \nabla .u_k \Phi_{v_l}.
$$
If $k<l-2$ we integrate by parts and bound by H\"older
\begin{equation} \label{formosa-100}
\left| \int \nabla .u_k \Phi_{v_l} \right|=\left| \int  u_k.\nabla \Phi_{v_l}  \right| \lesssim \| u_k\|_{L^1} \| \nabla \Phi_{v_l} \|_{L^\infty(2^k<|y|<2^{k+2})} \lesssim 2^{k-l}  \| \langle y \rangle^{-1}u_k\|_{L^1} \| v_l\|_{L^1}
\end{equation}
where we used that for $2^k<|y|<2^{k+2}$ there holds
$$
|\nabla \Phi_{v_l}(y)|\lesssim \int_{2^l<|y'|<2^{l+2}} |y-y'|^{-1}|v_l (y')|dy'\lesssim 2^{-l}\| v_l\|_{L^1} .
$$
If $k>l+2$ we integrate by parts and bound similarly
\begin{equation} \label{formosa-101}
\left| \int \nabla .u_k \Phi_{v_l}\right|=\left| \int u_k.\nabla \Phi_{v_l} \right|  \lesssim  \| u_k\|_{L^1} \| \nabla \Phi_{v_l} \|_{L^\infty(2^k<|y|<2^{k+2})} \lesssim  \| \langle y \rangle^{-1} u_k\|_{L^1} \|  v_l\|_{L^1}
\end{equation}
where we used that for $2^k<|y|<2^{k+2}$ there holds
$$
|\nabla \Phi_{v_l}(y)|\lesssim \int_{2^l<|y'|<2^{l+2}} |y-y'|^{-1}|v_l (y')|dy'\lesssim 2^{-k}\| v_l\|_{L^1}
$$
Therefore, after summation of \eqref{formosa-100} and \eqref{formosa-101} we get
\begin{equation} \label{formosa-2}
  \left|\sum_{k<l-2} \int \nabla . u_k \Phi_{v_l}\right|+\left|\sum_{k>l+2} \int \nabla . u_k \Phi_{v_l}\right|\lesssim  \| \langle y \rangle^{-1} u \|_{L^1}  \|  v \|_{L^1}.
\end{equation}
If $k=l+j$ for $-2\leq j \leq 2 $ we rescale variables defining $z=2^{-k}y$ and let $\tilde u_k(z)=u_k(y)$ and $\tilde v_l(z)=v_l(y)$. Then
\begin{align*}
\Phi_{\tilde v_l}(z)&=-\frac{1}{2\pi} \int \ln |z-z'|v_l(2^k z')dz'=-\frac{1}{2\pi} 2^{-2k}\int \ln |2^{-k}y-2^{-k}y'|v_l(y')dy' \\
& =-\frac{1}{2\pi} 2^{-2k}\int (-k\ln 2+\ln |y-y'|)v_l(y')dy' = \frac{k2^{-2k}}{2\pi}\int v_l +2^{-2k}\Phi_{v_l}(y).
\end{align*}
Thus, changing variables and using that $\tilde u_k$ has support in $\{|z|<4\}$,
\begin{align*}
\left|\int \nabla .u_k \Phi_{v_l} dy\right| &= 2^{3k}\left|\int \nabla .\tilde u_k(z) \left(\Phi_{\tilde v_l}(z)-\frac{k}{2\pi}\int v_l \right)dz\right| \ = 2^{3k}\left|\int \nabla .\tilde u_k(z) \Phi_{\tilde v_l}(z) \right|\\
&= 2^{3k}\left|\int \tilde u_k(z) .\nabla \Phi_{\tilde v_l}(z) \right| \ \lesssim 2^{3k}\left|(\int |\tilde u_k(z)|^2 dz\right)^{\frac 12} \left( \int_{|z|<10} |\nabla \Phi_{\tilde v} (z)|^2 dz \right)^{\frac 12}.
\end{align*}
By H\"older, $\int_{|z|<10} |\nabla \Phi_{\tilde v} (z)|^2 dz \lesssim \| \nabla \Phi_{\tilde v_l}\|_{L^6}^2$ and $\| \tilde v_l\|_{L^{3/2}}\lesssim \| \tilde v_l\|_{L^2}$,  where we used that $\tilde v_l$ has support inside $\{|z|<16\}$. We have $\| \nabla \Phi_{\tilde v}\|_{L^6}\lesssim\| \nabla^2 \Phi_{\tilde v_l}\|_{L^{3/2}} $ by Sobolev. Since $-\Delta \Phi_{\tilde v}=\tilde v$, by elliptic regularity estimates we have $\| \nabla^2 \Phi_{\tilde v_l}\|_{L^{3/2}}\lesssim \| \tilde v_l\|_{L^{3/2}}$. Combining, we get $\int_{|z|<10} |\nabla \Phi_{\tilde v} (z)|^2 dz\lesssim \| \tilde v_l\|_{L^2}^2$. Using these inequality and changing back variables $z\mapsto y$ we get
$$
\left|\int \nabla .u_k \Phi_{v_l} dy\right|\lesssim 2^{3k}\| \tilde u_k\|_{L^2}\| \tilde v_l\|_{L^2}=2^{k}\| u_k\|_{L^2}\| v_l\|_{L^2}\lesssim \| u_k\|_{L^2}\| \langle y \rangle v_l\|_{L^2}.
$$
Therefore, by Cauchy-Schwarz,
\begin{equation} \label{formosa}
\left| \sum_{l-2\leq k \leq 2+2} \int \nabla .u_k \Phi_{v_l}\right| \lesssim \|  u \|_{L^2}\| \langle y\rangle v\|_{L^2}.
\end{equation}
Combining \eqref{formosa-2} and \eqref{formosa} shows \eqref{annexe:bd:poisson-field-divergence-2} with parameter $\lambda=1$. Let now $\lambda>0$. We change variables $z=\lambda^{-1} y$ and define $\tilde u(z)=u(y)$ and $\tilde v(z)=v(y)$. Then by \eqref{annexe:bd:poisson-field-divergence-2} with parameter $\lambda=1$,
\begin{align*}
\left| \int \nabla. u\Phi_v dy \right|&=\lambda^{3}\left|\int \nabla.\tilde u(z) \Phi_{\tilde v}(z)dz \right|\\
&\leq \lambda^{3} C\left( \int  |\tilde u|^2 dz \right)^{\frac 12} \left( \int (1+| z|)^2 \tilde v^2 dz \right)^{\frac 12}+C\lambda^{3}  \int  \frac{|\tilde u|}{1+|z|} dz \int |\tilde v|  dz .\\
&= C\left( \int |\tilde u|^2 dy \right)^{\frac 12} \left( \int (\lambda+| y|)^2 \tilde v^2 dy \right)^{\frac 12}+C  \int  \frac{|\tilde u|}{\lambda+|y|} dy \int |\tilde v|  dy .
\end{align*}
This is the desired inequality \eqref{annexe:bd:poisson-field-divergence-2}.
\end{proof}

\bibliographystyle{alpha}

\newcommand{\etalchar}[1]{$^{#1}$}
\def\cprime{$'$}

\end{document}